\documentclass[11pt,a4paper]{article}

\setlength{\textwidth}{430pt}
\setlength{\textheight}{645pt}
\topmargin -0.5cm
\oddsidemargin=0.4cm
\evensidemargin=0.4cm

\usepackage{amsthm, amsfonts,amssymb,euscript}
\usepackage{amsmath}
\usepackage{bm}
\usepackage{amssymb}
\usepackage{amsthm}
\usepackage{graphicx}
\usepackage{caption}
\usepackage{subcaption}
\usepackage{amsfonts}
\usepackage{float}
\usepackage{color}
\usepackage{slashed}

\usepackage[usenames,dvipsnames,svgnames,table]{xcolor}
\usepackage[colorlinks=true]{hyperref}
\hypersetup{linkcolor=BrickRed, urlcolor=green, citecolor=blue, linktoc=page}

\numberwithin{equation}{section}

\newtheorem*{proposition*}{Proposition}
\newtheorem*{theorem*}{Theorem}
\newtheorem*{conjecture*}{Conjecture}
\newtheorem*{claim*}{Claim}
\newtheorem*{lemma*}{Lemma}
\newtheorem*{corollary*}{Corollary}
\newtheorem{theorem}{Theorem}[section]
\newtheorem{proposition}[theorem]{Proposition}

\newtheorem{lemma}[theorem]{Lemma}

\newtheorem*{definition*}{Definition}

\newtheorem*{assumption*}{\mathcal{A}ssumption}

\newtheorem*{remark*}{Remark}
\newtheorem{remark}{Remark}[section]


\hyphenation{space-time}
\hyphenation{Christo-doulou}
\hyphenation{Schwarz-schild}

\newcommand{\R}{\mathbb{R}}
\newcommand{\s}{\mathbb{S}}

\newcommand{\N}{\mathbb{N}}

\newcommand{\snabla}{\slashed{\nabla}}

\setcounter{tocdepth}{2}
\allowdisplaybreaks

\begin{document}

\title{A vector field approach to almost-sharp decay for the wave equation on spherically symmetric, stationary spacetimes}
\author{Y. Angelopoulos, S. Aretakis, and D. Gajic}
\date{February 15, 2018}
\normalsize

\maketitle

\begin{abstract}

We present a new vector field approach to almost-sharp decay for the wave equation on spherically symmetric, stationary and asymptotically flat spacetimes. Specifically, we derive a new hierarchy of higher-order weighted energy estimates by employing appropriate commutator vector fields. In cases where an integrated local energy decay estimate holds, like in the case of sub-extremal Reissner--Nordstr\"{o}m black holes, this hierarchy leads to almost-sharp global energy and pointwise time-decay estimates with decay rates that go beyond those obtained by the traditional vector field method. Our estimates play a fundamental role in our companion paper where precise late-time asymptotics are obtained for linear scalar fields on such backgrounds.

\end{abstract}

\tableofcontents

\section{Introduction }

\subsection{Decay estimates for the wave equation: Overview} 

The study of the long-time behavior of solutions to the wave equation
\begin{equation}
\label{waveequation}
\square_g\psi=\frac{1}{\sqrt{|\text{det}g|}}\partial_{a}\Big(\sqrt{|\text{det}g|}\cdot g^{ab}\cdot \partial_{b}\psi \Big)=0,
\end{equation}
where $g$ is a Lorentzian metric on a 4-dimensional Lorentzian manifold $\mathcal{M}$, is of fundamental importance throughout mathematical physics. In particular, decay results for the wave equation have applications to the following fundamental problems in general relativity:
A) long-time behavior of non-linear wave equations, including the Einstein equations (see \cite{k1,christab,HV2016}), B) stability of black hole backgrounds (see \cite{lecturesMD,blukerr,Dafermos2016}), C) propagation of gravitational waves (see \cite{gravitation,memorychistodoulou}).

Traditional methods for rigorously establishing decay estimates for the wave equation \eqref{waveequation} include the use of  1) representation formulas and/or properties of the fundamental solution (see \cite{evans,sogge}), 2) the Fourier transform (see \cite{hormander1}), 3) the conformal compactification and local theory (see \cite{fr1,chcon}), and 4) the vector field method (see \cite{mor1,muchT}).

The vector field method is very robust to non-linear perturbations and hence has been successfully used in obtaining various remarkable results in hyperbolic PDE (see, for instance, \cite{christab}). 
A novel energy approach to uniform decay results introduced by Dafermos--Rodnianski  \cite{newmethod}, and generalized by Moschidis \cite{moschidis1}, extended the applicability of the vector field method to a very general class of asymptotically flat spacetimes. One disadvantage, however, of the known vector field techniques is that they provide relatively weak  bounds on the decay rates : $t^{-2}$ decay for the global energy flux, $t^{-3/2}$ decay for the scalar field and $u^{-1/2}$ decay for the radiation field along null infinity (here $t$ is a time coordinate and $u$ is a retarded time coordinate).
On the other hand, an upper bound for the decay rate for solutions $\psi$ to the wave equation 
on Schwarzschild is suggested by Price's  heuristic 
  polynomial law (see \cite{price1972}):  
\begin{equation}
\psi(t,r_{0},\theta,\phi)\sim t^{-3},
\label{pricepolylaw}
\end{equation}
asymptotically as $t\rightarrow \infty$ along constant $r=r_{0}$ hypersurfaces. In fact \cite{price1972} suggested that if $\psi_{\ell}$ denotes the projection of $\psi$ on the $\ell^{\text{th}}$ angular frequency then the following late-time polynomial law along constant  $r=r_{0}$ hypersurfaces holds:
\begin{equation}
\psi_{\ell}(t,r_{0},\theta,\phi)\sim t^{-2\ell-3}.
\label{pricepolylawell}
\end{equation}
Subsequent work by Gundlach, Price and Pullin \cite{CGRPJP94} suggested that the radiation field $r\psi$ obeys the following polynomial law along the null infinity $\mathcal{I}$
\begin{equation}
r\psi\left.\right|_{\mathcal{I}}(u,\cdot)\sim u^{-2}.
\label{rfpullin}
\end{equation}

There have been numerous rigorous works which proved upper bounds on solutions to the wave equation which are consistent with the above heuristics and use in one way or another explicit representation formulas.
 Kronthaler \cite{kro} obtained $t^{-3}$ decay for spherically symmetric solutions to the (decoupled) wave equation on Schwarzschild under the assumptions that the initial data are   compactly supported and  supported away from the event horizon. 
Subsequently Donninger, Schlag and Soffer in \cite{other1} obtained $\ell$-dependent decay rates for fixed angular frequencies. They proved  at least $t^{-2\ell-2}$ decay  for general initial data and  faster $t^{-2\ell-3}$ decay for static initial data. Subsequently,  Metcalfe, Tataru and Tohaneanu in \cite{metal} proved $t^{-3}$ decay  for a general class of nonstationary asymptotically flat spacetimes   based on properties of the fundamental solution for the constant coefficient d'Alembertian (see also \cite{tataru3} for stationary spacetimes). Finally, for upper bounds for non-linear wave equations,  we refer to the work of  Dafermos and Rodnianski \cite{MDIR05} on the spherically symmetric Einstein--Maxwell-scalar field model.

The purpose of this paper is to present a new \textit{vector field approach} to decay estimates  which yields almost-sharp $t^{-5+\epsilon}$ decay for the global energy flux, $t^{-3+\epsilon}$ decay for the scalar field and $u^{-2+\epsilon}$ decay for the radiation field on spherically symmetric, stationary and asymptotically flat spacetimes. 
 One of the key new features of our method, which builds on the Dafermos--Rodnianski method, is that it makes use of the conservation laws associated to the limiting Newman--Penrose scalars on null infinity  introduced in Section \ref{sec:1stnpconstant}. Our method plays a crucial role in our companion paper \cite{paper2} where we derive the precise late-time asymptotics for solutions to the wave equation which, in particular, enable us to  obtain the first rigorous proof of the Price's polynomial law \eqref{pricepolylaw} as an upper and \textit{lower} bound.

We provide a brief introduction to the classical vector field method in Section \ref{introvf} and the Dafermos--Rodnianski method in Section \ref{introdr}. An overview of the new approach
 is presented in Section \ref{sketch} and a summary of the main results is presented in Section \ref{sintro}.  Section \ref{introappli} contains a  detailed discussion on applications of our method and estimates.

\subsubsection{The vector field method}
\label{introvf}

One associates to a scalar field $\psi$ the so-called energy-momentum tensor
\[\textbf{T}_{ab}[\psi ]=\partial_{a}\psi\cdot \partial_{b}\psi-\frac{1}{2}g_{ab}\partial^{c}\psi\cdot \partial_{c}\psi \]
which is a symmetric 2-tensor satisfying the following
\[\textbf{T}[\psi ](N_{1}, N_{2})\sim \sum_{a}|\partial_{a}\psi|^{2}, \]
\[\text{Div}\big(\textbf{T}[\psi]\big)=\Box_{g}\psi\cdot d\psi, \]
where $N_{1},N_{2}$ are timelike vector fields (the constant in $\sim$ depends implicitly on the norms of $N_{1},N_{2}$). To obtain useful estimates one then considers energy currents $J_{a}^{V}[W\psi]$ of the form
\[\Big(J^{V}[W\psi]\Big)_{a}=\textbf{T}_{ab}[W\psi]\cdot V^{b}. \]
Here $V$ and $W$ are vector fields; in fact $W$ can be taken to be a general differential operator. The vector field $V$ is traditionally called the vector field \textit{multiplier}  and the vector field $L$ is called the vector field \textit{commutator}. The use of vector fields as multipliers goes back to Morawetz \cite{mor1}, whereas the use of vector fields as commutators was initiated by Klainerman \cite{muchT, sergiunull}.

An immediate observation is that if the multiplier vector field $V$ is a Killing field then the energy current $J_{a}^{V}[\psi]$ is divergence-free leading to the following divergence identity
\[\int_{\widetilde{\Sigma}_{t}}\textbf{T}[\psi](V,n_{\widetilde{\Sigma}_{t}})=\int_{\widetilde{\Sigma}_{0}}\textbf{T}[\psi](V,n_{\widetilde{\Sigma}_{0}}),  \]
where $\widetilde{\Sigma}_{t},\widetilde{\Sigma}_{0}$ are Cauchy hypersurfaces and $n_{\widetilde{\Sigma}_{t}}, n_{\widetilde{\Sigma}_{0}}$ are their future-directed timelike unit normals, respectively. An appropriate choice of $V$ yields the boundedness of non-negative quadratic expressions of the first-order derivatives of $\psi$ in terms of the corresponding expressions of the initial data. On the other hand, the use of appropriate commutator vector fields allows for the control of \textit{higher-order} quadratic expressions of $\psi$. 

Minkowski spacetime has a wealth of (conformal) symmetries: 1) Translations $T=\partial_{t}, \partial_{x^{i}}$, 2) Scaling $S=t\partial_{t}+x^{i}\partial_{x^{i}}$, 3) Conformal Morawetz $K=(t^2+r^2)\partial_{t}+2tx^{i}\partial_{x^{i}}$, 4) Rotations $\Omega_{ab}=x^{a}\partial_{x^{b}}-x^{b}\partial_{x^{a}}$. Here, $(x^{0}=t,x^1,x^2,x^3)$ is a rectilinear coordinate system and $r=\sqrt{(x^{1})^{2}+(x^{2})^{2}+(x^{3})^{2}}$ is the radius coordinate.

If we apply the time translation vector field $T=\partial_{t}$  as a multiplier, then we obtain
\begin{equation}
\int_{\widetilde{\Sigma}_{t}}\textbf{T}[\psi](T,T)=\int_{\widetilde{\Sigma}_{0}}\textbf{T}[\psi](T,T),
\label{tidentity}
\end{equation}
where $\textbf{T}[\psi](T,T) \sim \sum_{a}|\partial_{a}\psi|^{2}$. 	


\begin{figure}[h!]
\begin{center}
\includegraphics[width=4in]{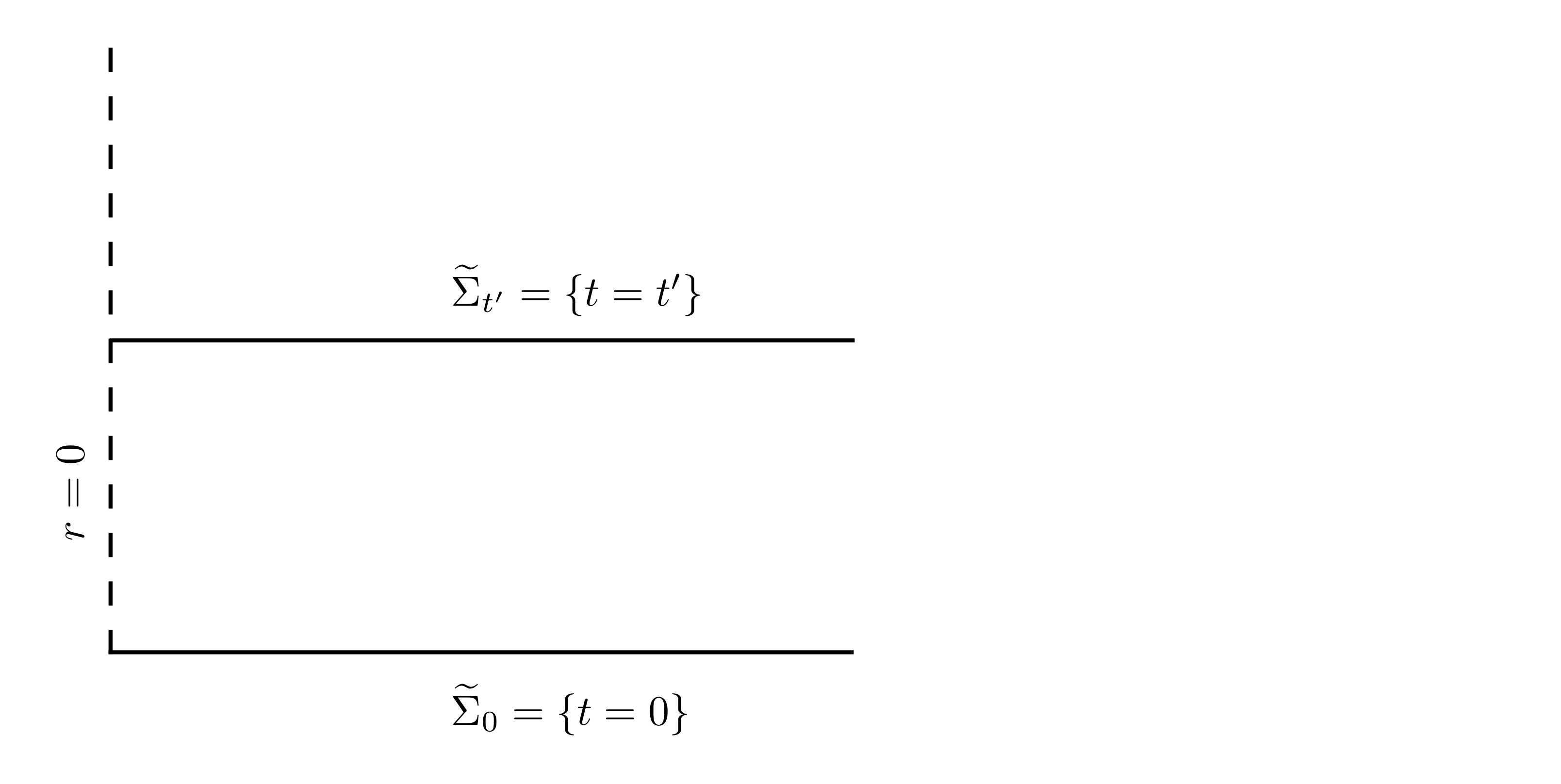}
\caption{\label{fig:sdsewrew}A foliation of Minkowski by hypersurfaces of constant $t$.}
\end{center}
\end{figure}

The above identity provides a bound for the energy flux of $\psi$ through the constant $t$ hypersurfaces $\widetilde{\Sigma}_{t}$; however it also shows that this energy flux does not decay in time. In order to obtain decay estimates, one needs to 1) apply multiplier vector fields with weights in $t$, and 2) restrict to appropriate regions, such as $\left\{r\leq R \right\}$, with $R>0$. Indeed, applying the conformal Morawetz vector field $K$
as a multiplier yields the boundedness of the (appropriately modified) energy current $J^{K}[\psi]$ which in turn satisfies 
\[ J^{K}[\psi]\cdot n_{t} \geq C_{R}\cdot  t^{2}\cdot \textbf{T}[\psi](T,T) \]
in  $\widetilde{\Sigma}_{t}\cap\left\{r\leq R\right\}$ with $R>0$. This implies the boundedness of the flux
\[\int_{\widetilde{\Sigma}_{t}\cap \left\{r\leq R \right\}}t^{2}\cdot \textbf{T}[\psi](T,T)\]
which immediately translates to decay for the flux of $\psi$ through $\widetilde{\Sigma}_{t}\cap \left\{r\leq R \right\}$. This approach yields the non-integrable $\frac{1}{t}$ pointwise decay for $\psi$.

Morawetz \cite{mor1} introduced the radial vector field $\mathbb{M}=\partial_{r}$ (which points towards the direction of spatial dispersion) as an alternative choice of multiplier vector field. The vector field $\mathbb{M}$ is not symmetry-generating and, hence, the spacetime terms (which arise from the divergence of the associated energy current $Q^{\mathbb{M}}[\psi]$) are non-trivial. In fact, the crucial observation is that these terms (modulo appropriate modifications of the energy current) are positive-definite, which leads to an integrated local energy decay estimate of the form
\begin{equation}
\int_{0}^{t}\int_{\widetilde{\Sigma}_{\bar{t}}\cap \left\{r\leq R \right\}}\textbf{T}[\psi](T,T)\,d\bar{t}\leq C\int_{\widetilde{\Sigma}_{0}} \textbf{T}[\psi](T,T).
\label{moraintro}
\end{equation}

Another very important development is due to Klainerman, who initiated the use of commutator vector fields and combined \eqref{tidentity} with the commutator vector fields $\partial_{t}, \partial_{x^{a}}, S, \Omega_{ab}$ to deduce the boundedness of higher-order weighted quadratic expressions of $\psi$. In turn, via appropriate generalizations of the Sobolev inequality, now known as the Klainerman--Sobolev inequality, this yields
\[|\psi|\leq C\sqrt{E}\frac{1}{(|t-r|+1)^{1/2}}\frac{1}{(|t+r|+1)}. \] 
Here $E$ is initial data higher-order weighted norm of $\psi$. The above estimate immediately yields  
\[|\psi|\leq C(R)\sqrt{E}\frac{1}{t^{3/2}}\]
for $r\leq R$. Note that the integrability in time of the upper bound using Klainerman's method plays a fundamental role in the study of non-linear wave equations.

\subsubsection{The Dafermos--Rodnianski method}
\label{introdr}

The above estimates for the flat wave equation do not extend immediately to more general curved spacetimes, including black hole spacetimes; the main reason being the growth in time of the error terms associated with the failure of either the multipliers or the commutators to be Killing (see however \cite{luk,lukkerr} for a modification of the above estimates that does produce decay results in slowly rotating Kerr spacetimes).  Dafermos and Rodnianski \cite{newmethod} introduced in 2008 a new approach which circumvents these difficulties very efficiently. Their method 1) captures the radiative properties of waves in a natural way, 2) can be applied to a vast class of spacetimes, and 3) does not make use of multipliers or commutators with weights growing in $t$ along fixed $r$ hypersurfaces.


\begin{figure}[h!]
\begin{center}
\includegraphics[width=4in]{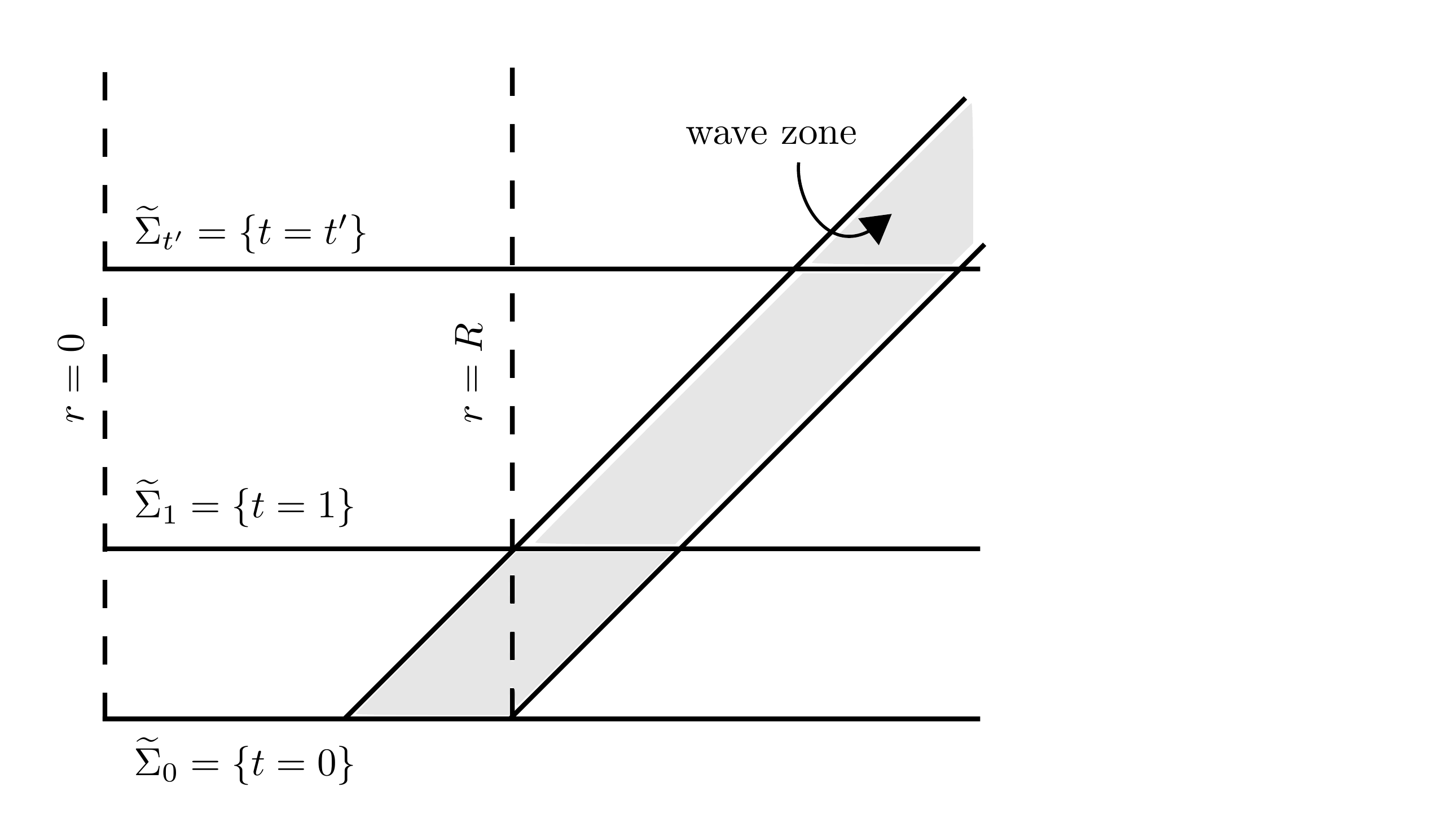}
\caption{\label{fig:sds}Spacelike hypersurfaces and the wave zone}
\end{center}
\end{figure}

A first important observation is that the constant $t$ hypersurfaces $\widetilde{\Sigma}_{t}$ are not suitable for capturing the decay properties of waves. Indeed, for all $t>0$, the spacelike hypersurface $\widetilde{\Sigma}_{t}$ intersects the wave zone bounded by the outgoing null hypersurfaces emanating from $\left\{r=R\right\}\cap \widetilde{\Sigma}_{0}$ and $\left\{r=R\right\}\cap \widetilde{\Sigma}_{1}$, for some $R>0$. Hence, the energy flux of $\widetilde{\Sigma}_{t}$ will always take into account the radiation propagated in this wave zone.

This leads to a new kind of spacelike-null hypersurfaces ${\Sigma}_{\tau}$ given by  
\[{\Sigma}_{\tau}=\left\{\widetilde{\Sigma}_{\tau} \text{ for }r\leq R  \right\} \cup \left\{\mathcal{N}_{\tau} \text{ for }r\geq R  \right\} \]
where $\mathcal{N}_{\tau}$ is the outgoing null hypersurface emanating from $\left\{r= R \right\}\cap \widetilde{\Sigma}_{\tau}$.


\begin{figure}[h!]
\begin{center}
\includegraphics[width=4in]{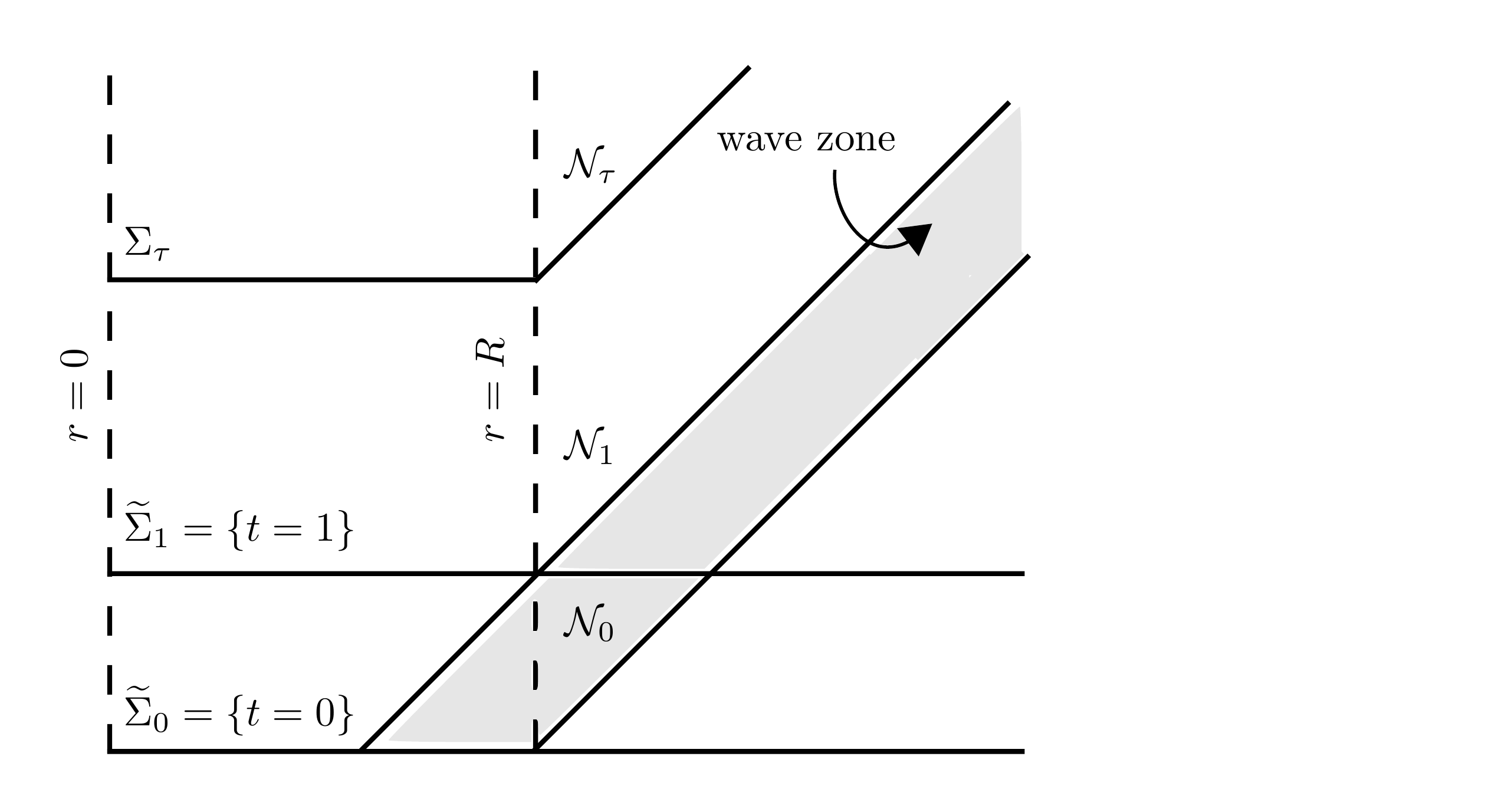}
\caption{\label{fig3}A foliation by spacelike-null hypersurfaces ${\Sigma}_{\tau}$.}
\end{center}
\end{figure}

Applying now the energy identity with $T=\partial_{t}$ as the multiplier vector field in the region bounded by the hypersurfaces ${\Sigma}_{0}$ and ${\Sigma}_{\tau}$ we obtain
\begin{equation}
\int_{{\Sigma}_{\tau}}\textbf{T}[\psi](T, n_{{\Sigma}_{\tau}}) + \int_{\mathcal{I}^{+}_{\tau}}\textbf{T}[\psi](T, n_{\mathcal{I}^{+}_{\tau}})= \int_{{\Sigma}_{0}}\textbf{T}[\psi](T, n_{{\Sigma}_{0}}), 
\label{boundnullflux}
\end{equation} 
where $\int_{\mathcal{I}^{+}_{\tau}}\textbf{T}[\psi](T, n_{\mathcal{I}^{+}_{\tau}})$ denotes the flux through the future null infinity $\mathcal{I}^{+}$. Here, $\mathcal{I}^{+}$ is the limiting hypersurface formed by the limit points of future null geodesics along which $r\rightarrow \infty$.
 

\begin{figure}[h!]
\begin{center}
\includegraphics[width=4in]{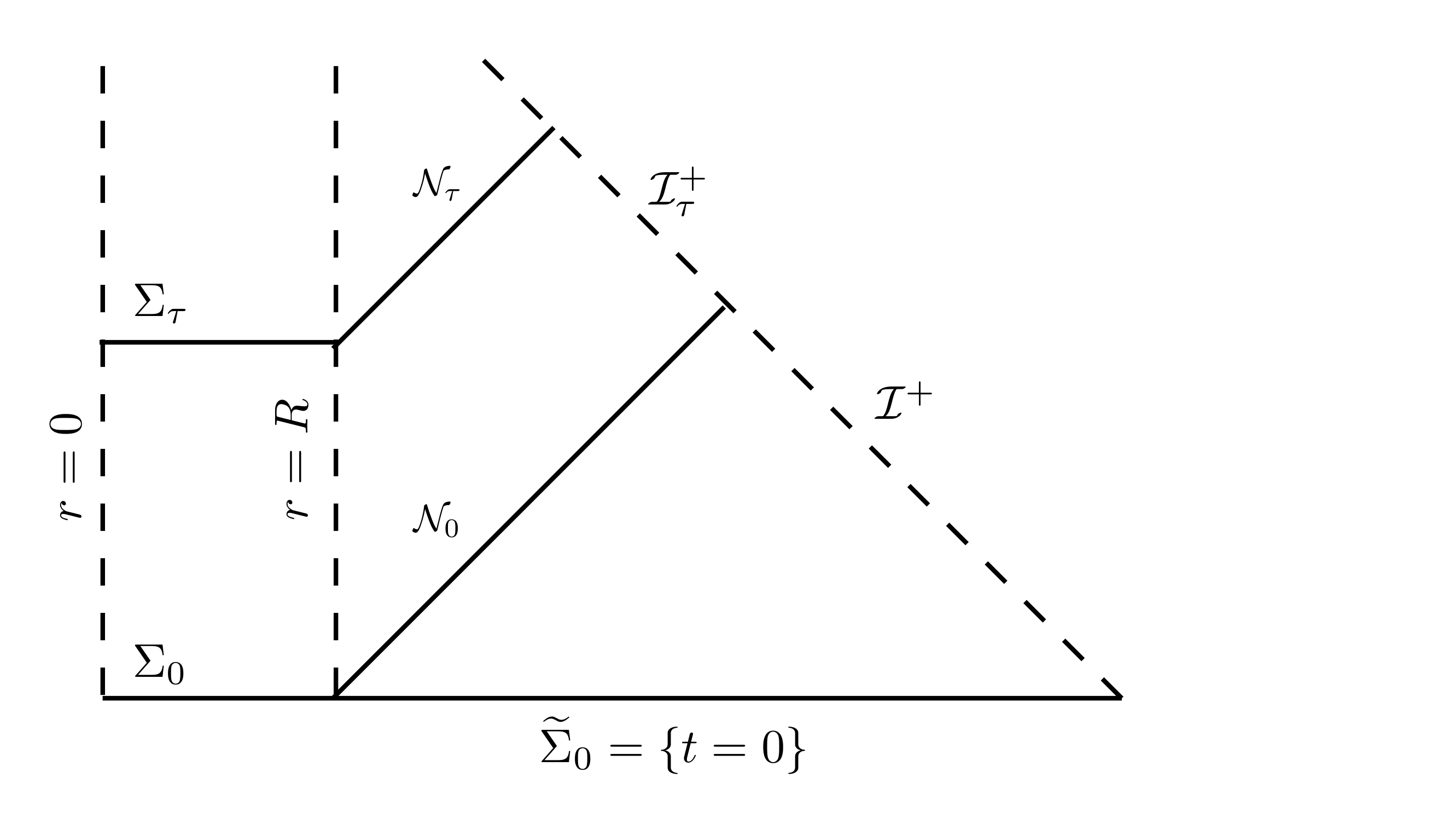}
\caption{\label{fig4}The hypersurfaces $\Sigma_{\tau}$ and the future null-infinity $\mathcal{I}^+$}
\end{center}
\end{figure}

Hence the flux through ${\Sigma_{\tau}}$ measures the part of the energy of the waves that has not been radiated to null infinity up to retarded time $\tau$. Clearly, decay for the energy flux through the hypersurfaces ${\Sigma}_{\tau}$ is equivalent to the statement that the total initial energy is radiated through null infinity:
\[\lim_{\tau\rightarrow\infty}\int_{\mathcal{I}^{+}_{\tau}}\textbf{T}[\psi](T, n_{\mathcal{I}^{+}_{\tau}})= \int_{{\Sigma}_{0}}\textbf{T}[\psi](T, n_{{\Sigma}_{0}}). \] 
The method of Dafermos and Rodnianski establishes a hierarchy of weighted estimates in the ``far away'' region $r\geq R$ (foliated by the null hypersurfaces $\mathcal{N}_{\tau}$) which are used in conjunction with the ILED estimate \eqref{moraintro} in the ``near region'' $r\leq R$. Specifically, their method uses the vector field $r^{p}\cdot L$ as a multiplier, where the null vector field 
\[L=\partial_{v}\]
is normal to the null hypersurface $\mathcal{N}_{\tau}$. Here $\partial_{v}$ is to be considered with respect to the null Eddington--Finkelstein coordinate system $(u,v)$, where $u$ is the retarded time and $v$ is the advanced time coordinate. The associated energy identities for $1\leq p\leq 2$  yield (after removing various distracting terms\footnote{such as terms involving angular derivatives and higher-order fluxes originating from the \textit{trapping effect}.}) the following schematic hierarchy of $r$-weighted estimates
\begin{equation}
\begin{split}
\int_{\tau_{1}}^{\tau_{2}}\int_{\mathcal{N}_{\tau}}\textbf{T}[\psi](T, n_{\mathcal{N}_{\tau}}) & \leq C \int_{N_{\tau_{1}}}r\cdot (L(r\psi))^{2}\ drd\omega,  \\
\int_{\tau_{1}}^{\tau_{2}}\int_{\mathcal{N}_{\tau}}r\cdot (L(r\psi))^{2}\ drd\omega &\leq C \int_{N_{\tau_{1}}}r^{2}\cdot (L(r\psi))^{2}\ drd\omega,  
\end{split}
\label{drhierarchy}
\end{equation}
 The precise estimates include weighted positive-definite fluxes which for simplicity we have also omitted here. Here $d\omega$ denotes the area element over the spherical sections of $\mathcal{N}_{\tau}$. 
 The hierarchy \eqref{drhierarchy} (coupled with \eqref{boundnullflux} and \eqref{moraintro}) yields the energy decay estimate
\[\int_{{\Sigma}_{\tau}}\textbf{T}[\psi](T, n_{{\Sigma}_{\tau}}) \leq C\cdot D_{0}[\psi]\frac{1}{\tau^{2}}  \]
 and the pointwise estimates
\[|\psi|\leq C\sqrt{D_{1}[\psi]}\frac{1}{\tau}, \ \ |r\psi|\leq C\sqrt{D_{2}[\psi]}\frac{1}{\sqrt{\tau}}  \]
where $D_{0},D_{1},D_{2}$ are suitable weighted norms of the initial data of $\psi$. 

The decay rate for the scalar field $\psi$ can be improved if one uses the vector fields $L$ and $rL$ as commutator vector fields. This was first demonstrated in the work of Schlue \cite{volker1} for Schwarzschild spacetimes and then in the work of Moschidis \cite{moschidis1} for a very general class of asymptotically flat spacetimes with non-constant Bondi mass
  (for applications see also \cite{sergiualex,molog,moinsta,shiwu}). This method yields improved decay for the higher-order energy
\[\int_{{\Sigma}_{\tau}}\textbf{T}[T\psi](T, n_{{\Sigma}_{\tau}}) \leq C\cdot D_{3}[\psi]\frac{1}{\tau^{4}}  \]
 which can been turned into improved pointwise decay for the scalar field
\[|\psi|\leq C\sqrt{D_{4}[\psi]}\frac{1}{\tau^{3/2}} \]
where $D_{3},D_{4}$ are weighted norms of the initial data of $\psi$. This method is comparable to commuting with the scaling vector field $S$, since the commutator vector fields have equal weights in $r$. 

\subsection{Sketch of the new vector field approach}
\label{sketch}

In this paper, we introduce a new vector field approach which allows us to obtain almost-sharp decay rates for asymptotically flat backgrounds, going therefore beyond the decay rates of the traditional vector field method. Specifically, for any $\epsilon >0$, we obtain $\tau^{-5+\epsilon}$ decay rate for the global energy flux through the hypersurfaces $\Sigma_{\tau}$ (see Figure \ref{fig4}), $\tau^{-3+\epsilon}$ decay rate for the scalar field and $\tau^{-2+\epsilon}$ decay rate for the radiation field, provided that the initial data decay are sufficiently regular. In fact, we obtain a new hierarchy of estimates which provides a correspondence between decay in \emph{space} of the initial data and  decay in \emph{time} of the scalar field. 

The $\epsilon$ loss (for, say, compactly supported smooth initial data) is removed in our companion paper \cite{paper2} where in fact we derive the precise late-time asymptotics of the scalar field and hence, we obtain \textit{sharp upper and lower} bounds. The lower bounds have not been previously derived by either physical space or Fourier analytic methods. The hierarchy of estimates derived in the present paper plays a fundamental role in \cite{paper2}. 

In this paper we restrict to the class of spherically symmetric, stationary, asymptotically flat spacetimes, a class which includes the Reissner--Nordstr\"{o}m family of black hole backgrounds. This restriction is mostly for convenience and simplicity of the estimates. In a future work we  obtain improved decay estimates for a general class of asymptotically flat backgrounds.

We next present a brief schematic overview of the estimates and techniques. Our approach builds on the Dafermos--Rodnianski hierarchy \eqref{drhierarchy}. The idea is to obtain  improved decay directly for the energy flux\footnote{That is, to obtain energy decay rates faster than $\tau^{-2}$. } $\int_{\mathcal{N}_{\tau}}\textbf{T}[\psi](T, n_{\mathcal{N}_{\tau}})$, instead of deriving improved decay for higher-order fluxes as in the method of Moschidis and Schlue. We will in fact obtain decay for the $r$-weighted flux $\int_{N_{\tau_{1}}}r^{2}\cdot (L(r\psi))^2\ drd\omega$, which can then be translated into decay for the standard energy flux via \eqref{drhierarchy}. Ideally, one would extend the hierarchy \eqref{drhierarchy} for values $p>2$, however this is not possible. Instead, following Klainerman's commutator vector field method, \textit{we obtain an extended hierarchy in the ``far away''region $\left\{r\geq R \right\}$
 by using commutator vector fields of the order of $r^{2}\cdot L$.}

The Hardy inequality on the null pieces $\mathcal{N}_{\tau}$
\begin{equation}
 \int_{\mathcal{N}_{\tau}}f^{2}\, dvd\omega\leq C\cdot \int_{\mathcal{N}_{\tau}}\Big(\partial_{v}(rf)\Big)^{2}\, dvd\omega,
\label{hardy}
\end{equation}
which holds if $f=O(1/r)$ and $f=0$ at $r=R$, applied to 
\[f= r\partial_{v}(r\psi)\]
yields schematically (modulo cut-off terms near $r=R$ that can be bounded by the Morawetz estimate-assuming that an integrated local energy decay estimate holds)
\[ \int_{\mathcal{N}_{\tau}}\Big(r\cdot (\partial_{v}(r\psi) \Big)^{2}\, dvd\omega \leq 
C\int_{\mathcal{N}_{\tau}}\Big(\partial_{v}\big(r^{2}\cdot\partial_{v}(r\psi) \big)  \Big)^{2} \,dvd\omega  .\]
We therefore need to show that the integral 
\[\int_{\mathcal{N}_{\tau}}\Big(\partial_{v}\big(r^{2}\cdot\partial_{v}(r\psi) \big)  \Big)^{2} \,dvd\omega, \]
decays in time. This naturally suggests that we need to commute with the vector field $r^{2}\partial_{v}=r^{2}\cdot L$.

Let us, for convenience,  define the higher-order expressions
\[E^{(1)}_{p}(\tau)=\int_{\mathcal{N}_{\tau}}r^{p-1}\cdot\Big(\partial_{v}\big(r^{2}\cdot\partial_{v}(r\psi) \big)  \Big)^{2} \,drd\omega. \]
We want to prove decay for $E^{(1)}_{1}(\tau)$.  We would ideally want to establish a hierarchy that schematically looks like
\begin{equation}
 \int_{\tau_{1}}^{\tau_{2}} E^{(1)}_{p}[\psi](\tau) \, d\tau \leq C E^{(1)}_{p+1}[\psi](\tau_{1}),
\label{hier}
\end{equation}
for all $0< p \leq 3$. However, this is not possible. First, in Section \ref{sharpnessofhierarchy}, we show that for generic smooth compactly supported initial data we have 
\[ \int_{\tau_{1}}^{\infty} E^{(1)}_{3}[\psi](\tau)=\infty. \] 
Hence, we only hope that \eqref{hier} holds for all $p\in (0,3)$. This is closely related to the $\epsilon$ loss of our decay rates. 
Note that \textbf{if} \eqref{hier} holds for all $p\in (0,3)$ then
\begin{itemize}
\item 
$p=1$ yields decay with rate $\tau^{-3}$ for the energy flux of $\psi$, 
\item
$1\leq p\leq 2$ yields decay with rate $\tau^{-4}$ for the energy flux of $\psi$, 

\item 
$1\leq p\leq 3-\epsilon$ yields decay with rate $\tau^{-5+\epsilon}$ for the energy flux of $\psi$.
\end{itemize}
The main difficulty in proving the hierarchy \eqref{hier} arises from the need to decompose $\psi$ in the spherically symmetric part and the non-spherically symmetric part:
\begin{equation}
\psi =\psi_{0}+\psi_{ 1}= \left(\frac{1}{4\pi} \int_{\mathbb{S}^{2}}\psi\right) + \left(\psi- \frac{1}{\pi}\int_{\mathbb{S}^{2}}\psi\right). 
\label{sphdecintro}
\end{equation}
 The main obstructions appear for the term $\psi_{1}=\psi- \int_{\mathbb{S}^{2}}\psi$. 
 In fact, for this term we can only establish the hierarchy \eqref{hier} for $p\in (0,2]$ (in fact, $p\in (-4,2]$, see Theorem \ref{thm:hierdrpsi1} in Section \ref{sintro}). In order to derive further decay, we use the Hardy inequality \eqref{hardy} for $f=r\partial_{v}(r^{2}\cdot \partial_{v}(r\psi_{1}))$ allowing us to control $E^{(1)}_{3}[\psi_{1}](\tau)$ (which appears on the right hand side of \eqref{hier} for $p=2$) in terms of the following $r$-weighted third-order energy flux:
 \[E^{(1)}_{3}[\psi_{1}](\tau) \leq C \int_{\mathcal{N}_{\tau}}\Big(\partial_{v}\big(r^{2}\cdot\partial_{v}(r^{2}\cdot\partial_{v}(r\psi_{1})) \big)  \Big)^{2} \,dvd\omega. \]
 This suggests that we need to commute with $r^{2} \cdot L$ \emph{twice} and obtain a hierarchy of estimates that schematically looks like 
 \begin{equation}
 \int_{\tau_{1}}^{\tau_{2}} E^{(2)}_{p}[\psi_{1}](\tau) \, d\tau \leq C E^{(2)}_{p+1}[\psi_{1}](\tau_{1}),
\label{hier1}
\end{equation}
for all $-6< p <1$, where 
\[E^{(2)}_{p}[\psi_{1}](\tau)= \int_{\mathcal{N}_{\tau}}r^{p-1}\cdot\Big(\partial_{v}\big(r^{2}\cdot\partial_{v}\big(r^{2}\cdot\partial_{v}(r\psi) \big) \big) \Big)^{2} \,drd\omega.\]
 See Theorem \ref{thm:hierdrpsi2} in Section \ref{sintro}.  In establishing the hierarchies \eqref{hier} and \eqref{hier1} for $\psi_{1}$, one of the most problematic terms in the associated energy identities  is
a non-trivial flux term on null infinity which has the wrong sign. A crucial point is  that the coefficients of the various terms are such that the dangerous term can be absorbed by using appropriate Hardy and Poincar\'{e} inequalities. Note that the Poincar\'{e} inequality on the sphere is applicable since we have  removed the spherically symmetric part in $\psi_{1}$. Here the asymptotically flatness assumption and the structure of the wave equation plays a crucial role.  
 
We next return to the spherically symmetric part $\psi_{0}$. Recall that the Dafermos--Rodnianski hierarchy holds for $0<p\leq 2$ (which yields \eqref{drhierarchy}). However, one observes  (see Theorem \ref{thm:hierpsi01} in Section \ref{sintro}) that for \emph{spherically symmetric} scalar fields $\psi_{0}$ the hierarchy \eqref{drhierarchy} in fact holds for all $0<p<3$.   
 The range of $p$ can be further extended only if the Newman--Penrose constant (see Section \ref{sec:1stnpconstant}) along null infinity
\[{I}_{0}[\psi_{0}]=\lim_{v\rightarrow\infty}r^{2}\cdot\partial_{v}(r\psi_{0})\]
is zero. Indeed, as is shown in Section \ref{sharpnessofhierarchy}, if ${I}_{0}[\psi_{0}]\neq 0$ then 
\[\int_{\tau_{1}}^{\infty}\int_{N_{\bar{\tau}}}r^{2}\cdot \big(\partial_{v}(r\psi_{0})\big)^{2}\, drd\omega d\bar{\tau}=\infty,\]
which shows that the Dafermos--Rodnianski hierarchy does not hold in this case for $p=3$. If, on the other hand,  ${I}_{0}[\psi_{0}]=0$ then the hierarchy  can be extended for $0<p<5$ (Theorem \ref{thm:hierpsi02} in Section \ref{sintro}). We also remark that in the proof of the hierarchy for $4<p<5$ we need to use the hierarchy for $0<p\leq 4$ and the resulting improved energy and pointwise decay for the scalar field in order to extend the range of $p$ to the interval $(4,5)$.  

It is evident from the above that the conservation law associated to the  Newman--Penrose constant $I_{0}$ on null infinity\footnote{We refer the reader to \cite{aretakisglue} for more on conservation laws on characteristic hypersurfaces.} plays a fundamental role in our vector field approach for decay. 

The above extended hierarchies yield almost-sharp energy decay (see Theorems \ref{thm:endecaypsi} in Section \ref{sintro}) for both cases $I_{0}=0$ and $I_{0}\neq 0$. In particular, the decay estimates for the case $I_{0}\neq 0$ play a crucial role in our companion paper \cite{paper2}. The hierarchies and the associated decay estimates of the weighted norms $E^{(1)}_{p}[\psi](\tau),E^{(2)}_{p}[\psi](\tau)$ yield pointwise decay for both the scalar field (see Theorem \ref{thm:decaypsi}) and the radiation field (see Theorem \ref{thm:pdecayradfield}). The aforementioned hierarchies can be extended further for solutions of the form $T^{k}\psi$ (where $T$ is the stationary Killing vector field) which in turn yields further energy and pointwise decay for solutions of this form (See Theorems \ref{thm:edecayTkpsi}, \ref{thm:decayTkradfield} and \ref{thm:decayTkpsi} in Section \ref{sintro}). This improvement is useful for non-linear applications. 
 
\begin{remark}
Our commutator vector field is at the same level in powers of $r$ as the conformal Killing vector field $K$ for Minkowski space which is used as part of the conformal compactification method. Note, however, that the latter approach yields (in Minkowski space) energy identities with vanishing spacetime terms. On the other hand, our approach establishes energy identities where the spacetime terms is positive and control the weighted fluxes $E^{(1)}_{p}, E_{p}^{(2)}$. Furthermore, we do not need to assume that the spacetime admits a regular conformal compactification.  

\label{remark1intro}
\end{remark} 
 
 \begin{remark}
 The method is \textit{optimal  in terms of the regularity required for the initial data}. We only need to commute twice with the vector field $r^{2}\cdot L$ to obtain the full hierarchy in the ``far away'' region (see Theorem \ref{thm:hierdrpsi2} in Section \ref{sintro}). This implies (see Theorem \ref{thm:endecaypsi} in Section \ref{sintro}) that to get the full energy decay $\tau^{-5+\epsilon}$ requires the boundedness of a weighted \underline{fifth-order} norm of the initial data.  Furthermore, the method is \textit{optimal in terms of the decay required  for the initial data}. In particular, the assumed decay is consistent with the bounds
\[|r\psi|\leq C(u), |\partial_{u}(r\psi)|\leq C(u), |\partial_{v}(r\psi)|\leq \frac{C(u)}{r^{2}},  \]
where $(u,v)$ is the Eddington--Finkelstein coordinate system.  Note that optimizing in terms of the regularity and the decay of the initial data is of fundamental importance for applications to non-linear problems. 
 \label{remark2intro}
 \end{remark}

\subsection{Summary of the main results}
\label{sintro}

In this section, we will state the main theorems that are proved in this paper. We will employ the notation that is introduced in Section \ref{sec:prelim}. In Section \ref{sec:prelim} we will moreover introduce the precise spacetime backgrounds to which the theorems apply, which are equipped with metrics of the form
\begin{equation*}
g=-D(r)du^2-2drdu+r^2(d\theta^2+\sin^2\theta d\varphi^2),
\end{equation*}
in Bondi coordinates $(u,r,\theta,\varphi)$, where the function $D$ depends only on $r$ and satisfies the properties outlined in Section \ref{sec:prelim}. In Section \ref{sec:prelim} we will moreover introduce the alternative coordinate charts $(\tau,r,\theta,\varphi)$ and $(\widetilde{\tau},\rho,\theta,\varphi)$ used in the theorems below.

The vector field $T$ is the Killing vector field corresponding to time-translation. We will also consider the angular momentum operators $\Omega_i$, with $i=1,2,3$ (see Section \ref{sec:basicineq} for a definition) and
\begin{equation*}
\Omega^{k}=\Omega_1^{k_1}\Omega_2^{k_2}\Omega_3^{k_3},
\end{equation*}
where $k=(k_1,k_2,k_3)\in \N_0^3$. We will frequently use $\slashed{\nabla}$ to denote the restriction of the covariant derivative to the round spheres that foliate the spacetime.

We use $\psi$ to denote solutions to \eqref{waveequation} arising from the initial value problem in Theorem \ref{thm:extuniq}, and we denote with $\phi=r\psi$ the corresponding Friedlander radiation fields. See Section \ref{sec:basicineq} for details regarding the spherical decomposition in angular frequencies, and in particular, the decomposition
\[\psi=\psi_{0}+\psi_{1}.\]

The first Newman--Penrose constant is denoted by
\begin{equation*}
I_0[\psi]=\lim_{r \to \infty} \int_{\s^2}r^2\partial_r \phi|_{u=0}(r,\theta,\varphi)\,\sin \theta d\theta d\varphi.
\end{equation*}
See Section \ref{sec:1stnpconstant} for more details regarding the first Newman--Penrose constant.

All our theorems apply to the Schwarzschild metric and more generally to the sub-extremal Reissner--Nordstr\"{o}m metric. See \cite{redshift} for a proof of the integrated local energy decay estimate for such metrics. Our $r^{p}$-weighted hierarchies in the far away region apply to the extremal Reissner--Nordstr\"{o}m metric as well (see \cite{aretakis1} for the derivation of a degenerate Morawetz estimate on such backgrounds). 

\subsubsection{New hierarchies of $r^p$-weighted estimates}
\label{sec:summaryhierarchies}
A key result in this paper is the discovery of \emph{new} hierarchies of $r^p$-weighted estimates.

For spherically symmetric solutions $\psi$, we extend the hierarchy of Dafermos--Rodnianski (see Section \ref{sec:dafrodhier}) and we distinguish between the cases where the initial data satisfies $I_0[\psi] \neq 0$ and $I_0[\psi]=0$.

We obtain the following hierarchies of estimates:
\begin{theorem}[$r^p$-weighted estimates for $\psi_0$ with $I_0\neq 0$]
\label{thm:hierpsi01}
Let $\psi$ be a spherically symmetric solution to \eqref{waveequation} emanating from initial data given as in Theorem \ref{thm:extuniq} in the region $\mathcal{A}=\{r\geq R\}$ with $\boldsymbol{I_0 [\psi ] \neq 0}$, and take $p\in(0,3)$. Then there exists an $R>0$ such that for any $0\leq u_1<u_2$
\begin{equation}\label{en00est}
\begin{split}
\int_{{\mathcal{N}}_{u_2}} r^p(\partial_r\phi )^2\,d\omega dr+p \int_{\mathcal{A}_{u_1}^{u_2}} r^{p-1}(\partial_r\phi )^2\,d\omega drdu \leq&\: C\int_{{\mathcal{N}}_{u_1}} r^p(\partial_r\phi )^2\,d\omega dr\\
&+C \int_{\Sigma_{u_1}} J^T [\psi ] \cdot n_{u_1}\, d\mu_{{u_1}} ,
\end{split}
\end{equation}
where $C=C(D,R)>0$ is a constant.
\end{theorem}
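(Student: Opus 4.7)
The plan is to apply the Dafermos--Rodnianski $r^p$-multiplier method directly to the radiation field $\phi = r\psi$, exploiting the absence of the angular Laplacian term in spherical symmetry to push the classical window $0 < p \leq 2$ up to $0 < p < 3$. In Bondi coordinates $(u,r,\theta,\varphi)$, the equation $\Box_g\psi=0$ for spherically symmetric $\psi$ reduces to the scalar $1+1$ equation
\begin{equation*}
2\,\partial_u\partial_r\phi \;=\; \partial_r\bigl(D\,\partial_r\phi\bigr) - \frac{D'(r)}{r}\phi
\end{equation*}
on $\mathcal{A}=\{r\geq R\}$. Multiplying by $r^p\,\partial_r\phi$ and rearranging yields the pointwise divergence identity
\begin{equation*}
\partial_u\bigl(r^p(\partial_r\phi)^2\bigr) \;=\; \tfrac12\,\partial_r\bigl(r^p D(\partial_r\phi)^2\bigr) + \tfrac12\,r^{p-1}(rD'-pD)(\partial_r\phi)^2 - r^{p-1}D'\,\phi\,\partial_r\phi.
\end{equation*}

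Integrating this identity against $du\,dr\,d\omega$ over $\mathcal{A}_{u_1}^{u_2}$, the $\partial_u$-divergence yields the desired flux difference on the null cones $\mathcal{N}_{u_1}$ and $\mathcal{N}_{u_2}$, while the $\partial_r$-divergence produces boundary terms at $r=R$ and at $r\to\infty$. The contribution at infinity vanishes: conservation of $I_0[\psi]$ along $\mathcal{I}^+$ forces $\partial_r\phi = O(r^{-2})$, so $r^p D(\partial_r\phi)^2 = O(r^{p-4})\to 0$ for $p<4$, and the null-cone flux $\int r^p(\partial_r\phi)^2\,dr$ is finite precisely because $p<3$—this is the origin of the range restriction in the theorem. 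The contribution at $r=R$, proportional to $\int_{u_1}^{u_2}\int_{\s^2}(\partial_r\phi)^2|_{r=R}\,du\,d\omega$, is absorbed into the right-hand side via the integrated local energy decay (Morawetz) estimate, producing the $\int_{\Sigma_{u_1}}J^T[\psi]\cdot n_{u_1}$ term.

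The coercive bulk contribution $-\tfrac{p}{2}r^{p-1}D(\partial_r\phi)^2$, together with the uniform lower bound $D\geq c_R>0$ on $\{r\geq R\}$ for $R$ large, supplies the positive spacetime integral $p\int r^{p-1}(\partial_r\phi)^2$ on the left-hand side, the $D$-dependent constant being absorbed into $C$. The subleading piece $\tfrac12 r^p D'(\partial_r\phi)^2 = O(r^{p-2})(\partial_r\phi)^2$ is dominated by the main term for $R$ large. The delicate ingredient is the potential coupling $r^{p-1}D'\,\phi\,\partial_r\phi$: since $D'=O(r^{-2})$, Cauchy--Schwarz gives
\begin{equation*}
|r^{p-1}D'\,\phi\,\partial_r\phi|\;\leq\;\epsilon\,r^{p-1}(\partial_r\phi)^2 + C\epsilon^{-1}r^{p-5}\phi^2,
\end{equation*}
and the $r^{p-5}\phi^2$ term is handled by a weighted Hardy inequality
\begin{equation*}
\int_R^\infty r^{p-5}\phi^2\,dr \;\lesssim\; \int_R^\infty r^{p-3}(\partial_r\phi)^2\,dr + \phi(u,R)^2,
\end{equation*}
valid for $p<4$ using that $r^{p-4}\phi^2\to 0$ at infinity (since $\phi$ has a finite radiation-field limit along $\mathcal{I}^+$). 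The resulting $r^{p-3}$ bulk is dominated by the main coercive term for $R$ large, and the boundary $\phi(u,R)^2$ is absorbed into the Morawetz estimate after integration in $u$.

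The main obstacle is the simultaneous compatibility of the potential coupling and the tight flux integrability at $\mathcal{I}^+$: both are governed by the decay $\partial_r\phi=O(r^{-2})$ forced by $I_0[\psi]\neq 0$, and it is exactly this that confines the admissible weight to $p<3$. Extending beyond $p=3$ would require either $I_0[\psi]=0$ (the companion Theorem~\ref{thm:hierpsi02}) or commuting with additional $r^2\,L$ multipliers, which is deferred to the non-spherically-symmetric analysis.
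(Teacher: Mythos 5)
Your identity is exactly the paper's divergence identity for the current $J^{V}[\phi]$ with multiplier $V=r^{p-2}\partial_r$ (Proposition \ref{en00}), so the proposal is essentially the paper's proof; the coercive bulk, the vanishing of the flux at infinity for $p<4$ via Proposition \ref{prop:step0radfields}, and the role of $p<3$ as the threshold of finiteness of $\int_{\mathcal{N}}r^p(\partial_r\phi)^2$ when $I_0\neq 0$ (cf.\ Proposition \ref{prop:sharpn0}) all match. The one genuine deviation is your treatment of the coupling term $r^{p-1}D'\phi\,\partial_r\phi$ by Cauchy--Schwarz plus the Hardy inequality \eqref{eq:Hardy1}, where the paper instead integrates by parts and uses that both resulting terms, $\tfrac12\int_{\mathcal{I}^+}r^{p-3}\phi^2$ and $\tfrac12(3-p)\int r^{p-4}\phi^2$, are nonnegative for $p<3$; both routes close, and yours has the mild advantage of making transparent that the only obstruction to $p\in[3,4)$ is the divergence of the data norm. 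Two bookkeeping points: the boundary term at $r=R$ coming from $\tfrac12\partial_r\bigl(r^pD(\partial_r\phi)^2\bigr)$ enters with a \emph{favourable} sign (it is $-\tfrac12 R^pD(R)(\partial_r\phi)^2|_{r=R}$ on the right-hand side) and can simply be dropped, so your appeal to the Morawetz estimate there is unnecessary --- and would not directly work as stated, since \eqref{ass:morawetz}--\eqref{ass:morawetzlocal} control spacetime slabs rather than hypersurface integrals and a trace inequality would cost a derivative; and the residual term $\phi(u,R)^2$ from your Hardy inequality does need the same cutoff/trace treatment near $\{R\leq r\leq R+1\}$ that the paper applies to its $\mathcal{R}_{\chi}$ error terms before the local energy bound can be invoked.
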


\begin{theorem}[$r^p$-weighted estimates for $\psi_0$ with $I_0=0$]
\label{thm:hierpsi02}
Let $\psi$ be a spherically symmetric solution to \eqref{waveequation} emanating from initial data as in Theorem \ref{thm:extuniq} in the region $\mathcal{A}=\{r\geq R\}$ with $\boldsymbol{I_0 [\psi ] =0}$, and take $p\in(0,4)$. Then there exists an $R>0$ such that for any $0\leq u_1<u_2$
\begin{equation*}
\begin{split}
\int_{{\mathcal{N}}_{u_2}} &r^p(\partial_r\phi )^2 dr+p\int_{\mathcal{A}_{u_1}^{u_2}} r^{p-1}(\partial_r\phi )^2\,drdu \leq \frac{C}{(4-p)^2}\int_{{\mathcal{N}}_{u_1}} r^p(\partial_r\phi )^2 dr\\
&+ \frac{C}{(4-p)^2}\int_{\Sigma_{u_1}} J^T [\psi ] \cdot n_{{u_1}} d\mu_{{u_1}},
\end{split}
\end{equation*}
where $C=C(D,R)>0$.

Moreover, for $p\in [4,5)$ we have that
\begin{equation*}
\begin{split}
\int_{{\mathcal{N}}_{u_2}} &r^p(\partial_r\phi )^2 dr+p\int_{\mathcal{A}_{u_1}^{u_2}} r^{p-1}(\partial_r\phi )^2\,drdu \leq C\int_{{\mathcal{N}}_{u_1}} r^p(\partial_r\phi )^2 dr\\
&+ C\int_{\Sigma_{u_1}} J^T [\psi ] \cdot n_{{u_1}} d\mu_{{u_1}} + C  \frac{E_{0;\rm aux}^{\delta}[\psi]}{(1+u_1 )^{1 -{2\delta}}} , 
\end{split}
\end{equation*}
for any $\delta > 0$, with
\begin{equation*}
E_{0;\rm aux}^{\delta}[\psi]= \sum_{l\leq 4} \int_{\Sigma_0} J^N [T^l\psi ] \cdot n_0\, d\mu_{\Sigma_0}+ \sum_{l\leq 3}\int_{\mathcal{N}_0} r^{4-l-\delta} (\partial_r  T^l\phi  )^2 \, d\omega dr<\infty,
\end{equation*}
where $C=C(D,R,\delta )>0$ is a constant.
\end{theorem}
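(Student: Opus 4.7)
In Bondi coordinates the wave equation for a spherically symmetric solution $\psi$, written in terms of $\phi=r\psi$, reduces to
\begin{equation*}
2\,\partial_u\partial_r\phi \;=\; \partial_r\!\bigl(D\,\partial_r\phi\bigr)-\frac{D'(r)}{r}\,\phi.
\end{equation*}
I will multiply by $r^{p}\,\partial_r\phi$ and integrate over $\mathcal{A}_{u_1}^{u_2}=\{u_1\le u\le u_2,\; r\ge R\}$. The left-hand side yields $\partial_u\bigl(r^{p}(\partial_r\phi)^{2}\bigr)$, which integrates to the boundary fluxes on $\mathcal{N}_{u_1}$ and $\mathcal{N}_{u_2}$. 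One further integration by parts in $r$ on the right-hand side produces the principal positive bulk term $\tfrac{p}{2}\!\int r^{p-1}D\,(\partial_r\phi)^{2}$, a flux at $\mathcal{I}^{+}$ of the form $\int_{u_1}^{u_2}\lim_{r\to\infty}r^{p}D(\partial_r\phi)^{2}\,du$, a boundary flux at $r=R$ which the integrated local energy decay estimate bounds by $\int_{\Sigma_{u_1}}J^{T}[\psi]\cdot n_{u_1}$, and two error terms: a lower-order $O\!\bigl(\int r^{p-2}(\partial_r\phi)^{2}\bigr)$ contribution coming from $r^{p}D'(\partial_r\phi)^{2}$, and the cross term $\int r^{p-1}D'\,\phi\,\partial_r\phi = O\!\bigl(\int r^{p-3}\phi\,\partial_r\phi\bigr)$ coming from the $D'\phi/r$ source.

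\textbf{Vanishing of the $\mathcal{I}^{+}$ flux for $p<4$.} Since $I_0[\psi]=0$ is conserved along $\mathcal{I}^{+}$, one has $r^{2}\partial_r\phi\to 0$ uniformly in $u\in[u_1,u_2]$, so $r^{p}(\partial_r\phi)^{2}=r^{p-4}(r^{2}\partial_r\phi)^{2}\to 0$ for every $p\le 4$, and the $\mathcal{I}^{+}$ flux disappears. The cross term is handled by integrating by parts in $r$,
\begin{equation*}
\int_R^\infty r^{p-1}D'\,\phi\,\partial_r\phi\,dr \;=\; \tfrac12\bigl[r^{p-1}D'\phi^{2}\bigr]_R^\infty - \tfrac12\int_R^\infty\!\bigl(r^{p-1}D'\bigr)'\phi^{2}\,dr,
\end{equation*}
and then invoking a weighted Hardy inequality on $[R,\infty)$ that trades $\int r^{p-4}\phi^{2}$ for $\tfrac{4}{(4-p)^{2}}\int r^{p-2}(\partial_r\phi)^{2}$ plus a boundary term, which vanishes at $r=\infty$ once more because $I_0[\psi]=0$ (one uses $\phi(u,r)=\phi(u,\infty)-\int_r^\infty\partial_s\phi\,ds$). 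The factor $(4-p)^{-2}$ in the theorem is exactly the Hardy constant, and it degenerates as $p\to 4$. All errors thus absorb into the bulk term (taking $R$ large so that $D\ge \tfrac12$) and into the $T$-energy on $\Sigma_{u_1}$, closing the estimate for $p\in(0,4)$.

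\textbf{Extension to $p\in[4,5)$.} For $p\ge 4$ the $\mathcal{I}^{+}$ flux is no longer automatically zero and must be shown to decay in $u_1$. Differentiating with $T$ preserves the condition $I_0[T^{l}\psi]=0$, so the previous hierarchy applies to $T^{l}\psi$ for $l\le 4$. Coupling this with the Dafermos--Rodnianski hierarchy converts the weighted data norm $E_{0;\rm aux}^{\delta}[\psi]$ into energy decay for $T^{l}\psi$ at rate $(1+u)^{-(2-2\delta)}$, which together with the identity $r^{2}\partial_r T^{l}\phi\to 0$ at $\mathcal{I}^{+}$ and the fundamental theorem of calculus in $r$ gives pointwise decay for $\lim_{r\to\infty}r^{p}(\partial_r\phi)^{2}$ at the same rate. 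Integrating in $u$ from $u_1$ to $u_2$ produces exactly $E_{0;\rm aux}^{\delta}[\psi]/(1+u_1)^{1-2\delta}$, which is the error term appearing on the right-hand side of the theorem; the remainder of the argument is identical to the previous paragraph. The main obstacle is this last step: since the $\mathcal{I}^{+}$ flux cannot be forced to zero algebraically for $p\ge 4$, one must commute with $T$ up to four times and run a bootstrap in which the improved energy decay of $T^{l}\psi$ (obtained from the $p<4$ hierarchy) is re-injected to control the bad flux in the $p\ge 4$ identity, all while preserving the explicit $(4-p)^{-2}$ constant so that the inequality remains meaningful for $p$ arbitrarily close to $4$.
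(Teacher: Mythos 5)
Your overall framework (multiplier $r^{p}\partial_r$, absorbing the $r=R$ boundary via the Morawetz estimate, a Hardy inequality for the zeroth-order term, and a $T$-commuted bootstrap for $p\ge 4$) matches the paper's, but the central step fails for $p\in[3,4)$. After integrating the cross term by parts you are left with the boundary term $\tfrac12\,r^{p-1}D'\phi^{2}\big|^{r=\infty}\sim M r^{p-3}\phi^{2}$ and the bulk term $M(p-3)\int r^{p-4}\phi^{2}\,dr$. The hypothesis $I_{0}[\psi]=0$ controls $\lim_{r\to\infty} r^{2}\partial_r\phi$, \emph{not} $\lim_{r\to\infty}\phi$: the radiation field $\phi=r\psi$ tends to a generically nonzero limit on $\mathcal{I}^{+}$, so for $p\ge 3$ the boundary term diverges and $\int_R^{\infty} r^{p-4}\phi^{2}\,dr=\infty$; your identity $\phi(u,r)=\phi(u,\infty)-\int_r^{\infty}\partial_s\phi\,ds$ does not rescue this. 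Relatedly, the Hardy constant for the weight $r^{p-4}$ would be $4/(p-3)^{2}$, not $4/(4-p)^{2}$; the $(4-p)^{-2}$ in the theorem comes from Hardy applied to $\int r^{p-5}\phi^{2}$ (i.e.\ $q=p-5$ in Lemma \ref{lm:Hardy}), whose boundary condition $\lim_{r\to\infty} r^{p-4}\phi^{2}=0$ holds for all $p<4$ simply because $\phi$ is bounded. The fix — and what the paper does — is to avoid the integration by parts entirely: estimate $|D'r^{p-1}\phi\,\partial_r\phi|\le\epsilon\, p\, r^{p-1}(\partial_r\phi)^{2}+C_{\epsilon}p^{-1}r^{p-5}\phi^{2}$ by Cauchy--Schwarz, absorb the first term into the positive bulk, and apply Hardy to the second (landing on the $r^{p-2}$-weighted hierarchy one level down). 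This is valid on all of $(0,4)$ and produces exactly the stated constant.

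For $p\in[4,5)$ your mechanism (commute with $T$, extract decay from the $p<4$ hierarchy, re-inject) is right in spirit, but the obstruction you identify — the $\mathcal{I}^{+}$ flux — is not the operative one: under the standing data assumptions $r^{3}\partial_r\phi$ remains bounded along each $\mathcal{N}_u$ (Proposition \ref{consNP0}), so $\lim_{r\to\infty} r^{p}D(\partial_r\phi)^{2}=\lim_{r\to\infty} r^{p-6}(r^{3}\partial_r\phi)^{2}=0$ for all $p<6$. The term that genuinely forces the loss $E^{\delta}_{0;\rm aux}[\psi](1+u_1)^{-1+2\delta}$ is again the bulk cross term $\sim Mr^{p-3}\phi\,\partial_r\phi$, whose Hardy constant has degenerated at $p=4$. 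The paper handles it with a $u$-weighted Cauchy--Schwarz, $|J_1|\le \epsilon'\,u^{-1-\eta}r^{p}(\partial_r\phi)^{2}+C\,u^{1+\eta}r^{p-6}\phi^{2}$, absorbing the first term into $\sup_{u}\int_{\mathcal{N}_u}r^{p}(\partial_r\phi)^{2}$ and controlling the second by the intermediate pointwise bound $|\phi|\lesssim\sqrt{E^{\delta}_{0;\rm aux}[\psi]}\,(1+u)^{-3/2+\delta/2}$ obtained from the $p<4$ hierarchy applied to $\psi$ and its $T$-derivatives; the restriction $p<5$ enters through the convergence of $\int r^{p-6}\,dr$. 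You should make this device explicit, and note that since the entire $[4,5)$ argument rests on the $p<4$ case, the gap above propagates into it.
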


Theorem \ref{thm:hierpsi01} is proved in Section \ref{sec:hiernpnotzero} and Theorem \ref{thm:hierpsi02} is proved in Section \ref{sec:hiernpzero}.

For the remaining part of the solution $\psi_1=\psi-\psi_0$ we instead construct a new hierarchy for the variable
\begin{equation*}
\Phi \doteq r^2\partial_r\phi=r^2\partial_r(r\psi).
\end{equation*}
That is to say, we \emph{commute} once with $r^2\partial_r$.

We obtain the following hierarchy of estimates for $r^2\partial_r(r\psi_1)$.

\begin{theorem}[$r^p$-weighted estimates for $r^2\partial_r(r\psi_1)$]
\label{thm:hierdrpsi1}
Let $\psi $ be a solution to \eqref{waveequation} emanating from initial data given as in Theorem \ref{thm:extuniq} in the region $\mathcal{A}=\{r\geq R\}$. We assume that $\psi$ is supported on angular frequencies $\ell\geq 1$.

Take $p\in (-4,2]$ and assume that
\begin{equation*}
\sum_{|k|\leq 2}\int_{\Sigma}J^T[\Omega^k\psi]\cdot n_{\Sigma}\,d\mu_{\Sigma}<\infty,
\end{equation*}
and
\begin{align*}
\lim_{r \to \infty }\sum_{|k|\leq 2}\int_{\s^2}(\Omega^k\phi)^2\,d\omega\big|_{u'=0}<&\:\infty,\\
\lim_{r\to \infty} \int_{\s^2}\Phi^2\,d\omega\big|_{u'=0}<&\:\infty.
\end{align*}

Then there exists an $R>0$ such that for any $0\leq u_1<u_2$
\begin{equation*}
\begin{split}
\int_{\mathcal{N}_{u_2}}& r^p(\partial_r\Phi)^2\, d\omega dr+\int_{\mathcal{A}_{u_1}^{u_2}} (p+4)r^{p-1}(\partial_r\Phi)^2+(2-p)r^{p-1}|\snabla \Phi|^2\,d\omega drdu \\
\leq&\: C\int_{\mathcal{N}_{u_1}} r^p(\partial_r\Phi)^2\,d\omega dr+C\sum_{l\leq 1}\int_{\Sigma_{u_1}}J^T[T^{l}\psi]\cdot n_{{u_1}}\,d\mu_{\Sigma_{u_1}},
\end{split}
\end{equation*}
where $C\doteq C(D,R)>0$ is a constant.
\end{theorem}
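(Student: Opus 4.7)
The plan is to commute the wave equation for $\phi=r\psi$ with the vector field $r^{2}\partial_{r}$ and then apply a Dafermos--Rodnianski style $r^{p}$-multiplier argument to the resulting equation for $\Phi=r^{2}\partial_{r}\phi$. In Bondi coordinates the wave equation takes the form
\begin{equation*}
2\partial_{u}\partial_{r}\phi-\partial_{r}(D\partial_{r}\phi)+\tfrac{D'}{r}\phi-\tfrac{1}{r^{2}}\sD\phi=0.
\end{equation*}
Computing $r^{2}\partial_{r}(r\square_{g}\psi)=0$ and eliminating the zeroth-order $\partial_{u}\Phi$ using the original equation produces a commuted equation of the schematic form
\begin{equation*}
2\partial_{u}\partial_{r}\Phi-\partial_{r}^{2}(D\Phi)+\tfrac{2D}{r}\partial_{r}\Phi-\tfrac{1}{r^{2}}\sD\Phi=\mathcal{F}[\phi],
\end{equation*}
in which $\mathcal{F}[\phi]$ collects the lower-order contributions of the commutator $[r^{2}\partial_{r},\square_{g}]\psi$, namely terms containing $\phi$, $\partial_{u}\phi$ and $\sD\phi$ multiplied by weights of the form $D'/r$, $D'$ and $D''$.

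I would then multiply this commuted equation by the multiplier $r^{p}\partial_{r}\Phi$ and integrate against $d\omega\,dr\,du$ on $\mathcal{A}_{u_{1}}^{u_{2}}\times\s^{2}$. The term $2(r^{p}\partial_{r}\Phi)\partial_{u}\partial_{r}\Phi=\partial_{u}(r^{p}(\partial_{r}\Phi)^{2})$ integrates in $u$ to give the fluxes $\int_{\mathcal{N}_{u_{2}}}r^{p}(\partial_{r}\Phi)^{2}-\int_{\mathcal{N}_{u_{1}}}r^{p}(\partial_{r}\Phi)^{2}$. Integration by parts in $r$ of $-r^{p}\partial_{r}\Phi\,\partial_{r}^{2}(D\Phi)$ generates a bulk contribution proportional to $pDr^{p-1}(\partial_{r}\Phi)^{2}$ which, combined with $2Dr^{p-1}(\partial_{r}\Phi)^{2}$ coming from $\tfrac{2D}{r}\partial_{r}\Phi\cdot r^{p}\partial_{r}\Phi$, produces exactly the stated coefficient $(p+4)r^{p-1}(\partial_{r}\Phi)^{2}$. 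Integration by parts of the angular term on $\s^{2}$ followed by integration by parts in $r$ yields the positive bulk $(2-p)r^{p-1}|\snabla\Phi|^{2}$ (using the induced sphere-metric normalization of the paper), plus boundary contributions at $r=R$ and at $\mathcal{I}^{+}$ which are well-defined thanks to the hypothesized finite limits of $\int_{\s^{2}}\Phi^{2}\,d\omega$ and $\int_{\s^{2}}(\Omega^{k}\phi)^{2}\,d\omega$ at initial time.

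The hard part will be absorbing the lower-order source $\mathcal{F}[\phi]$ and closing the estimate. The $\phi$ and $\sD\phi$ pieces of $\mathcal{F}$ are bounded using weighted Hardy inequalities in $r$ of the form $\int r^{q}\phi^{2}\lesssim \int r^{q+2}(\partial_{r}\phi)^{2}$, after which they are controlled by the Dafermos--Rodnianski $r^{p}$-hierarchy for $\phi$ itself. The $\partial_{u}\phi$ contributions force one commutation with the Killing field $T$, which is precisely why $T$-energy of $T\psi$, and hence $J^{T}[T^{l}\psi]$ with $l\leq 1$, must appear on the right-hand side. The assumption that $\psi$ is supported on angular frequencies $\ell\geq 1$ enters crucially through the Poincar\'e inequality $2\int_{\s^{2}}\Phi^{2}\,d\omega\leq\int_{\s^{2}}|\snabla\Phi|^{2}\,d\omega$, which permits the absorption of lower-order cross terms produced by $\mathcal{F}[\phi]$ into the positive bulk pair $(p+4)r^{p-1}(\partial_{r}\Phi)^{2}+(2-p)r^{p-1}|\snabla\Phi|^{2}$; this absorption succeeds exactly when $p+4>0$ and $2-p\geq 0$, which pins down the range $p\in(-4,2]$ (with the endpoint $p=2$ reached because the angular bulk is allowed to vanish while the radial bulk is still strongly coercive). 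Finally, the localized error from cutting off at $r=R$, inherent to integrating only over $\mathcal{A}=\{r\geq R\}$, is absorbed using the integrated local energy decay estimate assumed on the background, yielding the $J^{T}[\psi]$ term on the right-hand side.
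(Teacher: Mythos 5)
Your overall skeleton is the same as the paper's: commute with $r^2\partial_r$, apply the multiplier $r^{p-2}\partial_r$ to $\Phi$ (equivalently, multiply the commuted equation by $r^p\partial_r\Phi$), cut off at $r=R$, absorb the cut-off errors with the local Morawetz estimate, and exploit the Poincar\'e inequality on the sphere for $\ell\geq 1$. The fluxes and the good bulk terms $(p+4)r^{p-1}(\partial_r\Phi)^2$ and $(2-p)r^{p-1}|\snabla\Phi|^2$ come out as you describe (though note the coefficient $p+4$ arises from $\tfrac12(p-4)D$ in $K^V$ plus the $4D r^{-1}\partial_r\Phi$ transport term of the commuted equation, not a $2Dr^{-1}$ term).

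The genuine gap is your treatment of the mass-induced coupling term $r^{p+1}[D''+D'r^{-1}]\phi\,\partial_r\Phi\sim -2Mr^{p-2}\phi\,\partial_r\Phi$, which you propose to bound by Cauchy--Schwarz, a weighted Hardy inequality, and the Dafermos--Rodnianski hierarchy for $\phi$. That route produces $\epsilon\, r^{p-1}(\partial_r\Phi)^2+C_\epsilon r^{p-3}\phi^2$, and controlling $\int r^{p-3}\phi^2$ by Hardy costs a factor $(2-p)^{-2}$ and requires $\lim_{r\to\infty}r^{p-2}\phi^2=0$; both fail at $p=2$ (the radiation field has a finite, generically nonzero limit), and $p=2$ is precisely the endpoint the theorem needs. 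The paper instead integrates this term by parts in $r$, which yields a good-signed bulk term $+2M\int r^{p-4}\Phi^2$ together with a \emph{signless} boundary term $-2M\int_{\mathcal{I}^+}r^{p-2}\phi\,\Phi\,d\omega\,du$ on null infinity. Handling that term is the heart of the proof: one solves $\slashed{\Delta}_{\s^2}f=\Phi$ on $\s^2$ (possible exactly because $\int_{\s^2}\Phi\,d\omega=0$, i.e.\ $\ell\geq 1$), uses the limiting identity $r^2\slashed{\Delta}\phi|_{\mathcal{I}^+}=2\underline{L}\Phi|_{\mathcal{I}^+}$ to rewrite the integrand as a total $\underline{L}$-derivative of $r^{p-2}\cdot r^2|\snabla f|^2$ plus signed terms, and then bounds the resulting $u=u_1$ boundary contribution by the initial flux $\int_{\mathcal{N}_{u_1}}r^p(\partial_r\Phi)^2$ via Poincar\'e and Hardy. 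Relatedly, you never address the sign of the $\mathcal{I}^+$ boundary term $-\int_{\mathcal{I}^+}r^{p-2}\Phi^2\,d\omega\,du$ produced by integrating the $-2r^{p-2}\Phi\,\partial_r\Phi$ bulk term by parts; it has the wrong sign and must be absorbed by the good angular flux $\int_{\mathcal{I}^+}r^p|\snabla\Phi|^2$ through the Poincar\'e inequality (with exact cancellation for $\ell=1$). This is where the restriction to $\ell\geq 1$ is actually used, rather than in absorbing bulk cross terms as you suggest. Without these two ingredients the argument does not close at or near $p=2$.
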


By commuting once more with $r^2\partial_r$ and considering the variable
\begin{equation*}
\Phi_{(2)} \doteq r^2\partial_r\Phi=r^2\partial_r(r^2\partial_r(r\psi)),
\end{equation*}
we obtain and additional hierarchy of $r^p$-weighted estimates.

\begin{theorem}[$r^p$-weighted estimates for $(r^2\partial_r)^2(r\psi_1)$]
\label{thm:hierdrpsi2}
Let $\psi$ be a solution to \eqref{waveequation} emanating from initial data given as in Theorem \ref{thm:extuniq} in the region $\mathcal{A}=\{r\geq R\}$. We assume that $\psi$ is supported on angular frequencies $\ell\geq 1$.

Take $p\in(-6,1)$ and assume that
\begin{equation*}
\sum_{|k|\leq 4}\int_{\Sigma}J^T[\Omega^k\psi]\cdot n_{\Sigma}\,d\mu_{\Sigma}<\infty
\end{equation*}
and
\begin{align*}
\lim_{r \to \infty }\sum_{|k|\leq 4}\int_{\s^2}(\Omega^k\phi)^2\,d\omega\big|_{u'=0}<&\:\infty,\\
\lim_{r \to \infty }\sum_{|k|\leq 2}\int_{\s^2}(\Omega^k\Phi)^2\,d\omega\big|_{u'=0}<&\:\infty,\\
\lim_{r \to \infty }r^{-1}\int_{\s^2}\Phi_{(2)}^2\,d\omega\big|_{u'=0}<&\:\infty.
\end{align*}

Then there exists an $R>0$ such that for any $0\leq u_1<u_2$
\begin{equation*}
\begin{split}
\int_{\mathcal{N}_{u_2}}& r^p(\partial_r\Phi_{(2)})^2\, d\omega dr+\int_{\mathcal{A}_{u_1}^{u_2}} r^{p-1}(\partial_r\Phi_{(2)})^2+(2-p)r^{p-1}|\snabla \Phi_{(2)}|^2\,d\omega drdu\\
 \leq&\: C(p+6)^{-1}(p-1)^{-2}\int_{\mathcal{N}_{u_1}} r^p(\partial_r\Phi_{(2)})^2\,d\omega dr+C(p+6)^{-1}\sum_{k\leq 2}\int_{\Sigma_{u_1}}J^T[T^{k}\psi]\cdot n_{\Sigma_{u_1}},
\end{split}
\end{equation*}
where $C\doteq C(D,R)>0$ is a constant.
\end{theorem}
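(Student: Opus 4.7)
The plan is to adapt the multiplier argument of Theorem \ref{thm:hierdrpsi1} by commuting the wave equation a second time with $r^2\partial_r$ and absorbing the resulting lower-order errors using the hierarchy for $\Phi=r^2\partial_r\phi$ (applied also to $T^k\psi$, $k\leq 1$) together with Morawetz-type estimates. First, by applying $r^2\partial_r$ twice to the wave equation for $\phi=r\psi$ in Bondi coordinates, namely $2\partial_u\partial_r\phi=\partial_r(D\partial_r\phi)-(D'/r)\phi+r^{-2}\slashed{\Delta}\phi$, a direct algebraic computation at leading order in $r^{-1}$ produces
\begin{equation*}
2\partial_u\partial_r\Phi_{(2)}=\partial_r^2\Phi_{(2)}-4r^{-1}\partial_r\Phi_{(2)}+6r^{-2}\Phi_{(2)}+r^{-2}\slashed{\Delta}\Phi_{(2)}+\mathcal{E},
\end{equation*}
where $\mathcal{E}$ collects subleading contributions involving $\Phi$, $\phi$ and their angular derivatives that decay faster in $r$ and are controllable by lower-level estimates.

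Next I would multiply this equation by a suitable $r^p$-weighted multiplier (essentially $r^p\partial_r\Phi_{(2)}$, possibly corrected by a zeroth-order term of the form $c\,r^{p-1}\Phi_{(2)}$ to ensure the correct positivity of bulk coefficients) and integrate over $\mathcal{A}_{u_1}^{u_2}$ against $du\,dr\,d\omega$. The $\partial_u$-term produces the flux difference on $\mathcal{N}_{u_2}$ and $\mathcal{N}_{u_1}$. Integration by parts in $r$ of the $\partial_r^2\Phi_{(2)}$ and $-4r^{-1}\partial_r\Phi_{(2)}$ contributions, combined with the modification, yields the positive bulk density $(p+6)\,r^{p-1}(\partial_r\Phi_{(2)})^2$; integration by parts of the angular term $r^{-2}\slashed{\Delta}\Phi_{(2)}\cdot r^p\partial_r\Phi_{(2)}$ in both $r$ and on $\s^2$ produces $(2-p)\,r^{p-1}|\snabla\Phi_{(2)}|^2$, which is positive for $p<2$.

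The zeroth-order term $6r^{-2}\Phi_{(2)}\cdot r^p\partial_r\Phi_{(2)}$ generates, after integration by parts, a bulk contribution of the form $r^{p-3}\Phi_{(2)}^2$, which is absorbed into the angular bulk via the sharp Poincar\'e inequality on $\ell\geq 1$ modes (valid by the assumption on $\psi$). The boundary contribution at $\mathcal{I}^+$, essentially of the form $r^{p-2}\Phi_{(2)}^2|_{r=\infty}$, vanishes for $p<1$ using the finiteness hypothesis $\lim_{r\to\infty}r^{-1}\int_{\s^2}\Phi_{(2)}^2\,d\omega<\infty$ propagated in $u$ by the wave evolution. A weighted Hardy inequality on $\mathcal{N}_{u_1}$ with constant $(1-p)^{-1}$ is used to control initial-data terms of the form $\int_{\mathcal{N}_{u_1}}r^{p-2}\Phi_{(2)}^2$ by $(1-p)^{-2}\int_{\mathcal{N}_{u_1}}r^p(\partial_r\Phi_{(2)})^2$, producing the $(p-1)^{-2}$ factor on the right-hand side. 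The boundary terms at $r=R$ and the error terms $\mathcal{E}$ are handled by Theorem \ref{thm:hierdrpsi1} applied to $\psi$ and $T\psi$, integrated local energy decay for $T^k\psi$ with $k\leq 2$, and elliptic estimates on $\s^2$ using the assumed control of $\Omega^k$-derivatives, producing the $\sum_{k\leq 2}\int_{\Sigma_{u_1}}J^T[T^k\psi]\cdot n_{\Sigma_{u_1}}$ term on the right.

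The chief obstacle is the careful choice of modified multiplier and the detailed bookkeeping required to ensure every error term can be uniformly absorbed throughout the range $p\in(-6,1)$, together with the propagation in $u$ of the decay hypotheses at infinity. The endpoints of the range are sharp for the method: as $p\downarrow -6$ the positive bulk coefficient $(p+6)$ degenerates, while as $p\uparrow 1$ the Hardy constant $(1-p)^{-1}$ blows up, which explains the $(p+6)^{-1}(p-1)^{-2}$ prefactor in the final inequality.
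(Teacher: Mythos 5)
There is a genuine gap at the step where you absorb the zeroth-order error term. After integrating by parts, the leading contribution of the term $-6r^{p-2}\Phi_{(2)}\partial_r\Phi_{(2)}$ (coming from the coefficient $-6Dr^{-2}+\dots$ of $\Phi_{(2)}$ in the commuted equation) produces a bad bulk of size $3(2-p)\int r^{p-3}\Phi_{(2)}^2$, while the available good angular bulk is $\tfrac12(2-p)\int r^{p-1}|\snabla\Phi_{(2)}|^2\geq \tfrac12(2-p)L(L+1)\int r^{p-3}\Phi_{(2)}^2$ on the $\ell=L$ sector. Absorption therefore requires $\tfrac12 L(L+1)\geq 3$, i.e.\ $L\geq 2$; on the $\ell=1$ sector one only has $\tfrac12\cdot 2=1<3$, so the Poincar\'e inequality on ``$\ell\geq 1$ modes'' does \emph{not} close the argument. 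This is exactly why the paper first proves the estimate for $\psi_{\ell\geq 2}$ (Proposition \ref{prop:l2commr2v0}, where the range is the larger $p\in(-8,2)$ and the bulk coefficient is $(p+8)$), and then treats $\ell=1$ separately.

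For the $\ell=1$ part the paper does not absorb the bad term at all: it bounds it by Cauchy--Schwarz and two Hardy inequalities in terms of $\int_{\mathcal{N}_{u_1}}r^{p+2}(\partial_r\widetilde{\Phi})^2$, where $\widetilde{\Phi}=r(r-M)\partial_r(r\psi_{\ell=1})$ is the modified first-order quantity whose extended hierarchy (Proposition \ref{prop:rpphil=1}) holds for $p\in(-4,4)$ thanks to a cancellation of the leading terms in $J_2$ that is specific to the weight $r(r-M)$ and to $\ell=1$. This detour is the true origin of the theorem's range and constants: the $\widetilde{\Phi}$-hierarchy is applied at exponent $p+2$, forcing $p>-6$ and giving the $(p+6)^{-1}$ factor, and the final Hardy step converting $r^{p+2}(\partial_r\widetilde{\Phi})^2$ into $r^p(\partial_r\Phi_{(2)})^2$ forces $p<1$ and gives the $(p-1)^{-2}$ factor. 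Your explanation of the endpoints (degeneration of a $(p+6)$ bulk coefficient as $p\downarrow -6$, and a Hardy constant on initial data as $p\uparrow 1$) does not match this mechanism, and your proposal contains no ingredient playing the role of $\widetilde{\Phi}$; the suggested zeroth-order modification of the multiplier is not carried out and, as stated, the proof does not close on the $\ell=1$ sector.
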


Theorem \ref{thm:hierdrpsi1} and Theorem \ref{thm:hierdrpsi2} are proved in Section \ref{sec:hierpsi1a}.
\subsubsection{Decay statements}
We apply the hierachies from Section \ref{sec:summaryhierarchies} to obtain almost-sharp energy decay estimates and pointwise decay estimates.

Let $\epsilon>0$. We introduce the following initial energy norms for the spherical mean $\psi_0$ on the hypersurface $\Sigma_0$.
\begin{align*}
E_{0 , I_0 \neq 0}^{\epsilon}[\psi] \doteq &\:  \sum_{l=0}^3 \int_{\Sigma_0} J^N [T^l \psi ] \cdot n_0\, d\mu_{\Sigma_0} + \int_{\mathcal{N}_0} r^{3-\epsilon} (\partial_r \phi )^2 \,   d\omega  dr + \int_{\mathcal{N}_0} r^2 (\partial_r (T \phi ) )^2 \, d\omega dr \\ \nonumber
&+ \int_{\mathcal{N}_0} r (\partial_r (T^2 \phi ) )^2 \,  d\omega dr,\\
E_{0, I_0=0}^{\epsilon}[\psi] \doteq &\: \sum_{l=0}^5 \int_{\Sigma_0} J^N [T^l\psi ] \cdot n_0\, d\mu_{\Sigma_0}+ \int_{\mathcal{N}_0} r^{5-\epsilon} (\partial_r \phi )^2 \,  d\omega dr  +  \int_{\mathcal{N}_0} r^{4-\epsilon} (\partial_r (T \phi ) )^2 \,  d\omega dr\\ \nonumber
&+ \int_{\mathcal{N}_0} r^{3-\epsilon} (\partial_r (T^2 \phi ) )^2 \,  d\omega dr + \int_{\mathcal{N}_0} r^2 (\partial_r (T^3 \phi ) )^2 \,  d\omega  dr + \int_{\mathcal{N}_0} r(\partial_r (T^4 \phi ) )^2 \,  d\omega  dr .
\end{align*}

For each $\epsilon>0$ we also introduce the following weighted initial energy norms for $\psi_1=\psi-\psi_0$:
\begin{equation*}
\begin{split}
E^{\epsilon}_{1}[\psi]\doteq &\: \sum_{l\leq 5}\int_{\Sigma_{0}}J^N[T^l\psi]\cdot n_{0}\,d\mu_{\Sigma_0}+\sum_{l\leq 3}\int_{\mathcal{N}_0}r^{2}(\partial_rT^l\phi)^2+r^{}(\partial_rT^{l+1}\phi)^2\,d\omega dr\\
&+ \int_{\mathcal{N}_0} r^{2-\epsilon}(\partial_r\Phi)^2 \, d\omega dr+\int_{\mathcal{N}_{0}}r^{2-\epsilon}(\partial_rT{\Phi})^2+r^{1-\epsilon}(\partial_rT^2{\Phi})^2\,d\omega dr+\int_{\mathcal{N}_{0}}r^{1-\epsilon}(\partial_r{\Phi}_{(2)})^2\,d\omega dr.
\end{split}
\end{equation*}

\begin{theorem}[Energy decay for $\psi$]\label{thm:endecaypsi}
Let $\psi$ be a solution to \eqref{waveequation} emanating from initial data given as in Theorem \ref{thm:extuniq} in the region $\mathcal{A}=\{r\geq R\}$. Assume moreover that
\begin{align*}
\lim_{r \to \infty }\sum_{|l|\leq 4}\int_{\s^2}(\Omega^l\phi)^2\,d\omega\big|_{u'=0}<&\:\infty,\\
\lim_{r \to \infty }\sum_{|l|\leq 2}\int_{\s^2}(\Omega^l\Phi)^2\,d\omega\big|_{u'=0}<&\:\infty,\\
\lim_{r \to \infty }\int_{\s^2}r^{-1}\left(\Phi_{(2)}\right)^2\,d\omega\big|_{u'=0}<&\:\infty.
\end{align*}
\begin{itemize}

\item[\emph{(i)}] Assume that initially we have that $\boldsymbol{I_0 [\psi ] \neq 0}$ and $E_{0 , I_0 \neq 0}^{\epsilon}[\psi_0]+E^{\epsilon}_{1}[\psi_1]<\infty$.

Then, for all $\epsilon>0$, there exists a constant $C \doteq C(D,R,\epsilon)$ such that for all $u\geq 0$
\begin{equation}\label{est:endec1}
\int_{\Sigma_u} J^N [\psi ] \cdot n_{u} d\mu_{\Sigma_{u}} \leq C \frac{E_{0, I_0 \neq 0}^{\epsilon}[\psi_0]+E^{\epsilon}_{1}[\psi_1]}{(1+u )^{3-\epsilon}} .
\end{equation}

\item[\emph{(ii)}] Assume that initially we have that $\boldsymbol{I_0 [\psi ] = 0}$ and $E_{0, I_0=0}^{\epsilon}[\psi_0]+E^{\epsilon}_{1}[\psi_1]<\infty$.

Then, for all $\epsilon>0$, there exists a constant $C \doteq C(D,R,\epsilon)$ such that for all $u\geq 0$
\begin{equation}\label{est:endec2}
\int_{\Sigma_u} J^N [\psi ] \cdot n_{u} d\mu_{\Sigma_{u}} \leq C \frac{E_{0, I_0=0}^{\epsilon}[\psi_0]+E^{\epsilon}_{1}[\psi_1]}{(1+u )^{5-\epsilon}}. 
\end{equation}
\end{itemize}
\end{theorem}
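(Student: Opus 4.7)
The plan is to decompose $\psi = \psi_0 + \psi_1$ into the spherical mean and its complement, apply the corresponding $r^p$-hierarchies (Theorems \ref{thm:hierpsi01}--\ref{thm:hierdrpsi2}) separately to each piece, and convert each hierarchy into $u$-decay of a flux via a Dafermos--Rodnianski dyadic pigeonhole argument: if
\begin{equation*}
\int_{u_1}^{u_2} E_p(u)\, du \leq C\, E_{p+1}(u_1) + (\text{lower-order data}),
\end{equation*}
and $u_1 \mapsto E_p(u_1)$ is almost-monotone along the same energy identity, then $E_p(u) \lesssim E_{p+1}(0)(1+u)^{-1}$; stepping down through the hierarchy $n$ times produces $(1+u)^{-n}$ decay. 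The key auxiliary inputs at every level are (i) the ILED estimate on $\{r\leq R\}$, which handles the cut-off near-region of $\Sigma_u$, and (ii) the conservation of the $T$-flux, which lets one globalize decay from the null piece $\mathcal{N}_u$ to all of $\Sigma_u$.

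For $\psi_0$, I would invoke Theorem \ref{thm:hierpsi01} in case (i) and Theorem \ref{thm:hierpsi02} in case (ii), together with the analogous hierarchies for $T^k\psi_0$ (immediate since $T$ commutes with $\Box_g$). A repeated dyadic pigeonhole over $p\in(0,3)$ in case (i) yields $\int_{\Sigma_u} J^T[\psi_0]\cdot n_u \lesssim (1+u)^{-(3-\epsilon)}$ provided one feeds in the decay of $J^T[T^l\psi_0]$-fluxes at the appropriate rates, which is exactly what the norm $E^\epsilon_{0,I_0\neq 0}[\psi_0]$ is calibrated to supply through $l\leq 3$ commutations. In case (ii) the range extends to $p\in(0,5)$, but Theorem \ref{thm:hierpsi02} already uses improved decay of $T\psi_0,\ldots,T^4\psi_0$ as input for $p\in[4,5)$; the argument must therefore be run inductively, proving the $(1+u)^{-(3-\epsilon)}$ and $(1+u)^{-(5-\epsilon)}$ rates in the correct order while interlacing with the $T^k$-commuted hierarchies.

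For $\psi_1$, I would start with Theorem \ref{thm:hierdrpsi2} for $\Phi_{(2)}$ on $p\in(-6,1)$ and pigeonhole to obtain decay of $\int_{\mathcal{N}_u} r^p(\partial_r\Phi_{(2)})^2$ at every interior $p$. Applying the Hardy inequality \eqref{hardy} (to $f=r\partial_r\Phi$, for which $rf = \Phi_{(2)}$ modulo a near-$R$ cut-off absorbed by ILED) transfers this to decay of a suitably weighted flux of $\partial_r\Phi$. Feeding this as the ``lower-order data'' into Theorem \ref{thm:hierdrpsi1} and pigeonholing the resulting hierarchy for $\Phi$ (with $p\in(-4,2]$), then applying one more Hardy step, produces decay of the $T$-energy of $\psi_1$. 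Commuting with $T$ up to the order controlled by $E^\epsilon_1[\psi_1]$ provides the extra iterates needed to reach $(1+u)^{-(5-\epsilon)}$.

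The principal obstacle is bookkeeping the $\epsilon$-loss across the chain. At the endpoints of each admissible range ($p=3$ for $\psi_0$ with $I_0\neq 0$, $p=5$ for $\psi_0$ with $I_0=0$, and $p=1$ for $\Phi_{(2)}$), the constants in the hierarchies (e.g.\ the $(4-p)^{-2}$ of Theorem \ref{thm:hierpsi02} and the $(p+6)^{-1}(p-1)^{-2}$ of Theorem \ref{thm:hierdrpsi2}) blow up, so one must choose $p=P-\epsilon$ and absorb the resulting polynomial in $\epsilon^{-1}$ into $C(D,R,\epsilon)$. A second delicate point is that each application of Hardy for $\psi_1$ generates boundary contributions at $r=R$ which must be iteratively absorbed into the ILED estimate without breaking the almost-monotonicity used in the pigeonhole, and this is where the precise weights in $E^\epsilon_1[\psi_1]$ (matching the starting $p$ of each hierarchy) are critical.
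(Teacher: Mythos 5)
Your proposal follows essentially the same route as the paper's proof (Propositions \ref{prop:endec1} and \ref{decl2}): split $\psi=\psi_0+\psi_1$, run the extended $r^p$-hierarchies for $\psi_0$ and the Hardy-linked chain $\phi\to\Phi\to\Phi_{(2)}$ for $\psi_1$, convert to $u$-decay by iterated dyadic pigeonholing with Lemma \ref{lm:interpolation} absorbing the $\epsilon$-mismatches, and close with the Morawetz estimate and energy boundedness; the bootstrap you flag for $p\in[4,5)$ in the $I_0=0$ case is exactly how the paper handles \eqref{en01estv2}. The only cosmetic difference is that you describe the $\psi_1$ chain top-down from $\Phi_{(2)}$ while the paper builds it bottom-up from the Dafermos--Rodnianski hierarchy, but the estimates consumed are identical.
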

Theorem \ref{thm:endecaypsi} is proved in Section \ref{sec:energydecay}.

The above energy decay statements can be used to obtain pointwise decay statements for the radiation field $\phi$.
\begin{theorem}[$L^{\infty}$-decay of $r\psi$]
\label{thm:pdecayradfield}
Let $\psi$ be a solution to \eqref{waveequation} emanating from initial data given as in Theorem \ref{thm:extuniq} in the region $\mathcal{A}=\{r\geq R\}$. Assume moreover that
\begin{align*}
\lim_{r \to \infty }\sum_{|l|\leq 6}\int_{\s^2}(\Omega^l\phi)^2\,d\omega\big|_{u'=0}<&\:\infty,\\
\lim_{r \to \infty }\sum_{|l|\leq 4}\int_{\s^2}(\Omega^l\Phi)^2\,d\omega\big|_{u'=0}<&\:\infty,\\
\lim_{r \to \infty }\sum_{|l|\leq 2}\int_{\s^2}r^{-1}\left(\Omega^l\Phi_{(2)}\right)^2\,d\omega\big|_{u'=0}<&\:\infty.
\end{align*}

Assume further that either $E^{\epsilon}_{0,I_0\neq0}[\psi_0]<\infty$, or $E^{\epsilon}_{0,I_0=0}[\psi_0]<\infty$, and also that $\sum_{|l|\leq 2}E_{1}^{\epsilon}[\Omega^l\psi_1]<\infty$. 
Then, for all $\epsilon>0$ and for $R>0$ suitably large there exists a constant $C=C(D,R,\epsilon)>0$ such that for all $\widetilde{\tau}\geq 0$ 
\begin{align*}
|r\psi|(\widetilde{\tau},\rho,\theta,\varphi)\leq C\sqrt{E^{\epsilon}_{0,I_0\neq0}[\psi_0]+\sum_{|\alpha|\leq 2}E^{\epsilon}_{1}[\Omega^{\alpha}\psi_1]}(1+\widetilde{\tau})^{-1+\epsilon} \quad \textnormal{if}\quad I_0[\psi]\neq 0,\\
|r\psi|(\widetilde{\tau},\rho,\theta,\varphi)\leq C\sqrt{E^{\epsilon}_{0,I_0=0}[\psi_0]+\sum_{|\alpha|\leq 2}E^{\epsilon}_{1}[\Omega^{\alpha}\psi_1]}(1+\widetilde{\tau})^{-2+\epsilon} \quad \textnormal{if}\quad I_0[\psi]=0.
\end{align*}
\end{theorem}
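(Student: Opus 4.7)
The plan is to bound $|r\psi|$ pointwise by reducing to weighted energy bounds through Sobolev embedding on $\s^2$ and the fundamental theorem of calculus in the radial direction, and then to invoke the $r^p$-weighted hierarchies of Theorems \ref{thm:hierpsi01}, \ref{thm:hierpsi02}, \ref{thm:hierdrpsi1}, and \ref{thm:hierdrpsi2}, combined with the energy decay from Theorem \ref{thm:endecaypsi}, to extract decay in $\widetilde{\tau}$ for those weighted fluxes. We decompose $\psi = \psi_0 + \psi_1$ and treat the pieces separately. Since $\Omega^k\psi_0\equiv 0$ for $|k|\geq 1$ and each $\Omega^k\psi_1$ remains supported on $\ell\geq 1$, the two-dimensional Sobolev embedding on the sphere gives
\begin{equation*}
(r\psi)^2(\widetilde{\tau},\rho,\theta,\varphi)\leq C(r\psi_0)^2(\widetilde{\tau},\rho)+C\sum_{|k|\leq 2}\int_{\s^2}(r\Omega^k\psi_1)^2(\widetilde{\tau},\rho,\omega)\,d\omega,
\end{equation*}
which accounts for the two extra angular commutations imposed in the hypotheses relative to Theorem \ref{thm:endecaypsi}.

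Next, at fixed $(\widetilde{\tau},\theta,\varphi)$ we apply the fundamental theorem of calculus in the radial coordinate to $\phi=r\psi$, followed by Cauchy--Schwarz against the weight $r^{-1-\delta}$:
\begin{equation*}
\phi^2(\widetilde{\tau},\rho,\omega)\leq 2\phi^2(\widetilde{\tau},R,\omega)+\frac{2R^{-\delta}}{\delta}\int_R^{\infty}r^{1+\delta}(\partial_r\phi)^2(\widetilde{\tau},r,\omega)\,dr,
\end{equation*}
valid for any small $\delta>0$. The boundary term at $r=R$ sits in the bounded region, where standard spatial Sobolev embedding (after commutation with $T$ and $\Omega^k$ for $|k|\leq 2$) controls $|\psi|^2(\widetilde{\tau},R,\omega)$ by $C\sum_{|k|\leq 2,\,l\leq 1}\int_{\Sigma_{\widetilde{\tau}}}J^N[T^l\Omega^k\psi]\cdot n_{\widetilde{\tau}}\,d\mu_{\Sigma_{\widetilde{\tau}}}$; Theorem \ref{thm:endecaypsi} applied to $\psi$, $T\psi$, and $\Omega^k\psi$ then gives decay at rate $(1+\widetilde{\tau})^{-3+\epsilon}$ if $I_0\neq 0$ and $(1+\widetilde{\tau})^{-5+\epsilon}$ if $I_0=0$, both much faster than the target and hence negligible.

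The substantive task is therefore to show that $\int_{\s^2}\int_R^{\infty}r^{1+\delta}(\partial_r\Omega^k\phi)^2\,dr\,d\omega$ decays at rate $(1+\widetilde{\tau})^{-2+O(\delta)}$ when $I_0\neq 0$ and $(1+\widetilde{\tau})^{-4+O(\delta)}$ when $I_0=0$. For $\phi_0$, we apply Theorem \ref{thm:hierpsi01} or \ref{thm:hierpsi02} simultaneously to $\psi_0, T\psi_0, T^2\psi_0, \ldots$; a standard dyadic-in-$u$ pigeonhole argument converts the integrated spacetime terms into pointwise-in-$u$ flux decay, and successive iteration lowers the weight $p$ while gaining a factor of $(1+\widetilde{\tau})^{-1}$ at each step, reaching the required $r^{1+\delta}$-weighted level with the claimed rate. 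For $\phi_1$, the corresponding argument uses Theorems \ref{thm:hierdrpsi1} and \ref{thm:hierdrpsi2} applied to $\Omega^k\psi_1$ for $|k|\leq 2$; a short Hardy/FTC step converts decay of $\int_{\mathcal{N}_{\widetilde{\tau}}}r^p(\partial_r\Phi)^2$ and $\int_{\mathcal{N}_{\widetilde{\tau}}}r^p(\partial_r\Phi_{(2)})^2$ into decay of $\int r^{1+\delta}(\partial_r\phi_1)^2$, and the same pigeonhole argument supplies decay in $\widetilde{\tau}$.

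The principal obstacle is the combinatorial bookkeeping of the $r$-weights and of the $T$- and $\Omega^k$-commutations needed to run all hierarchies simultaneously at compatible orders. The $I_0=0$ case for $\phi_0$ is particularly delicate, since the $p\in[4,5)$ branch of Theorem \ref{thm:hierpsi02} carries an auxiliary error $E_{0;\mathrm{aux}}^\delta[\psi](1+u_1)^{-(1-2\delta)}$ which forces inclusion of $T^l\psi$ norms up to $l=4$ in $E_{0,I_0=0}^\epsilon[\psi]$ and which must be absorbed carefully through the pigeonhole steps. The $\epsilon$ in the final rate collects the small losses from the Cauchy--Schwarz weight $\delta$, from the strict-inequality endpoints of the $p$-ranges in the hierarchies, and from the dyadic pigeonhole.
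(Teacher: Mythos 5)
Your proposal is correct and follows essentially the same route as the paper's proof of Proposition \ref{prop:pointdecaradfield}: spherical decomposition plus the Sobolev inequality \eqref{eq:sobolevs2} with two $\Omega$-commutations, a radial fundamental-theorem-of-calculus/Cauchy--Schwarz reduction to $r^p$-weighted fluxes along $\mathcal{N}_{\tau}$, and decay of those fluxes via the hierarchies and a dyadic pigeonhole (the content of Lemmas \ref{lm:auxdecaypsi0} and \ref{lm:auxdecaypsi1}). The only real divergence is the Cauchy--Schwarz bookkeeping: the paper integrates from $r$ to infinity and bounds $\int_{\s^2}\phi^2$ by the geometric mean $\bigl(\int(\partial_r\phi)^2\bigr)^{1/2}\bigl(\int r^2(\partial_r\phi)^2\bigr)^{1/2}$, pairing the $p=0$ and $p=2$ fluxes, whereas you integrate outward from $r=R$ against the weight $r^{-1-\delta}$ and use the single flux $\int r^{1+\delta}(\partial_r\phi)^2$; both land on the same rate up to $\epsilon$, but your version produces a boundary term $\phi^2(\widetilde\tau,R,\omega)$ and leaves the region $\rho<R$ unaddressed. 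For these two loose ends, note that ``standard spatial Sobolev embedding'' in the bounded region implicitly requires second-order spatial derivatives (hence an elliptic estimate, which the paper only invokes, under the extra hypothesis $D'(r_+)>0$, for the decay of $\psi$ itself in Theorem \ref{thm:decaypsi}); the cleaner fix, which the paper uses for $r<R+1$, is a one-dimensional FTC along $\mathcal{S}_{\widetilde{\tau}}$ giving $r\int_{\s^2}\psi^2\leq \int_{\mathcal{S}_{\widetilde{\tau}}}J^T[\psi]\cdot n_{\widetilde{\tau}}\,d\mu$, which needs only the first-order energy and covers both your boundary term and all $\rho\leq R$ at once.
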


Theorem \ref{thm:pdecayradfield} is proved in Section \ref{sec:pointdecayradfield}.

In order to prove almost-sharp pointwise decay for $\psi$ itself, we will need to assume that $D'(r_+)=0$ on top of the assumptions in Section \ref{sec:prelim}, and we will need to introduce additional higher-order energies.

Let $\epsilon>0$ and $k\in \N_0$. We introduce the following higher-order initial energy norms for the spherical mean $\psi_0$ on the hypersurface $\Sigma_0$.
\begin{align}
E_{0, I_0\neq 0; k}^{\epsilon}[\psi] \doteq &\: \sum_{l\leq 3+3k}\int_{\Sigma_{0}}J^N[T^l\psi]\cdot n_{0}\,d\mu_{\Sigma_0}\\ \nonumber
&+\sum_{l\leq 2k}\int_{\mathcal{N}_{0}} r^{3-\epsilon}(\partial_rT^l\phi)^2\,  d\omega dr+r^{2}(\partial_rT^{l+1}\phi)^2+r(\partial_rT^{2+l}\phi)^2\,  d\omega dr\\ \nonumber
&+\sum_{\substack{m\leq k\\ l\leq 2k-2m+\min\{k,1\}}} \int_{\mathcal{N}_{0}}r^{2+2m-\epsilon}(\partial_r^{1+m}T^{l}\phi)^2\,   d\omega dr\\\nonumber
&+\int_{\mathcal{N}_{0}}r^{3+2k-\epsilon}(\partial_r^{1+k}\phi)^2\,  d\omega  dr,\\
E_{0, I_0= 0; k}^{\epsilon}[\psi] \doteq &\: \sum_{l\leq 5+3k}\int_{\Sigma_{0}}J^N[T^l\psi]\cdot n_{0}\,d\mu_{\Sigma_0}\\ \nonumber
&+\sum_{l\leq 2k}\int_{\mathcal{N}_{0}} r^{5-\epsilon}(\partial_rT^l\phi)^2+r^{4-\epsilon}(\partial_rT^{1+l}\phi)^2+r^{3-\epsilon}(\partial_rT^{2+l}\phi)^2\,  d\omega dr\\\nonumber
&+r^{2}(\partial_rT^{3+l}\phi)^2+r(\partial_rT^{4+l}\phi)^2\,   d\omega dr\\\nonumber
&+\sum_{\substack{m\leq k\\ l\leq 2k-2m+\min\{k,1\}}} \int_{\mathcal{N}_{0}}r^{4+2m-\epsilon}(\partial_r^{1+m}T^{l}\phi)^2\,   d\omega dr\\\nonumber
&+\int_{\mathcal{N}_{0}}r^{5+2k-\epsilon}(\partial_r^{1+k}\phi)^2\,   d\omega dr.\nonumber
\end{align}
For each $\epsilon>0$ and $k\in \N_0$ we also introduce the following higher-order weighted initial energy norms for $\psi_1$:
\begin{equation}
\begin{split}
E_{1;k}^{\epsilon}[\psi]\doteq &\:\sum_{\substack{|\alpha|\leq k\\ l+|\alpha|\leq 5+3k}}\int_{\Sigma_{0}}J^N[T^l\Omega^{\alpha}\psi]\cdot n_{0}\;d\mu_{\Sigma_0}\\
&+\sum_{ l\leq 3+2k}\int_{\mathcal{N}_0}r^{2}(\partial_rT^l\phi)^2+r^{}(\partial_rT^{1+l}\phi)^2\,d\omega dr\\
&+\sum_{ l\leq 2k+1}\int_{\mathcal{N}_{0}} r^{2-\epsilon}(\partial_rT^{l}{\Phi})^2+r^{1-\epsilon}(\partial_rT^{l+1}{\Phi})^2\,d\omega dr\\
&+\sum_{\substack{|\alpha|\leq k\\l+|\alpha|\leq 2k}}\int_{\mathcal{N}_{0}} r^{1-\epsilon}(\partial_rT^{l}\Omega^{\alpha}{\Phi}_{(2)})^2\,d\omega dr\\
&+\sum_{\substack{|\alpha|\leq \max\{0,k-1\}\\ m\leq \max\{k-1,0\}\\ l+|\alpha|\leq k-2m+\min\{k,1\}}}\int_{\mathcal{N}_{0}}r^{1+2m-\epsilon}(\partial_r^{1+m}\Omega^{\alpha}T^{l}{\Phi}_{(2)})^2\,d\omega dr\\
&+\sum_{\substack{|\alpha|\leq \max\{0,k-1\}, m\leq k\\ l+|\alpha|\leq 2k-2m+1}}\int_{\mathcal{N}_{0}} r^{2m-\epsilon}(\partial_r^{1+m}\Omega^{\alpha}T^{l}{\Phi}_{(2)})^2\,d\omega dr\\
&+\int_{\mathcal{N}_{0}} r^{1+2k-\epsilon}(\partial_r^{1+k}{\Phi}_{(2)})^2\,d\omega dr.
\end{split}
\end{equation}

\begin{theorem}[$L^{\infty}$-decay of $\psi$]
\label{thm:decaypsi}
Let $\psi$ be a solution to \eqref{waveequation} emanating from initial data given as in Theorem \ref{thm:extuniq} in the region $\mathcal{A}=\{r\geq R\}$.

Assume moreover that 
\begin{align*}
\lim_{r \to \infty }\sum_{|l|\leq 8}\int_{\s^2}(\Omega^l\phi)^2\,d\omega\big|_{u'=0}<&\:\infty,\\
\lim_{r \to \infty }\sum_{|l|\leq 6}\int_{\s^2}(\Omega^l\Phi)^2\,d\omega\big|_{u'=0}<&\:\infty,\\
\lim_{r \to \infty }\sum_{|l|\leq 4}\int_{\s^2}r^{-1}\left(\Omega^l\Phi_{(2)}\right)^2\,d\omega\big|_{u'=0}<&\:\infty,
\end{align*}
and
\begin{equation*}
\lim_{r \to \infty }\sum_{|l|\leq 2}\int_{\s^2}r^{3}\left(\partial_r\Omega^l\Phi_{(2)}\right)^2\,d\omega\big|_{u'=0}<\infty.
\end{equation*}

Assume further that $E^{\epsilon}_{0,I_0\neq0;1}[\psi_0]<\infty$, or $E^{\epsilon}_{0,I_0=0;1}[\psi_0]<\infty$, and also that $\sum_{|l|\leq 2}E_{1;1}^{\epsilon}[\Omega^{l}\psi_1]<\infty$.

Then, for all $\epsilon>0$ there exists a constant $C=C(D,R,\epsilon)>0$ such that for all $\widetilde{\tau}\geq 0$
\begin{align*}
|\psi|(\widetilde{\tau},\rho,\theta,\varphi)\leq C (1+\widetilde{\tau})^{-2+\epsilon}\left[\sqrt{E^{\epsilon}_{0,I_0\neq0;1}[\psi_0]}+\sum_{ |\alpha|\leq 2}\sqrt{E^{\epsilon}_{1;1}[\Omega^{\alpha}\psi_1]}\right] \quad \textnormal{if}\quad I_0[\psi]\neq 0,\\
|\psi|(\widetilde{\tau},\rho,\theta,\varphi)\leq C (1+\widetilde{\tau})^{-3+\epsilon}\left[\sqrt{E^{\epsilon}_{0,I_0=0;1}[\psi_0]}+\sum_{|\alpha|\leq 2}\sqrt{E^{\epsilon}_{1;1}[\Omega^{\alpha}\psi_1]}\right] \quad \textnormal{if}\quad I_0[\psi]= 0.
\end{align*}
\end{theorem}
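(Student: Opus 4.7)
The plan is to bootstrap pointwise decay for $\psi$ from the pointwise radiation-field decay of Theorem \ref{thm:pdecayradfield}, the energy decay of Theorem \ref{thm:endecaypsi}, and a single additional $\partial_r$-commutation (reflected in the $k=1$ parameter of the initial-data norms). The central tool is the time-integration identity
\begin{equation*}
\psi(\widetilde{\tau},\rho,\theta,\varphi)=-\int_{\widetilde{\tau}}^{\infty}T\psi(\widetilde{\tau}',\rho,\theta,\varphi)\,d\widetilde{\tau}',
\end{equation*}
which is valid because Theorem \ref{thm:endecaypsi} combined with Sobolev embedding on $\Sigma_{\widetilde{\tau}}$ (after commuting with the angular momentum operators) forces $\psi(\widetilde{\tau},\rho,\theta,\varphi)\to 0$ as $\widetilde{\tau}\to\infty$ at any fixed $\rho$. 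The structural observation underlying the proof is the identity $I_0[T\psi]=T(I_0[\psi])=0$, valid for any solution $\psi$, so that $T\psi$ always falls into the $I_0=0$ regime.

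For case (i), $I_0[\psi]\neq 0$: I would apply case (ii) of this theorem to $T\psi$. Since $I_0[T\psi]=0$, case (ii) is applicable provided the corresponding data norms of $T\psi$ are finite, which is ensured by the hypothesised norm $E^{\epsilon}_{0,I_0\neq 0;1}[\psi_0]+\sum_{|\alpha|\leq 2}E^{\epsilon}_{1;1}[\Omega^{\alpha}\psi_1]$ (designed to majorize the case-(ii) data norm of $T\psi$ after the appropriate shift between the $k$-index and the order of $T$-commutation). This yields $|T\psi|(\widetilde{\tau},\rho,\theta,\varphi)\lesssim\widetilde{\tau}^{-3+\epsilon}$, and the time-integration identity then gives $|\psi|\lesssim\widetilde{\tau}^{-2+\epsilon}$.

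For case (ii), naive iteration is insufficient: applying case (ii) to $T\psi$ only yields $|T\psi|\lesssim\widetilde{\tau}^{-3+\epsilon}$ and hence $|\psi|\lesssim\widetilde{\tau}^{-2+\epsilon}$. Instead, I would exploit the $\partial_r$-commutation: commute the $r^p$-hierarchies of Theorems \ref{thm:hierpsi02}, \ref{thm:hierdrpsi1}, and \ref{thm:hierdrpsi2} once with $\partial_r$, producing new energy identities for $\partial_r\phi_0$, $\partial_r\Phi$, and $\partial_r\Phi_{(2)}$ respectively. The combined $\partial_r$- and $T$-commuted hierarchies, together with elliptic estimates on $\Sigma_{\widetilde{\tau}}$ and Sobolev embedding (after further angular commutations $\Omega^{\alpha}$ with $|\alpha|\leq 2$), should yield $|T\psi|(\widetilde{\tau})\lesssim\widetilde{\tau}^{-4+\epsilon}$, from which $|\psi|\lesssim\widetilde{\tau}^{-3+\epsilon}$ follows again by time-integration.

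The hard part will be the derivation of the $\partial_r$-commuted hierarchies in case (ii). The commutator $[\partial_r,\Box_g]$ produces terms proportional to $D'(r)$ and $D''(r)$, which would be problematic near the horizon $r=r_+$; the additional assumption $D'(r_+)=0$ is precisely what makes these terms vanish there, allowing the commuted hierarchy to close without degeneracy. Moreover, as already encountered in Theorems \ref{thm:hierdrpsi1} and \ref{thm:hierdrpsi2}, the non-spherical part $\psi_1$ will generate dangerous cross-terms at null infinity with the wrong sign, which must be absorbed via a combination of Hardy inequalities and the angular Poincar\'e inequality, with the spherical decomposition $\psi=\psi_0+\psi_1$ being essential for the Poincar\'e gain required in the coefficient matching.
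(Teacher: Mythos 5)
The reduction $|\psi|(\widetilde{\tau},\rho,\cdot)\leq\int_{\widetilde{\tau}}^{\infty}|T\psi|\,d\widetilde{\tau}'$ is sound, but it shifts the whole difficulty onto a pointwise bound for $T\psi$ that is one power better than the one sought for $\psi$, and that bound is not reachable from the data norms the theorem actually assumes. Concretely, your case (ii) needs $|T\psi|\lesssim\widetilde{\tau}^{-4+\epsilon}$, which is precisely Theorem \ref{thm:decayTkpsi} with $k=1$; the only mechanism in this framework that converts energy decay into pointwise decay of $T\psi$ without losing a factor $r^{-1/2}$ (and hence half a power of $\widetilde{\tau}$) is the interpolation $\int_{\s^2}(T\psi)^2\,d\omega\lesssim\big(\int_{\mathcal{S}_{\widetilde{\tau}}}J^T[T\psi]\cdot n_{\widetilde{\tau}}\big)^{1/2}\big(\int_{\mathcal{S}_{\widetilde{\tau}}}J^T[T^2\psi]\cdot n_{\widetilde{\tau}}\big)^{1/2}$, and the second factor requires the decay rate $\widetilde{\tau}^{-9+\epsilon}$ for the $T^2\psi$-energy, hence the $k=2$ norms $E^{\epsilon}_{0,I_0=0;2}$, whereas the theorem assumes only $k=1$. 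The $\partial_r$-commuted hierarchies you propose to derive are already in the paper (Propositions \ref{enr0}, \ref{ent0}, \ref{prop:rpPhi2commr}, \ref{eq:hierTkpsi1}): they are exactly how the \emph{energy} decay $\int J^N[T^k\psi]\lesssim\widetilde{\tau}^{-2k-5+\epsilon}$ is obtained, but by themselves they only yield $|T\psi|\lesssim\widetilde{\tau}^{-7/2+\epsilon}$ through the standard Sobolev route, not $\widetilde{\tau}^{-4+\epsilon}$. Case (i) has the same bookkeeping problem: applying case (ii) to $T\psi$ requires $E^{\epsilon}_{0,I_0=0;1}[(T\psi)_0]<\infty$, which involves e.g.\ $\sum_{l\leq 8}\int J^N[T^{l+1}\psi]$ and $\int_{\mathcal{N}_0}r^{5-\epsilon}(\partial_rT^{l+1}\phi)^2$, and these are not majorized by $E^{\epsilon}_{0,I_0\neq0;1}[\psi_0]$ (which carries only $T$-derivatives up to order $6$ and the weight $r^{3-\epsilon}$ on $\partial_rT^l\phi$).

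The paper avoids time integration entirely. It applies the fundamental theorem of calculus in the \emph{spatial} direction $\rho$ along the hyperboloidal slice $\mathcal{S}_{\widetilde{\tau}}$, giving $\int_{\s^2}\psi^2\,d\omega\leq C\big(\int D^2r^2(\partial_{\rho}\psi)^2\big)^{1/2}\big(\int D^2r^2(\partial_{\rho}^2\psi)^2\big)^{1/2}$ after Cauchy--Schwarz and Hardy, and then invokes the degenerate elliptic estimate \eqref{eq:ellipticpsi} to replace the second-derivative factor by $\int_{\mathcal{S}_{\widetilde{\tau}}}J^T[T\psi]\cdot n_{\widetilde{\tau}}$; the geometric mean of the rates $\widetilde{\tau}^{-5+\epsilon}$ and $\widetilde{\tau}^{-7+\epsilon}$ then gives $|\psi|\lesssim\widetilde{\tau}^{-3+\epsilon}$ using only one $T$-commutation. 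Incidentally, your reading of the horizon hypothesis is backwards: the assumption actually used is $D'(r_+)>0$ (positive surface gravity, i.e.\ sub-extremality; the ``$D'(r_+)=0$'' in the introduction is a typo), and its role is not to tame $[\partial_r,\square_g]$ near the horizon (the $\partial_r$-commuted hierarchies live entirely in $\{r\geq R\}$ behind a cut-off) but to ensure $D$ has a simple zero at $r_+$, so that the error term $F(D,r)(\partial_{\rho}\psi)^2$ in the elliptic estimate can be absorbed into $D^2r^2(\partial_{\rho}^2\psi)^2$ via the Hardy inequality \eqref{eq:Hardy3}.
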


Theorem \ref{thm:decaypsi} is proved in Section \ref{prop:pointdecpsi}.

\subsubsection{Higher-order pointwise decay statements}
The $r^p$-weighted hierarchies introduced in this paper can be extended (see Section \ref{sec:rphierTkpsi1} and \ref{sec:rphierTkpsi0}) to obtain also the almost-sharp energy and pointwise decay statements for solutions of the form $T^k\psi$, with $k\in \N$.

\begin{theorem}[{Energy decay for $T^k \psi$}]\label{thm:edecayTkpsi}
Let $\psi$ be a solution to \eqref{waveequation} emanating from initial data given as in Theorem \ref{thm:extuniq} in the region $\mathcal{A}=\{r\geq R\}$. Let $n\in \N$ and assume additionally that $D(r)=1-2Mr^{-1}+O_{n+2}(r^{-1-\beta})$ for some $\beta>0$.

Assume further that 
\begin{align*}
\lim_{r \to \infty }\sum_{|l|\leq 4+2n}\int_{\s^2}(\Omega^l\phi)^2\,d\omega\big|_{u'=0}<&\:\infty,\\
\lim_{r \to \infty }\sum_{|l|\leq 2+2n}\int_{\s^2}(\Omega^l\Phi)^2\,d\omega\big|_{u'=0}<&\:\infty,\\
\lim_{r \to \infty }\sum_{|l|\leq 2n}\int_{\s^2}r^{-1}\left(\Omega^l\Phi_{(2)}\right)^2\,d\omega\big|_{u'=0}<&\:\infty,
\end{align*}
and
\begin{equation*}
\lim_{r \to \infty }\sum_{|l|\leq 2n-2s}\int_{\s^2}r^{2s+1}\left(\partial_r^s\Omega^l\Phi_{(2)}\right)^2\,d\omega\big|_{u'=0}<\infty,
\end{equation*}
for each $1\leq s\leq n$.

Then the following statements hold:
\begin{itemize}
\item[\emph{(i)}] Assume that initially we have that $\boldsymbol{I_0 [\psi ] \neq 0}$ and $E_{0, I_0\neq 0; k}^{\epsilon}[\psi_0]+E_{1;k}^{\epsilon}[\psi_1]<\infty$,
Then, for all $\epsilon>0$ and all $k\leq n$, there exists a constant $C \doteq C(D,R,\epsilon,n)$ such that for all $u\geq 0$
\begin{equation*}
\int_{\Sigma_u} J^N [T^k \psi ] \cdot n_u d\mu_{\Sigma_u} \leq C \frac{E_{0, I_0\neq 0; k}^{\epsilon}[\psi_0]+E_{1;k}^{\epsilon}[\psi_1]}{(1+u )^{2k+3-\epsilon}} . 
\end{equation*}

\item[\emph{(ii)}]Assume that initially we have that $\boldsymbol{I_0 [\psi ] = 0}$ and $E_{0, I_0= 0; k}^{\epsilon}[\psi_0]+E_{1;k}^{\epsilon}[\psi_1]<\infty$.

Then, for all $\epsilon>0$, there exists a constant $C \doteq C(D,R,\epsilon)$ such that for all $u\geq 0$
\begin{equation*}
\int_{\Sigma_u} J^N [T^k \psi ] \cdot n_u d\mu_{\Sigma_u} \leq C \frac{E_{0, I_0= 0; k}^{\epsilon}[\psi_0]+E_{1;k}^{\epsilon}[\psi_1]}{(1+u )^{2k+5-\epsilon}} . 
\end{equation*}
\end{itemize}
\end{theorem}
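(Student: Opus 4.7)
\medskip
\noindent\textbf{Proof proposal.} The plan is to proceed by induction on $k\in \{0,1,\ldots,n\}$, with the base case $k=0$ being Theorem \ref{thm:endecaypsi}. For the inductive step I would exploit three structural features of the problem: (i) the Killing property of $T$, so that $T^k\psi$ is itself a solution of \eqref{waveequation} and the hierarchies of Theorems \ref{thm:hierpsi01}--\ref{thm:hierdrpsi2} are directly available; (ii) the commutation of $T$ with the spherical decomposition, so that $(T^k\psi)_0=T^k\psi_0$ and $(T^k\psi)_1=T^k\psi_1$ is still supported on $\ell\geq 1$; and, crucially, (iii) the conservation law for the Newman--Penrose constant from Section \ref{sec:1stnpconstant}, which yields $T I_0[\psi]=0$ along $\mathcal{I}^+$ and therefore $I_0[T^k\psi]=0$ for every $k\geq 1$, regardless of whether $I_0[\psi]$ vanishes.

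Given (iii), the spherical mean $T^k\psi_0$ always falls into the $I_0=0$ regime of Theorem \ref{thm:hierpsi02} for $k\geq 1$, so one can use the hierarchy up to $p<5$. The first task is then to extend these hierarchies by commuting the wave equation with $r^2\partial_r$, with $T$, and with $\Omega^{\alpha}$ as in Section \ref{sec:rphierTkpsi0} and Section \ref{sec:rphierTkpsi1}, producing a family of estimates of the schematic form
\begin{equation*}
\int_{\mathcal{N}_{u_2}}r^p(\partial_r^{1+m}T^l\phi)^2\,d\omega dr+p\int_{\mathcal{A}_{u_1}^{u_2}}r^{p-1}(\partial_r^{1+m}T^l\phi)^2\,d\omega drdu \lesssim \text{initial data}+\text{lower order},
\end{equation*}
with $p$ reaching up to $3+2k-\epsilon$ in the $I_0\neq 0$ case and $5+2k-\epsilon$ in the $I_0=0$ case, over the range of $(m,l)$ compatible with the weighted norms $E^{\epsilon}_{0,I_0\neq 0;k}[\psi_0]$, $E^{\epsilon}_{0,I_0=0;k}[\psi_0]$ and $E^{\epsilon}_{1;k}[\psi_1]$. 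For $\psi_1$ one re-runs the Hardy/Poincar\'e mechanism from the proofs of Theorems \ref{thm:hierdrpsi1}--\ref{thm:hierdrpsi2} on each level, noting that commuting with $T$ does not disturb the positivity structure, whereas for $\psi_0$ one uses the strengthened range $p\in(0,5)$ available under $I_0=0$.

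Once the full hierarchy is in place, the decay statement is extracted by the standard Dafermos--Rodnianski mean-value/pigeonhole argument applied at level $k$: dyadic iteration of $\int_{u_1}^{u_2} E_p(u)\,du \lesssim E_{p+1}(u_1)$ over $p\in(0,p_{\max})$ together with the integrated local energy decay estimate in $\{r\leq R\}$ controls $\int_{\Sigma_u}J^N[T^k\psi]\cdot n_u\,d\mu_{\Sigma_u}$ by $(1+u)^{-p_{\max}+\epsilon}$ times the relevant initial norm, which delivers exactly the rates $2k+3-\epsilon$ and $2k+5-\epsilon$. The inductive hypothesis supplies decay for $T^j\psi$, $j<k$, which is exactly what is needed to absorb the lower-order error terms generated by the commutators $[r^2\partial_r,\Box_g]$, $[T^l,\Box_g]$ (via the expansion $D(r)=1-2Mr^{-1}+O_{n+2}(r^{-1-\beta})$) and the cut-off near $r=R$.

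The main obstacle I anticipate is the bookkeeping of the norms $E^{\epsilon}_{0,I_0=0;k}$, $E^{\epsilon}_{0,I_0\neq 0;k}$ and $E^{\epsilon}_{1;k}$: each application of $T$ lowers the admissible $r$-weight of the corresponding term by two, each $r^2\partial_r$ raises it by two (by the asymptotic structure of the wave equation expressed in the variables $\Phi$ and $\Phi_{(2)}$), and each $\Omega^{\alpha}$ leaves it unchanged, so closing the induction requires verifying that the hierarchy at level $k$ never spills outside the norms as stated. A secondary, and genuinely delicate, point is the extension of Theorem \ref{thm:hierpsi02} beyond $p=4$ at the commuted level: the auxiliary term $E^{\delta}_{0;\mathrm{aux}}/(1+u_1)^{1-2\delta}$ on its right-hand side must be absorbed using the already-established decay for $T^j\psi$ with $j<k$, which is the inductive coupling responsible for the sharp rate $2k+5-\epsilon$ in case \emph{(ii)}.
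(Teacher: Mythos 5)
Your proposal follows essentially the same route as the paper's proof (Propositions \ref{prop:endec2} and \ref{prop:edecayTkpsi1}): induction on $k$ with base case Theorem \ref{thm:endecaypsi}, the extended $r^p$-hierarchies of Sections \ref{sec:rphierTkpsi0} and \ref{sec:rphierTkpsi1} for $\partial_r^{1+m}T^l\phi$ and $\partial_r^{k}T\Phi_{(2)}$ with top weights $3+2k-\epsilon$ resp.\ $5+2k-\epsilon$, the pigeonhole iteration combined with the Morawetz estimate, and the absorption of the $(1+u_1)$-decaying auxiliary term on the top rung $p\in[4+2k,5+2k)$. Two minor points of emphasis only: the fact $I_0[T^k\psi]=0$ is noted in Section \ref{sec:rphierTkpsi0} but is not what drives the case distinction --- the admissible range of $p$ at every commuted level is still governed by whether $I_0[\psi]$ itself vanishes (compare the ranges $(2k,3+2k)$ and $(2k,5+2k)$ in Proposition \ref{enr0}) --- and the auxiliary term $E^{\delta}_{0;\rm aux}[\psi](1+u_1)^{-1-\delta}$ is absorbed using the intermediate pointwise decay \eqref{dec320} of $\phi$ obtained from the sub-top hierarchy, rather than energy decay of $T^j\psi$ for $j<k$.
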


Theorem \ref{thm:edecayTkpsi} is proved in Section \ref{sec:energydecay}.

We can use the above higher-order energy decay statements to obtain also pointwise decay statements for $T$-derivatives of the radiation field $\phi$:
\begin{theorem}[$L^{\infty}$-decay of $rT^k\psi$]
\label{thm:decayTkradfield}
Let $\psi$ be a solution to \eqref{waveequation} emanating from initial data given as in Theorem \ref{thm:extuniq} in the region $\mathcal{A}=\{r\geq R\}$. Assume that $D(r)=1-2Mr^{-1}+O_{n+2}(r^{-1-\beta})$ for some $n\in \N$ and $\beta>0$. Assume moreover that
\begin{align*}
\lim_{r \to \infty }\sum_{|l|\leq 6+2n}\int_{\s^2}(\Omega^l\phi)^2\,d\omega\big|_{u'=0}<&\:\infty,\\
\lim_{r \to \infty }\sum_{|l|\leq 4+2n}\int_{\s^2}(\Omega^l\Phi)^2\,d\omega\big|_{u'=0}<&\:\infty,\\
\lim_{r \to \infty }\sum_{|l|\leq 2+2n}\int_{\s^2}r^{-1}\left(\Omega^l\Phi_{(2)}\right)^2\,d\omega\big|_{u'=0}<&\:\infty,
\end{align*}
and
\begin{equation*}
\lim_{r \to \infty }\sum_{|l|\leq 2+2n-2s}\int_{\s^2}r^{2s+1}\left(\partial_r^s\Omega^l\Phi_{(2)}\right)^2\,d\omega\big|_{u'=0}<\infty,
\end{equation*}
for each $1\leq s\leq n$. Theorem \ref{thm:decayTkradfield} is proved in Section \ref{sec:pointdecTkrpsi}.

Assume further that either $E^{\epsilon}_{0,I_0\neq0;n}[\psi_0]<\infty$, or $E^{\epsilon}_{0,I_0=0;n}[\psi_0]<\infty$, and also that $\sum_{|l|\leq 2}E^{\epsilon}_{1;n}[\Omega^l\psi_1]<\infty$.

Then, for all $\epsilon>0$ and for $R>0$ suitably large there exists a constant $C=C(D,R,\epsilon,n)>0$ such that for all $k\leq n$ and $\widetilde{\tau}\geq 0$ 
\begin{align*}
|rT^k\psi|(\widetilde{\tau},\rho,\theta,\varphi)\leq C\sqrt{E^{\epsilon}_{0,I_0\neq0;k}[\psi_0]+\sum_{|\alpha|\leq 2}E^{\epsilon}_{1;k}[\Omega^{\alpha}\psi_1]}\widetilde{\tau}^{-1-k+\epsilon} \quad \textnormal{if}\quad I_0[\psi]\neq 0,\\
|rT^k\psi|(\widetilde{\tau},\rho,\theta,\varphi)\leq C\sqrt{E^{\epsilon}_{0,I_0=0;k}[\psi_0]+\sum_{|\alpha|\leq 2}E^{\epsilon}_{1;k}[\Omega^{\alpha}\psi_1]}\widetilde{\tau}^{-2-k+\epsilon} \quad \textnormal{if}\quad I_0[\psi]= 0.
\end{align*}
\end{theorem}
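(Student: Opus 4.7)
Since $T$ is Killing, each $T^j\psi$ solves \eqref{waveequation}, and the initial-data norms $E^{\epsilon}_{0,\cdot;k}$, $E^{\epsilon}_{1;k}$ together with the asymptotic hypotheses at $u'=0$ are designed precisely so that Theorem \ref{thm:edecayTkpsi} applies to $T^j\psi$ for all $j\leq k$, and the radiation-field bounds at lower order are available for $T^j\psi$ with $j<k$. The plan is to mimic the proof of Theorem \ref{thm:pdecayradfield} at each order and close the estimate by induction on $k\leq n$, the base case $k=0$ being Theorem \ref{thm:pdecayradfield} itself.

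For the inductive step I would decompose $T^k\psi=T^k\psi_0+T^k\psi_1$, and for the non-spherical part apply the Sobolev embedding $H^2(\s^2)\hookrightarrow L^\infty(\s^2)$, reducing the task to $L^2(\s^2)$-bounds on $T^k\Omega^{\alpha}\phi$ for $|\alpha|\leq 2$. Along $\mathcal{N}_{\widetilde\tau}$ I would use the fundamental theorem of calculus in $r$,
\[
|T^k\Omega^{\alpha}\phi|^2(\widetilde\tau,\rho,\omega)\leq |T^k\Omega^{\alpha}\phi|^2(\widetilde\tau,\infty,\omega)+2\int_{\rho}^{\infty}|T^k\Omega^{\alpha}\phi|\,|\partial_r T^k\Omega^{\alpha}\phi|\,dr,
\]
and split the bulk term by Cauchy--Schwarz into two weighted $L^2(\mathcal{N}_{\widetilde\tau})$ factors. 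After a Hardy inequality of the form \eqref{hardy} (used to re-insert an $r$-weight in the factor of $T^k\Omega^{\alpha}\phi$ itself), both factors are controlled by fluxes appearing in the $r^p$-weighted hierarchies of Theorems \ref{thm:hierpsi01}--\ref{thm:hierdrpsi2} and their $T^k$-commuted versions from Sections \ref{sec:rphierTkpsi1}--\ref{sec:rphierTkpsi0}, and hence decay in $\widetilde\tau$ by Theorem \ref{thm:edecayTkpsi}. The contribution near $r=R$ is absorbed by the integrated local energy decay estimate \eqref{moraintro}. Multiplying the resulting decay rates of the two factors and taking square roots gives the target pointwise rate in each of the two cases $I_0[\psi]\neq 0$ and $I_0[\psi]=0$.

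The main obstacle is the boundary term $|T^k\Omega^{\alpha}\phi|^2(\widetilde\tau,\infty,\omega)$ at null infinity, which in the $I_0[\psi]\neq 0$ regime does not vanish at fixed $\widetilde\tau$. I would control it by combining (i) the fundamental-theorem-of-calculus identity $T^j\Omega^{\alpha}\phi|_{\mathcal{I}^+}(u,\omega)=-\int_{u}^{\infty}T^{j+1}\Omega^{\alpha}\phi|_{\mathcal{I}^+}(u',\omega)\,du'$ for $j\geq 1$ (valid once the decay of $T^{j+1}\Omega^{\alpha}\phi|_{\mathcal{I}^+}$ is known to be integrable in $u'$, so that integration trades one factor of $\widetilde\tau^{-1}$ for one fewer $T$-differentiation), and (ii) a direct trace-type bound for $\lim_{r\to\infty}T^k\Omega^{\alpha}\phi(\widetilde\tau,r,\omega)$ using the $r$-weighted energies on $\mathcal{N}_{\widetilde\tau}$ supplied by the hierarchies of Sections \ref{sec:rphierTkpsi1}--\ref{sec:rphierTkpsi0} together with the $r\to\infty$ asymptotic hypotheses in the theorem statement; in the $I_0[\psi]=0$ regime the conservation law associated with the Newman--Penrose constant of Section \ref{sec:1stnpconstant} gives the additional faster decay of the limit. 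Both ingredients are compatible with the norms $E^{\epsilon}_{0,\cdot;k}$ and $E^{\epsilon}_{1;k}$, which are arranged precisely so that each step of the induction closes simultaneously up to order $n$; verifying this bookkeeping of weights against every Hardy, Cauchy--Schwarz and Sobolev step is the routine but lengthy remaining part of the proof.
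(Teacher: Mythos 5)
Your core argument is the paper's: the radiation field is controlled on $\mathcal{N}_{\widetilde{\tau}}$ by the fundamental theorem of calculus in $r$, Cauchy--Schwarz splitting into the two weighted fluxes $\int(\partial_r T^k\Omega^{\alpha}\phi)^2$ and $\int r^2(\partial_r T^k\Omega^{\alpha}\phi)^2$, a Hardy inequality, the Sobolev embedding on $\s^2$, and the $T^k$-commuted energy decay (Lemmas \ref{lm:auxdecaypsi0Tk} and \ref{lm:auxdecaypsi1Tk}); the paper simply runs the $k=0$ proof of Proposition \ref{prop:pointdecaradfield} verbatim with $\psi$ replaced by $T^k\psi$, so no induction on $k$ at the pointwise level is needed. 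Two remarks. First, the boundary term at $\mathcal{I}^+$ that you single out as the main obstacle is not one: since the cut-off $\chi$ vanishes at $r=R$, integrating $\partial_r\big((\chi T^k\Omega^{\alpha}\phi)^2\big)$ from $R$ to $\infty$ shows that $\lim_{r\to\infty}\int_{\s^2}(\chi T^k\Omega^{\alpha}\phi)^2\,d\omega$ is itself bounded by the same Cauchy--Schwarz product of weighted fluxes, which is your ingredient (ii); your ingredient (i), the identity $T^j\Omega^{\alpha}\phi|_{\mathcal{I}^+}(u)=-\int_u^{\infty}T^{j+1}\Omega^{\alpha}\phi|_{\mathcal{I}^+}\,du'$, requires control at order $j+1$ to conclude at order $j$ and therefore cannot close inside your upward induction on $k$ --- it should be dropped. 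Second, your argument only covers $r\geq R$; the theorem is stated on all of $\mathcal{S}_{\widetilde{\tau}}$, and for $r\leq R+1$ the paper uses a separate (easier) step, namely the fundamental theorem of calculus along the hyperboloidal leaf in $\rho$ giving $r\int_{\s^2}(T^k\psi)^2\,d\omega\leq \int_{\mathcal{S}_{\widetilde{\tau}}}J^T[T^k\psi]\cdot n_{\widetilde{\tau}}\,d\mu$, combined with the energy decay of Theorem \ref{thm:edecayTkpsi}. Both points are easily repaired, but as written the proposal over-complicates the first and omits the second.
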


Finally, we also obtain higher-order pointwise decay statements for $\psi$, again.

\begin{theorem}[$L^{\infty}$-decay of $T^k\psi$]
\label{thm:decayTkpsi}
Let $\psi$ be a solution to \eqref{waveequation} emanating from initial data given as in Theorem \ref{thm:extuniq} in the region $\mathcal{A}=\{r\geq R\}$. Assume that $D(r)=1-2Mr^{-1}+O_{n+2}(r^{-1-\beta})$ for some $n\in \N$ and $\beta>0$. Assume moreover that
\begin{align*}
\lim_{r \to \infty }\sum_{|l|\leq 8+2n}\int_{\s^2}(\Omega^l\phi)^2\,d\omega\big|_{u'=0}<&\:\infty,\\
\lim_{r \to \infty }\sum_{|l|\leq 6+2n}\int_{\s^2}(\Omega^l\Phi)^2\,d\omega\big|_{u'=0}<&\:\infty,\\
\lim_{r \to \infty }\sum_{|l|\leq 4+2n}\int_{\s^2}r^{-1}\left(\Omega^l\Phi_{(2)}\right)^2\,d\omega\big|_{u'=0}<&\:\infty,
\end{align*}
and
\begin{equation*}
\lim_{r \to \infty }\sum_{|l|\leq 2+2n-2s}\int_{\s^2}r^{2s+1}\left(\partial_r^s\Omega^l\Phi_{(2)}\right)^2\,d\omega\big|_{u'=0}<\infty,
\end{equation*}
for each $1\leq s\leq n$.

Assume further that either $E^{\epsilon}_{0,I_0\neq0;n+1}[\psi_0]<\infty$, or $E^{\epsilon}_{0,I_0=0;n+1}[\psi_0]<\infty$, and also that $\sum_{|l|\leq 2}E^{\epsilon}_{1;n+1}[\Omega^l\psi_1]<\infty$. 

For all $\epsilon>0$ there exists a constant $C=C(D,R,\epsilon,n)>0$ such that for all $0\leq k\leq n$ and $\widetilde{\tau}\geq 0$
\begin{align*}
|T^k\psi|(\widetilde{\tau},\rho,\theta,\varphi)\leq C\sqrt{E^{\epsilon}_{0,I_0\neq0;k+1}[\psi_0]+\sum_{|\alpha|\leq 2}E^{\epsilon}_{1;k+1}[\Omega^{\alpha}\psi_1]}\widetilde{\tau}^{-2-k+\epsilon} \quad \textnormal{if}\quad I_0[\psi]\neq 0,\\
|T^k\psi|(\widetilde{\tau},\rho,\theta,\varphi)\leq C\sqrt{E^{\epsilon}_{0,I_0=0;k+1}[\psi_0]+\sum_{|\alpha|\leq 2}E^{\epsilon}_{1;k+1}[\Omega^{\alpha}\psi_1]}\widetilde{\tau}^{-3-k+\epsilon} \quad \textnormal{if}\quad I_0[\psi]= 0.
\end{align*}
\end{theorem}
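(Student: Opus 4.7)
The plan is to deduce the pointwise decay of $T^k\psi$ from the already-established higher-order radiation-field decay of Theorem \ref{thm:decayTkradfield} and the higher-order energy decay of Theorem \ref{thm:edecayTkpsi}, by imitating the proof of Theorem \ref{thm:decaypsi} (which is precisely the $k=0$ instance of the present statement) with $T^k\psi$ in place of $\psi$. The hypotheses at level $n+1$ here are exactly what is needed to apply those two theorems up through level $n+1$, and to apply Theorem \ref{thm:decaypsi} in its level-$1$ form; in particular, the assumed limits on null infinity and the finiteness of $E^\epsilon_{0,I_0\neq 0;n+1}$ (or $E^\epsilon_{0,I_0=0;n+1}$) together with $\sum_{|l|\leq 2}E^\epsilon_{1;n+1}[\Omega^l\psi_1]$ imply the corresponding level-$k$ and level-$(k+1)$ hypotheses for every $0\leq k\leq n$.

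I would proceed by a finite downward induction on $k$. For the inductive step $k+1 \rightsquigarrow k$ with $0\leq k\leq n-1$, I use the identity
\begin{equation*}
T^k\psi(\widetilde{\tau}_1,\rho,\theta,\varphi)=-\int_{\widetilde{\tau}_1}^{\infty} T^{k+1}\psi(\widetilde{\tau}',\rho,\theta,\varphi)\,d\widetilde{\tau}',
\end{equation*}
valid because $T^k\psi \to 0$ as $\widetilde{\tau} \to \infty$ by the inductive hypothesis. Feeding in the pointwise bound $|T^{k+1}\psi|\lesssim (1+\widetilde{\tau})^{-3-k+\epsilon}$ (respectively $(1+\widetilde{\tau})^{-4-k+\epsilon}$ in the $I_0[\psi]=0$ branch) supplied by the inductive hypothesis, the integral converges and produces exactly the claimed $(1+\widetilde{\tau}_1)^{-2-k+\epsilon}$ (respectively $(1+\widetilde{\tau}_1)^{-3-k+\epsilon}$) rate, with the initial-data constants at level $k+1$ dominated by those at level $n+1$.

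The main work is the base case $k=n$, where the inductive integration trick cannot be invoked at level $n+1$ as a theorem, so the sharp rate must be produced directly. I plan to replay the argument of Theorem \ref{thm:decaypsi} with $T^n\psi$ in place of $\psi$: in the far region $\{\rho\geq R\}$, the bound $|T^n\psi|\leq |rT^n\psi|/\rho$ together with Theorem \ref{thm:decayTkradfield} at level $n$ yields the desired rate after propagating the improvement inward along outgoing null rays; in the near region $\{\rho\leq R\}$, I combine Sobolev embedding on the spheres (after commutations with $\Omega^\alpha$, $|\alpha|\leq 2$, which preserve the energy decay of Theorem \ref{thm:edecayTkpsi} and the hierarchies of Sections \ref{sec:rphierTkpsi0} and \ref{sec:rphierTkpsi1} in the spherically symmetric background) with a second integration in $\widetilde{\tau}$ of the pointwise bound on $rT^{n+1}\psi$ furnished by Theorem \ref{thm:decayTkradfield} at level $n+1$, to gain the final power of $\widetilde{\tau}^{-1}$ over the radiation-field rate. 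The principal obstacle is precisely this last half-power in the near region of the base case: a naive Sobolev bound from the level-$(n+1)$ energy of $T^{n+1}\psi$ only gives $(1+\widetilde{\tau})^{-(n+5/2)+\epsilon/2}$, which falls short of what integrating down to $T^n\psi$ requires; I expect to close the gap by using the wave equation $\square_g T^n\psi=0$ to trade a $T$-derivative for spatial derivatives of lower-order quantities whose energies decay at the faster, level-$(n+1)$, rate, exactly as in the closing step of the proof of Theorem \ref{thm:decaypsi}.
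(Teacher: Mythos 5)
Your inductive step (integrating $|T^{k+1}\psi|\lesssim \widetilde{\tau}^{-3-k+\epsilon}$ in $\widetilde{\tau}$ to recover $T^k\psi$) is sound, but the base case $k=n$ — where all the content lives — does not close as written, and the two concrete mechanisms you offer there both fall short quantitatively. In the far region, $|T^n\psi|\leq |rT^n\psi|/\rho$ at \emph{fixed} $\rho\geq R$ only yields $R^{-1}\widetilde{\tau}^{-1-n+\epsilon}$ from Theorem \ref{thm:decayTkradfield}; the factor $1/\rho$ gains a power of $\widetilde{\tau}$ only where $\rho\gtrsim\widetilde{\tau}$, and "propagating the improvement inward along outgoing null rays" is not an argument that recovers it for $R\leq\rho\ll\widetilde{\tau}$. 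In the near region, $|rT^{n+1}\psi|\lesssim\widetilde{\tau}^{-2-n+\epsilon}$ gives $|T^{n+1}\psi|\lesssim\widetilde{\tau}^{-2-n+\epsilon}$ there, and integrating that in $\widetilde{\tau}$ produces $\widetilde{\tau}^{-1-n+\epsilon}$ for $T^n\psi$ — a \emph{full} power short of the target $\widetilde{\tau}^{-2-n+\epsilon}$, not the half power you diagnose. The device you defer to at the end ("trade a $T$-derivative for spatial derivatives ... as in the closing step of the proof of Theorem \ref{thm:decaypsi}") is not a patch for a residual half power: it is the entire proof, and once you invoke it the induction, the far/near splitting, and the $\widetilde{\tau}$-integrations are all unnecessary.

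The paper's argument, for every $0\leq k\leq n$ directly, is: apply the fundamental theorem of calculus to $(T^k\psi)^2$ in the $\rho$-direction along the hyperboloidal leaf $\mathcal{S}_{\widetilde{\tau}}$, integrate over $\s^2$, and use Cauchy--Schwarz together with Hardy inequalities and the degenerate elliptic estimate \eqref{eq:ellipticpsi} (which converts $\int D^2r^2(\partial_\rho^2 T^k\psi)^2$ into $\int r^2(\partial_\rho T^{k+1}\psi)^2$ plus controllable terms, and is where the hypothesis $D'(r_+)>0$ enters) to obtain $\bigl(\int_{\s^2}(T^k\psi)^2\,d\omega\bigr)^2\lesssim \int_{\mathcal{S}_{\widetilde{\tau}}}J^N[T^k\psi]\cdot n_{\widetilde{\tau}}\cdot\int_{\mathcal{S}_{\widetilde{\tau}}}J^N[T^{k+1}\psi]\cdot n_{\widetilde{\tau}}$. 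Theorem \ref{thm:edecayTkpsi} gives these fluxes decay rates $\widetilde{\tau}^{-2k-3+\epsilon}$ and $\widetilde{\tau}^{-2k-5+\epsilon}$ (resp. $\widetilde{\tau}^{-2k-5+\epsilon}$ and $\widetilde{\tau}^{-2k-7+\epsilon}$ when $I_0=0$), whose geometric mean yields $|T^k\psi|\lesssim\widetilde{\tau}^{-k-2+\epsilon}$ (resp. $\widetilde{\tau}^{-k-3+\epsilon}$) after commuting with $\Omega^{\alpha}$, $|\alpha|\leq 2$, and applying the Sobolev inequality \eqref{eq:sobolevs2}. Note also that your induction would produce constants controlled by the level-$(n+1)$ energies at every $k$, whereas the statement requires only level-$(k+1)$ energies; the direct argument gives the correct dependence.
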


Theorem \ref{thm:decayTkpsi} is proved in Section \ref{sec:pointdecTkpsi}.

\subsection{Applications}
\label{introappli}

We next present applications of our method in various topics in general relativity.

\subsubsection{General asymptotically flat spacetimes}
\label{bhiintro1}
The method can be applied to a general class of asymptotically flat spacetimes without any symmetries assumptions (including, of course, the full Kerr--Newman family of black holes). One still needs to use the decomposition \eqref{sphdecintro}. However, the main observation is that the \textit{coupled} weighted estimates for $\psi_{0}$ and $\psi_{1}$ close and all error terms can be controlled leading to hierarchies similar to those presented in this paper.  This will be demonstrated in an upcoming work. 

\subsubsection{Late time asymptotics of scalar fields}
\label{bhiintroasymptotics}

The estimates and techniques presented in this paper play a fundamental role in our companion paper \cite{paper2} where the exact late time asymptotics of general solutions to the wave equation on spherically symmetric backgrounds are derived. These asymptotics yield in particular sharp upper and lower bounds for the scalar fields.  Our work provides a first rigorous proof of Price's heuristics  (see \cite{price1972}) regarding the $\tau^{-3}$ and $\tau^{-2}$ power-laws for the asymptotic lower tail bounds for scalar and radiation fields, respectively,  arising from smooth compactly supported initial data on sub-extremal backgrounds. 

In a future work, we will investigate the relevance of our method to the study of late time asymptotics for the Teukolsky equation, and, more generally, the linearized Einstein equations.

\subsubsection{Non-linear applications}
\label{bhiintrononlinear}
 Due to the extensive range of applications of the vector field method to non-linear wave equations, we expect that the new approach for establishing improved decay rates will be useful for studying non-linear wave equations and, in particular, the Einstein equations. 
In an upcoming work we show that the new approach to decay can be used to yield improved (in fact sharp) decay rates for non-linear wave equations satisfying the null condition (in particular, our method applies for the wave map problem). See also the relevant Remark \ref{remark1intro}. We expect that this method will be relevant to the non-linear stability problem of the Kerr family.

\subsubsection{Interior of black holes and strong cosmic censorship}
\label{bhiintro}

A precise quantitative understanding of the decay behaviour of solutions to (\ref{waveequation}) in the \emph{exterior} of black hole spacetimes is central to understanding the extendibility of solutions beyond the Cauchy horizon in the black hole \emph{interior}.  Boundedness and blow-up statements for solutions to (\ref{waveequation}) in black hole interiors rely heavily on upper and lower bounds for solutions along the event horizon. See \cite{MD03,MD05c, MD12, Luk2015, LukSbierski2016, DafShl2016, Hintz2015, Franzen2014, Luk2016a, Luk2016b} for works in the interior of sub-extremal black holes. 

In the interior of extremal black hole spacetimes, the regularity of solutions to (\ref{waveequation}) at the inner horizon depends delicately on the \emph{precise} asymptotics of the solutions along the (outer) event horizon. In particular, in \cite{gajic} the third author showed that $C^1$ extendibility of spherically symmetric solutions in the interior of extremal Reissner--Nordstr\"om requires \emph{sharp} decay rates along the event horizon. Analogous results have also been obtained in the interior of extremal Kerr--Newman black holes \cite{gajic2}.
 
In the cosmological setting, in sub-extremal Reissner--Nordstr\"om-de Sitter spacetimes, the sharp decay rate of solutions to (\ref{waveequation}) is expected to be \emph{exponential}. Interestingly, also in this case, the precise exponents in the decay rates of solutions along the event horizon play an important role in the regularity properties of solutions at the Cauchy horizon; see \cite{Costa2015, Costa2014, Costa2014a, Hintz2015a, CostFranz2016}.

\subsubsection{Exterior of extremal black holes}
\label{ebhintro}
The quantitative decay behaviour of solutions to (\ref{waveequation}) on extremal black holes differs dramatically from the decay behaviour on \emph{sub}-extremal black holes. A mathematical study of the wave equation on extremal Reissner--Nordstr\"om and extremal Kerr was initiated by the second author in \cite{aretakis1, aretakis2,aretakis3, aretakis4, aretakis2012, aretakis2013} establishing in particular the existence of conserved quantities along the event horizon that form an obstruction to decay estimates.  
 It was shown that due to the existence of conserved constants, transversal derivatives of solutions along the event horizon generically \emph{do not} decay and higher-order transversal derivatives \emph{blow up} asymptotically in proper time.

Furthermore, the existence of conserved quantities along the event horizon of extremal Reissner--Nordstr\"om is related to a type of ``stable'' trapping of null geodesics along the event horizon, which forms an obstruction to integrated decay estimates (cf. in contrast, the trapping of null geodesics at the photon sphere in (sub)-extremal Reissner--Nordstr\"om can be considered ``unstable''); see \cite{aag1} for more information.

Subsequent work \cite{hm2012,ori2013,Sela2015} in the physics literature provided a modified weaker power-law of solutions to (\ref{waveequation}) on extremal Reissner--Nordstr\"om. For work on tails on extremal Kerr we refer  to the very interesting recent works \cite{zimmerman1, zimmerman2}.

The new approach applies to extremal black hole backgrounds, in contrast to previous physical space or Fourier analytic techniques for decay for the wave equation. Indeed, in an upcoming work \cite{aag7} we derive the late time asymptotics of scalar fields on extremal Reissner--Nordstr\"om and establish the precise influence of conserved quantities along the event horizon and null infinity on the decay rate.

Furthermore, improved decay rates for scalar fields on extremal Reissner--Nordstr\"om obtained by the new approach are an essential ingredient even for showing global well-posedness for non-linear wave equations satisfying the null condition on such backgrounds. This is accomplished in an upcoming work. See also \cite{yannis1,rizad}.

\subsubsection{Higher-order limits on null infinity}
\label{horintro}

The traditional vector field method yields (see \cite{moschidis1}) bounds of the Friedlander radiation field $\phi$ on null infinity for general asymptotically flat spacetimes. In a future work we will show that the new approach can be used to derive limiting bounds of higher-order derivatives
\[r^{2}L(\phi)\] 
and more generally
\[ \big(r^{2}L\big)^{k}(\phi),\]
for $k\geq 1$, on null infinity. Note that this implies that \textit{the higher-order Newman--Penrose constants associated to higher spherical harmonic parameters are well-defined on general asymptotically flat spacetimes}.

\subsection{Outline }

In Section \ref{sec:prelim} we present the assumptions for the relevant spacetimes. In Section \ref{sec:limitsnullinfty} we present basic properties of the Newman--Penrose constants and other important limiting quantities on null infinity. The hierarchy of $r$-weighted estimates for $\psi_{1}=\psi-\int_{\mathbb{S}^{2}}\psi$ is derived in Section \ref{sec:hierpsi1} and for the spherically symmetric mean in Section \ref{sec:hierpsi0}. In section \ref{sharpnessofhierarchy} we show that the range of our weighted hierarchies is sharp, that is they cannot be further extended for solutions arising from generic smooth compactly supported initial data (on spacetimes with strictly positive mass $M>0$). Finally, in Section \ref{sec:energydecay} and \ref{sec:pointwise} we obtain almost-sharp energy and pointwise decay rates for the scalar field, the radiation field and their $T^{k}$ derivatives for all $k\geq 1$.

\subsection{Acknowledgements} 

We would like to thank Mihalis Dafermos and Georgios Moschidis for several insightful discussions. The second author (S.A.) acknowledges support through NSF grant DMS-1600643 and a Sloan Research Fellowship. The third author (D.G.) acknowledges support by the European Research Council grant no. ERC-2011-StG 279363-HiDGR.

\section{Preliminaries}
\label{sec:prelim}
We next introduce the class of spacetimes to which our theorems apply moreover set the notation for the remaining sections of the paper.

\subsection{Assumptions on the geometry of the near infinity region $r\geq R$}
\label{sec:geomassm}
Let $R>0$ and consider the manifold-with-boundary $\mathcal{A'}=\R\times [R,\infty)\times \s^2$, equipped with a Lorentzian metric
\begin{equation}
\label{metricurcoords}
g=-D(r)du^2-2dudr+r^2(d\theta^2+\sin^2\theta d\varphi),
\end{equation}
where $u\in \R$, $r\in [R,\infty)$, $\theta\in (0,\pi)$, $\varphi\in (0,2\pi)$ and $D:[R,\infty)\to \R$ is a smooth function. 

Let us refer to the $(u,r,\theta,\varphi)$ coordinates as \emph{Bondi coordinates}; modulo standard degenerations of the $(\theta,\varphi)$ coordinates on $\s^2$, they cover $\mathcal{A'}$ globally. We refer to $\mathcal{A'}$ as the far-away region.

We will assume the following asymptotics for the function $D$:
\begin{equation}
\label{ass:quantitativedecD}
D(r)=1-\frac{2M}{r}+O_{3}(r^{-1-\beta}),
\end{equation}
for $M\in \R$, such that $M\geq 0$, and $\beta>0$. Here, we have applied ``big O'' notation, i.e. the term $O_k(r^{-l})$ consists of functions $f:[R,\infty)\to \R$ that satisfy the following property: for all $0\leq j \leq k$, there exist uniform constants $C_j>0$, such that
\begin{equation*}
\left|\frac{d^jf}{dr^j}\right|\leq C_j r^{-l-j}.
\end{equation*}
The asymptotics in \eqref{ass:quantitativedecD} are consistent with the assumption that $\mathcal{A}'$ can be foliated by asymptotically flat hypersurfaces, such that $M$ is the ADM mass corresponding to these hypersurfaces.  By assumption, the vector field $T\doteq \partial_u$ is a timelike Killing vector field.

We can define a smooth function $v:\mathcal{A'}\to \R$ such that $v=u+2r_*$, where $r_*:\mathcal{A}'\to \R$ is defined as follows:
\begin{align*}
r_*(R)=&\:R,\\
\frac{dr}{dr_*}=&\:D(r).
\end{align*}

In the coordinate chart $(v,r,\theta,\varphi)$ the metric can be expressed as follows:
\begin{equation}
\label{metricvrcoords}
g=-D(r)dv^2+2dvdr+r^2(d\theta^2+\sin^2\theta d\varphi).
\end{equation}
These coordinates are called \emph{ingoing Eddington--Finkelstein} coordinates.

\subsection{Assumptions on the global  geometry }
\label{sec:asmMsetminusA}
Let us now define $\mathcal{M}$, the manifolds(-with-boundary) of interest. We will consider two cases.

As a first case, let us extend $D$ to a function $D:(r_+,\infty)\to \R$, with $0<r_+<R$, such that $D(r)>0$ for $r\in (r_+,\infty)$ and $D(r_+)=0$. We define the manifold-with-boundary $\mathcal{M}_+$: $$\mathcal{M}_+=\R \times [r_+,\infty)\times   \s^2,$$ such that $\mathcal{M}_+$ is covered by the coordinate chart $(v,r,\theta,\varphi)$, with $v\in \R$, $r\in [r_+\infty)$, $\theta\in (0,\pi)$, $\varphi\in (0,2\pi)$. We equip $\mathcal{M}_+$ with the metric $g$ given by the expression \eqref{metricvrcoords}. The boundary $\mathcal{H}^+=\{(v,r,\theta,\varphi)\,:\,r=r_+\}$ is a null hypersurface, which from now on we will call the \emph{future event horizon} of the spacetime.

From the assumption on $D$ above, we can moreover define the manifold-with-boundary $\mathcal{M}_-$ by $$\mathcal{M}_-= \R \times [r_+,\infty) \times \s^2,$$ such that $\mathcal{M}_-$ is covered by the coordinate chart $(u,r,\theta,\varphi)$, with $u\in \R$, $r\in [r_+\infty)$, $\theta\in (0,\pi)$, $\varphi\in (0,2\pi)$. We equip $\mathcal{M}_-$ with the metric $g$ given by the expression \eqref{metricurcoords}. The boundary $\mathcal{H}^-=\{(u,r,\theta,\varphi)\,:\,r=r_+\}$ is a null hypersurface. We will refer to $\mathcal{H}^-$ as the \emph{past event horizon} of the spacetime.

We will denote $$\mathcal{M}=\mathcal{M}_+\cup \mathcal{M}_-=\mathcal{M}_+\cup \mathcal{H}^-=\mathcal{M}_-\cup \mathcal{H}^-.$$ See Figure \ref{fig:fullfoliations2} for the corresponding Penrose diagram. The minimum value of $r$ on $\mathcal{M}$ is denoted as $r_{\rm min}$, so $r_{\rm min}=r_+$ in this case. In this paper, we will only be dealing with the extension $\mathcal{M}_+$.

The domains of outer communication of Reissner--Nordstr\"om black hole spacetimes, for which
\begin{equation*}
D(r)=1-\frac{2M}{r}+\frac{e^2}{r^2},
\end{equation*}
where $|e|\leq M$ is a constant, are examples of a spacetime region satisfying the above assumptions on $\mathcal{M}$ in the case where $r_{\rm min}=r_+>0$.

As a second case, we extend $D$ to $D:[0,\infty)\to \R$, such that $D(r)\geq d_D$, for some constant $d_D>0$. We now consider the manifold $\mathcal{M}=\R\times \R^3$, and the submanifold $$\mathring{\mathcal{M}}=\mathcal{M}\setminus \{\R\times \{0\}\}=\R\times (0,\infty)\times \s^2,$$ which is covered by the coordinate chart $(v,r,\theta,\varphi)$, with $v\in \R$, $r\in (0,\infty)$, $\theta\in (0,\pi)$, $\varphi\in (0,2\pi)$.

We equip $\mathring{\mathcal{M}}$ with the metric $g$ given by the expression \eqref{metricvrcoords}. Note that $g$ can be extended to the entire manifold $\mathcal{M}$ after a suitable change of coordinates. Clearly $r_{\text{min}}=0$ in this case, where $r_{\text{min}}=0$  denotes the infimum of $r$ on $\mathring{\mathcal{M}}$. The Penrose diagram of such spacetimes is depicted in Figure \ref{fig:fullfoliations1}.

The case $D(r)=1$ corresponds to the Minkowski spacetime which clearly is an example of a spacetime satisfying the above assumptions.

\subsection{Foliations}
\label{sec:foliations}
Let $\widetilde{\Sigma}$ be an asymptotically flat spacelike hypersurface in $\mathcal{M}$ interesecting  $\mathcal{H}^{+}$ such that $\widetilde{\Sigma}\cap \mathcal{H}^+$ is isometric to a round sphere.

Consider the outgoing null hypersurface $\mathcal{N}_{u'}=\{(u,r,\theta,\varphi)\,:\, u=u',\: r\geq R\}$. Now, define the following spacetime regions contained in $\mathcal{A}'$: 
 $$\mathcal{A}^{u_2}_{u_1}=\bigcup_{u\in [u_1,u_2]}\mathcal{N}_{u}$$
 and the extended region  
 $$\mathcal{A}=\bigcup_{u\in[0,\infty)}\mathcal{N}_{u}.$$ 
\begin{figure}[h!]
\begin{center}
\includegraphics[width=1.9in]{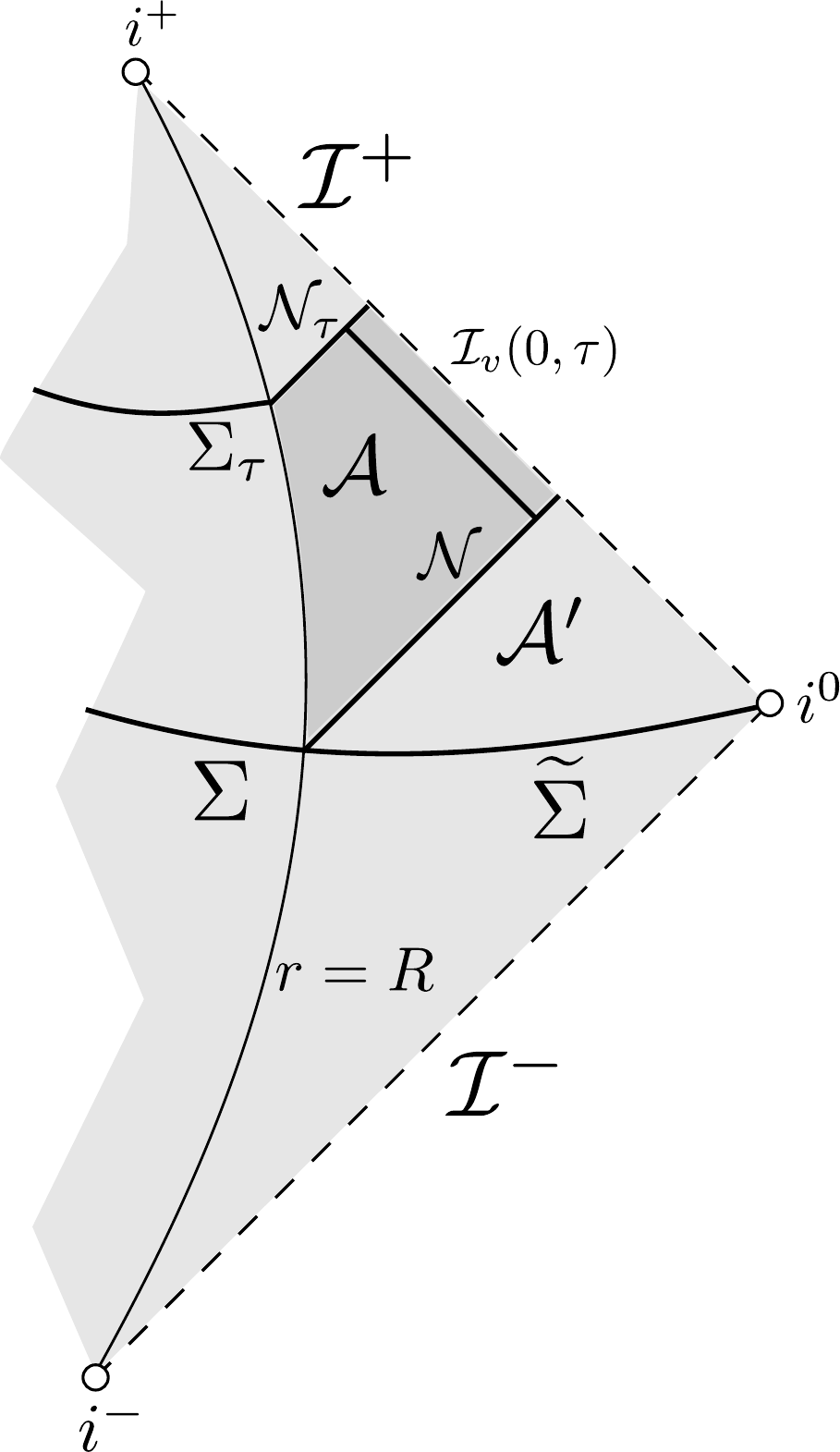}
\caption{\label{fig:foliations}The Penrose diagram of the far-away spacetime region $\mathcal{A'}=\left\{r\geq R\right\}$ embedded in a larger spacetime.}
\end{center}
\end{figure}

By shifting the coordinate $u$ by a constant, we can assume without loss of generality that $\widetilde{\Sigma}\cap \{r=R\}=\{u=0\}\cap \{r=R\}$. Denote $\mathcal{N}=\mathcal{N}_0$ and consider the spacelike-null hypersurface $$\Sigma\doteq \widetilde{\Sigma}\cap\{r\leq R\}\cup \mathcal{N}$$
and let $\tau$ be a smooth function on $J^+(\Sigma)$, such that $\tau|_{{\Sigma}}=0$, and $T(\tau)=1$. Note that in $\mathcal{A}$ it holds that $\tau=u$.

Let $\mathcal{I}_{v'}(\tau_1,\tau_2) \doteq \{v=v',\:\tau_1\leq u\leq \tau_2\}$ denote ingoing null segments in $\mathcal{A}^{\tau_2}_{\tau_1}$. Finally, we denote the main spacetime region on interest in this paper by $\mathcal{R}$, where
\begin{equation*}
\mathcal{R}=J^+(\Sigma)=\bigcup_{\tau\in [0,\infty)}\Sigma_{\tau}.
\end{equation*}
See Figure \ref{fig:foliations} for an illustration of the above foliation.

It will also be convenient to introduce a foliation with leaves that are \emph{hyperboloidal} spacelike hypersurface which terminate at null infinity. In order to construct the hyperboloidal foliation, we will first consider a vector field $$Y=\partial_r+h(r)\partial_v,$$ with $h: [r_{\rm min},\infty) \to \R$ a smooth function, such that:
\begin{align*}
\frac{1}{\max_{r_{\rm min}\:\leq r\leq R}D(r)}&\leq h(r)< \frac{2}{D(r)}\quad \textnormal{if}\: r\leq R,\\
0<\frac{2}{D(r)}-h(r)&=O_1(r^{-1-\eta})\quad \textnormal{if}\: r>R,
\end{align*}
for some $\eta>0$. Note that by construction $Y$ is spacelike, i.e.\ $$g(Y,Y)=h(r)(2-h(r)D(r))>0$$ for all $r\in [r_{r\min},\infty)$.

We will construct the hyperboloidal leaves by considering the integral curves $\gamma_Y \subset \mathcal{M}$ of $Y$. By considering as a parameter the function $r$, we have that $\gamma_Y: [r_{\rm min},\infty)\to \mathcal{R}$, with
\begin{equation*}
\gamma_Y(r)=(v_{Y}(r),r,\theta_0,\varphi_0),
\end{equation*}
in $(v,r,\theta,\varphi)$ coordinates, where we keep $\theta_0,\varphi_0$ fixed, we have that $\frac{dv_Y}{dr}=h(r)$ and moreover $v_{Y}(R)=v_{0}$. By choosing $v_{0}$ sufficiently large, we can guarantee that $\gamma_{Y}(R)\in\mathcal{R}$ as $v$ increases in the future direction along $r=R$.  

We first restrict to the region $\mathcal{R}\setminus \mathcal{A}'$. There, $v_Y$ is given by 
\begin{equation*}
v_Y(r)=v_0-\int_{r}^R h(r')\,dr'.
\end{equation*}
By choosing $v_0$ suitably large depending on $R$ and $h$, we can ensure that $v_{Y}(\tilde{r})$ is larger than $v(\Sigma\cap \left\{r=\tilde{r}\right\})$  for all $r_{\rm min}\leq \tilde{r}\leq R$. This implies that  $\gamma_{Y}(r)\in \mathcal{R}$ for all $r_{\rm min}\leq r\leq R$.

Now, consider the region $\mathcal{A}'$. Using that $u=v-2r_*$, we can express $\gamma_Y$ in $(u,r,\theta,\varphi)$ coordinates. We obtain:
\begin{equation*}
\gamma_Y(r)=(u_{Y}(r)=v_{Y}(r)-2r_*(r),r,\theta_0,\varphi_0),
\end{equation*}
with $\frac{du_{Y}}{dr}=h-\frac{2}{D}$. By the assumptions on $h$ above, we therefore have that $\frac{du_{Y}}{dr}<0$ and moreover
\begin{equation*}
\begin{split}
\left|u_{Y}(r)-u_{Y}(R)\right|=&\:\left|\int_{R}^r h(r')-\frac{2}{D(r')}\,dr'\right|\\
\leq &\: C_Y (R^{-\eta}-r^{-\eta})\leq C_Y R^{-\eta},
\end{split}
\end{equation*}
where $C_Y>0$ is a constant that depends on the choice of $h$. In particular, we observe that for $v_0$ suitably large (depending on $C_Y$ and $R$) $u_Y$ satisfies the in equality $u_{Y}(r)>0$, and hence we can conclude that also $\gamma_{Y}(r)\in \mathcal{R}$ for all $r\geq R$.

We define the \emph{spacelike hyperboloidal hypersurface} $\mathcal{S}$ as
\begin{equation*}
\mathcal{S}=\{(v,r,\theta,\varphi)\,:\, v=v_Y(r),\: r\in [r_{\rm min},\infty)\}.
\end{equation*}
By construction, we have that $\mathcal{S}\subset \mathcal{R}$.

We can now construct a \emph{hyperboloidal} foliation of a subset of $\mathcal{R}$, by taking as our leaves the spacelike hypersurfaces $\mathcal{S}_{\widetilde{\tau}'}=\{\widetilde{\tau}=\widetilde{\tau}'\}$, where $\widetilde{\tau}: \mathcal{R}\cap J^+(\mathcal{S})\to [0,\infty)$ is the function defined by: $\widetilde{\tau}|_{\mathcal{S}}=0$ and $T(\widetilde{\tau})=1$. Then,
\begin{equation*}
J^+(\mathcal{S})=\bigcup_{\widetilde{\tau}\in[0,\infty)}\mathcal{S}_{\widetilde{\tau}}.
\end{equation*}

It will also be useful to introduce coordinates on $\mathcal{R}\cap J^+(\mathcal{S})$, corresponding to the hyperboloidal foliation. We consider the coordinate chart $(\widetilde{\tau},\rho,\theta,\varphi)$, with $\rho=r|_{\mathcal{S}_0}$ and $\partial_{\rho}= Y$. By construction, we can estimate
\begin{equation}
\label{eq:comptimes}
\tau-\tau_0\leq \widetilde{\tau}\leq \tau+\tau_0,
\end{equation}
for some $\tau_0=\tau_0(D, R,\mathcal{S}_0,\Sigma)>0$.


\begin{figure}[h!]
\begin{center}
\includegraphics[width=3in]{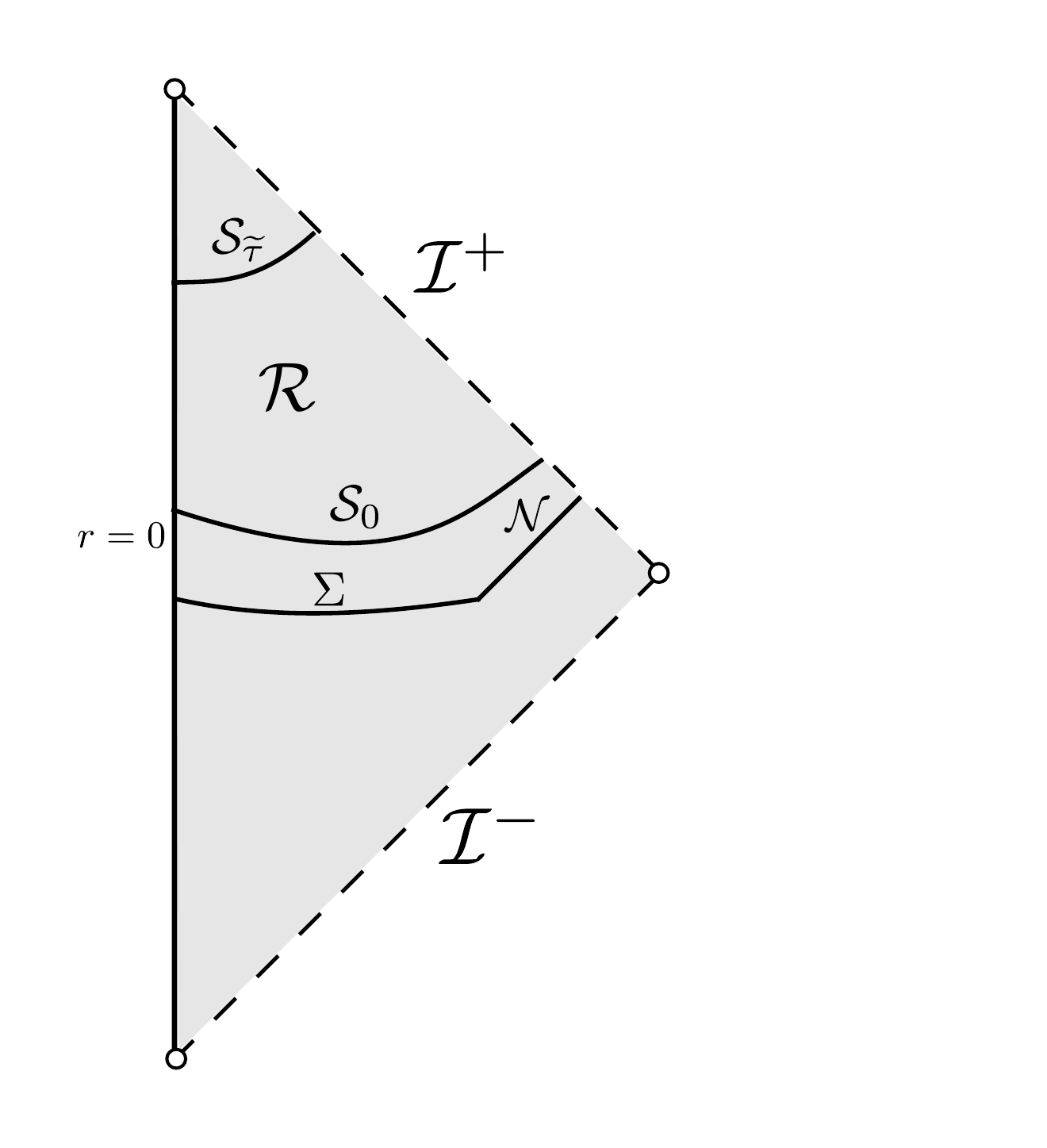}
\caption{\label{fig:fullfoliations1}A Penrose diagram of $\mathcal{M}$ in the case $r_{\rm min}=0$.}
\end{center}
\end{figure}


\begin{figure}[h!]
\begin{center}
\includegraphics[width=3in]{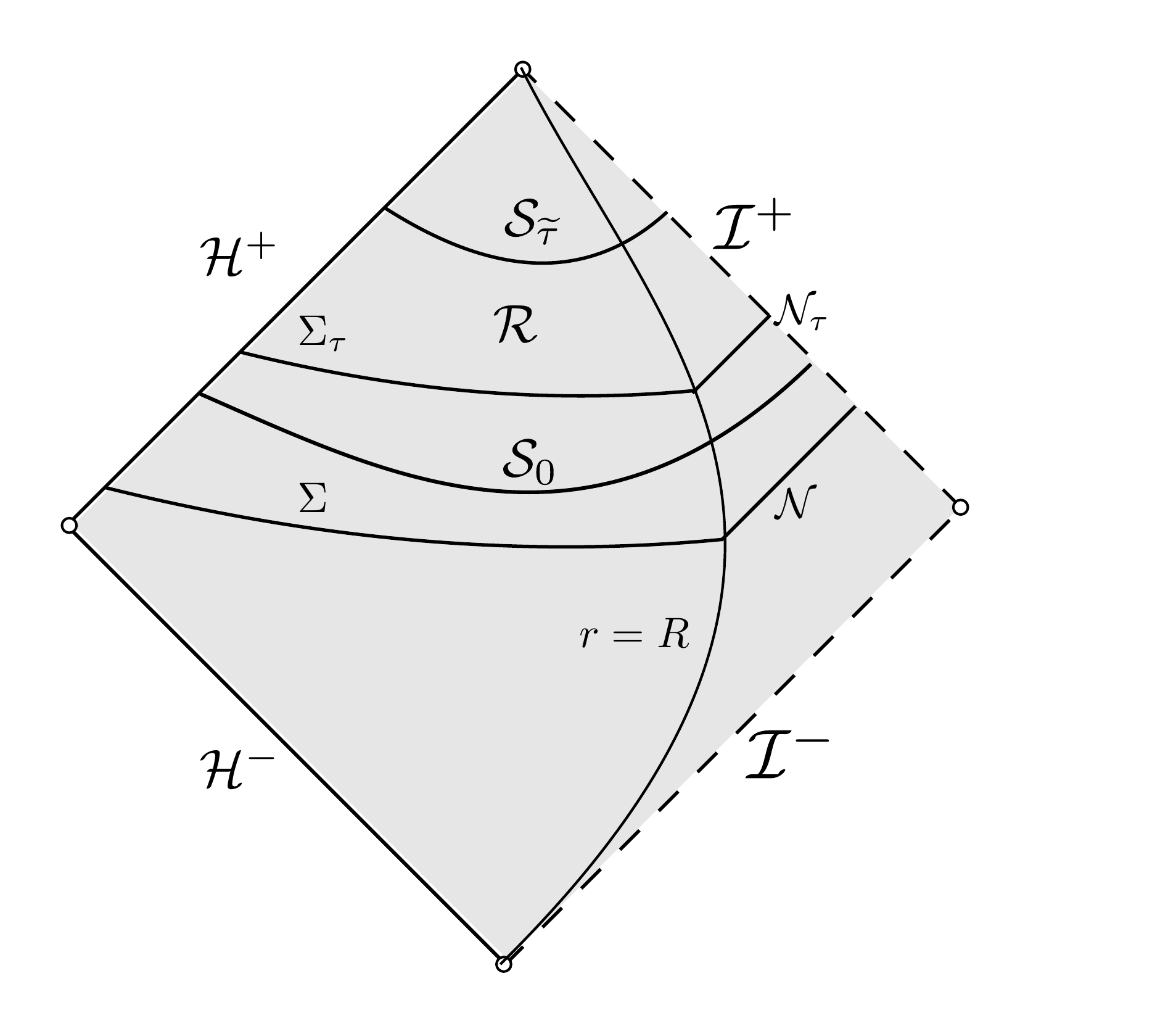}
\caption{\label{fig:fullfoliations2}Penrose diagram of $\mathcal{M}$ in the case $r_{\rm min}=r_+$.}
\end{center}
\end{figure}

\subsubsection{Additional notational conventions}
In this section, we will introduce several notational conventions that will be used throughout the paper. First of all, we write
\begin{equation*}
f \lesssim A,
\end{equation*}
for a positive function $f$ and a constant $A$, when there exists a \emph{uniform} constant $C>0$, depending only on the geometry of $(\mathcal{R},g)$, such that
\begin{equation*}
f\leq C\cdot A.
\end{equation*}

We also write
\begin{equation*}
f \sim A
\end{equation*}
when there exist \emph{two} uniform constants $C>c>0$ such that
\begin{equation*}
c\cdot A \leq f\leq C\cdot A.
\end{equation*}

The induced volume form on the spacelike-null hypersurfaces $\Sigma_{\tau}$ will be denoted by $d\mu_{\Sigma_{\tau}}$ and the induced volume form on the hyperboloidal hypersurfaces  $\mathcal{S}_{\widetilde{\tau}}$ will be denoted by $d\mu_{\mathcal{S}_{\widetilde{\tau}}}$. Here, where we let $d\mu_{\Sigma_{\tau}}|_{\mathcal{N}_{\tau}}=r^2d\omega dr$ along $\mathcal{N}_{\tau}$. Moreover, $\underline{L}$ will denote the ingoing null vector field satisfying $\underline{L}(u)=1$ and $L$ the outgoing null vector field satisfying ${L}(v)=1$.

We will additionally denote the future-directed normal along $\Sigma_{\tau}$ by $n_{\Sigma_{\tau}}$, with the shorthand form $n_{\tau}$. We moreover let $n_{\tau}|_{\mathcal{N}_{\tau}}=\underline{L}$. Furthermore, we will denote the future-directed normal along $\mathcal{S}_{\widetilde{\tau}}$ by $n_{\mathcal{S}_{\widetilde{\tau}}}$ and $n_{\widetilde{\tau}}$.

\subsection{Assumptions for the wave equation}
\label{sec:waveassm}
We consider the spacetime region $(\mathcal{R},g)$ defined in Section \ref{sec:foliations} and formulate a global existence and uniqueness statement for the linear wave equation (\ref{waveequation}) with smooth mixed Cauchy and characteristic initial data on $\Sigma$.\footnote{For convenience, we formulated a global existence and uniqueness statement for smooth initial data. Instead, we could have taken our data to be less regular, i.e.\ $\Psi \in W^{k+1,2}_{\rm loc}$ or $\Psi' \in W^{k,2}_{\rm loc}$ for $k\geq 0$.}

\begin{theorem}
\label{thm:extuniq}
Let $\Psi\in C^{\infty}({\Sigma})$, $\Psi'\in C^{\infty}({\Sigma}\setminus \mathcal{N})$. Then there exists a unique smooth function $\psi: \mathcal{R} \to \R$ satisfying
\begin{align*}
 \square_g \psi&=0,\\
\psi|_{{\Sigma}}&=\Psi,\\
n_{\widetilde{\Sigma}}(\psi)|_{\widetilde{\Sigma}\setminus \Sigma'}&=\Psi'.
\end{align*}
\end{theorem}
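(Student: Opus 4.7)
The plan is to treat Theorem \ref{thm:extuniq} as a mixed Cauchy/characteristic initial value problem: establish smooth solvability in a neighborhood of $\Sigma$ by combining classical Cauchy theory on the spacelike piece $\widetilde{\Sigma}\cap\{r\leq R\}$ with Rendall's characteristic initial value theorem on the null piece $\mathcal{N}$, and then propagate globally via energy estimates adapted to the Killing field $T=\partial_u$.

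First I would decompose $\Sigma = \widetilde{\Sigma}_0 \cup \mathcal{N}$, where $\widetilde{\Sigma}_0 \doteq \widetilde{\Sigma}\cap\{r\leq R\}$ is spacelike and the two pieces meet transversally along the sphere $\widetilde{\Sigma}\cap\{r=R\}$. On $\widetilde{\Sigma}_0$ the pair $(\Psi,\Psi')$ is standard Cauchy data for the linear wave equation on the globally hyperbolic region $(\mathcal{M},g)$, and the classical theory for second-order hyperbolic equations produces a unique smooth solution $\psi_1$ in a sufficiently small spacetime neighborhood of $\widetilde{\Sigma}_0$. On the null hypersurface $\mathcal{N}$ the restriction of $\Psi$ is alone sufficient characteristic data, since the transverse derivative of $\psi$ is then determined by $\square_g\psi = 0$; Rendall's theorem on the characteristic initial value problem produces a unique smooth solution $\psi_2$ in a neighborhood of $\mathcal{N}$ to its causal future. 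The matching conditions at the corner hold tautologically because $\Psi$ is prescribed as a single smooth function on all of $\Sigma$.

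Next I would glue: on the overlap of the two local domains both $\psi_1$ and $\psi_2$ solve the same linear wave equation and share induced data on a common slice, so a routine $T$-energy estimate on small causal diamonds forces them to coincide. One must additionally verify smoothness across the codimension-two corner $\widetilde{\Sigma}\cap\{r=R\}$; this follows because the wave equation recursively determines all transverse jets of $\psi$ along $\Sigma$ from tangential derivatives of the data, and these formal jets agree from both sides by the regularity of $\Psi$ and $\Psi'$. This yields a smooth $\psi$ on an open neighborhood of $\Sigma$ in $J^+(\Sigma)$.

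Finally, to extend to all of $\mathcal{R}=\bigcup_{\tau\geq 0}\Sigma_\tau$, I would apply the divergence theorem to the current $J^T[\psi]$ on the causal region bounded by $\Sigma_{\tau_1}$, $\Sigma_{\tau_2}$, and the ingoing null segments $\mathcal{I}_{v'}(\tau_1,\tau_2)$, obtaining finite energy bounds on each finite time interval by virtue of $T$ being Killing. A continuity argument in $\tau$ then produces $\psi$ on all of $\mathcal{R}$, and global uniqueness follows by applying the same energy estimate to the difference of two solutions. The one subtlety, which is the main obstacle worth flagging, is that in the case $r_{\min}=r_+$ the Killing field $T$ becomes null on the event horizon $\mathcal{H}^+$ and its flux degenerates there; this is handled by replacing $T$ as a multiplier with the timelike redshift vector field $N$ of \cite{redshift}, which yields non-degenerate energy estimates up to and including $\mathcal{H}^+$ and closes the global continuity argument.
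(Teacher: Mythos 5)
The paper does not actually prove Theorem \ref{thm:extuniq}: it is stated (with a footnote on regularity) as a standard well-posedness result for the mixed spacelike--characteristic initial value problem, so your outline is supplying an argument the authors take for granted. Your overall shape — local well-posedness, then domain-of-dependence propagation, then uniqueness by local energy estimates — is the right one, but the central step is set up incorrectly. You claim that Rendall's theorem applied to the data $\Psi|_{\mathcal{N}}$ alone produces a solution $\psi_2$ ``in a neighborhood of $\mathcal{N}$ to its causal future.'' The future domain of dependence of the single outgoing null hypersurface $\mathcal{N}=\{u=0,\ r\geq R\}$ is trivial: a past-directed \emph{outgoing} null geodesic from any point with $u>0$ stays on a level set of $u$ and never meets $\mathcal{N}$, so no point off $\mathcal{N}$ is determined by data on $\mathcal{N}$ alone, and there is no $\psi_2$ to glue. (Indeed, even the transverse derivative $\partial_u\psi$ on $\mathcal{N}$ is obtained by integrating a transport equation along the null generators starting from the corner sphere, so it already depends on $\Psi'$.) Rendall's theorem requires data on \emph{two} transversally intersecting characteristic hypersurfaces. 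The correct structure is sequential, not parallel: first solve the classical Cauchy problem with data $(\Psi,\Psi')$ on $\widetilde{\Sigma}\cap\{r\leq R\}$ in its future domain of dependence, whose future boundary is the ingoing null hypersurface $\{v=v_c\}$ emanating from the corner sphere $\{u=0\}\cap\{r=R\}$; then solve the genuine two-characteristic (Goursat) problem with data on $\mathcal{N}\cap\{v\geq v_c\}$ and on $\{v=v_c,\ u\geq 0\}$, the latter induced from the first step. Your ``corner smoothness'' discussion then belongs not to the corner sphere inside $\Sigma$ but to the null hypersurface $\{v=v_c\}$ inside $J^+(\Sigma)$: one checks that the transversal jets $\partial_v^k\psi|_{v=v_c}$ computed from the two regions coincide, since they satisfy the same transport equations along the generators with the same initial values at the corner sphere.

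Two smaller points. For a \emph{linear} equation, global existence on $J^+(\Sigma)$ requires no global energy estimate: exhaust $J^+(\Sigma)$ by compact causal diamonds and iterate the local theory, using finite speed of propagation. In particular you should not invoke the redshift estimates of \cite{redshift} here — within this paper's logical structure, energy boundedness \eqref{ass:ebound} and the Morawetz estimates are black-box \emph{assumptions} downstream of Theorem \ref{thm:extuniq}, not tools available to prove it. Likewise, uniqueness on a compact causal diamond follows from an energy identity with any locally timelike multiplier, so the degeneration of $T$ on $\mathcal{H}^+$ is not an obstruction and needs no special treatment.
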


We will moreover always assume that $\Psi\to 0$ as $r\to \infty$.

We denote the stress-energy tensor corresponding a function $f:\mathcal{R}\to\R$ by $\mathbf{T}[f]$. The stress-energy tensor $\mathbf{T}[f]$ is a symmetric 2-tensor, with components
\begin{equation*}
\mathbf{T}_{\alpha \beta}[f]=\partial_{\alpha}f \partial_{\beta}f-\frac{1}{2}{g}_{\alpha \beta} (g^{-1})^{\kappa \lambda}\partial_{\kappa}f \partial_{\lambda} f,
\end{equation*}
with respect to a coordinate basis. Note that $\textnormal{div}\,\mathbf{T}[\psi]=0$ for solutions $\psi$ to (\ref{waveequation}).

We define the energy current $J^V[f]$ with respect to a function $f$ and a vector field $V$ on $\mathcal{R}$ as the following contraction:
\begin{equation*}
J^V[f]\doteq \mathbf{T}(V,\cdot).
\end{equation*}

Let $W$ be another vector field on $\mathcal{R}$. We will make use of the following notation:
\begin{equation*}
J^V[\psi]\cdot W \doteq \mathbf{T}(V,W).
\end{equation*}

It will be convenient to split
\begin{equation*}
\textnormal{div}\,J^V[f]=K^V[f]+\mathcal{E}^V[f],
\end{equation*}
where
\begin{align}
\label{def:KV}
K^V[f] \doteq&\: \mathbf{T}^{\alpha \beta}\nabla_{\alpha}V_{\beta},\\
\label{def:EV}
\mathcal{E}^V[f]\doteq&\: V(f)\square_g f.
\end{align}
Note that in particular, $\mathcal{E}^V[\psi]=0$ for solutions $\psi$ to (\ref{waveequation}).

\subsubsection{Energy boundedness for scalar waves}
Let $N$ be a strictly timelike vector field $N$, such that $N=T$ in $\mathcal{A}$. In the $r_{\rm min}=0$ case, we can take $N=T$, whereas in the $r_{\rm min}=r_+>0$ case, we can construct $N$ such that $N=T-Y$ for $r_+\leq r\leq r_0$ and $N=T$ for $r\geq r_1$, with $r_+<r_0<r_1$, by employing a smooth cut-off function.

 We will assume the following energy boundedness statement for the wave equation as a ``black-box'': assume that
 \begin{equation*}
 \int_{\Sigma}J^N[\psi]\cdot n_{\Sigma}\,d\mu_{\Sigma}<\infty.
 \end{equation*}
Then there exists a uniform constant $C>0$, such that for all $v$
\begin{equation}
\label{ass:ebound}
\int_{\Sigma_{\tau}}J^N[\psi]\cdot n_{\tau}\,d\mu_{\Sigma_{\tau}}+\int_{\mathcal{I}_v(0,\tau)}J^N[\psi]\cdot \underline{L}\: r^2d\omega du\leq C \int_{\Sigma}J^N[\psi]\cdot n_{\Sigma}\,d\mu_{\Sigma}.
\end{equation}

\begin{remark}
Note that the geometric assumptions in Section \ref{sec:geomassm} and \ref{sec:asmMsetminusA} together with the energy boundedness assumption above do not fix the behaviour of $D$ in the spacetime region $\mathcal{R}\setminus \mathcal{A}$. A spacetime satisfying only these assumptions allows in particular for complicated trapping behaviour of null geodesics; see \cite{molog, Keir, aag1}. In \cite{molog, Keir}, in particular, it is shown that trapping can be the source of time-decay for $\psi$ with merely a logarithmic rate. We will, however, consider only spacetimes where the behaviour of $\psi$ in $\mathcal{R}\setminus \mathcal{A}$ does \emph{not} form an obstruction to decay estimates. Quantitatively, this amounts to making an additional assumption of suitable Morawetz estimates, or integrated local energy decay statements in the assumption below.
\end{remark}

\subsubsection{Morawetz estimates for scalar waves}
We assume that the region $\mathcal{R}\setminus \mathcal{A}$ does not form an obstruction to decay by imposing the following Morawetz estimate as another black-box: there exists a uniform constant $C>0$, such that for all $0<\tau_1<\tau_2$
\begin{equation}
\label{ass:morawetz}
\int_{\tau_1}^{\tau_2}\left(\int_{\Sigma_{\tau}\setminus \mathcal{N}_{\tau}}J^N[ \psi]\cdot n_{\tau}\,d\mu_{\Sigma_{\tau}}\right)\,d\tau\leq C \int_{\Sigma_{\tau_1}}J^N[\psi]\cdot n_{\tau_1}+J^N[T\psi]\cdot n_{\tau_1}\,d\mu_{\Sigma_{\tau_1}}.
\end{equation}
We will moreover assume local Morawetz estimates in $\mathcal{A}$ \emph{without} a loss of derivatives for higher-order derivatives of $\psi$ :
\begin{equation}
\label{ass:morawetzlocal}
\int_{\tau_1}^{\tau_2}\left(\int_{\mathcal{N}_{\tau}\cap \{\widetilde{R}\leq r\leq \widetilde{R}+1\}}J^T[ \partial^{\alpha}\psi]\cdot n_{\tau}\,d\mu_{\Sigma_{\tau}}\right)\,d\tau\leq C_{\alpha} \sum_{k\leq |\alpha|}\int_{{\Sigma}_{\tau_1}}J^N[T^{k}\psi]\cdot n_{\tau_1},
\end{equation}
for $|\alpha|\geq 0$ and $\widetilde{R}\geq R$, where $C_{\alpha}>0$ depends on $\widetilde{R}$ and the choice of $\alpha$.

The Morawetz estimates above have in particular been obtained for sub-extremal Reissner--Nordstr\"om spacetimes  (see, for instance, \cite{redshift,blu1}). The left hand side of \ref{ass:morawetz} has been shown to \emph{blow up} for generic smooth, compactly supported data on $\Sigma$ in extremal Reissner--Nordstr\"om, which are therefore not included in the class of spacetimes studied in this paper. However, a degenerate Morawetz estimate still holds in the extremal case (see \cite{aretakis1}). This allows us to appropriately adapt the methods developed in this paper to the extremal case separately (\cite{aag7}).

\begin{remark}
Whenever a spacetime contains trapped null geodesics, one cannot arrive at a Morawetz estimate like (\ref{ass:morawetz}) without ``losing derivatives'' on the right-hand side; see \cite{janpaper, ralston2}. However, the trapping of null geodesics in example Schwarzschild and Kerr (away from the event horizon) is a high-angular frequency phenomenon; that is to say, if we restrict $\psi$ to a fixed angular mode $\psi_{\ell}$, we can remove the loss of derivatives in \eqref{ass:morawetz}, at the expense of replacing the uniform constant on the right-hand side with a constant that grows polynomially in $\ell$. Note that we can also relax the assumption \eqref{ass:morawetz} to allow for the loss of \emph{any} finite number of derivatives on the right-hand side.
\end{remark}

\subsection{Spherical decompositions and Hardy and Poincar\'{e} inequalities}
\label{sec:basicineq}
In this section, we review some standard inequalities that play a central role in our proofs.

\begin{lemma}[Hardy inequalities]
\label{lm:Hardy}
Let $q\in \R$ and $f:[r_0,\infty)\to\R$ a $C^1$ function. Then,
\begin{align}
\label{eq:Hardy1}
\int_{r_0}^{\infty} r^q f^2(r)\,dr&\leq \frac{4}{(q+1)^2}\int_{r_0}^{\infty} r^{q+2}(\partial_rf)^2(r)\,dr\\
& \textnormal{for}\:q\neq -1,\:\textnormal{if} \: f(r_0)=0\:\textnormal{and}\:\lim_{r\to \infty} r^{q+1}f^2(r)=0, \nonumber\\
\label{eq:Hardy3}
\int_{r_{0}}^{\infty} f^2(r)\,dr&\leq 4\int_{r_{0}}^{\infty} (r-r_{0})^2 (\partial_rf)^2(r)\,dr \quad \textnormal{if}\: \lim_{r\to \infty} r f^2(r)=0.
\end{align}
\end{lemma}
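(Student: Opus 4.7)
The plan is to establish both inequalities by a single integration-by-parts step followed by Cauchy--Schwarz, which is the standard derivation of one-dimensional weighted Hardy inequalities.

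For \eqref{eq:Hardy1}, since $q\neq -1$, I would rewrite $r^q = \frac{1}{q+1}\frac{d}{dr}(r^{q+1})$ and integrate by parts:
\begin{equation*}
\int_{r_0}^{\infty} r^q f^2\,dr = \frac{1}{q+1}\Big[r^{q+1}f^2\Big]_{r_0}^{\infty} - \frac{2}{q+1}\int_{r_0}^{\infty} r^{q+1} f\,\partial_r f\,dr.
\end{equation*}
The boundary term at infinity vanishes by the hypothesis $\lim_{r\to\infty}r^{q+1}f^2(r)=0$, and the boundary term at $r_0$ vanishes because $f(r_0)=0$. Writing $r^{q+1}=r^{q/2}\cdot r^{(q+2)/2}$ and applying Cauchy--Schwarz to the remaining integral yields
\begin{equation*}
\int_{r_0}^{\infty} r^q f^2\,dr \leq \frac{2}{|q+1|}\left(\int_{r_0}^{\infty} r^q f^2\,dr\right)^{1/2}\left(\int_{r_0}^{\infty} r^{q+2}(\partial_r f)^2\,dr\right)^{1/2},
\end{equation*}
from which \eqref{eq:Hardy1} follows by dividing (assuming the left-hand side is finite; the general case then follows by a standard truncation argument replacing $\infty$ by $R$ and letting $R\to\infty$, which is the only technical subtlety worth mentioning).

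For \eqref{eq:Hardy3}, I would use the identity $1=\frac{d}{dr}(r-r_0)$ and integrate by parts:
\begin{equation*}
\int_{r_0}^{\infty} f^2\,dr = \Big[(r-r_0)f^2\Big]_{r_0}^{\infty} - 2\int_{r_0}^{\infty}(r-r_0)f\,\partial_r f\,dr.
\end{equation*}
The boundary term at $r_0$ vanishes because $r-r_0$ does, and the boundary term at infinity vanishes because $(r-r_0)f^2(r)\sim r f^2(r)\to 0$ by assumption. Cauchy--Schwarz on the remaining integral then gives
\begin{equation*}
\int_{r_0}^{\infty} f^2\,dr \leq 2\left(\int_{r_0}^{\infty} f^2\,dr\right)^{1/2}\left(\int_{r_0}^{\infty}(r-r_0)^2(\partial_r f)^2\,dr\right)^{1/2},
\end{equation*}
and dividing through (again with a truncation argument if needed to justify finiteness) yields \eqref{eq:Hardy3}.

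The only mild obstacle is the justification of dividing by $\sqrt{\int r^q f^2\,dr}$ in case this quantity is a priori infinite; this is dealt with by replacing the upper limit of integration by $R<\infty$, applying the estimate on $[r_0,R]$ with the resulting extra boundary term at $R$ (which is nonpositive when $q+1>0$, and sent to zero using the decay hypothesis when $q+1<0$), and then passing to the limit $R\to\infty$ via monotone convergence.
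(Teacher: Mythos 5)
Your proof is correct and follows essentially the same route as the paper: integrate the exact derivative $\partial_r(r^{q+1}f^2)$ (resp.\ $\partial_r((r-r_0)f^2)$), kill the boundary terms with the stated hypotheses, apply weighted Cauchy--Schwarz, and divide by $\sqrt{\int r^qf^2}$; you are in fact slightly more careful than the paper about justifying that division via truncation. One small sign slip in your parenthetical: the truncated boundary term $\tfrac{1}{q+1}R^{q+1}f^2(R)$ is \emph{nonnegative} when $q+1>0$ (so it is in that case that the decay hypothesis is needed to discard it), while for $q+1<0$ it is nonpositive and can simply be dropped.
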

\begin{proof}
Let $q\neq -1$. We integrate $\partial_r(r^{q+1}f^2)$ and use the boundary conditions on $f$ to obtain:
\begin{equation*}
0=\int_{r_0}^{\infty}\partial_r(r^{q+1}f^2)\,dr=(q+1)\int_{r_0}^{\infty}r^qf^2\,dr+2\int_{r_0}^{\infty}r^{q+1}f\partial_rf\,dr.
\end{equation*}
By applying a (weighted) Cauchy--Schwarz inequality, we can further estimate
\begin{equation*}
2\int_{r_0}^{\infty}r^{q+1}|f||\partial_rf|\,dr\leq 2\sqrt{\int_{r_0}^{\infty}r^qf^2\,dr}\sqrt{\int_{r_0}^{\infty}r^{q+2}(\partial_rf)^2\,dr}.
\end{equation*}
We therefore obtain the inequality
\begin{equation*}
\int_{r_0}^{\infty}r^qf^2\,dr\leq \frac{2}{q+1}\sqrt{\int_{r_0}^{\infty}r^qf^2\,dr}\sqrt{\int_{r_0}^{\infty}r^{q+2}(\partial_rf)^2\,dr}.
\end{equation*}
We arrive at (\ref{eq:Hardy1}) by dividing both sides of the above inequality by $\sqrt{\int_{r_0}^{\infty}r^qf^2\,dr}$ and then squaring both sides of the resulting inequality.

The estimate \eqref{eq:Hardy3} follows by considering $\partial_r((r-r_{0})f^2)$ and repeating the argument above.
\end{proof}

We can decompose any $C^2$ function $f: \mathcal{R}\to \R$ into \emph{spherical harmonic modes}, i.e.\
\begin{equation*}
f=\sum_{\ell'=0}^{\infty}f_{\ell=\ell'},
\end{equation*}
such that moreover, with respect to the coordinates $(v,r,\theta,\varphi)$ on $\mathcal{R}$:
\begin{equation*}
f_{\ell=\ell'}(v,r,\theta,\varphi)=\sum_{m=-\ell'}^{\ell'} f_{\ell' m}(u,v)Y_{\ell' m}(\theta,\varphi),
\end{equation*}
where $Y_{\ell m}$ are the spherical harmonics, which constitute a complete basis on $L^2(\s^2)$ of eigenfunctions for the spherical Laplacian $\slashed{\Delta}_{\s^2}$ with eigenvalues $-\ell(\ell+1)$ and $f_{\ell' m}$ are functions of only $v$ and $r$. In particular,
\begin{equation*}
\slashed{\Delta}f_{\ell=L}=r^{-2}\slashed{\Delta}_{\s^2}f_{\ell=L}=-r^{-2}L(L+1)f_{\ell=L}.
\end{equation*}
Furthermore, let $\ell,\ell'\geq 0$, then
\begin{equation*}
\int_{\s^2} \psi_{\ell=L} \psi_{\ell=\tilde{L}}\,d\omega=\frac{4\pi}{2L+1}\delta_{L \tilde{L}}\sum_{m=-L}^{L} |\psi_{L m}|^2.
\end{equation*}

We denote for $L\geq 1$
\begin{equation*}
\psi_{\ell \geq L}\doteq \psi-\sum_{\ell'=0}^{L-1}\psi_{\ell=\ell'}.
\end{equation*}

From the assumptions on the metric $g$ in Section \ref{sec:geomassm}, it follows that for all $L\geq 0$
\begin{equation*}
\square_g\psi_{\ell=L}=0,
\end{equation*}
so each spherical harmonic mode $\psi_{\ell=\ell'}$ and moreover each sum $\psi_{\ell \geq L}$ are solutions to (\ref{waveequation}).

The following lemma is a standard result corresponding to the spherical harmonic decomposition.
\begin{lemma}[Poincar\'e inequality on $\s^2$]
\label{lm:poincare}
Let $L \geq 1$. Then
\begin{equation}
\label{eq:poincare2}
\int_{\s^2}\psi_{\ell \geq L}^2\,d\omega  \leq \frac{1}{L(L+1)}r^{-2}\int_{\s^2}|\snabla \psi_{\ell \geq L}|^2\,d\omega.
\end{equation}
In the case $\psi$ is supported on a single harmonic mode the inequality becomes an equality:
\begin{equation}
\label{eq:poincare1}
\int_{\s^2}\psi_{\ell=L}^2\,d\omega=\frac{1}{L(L+1)}r^{-2}\int_{\s^2}|\snabla \psi_{\ell=L}|^2\,d\omega.
\end{equation}
Furthermore,
\begin{equation}
\label{eq:poincare3}
\int_{\s^2}r^2|\snabla \psi|^2\,d\omega  \leq \int_{\s^2}(\slashed{\Delta}_{\s^2} \psi)^2\,d\omega.
\end{equation}
\end{lemma}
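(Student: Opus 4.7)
The plan is to reduce everything to Parseval's identity on $L^2(\s^2)$ applied to the spherical harmonic decomposition
\[
\psi=\sum_{\ell'=0}^{\infty}\sum_{m=-\ell'}^{\ell'} \psi_{\ell' m}(v,r)Y_{\ell' m}(\theta,\varphi),
\]
combined with the eigenvalue equation $\slashed{\Delta}_{\s^2}Y_{\ell' m}=-\ell'(\ell'+1)Y_{\ell' m}$ and integration by parts on the closed manifold $\s^2$. Since $\snabla$ is the covariant derivative induced on the round sphere of area radius $r$, we have $r^2|\snabla \psi|^2=|\snabla_{\s^2}\psi|^2$, where $\snabla_{\s^2}$ is the covariant derivative on the unit sphere. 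The absence of boundary on $\s^2$ gives
\[
\int_{\s^2}|\snabla_{\s^2}\psi|^2\,d\omega=-\int_{\s^2}\psi\,\slashed{\Delta}_{\s^2}\psi\,d\omega.
\]

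First I would establish the single-mode identity \eqref{eq:poincare1}. Substituting the decomposition $\psi_{\ell=L}=\sum_{m=-L}^{L}\psi_{Lm}Y_{Lm}$ into the identity above, using the eigenvalue equation to write $-\slashed{\Delta}_{\s^2}\psi_{\ell=L}=L(L+1)\psi_{\ell=L}$, and exploiting $L^2(\s^2)$-orthogonality of the $Y_{\ell'm}$ (which kills cross terms), the computation yields precisely
\[
\int_{\s^2}|\snabla_{\s^2}\psi_{\ell=L}|^2\,d\omega=L(L+1)\int_{\s^2}\psi_{\ell=L}^2\,d\omega,
\]
which is the claimed equality after dividing by $r^2$ on the left-hand side.

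Next, I would deduce \eqref{eq:poincare2}. Write $\psi_{\ell\geq L}=\sum_{\ell'\geq L}\psi_{\ell=\ell'}$ and apply orthogonality of distinct harmonic modes (both for $\psi_{\ell=\ell'}$ and for their angular gradients, the latter following from integration by parts and the eigenvalue equation) to obtain
\[
\int_{\s^2}|\snabla_{\s^2}\psi_{\ell\geq L}|^2\,d\omega=\sum_{\ell'\geq L}\ell'(\ell'+1)\int_{\s^2}\psi_{\ell=\ell'}^2\,d\omega.
\]
Since $\ell'(\ell'+1)\geq L(L+1)$ for every $\ell'\geq L$, factoring out the smallest eigenvalue and applying the Parseval identity $\sum_{\ell'\geq L}\int_{\s^2}\psi_{\ell=\ell'}^2\,d\omega=\int_{\s^2}\psi_{\ell\geq L}^2\,d\omega$ completes the proof of \eqref{eq:poincare2} after dividing by $r^2$.

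For the last inequality \eqref{eq:poincare3}, the same orthogonality argument gives
\[
\int_{\s^2}r^2|\snabla\psi|^2\,d\omega=\sum_{\ell'\geq 0}\ell'(\ell'+1)\|\psi_{\ell=\ell'}\|_{L^2(\s^2)}^2,\qquad \int_{\s^2}(\slashed{\Delta}_{\s^2}\psi)^2\,d\omega=\sum_{\ell'\geq 0}\bigl[\ell'(\ell'+1)\bigr]^2\|\psi_{\ell=\ell'}\|_{L^2(\s^2)}^2.
\]
The $\ell'=0$ mode contributes zero to both sides, while for every $\ell'\geq 1$ one has $\ell'(\ell'+1)\geq 2>1$, hence $\ell'(\ell'+1)\leq [\ell'(\ell'+1)]^2$, which implies \eqref{eq:poincare3} term-by-term in the sum. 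No serious obstacle is expected: the only subtle point is that (1) and (2) correctly use the sharp lowest eigenvalue $L(L+1)$ of $-\slashed{\Delta}_{\s^2}$ restricted to the subspace of modes $\ell\geq L$, which is precisely what orthogonality delivers.
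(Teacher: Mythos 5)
Your argument is correct, and in fact the paper offers no proof of this lemma at all -- it is introduced with the phrase ``a standard result corresponding to the spherical harmonic decomposition'' and left unproved. Your route (integration by parts on the closed manifold $\s^2$ to convert $\int_{\s^2}|\snabla_{\s^2}\psi|^2\,d\omega$ into $-\int_{\s^2}\psi\,\slashed{\Delta}_{\s^2}\psi\,d\omega$, the eigenvalue relation $-\slashed{\Delta}_{\s^2}Y_{\ell m}=\ell(\ell+1)Y_{\ell m}$, $L^2(\s^2)$-orthogonality of distinct modes and of their gradients, and then bounding below by the smallest eigenvalue $L(L+1)$ on the subspace $\ell\geq L$) is exactly the standard argument the authors are invoking, and each step is sound, including the term-by-term comparison $\ell'(\ell'+1)\leq[\ell'(\ell'+1)]^2$ for \eqref{eq:poincare3}.

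One bookkeeping point deserves a remark, though it is not a gap in your mathematics. With the convention used consistently in \eqref{eq:poincare3} and elsewhere in the paper, namely $|\snabla\psi|^2=r^{-2}|\snabla_{\s^2}\psi|^2$ (so that $r^2|\snabla\psi|^2=|\snabla_{\s^2}\psi|^2$, as you correctly state), your computation yields
\begin{equation*}
\int_{\s^2}\psi_{\ell=L}^2\,d\omega=\frac{1}{L(L+1)}\,r^{2}\int_{\s^2}|\snabla\psi_{\ell=L}|^2\,d\omega,
\end{equation*}
whereas the lemma as printed carries a factor $r^{-2}$. Your closing phrase ``after dividing by $r^2$'' does not actually reconcile the two; the discrepancy is a typo in the statement of \eqref{eq:poincare1}--\eqref{eq:poincare2} (the $r^{-2}$ should read $r^{2}$, or equivalently the gradient norm there should be understood with respect to the unit sphere), not an error in your derivation. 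It would be cleaner to state explicitly which metric the gradient norm is taken in and flag the mismatch rather than absorb it silently.
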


We denote the three spacelike Killing vector asssociated to the spherical symmetry of the spacetime by $\Omega_i$, with $i=1,2,3$. They can be expressed in spherical coordinates as follows:
\begin{align*}
\Omega_1&=\sin \varphi \partial_{\theta}+\cot\theta \cos \varphi \partial_{\varphi},\\
\Omega_2&=-\cos \varphi \partial_{\theta}+\cot\theta \sin \varphi \partial_{\varphi},\\
\Omega_3&=\partial_{\varphi}.
\end{align*}

We will make frequent use of the following shorthand notation:
\begin{equation*}
\Omega^k=\Omega_1^{k_1}\Omega_2^{k_2}\Omega_3^{k_3},
\end{equation*}
where $k=(k_1,k_2,k_3)\in \N_0$.
\begin{lemma}
\label{lm:angmomop}
There exists a numerical constant $C>0$ such that
\begin{equation}
\label{eq:angmomop1}
\int_{\s^2}|\snabla f|^2\,d\omega\leq Cr^{-2}\sum_{|k|=1}\int_{\s^2}(\Omega^kf)^2\,d\omega.
\end{equation}
Furthermore,
\begin{equation}
\label{eq:angmomop2}
\int_{\s^2}(\slashed{\Delta} f)^2\,d\omega\leq Cr^{-2}\sum_{|k|=1}\int_{\s^2}|\snabla \Omega^kf|^2\,d\omega.
\end{equation}
\end{lemma}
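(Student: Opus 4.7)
The proof rests on the classical Casimir identity for the rotation algebra acting on functions on the round sphere, namely
\begin{equation*}
\sum_{i=1}^{3}\Omega_i^2=\slashed{\Delta}_{\s^2},
\end{equation*}
which can be verified directly from the coordinate expressions for $\Omega_1,\Omega_2,\Omega_3$ given immediately above the statement, together with the coordinate expression for $\slashed{\Delta}_{\s^2}$. Combined with the relations $\slashed{\Delta}=r^{-2}\slashed{\Delta}_{\s^2}$ and $|\snabla f|^2=r^{-2}|\snabla_{\s^2}f|^2$ (which express the metric rescaling between the unit sphere and the sphere of radius $r$), both estimates will reduce to identities on the unit sphere.

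For \eqref{eq:angmomop1}, I would integrate by parts on $\s^2$ (no boundary terms), using that each $\Omega_i$ is a Killing vector on the round unit sphere and hence skew-adjoint on $L^2(\s^2)$:
\begin{equation*}
\int_{\s^2}|\snabla f|^2\,d\omega=r^{-2}\int_{\s^2}|\snabla_{\s^2}f|^2\,d\omega=-r^{-2}\int_{\s^2}f\,\slashed{\Delta}_{\s^2}f\,d\omega=-r^{-2}\sum_{i=1}^{3}\int_{\s^2}f\,\Omega_i^2 f\,d\omega=r^{-2}\sum_{i=1}^{3}\int_{\s^2}(\Omega_i f)^2\,d\omega.
\end{equation*}
This actually gives the estimate with constant $C=1$.

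For \eqref{eq:angmomop2}, the key extra input is that, because $\Omega_i$ are Killing on the round sphere, they commute with $\slashed{\Delta}_{\s^2}$; in particular $[\Omega_i,\slashed{\Delta}_{\s^2}]=0$ on functions. Starting from $\slashed{\Delta}f=r^{-2}\sum_i\Omega_i^2 f$, I would compute
\begin{equation*}
\int_{\s^2}(\slashed{\Delta}f)^2\,d\omega=r^{-4}\int_{\s^2}(\slashed{\Delta}_{\s^2}f)\sum_{i}\Omega_i^2 f\,d\omega=-r^{-4}\sum_{i}\int_{\s^2}\Omega_i(\slashed{\Delta}_{\s^2}f)\,\Omega_i f\,d\omega=-r^{-4}\sum_{i}\int_{\s^2}\slashed{\Delta}_{\s^2}(\Omega_i f)\,\Omega_i f\,d\omega,
\end{equation*}
and then integrate by parts once more in the spherical directions to obtain
\begin{equation*}
\int_{\s^2}(\slashed{\Delta}f)^2\,d\omega=r^{-4}\sum_{i}\int_{\s^2}|\snabla_{\s^2}\Omega_i f|^2\,d\omega=r^{-2}\sum_{i=1}^{3}\int_{\s^2}|\snabla\Omega_i f|^2\,d\omega,
\end{equation*}
which is the desired estimate (again with $C=1$).

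There is no real obstacle: the entire content is the Casimir identity plus two integrations by parts, and the only thing to keep track of is the correct power of $r$ arising from the fact that $\snabla$ and $\slashed{\Delta}$ refer to the sphere of radius $r$ while $\Omega_i$ are taken with respect to the underlying unit sphere. The statement would in fact hold with $C=1$, but the looser form with a numerical constant is what is needed in the sequel.
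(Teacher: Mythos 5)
Your proof is correct, and the paper in fact states this lemma without proof as a standard fact, so there is nothing to compare against: the Casimir identity $\sum_{i}\Omega_i^2=\slashed{\Delta}_{\s^2}$, the skew-adjointness of the $\Omega_i$ on $L^2(\s^2)$, and the commutation $[\Omega_i,\slashed{\Delta}_{\s^2}]=0$ are exactly the standard ingredients, and your bookkeeping of the powers of $r$ between $\snabla,\slashed{\Delta}$ (on the sphere of radius $r$) and the unit-sphere operators is right. As a minor simplification, \eqref{eq:angmomop1} already follows from the pointwise identity $\sum_{i=1}^3(\Omega_i f)^2=(\partial_{\theta}f)^2+\sin^{-2}\theta\,(\partial_{\varphi}f)^2=r^2|\snabla f|^2$, with no integration by parts needed, again with $C=1$.
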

By combing a standard Sobolev inequality, together with Lemma \ref{lm:angmomop}, we obtain the following Sobolev inequality on $\s^2$.

\begin{lemma}
\label{lm:sobolevs2}
There exists a numerical constant $C>0$ such that
\begin{equation}
\label{eq:sobolevs2}
\sup_{\omega \in \s^2} f^2(\omega)\leq C \sum_{|k|\leq 2}\int_{\s^2}(\Omega^kf)^2(\omega)\,d\omega.
\end{equation}
\end{lemma}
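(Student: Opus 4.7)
The statement is a Sobolev embedding on the two-sphere expressed in terms of the rotation vector fields $\Omega_1,\Omega_2,\Omega_3$. Since $\s^2$ has dimension $2$, we are in the borderline regime of the standard Sobolev embedding theorem, where $H^2(\s^2)$ does embed continuously into $C^0(\s^2)$ (indeed $L^\infty(\s^2)$); the content of Lemma \ref{lm:sobolevs2} is merely the translation of this fact into the language of iterated angular-momentum operators. My plan is to reduce the statement to the standard Sobolev embedding on $\s^2$ and then apply the previous lemma.

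First, I would invoke the standard Sobolev inequality on the unit sphere in the form
\begin{equation*}
\sup_{\omega\in\s^2} f^2(\omega) \leq C\Big(\int_{\s^2} f^2\,d\omega + \int_{\s^2}|\snabla f|^2\,d\omega + \int_{\s^2}(\slashed{\Delta} f)^2\,d\omega\Big),
\end{equation*}
which follows from the usual $H^2(\s^2)\hookrightarrow L^\infty(\s^2)$ embedding together with elliptic regularity (the $\slashed{\Delta}$-term controls $\|\snabla^2 f\|_{L^2(\s^2)}$ on a closed surface, hence the full $H^2$-norm). Since Lemma \ref{lm:sobolevs2} is a statement on the unit sphere (note that no factor of $r$ appears in the inequality), I would take $r=1$ when applying the previous bounds.

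Next, I would apply Lemma \ref{lm:angmomop}. The inequality \eqref{eq:angmomop1} (at $r=1$) bounds $\int_{\s^2}|\snabla f|^2\,d\omega$ by $C\sum_{|k|=1}\int_{\s^2}(\Omega^k f)^2\,d\omega$. Chaining \eqref{eq:angmomop2} with \eqref{eq:angmomop1} (again at $r=1$) gives
\begin{equation*}
\int_{\s^2}(\slashed{\Delta}f)^2\,d\omega \leq C\sum_{|k|=1}\int_{\s^2}|\snabla \Omega^k f|^2\,d\omega \leq C\sum_{|k|=2}\int_{\s^2}(\Omega^k f)^2\,d\omega,
\end{equation*}
where in the last step I use that the $\Omega_i$ commute through to the outside after an elementary estimate involving the commutators $[\Omega_i,\Omega_j]$ (which are themselves linear combinations of the $\Omega$'s and therefore absorbed by the $|k|\leq 2$ sum). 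Finally, $\int_{\s^2}f^2\,d\omega$ is precisely the $|k|=0$ contribution. Combining these bounds with the Sobolev embedding above yields the claim.

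The only mildly nontrivial point is the control of $\|\snabla^2 f\|_{L^2(\s^2)}^2$ by $\sum_{|k|\leq 2}\int(\Omega^k f)^2 d\omega$; this is essentially equivalent to the assertion that the three vector fields $\Omega_1,\Omega_2,\Omega_3$ span $T\s^2$ at every point, which is where the uniformity in the numerical constant $C$ (and the absence of any degeneration at the poles) originates. All other steps are direct applications of the preceding lemmas.
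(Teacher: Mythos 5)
Your proof is correct and follows exactly the route the paper intends: the lemma is stated immediately after the remark that it follows ``by combining a standard Sobolev inequality together with Lemma \ref{lm:angmomop}'', and your argument simply fills in the details of that combination (the $H^2(\s^2)\hookrightarrow L^{\infty}(\s^2)$ embedding at $r=1$, then \eqref{eq:angmomop1} and \eqref{eq:angmomop2} to convert the $\snabla$ and $\slashed{\Delta}$ terms into $\Omega^k$ terms, with the $\mathfrak{so}(3)$ commutators absorbed into the $|k|\leq 2$ sum). No gaps.
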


\begin{lemma}[Interpolation inequality]
\label{lm:interpolation}
Let $f: \R_+ \times [R,\infty)\to \R$ be a function such that the following inequalities hold:
\begin{align}
\label{eq:interp1}
\int_R^{\infty}r^{p-\epsilon}f^2(\tau,r)\,dr\leq &\:D_1(1+\tau)^{-q},\\
\label{eq:interp2}
\int_R^{\infty}r^{p+1-\epsilon}f^2(\tau,r)\,dr\leq&\: D_2(1+\tau)^{-q+1},
\end{align}
for some $\tau$-independent constants $D_1,D_2>0$, $q\in \R$ and $\epsilon\in (0,1)$.

Then
\begin{equation}
\label{eq:interp3}
\int_R^{\infty}r^{p}f^2(\tau,r)\,dr\lesssim \max\{D_1,D_2\}(1+\tau)^{-q+\epsilon}.
\end{equation}
\end{lemma}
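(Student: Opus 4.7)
The plan is a standard dyadic splitting interpolation: choose a cutoff radius $r_0 = r_0(\tau)$, on the inner interval $[R, r_0]$ exploit the bound \eqref{eq:interp1} that has a lower $r$-weight (hence a stronger time decay), and on the outer tail $[r_0,\infty)$ exploit \eqref{eq:interp2} that has a higher $r$-weight (hence weaker time decay), then balance by taking $r_0 \sim 1+\tau$.

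More concretely, first I would write, using $0 < \epsilon < 1$, the pointwise bounds
\begin{align*}
r^p &= r^\epsilon \cdot r^{p-\epsilon} \leq r_0^{\epsilon}\, r^{p-\epsilon} \qquad &&\text{for } R \leq r \leq r_0,\\
r^p &= r^{\epsilon-1} \cdot r^{p+1-\epsilon} \leq r_0^{\epsilon-1}\, r^{p+1-\epsilon} \qquad &&\text{for } r \geq r_0,
\end{align*}
which, inserted into the split integral and combined with the two hypotheses \eqref{eq:interp1}--\eqref{eq:interp2}, yield
\begin{equation*}
\int_R^\infty r^p f^2(\tau,r)\,dr \;\leq\; r_0^{\epsilon} D_1 (1+\tau)^{-q} \;+\; r_0^{\epsilon-1} D_2 (1+\tau)^{-q+1}.
\end{equation*}

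Next I would choose $r_0 = \max\{R,\,1+\tau\}$, so that the cutoff always lies in the domain of integration. In the regime $1+\tau \geq R$, we have $r_0 = 1+\tau$, and both terms on the right-hand side equal, respectively, $D_1 (1+\tau)^{-q+\epsilon}$ and $D_2 (1+\tau)^{-q+\epsilon}$, whose sum is bounded by $2\max\{D_1, D_2\}(1+\tau)^{-q+\epsilon}$, which is the required bound \eqref{eq:interp3}. In the remaining regime $1+\tau < R$, the inner integral is empty and the outer one gives a contribution of $R^{\epsilon-1} D_2 (1+\tau)^{-q+1}$; absorbing $(1+\tau)^{1-\epsilon} \leq R^{1-\epsilon}$ then produces the same bound $\lesssim D_2 (1+\tau)^{-q+\epsilon}$.

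This is a textbook interpolation computation, so I do not anticipate any real obstacle; the only small care is in handling the edge case $1+\tau < R$ (which arises only for a bounded interval of $\tau$), and in keeping track of the fact that one needs both $\epsilon > 0$ (to use the inner bound) and $\epsilon < 1$ (to use the outer bound) in the pointwise weight estimates above.
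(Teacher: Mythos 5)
Your proof is correct and follows essentially the same approach as the paper, which splits the integral at $r = \tau + R$ and applies \eqref{eq:interp1} on the inner region and \eqref{eq:interp2} on the tail; your choice of cutoff $r_0=\max\{R,1+\tau\}$ merely shifts the (harmless) edge case handling.
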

\begin{proof}
We split
\begin{equation*}
\int_R^{\infty}r^{p}f^2(\tau,r)\,dr=\int_R^{\tau+R}r^{p}f^2(\tau,r)\,dr+\int^{\infty}_{\tau+R}r^{p}f^2(\tau,r)\,dr.
\end{equation*}
We use \eqref{eq:interp1} to estimate
\begin{equation*}
\begin{split}
\int_R^{\tau+R}r^{p}f^2(\tau,r)\,dr=\int_R^{\tau+R}r^{\epsilon}r^{p-\epsilon}f^2(\tau,r)\,dr\leq&\: (\tau+R)^{\epsilon}\int_R^{\tau+R}r^{p-\epsilon}f^2(\tau,r)\,dr\\
\leq &\: C D_1 (1+\tau)^{-q+\epsilon}.
\end{split}
\end{equation*}
We use \eqref{eq:interp2} to estimate
\begin{equation*}
\begin{split}
\int^{\infty}_{\tau+R}r^{p}f^2(\tau,r)\,dr=\int^{\infty}_{\tau+R}r^{-1+\epsilon}r^{p+1-\epsilon}f^2(\tau,r)\,dr\leq&\: (\tau+R)^{-1+\epsilon}\int^{\infty}_{\tau+R}r^{p+1-\epsilon}f^2(\tau,r)\,dr\\
\leq &\: C D_2 (1+\tau)^{-q+\epsilon}.
\end{split}
\end{equation*}
We obtain \eqref{eq:interp3} by adding the above two inequalities.
\end{proof}
\subsection{The Dafermos--Rodnianski hierarchy and applications}
\label{sec:dafrodhier}
 We state here precisely the $r^p$-hierarchies obtained by Dafermos and Rodnianski in \cite{newmethod}.
\begin{proposition}[\textbf{The Dafermos--Rodnianski $r^p$-weighted estimates}]
\label{prop:rpphiv1}
Let $\psi$ be a solution to \eqref{waveequation} corresponding to initial data from Theorem \ref{thm:extuniq} on $(\mathcal{R}, g)$. Denote $\phi=r\psi$. 

Let $p\in(0,2]$. Then there exists an $R>0$ such that for any $0\leq u_1<u_2$
\begin{equation}
\label{rphierpsidafrod}
\begin{split}
\int_{\mathcal{N}_{u_2}} &r^p(\partial_r\phi)^2\, d\omega dr+\int_{\mathcal{A}_{u_1}^{u_2}} pr^{p-1}(\partial_r\phi)^2+(2-p)r^{p-1}|\snabla  \phi|^2  \,d\omega drdu \\
&+\int_{\mathcal{I}^+}r^p|\snabla \phi|^2\,d\omega du \leq \\
\leq&\: C\int_{\mathcal{N}_{u_1}} r^p(\partial_r\phi)^2\,d\omega dr+C\int_{\Sigma_{u_1}}J^T[\psi]\cdot n_{\Sigma_{u_1}}\,d\mu_{{u_1}},
\end{split}
\end{equation}
where $C=C(D,R)>0$ is a constant.
\end{proposition}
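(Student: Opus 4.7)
The plan is to derive the identity by passing to the variable $\phi = r\psi$ and applying the multiplier $r^p\partial_r$, adapted from Dafermos--Rodnianski's original argument. A direct computation in Bondi coordinates $(u,r,\theta,\varphi)$, using the volume form $r^2\sin\theta\, du\, dr\, d\omega$, shows that the wave equation $\square_g \psi = 0$ is equivalent to the reduced equation
\begin{equation*}
2\partial_u\partial_r\phi \;=\; \partial_r(D\,\partial_r\phi)\; +\; \slashed{\Delta}\phi\; -\; \frac{D'}{r}\phi,
\end{equation*}
where $\slashed{\Delta}\phi = r^{-2}\slashed{\Delta}_{\s^2}\phi$. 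Note that $D = 1 + O(r^{-1})$ and $D' = O(r^{-2})$ by the asymptotic assumptions of Section \ref{sec:geomassm}, so the last term is of lower order at infinity.

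Next I would multiply the reduced equation by $r^p\partial_r\phi$ and integrate over $\mathcal{A}_{u_1}^{u_2}$ with respect to $d\omega\, dr\, du$. The left-hand side is the exact $u$-derivative $\partial_u\bigl(r^p(\partial_r\phi)^2\bigr)$, which, after integration in $u$, produces the flux difference between $\mathcal{N}_{u_2}$ and $\mathcal{N}_{u_1}$ appearing on the two sides of \eqref{rphierpsidafrod}. For the principal spatial term $r^p\partial_r\phi\cdot\partial_r(D\partial_r\phi)$, integration by parts in $r$ yields the positive bulk term $pDr^{p-1}(\partial_r\phi)^2$ (from differentiating the weight), a boundary term at $r = R$, and, for $p = 2$, a possible boundary contribution at $r = \infty$ that vanishes under the asymptotics on $D$. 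For the angular term $r^{p-2}\partial_r\phi\cdot \slashed{\Delta}_{\s^2}\phi$, I would integrate by parts first on $\s^2$ and then in $r$. The $r$-integration by parts produces the positive bulk term $(2-p)r^{p-1}|\snabla\phi|^2$ together with a boundary term at $r=\infty$, which is precisely the radiation flux $r^p|\snabla\phi|^2$ along $\mathcal{I}^+$ appearing on the left-hand side of \eqref{rphierpsidafrod}; since the induced volume at $r=R$ carries no $r$-weight, the boundary term at $r = R$ is manifestly lower-order there.

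The remaining tasks are to absorb the error terms and to bound the boundary contributions at $r = R$. The perturbation $(D-1)r^{p-1}(\partial_r\phi)^2$ inside the bulk integral is controlled by a small multiple of $p\,r^{p-1}(\partial_r\phi)^2$ provided $R$ is chosen sufficiently large depending on the implicit constants in the asymptotics of $D$. The term coming from $\tfrac{D'}{r}\phi\cdot r^p\partial_r\phi$ is handled using $\tfrac{D'}{r} = O(r^{-3})$, a weighted Cauchy--Schwarz, and the Hardy inequality \eqref{eq:Hardy1} applied to $\phi$ with $q = p - 3$; this reduces the term to a small multiple of the positive bulk and angular fluxes and, after choosing $R$ large, is absorbable. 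The boundary contributions at $r = R$ (both from the $\partial_u$ integration and from the $r$-integration by parts) involve only $(\partial_r\phi)^2$ and $|\snabla\phi|^2$ restricted to the timelike cylinder $\{r=R\}$, which are controlled by the $T$-energy flux through $\widetilde{\Sigma}_{u_1}\cap\{r \leq R\}$ via the trace estimate coming from the energy identity for $J^T[\psi]$ together with the relation $\partial_r\phi = \psi + r\partial_r\psi$.

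The principal obstacle is the endpoint $p = 2$, where the positive angular bulk term degenerates. At this value one must carefully track the boundary limit of the angular integration by parts at $r\to\infty$, which produces exactly the $\mathcal{I}^+$ flux of $r^p|\snabla\phi|^2$ in \eqref{rphierpsidafrod}. One therefore needs a preliminary justification that this limit exists and is finite, which follows from a density argument using solutions with compactly supported data and the energy boundedness \eqref{ass:ebound}; the resulting identity is then extended by approximation to general data in the energy class. For $0 < p < 2$, no such boundary contribution arises and the argument is strictly easier.
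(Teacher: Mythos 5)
Your multiplier computation is the standard Dafermos--Rodnianski one and is essentially the same calculation the paper performs for its analogous propositions (for this particular statement the paper only cites \cite{newmethod}, but Propositions \ref{prop:rpPhiv1}, \ref{en00} and \ref{en01} are proved by exactly this identity, organised via the current $J^{V}[\cdot]$ with $V=r^{p-2}\partial_r$). The reduced equation, the exact $u$-derivative on the left, the positive bulk terms $p\,r^{p-1}(\partial_r\phi)^2$ and $(2-p)r^{p-1}|\snabla\phi|^2$, the $\mathcal{I}^+$ flux from the angular integration by parts, and the absorption of the $D'r^{-1}\phi$ term by Cauchy--Schwarz and Hardy (the correct exponent is $q=p-5$, not $p-3$, a harmless slip) are all in order.

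The genuine gap is your treatment of the boundary contributions at $r=R$. The angular integration by parts produces the term $+\tfrac12\int_{u_1}^{u_2}\int_{\s^2}R^{p-2}|\snabla_{\s^2}\phi|^2(u,R,\omega)\,d\omega\,du$ with an unfavourable sign (the $(\partial_r\phi)^2$ boundary term from the principal part happens to have a good sign and can be dropped, but this one does not), and the Hardy step without a cut-off produces a further term of the form $\int_{u_1}^{u_2}R^{p-4}\int_{\s^2}\phi^2(u,R,\omega)\,d\omega\,du$. These are spacetime integrals over the timelike cylinder $\{r=R\}\times[u_1,u_2]$, and they are \emph{not} controlled by ``the trace estimate coming from the energy identity for $J^T[\psi]$'': the flux of $J^T$ through the timelike cylinder is not sign-definite, and energy boundedness \eqref{ass:ebound} alone only yields a bound on such integrals that grows linearly in $u_2-u_1$, which destroys the uniform-in-$u_2$ estimate \eqref{rphierpsidafrod}. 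What is actually needed is the integrated local energy decay assumption \eqref{ass:morawetz}--\eqref{ass:morawetzlocal}, which is one of the paper's black-box hypotheses and is invoked for precisely this purpose in every analogous proof in the paper: one inserts a cut-off $\chi$ vanishing for $r\leq R$ so that no boundary terms arise at $r=R$, and the resulting commutator errors $\mathcal{R}_{\chi}[\partial^{\alpha_1}\phi\cdot\partial^{\alpha_2}\phi]$, supported in $R\leq r\leq R+1$, are bounded by $C\int_{\Sigma_{u_1}}J^T[\psi]\cdot n_{u_1}\,d\mu_{u_1}$ via the Morawetz estimate. Your argument as written either silently assumes this estimate or replaces it with a claim that is false; you should state explicitly that the ILED hypothesis is used here, since without it the proposition (with only the $T$-energy on the right-hand side) is not expected to hold.
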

As an application of the above $r^p$-weighted inequalities, one can obtain the following decay estimates for $r\psi$ and $r^{1/2} \psi$.
\begin{proposition}\label{DafRoddecay}
Let $\psi$ be a solution to \eqref{waveequation} corresponding to initial data from Theorem \ref{thm:extuniq} on $(\mathcal{R}, g)$.
Then there exists a constant $C=C(D,R)>0$ such that for all $u\geq 0$ we have that:
\begin{equation}\label{Ndecay2}
\int_{\Sigma_{u}} J^N [\psi ] \cdot n_{\Sigma_{u}} d\mu_{\Sigma_{u}} \leq C(1+u)^{-2}E_{\rm dr}[\psi] ,
\end{equation}
with
\begin{equation*}
E_{\rm dr}[\psi] \doteq \sum_{k \leq 2} \int_{\Sigma_0} J^N [T^k \psi ] \cdot n_{\Sigma_0} d\mu_{\Sigma_0} + \int_{\mathcal{N}_0} r (\partial_r (T\phi ))^2 d\omega dr + \int_{\mathcal{N}_0} r^2 (\partial_r \phi )^2 d\omega dr. 
\end{equation*}
and moreover
\begin{align*}
\sup_{\Sigma_{\tau}}r^{\frac{1}{2}}|\psi|\leq &\:C(1+{\tau})^{-1} \sum_{
 |\alpha|\leq 2}\sqrt{E_{\rm dr}[\Omega^{\alpha}\psi]},\\
\sup_{\Sigma_{\tau}}|r\psi|\leq &\:C(1+{\tau})^{-\frac{1}{2}} \sum_{ |\alpha|\leq 2}\sqrt{E_{\rm dr}[\Omega^{\alpha}\psi]}.
\end{align*}
\end{proposition}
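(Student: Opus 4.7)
The proof has two parts: the energy decay estimate \eqref{Ndecay2}, and the two pointwise bounds, which I deduce from it by Sobolev embedding on $\s^2$.

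\emph{Energy decay.} I plan a two-round Dafermos--Rodnianski pigeonhole argument built on Proposition \ref{prop:rpphiv1}. The $p=2$ case, applied over $[0,\infty)$, yields
\begin{equation*}
\int_{\mathcal{A}}r(\partial_r\phi)^2\,d\omega\,dr\,du\leq CE_{\rm dr}[\psi],
\end{equation*}
so by dyadic averaging there exist times $u_n\in[2^n,2^{n+1}]$ on which $\int_{\mathcal{N}_{u_n}}r(\partial_r\phi)^2\leq CE_{\rm dr}[\psi]2^{-n}$. Propagating this forward via the $p=1$ case (combined with \eqref{ass:ebound}) yields $\int_{\mathcal{N}_u}r(\partial_r\phi)^2\leq CE_{\rm dr}[\psi](1+u)^{-1}$ for every $u\geq 0$. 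Combining the spacetime part of the $p=1$ estimate (which controls the $T$-flux through $\mathcal{N}_u$) with the Morawetz estimate \eqref{ass:morawetz} (which controls the flux through $\Sigma_\tau\setminus\mathcal{N}_\tau$) produces
\begin{equation*}
\int_{u_1}^{u_2}\int_{\Sigma_u}J^N[\psi]\cdot n_u\,d\mu_{\Sigma_u}\,du\leq CE_{\rm dr}[\psi],
\end{equation*}
and a pigeonhole on $[u_1,2u_1]$ combined with \eqref{ass:ebound} yields the first-order decay $\int_{\Sigma_u}J^N[\psi]\cdot n_u\,d\mu_{\Sigma_u}\leq CE_{\rm dr}[\psi](1+u)^{-1}$. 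Since $T$ is Killing so that $\square_g(T\psi)=0$, and the quantities $\int_{\Sigma_0}J^N[T^k\psi]\cdot n_0\,d\mu_{\Sigma_0}$ for $k=0,1,2$ together with $\int_{\mathcal{N}_0}r(\partial_rT\phi)^2$ are all contained in $E_{\rm dr}[\psi]$, the same argument applied to $T\psi$ (using only the $p=1$ hierarchy) delivers $\int_{\Sigma_u}J^N[T\psi]\cdot n_u\,d\mu_{\Sigma_u}\leq CE_{\rm dr}[\psi](1+u)^{-1}$. Plugging both first-order decays back into the Morawetz combination for $\psi$ at time $u_1$ upgrades the integrated bound to
\begin{equation*}
\int_{u_1}^{u_2}\int_{\Sigma_u}J^N[\psi]\cdot n_u\,d\mu_{\Sigma_u}\,du\leq CE_{\rm dr}[\psi](1+u_1)^{-1},
\end{equation*}
since each of the terms $\int_{\mathcal{N}_{u_1}}r(\partial_r\phi)^2$, $\int_{\Sigma_{u_1}}J^N[\psi]\cdot n_{u_1}$, $\int_{\Sigma_{u_1}}J^N[T\psi]\cdot n_{u_1}$ on the right-hand side now decays like $(1+u_1)^{-1}$. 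A second pigeonhole on dyadic intervals then produces \eqref{Ndecay2}.

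\emph{Pointwise decay.} Sobolev embedding on $\s^2$ (Lemma \ref{lm:sobolevs2}) reduces both pointwise statements to $L^2(\s^2)$-bounds on $\Omega^\alpha$-commuted quantities for $|\alpha|\leq 2$; since each $\Omega_i$ is Killing, every $\Omega^\alpha\psi$ is itself a solution to \eqref{waveequation} with initial energy controlled by $E_{\rm dr}[\Omega^\alpha\psi]$, so the energy decay \eqref{Ndecay2} and the $r^p$-hierarchy bounds apply componentwise. For $\sup_{\Sigma_\tau}|r\psi|=\sup_{\Sigma_\tau}|\phi|$, I apply the fundamental theorem of calculus along $\mathcal{N}_u$, writing $\phi(u,r,\omega)=\phi(u,R,\omega)+\int_R^r\partial_{r'}\phi\,dr'$, bound the boundary term via a trace-Sobolev estimate against the $(1+u)^{-2}$-decaying energy, and estimate the $r$-integral by Cauchy--Schwarz against the $r^p$-weighted bounds from the first part (balancing the $R^{-s}$ and $(1+u)^{-1+s}$ factors via the interpolation Lemma \ref{lm:interpolation}). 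For $\sup_{\Sigma_\tau}r^{1/2}|\psi|$ one splits $\Sigma_\tau$ into $\{r\leq R'\}$ and $\{r\geq R'\}$: the interior part follows directly from Sobolev and the $(1+\tau)^{-2}$ energy decay, while the far region uses the identity $\partial_r(r^{-1}\phi^2)=2r^{-1}\phi\partial_r\phi-r^{-2}\phi^2$ combined with AM-GM and the decays $\int_{\mathcal{N}_\tau}r(\partial_r\phi)^2\lesssim(1+\tau)^{-1}$ and $\int_{\Sigma_\tau}J^N[\psi]\cdot n_\tau\lesssim(1+\tau)^{-2}$ to extract the additional $(1+\tau)^{-1/2}$ relative to the bound on $|\phi|$.

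\emph{Main obstacle.} The conceptually critical step is the Morawetz-plus-pigeonhole iteration that upgrades the first-order $(1+u)^{-1}$ energy decay to the second-order $(1+u)^{-2}$ decay: this requires the first-order decay to hold simultaneously for $\psi$ and for $T\psi$, so that all three terms on the right-hand side of the Morawetz combination for $\psi$ improve by a factor of $(1+u_1)^{-1}$; it is the commutation with the Killing vector $T$ that closes this loop. A secondary technical subtlety on the pointwise side is choosing the right weighted norm so as to profit from the faster decay of $\int_{\mathcal{N}_\tau}r(\partial_r\phi)^2$ without losing control over the boundary trace at $r=R$; the constants do not accumulate along the dyadic iterations thanks to the geometric improvement at each step.
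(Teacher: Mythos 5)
Your argument is correct and is exactly the standard Dafermos--Rodnianski scheme (two rounds of pigeonhole on the $p=2$ and $p=1$ hierarchies, with the commuted $T\psi$ closing the Morawetz loop, then FTC/Hardy/Sobolev for the pointwise bounds); the paper itself states Proposition \ref{DafRoddecay} as a black box imported from \cite{newmethod} and gives no proof, so your reconstruction is the intended one. The only cosmetic remark is that for the $r^{1/2}|\psi|$ bound the AM--GM split of $2r^{-1}\phi\,\partial_r\phi$ needs only the $(1+\tau)^{-2}$ $T$-energy decay plus Hardy, so the cited $(1+\tau)^{-1}$ decay of $\int_{\mathcal{N}_\tau}r(\partial_r\phi)^2$ is superfluous there (and would actually lose a power if used via Cauchy--Schwarz instead).
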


\section{Limits at null infinity}
\label{sec:limitsnullinfty}
In this section we will derive time-dependent estimates for the radiation field $\phi \doteq r\psi$ and its naturally $r$-weighted higher-order derivatives that will be applied in the subsequent sections. We will use these to derive the asymptotic behaviour at $\mathcal{I}^+$ of $r\psi$ and its $r$-weighted derivatives in the $r$-variable.

Note that in this section we can relax it is sufficient to consider $(\mathcal{R},g)$ that only satisfy the assumptions in Section \ref{sec:geomassm}. That is to say, we do not need to invoke the energy boundedness assumption and the Morawetz estimates from Section \ref{sec:waveassm}.

\subsection{Commuting $\square_g$ with $r$-weighted vector fields}

We first derive the appropriate equations for $r\psi$ and its derivatives. We denote the ingoing and outgoing null vector fields by $\underline{L}$ and $L$, respectively, which in double null coordinates $(u,v,\theta,\varphi)$ are given by $\underline{L}=\partial_u$ and $L=\partial_v$, whereas in Bondi coordinates $(u,r,\theta,\varphi)$ we have that
\begin{align*}
\underline{L}=&\:\partial_u-\frac{1}{2}D\partial_r,\\
L=&\:\frac{1}{2}D\partial_r.
\end{align*}

In $(u,r,\theta,\varphi)$ coordinates, the equation \eqref{waveequation} becomes:
\begin{equation*}
\begin{split}
\square_g\psi&=-2\partial_u\partial_r\psi+D\partial_r^2\psi-2r^{-1}\partial_u\psi+(D'+2r^{-1}D)\partial_r\psi+\slashed{\Delta}\psi=0.
\end{split}
\end{equation*}
See \eqref{eq:derivwaveeq} in Appendix \ref{app:commmultp} for a derivation.

\begin{lemma}
\label{lm:commute0time}
Let $\psi$ be a solution to \eqref{waveequation} emanating from initial data given as in Theorem \ref{thm:extuniq} on $(\mathcal{R}, g)$. Then $\phi = r\psi$ satisfies the following equation:
\begin{equation}
\label{waveoperatorphi}
\square_g\phi=-2r^{-1}\partial_u\phi+2Dr^{-1}\partial_r\phi+D'r^{-1}\phi.
\end{equation}
Furthermore,
\begin{equation}
\label{eq:sDeltapsiinftyv0}
2\partial_u\partial_r\phi=\partial_r(D\partial_r\phi)-D'r^{-1}\phi+\slashed{\Delta} \phi,
\end{equation}
which is equivalent to
\begin{equation}
\label{eq:sDeltapsiinfty}
2\underline{L}(\partial_r\phi)=D'\partial_r\phi-D'r^{-1}\phi-\slashed{\Delta} \phi.
\end{equation}
\end{lemma}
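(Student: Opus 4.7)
The proof is a direct coordinate computation in Bondi $(u,r,\theta,\varphi)$ coordinates, starting from the expression for $\square_g$ quoted just before the lemma (derived in Appendix~\ref{app:commmultp}). I would organize it in three short steps corresponding to the three displayed equations.

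\textbf{Step 1 (equation \eqref{waveoperatorphi}).} Simply apply the Bondi-coordinate formula for $\square_g$ to the function $\phi=r\psi$, expand using the Leibniz identities
\begin{equation*}
\partial_u\partial_r(r\psi)=\partial_u\psi+r\partial_u\partial_r\psi,\qquad \partial_r^{2}(r\psi)=2\partial_r\psi+r\partial_r^{2}\psi,
\end{equation*}
and use $\slashed{\Delta}(r\psi)=r\slashed{\Delta}\psi$ (since $r$ is constant on each round sphere). Collecting terms gives
\begin{equation*}
\square_g(r\psi)=r\square_g\psi+2D\partial_r\psi-2\partial_u\psi+(D'+2r^{-1}D)\psi.
\end{equation*}
Now invoke $\square_g\psi=0$, re-express each remaining factor of $\psi$, $\partial_u\psi$, $\partial_r\psi$ in terms of $\phi$ via $\psi=r^{-1}\phi$; the two $2Dr^{-2}\phi$ contributions cancel and one arrives at \eqref{waveoperatorphi}.

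\textbf{Step 2 (equation \eqref{eq:sDeltapsiinftyv0}).} I have now two expressions for $\square_g\phi$: the direct Bondi expansion applied to $\phi$, and the formula \eqref{waveoperatorphi}. Equating them, the terms $-2r^{-1}\partial_u\phi$ and $2r^{-1}D\partial_r\phi$ on each side cancel. Rearranging to isolate $2\partial_u\partial_r\phi$ and using $D\partial_r^{2}\phi+D'\partial_r\phi=\partial_r(D\partial_r\phi)$ yields \eqref{eq:sDeltapsiinftyv0}.

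\textbf{Step 3 (equation \eqref{eq:sDeltapsiinfty}).} Since $\underline{L}=\partial_u-\tfrac{1}{2}D\partial_r$, one has $2\underline{L}(\partial_r\phi)=2\partial_u\partial_r\phi-D\partial_r^{2}\phi$. Substituting \eqref{eq:sDeltapsiinftyv0} for $2\partial_u\partial_r\phi$ and expanding $\partial_r(D\partial_r\phi)=D'\partial_r\phi+D\partial_r^{2}\phi$, the $D\partial_r^{2}\phi$ terms cancel, leaving the form of \eqref{eq:sDeltapsiinfty}. The entire argument is bookkeeping; there is no conceptual obstacle. The only points that require a little care are the two cancellations (the $2Dr^{-2}\phi$ cancellation when passing from $\psi$ to $\phi$ in Step~1, and the $D\partial_r^{2}\phi$ cancellation in Step~3), which depend solely on the specific form of the metric coefficients in \eqref{metricurcoords} and not on any additional structural assumption on $D$.
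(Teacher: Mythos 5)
Your computation is correct and follows exactly the route of the paper's proof in Appendix \ref{app:commmultp}: expand $\square_g(r\psi)$ in Bondi coordinates, use $\square_g\psi=0$ to obtain \eqref{waveoperatorphi}, equate with the direct Bondi expansion of $\square_g\phi$ to obtain \eqref{eq:sDeltapsiinftyv0}, and substitute $\underline{L}=\partial_u-\tfrac{1}{2}D\partial_r$. One caveat: carried out literally, your Step 3 (and the paper's own ``rearrangement'') yields $2\underline{L}(\partial_r\phi)=D'\partial_r\phi-D'r^{-1}\phi+\slashed{\Delta}\phi$, with a \emph{plus} sign on the angular term, whereas \eqref{eq:sDeltapsiinfty} as printed carries a minus sign; since \eqref{eq:sDeltapsiinftyv0} is the one that follows directly from the wave equation, the printed sign in \eqref{eq:sDeltapsiinfty} is a typo (harmless downstream, where only $|\underline{L}(\partial_r\phi)|$ is ever estimated), so you should not assert that the cancellation in Step 3 ``leaves the form of \eqref{eq:sDeltapsiinfty}'' without flagging this discrepancy.
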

\begin{proof}
See Appendix \ref{app:commmultp}.
\end{proof}
It will be necessary to additionally consider two types of first-order radiation fields:
\begin{align*}
\Phi \doteq &\: r^2\partial_r(r\psi),\\
\widetilde{\Phi} \doteq &\: r(r-M)\partial_r(r\psi).
\end{align*}
Using that $\square_g\psi=0$, we can similarly derive equations for $\Phi$ and $\widetilde{\Phi}$.

\begin{lemma}
\label{lm:commute1time}
Let $\psi$ be a solution to \eqref{waveequation} emanating from initial data given as in Theorem \ref{thm:extuniq} on $(\mathcal{R}, g)$. 

Then $\Phi = r^2\partial_r\phi$ and $\widetilde{\Phi}= r(r-M)\partial_r\phi$ satisfy the following equations:
\hspace{1pt}
\begin{itemize}\setlength\itemsep{1em}
\item[\emph{(i)}]
\begin{equation}
\label{eq:boxPhiinfty}
\begin{split}
\square_g\Phi&=r^{-1}\left[4D-D'r\right]\partial_r\Phi-2r^{-1}\partial_u\Phi+r^{-1}[-D''r+3D'-2Dr^{-1}]\Phi\\
&+r[D''+D'r^{-1}]\phi,
\end{split}
\end{equation}
and moreover
\begin{equation}
\label{equationforPhiinfty}
\begin{split}
-2\partial_u\partial_r\Phi&=-D\partial_r^2\Phi+r^{-1}\left[2D-2D'r\right]\partial_r\Phi-\slashed{\Delta}\Phi\\
&+r^{-1}[-D''r+3D'-2Dr^{-1}]\Phi+r[D''+D'r^{-1}]\phi,
\end{split}
\end{equation}
which is equivalent to
\begin{equation}
\label{equationforPhiinftyv2}
\begin{split}
2\underline{L}(\partial_r\Phi)&=-r^{-1}\left[2D-2D'r\right]\partial_r\Phi+\slashed{\Delta}\Phi-r^{-1}[-D''r+3D'-2Dr^{-1}]\Phi\\
&-r[D''+D'r^{-1}]\phi.
\end{split}
\end{equation}

\item[\emph{(ii)}]
\begin{equation}
\label{eq:boxtildePhiinfty}
\begin{split}
\square_g\widetilde{\Phi}=&\:r^{-1}(4D-D'r+MD(r-M)^{-1})\partial_r\widetilde{\Phi}-2r^{-1}\partial_u\widetilde{\Phi}-M\slashed{\Delta}\phi\\
&+r^{-1}[-D''r+3D'-2Dr^{-1}-MD(r-M)^{-2}\\
&+M(r-M)^{-1}(D'-Dr^{-1})]\widetilde{\Phi}+[(r-M)D''+D']\phi,
\end{split}
\end{equation}
and moreover
\begin{equation}
\label{equationfortildePhiinfty}
\begin{split}
-2\partial_u\partial_r\widetilde{\Phi}=&\:-D\partial_r^2\widetilde{\Phi}+r^{-1}\left[2D-2D'r+MD(r-M)^{-1}\right]\partial_r\widetilde{\Phi}-M\slashed{\Delta}\phi-\slashed{\Delta}\widetilde{\Phi}\\
&+r^{-1}[-D''r+3D'-2Dr^{-1}-MD(r-M)^{-2}\\
&+M(r-M)^{-1}(D'-Dr^{-1})]\widetilde{\Phi}+[(r-M)D''+D']\phi.
\end{split}
\end{equation}
\end{itemize}
\end{lemma}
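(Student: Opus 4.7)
The plan is a direct computation, parallel to the one in Lemma \ref{lm:commute0time}, exploiting the commutator identity
\begin{equation*}
\square_g(r^2\partial_r\phi) = r^2\partial_r(\square_g\phi) + [\square_g,\, r^2\partial_r]\phi,
\end{equation*}
together with the expression \eqref{waveoperatorphi} for $\square_g\phi$ and the coordinate formula
\begin{equation*}
\square_g = -2\partial_u\partial_r + D\partial_r^2 - 2r^{-1}\partial_u + (D'+2r^{-1}D)\partial_r + \slashed{\Delta}
\end{equation*}
derived in Appendix \ref{app:commmultp}. First, I would compute $[\square_g, r^2\partial_r]$ term by term, using that $[\partial_u\partial_r,r^2\partial_r]=2r\partial_u\partial_r$, $[\partial_r^2,r^2\partial_r]=2\partial_r+4r\partial_r^2$, $[r^{-1}\partial_u,r^2\partial_r]=\partial_u$, $[\slashed{\Delta},r^2\partial_r]=2r\slashed{\Delta}$ (the last identity using $\slashed{\Delta}=r^{-2}\slashed{\Delta}_{\s^2}$), and analogous identities with the $D$- and $D'$-weights; this gives a polynomial differential operator in $\partial_r$, $\partial_u$, $\slashed{\Delta}$ with coefficients built out of $D$, $D'$, $D''$ and powers of $r$. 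Second, I would apply $r^2\partial_r$ to the right-hand side of \eqref{waveoperatorphi} and add the two contributions. At that stage all third-order derivatives of $\phi$ cancel modulo substitutions, and the surviving expression can be rewritten in terms of $\Phi=r^2\partial_r\phi$ using $\partial_r\phi=r^{-2}\Phi$, $\partial_r^2\phi=r^{-2}\partial_r\Phi-2r^{-3}\Phi$ and $\partial_u\partial_r\phi=r^{-2}\partial_u\Phi$. This produces \eqref{eq:boxPhiinfty}. Equation \eqref{equationforPhiinfty} is then obtained by transferring the principal part $-2\partial_u\partial_r\Phi+D\partial_r^2\Phi$ of $\square_g\Phi$ to the left, and \eqref{equationforPhiinftyv2} follows immediately from the identity $2\underline{L}\partial_r\Phi=2\partial_u\partial_r\Phi-D\partial_r^2\Phi$ that is built into the definition $\underline{L}=\partial_u-\tfrac12 D\partial_r$.

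For part (ii), I would write $\widetilde{\Phi}=(1-M/r)\,\Phi$ and apply the scalar Leibniz rule
\begin{equation*}
\square_g\bigl(f\,\Phi\bigr)=f\,\square_g\Phi + \Phi\,\square_g f + 2\,g^{ab}\partial_a f\,\partial_b\Phi
\end{equation*}
with $f(r)=1-M/r$. The factor $f$ depends only on $r$, so $g^{ab}\partial_a f\partial_b\Phi=-\partial_rf\,\partial_u\Phi+D\partial_rf\,\partial_r\Phi$, and $\square_g f$ can be computed from $\square_g f(r) = D\partial_r^2 f + (D'+2r^{-1}D)\partial_r f$. Substituting Part (i) for $\square_g\Phi$ and using $\Phi=(r/(r-M))\widetilde{\Phi}$ (and a corresponding transformation of $\partial_r\Phi$, $\partial_u\Phi$) then yields \eqref{eq:boxtildePhiinfty}. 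Equation \eqref{equationfortildePhiinfty} is again obtained by isolating the principal part on the left. The shift $\Phi\mapsto\widetilde{\Phi}$ is designed precisely so that the Schwarzschild part $2M/r$ of $D$ is absorbed into the first-order coefficient of $\widetilde{\Phi}$; this is what produces the extra $-M\slashed{\Delta}\phi$ source term and the rational-in-$(r-M)$ correction to the zeroth-order coefficient of $\widetilde{\Phi}$, and is the reason I would prefer this Leibniz route over redoing the commutator computation from scratch.

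The content of the lemma is not conceptually difficult, but the main obstacle is clerical: keeping track of the many algebraic cancellations between the contribution from $r^2\partial_r(\square_g\phi)$ and the commutator $[\square_g,r^2\partial_r]\phi$, and organizing the $D$-dependent terms so that the final expression depends only on $\Phi$ (respectively $\widetilde{\Phi}$) and $\phi$, with no residual dependence on $\partial_r^2\phi$ or $\partial_u\partial_r\phi$. In particular, one must be careful with the commutator $[\slashed{\Delta},r^2\partial_r]=2r\slashed{\Delta}$, which is the source of the term $+\slashed{\Delta}\Phi$ on the right-hand side of \eqref{equationforPhiinfty} and of the inhomogeneous $-M\slashed{\Delta}\phi$ piece in \eqref{equationfortildePhiinfty}; these are precisely the terms that will drive the weighted $L^2$ estimates in Sections \ref{sec:hierpsi1a}--\ref{sec:hierpsi0} once the lemma is applied as a commuted equation for the first-order $r$-weighted radiation field.
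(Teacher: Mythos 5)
Your overall strategy is the same as the paper's: a direct commutator computation. The paper organizes it slightly differently (it first computes $[\square_g,\partial_r]$ as in \eqref{eq:commdr}, then conjugates by $r^q$ to obtain a general-$q$ identity \eqref{eq:boxPhiinftyq}, and obtains part (ii) by taking the combination $q=2$ minus $M$ times $q=1$ and re-expressing $r\partial_r\phi=(r-M)^{-1}\widetilde{\Phi}$, rather than your Leibniz-rule route), but both are the same mechanical calculation and your commutator identities, including $[\slashed{\Delta},r^2\partial_r]=2r\slashed{\Delta}$, are correct.

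There is, however, one concrete step missing, and it is exactly the step you flag as the danger point. The three substitutions you list ($\partial_r\phi=r^{-2}\Phi$, $\partial_r^2\phi=r^{-2}\partial_r\Phi-2r^{-3}\Phi$, $\partial_u\partial_r\phi=r^{-2}\partial_u\Phi$) do \emph{not} produce \eqref{eq:boxPhiinfty}. Tracking the mixed derivatives: $[-2\partial_u\partial_r,r^2\partial_r]\phi=-4r\partial_u\partial_r\phi$ and $r^2\partial_r(-2r^{-1}\partial_u\phi)=2\partial_u\phi-2r\partial_u\partial_r\phi$, so after your substitutions you land on an identity with coefficient $-6r^{-1}\partial_u\Phi$ and a surviving source $+2r\slashed{\Delta}\phi$ coming from $[\slashed{\Delta},r^2\partial_r]\phi$ — this is the intermediate identity \eqref{eq:waveeqPsi} with $q=2$, not the stated \eqref{eq:boxPhiinfty}. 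To reach the stated form you must additionally substitute the wave equation for $\phi$ once more, in the form \eqref{eq:sDeltapsiinfty}, to convert the excess $-4r^{-1}\partial_u\Phi=-4r\partial_u\partial_r\phi$ into $\partial_r\Phi$, $\Phi$, $\phi$ and $\slashed{\Delta}\phi$ terms; it is precisely this substitution that cancels the $2r\slashed{\Delta}\phi$ term when $q=2$ and explains why \eqref{eq:boxPhiinfty} contains no angular source. The same extra use of \eqref{eq:sDeltapsiinfty} is needed in your part (ii): the cross term $2g^{ab}\partial_af\,\partial_b\Phi$ contributes $-2Mr^{-2}\partial_u\Phi=-2M\partial_u\partial_r\phi$, and it is only after trading this for $\partial_r(D\partial_r\phi)-D'r^{-1}\phi+\slashed{\Delta}\phi$ that the $-M\slashed{\Delta}\phi$ term of \eqref{eq:boxtildePhiinfty} appears and the $\partial_u\widetilde{\Phi}$ coefficient becomes $-2r^{-1}$ rather than $-2(r-M)^{-1}$. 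Without this substitution your computation yields a correct identity, but not the one asserted in the lemma.
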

\begin{proof}
See Appendix \ref{app:commmultp}.
\end{proof}

Finally, we consider a second-order radiation field, which we denote by $\Phi_{(2)}$ and it is given by
\begin{equation*}
\Phi_{(2)}=(r^2\partial_r)^2(r\psi).
\end{equation*}

\begin{lemma}
\label{lm:commute2time}
Let $\psi$ be a solution to \eqref{waveequation} emanating from initial data given as in Theorem \ref{thm:extuniq} on $(\mathcal{R}, g)$. Then $\Phi_{(2)}=r^2\partial_r\Phi$ satisfies the following equation:
\begin{equation}
\label{eq:Phi2infty}
\begin{split}
\square_g\Phi_{(2)}&=r^{-1}[6D-2D'r]\partial_r\Phi_{(2)}-2r^{-1}\partial_u\Phi_{(2)}+r^{-1}[-6Dr^{-1}-3D'' r+7D']\Phi_{(2)}\\
&+r[-D'''r+2D''+2D'r^{-1}]\Phi+r^3[D'''+4D''r^{-1}+2D'r^{-2}]\phi
\end{split}
\end{equation}
and moreover
\begin{equation}
\label{equationforPhi2}
\begin{split}
2 \partial_u\partial_r\Phi_{(2)}=&\:D\partial_r^2\Phi_{(2)}-[4Dr^{-1}+D']\partial_r\Phi_{(2)}+\slashed{\Delta}\Phi_{(2)}-r^{-1}[-6Dr^{-1}-3D'' r+7D']\Phi_{(2)}\\
&-r[-D'''r+2D''+2D'r^{-1}]\Phi-r^3[D'''+4D''r^{-1}+2D'r^{-2}]\phi,
\end{split}
\end{equation}
which is equivalent to
\begin{equation}
\label{equationforPhi2v2}
\begin{split}
2 \underline{L}(\partial_r\Phi_{(2)})=&\:-[4Dr^{-1}+D']\partial_r\Phi_{(2)}+\slashed{\Delta}\Phi_{(2)}-r^{-1}[-6Dr^{-1}-3D'' r+7D']\Phi_{(2)}\\
&-r[-D'''r+2D''+2D'r^{-1}]\Phi-r^3[D'''+4D''r^{-1}+2D'r^{-2}]\phi.
\end{split}
\end{equation}
\end{lemma}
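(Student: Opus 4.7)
The proof is a direct (if tedious) computation based on the same principle used for Lemma \ref{lm:commute0time} and Lemma \ref{lm:commute1time}: one commutes $\square_g$ with the operator $X=r^2\partial_r$ and then iterates. Concretely, since $\Phi_{(2)}=r^2\partial_r\Phi=X\Phi$, the plan is to use the identity
\[
\square_g\Phi_{(2)}=[\square_g,X]\Phi+X(\square_g\Phi),
\]
where $\square_g\Phi$ has already been expressed in \eqref{eq:boxPhiinfty}, so that every term on the right-hand side is known.

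The first step is to compute the commutator $[\square_g,X]$ on a general function $f$. Starting from
\[
\square_g f=-2\partial_u\partial_rf+D\,\partial_r^2f-2r^{-1}\partial_u f+(D'+2r^{-1}D)\partial_rf+\slashed{\Delta}f,
\]
one finds after collecting terms that
\[
[\square_g,X]f=-4r\,\partial_u\partial_rf+4rD\,\partial_r^2f+(8D-r^2D'')\,\partial_rf-2\,\partial_uf+2r\,\slashed{\Delta}f,
\]
the third-order $\partial_r^3$ and mixed $\partial_u\partial_r^2$ contributions cancelling exactly. The second step is to plug $f=\Phi$ into this identity and re-express $\partial_r\Phi$, $\partial_r^2\Phi$, and $\slashed{\Delta}\Phi$ in terms of $\Phi_{(2)}$ via
\[
\partial_r\Phi=r^{-2}\Phi_{(2)},\qquad \partial_r^2\Phi=-2r^{-3}\Phi_{(2)}+r^{-2}\partial_r\Phi_{(2)},
\]
so that the $\partial_u\partial_r\Phi$ term combines with the analogous contributions coming from $X(\square_g\Phi)$ to produce the single term $-2r^{-1}\partial_u\Phi_{(2)}$ appearing in \eqref{eq:Phi2infty}.

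The third step is to compute $X(\square_g\Phi)=r^2\partial_r(\square_g\Phi)$ by differentiating the right-hand side of \eqref{eq:boxPhiinfty} in $r$. Each of the four summands there (the $\partial_r\Phi$, $\partial_u\Phi$, $\Phi$, and $\phi$ pieces) contributes terms which, after using $\partial_r\phi=r^{-2}\Phi$ and $\partial_r\Phi=r^{-2}\Phi_{(2)}$, are again of the schematic form $\partial_r\Phi_{(2)}$, $\Phi_{(2)}$, $\Phi$, $\phi$ with coefficients polynomial in $D,D',D'',D'''$ and powers of $r^{-1}$. Adding the contributions from Step~2 and Step~3 and collecting, the coefficients of $\partial_r\Phi_{(2)}$, $\Phi_{(2)}$, and $\Phi$ combine in a straightforward way, while the coefficient of $\phi$ requires grouping the second-order and third-order derivatives of $D$; one checks that the result is exactly the right-hand side of \eqref{eq:Phi2infty}.

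Finally, equation \eqref{equationforPhi2} is obtained by moving the $-2\partial_u\partial_r\Phi_{(2)}+D\partial_r^2\Phi_{(2)}+\slashed{\Delta}\Phi_{(2)}$ piece of $\square_g\Phi_{(2)}$ to the right-hand side and isolating $-2\partial_u\partial_r\Phi_{(2)}$, and \eqref{equationforPhi2v2} follows from \eqref{equationforPhi2} by rewriting $2\partial_u\partial_r\Phi_{(2)}=2\underline{L}(\partial_r\Phi_{(2)})+D\,\partial_r^2\Phi_{(2)}+D'\,\partial_r\Phi_{(2)}$ using $\underline{L}=\partial_u-\tfrac12 D\partial_r$. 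The main obstacle is purely algebraic: because $\Phi_{(2)}$, $\Phi$, and $\phi$ are related by two nested $r^2\partial_r$ commutations, a large number of sub-leading terms with different powers of $r$ and different derivatives of $D$ appear, and one must be careful in grouping them so that the cancellations produce the clean coefficients $-D'''r+2D''+2D'r^{-1}$ for $\Phi$ and $D'''+4D''r^{-1}+2D'r^{-2}$ for $\phi$ stated in \eqref{eq:Phi2infty}. All the details are deferred to Appendix \ref{app:commmultp}.
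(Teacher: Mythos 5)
Your overall strategy---writing $\square_g\Phi_{(2)}=[\square_g,r^2\partial_r]\Phi+r^2\partial_r(\square_g\Phi)$ and feeding in \eqref{eq:boxPhiinfty}---is the right one, and is essentially the computation carried out in Appendix \ref{app:commmultp} (there it is organized as $[\square_g,\partial_r]$ followed by $\square_g(r^2\,\cdot\,)$, which amounts to the same calculation). However, two steps in your sketch are wrong as stated. First, your commutator is missing a term: $[D\partial_r^2,r^2\partial_r]f=2D\partial_rf+(4rD-D'r^2)\partial_r^2f$, so the correct identity is
\begin{equation*}
[\square_g,r^2\partial_r]f=-4r\,\partial_u\partial_rf+(4rD-D'r^2)\,\partial_r^2f+(8D-r^2D'')\,\partial_rf-2\,\partial_uf+2r\,\slashed{\Delta}f.
\end{equation*}
Dropping $-D'r^2\partial_r^2f$ feeds, via $\partial_r^2\Phi=r^{-2}\partial_r\Phi_{(2)}-2r^{-3}\Phi_{(2)}$, directly into the coefficients of $\partial_r\Phi_{(2)}$ and $\Phi_{(2)}$, so you would not recover $r^{-1}[6D-2D'r]$ and $r^{-1}[-6Dr^{-1}-3D''r+7D']$.

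Second, and more seriously, your claim that the $\partial_u\partial_r\Phi$ contributions ``combine \ldots to produce the single term $-2r^{-1}\partial_u\Phi_{(2)}$'' is false: the commutator contributes $-4r\partial_u\partial_r\Phi-2\partial_u\Phi$ while $r^2\partial_r(-2r^{-1}\partial_u\Phi)=2\partial_u\Phi-2r\partial_u\partial_r\Phi$, so after the $\pm 2\partial_u\Phi$ cancellation the raw total is $-6r\partial_u\partial_r\Phi=-6r^{-1}\partial_u\Phi_{(2)}$, and in addition the term $2r\slashed{\Delta}\Phi$ from the commutator survives---whereas \eqref{eq:Phi2infty} has neither an angular term in $\Phi$ nor the coefficient $-6r^{-1}$. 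The missing idea is to write $-6r\partial_u\partial_r\Phi=-2r\partial_u\partial_r\Phi-4r\partial_u\partial_r\Phi$ and substitute for $-4r\partial_u\partial_r\Phi$ using $2r$ times the already-derived equation \eqref{equationforPhiinfty}: this cancels $2r\slashed{\Delta}\Phi$ exactly and supplies the additional contributions $-2rD\partial_r^2\Phi+2[2D-2D'r]\partial_r\Phi+2[-D''r+3D'-2Dr^{-1}]\Phi+2r^2[D''+D'r^{-1}]\phi$, without which the stated coefficients cannot be reached (for instance, the $2D'r^{-2}$ piece of the $\phi$-coefficient in \eqref{eq:Phi2infty} comes entirely from this substitution; the raw computation only yields $r^3[D'''+2D''r^{-1}]\phi$). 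A final small slip: $2\partial_u\partial_r\Phi_{(2)}=2\underline{L}(\partial_r\Phi_{(2)})+D\partial_r^2\Phi_{(2)}$, with no extra $D'\partial_r\Phi_{(2)}$; your version would not take \eqref{equationforPhi2} to \eqref{equationforPhi2v2}.
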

\begin{proof}
See Appendix \ref{app:commmultp}.
\end{proof}

We can apply the above lemmas together with the energy boundedness assumption of Section \ref{sec:waveassm} to obtain $u$-dependent estimates for the radiation field and its higher-order derivatives.
\begin{proposition}
\label{prop:step0radfields}
Let $\psi$ be a solution to \eqref{waveequation} emanating from initial data given as in Theorem \ref{thm:extuniq} on $(\mathcal{R}, g)$.
\begin{itemize}
\item[\emph{(i)}]
If we assume that
\begin{equation}
\label{eq:assinitialTenergy}
\int_{\Sigma}J^T[\psi]\cdot n_{\Sigma}\,d\mu_{\Sigma}<\infty
\end{equation}
and
\begin{equation}
\label{eq:asslimitphi}
\lim_{r \to \infty }\int_{\s^2}\phi^2\,d\omega\big|_{u'=0}<\infty,\\
\end{equation}
then for all $u\geq 0$, we have along each $\mathcal{N}_u$ that
\begin{equation}
\label{est:0radfieldudep}
\int_{\s^2}\phi^2\,d\omega\big|_{u'=u}\leq \: C_0(u),
\end{equation}
with a $u$-dependent constant $C_0(u)>0$.
\item[\emph{(ii)}]
If we additionally assume that
\begin{equation*}
\sum_{|k|\leq 2}\int_{\Sigma}J^T[\Omega^k\psi]\cdot n_{\Sigma}\,d\mu_{\Sigma}<\infty,
\end{equation*}
and
\begin{align}
\lim_{r \to \infty }\sum_{|k|\leq 2}\int_{\s^2}(\Omega^k\phi)^2\,d\omega\big|_{u'=0}<&\:\infty,\\
\label{eq:asslimitphi1}
\lim_{r\to \infty} \int_{\s^2}\Phi^2\,d\omega\big|_{u'=0}<&\:\infty,
\end{align}
then we also have that
\begin{equation*}
\lim_{r \to \infty }\int_{\s^2} \phi^2\,d\omega\big|_{u'=u}<\infty
\end{equation*}
and
\begin{align}
\label{est:1radfieldudep}
\int_{\s^2}\Phi^2\,d\omega\big|_{u'=u}\leq& \: C_1(u),\\
\label{1radfieldfin}
\lim_{r\to \infty} \int_{\s^2}\Phi^2\,d\omega\big|_{u'=u}<&\:\infty,
\end{align}
with a $u$-dependent constant $C_1(u)>0$. 
\item[\emph{(iii)}]
If we moreover assume that
\begin{equation*}
\sum_{|k|\leq 4}\int_{\Sigma}J^T[\Omega^k\psi]\cdot n_{\Sigma}\,d\mu_{\Sigma}<\infty
\end{equation*}
and
\begin{align}
\lim_{r \to \infty }\sum_{|k|\leq 4}\int_{\s^2}(\Omega^k\phi)^2\,d\omega\big|_{u'=0}<&\:\infty,\\
\lim_{r \to \infty }\sum_{|k|\leq 2}\int_{\s^2}(\Omega^k\Phi)^2\,d\omega\big|_{u'=0}<&\:\infty,\\
\label{eq:asslimitphi2}
\lim_{r \to \infty }\int_{\s^2}\left(r^{-n}\Phi_{(2)}\right)^2\,d\omega\big|_{u'=0}<&\:\infty,
\end{align}
for some $0\leq n\leq 2$, then we additionally have that
\begin{align}
\label{est:2radfieldudep}
\int_{\s^2}\left(r^{-n}\Phi_{(2)}\right)^2\,d\omega\big|_{u'=u} \leq &\: C_2(u),\\
\label{2radfieldfin}
\lim_{r \to \infty }\int_{\s^2}\left(r^{-n}\Phi_{(2)}\right)^2\,d\omega\big|_{u'=u}<&\:\infty,
\end{align}
with a $u$-dependent constant $C_2(u)>0$.
\end{itemize}
\end{proposition}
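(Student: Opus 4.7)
The three parts will be proved in sequence, each leveraging its predecessors. The common strategy is: derive a transport identity for the square of the quantity of interest along the ingoing null direction $\underline{L}$, integrate in $u$ at fixed $v$, and then close a Gr\"onwall-type inequality by controlling the driving terms via the energy boundedness assumption \eqref{ass:ebound} and the boundary terms via the initial-data limit hypotheses.

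For part (i), we write
\begin{equation*}
\int_{\s^2}\phi^2(u,v,\cdot)\,d\omega = \int_{\s^2}\phi^2(0,v,\cdot)\,d\omega + 2\int_0^u\int_{\s^2}\phi\,\underline{L}\phi\,d\omega\,du',
\end{equation*}
valid at fixed $v$. The identity $\underline{L}\phi = r\,\underline{L}\psi - \tfrac{D}{2r}\phi$ combined with Cauchy--Schwarz splits $\phi\,\underline{L}\phi$ into a Gr\"onwall-friendly $r^{-2}\phi^2$ piece and a term $r^2(\underline{L}\psi)^2$ whose integral over the ingoing null segment $\mathcal{I}_v(0,u)$ is exactly the ingoing $T$-flux, bounded by \eqref{ass:ebound}. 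The boundary contribution $\int_{\s^2}\phi^2(0,v,\cdot)\,d\omega$ is controlled uniformly in $v$: for large $v$ directly by the limit assumption \eqref{eq:asslimitphi}, and for bounded $v$ by the initial $T$-energy \eqref{eq:assinitialTenergy} combined with a trace estimate and Hardy's inequality \eqref{eq:Hardy3}. Gr\"onwall then yields \eqref{est:0radfieldudep}.

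For part (ii), the same scheme is applied to $\Phi$, using that $\underline{L}\Phi$ may be expressed via \eqref{eq:sDeltapsiinfty} as a linear combination of $\phi$, $r\partial_r\phi$, and $r^2\slashed{\Delta}\phi$ with coefficients in $D$. The novel feature is the angular source $r^2\slashed{\Delta}\phi$, whose $L^2(\s^2)$-norm is controlled via \eqref{eq:angmomop2} by $|\snabla\Omega^k\phi|$ for $|k|\leq 1$; finiteness of the ingoing $T$-flux of the commuted fields $\Omega^k\psi$ follows from \eqref{ass:ebound} applied to $\Omega^k\psi$, which justifies the $T$-energy hypothesis for $|k|\leq 2$. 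The boundary term $\int_{\s^2}\Phi^2(0,v,\cdot)\,d\omega$ is controlled by \eqref{eq:asslimitphi1} for large $v$ and by the Dafermos--Rodnianski hierarchy (Proposition \ref{prop:rpphiv1}) together with part (i) for bounded $v$. The existence of the limit \eqref{1radfieldfin} for $u>0$ follows from the just-established uniform bound by showing $\int_R^{\infty}\int_{\s^2}|\Phi\partial_r\Phi|\,d\omega\,dr<\infty$ along $\mathcal{N}_u$ via Cauchy--Schwarz against the $r^p$-weighted flux.

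Part (iii) follows the same plan with $\Phi_{(2)}$ in place of $\Phi$ and the transport equation \eqref{equationforPhi2v2} as input; the weight $r^{-n}$ with $0\leq n\leq 2$ is chosen so that, after Cauchy--Schwarz, all the source contributions coming from $\phi$, $\Phi$, $\partial_r\Phi$, and $\slashed{\Delta}\Phi$ are absorbed either by the bounds from parts (i)--(ii) or by the ingoing $T$-fluxes of $\Omega^k\psi$ for $|k|\leq 4$. The main technical obstacle throughout is the bookkeeping of $r$-weights in the Cauchy--Schwarz splittings of the cross terms $\Phi\,\underline{L}\Phi$ and $\Phi_{(2)}\,\underline{L}\Phi_{(2)}$: one must arrange these so that every dangerous piece becomes either a Gr\"onwall-friendly quadratic in the quantity being estimated or an ingoing $T$-flux bounded by \eqref{ass:ebound}, with the angular regularity assumed in each part being precisely what is needed to close the chain.
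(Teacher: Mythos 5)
Your scheme for the uniform-in-$v$ bounds \eqref{est:0radfieldudep}, \eqref{est:1radfieldudep}, \eqref{est:2radfieldudep} is essentially the paper's: integrate the $\underline{L}$-transport identity in $u$ at fixed $v$, split the cross terms by Cauchy--Schwarz so that one piece is Gr\"onwall-friendly and the other is the ingoing $T$-flux controlled by the divergence identity for the Killing field $T$, and handle the angular source $\slashed{\Delta}_{\s^2}\phi$ (resp.\ $\slashed{\Delta}_{\s^2}\Phi$) through Lemma \ref{lm:angmomop} and the previously established bounds applied to $\Omega^k\psi$. The minor imprecisions in your $r$-weights (e.g.\ the ``$r^{-2}\phi^2$'' piece in part (i), or invoking only $|\snabla\Omega^k\phi|$ with $|k|\leq 1$ where a second application of \eqref{eq:angmomop1} is needed to land on $\Omega^k\phi$ with $|k|\leq 2$) are harmless; the argument closes as you describe.

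There is, however, a genuine gap in your treatment of the limit statements \eqref{1radfieldfin}, \eqref{2radfieldfin} and $\lim_{r\to\infty}\int_{\s^2}\phi^2\,d\omega|_{u'=u}<\infty$. You propose to show that $r\mapsto\int_{\s^2}\Phi^2(u,r)\,d\omega$ is Cauchy as $r\to\infty$ by bounding $\int_R^{\infty}\int_{\s^2}|\Phi\,\partial_r\Phi|\,d\omega\,dr$ via Cauchy--Schwarz against an $r^p$-weighted flux of $\partial_r\Phi$ along $\mathcal{N}_u$. No such flux bound is available here: the hypotheses of the proposition contain no weighted integral of $\partial_r\Phi$ on the data slice (only the existence of $\lim_{r\to\infty}\int_{\s^2}\Phi^2\,d\omega|_{u'=0}$), and the $r^p$-weighted estimate that would propagate such a flux, Proposition \ref{prop:rpPhiv1}, is proved later and itself invokes the present proposition to conclude $\lim_{r\to\infty}r^{p-4}\int_{\s^2}\Phi^2\,d\omega=0$ — so appealing to it would be circular. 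The correct mechanism works in the $u$-direction, not the $r$-direction: from \eqref{eq:Lbarrndrphi} and the uniform bounds already established, $\underline{L}(r^n\partial_r\phi)$ (and likewise $\underline{L}(r^{-n}\Phi_{(2)})$) is bounded uniformly in $v$ and converges pointwise as $v\to\infty$ at each fixed $u'$; applying the bounded convergence theorem to the identity $\Phi(u,v)=\Phi(0,v)+\int_0^u\underline{L}\Phi(u',v)\,du'$ then transports the existence of the limit from $u=0$ (where it is assumed) to every $u>0$. You should replace your $r$-integration argument by this dominated-convergence step.
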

\begin{proof}
It will be convenient to work in $(u,v,\theta,\varphi)$ coordinates. By the fundamental theorem of calculus and Cauchy--Schwarz, we can estimate
\begin{equation*}
|\psi|^2(u,v,\theta,\varphi)\leq |\psi|^2(0,v,\theta,\varphi)+\int_{0}^u\,du'\cdot \int_0^u (\underline{L}\psi)^2(u',v,\theta,\varphi)\,du'.
\end{equation*}
After integrating over $\s^2$ and using (\ref{eq:Tcurrent2}), we therefore obtain
\begin{equation*}
\int_{\s^2}\psi^2(u,v,\theta,\varphi)\,d\omega \leq \int_{\s^2}\psi^2(0,v,\theta,\varphi)\,d\omega+ u\int_{\mathcal{I}_v(0,u)}J^T[\psi]\cdot \underline{L}\,r^2d\omega du.
\end{equation*}

Boundedness of the energy on the right-hand side follows by application of the divergence theorem together with the Killing property of $T$ and assumption \eqref{eq:assinitialTenergy}.

Similarly, we can apply the fundamental theorem of calculus to obtain
\begin{equation}
\label{eq:phi0fundthmcalc}
\phi(u,v,\theta,\varphi)=\phi(0,v,\theta,\varphi)+\int_{0}^u \underline{L}(r\psi)(u',v,\theta,\varphi)\,du'.
\end{equation}

We can further estimate by Cauchy--Schwarz
\begin{equation*}
\int_{\s^2}\left(\int_{0}^u |\underline{L}(r\psi)|\,du'\right)^2\,d\omega \leq u \int_{\mathcal{I}_v(0,u)}J^T[\psi]\cdot \underline{L}\,r^2d\omega du+\int_{\s^2}\left(\int_0^uD |\psi|\,du'\right)^2\,d\omega.
\end{equation*}
After applying the Sobolev inequality \eqref{lm:sobolevs2}, the estimate for $\psi$ above and the assumptions on the initial data imply that there exists a constant $C_0(u)>0$, such that
\begin{equation*}
\int_{\s^2}\phi^2(u,v,\theta,\varphi)\,d\omega\leq C_0(u)
\end{equation*}
for all $u\geq 0$.

We apply the fundamental theorem of calculus again to obtain
\begin{equation}
\label{eq:fundpartialrphi}
\partial_r\phi(u,v,\theta,\varphi)=\partial_r\phi(0,v,\theta,\varphi)+\int_0^u \underline{L}(\partial_r\phi)(u',v,\theta,\varphi)\,du',
\end{equation}
where $\partial_r=2D^{-1}\partial_v$. By (\ref{eq:sDeltapsiinfty}) we can estimate
\begin{equation*}
|\underline{L}(\partial_r\phi)|\leq CMr^{-2}|\partial_r\phi|+Cr^{-3}|\phi|+Cr^{-2}|\slashed{\Delta}_{\s^2} \phi|.
\end{equation*}
Consequently,
\begin{equation*}
\underline{L}\left(\int_{\s^2}(\partial_r\phi)^2\,d\omega \right)\leq C\int_{\s^2}r^{-2}(\partial_r\phi)^2\,d\omega+\int_{\s^2}r^{-4}\phi^2+r^{-2}(\slashed{\Delta}_{\s^2} \phi)^2\,d\omega.
\end{equation*}

We use \eqref{eq:angmomop1} and \eqref{eq:angmomop2} to estimate the $(\slashed{\Delta}_{\s^2} \phi)^2$ integral by using the above estimates for $\psi$ applied also to $\Omega \psi$ and $\Omega^2\psi$. We then apply a standard Gr\"onwall inequality to obtain
\begin{equation*}
\int_{\s^2}(\partial_r\phi)^2\,d\omega\big|_{u'=u}\leq C\left(1+\int_{\s^2}(\partial_r\phi)^2\,d\omega\big|_{u'=0}\right).
\end{equation*}

Subsequently, we can conclude that
\begin{equation*}
\lim_{v\to \infty }\underline{L}(\partial_r\phi)(u,v,\theta,\varphi)=0
\end{equation*}
for all $u\geq 0$ and therefore, by the bounded convergence theorem applied to \eqref{eq:fundpartialrphi},
\begin{equation*}
\lim_{r\to \infty }\int_{\s^2}(\partial_r\phi)^2(u,v,\theta,\varphi)\,d\omega=0.
\end{equation*}
More generally, we have that
\begin{equation}
\label{eq:Lbarrndrphi}
\underline{L}(r^n\partial_r\phi)=\frac{1}{2}\left(-nr^{-1}D+D'\right)r^n\partial_r\phi-\frac{1}{2}D'r^{n-1}\phi-\frac{1}{2}r^{n-2}\slashed{\Delta}_{\s^2} \phi,
\end{equation}
for $0\leq n\leq 2$. By similar arguments we can therefore similarly conclude that
\begin{equation}
\label{est:kderphiudep}
\int_{\s^2}r^{2n}(\partial_r\phi)^2\,d\omega\big|_{u'=u}\leq C\left(1+\int_{\s^2}r^{2n}(\partial_r\phi)^2\,d\omega\big|_{u'=0}\right).
\end{equation}
for $n\leq 2$.

We moreover have that $\partial_r\phi$ is integrable in $r$ for $k\leq 2$ by applying (\ref{est:kderphiudep}) with $n= 2$. Hence,
\begin{equation*}
\lim_{r\to\infty} \int_{\s^2}\phi^2(u,v,\theta,\varphi)\,d\omega<\infty.
\end{equation*}
The above limit and (\ref{est:kderphiudep}) for $n\leq 2$ allow us to apply the bounded convergence theorem once more and conclude that
\begin{equation*}
\lim_{r\to\infty}\int_{\s^2}r^{4}(\partial_r\phi)^2\,d\omega\big|_{u'=u}<\infty.
\end{equation*}

In order to prove (\ref{est:2radfieldudep}), we proceed similarly by using the fundamental theorem of calculus together with (\ref{equationforPhiinftyv2}) and the results above. We obtain (\ref{2radfieldfin}) by applying the bounded convergence theorem, as above.
\end{proof}
\begin{proposition}
\label{prop:hostep0radfields}
Let $\psi$ be a solution to \eqref{waveequation} as in Proposition \ref{prop:step0radfields}. Fix $n\geq 1$ and $0\leq k\leq 2$. If we assume that \begin{equation*}
\sum_{|l|\leq 4+2n}\int_{\Sigma}J^T[\Omega^l\psi]\cdot n_{\Sigma}\,d\mu_{\Sigma}<\infty,
\end{equation*}
and
\begin{align*}
\lim_{r \to \infty }\sum_{|l|\leq 4+2n}\int_{\s^2}(\Omega^l\phi)^2\,d\omega\big|_{u'=0}<&\:\infty,\\
\lim_{r \to \infty }\sum_{|l|\leq 2+2n}\int_{\s^2}(\Omega^l\Phi)^2\,d\omega\big|_{u'=0}<&\:\infty,\\
\lim_{r \to \infty }\sum_{|l|\leq 2n}\int_{\s^2}\left(r^{-k}\Omega^l\Phi_{(2)}\right)^2\,d\omega\big|_{u'=0}<&\:\infty,
\end{align*}
and additionally
\begin{equation*}
\lim_{r \to \infty }\sum_{|l|\leq 2n-2s}\int_{\s^2}\left(r^{m}\partial_r^s\Omega^l\Phi_{(2)}\right)^2\,d\omega\big|_{u'=0}<\infty,
\end{equation*}
for each $1\leq s\leq n$ and $0\leq m\leq s+1-k$, then we have that
\begin{align*}
\lim_{r \to \infty }\sum_{|l|\leq 2n-2s}\int_{\s^2}\left(r^{m}\partial_r^s\Omega^l\Phi_{(2)}\right)^2\,d\omega\big|_{u'=u}\leq&\: C_{3;s}(u),\\
\lim_{r \to \infty }\sum_{|l|\leq 2n-2s}\int_{\s^2}\left(r^{m}\partial_r^s\Omega^l\Phi_{(2)}\right)^2\,d\omega\big|_{u'=u}<&\: \infty,
\end{align*}
for each $1\leq s\leq n$ and $0\leq m\leq s+1-k$, with $C_{3;s}(u)>0$ a $u$-dependent constant.
\end{proposition}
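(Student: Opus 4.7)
The strategy directly extends that of Proposition \ref{prop:step0radfields} by induction on $s$, the number of $\partial_r$-derivatives applied to $\Phi_{(2)}$. We work in $(u,v,\theta,\varphi)$ double-null coordinates so that $\underline{L}=\partial_u$, and we exploit that each angular momentum operator $\Omega_i$ is Killing and hence commutes with $\square_g$, $\partial_r$ and $\underline{L}$. For the base case $s=0$ (the $\Phi_{(2)}$ statement without any extra $\partial_r$), Proposition \ref{prop:step0radfields}(iii) applied to each $\Omega^l\psi$ with $|l|\leq 2n$ already yields both the uniform-in-$v$ bound and the finite $r\to\infty$ limit of $\int_{\s^2}(r^{-k}\Omega^l\Phi_{(2)})^2\,d\omega$; the $\Omega$-commuted initial-data assumptions in our hypotheses are precisely those needed to invoke that proposition for $\Omega^l\psi$.

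For the inductive step from $s-1$ to $s$, commute equation \eqref{equationforPhi2v2} with $\partial_r^{s-1}\Omega^l$. This produces an evolution equation of schematic form
\begin{equation*}
2\underline{L}(\partial_r^s\Omega^l\Phi_{(2)}) = c_s(r)\,\partial_r^s\Omega^l\Phi_{(2)} + \mathcal{F}_{s,l},
\end{equation*}
where $c_s(r)=O(r^{-1})$ is inherited from the $[4Dr^{-1}+D']$ coefficient of \eqref{equationforPhi2v2}, and $\mathcal{F}_{s,l}$ is a linear combination of $\partial_r^{s'}\Omega^{l'}\Phi_{(2)}$ with $s'\leq s-1$ and $|l'|\leq|l|+2$ (the two extra angular derivatives originating from the $\slashed{\Delta}\Phi_{(2)}$ term), of $\partial_r^{s'}\Omega^l\Phi$ with $s'\leq s-1$, and of $\partial_r^{s'}\Omega^l\phi$ with $s'\leq s-1$. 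Each such contribution is multiplied by an $r$-dependent coefficient built from $D$, $D'$, $D''$, $D'''$ and integer powers of $r$; by \eqref{ass:quantitativedecD} these coefficients are benign in $r$ (for instance the coefficient of $\phi$ in \eqref{equationforPhi2v2} is already $O(r^{-1})$, and each additional $\partial_r$ only improves the decay). Multiplying the evolution equation by $r^{2m}\partial_r^s\Omega^l\Phi_{(2)}$, integrating over $\s^2$ and applying a weighted Cauchy--Schwarz yields
\begin{equation*}
\underline{L}\!\left(\int_{\s^2}r^{2m}(\partial_r^s\Omega^l\Phi_{(2)})^2\,d\omega\right)\leq C\int_{\s^2}r^{2m}(\partial_r^s\Omega^l\Phi_{(2)})^2\,d\omega + G_{s,l}(u,v),
\end{equation*}
with $G_{s,l}(u,v)$ bounded uniformly in $v$ by the inductive hypothesis applied to the $\Phi_{(2)}$-terms and by Proposition \ref{prop:step0radfields} applied to the $\phi$- and $\Phi$-terms.

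Integrating the above in $u$ at fixed $v$ from $0$ to $u$ and applying Gronwall's inequality produces the $v$-uniform bound $C_{3;s}(u)$. To obtain the finite $r\to\infty$ limit, one proceeds as in Proposition \ref{prop:step0radfields}: the evolution equation together with the bounds just established shows that the pointwise limit $\lim_{v\to\infty}\underline{L}(r^m\partial_r^s\Omega^l\Phi_{(2)})(u',v,\cdot)$ exists in $L^2(\s^2)$ and is dominated uniformly in $u'\in[0,u]$, so the bounded convergence theorem applied to the fundamental theorem of calculus for $\underline{L}$ propagates the finite initial limit at $u=0$ to all $u>0$. The principal obstacle is the bookkeeping of $r$-weights in the commutator $[\partial_r^{s-1}\Omega^l,\square_g]$: every term in $\mathcal{F}_{s,l}$ must be estimable, in $L^2(\s^2)$ and uniformly in $v$, by a norm already supplied by the inductive hypothesis or by Proposition \ref{prop:step0radfields}. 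It is precisely this requirement --- combined with the asymptotic decay of $D'$, $D''$, $D'''$ encoded in \eqref{ass:quantitativedecD} and the asymptotic growth rate of $\Phi_{(2)}$ --- that pins down the sharp admissible range $0\leq m\leq s+1-k$.
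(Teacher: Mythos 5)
Your proposal is correct and follows essentially the same route as the paper: the paper's (one-line) proof says to proceed "analogously to Proposition \ref{prop:step0radfields} by considering $\square_g(r^{2-k}\partial_r\Phi_{(2)})$ and $\square_g(\partial_r^{s-1}(r^{2-k}\partial_r\Phi_{(2)}))$", which is exactly your scheme of commuting the $\underline{L}$-transport equation \eqref{equationforPhi2v2} with $\partial_r^{s-1}\Omega^l$ (cf.\ Lemma \ref{lm:boxdrkPsi2}), closing the weighted $L^2(\s^2)$ estimate by Cauchy--Schwarz and Gr\"onwall, and passing to the $r\to\infty$ limit by bounded convergence. The only cosmetic difference is that the paper builds the $r$-weight into the commuted variable while you introduce it as a multiplier in the energy identity; your side remark that the range $0\leq m\leq s+1-k$ is "pinned down" by the $\slashed{\Delta}\Phi_{(2)}$ source is a slight overstatement of sharpness for $s\geq 2$, but it does not affect the validity of the argument for the stated range.
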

\begin{proof}
The proof proceeds analogously to the proof of Proposition \ref{prop:step0radfields} by considering $\square_g(r^{2-k}\partial_r\Phi_{(2)})$ and $\square_g(\partial_r^{s-1}(r^{2-k}\partial_r\Phi_{(2)}))$.
\end{proof}
\begin{remark}
Proposition \ref{prop:step0radfields} and Proposition \ref{prop:hostep0radfields} should be thought of as ``preliminary'' pointwise estimates for the radiation fields at infinity. In particular, we will obtain in Section \ref{sec:pointdecayradfield} far more refined estimates for $\phi$ that are uniform in $u$ and even provide $u$-decay for $\phi$.
\end{remark}

\subsection{The first Newman--Penrose constant}
\label{sec:1stnpconstant}

Next, we can consider the quantity 
\begin{equation}\label{firstNP}
r^2 \partial_r (r\psi_0 )\big|_{\mathcal{I}^+}(u) \doteq  \lim_{r \to \infty} r^2 \partial_r (r\psi_0 )(u,r) \mbox{ for $ \psi_0 = \frac{1}{4\pi}\int_{\mathbb{S}^2} \psi \, d\omega ,$}
\end{equation}
which is known as the first Newman--Penrose \textit{quantity} (see \cite{NP1,np2}). By Proposition \ref{prop:step0radfields}, it is well-defined for suitably decaying data. In the proposition below, we show that the first Newman--Penrose quantity is actually a \emph{constant}, i.e.\ it is independent of $u$ and determined by initial data on $\Sigma$.
\begin{proposition}\label{consNP}
Let $\psi$ be a solution to \eqref{waveequation} emanating from initial data given as in Theorem \ref{thm:extuniq} on a spacetime $(\mathcal{R}, g)$. Then the first Newman--Penrose quantity defined in \eqref{firstNP} is independent of $u$.
\end{proposition}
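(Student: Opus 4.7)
The plan is to establish conservation of $\Phi_0|_{\mathcal{I}^+}\doteq r^2\partial_r(r\psi_0)|_{\mathcal{I}^+}$ by computing $\underline{L}\Phi_0$ directly from the wave equation and showing that it decays fast enough in $r$ to integrate to zero along an ingoing null segment as one takes the limit to null infinity. Since $\psi_0$ is spherically symmetric, Lemma \ref{lm:commute0time} (in particular equation \eqref{eq:sDeltapsiinftyv0}) reduces to
\begin{equation*}
2\partial_u\partial_r\phi_0 = \partial_r(D\partial_r\phi_0) - D'r^{-1}\phi_0.
\end{equation*}
Multiplying by $r^2/2$, substituting $\partial_r\phi_0 = r^{-2}\Phi_0$ and $r^2\partial_r^2\phi_0 = \partial_r\Phi_0 - 2r\partial_r\phi_0$, and converting the Bondi derivative $\partial_u = \underline{L}+\tfrac{D}{2}\partial_r$ into $\underline{L}$, I expect to obtain the clean identity
\begin{equation*}
\underline{L}\Phi_0 = \Bigl(\tfrac{D'}{2} - \tfrac{D}{r}\Bigr)\Phi_0 - \tfrac{rD'}{2}\phi_0.
\end{equation*}

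Next I would integrate this identity along an ingoing null ray $\mathcal{I}_v(u_1,u_2)$ at fixed $v$ to get
\begin{equation*}
\Phi_0(u_2,v) - \Phi_0(u_1,v) = \int_{u_1}^{u_2}\underline{L}\Phi_0(u,v)\,du,
\end{equation*}
and then pass to the limit $v\to\infty$. Using the asymptotic expansion \eqref{ass:quantitativedecD}, namely $D = 1-2M/r + O_3(r^{-1-\beta})$ and $D' = 2M/r^{2}+O_2(r^{-2-\beta})$, one computes that $\tfrac{D'}{2}-\tfrac{D}{r} = -r^{-1}+O(r^{-2})$ and $\tfrac{rD'}{2} = M/r+O(r^{-1-\beta})$. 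Thus
\begin{equation*}
|\underline{L}\Phi_0(u,v)| \leq \tfrac{C}{r}\bigl(|\Phi_0(u,v)| + |\phi_0(u,v)|\bigr).
\end{equation*}
Proposition \ref{prop:step0radfields}(i)–(ii) provides the uniform $r$-independent $L^2(\s^2)$ bounds needed for $\phi_0$ and $\Phi_0$ on $u\in[u_1,u_2]$; because $\phi_0,\Phi_0$ are spherically symmetric, these $L^2$ bounds translate directly into pointwise bounds. Consequently the right-hand side of the above identity is bounded by $(u_2-u_1)\cdot C/r$, which vanishes as $v\to\infty$ (uniformly in $u\in[u_1,u_2]$), yielding $\Phi_0|_{\mathcal{I}^+}(u_2) = \Phi_0|_{\mathcal{I}^+}(u_1)$ and hence $u$-independence of the first Newman--Penrose quantity.

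The main point where one must be careful is the boundary term $-\tfrac{rD'}{2}\phi_0$: without the asymptotic flatness condition one could only say that $rD'$ is $O(1)$, in which case $\underline{L}\Phi_0$ would not decay and no conservation would hold. What makes the argument work is precisely the $r^{-1}$ gain coming from $D' = 2M/r^2 + O(r^{-2-\beta})$ together with the uniform boundedness of $\phi_0$ afforded by the preliminary estimates in Proposition \ref{prop:step0radfields}; both ingredients are essential and the conclusion reflects the tight interplay between the geometric decay of $D$ and the pointwise asymptotics of the radiation field.
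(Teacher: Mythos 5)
Your proposal is correct and follows essentially the same route as the paper: both compute $\underline{L}\Phi_0$ from the commuted wave equation (your identity is exactly \eqref{eq:Lbarrndrphi} with $n=2$, with the $\slashed{\Delta}_{\s^2}\phi$ term dropped by spherical symmetry), use the preliminary $u$-dependent uniform bounds on $\phi_0$ and $\Phi_0$ from Proposition \ref{prop:step0radfields}, and pass to the limit $v\to\infty$ in the $u$-integral of $\underline{L}\Phi_0$. The only cosmetic difference is that you justify the vanishing of the limit by the explicit quantitative bound $|\underline{L}\Phi_0|\lesssim r^{-1}$, whereas the paper invokes the bounded convergence theorem; both are valid.
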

\begin{proof}
By the results of Lemma \ref{prop:step0radfields} together with \eqref{eq:Lbarrndrphi} with $n=2$, we have that
\begin{align*}
|\underline{L}(r^2\partial_r\phi_0)|\leq &\:C(u)\quad \textnormal{for}\,r\geq R,\\
\lim_{r\to \infty}\underline{L}(r^2\partial_r\phi_0)(u,r)=&\:0\quad \textnormal{for all}\,u\geq 0.
\end{align*}
The above equality holds because the only non-trivial term on the right-hand side of \eqref{eq:Lbarrndrphi} as $r\to\infty$ is $\slashed{\Delta}_{\s^2} \phi$ which vanishes since $\psi$ is assumed to be spherically symmetric. By the bounded convergence theorem, we therefore have that
\begin{equation*}
r^2 \partial_r (r\psi_0 )\big|_{\mathcal{I}^+}(u)=\lim_{r\to \infty}r^2\partial_r\phi_0(u,r)=\lim_{r\to \infty}r^2\partial_r\phi_0(0,r)=r^2 \partial_r (r\psi_0 )\big|_{\mathcal{I}^+}(0),
\end{equation*}
for all $u\geq 0$.
\end{proof}

Let us now define
\begin{equation*}
I_0[\psi]\doteq \lim_{r\to \infty}r^2 \partial_r (r\psi_0 )\Big|_{\Sigma}.
\end{equation*}
Then, in light of Proposition \ref{firstNP}, we have that
\begin{equation*}
r^2 \partial_r (r\psi_0 )\big|_{\mathcal{I}^+}(u)=I_0[\psi]
\end{equation*}
for all $u\geq 0$. We therefore refer to $I_0[\psi]$ as the first Newman--Penrose constant.

We conclude this section by stating an auxiliary result that can be applied to solutions $\psi=\psi_0$ to \eqref{waveequation} that emanate from data for which $I_0 [ \psi ] = 0$.

\begin{proposition}\label{consNP0}
Let $\psi$ be a solution to \eqref{waveequation} emanating from initial data given as in Theorem \ref{thm:extuniq} on $(\mathcal{R}, g)$. Let $0\leq n\leq 3$ and assume additionally that
\begin{equation}\label{ass:r3}
 \left| r^n \partial_r (r \psi_0 ) \right|_{\mathcal{N}} \leq C , 
\end{equation}
for some constant $C < \infty$ where $\psi_0 = \int_{\s^2} \psi$, then we have that
\begin{equation}\label{est:r3}
 \left| r^n \partial_r (r \psi_0 ) \right|_{\mathcal{N}_u} \leq K(u) , 
\end{equation}
for all $0 \leq u < \infty$, where $K(u) < \infty$.
\end{proposition}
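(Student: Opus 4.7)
The plan is to derive a first-order transport equation for $r^n \partial_r \phi_0$ along the ingoing null direction $\underline{L}$ and to run a Gr\"onwall argument along its integral curves from $\mathcal{N}_0$ to $\mathcal{N}_u$. Since $\psi_0$ is spherically symmetric, the angular Laplacian term in \eqref{eq:sDeltapsiinfty} vanishes, and specializing \eqref{eq:Lbarrndrphi} yields
\begin{equation*}
\underline{L}\bigl(r^n\partial_r\phi_0\bigr) \;=\; \tfrac{1}{2}\bigl(D' - n D r^{-1}\bigr)\,r^n\partial_r\phi_0 \;-\; \tfrac{1}{2}D'\,r^{n-1}\phi_0 .
\end{equation*}
From the assumed asymptotics of $D$, the homogeneous coefficient $\tfrac{1}{2}(D'-nDr^{-1})$ is $O(r^{-1})$ and, for $r \geq R$ with $R$ sufficiently large, is $\leq 0$ when $n\geq 1$; while the inhomogeneous weight is $O(r^{n-3})$ and hence uniformly bounded on $\{r\geq R\}$ for $0\leq n\leq 3$.

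In $(u,v,\theta,\varphi)$ coordinates the integral curves of $\underline{L}$ are the constant-$v$ lines, and the characteristic through a point of $\mathcal{N}_u$ at radius $r\geq R$ meets $\mathcal{N}_0$ at the radius $r_0$ determined by $2r_*(r_0)=u+2r_*(r)$. Using the change of variables $du' = -(2/D(r'))\,dr'$ along the characteristic, a direct computation gives $\int_0^{u}\tfrac{1}{2}(D' - nD r'^{-1})\,du' = \log\bigl(D(r_0)/D(r)\bigr) - n\log(r_0/r)$, which is bounded above uniformly in $u$ and $r\geq R$, so the homogeneous Gr\"onwall factor is bounded by a constant depending only on $D$ and $R$. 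The initial data hypothesis gives $|r^n\partial_r\phi_0|(0,r_0)\leq C$, so Gr\"onwall applied to the transport equation yields
\begin{equation*}
\bigl|r^n\partial_r\phi_0\bigr|(u,r) \;\lesssim\; C \;+\; \int_0^u \bigl|\phi_0\bigr|\bigl(u',r(u')\bigr)\,du',
\end{equation*}
where $r(u')$ parametrizes the characteristic.

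To bound $|\phi_0|$ along the characteristic, I would use that, in $(u,v)$ coordinates, the spherically symmetric wave equation takes the form $\partial_u\partial_v\phi_0 = -\tfrac{DD'}{4r}\phi_0$, which follows from \eqref{eq:sDeltapsiinftyv0} after passage to double-null coordinates. Integrating this equation first in $u$ at constant $v$ to control $\partial_v\phi_0$, and then in $v$ from the boundary $v=u+2r_*(R)$ at which $|\phi_0|$ is controlled by smoothness of the data and local well-posedness, produces a Gr\"onwall-type inequality
\[
m(u)\;\leq\; K_R(u) \;+\; 2\sup_{\mathcal{N}_0}|\phi_0| \;+\; CR^{-2}\!\int_0^u m(s)\,ds, \qquad m(u)\;:=\;\sup_{v\geq u+2r_*(R)}|\phi_0|(u,v).
\]
The term $\sup_{\mathcal{N}_0}|\phi_0|$ is finite: for $n\geq 2$ this follows by integrating $|\partial_r\phi_0|\leq Cr^{-n}$ from infinity, while for $n<2$ one allows a power-law or logarithmic growth of $\phi_0$ in $r$ which is then absorbed by the $r^{n-3}$ weight in the inhomogeneity. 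Gr\"onwall then gives $m(u)\leq K(u)<\infty$, and substituting this into the previous estimate closes the argument. The main obstacle is precisely this coupling to $\phi_0$: the transport equation for $r^n\partial_r\phi_0$ does not close without a separate pointwise bound on $\phi_0$, and the endpoint $n=3$ is the most delicate, since the weight $r^{n-3}\equiv 1$ provides no decay in $r$, so the $\phi_0$-propagation along each $\mathcal{N}_u$ must be carried out genuinely rather than being absorbed into an $r$-integrable weight.
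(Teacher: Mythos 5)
Your proof is correct and follows essentially the same route as the paper: the transport equation \eqref{eq:Lbarrndrphi} along $\underline{L}$, with the angular term $r^{n-2}\slashed{\Delta}_{\s^2}\phi$ vanishing by spherical symmetry, boundedness of the resulting coefficients for $0\leq n\leq 3$, and a Gr\"onwall argument from $\mathcal{N}_0$ to $\mathcal{N}_u$. The only divergence is that the paper imports the $u$-dependent pointwise bound on $\phi_0$ from Proposition \ref{prop:step0radfields} (which rests on the $T$-energy flux through ingoing null segments), whereas you re-derive it self-containedly from the reduced equation $\partial_u\partial_v\phi_0=-\tfrac{DD'}{4r}\phi_0$; both work, and your caveat about the $n<2$ endpoint applies equally to the paper's version, since Proposition \ref{prop:step0radfields} likewise presupposes that $\phi$ is bounded on $\mathcal{N}_0$.
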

\begin{proof}
The estimate \eqref{est:r3} follows analogously to the results in Lemma \ref{prop:step0radfields}. We apply the results of Lemma \ref{prop:step0radfields} together with \eqref{eq:Lbarrndrphi} with $0\leq n\leq 3$. Here, it is critical that the term $r^{n-2}\slashed{\Delta}_{\s^2}\phi$ that appears on the right-hand side of \eqref{eq:Lbarrndrphi} vanishes for spherically symmetric $\psi$. The estimate \eqref{est:r3} then follows from the fundamental theorem of calculus together with a Gr\"onwall inequality.
\end{proof}
\section{Commutator estimates for $\psi_{1}=\psi-\frac{1}{4\pi}\int_{\mathbb{S}^{2}}\psi$}
\label{sec:hierpsi1}
In this section, we will derive new hierarchies of $r^p$-weighted estimates for $\psi_{1}=\psi-\frac{1}{4\pi}\int_{\mathbb{S}^{2}}\psi\,d\omega$, improving upon the Dafermos--Rodnianski hierarchy from Section \ref{sec:dafrodhier} and also the hierarchies obtained in \cite{volker1,moschidis1}. \textbf{Unless specifically stated otherwise, we will denote $\psi_1$ by $\psi$ in this section.}

\subsection{The hierarchy of estimates for $\psi_{1}$}
\label{sec:hierpsi1a}
In this section, we will use the vector field $r^2\partial_r$ as a commutation vector field to derive an additional hierarchy of $r^p$-weighted estimates for $r^2\partial_r(r\psi_1)$ and for $(r^2\partial_r)^2(r\psi_1)$. 

\begin{proposition}[\textbf{$r^p$-weighted estimates for $r^2\partial_r(r\psi_1)$}]
\label{prop:rpPhiv1}
Let $\psi$ be a solution to \eqref{waveequation} emanating from initial data given as in Theorem \ref{thm:extuniq} on $(\mathcal{R}, g)$.

Take $p\in (-4,2]$. Denote $$\Phi=r^2\partial_r\phi=r^2\partial_r(r\psi)$$ and assume that
\begin{equation*}
\sum_{|k|\leq 2}\int_{\Sigma}J^T[\Omega^k\psi]\cdot n_{\Sigma}\,d\mu_{\Sigma}<\infty,
\end{equation*}
and
\begin{align*}
\lim_{r \to \infty }\sum_{|k|\leq 2}\int_{\s^2}(\Omega^k\phi)^2\,d\omega\big|_{u'=0}<&\:\infty,\\
\lim_{r\to \infty} \int_{\s^2}\Phi^2\,d\omega\big|_{u'=0}<&\:\infty.
\end{align*}

Then there exists an $R>0$ such that for any $0\leq u_1<u_2$
\begin{equation}
\label{rphierpsilgeq1}
\begin{split}
\int_{\mathcal{N}_{u_2}}& r^p(\partial_r\Phi)^2\, d\omega dr+\int_{\mathcal{A}_{u_1}^{u_2}} (p+4)r^{p-1}(\partial_r\Phi)^2+(2-p)r^{p-1}|\snabla \Phi|^2\,d\omega drdu \\
\leq&\: C\int_{\mathcal{N}_{u_1}} r^p(\partial_r\Phi)^2\,d\omega dr+C\sum_{l\leq 1}\int_{\Sigma_{u_1}}J^T[T^{l}\psi]\cdot n_{{u_1}}\,d\mu_{\Sigma_{u_1}},
\end{split}
\end{equation}
where $C\doteq C(D,R)>0$ is a constant and we can take $R=(p+4)^{-1}R_0(D)>0$, with $R_0(D)>0$ a constant.
\end{proposition}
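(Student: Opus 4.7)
The plan is to derive the identity by applying the multiplier $r^p\partial_r\Phi$ to the commuted equation \eqref{equationforPhiinftyv2} and integrating over $\mathcal{A}^{u_2}_{u_1}$ with respect to $d\omega\,dr\,du$. I first rewrite \eqref{equationforPhiinftyv2} schematically as $2\underline{L}(\partial_r\Phi)=A(r)\partial_r\Phi+\slashed{\Delta}\Phi+B(r)\Phi+C(r)\phi$, where the asymptotics \eqref{ass:quantitativedecD} give $A(r)=-2r^{-1}+O(r^{-2})$, $B(r)=2r^{-2}+O(r^{-3})$ and $C(r)=O(r^{-2})$. Using $\underline{L}=\partial_u-\tfrac{1}{2}D\partial_r$, the left-hand side becomes $\int r^p\underline{L}((\partial_r\Phi)^2)\,d\omega dr du$; integration by parts in $r$ produces the outgoing fluxes $\int_{\mathcal{N}_{u_j}}r^p(\partial_r\Phi)^2 \,d\omega dr$ and a radial bulk contribution of $\tfrac{p}{2}r^{p-1}(\partial_r\Phi)^2$. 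Combined with $-A r^p(\partial_r\Phi)^2\sim 2r^{p-1}(\partial_r\Phi)^2$ coming from moving the first term on the right-hand side across, the net radial bulk coefficient becomes $\tfrac{1}{2}(p+4)r^{p-1}(\partial_r\Phi)^2$, positive precisely when $p>-4$. Integrating $\slashed{\Delta}\Phi$ by parts in $r$ yields the angular bulk term $\tfrac{1}{2}(2-p)r^{p-1}|\snabla\Phi|^2$, nonnegative for $p\leq 2$, together with boundary fluxes at $r=R$ and $\mathcal{I}^+$.

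The next step is the treatment of the error terms $B\Phi$ and $C\phi$. A Cauchy-Schwarz with a small parameter bounds their contributions by $\epsilon \int r^{p-1}(\partial_r\Phi)^2+\epsilon^{-1}\int r^{p-3}(\Phi^2+\phi^2)$, the first of which is absorbed into the positive $(p+4)r^{p-1}$ bulk. The weighted Hardy inequality \eqref{eq:Hardy1} applied on each $\mathcal{N}_u$ with $q=p-3$ controls $\int r^{p-3}\Phi^2$ by $\int r^{p-1}(\partial_r\Phi)^2$ provided $\Phi$ has suitable decay at $\mathcal{I}^+$, which is guaranteed by Proposition \ref{prop:step0radfields} under the proposition's hypotheses on $\int_{\s^2}\Phi^2\,d\omega|_{u'=0}$ and the angular derivatives. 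The $\int r^{p-3}\phi^2$ piece is further reduced by Hardy using $\Phi=r^2\partial_r\phi$, and then closed via the Dafermos--Rodnianski hierarchy of Proposition \ref{prop:rpphiv1} applied to $\psi$; this produces the $\int_{\Sigma_{u_1}}J^T[\psi]\cdot n_{u_1}$ term on the right-hand side, while the additional $\int_{\Sigma_{u_1}}J^T[T\psi]\cdot n_{u_1}$ contribution arises from the loss of one derivative in the Morawetz assumption \eqref{ass:morawetz} needed to absorb time integrals of near-$R$ contributions. The boundary terms at $r=R$ themselves are handled by the local Morawetz estimate \eqref{ass:morawetzlocal}. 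Since the subleading corrections in $A,B,C$ are of order $Mr^{-2}$ and the Hardy constants degenerate as $p\to-4$, one must take $R=(p+4)^{-1}R_0(D)$ large enough that the error absorption remains strictly controlled by the positive bulk on $\{r\geq R\}$.

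The main obstacle is the boundary flux at $\mathcal{I}^+$ generated jointly by the integrations by parts of $\slashed{\Delta}\Phi$ and of $B(r)\Phi\approx 2r^{-2}\Phi\partial_r\Phi$; rewriting $2r^{-2}\Phi\partial_r\Phi=\partial_r(r^{-2}\Phi^2)+2r^{-3}\Phi^2$ and integrating produces a term of the form $\lim_{r\to\infty}r^{p-2}\Phi^2$, which has the ``wrong sign'' referred to in the introduction and becomes non-trivial at $p=2$ (and correspondingly the $\slashed{\Delta}\Phi$-integration produces $-\tfrac{1}{2}\lim_{r\to\infty}r^{p-2}|\snabla_{\s^2}\Phi|^2$ on the right-hand side, i.e.\ a good flux $+\tfrac{1}{2}\lim r^{p-2}|\snabla_{\s^2}\Phi|^2$ on the left). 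Here the restriction to $\psi=\psi_1$, i.e.\ to angular frequencies $\ell\geq 1$, is essential: the Poincar\'e inequality \eqref{eq:poincare2} on $\s^2$ gives $\int_{\s^2}\Phi^2\,d\omega\leq \tfrac{1}{2}\int_{\s^2}|\snabla_{\s^2}\Phi|^2\,d\omega$, and combined with the positive angular bulk $(2-p)r^{p-1}|\snabla\Phi|^2$ (and its analogue at $\mathcal{I}^+$) absorbs both the dangerous $\mathcal{I}^+$-flux and the lower-order $\int r^{p-3}\Phi^2$ bulk. The simultaneous coordination of these three absorption arguments -- positivity of the radial bulk $(p+4)$, control of $B\Phi,C\phi$ via Hardy plus the Dafermos--Rodnianski hierarchy, and cancellation of the $\mathcal{I}^+$-flux via Poincar\'e -- is what fixes both the admissible range $p\in(-4,2]$ and the higher-angular limits at infinity specified in the hypotheses.
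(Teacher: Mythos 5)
Your overall architecture matches the paper's: the multiplier $r^{p-2}\partial_r$ applied to the current for $\chi\Phi$ (equivalently, multiplying \eqref{equationforPhiinftyv2} by $r^p\partial_r\Phi$), the positive radial bulk with coefficient $\sim(p+4)$, the positive angular bulk $\sim(2-p)$, the absorption of the $\mathcal{I}^+$-flux and of the $(p-2)\int r^{p-3}\Phi^2$ bulk coming from $-2r^{p-2}\Phi\,\partial_r\Phi$ via the Poincar\'e inequality on the sphere (using $\int_{\s^2}\Phi\,d\omega=0$), and the cut-off/Morawetz treatment near $r=R$ are all exactly what the paper does.

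The genuine gap is your treatment of the $\phi$-coupling term, i.e.\ the term $J_2=r^{p+1}[D''+D'r^{-1}]\chi\phi\,\partial_r(\chi\Phi)=-2Mr^{p-2}\chi\phi\,\partial_r(\chi\Phi)+O(r^{p-2-\beta})\chi\phi\,\partial_r(\chi\Phi)$. You propose to dispose of its leading part by Cauchy--Schwarz, producing $\epsilon\, r^{p-1}(\partial_r\Phi)^2+C_\epsilon\, r^{p-3}\phi^2$, and then to control $\int r^{p-3}\phi^2$ by Hardy and the Dafermos--Rodnianski hierarchy. This cannot reach the endpoint $p=2$, which is included in the statement and is precisely the case fed into the energy-decay argument: at $p=2$ the error term is $\int_{\mathcal{N}_u} r^{-1}\phi^2\,dr$, which is the \emph{critical} Hardy exponent ($q=-1$ is excluded in \eqref{eq:Hardy1}), and since the radiation field $\phi$ generically has a finite nonzero limit on $\mathcal{I}^+$ this integral is in fact infinite. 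For $p<2$ the Hardy constant $4(p-2)^{-2}$ blows up as $p\to 2$, so the absorption cannot be made uniform near the top of the range. This is exactly the ``non-trivial flux/bulk term with the wrong sign'' flagged in the introduction, and it is why the paper does something structurally different there: it integrates $-2Mr^{p-2}\chi\phi\,\partial_r(\chi\Phi)$ by parts in $r$ (producing the good-sign bulk $+2M\int r^{p-4}(\chi\Phi)^2$ but also a boundary term $-2M\int_{\mathcal{I}^+}r^{p-2}\phi\Phi$ and a bulk $-2M(2-p)\int r^{p-3}\chi\phi\,\chi\Phi$), then inverts the spherical Laplacian ($\slashed{\Delta}_{\s^2}f=\Phi$, legitimate since $\Phi$ has zero mean), uses the identity $r^2\slashed{\Delta}\phi|_{\mathcal{I}^+}=2\underline{L}\Phi|_{\mathcal{I}^+}$ from \eqref{eq:Deltaphi}, and integrates by parts in $u$ along $\mathcal{I}^+$ to exhibit these dangerous contributions as a total $u$-derivative plus terms of good sign, with the remaining initial-data boundary term controlled by \eqref{eq:poincare3} and Hardy. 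Without this cancellation mechanism your identity does not close at $p=2$ (and degenerates as $p\uparrow 2$), so the proof as proposed establishes the proposition only on a strictly smaller range such as $p\in(-4,2)$ with a $p$-dependent constant blowing up at the endpoint.
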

\begin{proof}
Let $\chi: (0,\infty)\to \R$ be a smooth cut-off function, such that
\begin{align*}
\chi(r) =&\: 0 \quad \textnormal{for}\: r\leq R,\\
\chi(r)=&\: 1 \quad \textnormal{for}\: r\geq R+1.\\
\end{align*}
Consequently $\chi \Phi (R)=0$, and $\chi \Phi (r)=\Phi(r)$ for $r\geq R+1$. Consider the vector field multiplier $V=r^{p-2}\partial_r$. We apply the divergence theorem in the region $\mathcal{A}_{u_1}^{u_2}$ on $\chi \Phi $ to obtain
\begin{equation*}
\begin{split}
\int_{\mathcal{N}_{u_2}}& J^V[\chi \Phi]\cdot L\, r^2 d\omega dr+\int_{\mathcal{I}^+}J^V[\chi \Phi]\cdot \underline{L}\, r^2d\omega du+\int_{\mathcal{A}_{u_1}^{u_2}} \textnormal{div}J^V[\chi \Phi]\,r^2d\omega drdu\\
&=\int_{\mathcal{N}_{u_1}}J^V[\chi \Phi]\cdot L\,r^2d\omega dr,
\end{split}
\end{equation*}
where we used that the boundary term along the hypersurface $\{r=R\}$ vanishes due to the choice of cut-off $\chi$. In Appendix \ref{app:commmultp} it is shown that
\begin{align*}
r^2 J^V[\chi \Phi ]\cdot L&=r^{p}D^2(\partial_r(\chi\Phi ))^2,\\
r^2 J^V[\chi \Phi ]\cdot \underline{L}\big|_{\mathcal{I}^+}&=r^{p}|\snabla \Phi|^2.
\end{align*}
We split
\begin{equation*}
\textnormal{div}J^V[\chi \Phi ]=K^V[\chi \Phi ]+\mathcal{E}^V[\chi \Phi ].
\end{equation*}
Recall from Appendix \ref{app:commmultp} that we have
\begin{equation}
\label{eq:KVchiphi0}
K^V[\chi \Phi ]=\frac{1}{2}r^{p-3}\left[D(p-4)-D'r\right](\partial_r (\chi \Phi ))^2+2r^{p-3}\partial_r(\chi \Phi )\partial_u (\chi \Phi ) .
\end{equation}
We also have that
\begin{equation*}
\mathcal{E}^V[\chi \Phi ]=V(\chi \Phi )\square_g(\chi \Phi )=V(\chi \Phi )\square_g (\chi\Phi ).
\end{equation*}
From (\ref{eq:boxPhiinfty}) it therefore follows that
\begin{equation*}
\begin{split}
\mathcal{E}^V[\chi \Phi]=&\:r^{p-3}\left[4D-D'r\right](\partial_r (\chi \Phi))^2-2r^{p-3}\partial_u(\chi\Phi)\partial_r(\chi\Phi)\\
&+r^{p-3}[-D''r+3D'-2Dr^{-1}]\chi\Phi\partial_r(\chi\Phi)+r^{p-1}[D''+D'r^{-1}]\chi\phi\partial_r(\chi\Phi)\\
&+\sum_{|\alpha_1|+|\alpha_2|\leq 1}\mathcal{R}_{\chi}[\partial^{\alpha_1}\Phi\cdot \partial^{\alpha_2}\Phi],
\end{split}
\end{equation*}
where we use the notation $\mathcal{R}_{\chi} [f]$ to denote all terms that are linear in $f$, multiplied by factors depending non-trivially on the derivatives $\chi'$ or $\chi''$.

Hence,
\begin{equation*}
\begin{split}
\textnormal{div}J^V[\chi \Phi]=&\:\frac{1}{2}r^{p-3}\left[(p+4)D-3D'r\right](\partial_r (\chi \Phi))^2+\frac{1}{2}(2-p)r^{p-3}|\snabla(\chi\Phi)|^2\\
&+ r^{p-3}[-D''r+3D'-2Dr^{-1}]\chi\Phi\partial_r(\chi\Phi)+r^{p-1}[D''+D'r^{-1}]\chi \phi\partial_r(\chi \Phi)\\
&+\sum_{|\alpha_1|\leq 1,\,|\alpha_2|\leq 1}\mathcal{R}_{\chi}[\partial^{\alpha_1}\Phi\cdot \partial^{\alpha_2}\Phi].
\end{split}
\end{equation*}

We can write 
\begin{equation*}
r^2\textnormal{div}J^V[\chi \Phi]=J_0+J_1+J_2+\sum_{|\alpha_1|+|\alpha_2|\leq 1}\mathcal{R}_{\chi}[\partial^{\alpha_1}\Phi\cdot \partial^{\alpha_2}\Phi],
\end{equation*}
with
\begin{align*}
J_0[\chi \Phi]&\doteq\frac{1}{2}r^{p-1}\left[(p+4)D-3D'r\right](\partial_r(\chi \Phi))^2+\frac{1}{2}r^{p-3}(2-p)r^2|\snabla(\chi \Phi)|^2,\\
J_1[\chi \Phi]&\doteq r^{p-1}[-D''r+3D'-2Dr^{-1}]\chi \Phi\partial_r(\chi \Phi),\\
J_2[\chi \Phi]&\doteq r^{p+1}[D''+D'r^{-1}]\chi\phi\partial_r(\chi \Phi).
\end{align*}

As $\chi^{(k)}$ is supported in the region $R\leq r\leq R+1$ for $k\geq 1$, $\mathcal{R}_{\chi}[\partial^{\alpha_1}\Phi\cdot \partial^{\alpha_2}\Phi]$ must also only be supported in the region $R\leq r\leq R+1$.  By the Morawetz estimate (\ref{ass:morawetzlocal}), we can therefore estimate
\begin{equation*}
\begin{split}
\int_{\mathcal{A}^{u_2}_{u_1}}\sum_{|\alpha_1|\leq 1,\,|\alpha_2|\leq 1}\mathcal{R}_{\chi}[\partial^{\alpha_1}\Phi\cdot \partial^{\alpha_2}\Phi]\,d\omega dr du\leq&\: C\sum_{|\alpha|\leq 2}\int_{\mathcal{A}^{u_2}_{u_1}\cap \{r\leq R+1\}}(\partial^{\alpha}\psi)^2\,d\omega dr du\\
\leq &\: C\sum_{l\leq 1}\int_{\Sigma_{u_1}}J^T[T^{l}\psi]\cdot n_{\Sigma_{u_1}}\,d\mu_{\Sigma_{u_1}}.
\end{split}
\end{equation*}

Note that $J_0$ has a positive (good) sign if $-4<p\leq 2$ and $R\geq (p+4)^{-1}R_0(D)$, for $R_0(D)>0$ a suitably large constant. Moreover,
\begin{align*}
D&=1-\frac{2M}{r}+O_3(r^{-1-\beta}),\\
D'&=\frac{2M}{r^2}+O_2(r^{-1-\beta}),\\
D''&=-\frac{4M}{r^3}+O_1(r^{-3-\beta}).
\end{align*}

The leading order term of $J_1$ is therefore $-2r^{p-2}\chi \Phi\partial_r(\chi \Phi)$. We integrate by parts to obtain
\begin{equation}
\label{estJ1infty}
\begin{split}
-\int_{\mathcal{A}^{u_2}_{u_1}}2r^{p-2}\chi \Phi\partial_r(\chi \Phi)\,d\omega drdu=&-\int_{\mathcal{I}^+\cap\{u_1\leq u \leq u_2\}}r^{p-2}(\chi \Phi)^2\,d\omega du\\
&+(p-2)\int_{\mathcal{A}^{u_2}_{u_1}}r^{p-3}(\chi \Phi)^2\,d\omega dudr.
\end{split}
\end{equation}

Recall also that
\begin{equation*}
\int_{\mathcal{I}^+} J^V[\chi \Phi]\cdot \underline{L}\,r^2d\omega du =\frac{1}{2}\int_{\mathcal{I}^+} r^{p-2}|\snabla (\chi \Phi)|^2d\omega du.
\end{equation*}
By the Poincar\'e inequality (\ref{eq:poincare2}), we can further estimate
\begin{equation*}
\frac{1}{2}\int_{\mathcal{I}^+} r^{p}|\snabla (\chi\Phi_{\ell\geq L})|^2d\omega du \leq \frac{1}{2}L(L+1)\int_{\mathcal{I}^+}r^{p-2}(\chi\Phi_{\ell\geq L})^2\,d\omega du.
\end{equation*}
Hence, for $\psi=\psi_{\ell\geq 1}$ the boundary term in (\ref{estJ1infty}) gets absorbed by the flux term along $\mathcal{I}^+$. Moreover, if $\psi=\psi_{\ell=1}$ an \emph{exact} cancellation occurs! 

We can also apply the Poincar\'e inequality to the bulk term in (\ref{estJ1infty}) to show that it gets absorbed by the $\snabla (\chi \Phi)$ term in $J_0$ if $p\leq 2$. If $\psi=\psi_{\ell=1}$, an exact cancellation also occurs for the bulk term.

The remaining terms can be estimated by applying a Cauchy--Schwarz inequality:
\begin{equation*}
\begin{split}
\int_{\mathcal{A}^{u_2}_{u_1}}{O}(r^{p-3})\chi\Phi\partial_r(\chi\Phi)\,d\omega dudr\leq&\: \epsilon \int_{\mathcal{A}^{u_2}_{u_1}}r^{p-1} (p+4)(\partial_r(\chi\Phi))^2\,d\omega dudr\\
&+C_{\epsilon}(p+4)^{-1} \int_{\mathcal{A}^{u_2}_{u_1}}r^{p-5}(\chi \Phi)^2\,d\omega dudr,
\end{split}
\end{equation*}
where the first term on the right-hand side can be absorbed into $J_0$ and the second term can be estimated by applying the Hardy inequality (\ref{eq:Hardy1}):
\begin{equation}
\label{eq:auxeqrpPhi}
\begin{split}
 (p+4)^{-1}\int_{\mathcal{A}^{u_2}_{u_1}}r^{p-5}(\chi\Phi)^2\,d\omega dudr\leq &\:C(p-4)^{-2}(p+4)^{-2}\int_{\mathcal{A}^{u_2}_{u_1}}(p+4)r^{p-3}(\partial_r(\chi\Phi))^2\,d\omega dudr,
 \end{split}
\end{equation}
for $p<4$, using that Proposition \ref{prop:step0radfields} implies that $\lim_{r\to \infty}r^{p-4} \int_{\s^2}\Phi^2(u,r,\theta,\varphi)\,d\omega=0$ if we use the assumptions on the initial data in the statement of the current proposition.

The right-hand side of \eqref{eq:auxeqrpPhi} can be absorbed into $J_0$ if $R>(p+4)^{-1}(p-4)^{-1}R_0$, with $R_0=R_0(D)>0$ suitably large.

We are left with estimating $J_2$. We can write
\begin{equation*}
J_2[\chi\Phi]=(-2Mr^{p-2}+{O}(r^{p-2-\beta}))\chi \phi\partial_r(\chi \Phi).
\end{equation*}
First, note that the integral of ${O}(r^{p-2-\beta})\chi \phi\partial_r(\chi \Phi)$ can be easily estimated via Cauchy--Schwarz and \eqref{eq:Hardy1} and absorbed into $J_0$.

We integrate the leading-order term in $J_2$ by parts to obtain
\begin{equation}
\label{eq:maintermJ2}
\begin{split}
\int_{\mathcal{A}^{u_2}_{u_1}}&-2Mr^{p-2}\chi \phi\partial_r(\chi \Phi)\,d\omega drdu=-2M\int_{\mathcal{I}^+}r^{p-2}\phi \Phi\,d\omega du\\
&-2M(2-p)\int_{\mathcal{A}^{u_2}_{u_1}}r^{p-3}\chi \phi\chi \Phi\,d\omega drdu+\int_{\mathcal{A}^{u_2}_{u_1}}2Mr^{p-2}\partial_r(\chi \phi)\chi \Phi\,d\omega drdu\\
=&\:-2M\int_{\mathcal{I}^+}r^{p-2}\phi \Phi\,d\omega du-2M(2-p)\int_{\mathcal{A}^{u_2}_{u_1}}r^{p-3}\chi \phi\chi \Phi\,d\omega drdu\\
&+\int_{\mathcal{A}^{u_2}_{u_1}}2Mr^{p-4}(\chi \Phi)^2\,d\omega dr du\\
&+\int_{\mathcal{A}^{u_2}_{u_1}}\mathcal{R}_{\chi}[\phi \cdot \partial_r\phi]\,d\omega drdu.
\end{split}
\end{equation}
The third term on the very right-hand side of \eqref{eq:maintermJ2} has a good sign and the fourth term can be estimated using (\ref{ass:morawetzlocal}). In order to estimate the remaining terms, we will use that there exists a unique smooth function $f$ on $\s^2$ such that $\slashed{\Delta}_{\s^2} f=\Phi$ since $\int_{\s^2}\Phi\,d\omega=0$. 

Therefore, we can integrate by parts on $\s^2$ to obtain
\begin{equation}
\label{eq:boundaryI+}
\int_{\s^2}\phi \cdot \Phi\,d\omega =\int_{\s^2}\slashed{\Delta}_{\s^2}\phi\cdot f\,d\omega.
\end{equation}
Now we use 
\begin{equation}
\label{eq:Deltaphi}
\begin{split}
r^2\slashed{\Delta} \phi=2 \underline{L}\Phi+(2Dr^{-1}-D')\Phi_1+D'r\phi,
\end{split}
\end{equation}
where $\underline{L}=\partial_u-\frac{D}{2}\partial_r$ is the ingoing null generator (i.e.\ $\partial_u$ in double null Eddington--Finkelstein coordinates).

Using that $\phi$, $r^2 \slashed{\Delta}\phi$, $\Phi$ and $\underline{L}\phi$ have a well-defined limit on $\mathcal{I}^+$ by Proposition \ref{prop:step0radfields}, it follows that

\begin{equation*}
r^2\slashed{\Delta} {\phi}|_{\mathcal{I}^+}=2 \underline{L}{\Phi}|_{\mathcal{I}^+}.
\end{equation*}
Hence, we can use (\ref{eq:boundaryI+}) and \eqref{eq:Deltaphi} to obtain
\begin{equation*}
\begin{split}
-2M\int_{\mathcal{I}^+}r^{p-2}\phi \Phi\,d\omega du=&-4M\int_{\mathcal{I}^+} r^{p-2}\underline{L}(\slashed{\Delta}_{\s^2}f) f\,d\omega du\\
=&2M\int_{\mathcal{I}^+} \underline{L}(r^{p-2}r^2|\slashed{\nabla} f|^2)\,d\omega du-(2-p)2M\int_{\mathcal{I}^+}r^{p-3}r^2|\slashed{\nabla} f|^2\,d\omega du\\
=&2M\lim_{r\to\infty}\int_{\s^2}r^{p-2}r^2|\slashed{\nabla} f|^2(u_2,r,\theta,\varphi)\,d\omega \\
&-2M\lim_{r\to\infty}\int_{\s^2}r^{p-2}r^2|\slashed{\nabla} f|^2(u_1,r,\theta,\varphi)\,d\omega \\
&-(2-p)2M\int_{\mathcal{I}^+}r^{p-3}r^2|\slashed{\nabla} f|^2\,d\omega du.
\end{split}
\end{equation*}

The first term on the very right-hand side has the correct sign and the third term has to vanish for $p<3$ because $\Phi$ is finite at $\mathcal{I}^+$. To estimate the second term, we apply the Poincar\'e inequality \eqref{eq:poincare3} followed by the fundamental theorem of calculus together with Cauchy--Schwarz:
\begin{equation}
\label{est:boundest1}
\begin{split}
2M\lim_{r\to\infty}\int_{\s^2}&r^{p-2}\chi^2r^2|\slashed{\nabla} f|^2(u_1,r,\theta,\varphi)\,d\omega\leq M\lim_{r\to\infty}\int_{\s^2}r^{p-2}\chi^2\Phi^2(u_1,r,\theta,\varphi)\,d\omega\\
 =&\:M\int_{\mathcal{N}_{u_1}}\partial_r(r^{p-2}\chi^2\Phi^2)\,d\omega dr\\
\leq&\frac{M}{2}\int_{\mathcal{N}_{u_1}}r^p(\partial_r(\chi\Phi))^2\,d\omega dr+\frac{M}{2}\int_{\mathcal{N}_{u_1}}r^{p-4}\chi^2\Phi^2\,d\omega dr\\
&-M(2-p)\int_{\mathcal{N}_{u_1}}r^{p-3}\chi^2\Phi^2\,d\omega dr\\
\leq \:&C(3-p)^{-1}\int_{\mathcal{N}_{u_1}}r^2(\partial_r(\chi\Phi))^2\,d\omega dr-M(2-p)\int_{\mathcal{N}_{u_1}}r^{p-3}\chi^2\Phi^2\,d\omega dr,
\end{split}
\end{equation}
where we applied \eqref{eq:Hardy1} in the last inequality. The first term on the very right-hand side of (\ref{est:boundest1}) is a constant multiple of a flux term that already appears in the divergence identity for $J^V[\Phi]$. The second term has a good sign for $p\leq 2$.

We can similarly estimate the second term on the very right-hand side of (\ref{eq:maintermJ2})
\begin{equation}
\label{eq:additionalpneq2}
\begin{split}
-2M(2-p)\int_{\mathcal{A}^{u_2}_{u_1}}r^{p-3}\chi^2 \phi \Phi\,d\omega dudr=&-2M(2-p)\int_{\mathcal{A}^{u_2}_{u_1}}r^{p-3}r^2 \chi^2 \slashed{\Delta}\phi f\,d\omega dudr\\
=&M(2-p)\int_{\mathcal{A}^{u_2}_{u_1}}r^{p-3}\underline{L}(\chi r^2|\slashed{\nabla} f|^2))\,d\omega dudr\\
&+M(2-p)\int_{\mathcal{A}^{u_2}_{u_1}}(2Dr^{p-4}-D'r^{p-3})\chi^2r^2|\slashed{\nabla} f|^2\,d\omega dudr\\
&+M(2-p)\int_{\mathcal{A}^{u_2}_{u_1}}r^{p-2}D'\chi^2 \phi f\,d\omega dudr\\
&+\sum_{|\alpha_i|\leq 1}\int_{\mathcal{A}^{u_2}_{u_1}}\mathcal{R}_{\chi}[\partial^{\alpha_1}\phi\cdot \partial^{\alpha_2}\phi]\,d\omega drdu.
\end{split}
\end{equation}
The second term on the right-hand side of (\ref{eq:additionalpneq2}) has a good sign if $p\leq 2$. The third term can be estimated using Cauchy--Schwarz, \eqref{eq:poincare2} and \eqref{eq:poincare3}:
\begin{equation*}
\int_{\mathcal{A}^{u_2}_{u_1}}r^{p-2}D'\chi^2\phi f\,d\omega dudr\leq \epsilon \int_{\mathcal{A}^{u_2}_{u_1}}r^{p-4}\chi^2\Phi^2\,d\omega dudr+C_{\epsilon}\int_{\mathcal{A}^{u_2}_{u_1}}r^{p-4}\chi^2\phi^2\,d\omega dudr.
\end{equation*}
We absorb the $\Phi^2$ term into the second term on the right-hand side of (\ref{eq:additionalpneq2}) and estimate the $\phi^2$ term by applying the Hardy inequality \eqref{eq:Hardy1} twice.

By plugging all the estimates above into the divergence identity for $\Phi$, we obtain
\begin{equation}
\label{eq:rpestPhichiPhi}
\begin{split}
\int_{\mathcal{N}_{u_2}} r^p(\partial_r(\chi\Phi))^2\, d\omega dr+\int_{\mathcal{A}_{u_1}^{u_2}} (p+4)r^{p-1}(\partial_r(\chi\Phi))^2\,d\omega drdu \leq&\: C\int_{\mathcal{N}_{u_1}} r^p(\partial_r(\chi\Phi))^2\,d\omega dr\\
&+C\sum_{k\leq 1}\int_{\Sigma_{u_1}}J^T[T^{k}\psi]\cdot n_{{u_1}}\,d\mu_{\Sigma_{u_1}}.
\end{split}
\end{equation}
Note that we can add the term $(2-p)r^{p-1}|\snabla \chi \Phi|^2$ inside the spacetime integral on the left-hand side. Indeed, for $\Phi_{\ell \geq 2}$ this term arises from $J_0$, whereas for $\Phi_{\ell=1}$ we can apply the equality \eqref{eq:poincare1} together with the Hardy inequality \eqref{eq:Hardy1} to add $r^{p-1}|\snabla \chi \Phi_{\ell=1}|^2$ to the spacetime term.

In order to remove the cut-off $\chi$ in \eqref{eq:rpestPhichiPhi} we note that we can apply Hardy's inequality \eqref{eq:Hardy1} to estimate the following
\begin{equation*}
\begin{split}
\int_{R}^{\infty}r^p(\partial_r(\chi\Phi ))^2\, dr\big|_{u=u'}\leq &\: 2\int_{R}^{\infty}r^p\chi^2 (\partial_r\Phi )^2\, dr\big|_{u=u'} +2\int_{R}^{R+1}r^p\chi'^2\Phi^2\, dr\big|_{u=u'}\\
\leq &\:2\int_{R}^{\infty}r^p (\partial_r\Phi )^2\, dr\big|_{u=u'}+ C\int_{R}^{\infty}\psi^2+(\partial_r\psi)^2+(\partial_r^2\psi)^2\, dr\big|_{u=u'}.\\
\leq &\:2\int_{R}^{\infty}r^p (\partial_r\Phi )^2\, dr\big|_{u=u'}+ C\sum_{l\leq 1}\int_{\Sigma_{u'}}J^T[\partial_r^l\psi ]\cdot n_{u'} \,d\mu_{{u'}}.
\end{split}
\end{equation*}
Note that we can replace the term $J^T[\partial_r^l\psi ]$ above by $J^T[T^l\psi ]$ by applying a standard elliptic estimate.

Similarly, we can estimate
\begin{equation*}
\begin{split}
\int_{R}^{\infty}r^p (\partial_r\Phi )^2\, dr\big|_{u=u'}=&\:\int_{R+1}^{\infty}r^p (\partial_r(\chi \Phi ))^2\, dr\big|_{u=u'}+\int_{R}^{R+1}r^p (\partial_r\Phi )^2\, dr\big|_{u=u'}\\
\leq &\:\int_{R+1}^{\infty}r^p (\partial_r(\chi \Phi ))^2\, dr\big|_{u=u'}+C\sum_{l\leq 1}\int_{\Sigma_{u'}}J^T[\partial_r^l\psi ]\cdot n_{u'} \,d\mu_{{u'}}.\qedhere
\end{split}
\end{equation*}
\end{proof}

By considering the quantity $\widetilde{\Phi}=r(r-M)\partial_r\phi$ instead of $\Phi=r^2\partial_r\phi$ and restricting to the single angular mode $\psi=\psi_{\ell=1}$, we can in fact extend the range of $p$ to $p\in(-4,4)$.
\begin{proposition}[\textbf{Extended $r^p$-weighted estimates for $r(r-M)\partial_r(r\psi_{\ell=1})$}]
\label{prop:rpphil=1}
Let $\psi$ be a solution to \eqref{waveequation} emanating from initial data given as in Theorem \ref{thm:extuniq} on $(\mathcal{R}, g)$, which is supported on the single angular mode with $\ell=1$.

Take $p\in(-4,4)$ and denote $$\widetilde{\Phi}=r(r-M)\partial_r\phi=r(r-M)\partial_r(r\psi)$$ and assume that
\begin{equation*}
\int_{\Sigma}J^T[\psi]\cdot n_{\Sigma}\,d\mu_{\Sigma}<\infty,
\end{equation*}
and
\begin{align*}
\lim_{r \to \infty }\sum_{|k|\leq 2}\int_{\s^2}\phi^2\,d\omega\big|_{u'=0}<&\:\infty,\\
\lim_{r\to \infty} \int_{\s^2}\widetilde{\Phi}^2\,d\omega\big|_{u'=0}<&\:\infty.
\end{align*}

Then there exists an $R>0$ such that for any $0\leq u_1<u_2$
\begin{equation}
\label{rphierpsileq1}
\begin{split}
\int_{\mathcal{N}_{u_2}} r^p(\partial_r\widetilde{\Phi})^2\, d\omega dr+\int_{\mathcal{A}_{u_1}^{u_2}} (p+4)r^{p-1}(\partial_r\widetilde{\Phi})^2\,d\omega drdu \leq&\: C\int_{\mathcal{N}_{u_1}} r^p(\partial_r\widetilde{\Phi})^2\\
&+C\sum_{k\leq 1}\int_{\Sigma_{u_1}}J^T[T^{k}\psi]\cdot n_{\Sigma_{u_1}}\,d\mu_{\Sigma_{u_1}},
\end{split}
\end{equation}
where $C\doteq C(D,R)>0$ is a constant and we can take $R=(p+4)^{-1}(p-4)^{-1}R_0(D)>0$, with $R_0(D)>0$ a constant.
\end{proposition}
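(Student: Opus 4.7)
\medskip

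The plan is to proceed by the same multiplier strategy as in Proposition \ref{prop:rpPhiv1}, applying $V=r^{p-2}\partial_r$ to $\chi\widetilde{\Phi}$ with a cutoff $\chi$ that vanishes for $r\leq R$ and equals $1$ for $r\geq R+1$, and invoking the divergence theorem in $\mathcal{A}^{u_2}_{u_1}$. The boundary flux along $\mathcal{N}_{u_i}$ will again reproduce $r^p(\partial_r(\chi\widetilde{\Phi}))^2$ and the boundary flux along $\mathcal{I}^+$ will reproduce $r^p|\snabla\widetilde{\Phi}|^2$. For the bulk, I combine $K^V[\chi\widetilde{\Phi}]$ (which has the same structure as \eqref{eq:KVchiphi0}, giving a leading $\frac{1}{2}r^{p-3}[D(p-4)-D'r](\partial_r\widetilde{\Phi})^2$ and a $-\frac{1}{2}(2-p)r^{p-3}r^2|\snabla\widetilde{\Phi}|^2$) with $\mathcal{E}^V[\chi\widetilde{\Phi}]=V(\chi\widetilde{\Phi})\square_g(\chi\widetilde{\Phi})$, where I substitute the expression for $\square_g\widetilde{\Phi}$ supplied by Lemma \ref{lm:commute1time}(ii). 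After the standard cancellations between the $\partial_u\widetilde{\Phi}\partial_r\widetilde{\Phi}$ terms this will produce the positive main bulk term $\tfrac{1}{2}(p+4)r^{p-1}(\partial_r\widetilde{\Phi})^2$ (modulo $D'$ corrections that are absorbed for $R$ large), which is positive precisely for $p>-4$. Errors supported in $\{R\leq r\leq R+1\}$ coming from $\chi^{(k)}$ are handled by the local Morawetz bound \eqref{ass:morawetzlocal} exactly as in Proposition \ref{prop:rpPhiv1}.

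The new ingredient, which produces the extension to $p<4$, is the exploitation of $\ell=1$ to handle the angular term $(2-p)r^{p-3}r^2|\snabla\widetilde{\Phi}|^2$, which has the wrong sign once $p>2$. For $\widetilde{\Phi}=\widetilde{\Phi}_{\ell=1}$, the Poincar\'e equality \eqref{eq:poincare1} gives $\int_{\s^2}r^2|\snabla\widetilde{\Phi}|^2\,d\omega=2\int_{\s^2}\widetilde{\Phi}^2\,d\omega$, so after converting and then applying the Hardy inequality \eqref{eq:Hardy1} with $q=p-5$ (legal since $q+1=p-4\neq 0$ and the decay of $\widetilde{\Phi}$ at $\mathcal{I}^+$ from Proposition \ref{prop:step0radfields} kills the boundary term), one obtains
\begin{equation*}
\int_{\mathcal{A}^{u_2}_{u_1}} r^{p-5}\widetilde{\Phi}^2\,d\omega drdu \leq \frac{4}{(p-4)^2}\int_{\mathcal{A}^{u_2}_{u_1}} r^{p-3}(\partial_r\widetilde{\Phi})^2\,d\omega drdu,
\end{equation*}
which can be absorbed into the positive $(p+4)r^{p-1}(\partial_r\widetilde{\Phi})^2$ term provided $R\geq(p+4)^{-1}(p-4)^{-1}R_0(D)$, yielding the asserted weight dependence on $R$.

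The crucial cancellation that really drives the proof is the interaction of the $-M\slashed{\Delta}\phi$ term appearing in $\square_g\widetilde{\Phi}$ with the $[(r-M)D''+D']\phi$ coupling. Using $r^2\slashed{\Delta}\phi_{\ell=1}=-2\phi_{\ell=1}$ and the asymptotics \eqref{ass:quantitativedecD} so that $D''=-4Mr^{-3}+O_1(r^{-3-\beta})$ and $D'=2Mr^{-2}+O_2(r^{-1-\beta})$, one computes
\begin{equation*}
\bigl[-M\slashed{\Delta}\phi+((r-M)D''+D')\phi\bigr]_{\ell=1}=\bigl[2Mr^{-2}-2Mr^{-2}+4M^2r^{-3}+O(r^{-2-\beta})\bigr]\phi,
\end{equation*}
so the leading $Mr^{-2}\phi$ coupling cancels \emph{exactly}. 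This is the whole point of replacing $\Phi$ by $r(r-M)\partial_r\phi$: the residual coupling is $O(r^{-3})\phi$ (plus strictly better decay), and the resulting bulk expression $\sim r^{p-3}\phi\,\partial_r\widetilde{\Phi}$ is controlled by Cauchy--Schwarz together with iterated applications of \eqref{eq:Hardy1} to $\phi$, for any $p<4$.

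The remaining task is the $\mathcal{I}^+$ boundary term $-2Mr^{p-2}\phi\,\partial_r\widetilde{\Phi}$ arising after integration by parts of the residual $\phi$ coupling, together with the boundary flux $r^p|\snabla\widetilde{\Phi}|^2$. These are treated exactly as in the proof of Proposition \ref{prop:rpPhiv1}: one writes $\int_{\s^2}\phi\widetilde{\Phi}\,d\omega=\int_{\s^2}\slashed{\Delta}_{\s^2}\phi\cdot f\,d\omega$ for $\slashed{\Delta}_{\s^2}f=\widetilde{\Phi}$, then uses \eqref{eq:Deltaphi} (or its analogue obtained by combining Lemma \ref{lm:commute0time} and \ref{lm:commute1time}) to convert $r^2\slashed{\Delta}\phi$ at $\mathcal{I}^+$ into $2\underline{L}\widetilde{\Phi}$ up to lower order, and integrates by parts in $u$ to produce $\underline{L}(r^p|\snabla f|^2)$, so that the sign matches that of the good $\mathcal{I}^+$ flux of $\widetilde{\Phi}$. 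The $\ell=1$ restriction again provides exact compatibility of weights. The main obstacle of the argument is the clean bookkeeping of these lower-order $\phi$ terms and verifying that every Hardy application remains valid up to $p<4$; since the $(p-4)^{-2}$ constants enter only into the size of $R$ and the final constant $C(D,R)$, they are harmless for any fixed $p\in(-4,4)$. The removal of the cutoff $\chi$ from \eqref{rphierpsileq1} is identical to the final paragraph of the proof of Proposition \ref{prop:rpPhiv1}.
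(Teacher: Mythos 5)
Your identification of the crucial $J_2$ cancellation is exactly right: using $r^2\slashed{\Delta}\phi_{\ell=1}=-2\phi_{\ell=1}$ together with $D''=-4Mr^{-3}+O(r^{-3-\beta})$, $D'=2Mr^{-2}+O(r^{-2-\beta})$, the $-M\slashed{\Delta}\phi$ term and the $[(r-M)D''+D']\phi$ coupling cancel at leading order, leaving only $O(r^{p-3})\phi\,\partial_r\widetilde{\Phi}$; this is indeed the raison d'\^etre of $\widetilde{\Phi}$. However, your treatment of the angular term contains a genuine gap. After the Poincar\'e equality, the dangerous bulk term is $\tfrac12(2-p)\,r^{p-1}\int_{\s^2}|\snabla\widetilde{\Phi}|^2\,d\omega=(2-p)\,r^{p-3}\int_{\s^2}\widetilde{\Phi}^2\,d\omega$, i.e.\ it carries the weight $r^{p-3}$, not the $r^{p-5}$ that appears in your displayed Hardy inequality (that inequality, with $q=p-5$, is the right tool for the \emph{subleading} $O(r^{p-3})\widetilde{\Phi}\,\partial_r\widetilde{\Phi}$ self-coupling terms, where the two extra powers of $r$ let you absorb into $r^{p-1}(\partial_r\widetilde{\Phi})^2$ by taking $R$ large). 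For the angular term itself, Hardy with the correct exponent $q=p-3$ yields the constant $4(p-2)^{-2}$, so the absorption requirement becomes $4|p-2|^{-1}<\tfrac12(p+4)$, which fails for $p$ in a nontrivial subinterval of $(2,4)$ (e.g.\ $p=3$), and since the exponent gap is exactly two there is no $R$-dependent smallness to exploit. No Cauchy--Schwarz/Hardy scheme closes here: the coefficients are critically tuned.

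The missing ingredient is the self-coupling term $J_1$ coming from the $r^{-1}[-D''r+3D'-2Dr^{-1}+\cdots]\widetilde{\Phi}$ coefficient in $\square_g\widetilde{\Phi}$, whose leading part is $-2r^{p-2}\widetilde{\Phi}\,\partial_r\widetilde{\Phi}$ and which you do not discuss. Integrating this by parts in $r$ produces the bulk term $(p-2)\int r^{p-3}\widetilde{\Phi}^2$, which cancels \emph{exactly} the Poincar\'e-converted angular term $(2-p)\int r^{p-3}\widetilde{\Phi}^2$ for every $p$, while the resulting boundary term $-\int_{\mathcal{I}^+}r^{p-2}\widetilde{\Phi}^2$ exactly cancels the $|\snabla\widetilde{\Phi}|^2$ flux along $\mathcal{I}^+$; this algebraic cancellation, not an absorption, is what removes the $p\leq 2$ barrier. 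Relatedly, your final paragraph is superfluous and somewhat off-track: once the $J_2$ cancellation has reduced the $\phi$-coupling to $O(r^{p-3})\phi\,\partial_r\widetilde{\Phi}$, no integration by parts toward $\mathcal{I}^+$ and no auxiliary potential $f$ with $\slashed{\Delta}_{\s^2}f=\widetilde{\Phi}$ are needed -- plain Cauchy--Schwarz followed by two applications of \eqref{eq:Hardy1} (valid for $p<4$, resp.\ $p<6$, using the vanishing of $r^{p-4}\int_{\s^2}\phi^2$ and $r^{p-6}\int_{\s^2}\widetilde{\Phi}^2$ from Proposition \ref{prop:step0radfields}) suffices; reinstating the $\mathcal{I}^+$ analysis of Proposition \ref{prop:rpPhiv1} would reintroduce sign conditions that fail for $p>2$.
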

\begin{proof}
Let $\chi$ be the cut-off introduced in the proof of Proposition \ref{prop:rpPhiv1}. Consider the vector field multiplier $V=r^{p-2}\partial_r$ and apply the divergence theorem to the corresponding current $J^V[\chi \widetilde{\Phi}]$:
\begin{equation*}
\begin{split}
\int_{\mathcal{N}_{u_2}}& J^V[\chi \widetilde{\Phi}]\cdot L\, r^2 d\omega dr+\int_{\mathcal{I}^+}J^V[\chi \widetilde{\Phi}]\cdot \underline{L}\, r^2d\omega du+\int_{\mathcal{A}_{u_1}^{u_2}} \textnormal{div}J^V[\chi \widetilde{\Phi}]\,r^2d\omega drdu\\
&=\int_{\mathcal{N}_{u_1}}J^V[\chi \widetilde{\Phi}]\cdot L\,r^2d\omega dr,
\end{split}
\end{equation*}

From (\ref{eq:boxtildePhiinfty}) it follows that
\begin{equation*}
\begin{split}
\mathcal{E}^V[\chi \widetilde{\Phi}]=&\:r^{p-3}(4D-D'r+MD(r-M)^{-1})(\partial_r(\chi\widetilde{\Phi}))^2-2r^{p-3}\partial_u(\chi\widetilde{\Phi})\partial_r(\chi\widetilde{\Phi})\\
&-Mr^{p-2}\chi\slashed{\Delta}\phi\partial_r(\chi \widetilde{\Phi})+r^{p-3}[-D''r+3D'-2Dr^{-1}-MD(r-M)^{-2}\\
&+M(r-M)^{-1}(D'-Dr^{-1})]\chi\widetilde{\Phi}\partial_r(\chi\widetilde{\Phi})+r^{p-2}[(r-M)D''+D']\chi \phi\partial_r(\chi\widetilde{\Phi})\\
&+\sum_{|\alpha_1|\leq 1,\,|\alpha_2|\leq 1}\mathcal{R}_{\chi}[\partial^{\alpha_1}\widetilde{\Phi}\cdot \partial^{\alpha_2}\widetilde{\Phi}].
\end{split}
\end{equation*}
Hence,
\begin{equation*}
\begin{split}
\textnormal{div}J^V[\chi \widetilde{\Phi}]=&\:\frac{1}{2}r^{p-3}\left[(p+4)D-3D'r+MD(r-M)^{-1}\right](\partial_r (\chi \widetilde{\Phi}))^2\\
&+\frac{1}{2}(2-p)r^{p-3}|\snabla(\chi\widetilde{\Phi})|^2+ r^{p-3}[-D''r+3D'-2Dr^{-1}-MD(r-M)^{-2}\\
&+M(r-M)^{-1}(D'-Dr^{-1})]\chi\widetilde{\Phi}\partial_r(\chi\widetilde{\Phi})\\
&+r^{p-1}[D''+D'r^{-1}-Mr^{-1}D'']\chi \phi\partial_r(\chi \widetilde{\Phi})-Mr^{p-2}\chi\slashed{\Delta}\phi\partial_r(\chi \widetilde{\Phi})\\
&+\sum_{|\alpha_1|\leq 1,\,|\alpha_2|\leq 1}\mathcal{R}_{\chi}[\partial^{\alpha_1}\widetilde{\Phi}\cdot \partial^{\alpha_2}\widetilde{\Phi}].
\end{split}
\end{equation*}

We can write $r^2\textnormal{div}J^V[\chi \widetilde{\Phi}]=J_0+J_1+J_2+\mathcal{R}_{\chi}[\partial^{\alpha_1}\widetilde{\Phi}\cdot \partial^{\alpha_2}\widetilde{\Phi}]$, with
\begin{align*}
J_0[\chi \widetilde{\Phi}]&\doteq\frac{1}{2}r^{p-1}\left[(p+4)+{O}(r^{-1})\right](\partial_r(\chi \widetilde{\Phi}))^2+\frac{1}{2}r^{p-3}(2-p)r^2|\snabla(\chi \widetilde{\Phi})|^2,\\
J_1[\chi \widetilde{\Phi}]&\doteq r^{p-1}[-2r^{-1}+{O}(r^{-2})]\chi \widetilde{\Phi}\partial_r(\chi \widetilde{\Phi}),\\
J_2[\chi \widetilde{\Phi}]&\doteq r^{p+1}[D''+D'r^{-1}+{O}(r^{-4})]\chi\phi\partial_r(\chi \widetilde{\Phi})-Mr^{p}\chi\slashed{\Delta}\phi\partial_r(\chi \widetilde{\Phi}).
\end{align*}
We can estimate the spacetime integral of $\mathcal{R}_{\chi}[\partial^{\alpha_1}\widetilde{\Phi}\cdot \partial^{\alpha_2}\widetilde{\Phi}]$ in the same way as in the proof of Proposition \ref{prop:rpPhiv1}.

Moreover, we can improve the estimates of $J_i$, if we restrict to $\psi=\psi_1$. The $(\partial_r(\chi \widetilde{\Phi}))^2$ term in $J_0$ has the right sign for $p>0$, if $R\geq (p+4)^{-1}R_0$, for $R_0=R_0(D)>0$ suitably large. We use the Poincar\'e inequality (\ref{eq:poincare1}) to write the remaining term as follows:
\begin{equation}
\label{eq:J1l=1}
\int_{\mathcal{A}^{u_2}_{u_1}}\frac{1}{2}r^{p-3}(2-p)r^2|\snabla(\chi \widetilde{\Phi}_1)|^2\,d\omega dr du=(2-p)\int_{\mathcal{A}^{u_2}_{u_1}}r^{p-3}(\chi \widetilde{\Phi}_1)^2\,d\omega dr du.
\end{equation}
Moreover, we apply integration by parts to rewrite the leading order term in $J_1$:
\begin{equation}
\label{eq:J1l=1b}
\begin{split}
\int_{\mathcal{A}^{u_2}_{u_1}}-2r^{p-2}\chi\widetilde{\Phi}_1\partial_r(\chi \widetilde{\Phi}_1)\,d\omega du dr=&-\int_{\mathcal{I}^+} r^{p-2}(\chi\widetilde{\Phi}_1)^2\,d\omega du\\
&+\int_{\mathcal{A}^{u_2}_{u_1}}(p-2)r^{p-3}\chi\widetilde{\Phi}_1\partial_r(\chi \widetilde{\Phi}_1)\,d\omega du dr.
\end{split}
\end{equation}
Note that the second term on the right-hand of (\ref{eq:J1l=1b}) cancels out exactly the term on the right-hand side of (\ref{eq:J1l=1}). Similarly, the first term on the right-hand side of (\ref{eq:J1l=1b}) cancels out exactly the $|\snabla (\chi \tilde{\Phi})|^2$ term in the flux integral along $\mathcal{I}^+$ that appears in the divergence identity for $J^V[\chi \widetilde{\Phi}]$.

We are left with estimating the higher-order terms in $r^{-1}$ in $J_1$; we apply Cauchy--Schwarz:
\begin{equation*}
r^{p-3}\chi\widetilde{\Phi}\partial_r(\chi \widetilde{\Phi})\leq \epsilon (p+4) r^{p-1} (\partial_r(\chi \widetilde{\Phi}))^2+C_{\epsilon}(p+4)^{-1}r^{p-5}(\chi \widetilde{\Phi})^2
\end{equation*}
and absorb the spacetime integral of the first term on the right-hand side into $J_0$, whereas we apply a Hardy inequality to deal with the second term:
\begin{equation*}
C_{\epsilon}(p+4)^{-1}\int_{\mathcal{A}^{u_2}_{u_1}}r^{p-5}(\chi \widetilde{\Phi})^2\,d\omega dr du \leq C_{\epsilon}(p+4)^{-1}(p-4)^{-2}\int_{\mathcal{A}^{u_2}_{u_1}}r^{p-3}(\partial_r(\chi \widetilde{\Phi}))^2\,d\omega dr du,
\end{equation*}
where we require $p<4$ and moreover, use that $\lim_{r\to \infty}r^{p-4}\int_{\s^2}\widetilde{\Phi}^2\,d\omega=0$ for $p<4$, by Proposition \ref{prop:step0radfields}. We can absorb the right-hand side above into $J_0$ for suitably large $R_0>0$.

Recall that,
\begin{align*}
D&=1-\frac{2M}{r}+O_3(r^{-1-\beta}),\\
D'&=\frac{2M}{r^2}+{O}_2(r^{-2-\beta}),\\
D''&=-\frac{4M}{r^3}+{O}_1(r^{-3-\beta}),
\end{align*}
so we can write
\begin{equation*}
\begin{split}
J_2[\chi \widetilde{\Phi}]=&\:r^{p+1}\left(-\frac{4M}{r^3}+\frac{2M}{r^3}+{O}(r^{-4})\right)\chi\phi_1\partial_r(\chi \widetilde{\Phi}_1)-Mr^{p}\chi\slashed{\Delta}\phi_1\partial_r(\chi \widetilde{\Phi}_1)\\
=&\:{O}(r^{p-3})\chi\phi_1\partial_r(\chi \widetilde{\Phi}_1).
\end{split}
\end{equation*}
The precise definition of $\tilde{\Phi}$ and the restriction $\psi=\psi_1$ are crucial for the above cancellation of the leading order terms in $r^{-1}$ in $J_2$.

We apply Cauchy--Schwarz and Hardy to estimate
\begin{equation*}
\begin{split}
\int_{\mathcal{A}^{u_2}_{u_1}}{O}(r^{p-3})\chi\phi\partial_r(\chi\widetilde{\Phi})\,d\omega dudr\leq&\: \epsilon \int_{\mathcal{A}^{u_2}_{u_1}}(p+4)r^{p-1}(\partial_r(\chi\widetilde{\Phi}))^2\,d\omega dudr\\
&+C_{\epsilon}(p+4)^{-1} \int_{\mathcal{A}^{u_2}_{u_1}}r^{p-5}\chi^2\phi^2\,d\omega dudr\\
\leq&\: \epsilon \int_{\mathcal{A}^{u_2}_{u_1}}(p+4)r^{p-1}(\partial_r(\chi\widetilde{\Phi}))^2\,d\omega dudr\\
&+C_{\epsilon}(p-4)^2(p+4)^{-1} \int_{\mathcal{A}^{u_2}_{u_1}}r^{p-3}(\partial_r(\chi\phi))^2\,d\omega dudr,
\end{split}
\end{equation*}
for $p<4$, where we used that $\lim_{r\to \infty}r^{p-4}\int_{\s^2}\phi^2\,d\omega=0$, by Proposition \ref{prop:step0radfields}.

We can apply Hardy once more to obtain
\begin{equation*}
\begin{split}
(p-4)^2(p+4)^{-1}& \int_{\mathcal{A}^{u_2}_{u_1}}r^{p-3}(\partial_r(\chi\phi))^2\,d\omega dudr\leq C(p-4)^2(p+4)^{-1} \int_{\mathcal{A}^{u_2}_{u_1}}r^{p-3}\chi^2(\partial_r\phi)^2\,d\omega dudr\\
&+ \int_{\mathcal{A}^{u_2}_{u_1}}\mathcal{R}_{\chi}[\phi^2]\,d\omega dudr\\
\leq&\: C(p-4)^2(p-6)^{-2}(p+4)^{-1} \int_{\mathcal{A}^{u_2}_{u_1}}r^{p-5}(\partial_r(\chi\widetilde{\Phi}))^2\,d\omega dudr\\
&+  \int_{\mathcal{A}^{u_2}_{u_1}}\mathcal{R}_{\chi}[\phi^2]\,d\omega dudr,
\end{split}
\end{equation*}
using that $\lim_{r\to \infty}r^{p-6}\widetilde{\Phi}^2=0$ for $p<6$. We can apply the above estimates to the divergence identity to obtain the $r^p$-weighted estimates in the proposition.
\end{proof}
\begin{remark}\label{rmk:I1}
We need to require $\lim_{r\to \infty} r^2\partial_r(r(r-M)\partial_r(r\psi_{\ell=1})) (0,r , \theta, \varphi ) = 0$ only if we want to take $p\geq 3$ in Proposition \ref{prop:rpphil=1}. For $p < 3$ it is enough to consider
\begin{equation}
\lim_{r\to \infty} r^2\partial_r(r(r-M)\partial_r(r\psi_{\ell=1})) (0,r , \theta, \varphi ) < \infty.
\end{equation}
This limit, evaluated along $\mathcal{N}_{\tau}$, corresponds precisely to the \emph{second} Newman--Penrose constant $I_1[\psi]$, which one can show is conserved along $\mathcal{I}^+$. One can compare this with requiring that $I_0[\psi]=0$ in order to be able to take $p\geq 3$ in Proposition \ref{en01} for $\psi_0$. 
\end{remark}
We can obtain additional $r^p$-weighted estimates if we commute $\square_g$ once more with $r^2\partial_r$. First, we restrict to $\psi_{\ell\geq 2}$
\begin{proposition}[\textbf{$r^p$-weighted estimates for $(r^2\partial_r)^2(r\psi_{\ell \geq 2})$}]
\label{prop:l2commr2v0}
Let $\psi$ be a solution to \eqref{waveequation} emanating from initial data given as in Theorem \ref{thm:extuniq} on $(\mathcal{R}, g)$.

Let $\psi$ be supported on angular modes with $\ell \geq 2$ and take $p\in(-8,2)$. Denote $$\Phi_{(2)}=r^2\partial_r\Phi=r^2\partial_r(r^2\partial_r(r\psi)).$$ and assume that
\begin{equation*}
\sum_{|k|\leq 4}\int_{\Sigma}J^T[\Omega^k\psi]\cdot n_{\Sigma}\,d\mu_{\Sigma}<\infty
\end{equation*}
and
\begin{align*}
\lim_{r \to \infty }\sum_{|k|\leq 4}\int_{\s^2}(\Omega^k\phi)^2\,d\omega\big|_{u'=0}<&\:\infty,\\
\lim_{r \to \infty }\sum_{|k|\leq 2}\int_{\s^2}(\Omega^k\Phi)^2\,d\omega\big|_{u'=0}<&\:\infty,\\
\lim_{r \to \infty }\int_{\s^2}\Phi_{(2)}^2\,d\omega\big|_{u'=0}<&\:\infty,
\end{align*}

Then there exists an $R>0$ such that for any $0\leq u_1<u_2$
\begin{equation}
\label{rphierPhi2lgeq2av0}
\begin{split}
\int_{\mathcal{N}_{u_2}}& r^p(\partial_r\Phi_{(2)})^2\, d\omega dr+\int_{\mathcal{A}_{u_1}^{u_2}} (p+8)r^{p-1}(\partial_r\Phi_{(2)})^2+(2-p)r^{p-1}|\snabla \Phi_{(2)}|^2\,d\omega drdu\\
 \leq&\: C\int_{\mathcal{N}_{u_1}} r^p(\partial_r\Phi_{(2)})^2\,d\omega dr+C\sum_{k\leq 2}\int_{\Sigma_{u_1}}J^T[T^{k}\psi]\cdot n_{{u_1}}\,d\mu_{\Sigma_{u_1}},
\end{split}
\end{equation}
where $C\doteq C(D,R)>0$ is a constant and we can take $R=(p+8)^{-1}(p-2)^{-1}R_0(D)>0$, with $R_0(D)>0$ a constant.
\end{proposition}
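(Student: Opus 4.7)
The strategy parallels the proof of Proposition \ref{prop:rpPhiv1}, with $\Phi_{(2)}$ in place of $\Phi$ and the commuted equation \eqref{eq:Phi2infty} replacing \eqref{eq:boxPhiinfty}. I would introduce the same cutoff $\chi$ (vanishing for $r\leq R$ and identically $1$ for $r\geq R+1$) and apply the divergence theorem in $\mathcal{A}_{u_1}^{u_2}$ to the energy current $J^V[\chi\Phi_{(2)}]$ generated by the multiplier $V=r^{p-2}\partial_r$. The boundary contributions are the weighted $\mathcal{N}_u$ fluxes $\int r^p(\partial_r(\chi\Phi_{(2)}))^2\,d\omega dr$ together with the $\mathcal{I}^+$-flux $\int r^p|\snabla(\chi\Phi_{(2)})|^2\,d\omega du$, using the same null-frame computation as in Appendix \ref{app:commmultp}.

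The key computation is the bulk integrand $K^V[\chi\Phi_{(2)}]+\mathcal{E}^V[\chi\Phi_{(2)}]$. The new feature is that the coefficient of $\partial_r\Phi_{(2)}$ in \eqref{eq:Phi2infty} is $r^{-1}[6D-2D'r]$, contributing an extra $+12$ to the $(\partial_r\chi\Phi_{(2)})^2$-coefficient in $\mathcal{E}^V$; combined with the $(p-4)$ from $K^V$, this produces the positive leading term $\tfrac12(p+8)r^{p-1}(\partial_r\chi\Phi_{(2)})^2$, explaining the lower threshold $p>-8$ and the requirement $R\gtrsim (p+8)^{-1}$. The remaining terms from \eqref{eq:Phi2infty} give three classes of errors: (a) the self-coupling $r^{-1}[-6Dr^{-1}-3D''r+7D']\Phi_{(2)}\partial_r\Phi_{(2)}$, whose leading $-6r^{-2}\Phi_{(2)}\partial_r\Phi_{(2)}$ piece I would integrate by parts in $r$ (mimicking \eqref{estJ1infty}), generating a boundary term $-3\int_{\mathcal{I}^+}r^{p-2}(\chi\Phi_{(2)})^2\,d\omega du$ and a bulk term $3(p-2)\int r^{p-3}(\chi\Phi_{(2)})^2$; (b) the coupling $r[-D'''r+2D''+2D'r^{-1}]\Phi\,\partial_r\Phi_{(2)}$, whose coefficient by direct expansion is $-16Mr^{-2}+O(r^{-2-\beta})$, absorbed via Cauchy--Schwarz into $\epsilon(p+8)r^{p-1}(\partial_r\Phi_{(2)})^2$ plus $C_\epsilon r^{p-1}\Phi^2$, the latter controlled by Hardy \eqref{eq:Hardy1} and Proposition \ref{prop:rpPhiv1}; (c) the coupling to $\phi$, whose coefficient $r^3[D'''+4D''r^{-1}+2D'r^{-2}]$ enjoys the cancellation $12M-16M+4M=0$ and is therefore $O(r^{-1-\beta})$ rather than the naively expected $O(1)$, so Cauchy--Schwarz and Hardy on $\phi$ suffice.

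The crucial step, and the reason for the restriction to $\ell\geq 2$, is absorbing the boundary term $-3\int_{\mathcal{I}^+}r^{p-2}(\chi\Phi_{(2)})^2\,d\omega du$ from (a) into the positive $\mathcal{I}^+$-flux $\tfrac12\int_{\mathcal{I}^+}r^p|\snabla(\chi\Phi_{(2)})|^2\,d\omega du$. By the Poincar\'e inequality \eqref{eq:poincare2} applied to $\psi=\psi_{\ell\geq 2}$, one has
\begin{equation*}
\tfrac12\int_{\mathcal{I}^+}r^p|\snabla(\chi\Phi_{(2)})|^2\,d\omega du\;\geq\;\tfrac12\,\ell(\ell+1)\int_{\mathcal{I}^+}r^{p-2}(\chi\Phi_{(2)})^2\,d\omega du\;\geq\;3\int_{\mathcal{I}^+}r^{p-2}(\chi\Phi_{(2)})^2\,d\omega du,
\end{equation*}
with \emph{exact} cancellation when $\ell=2$. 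The analogous bulk term $3(p-2)\int r^{p-3}(\chi\Phi_{(2)})^2$ is nonpositive for $p<2$ and can be moved to the left-hand side or absorbed together with the angular bulk $\tfrac12(2-p)r^{p-1}|\snabla\chi\Phi_{(2)}|^2$ using the same Poincar\'e inequality; this is exactly where the upper restriction $p<2$ appears.

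The cutoff error terms $\mathcal{R}_\chi[\partial^{\alpha_1}\Phi_{(2)}\cdot\partial^{\alpha_2}\Phi_{(2)}]$ with $|\alpha_i|\leq 1$ are supported in $R\leq r\leq R+1$ and translate, after expanding $\Phi_{(2)}$ in terms of $\psi$, into local integrals of at most two spatial derivatives of $\psi$, which are handled by the local Morawetz estimate \eqref{ass:morawetzlocal} at the cost of $\sum_{k\leq 2}\int_{\Sigma_{u_1}}J^T[T^k\psi]\cdot n_{u_1}\,d\mu_{\Sigma_{u_1}}$. Finally, the passage from $\partial_r(\chi\Phi_{(2)})$ to $\partial_r\Phi_{(2)}$ in the flux terms proceeds precisely as in the last display of the proof of Proposition \ref{prop:rpPhiv1}, using Hardy together with the Morawetz estimate. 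The main obstacle is the sharp tracking of the constant $3$ in (a) against the Poincar\'e eigenvalue $\ell(\ell+1)=6$ at $\ell=2$, which is what forces the assumption $\psi=\psi_{\ell\geq 2}$ and leaves no room to extend the range without additional commutations or use of the higher Newman--Penrose constants (cf.\ Remark \ref{rmk:I1}).
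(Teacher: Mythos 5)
Your architecture is the paper's: the multiplier $V=r^{p-2}\partial_r$ applied to $\chi\Phi_{(2)}$, the $(p+8)$ leading coefficient in the bulk, the integration by parts of the $-6r^{p-2}\chi\Phi_{(2)}\partial_r(\chi\Phi_{(2)})$ term with its boundary and bulk pieces absorbed by the $\mathcal{I}^+$-flux and the angular bulk via Poincar\'e (exact cancellation at $\ell=2$, which is indeed what forces $\ell\geq 2$), the cancellation $12M-16M+4M=0$ in the $\phi$-coupling, and the cutoff/Morawetz bookkeeping all coincide with the paper's proof.

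The one step that does not close as written is your treatment of the $\Phi$-coupling (b). The Cauchy--Schwarz complement of $r^{p-2}\chi\Phi\,\partial_r(\chi\Phi_{(2)})$ against $\epsilon\, r^{p-1}(\partial_r(\chi\Phi_{(2)}))^2$ is $C_\epsilon r^{p-3}(\chi\Phi)^2$, not $C_\epsilon r^{p-1}\Phi^2$; more importantly, you propose to control the resulting spacetime integral by Hardy plus Proposition \ref{prop:rpPhiv1}. Hardy gives $\int r^{p-3}(\chi\Phi)^2\leq C(p-2)^{-2}\int r^{p-1}(\partial_r(\chi\Phi))^2$, but Proposition \ref{prop:rpPhiv1} only controls the bulk $\int(p'+4)r^{p'-1}(\partial_r\Phi)^2$ for $p'\in(-4,2]$, so for $p\in(-8,-4]$ it is not applicable, and even for $p\in(-4,2)$ it would insert the data term $\int_{\mathcal{N}_{u_1}}r^{p}(\partial_r\Phi)^2$ on the right-hand side, which does not appear in \eqref{rphierPhi2lgeq2av0}. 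The paper instead applies Hardy a second time, using $\partial_r\Phi=r^{-2}\Phi_{(2)}$, so that $\int r^{p-1}(\partial_r\Phi)^2=\int r^{p-5}\Phi_{(2)}^2\leq C\int r^{p-3}(\partial_r\Phi_{(2)})^2\leq CR^{-2}\int r^{p-1}(\partial_r\Phi_{(2)})^2$, and absorbs this into the positive bulk $J_0$ by taking $R\geq (p+8)^{-1}(p-2)^{-1}R_0$; this is where the stated dependence of $R$ on $(p-2)^{-1}$ comes from, and it is also the true source of the restriction $p<2$ (at $\ell=2$ your Poincar\'e cancellation is exact for \emph{every} $p$, so it does not by itself impose $p<2$). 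The same double-Hardy-and-absorb device, not an appeal to an earlier $r^p$-proposition, is what handles the $\phi$-coupling (c) as well.
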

\begin{proof}
From (\ref{eq:Phi2infty}) it follows that
\begin{equation*}
\begin{split}
\mathcal{E}^V[\chi \Phi_{(2)}]=&\:r^{p-3}[6D-2D'r](\partial_r(\chi\Phi_{(2)}))^2-2r^{p-3}\partial_u(\chi\Phi_{(2)})\partial_r(\chi\Phi_{(2)})\\
&+r^{p-3}[-6Dr^{-1}-3D'' r+7D']\chi\Phi_{(2)}\partial_r(\chi\Phi_{(2)})\\
&+r^{p-1}[-D'''r+2D''+2D'r^{-1}]\chi\Phi\partial_r(\chi\Phi_{(2)})\\
&+r^{p+1}[D'''+4D''r^{-1}+2D'r^{-2}]\chi\phi\partial_r(\chi\Phi_{(2)})+\sum_{|\alpha_1|\leq 1,\,|\alpha_2|\leq 1}\mathcal{R}_{\chi}[\partial^{\alpha_1}\Phi_{(2)}\cdot \partial^{\alpha_2}\Phi_{(2)}].
\end{split}
\end{equation*}
We obtain
\begin{equation*}
\begin{split}
\textnormal{div}J^V[\chi \Phi_{(2)}]&=\frac{1}{2}r^{p-3}\left[D(p+8)-5D'r\right](\partial_r(\chi\Phi_{(2)}))^2+\frac{1}{2}r^{p-3}(2-p)|\snabla(\chi\Phi_{(2)})|^2\\
&+r^{p-3}[-6Dr^{-1}-3D'' r+7D']\chi\Phi_{(2)}\partial_r(\chi\Phi_{(2)})\\
&+r^{p-1}[-D'''r+2D''+2D'r^{-1}]\chi\Phi\partial_r((\chi\Phi_{(2)}))\\
&+r^{p+1}[D'''+4D''r^{-1}+2D'r^{-2}]\chi\phi\partial_r((\chi\Phi_{(2)})+\sum_{|\alpha_1|\leq 1,\,|\alpha_2|\leq 1}\mathcal{R}_{\chi}[\partial^{\alpha_1}\Phi_{(2)}\cdot \partial^{\alpha_2}\Phi_{(2)}].
\end{split}
\end{equation*}
We can write
\begin{equation*}
r^2\textnormal{div}J^V[\chi\Phi_{(2)}]=\sum_{k=0}^3 J_k[\chi\Phi_{(2)}]+\sum_{|\alpha_1|\leq 1,\,|\alpha_2|\leq 1}\mathcal{R}_{\chi}[\partial^{\alpha_1}\Phi_{(2)}\cdot \partial^{\alpha_2}\Phi_{(2)}],
\end{equation*}
where
\begin{align*}
J_0[\chi \Phi_{(2)}]&\doteq \frac{1}{2}r^{p-1}\left[D(p+8)-5D'r\right](\partial_r(\chi\Phi_{(2)}))^2+\frac{1}{2}r^{p-3}(2-p)r^2|\snabla(\chi\Phi_{(2)})|^2,\\
J_1[\chi \Phi_{(2)}]&\doteq r^{p-1}[-6Dr^{-1}-3D'' r+7D']\chi\Phi_{(2)}\partial_r(\chi\Phi_{(2)}),\\
J_2[\chi \Phi_{(2)}]&\doteq r^{p+1}[-D'''r+2D''+2D'r^{-1}]\chi\Phi\partial_r(\chi\Phi_{(2)}),\\
J_3[\chi \Phi_{(2)}]&\doteq r^{p+3}[D'''+4D''r^{-1}+2D'r^{-2}]\chi\phi\partial_r(\chi\Phi_{(2)}).
\end{align*}

By the Morawetz estimate (\ref{ass:morawetzlocal}), we can estimate

\begin{equation*}
\begin{split}
\int_{\mathcal{A}^{u_2}_{u_1}}\sum_{|\alpha_1|\leq 1,\,|\alpha_2|\leq 1}\mathcal{R}_{\chi}[\partial^{\alpha_1}\Phi_{(2)}\cdot\partial^{\alpha_2}\Phi_{(2)}]\,d\omega dr du\leq&\: C\sum_{|\alpha|\leq 3}\int_{\{R\leq r\leq R+1\}}(\partial^{\alpha}\psi)^2\,d\omega dr du\\
\leq &\: C\sum_{k\leq 2}\int_{\Sigma_{u_1}}J^T[T^{k}\psi]\cdot n_{\Sigma_{u_1}}\,d\mu_{\Sigma_{u_1}}.
\end{split}
\end{equation*}

We have that
\begin{align*}
D&=1-\frac{2M}{r}+{O}_3(r^{-1-\beta}),\\
D'&=\frac{2M}{r^2}+{O}_2(r^{-2-\beta}),\\
D''&=-\frac{4M}{r^3}+{O}_1(r^{-3-\beta}),\\
D'''&=\frac{12M}{r^4}+{O}(r^{-4-\beta}).
\end{align*}

The terms in $J_0$ are non-negative definite if $-8<p\leq 2$ and $R\geq (p+8)^{-1}R_0$, with $R_0=R_0(D)>0$ suitably large. The leading-order term in $J_1$ can be estimated by integration by parts:
\begin{equation}
\label{eq:J1phi2infty}
\begin{split}
-\int_{\mathcal{A}^{u_2}_{u_1}}6r^{p-2}\chi\Phi_{(2)}\partial_r(\chi\Phi_{(2)})\,d\omega drdu=&-3\int_{\mathcal{I}^+}r^{p-2}(\chi\Phi_{(2)})^2\,d\omega du\\
&+3(p-2)\int_{\mathcal{A}^{u_2}_{u_1}}3r^{p-3}(\chi\Phi_{(2)})^2\,d\omega drdu.
\end{split}
\end{equation}
We can decompose into orthogonal parts $\psi_1=\psi_{\ell=1}+\psi_{\ell\geq 2}$.

Suppose that $\psi=\psi_{\ell\geq 2}$. Then we can estimate, using the Poincar\'e inequality (\ref{eq:poincare1}),
\begin{equation*}
\frac{1}{2}(2-p)\int_{\mathcal{A}^{u_2}_{u_1}}r^{p-3}r^{2}|\snabla(\chi\Phi_{(2)})|^2\,d\omega drdu\leq 3(2-p) \int_{\mathcal{A}}r^{p-3}(\chi\Phi_{(2)})^2\,d\omega drdu,
\end{equation*}
where the inequality becomes an equality if $\psi=\psi_{\ell=2}$. Hence, the bulk term on the right-hand side of (\ref{eq:J1phi2infty}) gets absorbed (or exactly cancelled) by the $\snabla (\chi\Phi_{(2)})$ term in $J_0$. Similarly, we can absorb the boundary integral on the right-hand side of (\ref{eq:J1phi2infty}) by the flux integral of $J^V[\chi \Phi_{(2)}]\cdot \underline{L}$ at $\mathcal{I}^+$, where an exact cancellation occurs again if $\psi=\psi_{\ell=2}$.

The remaining terms in $J_1$ can be estimated by applying a Cauchy--Schwarz inequality:
\begin{equation*}
\begin{split}
\int_{\mathcal{A}^{u_2}_{u_1}}{O}(r^{p-3})\chi\Phi_{(2)}\partial_r(\chi\Phi_{(2)})\,d\omega dudr\leq&\: \epsilon \int_{\mathcal{A}^{u_2}_{u_1}}(p+4)r^{p-1}(\partial_r(\chi\Phi_{(2)}))^2\,d\omega dudr\\
&+C_{\epsilon}(p+4)^{-1} \int_{\mathcal{A}^{u_2}_{u_1}}r^{p-1}\chi^2(\partial_r\Phi)^2\,d\omega dudr,
\end{split}
\end{equation*}
where the first term on the right-hand side can be absorbed into $J_0$ for suitably large $R_0$ and the second term can be estimated by applying \eqref{eq:Hardy1} as follows:
\begin{equation*}
 \int_{\mathcal{A}^{u_2}_{u_1}}r^{p-1}\chi^2(\partial_r\Phi)^2\,d\omega dudr\leq C \int_{\mathcal{A}^{u_2}_{u_1}}r^{p-3}(\partial_r(\chi\Phi_{(2)}))^2\,d\omega dudr,
\end{equation*}
if $p<4$, and absorbed into $J_0$, where we used that $\lim_{r\to \infty}r^{p-4}\int_{\s^2}\Phi_{(2)}^2\,d\omega=0$.
 
Consider now $J_2$. We write
\begin{equation*}
J_2=O(r^{p-2})\chi\Phi\partial_r(\chi \Phi_{(2)})
\end{equation*}
and apply Cauchy--Schwarz to estimate
\begin{equation*}
\begin{split}
\int_{\mathcal{A}^{u_2}_{u_1}}{O}(r^{p-2})\chi\Phi\partial_r(\chi \Phi_{(2)})\,d\omega drdu\leq&\: \epsilon \int_{\mathcal{A}^{u_2}_{u_1}}(p+8)r^{p-1}(\partial_r(\chi \Phi_{(2)}))^2\,d\omega drdu\\
&+{C}_{\epsilon}(p+8)^{-1}\int_{\mathcal{A}^{u_2}_{u_1}}r^{p-3}(\chi \Phi)^2\,d\omega drdu.
\end{split}
\end{equation*}
The first term on the right-hand side can be absorbed into $J_0$, whereas the second term can be estimated by applying a \eqref{eq:Hardy1}:
\begin{equation*}
\int_{\mathcal{A}^{u_2}_{u_1}}r^{p-3}(\chi \Phi)^2\,d\omega drdu\leq C(p-2)^{-2}\int_{\mathcal{A}^{u_2}_{u_1}}r^{p-1}(\partial_r(\chi \Phi))^2\,d\omega drdu,
\end{equation*}
if $p<2$.

We can further estimate
\begin{equation*}
\int_{\mathcal{A}^{u_2}_{u_1}}r^{p-1}(\partial_r(\chi \Phi))^2\,d\omega drdu\leq C\int_{\mathcal{A}^{u_2}_{u_1}}r^{p-1}\chi^2(\partial_r \Phi)^2\,d\omega drdu+\sum_{|\alpha_1|+|\alpha_2|\leq 1}\mathcal{R}_{\chi}[\partial^{\alpha_1}\phi\cdot \partial^{\alpha_2}\phi]
\end{equation*}
and apply another Hardy inequality to absorb the first term on the right-hand side into $J_0$, as above, now taking $R\geq (p+8)^{-1}(p-2)^{-1}R_0$, with $R_0=R_0(D)>0$ suitably large.

Consider now $J_3$. Observe that
\begin{equation*}
D'''+4D''r^{-1}+2D'r^{-2}= {O}(r^{-5}).
\end{equation*}
Therefore, 
\begin{equation*}
J_3={O}(r^{p-2})\chi\phi\partial_r(\chi \Phi_{(2)}),
\end{equation*}
and we can apply Cauchy--Schwarz to estimate
\begin{equation*}
\begin{split}
\int_{\mathcal{A}^{u_2}_{u_1}}{O}(r^{p-2})\chi\phi\partial_r(\chi \Phi_{(2)})\,d\omega drdu\leq &\: \epsilon \int_{\mathcal{A}^{u_2}_{u_1}}(p+8)r^{p-1}(\partial_r(\chi \Phi_{(2)}))^2\,d\omega drdu\\
&+{C}_{\epsilon}(p+8)^{-1}\int_{\mathcal{A}^{u_2}_{u_1}}r^{p-3}(\chi \phi)^2\,d\omega drdu.
\end{split}
\end{equation*}
The first term on the right-hand side can be absorbed into $J_0$, whereas the second term can be estimated by applying a Hardy inequality:
\begin{equation*}
\int_{\mathcal{A}^{u_2}_{u_1}}r^{p-3}(\chi \phi)^2\,d\omega drdu\leq C(p-2)^2\int_{\mathcal{A}^{u_2}_{u_1}}r^{p-1}(\partial_r(\chi \phi))^2\,d\omega drdu,
\end{equation*}
if $p<2$. Furthermore,
\begin{equation*}
\int_{\mathcal{A}^{u_2}_{u_1}}r^{p-1}(\partial_r(\chi \phi))^2\,d\omega drdu\leq C\int_{\mathcal{A}^{u_2}_{u_1}}r^{p-1}\chi^2(\partial_r \phi)^2+\mathcal{R}_{\chi}[\phi^2]\,d\omega drdu,
\end{equation*}
and the first term on the right-hand side can be absorbed into $J_0$ after applying two more Hardy inequalities, as above.

We now apply the divergence identity to $J^V$ to obtain
\begin{equation*}
\begin{split}
\int_{\mathcal{N}_{u_2}}& r^p(\partial_r(\chi\Phi_{(2)}))^2\, d\omega dr+\int_{\mathcal{A}_{u_1}^{u_2}} (p+8)r^{p-1}(\partial_r(\chi\Phi_{(2)}))^2\,d\omega drdu \\
\leq&\: C\int_{\mathcal{N}_{u_1}} r^p(\partial_r(\chi\Phi_{(2)}))^2\,d\omega dr+C\sum_{k\leq 2}\int_{\Sigma_{u_1}}J^T[T^{k}\psi]\cdot n_{{u_1}}\,d\mu_{\Sigma_{u_1}},
\end{split}
\end{equation*}
if $R\geq (p+8)^{-1}(p-2)^{-1}R_0$ with $R_0=R_0(D)>0$ suitably large.

Note that we can add the term  $(2-p)r^{p-1}|\snabla \chi \Phi_{(2)}|^2$ to the bulk term integrand on the left-hand side. Indeed, for $(\Phi_{(2)})_{\ell \geq 3}$ this term appears with a good sign in $J_0+J_1$, whereas for $(\Phi_{(2)})_{\ell=2}$ we can apply the equality \eqref{eq:poincare1} together with the Hardy inequality \eqref{eq:Hardy1} to add $r^{p-1}|\snabla \chi \Phi_{(2)}|^2$ to the bulk term.

We can remove the cut-off $\chi$ from the above estimates as in Proposition \ref{prop:rpPhiv1}.
\end{proof}
\begin{remark}
By estimating $J_2$ similarly to the proof of Proposition \ref{prop:rpPhiv1}, we can in fact increase the range of $p$ to $p\in (-8,2]$ in Proposition \ref{prop:l2commr2}.
\end{remark}
By combining Proposition \ref{prop:rpphil=1} with Proposition \ref{prop:l2commr2}, we obtain the following hierarchy of estimates for $\psi_1$.

\begin{proposition}[\textbf{$r^p$-weighted estimates for $(r^2\partial_r)^2(r\psi_1)$}]
\label{prop:l2commr2}
Let $\psi$ be a solution to \eqref{waveequation} emanating from initial data given as in Theorem \ref{thm:extuniq} on $(\mathcal{R}, g)$.

Take $p\in(-6,1)$. Denote $$\Phi_{(2)}=r^2\partial_r\Phi=r^2\partial_r(r^2\partial_r(r\psi))$$ and assume that
\begin{equation*}
\sum_{|k|\leq 4}\int_{\Sigma}J^T[\Omega^k\psi]\cdot n_{\Sigma}\,d\mu_{\Sigma}<\infty
\end{equation*}
and
\begin{align*}
\lim_{r \to \infty }\sum_{|k|\leq 4}\int_{\s^2}(\Omega^k\phi)^2\,d\omega\big|_{u'=0}<&\:\infty,\\
\lim_{r \to \infty }\sum_{|k|\leq 2}\int_{\s^2}(\Omega^k\Phi)^2\,d\omega\big|_{u'=0}<&\:\infty,\\
\lim_{r \to \infty }r^{-1}\int_{\s^2}\Phi_{(2)}^2\,d\omega\big|_{u'=0}<&\:\infty.
\end{align*}

Then there exists an $R>0$ such that for any $0\leq u_1<u_2$
\begin{equation}
\label{rphierPhi2lgeq2a}
\begin{split}
\int_{\mathcal{N}_{u_2}}& r^p(\partial_r\Phi_{(2)})^2\, d\omega dr+\int_{\mathcal{A}_{u_1}^{u_2}} r^{p-1}(\partial_r\Phi_{(2)})^2+(2-p)r^{p-1}|\snabla \Phi_{(2)}|^2\,d\omega drdu\\
 \leq&\: C(p+6)^{-1}(p-1)^{-2}\int_{\mathcal{N}_{u_1}} r^p(\partial_r\Phi_{(2)})^2\,d\omega dr+C(p+6)^{-1}\sum_{k\leq 2}\int_{\Sigma_{u_1}}J^T[T^{k}\psi]\cdot n_{\Sigma_{u_1}},
\end{split}
\end{equation}
where $C\doteq C(D,R)>0$ is a constant and we can take $R=(p+6)^{-1}R_0(D)>0$, with $R_0(D)>0$ a constant.
\end{proposition}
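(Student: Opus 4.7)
\textbf{Proof proposal for Proposition \ref{prop:l2commr2}.}

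The plan is to split $\psi_1 = \psi_{\ell=1} + \psi_{\ell \geq 2}$ and treat each piece separately, since the wave equation preserves the angular decomposition and all relevant quadratic norms split orthogonally. For the component $\psi_{\ell \geq 2}$ the claimed range $p\in(-6,1)$ is strictly contained in $(-8,2)$, so Proposition \ref{prop:l2commr2v0} gives the estimate with uniform constants and no singular factors. The real work is in the $\ell=1$ case, where we run a modification of the divergence-identity argument used in Proposition \ref{prop:l2commr2v0}, tracking how the weaker Poincar\'e constant $\ell(\ell+1)=2$ forces the restricted range and produces the singular factors $(p+6)^{-1}(p-1)^{-2}$.

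For $\psi_{\ell=1}$ we again apply the multiplier $V=r^{p-2}\partial_r$ to $\chi\Phi_{(2)}$ and expand $r^2\,\mathrm{div}\,J^V=J_0+J_1+J_2+J_3+\mathcal{R}_\chi$ as in Proposition \ref{prop:l2commr2v0}. The coefficient of $(\partial_r\Phi_{(2)})^2$ in $J_0$ is non-negative for $p>-8$ and $R$ large. For the angular term, the $\ell=1$ equality \eqref{eq:poincare1} converts $\tfrac{1}{2}(2-p)r^{p-3}r^2|\snabla\Phi_{(2)}|^2$ into exactly $(2-p)r^{p-3}\Phi_{(2)}^2$. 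The leading part of $J_1$, integrated by parts as in \eqref{eq:J1phi2infty}, contributes $-3\int_{\mathcal{I}^+}r^{p-2}\Phi_{(2)}^2 + 3(p-2)\int_{\mathcal{A}}r^{p-3}\Phi_{(2)}^2$. Summing yields a bulk remainder $(p-2)\int r^{p-3}\Phi_{(2)}^2$ which has the wrong sign for $p<2$; we absorb it by Hardy's inequality \eqref{eq:Hardy1}, giving $\int r^{p-3}\Phi_{(2)}^2 \leq \tfrac{4}{(p-2)^2}\int r^{p-1}(\partial_r\Phi_{(2)})^2$. This is where the restriction $p<1$ enters: the Hardy inequality requires $\lim_{r\to\infty}r^{p-2}\int_{\s^2}\Phi_{(2)}^2\,d\omega = 0$, which is precisely what the hypothesis $\lim_{r\to\infty}r^{-1}\int_{\s^2}\Phi_{(2)}^2\,d\omega<\infty$ gives for $p<1$. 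The boundary flux $-3\int_{\mathcal{I}^+}r^{p-2}\Phi_{(2)}^2$ is finite under the same hypothesis, and is absorbed by $\int_{\mathcal{I}^+}r^{p-2}|\snabla\Phi_{(2)}|^2$, which equals $2\int_{\mathcal{I}^+}r^{p-2}\Phi_{(2)}^2$ by \eqref{eq:poincare1}.

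The term $J_2 = O(r^{p-2})\chi\Phi\,\partial_r(\chi\Phi_{(2)})$ is handled by Cauchy--Schwarz as
\begin{equation*}
|J_2|\leq \epsilon\,r^{p-1}(\partial_r\chi\Phi_{(2)})^2 + C_\epsilon\,r^{p-3}\Phi^2,
\end{equation*}
absorbing the first term into $J_0$. To control the second, write $\Phi = \widetilde{\Phi} + Mr\partial_r\phi$ and apply Hardy's inequality followed by Proposition \ref{prop:rpphil=1} to $\widetilde{\Phi}$ (with exponent $p-2$) and the Dafermos--Rodnianski hierarchy to $\phi$. The range $p-2 \in (-6,-1) \subset (-4,4)$ gives $p\in(-4,3)$ for free, but the additional Hardy step needed to reduce $r^{p-3}\Phi^2$ to an $(\partial_r\widetilde{\Phi})^2$-integral forces $p>-6$ and introduces the factor $(p+6)^{-1}$. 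The term $J_3 = O(r^{p-2})\chi\phi\,\partial_r(\chi\Phi_{(2)})$ is treated analogously via Cauchy--Schwarz and two further Hardy inequalities, reducing to a $T$-energy. Finally we remove the cut-off $\chi$ and absorb the commutator remainder $\mathcal{R}_\chi$ into $\sum_{l\leq 2}\int_{\Sigma_{u_1}}J^T[T^l\psi]\cdot n_{u_1}$ via the local Morawetz estimate \eqref{ass:morawetzlocal}, exactly as at the end of the proof of Proposition \ref{prop:rpPhiv1}.

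The main obstacle is the $\ell=1$ case: the exact cancellations that held for $\ell\geq 2$ in Proposition \ref{prop:l2commr2v0} break down because the Poincar\'e constant drops from $6$ to $2$, so the combined bulk term $(p-2)\int r^{p-3}\Phi_{(2)}^2$ must be absorbed by a Hardy inequality that degenerates at $p=1$; simultaneously, the weaker hypothesis on $\Phi_{(2)}$ at infinity (reflecting the potentially non-vanishing second Newman--Penrose constant $I_1[\psi]$ noted in Remark \ref{rmk:I1}) only just suffices to make the boundary integrals finite, and only for $p<1$. This double constraint is the source of the restricted range and singular constants in the statement.
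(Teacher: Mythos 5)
Your overall architecture is right: split $\psi_1=\psi_{\ell=1}+\psi_{\ell\geq 2}$, dispose of $\psi_{\ell\geq 2}$ by Proposition \ref{prop:l2commr2v0}, and concentrate on $\ell=1$, where the exact Poincar\'e cancellations break down. But your mechanism for the $\ell=1$ case does not close on the stated range. After integrating $-6r^{p-2}\Phi_{(2)}\partial_r\Phi_{(2)}$ by parts and using the $\ell=1$ identity \eqref{eq:poincare1}, the net bulk deficit is $-2(2-p)\int r^{p-3}\Phi_{(2)}^2$ (you wrote $(p-2)$; the angular term contributes $+(2-p)$ and the integration by parts $+3(p-2)$, so the sum is $2(p-2)$). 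You propose to absorb this into the $(\partial_r\Phi_{(2)})^2$ part of $J_0$ via Hardy. The Hardy inequality \eqref{eq:Hardy1} with $q=p-3$ costs a factor $4/(2-p)^2$, so the absorbed quantity is $\tfrac{8}{2-p}\int r^{p-1}(\partial_r\Phi_{(2)})^2$, while the available positive bulk is only $\tfrac{1}{2}(p+8)\int r^{p-1}(\partial_r\Phi_{(2)})^2$ at leading order. The inequality $\tfrac{1}{2}(p+8)>\tfrac{8}{2-p}$ is equivalent to $p(p+6)<0$, i.e.\ it holds only for $p\in(-6,0)$ and fails for $p\in[0,1)$ --- half of the claimed range. (Your boundary bookkeeping at $\mathcal{I}^+$ also does not work as stated: you propose to absorb $-3\int_{\mathcal{I}^+}r^{p-2}\Phi_{(2)}^2$ by a flux worth $+\int_{\mathcal{I}^+}r^{p-2}\Phi_{(2)}^2$ for $\ell=1$; this is moot only because that limit actually vanishes for $p<1$ under the hypothesis $\lim_{r\to\infty} r^{-1}\int_{\s^2}\Phi_{(2)}^2\,d\omega<\infty$.)

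The missing idea --- and the actual source of the factors $(p+6)^{-1}(p-1)^{-2}$ in the statement, which your argument never produces --- is that the dangerous term should not be absorbed into $J_0$ at all. Instead, apply Cauchy--Schwarz directly to $-6r^{p-2}\Phi_{(2)}\partial_r\Phi_{(2)}$ (no integration by parts), observe that $r^{p-3}\Phi_{(2)}^2=r^{p+1}(\partial_r\Phi)^2$, and control the resulting spacetime integral $\int_{\mathcal{A}^{u_2}_{u_1}}r^{p+1}(\partial_r\widetilde{\Phi})^2$ by the \emph{already established} extended hierarchy for $\widetilde{\Phi}=r(r-M)\partial_r\phi$ of Proposition \ref{prop:rpphil=1}, applied with exponent $p+2\in(-4,4)$; the requirement $p+2>-4$ is where $p>-6$ and the factor $(p+6)^{-1}$ enter. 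The resulting data term $\int_{\mathcal{N}_{u_1}}r^{p+2}(\partial_r\widetilde{\Phi})^2$ is then converted back to $\int_{\mathcal{N}_{u_1}}r^{p}(\partial_r\Phi_{(2)})^2$ by one further Hardy inequality, valid precisely for $p<1$ and responsible for the $(p-1)^{-2}$. Your use of $\widetilde{\Phi}$ for $J_2$ is unnecessary (and your exponent bookkeeping there --- ``$p-2\in(-6,-1)\subset(-4,4)$'' --- is inconsistent); $J_2$ and $J_3$ are handled exactly as in Proposition \ref{prop:l2commr2v0} by Cauchy--Schwarz and iterated Hardy inequalities on $\Phi$ and $\phi$ themselves.
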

\begin{proof}
The only difference with the proof of Proposition \ref{prop:l2commr2v0} occurs in the estimate of $J_1$ for $\psi=\psi_{\ell=1}$. In that case, we use Proposition \ref{prop:rpphil=1} and Cauchy--Schwarz to estimate
\begin{equation*}
\begin{split}
-\int_{\mathcal{A}^{u_2}_{u_1}}&6r^{p-2}\chi\Phi_{(2)}\partial_r(\chi\Phi_{(2)})\,d\omega drdu\leq \epsilon \int_{\mathcal{A}^{u_2}_{u_1}}r^{p-1}(\partial_r(\chi\Phi_{(2)}))^2\,d\omega drdu+C_{\epsilon }\int_{\mathcal{A}^{u_2}_{u_1}}r^{p-3}(\chi\Phi_{(2)})^2\,d\omega drdu\\
\leq&\: \epsilon \int_{\mathcal{A}^{u_2}_{u_1}}r^{p-1}(p+8)(\partial_r(\chi\Phi_{(2)}))^2\,d\omega drdu+C_{\epsilon }(p+8)^{-1}\int_{\mathcal{A}^{u_2}_{u_1}}r^{p+1}(\partial_r(\chi\widetilde{\Phi}))^2\,d\omega drdu\\
&+C_{\epsilon}(p+8)^{-1}\int_{\mathcal{A}^{u_2}_{u_1}}\mathcal{R}_{\chi}[\widetilde{\Phi}^2]\,d\omega drdu\\
\leq&\: \epsilon \int_{\mathcal{A}^{u_2}_{u_1}}r^{p-1}(p+8)(\partial_r(\chi\Phi_{(2)}))^2\,d\omega drdu+C_{\epsilon }(p+6)^{-1}(p+8)^{-1}\int_{\mathcal{N}_{u_1}}r^{p+2}(\partial_r(\chi\widetilde{\Phi}))^2\,d\omega dr\\
&+C_{\epsilon }(p+6)^{-1}(p+8)^{-1}\sum_{k\leq 1}\int_{\Sigma_{u_1}}J^T[T^{k}\psi]\cdot n_{\Sigma_{u_1}},
\end{split}
\end{equation*}
if $p<2$ and $R\geq (p-2)^{-1}(p+6)^{-1}R_0$, with $R_0=R_0(D)>0$ suitably large. If $p<1$, we can apply $\eqref{eq:Hardy1}$ to further estimate
\begin{equation*}
\begin{split}
\int_{\mathcal{N}_{u_1}}r^{p+2}(\partial_r(\chi\widetilde{\Phi}))^2\,d\omega dr\leq&\: C \int_{\mathcal{N}_{u_1}}r^{p+2}\chi^2(\partial_r\Phi)^2+\chi^2 r^{p-2}\Phi^2\,d\omega dr+C\int_{\Sigma_{u_1}}J^T[\psi]\cdot n_{\Sigma_{u_1}}\\
\leq&\: C(p-1)^{-2}\int_{\mathcal{N}_{u_1}}r^{p}(\partial_r(\chi \Phi_{(2)})^2\,d\omega dr,
\end{split}
\end{equation*}
for $p<1$.
\end{proof}
\subsection{Extended hierarchies for $\partial_r^{k+1}\Phi_{(2)}$ and $\partial_r^{k}T\Phi_{(2)}$, $k\geq 0$}
\label{sec:rphierTkpsi1}
In this section, we will obtain additional $r^p$-weighted estimates for the quantity $\partial_r^{k}T\Phi_{(2)}$, with $k\geq 0$, using the $r^p$-weighted estimates from the previous sections. As an intermediate step, we will obtain $r^p$-weighted estimates for the commuted quantities $\partial_r^k\Phi_{(2)}$.
\begin{lemma}
\label{lm:boxdrkPsi2}
Let $D=1-\frac{2M}{r}+O_{n+2}(r^{-1-\beta})$ for some $n\in \N$. Then
\begin{equation}
\label{eq:boxdrkPsi2}
\begin{split}
\square_g(\partial_r^k \Phi_{(2)})=&\:\left[6r^{-1}+O(r^{-2})\right]\partial_r^{k+1}\Phi_{(2)}-2r^{-1}\partial_u\partial_r^k\Phi_{(2)}+\sum_{j=0}^{k}O(r^{-2-j})\partial_{r}^{k-j}\Phi_{(2)}\\
&+2kr^{-1}\slashed{\Delta}(\partial_r^{k-1}\Phi_{(2)})+\sum_{j=0}^{\max\{k-2,0\}} k(k-1)O(r^{-2-j})\slashed{\Delta}\partial_r^{k-j-2}\Phi_{(2)}\\
&+O(r^{-k-2})\Phi+O(r^{-k-2})\phi,
\end{split}
\end{equation}
for all $0\leq k\leq n$.
\end{lemma}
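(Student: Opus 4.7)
The plan is induction on $k$. The base case $k=0$ is exactly equation \eqref{eq:Phi2infty}: under the assumption $D=1-2Mr^{-1}+O_{n+2}(r^{-1-\beta})$, one checks that $r^{-1}[6D-2D'r]=6r^{-1}+O(r^{-2})$; the coefficient of $\Phi_{(2)}$ is $r^{-1}[-6Dr^{-1}-3D''r+7D']=O(r^{-2})$; the coefficient of $\Phi$ is $r[-D'''r+2D''+2D'r^{-1}]=O(r^{-2})$; and the coefficient of $\phi$ is $r^3[D'''+4D''r^{-1}+2D'r^{-2}]$, whose Schwarzschild-leading contribution cancels identically ($12M-16M+4M=0$), leaving an $O(r^{-1-\beta})$ remainder that fits into the $O(r^{-2})\phi$ slot.

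For the inductive step, I would exploit the identity
\begin{equation*}
\square_g(\partial_r^k\Phi_{(2)})=\partial_r\bigl(\square_g(\partial_r^{k-1}\Phi_{(2)})\bigr)-[\partial_r,\square_g]\partial_r^{k-1}\Phi_{(2)}.
\end{equation*}
Starting from $\square_g f=-2\partial_u\partial_r f-2r^{-1}\partial_u f+(D'+2r^{-1}D)\partial_r f+D\partial_r^2 f+\slashed{\Delta}f$ together with $[\partial_r,\slashed{\Delta}]=-2r^{-1}\slashed{\Delta}$ (which follows from $\slashed{\Delta}=r^{-2}\slashed{\Delta}_{\s^2}$), a direct computation yields
\begin{equation*}
[\partial_r,\square_g]f=D'\partial_r^2 f+2r^{-2}\partial_u f+(D''-2r^{-2}D+2r^{-1}D')\partial_r f-2r^{-1}\slashed{\Delta}f.
\end{equation*}
Applying $\partial_r$ to the inductive hypothesis via Leibniz, the leading terms $[6r^{-1}+O(r^{-2})]\partial_r^k\Phi_{(2)}$ and $-2r^{-1}\partial_u\partial_r^{k-1}\Phi_{(2)}$ are upgraded to $[6r^{-1}+O(r^{-2})]\partial_r^{k+1}\Phi_{(2)}-2r^{-1}\partial_u\partial_r^k\Phi_{(2)}$, and the stray $+2r^{-2}\partial_u\partial_r^{k-1}\Phi_{(2)}$ produced in the process is cancelled exactly by the $-2r^{-2}\partial_u\partial_r^{k-1}\Phi_{(2)}$ contribution from $-[\partial_r,\square_g]\partial_r^{k-1}\Phi_{(2)}$. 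Using $\partial_r\Phi=r^{-2}\Phi_{(2)}$ and $\partial_r\phi=r^{-2}\Phi$, the derivatives of the $\Phi$ and $\phi$ terms in the inductive hypothesis feed back into the existing sums with improved decay, while differentiation of the $O(r^{-k-1})$ coefficients in front of $\Phi$ and $\phi$ promotes them to $O(r^{-k-2})$ coefficients, matching the claimed form.

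The bulk of the bookkeeping is the angular-derivative accounting. The inductive hypothesis supplies $2(k-1)r^{-1}\slashed{\Delta}\partial_r^{k-2}\Phi_{(2)}$, and via $\partial_r(r^{-1}\slashed{\Delta}g)=r^{-1}\slashed{\Delta}\partial_r g-3r^{-2}\slashed{\Delta}g$ this becomes $2(k-1)r^{-1}\slashed{\Delta}\partial_r^{k-1}\Phi_{(2)}$ plus a remainder $-6(k-1)r^{-2}\slashed{\Delta}\partial_r^{k-2}\Phi_{(2)}$ that fits into $\sum_{j}k(k-1)O(r^{-2-j})\slashed{\Delta}\partial_r^{k-j-2}\Phi_{(2)}$ (since $-6(k-1)/[k(k-1)]=O(1)$ for $k\geq 2$, with the $k=1$ case trivial). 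The subtracted commutator contributes the additional $+2r^{-1}\slashed{\Delta}\partial_r^{k-1}\Phi_{(2)}$ needed to promote the coefficient $2(k-1)$ to $2k$. The assumption $D\in O_{n+2}(r^{-1-\beta})$ ensures that all derivatives of $D$ appearing after at most $k\leq n$ commutations (up to order $k+2$) remain controlled. The main obstacle is the careful reindexing of the $O(r^{-2-j})$ sums, to confirm that all error terms produced by differentiating the coefficients of the inductive hypothesis and by the $\partial_r\Phi$, $\partial_r\phi$ substitutions can be consistently absorbed into the two structural sums $\sum_{j=0}^k O(r^{-2-j})\partial_r^{k-j}\Phi_{(2)}$ and $\sum_{j=0}^{\max\{k-2,0\}}k(k-1)O(r^{-2-j})\slashed{\Delta}\partial_r^{k-j-2}\Phi_{(2)}$ on the right-hand side.
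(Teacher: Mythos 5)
Your proposal is correct and follows essentially the same route as the paper's proof in Appendix \ref{app:commmultp}: induction on $k$ with base case Lemma \ref{lm:commute2time}, the identity $\square_g(\partial_r^{k}\Phi_{(2)})=\partial_r(\square_g(\partial_r^{k-1}\Phi_{(2)}))+[\square_g,\partial_r](\partial_r^{k-1}\Phi_{(2)})$, the commutator formula \eqref{eq:commdr}, and the same cancellations of the $r^{-2}\partial_u\partial_r^{k-1}\Phi_{(2)}$ terms and promotion of the angular coefficient from $2(k-1)$ to $2k$. Your bookkeeping of the $\slashed{\Delta}$ terms (including the $-6(k-1)r^{-2}$ remainder) is in fact slightly more careful than the paper's, and the only caveat you inherit from the paper itself is that the $\phi$-coefficient after the Schwarzschild cancellation is really $O(r^{-k-1-\beta})$, which matches the stated $O(r^{-k-2})$ only up to the (harmless, since only $O(r^{-k-1})$ is used downstream) convention on $\beta$.
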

\begin{proof}
See Appendix \ref{app:commmultp}.
\end{proof}
\begin{proposition}
\label{prop:rpPhi2commr}
Let $\psi$ be a solution to \eqref{waveequation} emanating from initial data given as in Theorem \ref{thm:extuniq} on $(\mathcal{R}, g)$.

Assume that $D=1-\frac{2M}{r}+O_{n+2}(r^{-1-\beta})$ with $n\in \N$. Assume also that 
\begin{equation*}
\sum_{|l|\leq 4+2n}\int_{\Sigma}J^T[\Omega^l\psi]\cdot n_{\Sigma}\,d\mu_{\Sigma}<\infty,
\end{equation*}
and moreover,
\begin{align*}
\lim_{r \to \infty }\sum_{|l|\leq 4+2n}\int_{\s^2}(\Omega^l\phi)^2\,d\omega\big|_{u'=0}<&\:\infty,\\
\lim_{r \to \infty }\sum_{|l|\leq 2+2n}\int_{\s^2}(\Omega^l\Phi)^2\,d\omega\big|_{u'=0}<&\:\infty,\\
\lim_{r \to \infty }\sum_{|l|\leq 2n}\int_{\s^2}r^{-1}\left(\Omega^l\Phi_{(2)}\right)^2\,d\omega\big|_{u'=0}<&\:\infty,
\end{align*}
and, if $n\geq 1$,
\begin{equation*}
\lim_{r \to \infty }\sum_{|l|\leq 2n-2s}\int_{\s^2}r^{2s+1}\left(\partial_r^s\Omega^l\Phi_{(2)}\right)^2\,d\omega\big|_{u'=0}<\infty,
\end{equation*}
for each $1\leq s\leq n$.

Let $0\leq k \leq n$ and take $p\in(-6+2k,1+2k)$. Then there exists an $R>0$ such that for any $0\leq u_1<u_2$
\begin{equation}
\label{eq:pPhi2commr}
\begin{split}
\int_{\mathcal{N}_{u_2}}& r^p(\partial_r^{k+1}\Phi_{(2)})^2\, d\omega dr\\
&+\int_{\mathcal{A}_{u_1}^{u_2}} r^{p-1}(\partial_r^{k+1}\Phi_{(2)})^2+r^{p-1}|\snabla \partial_r^k \Phi_{(2)}|^2 \,d\omega drdu \\
\leq&\: C(p-1-2k)^{-2}\sum_{\substack{j\leq k}}\int_{\mathcal{N}_{u_1}} r^{p-2j}(\partial_r^{k-j+1}\Phi_{(2)})^2\,d\omega dr\\
&+C\sum_{j\leq k+2}\int_{\Sigma_{u_1}}J^T[T^{j}\psi]\cdot n_{\Sigma_{u_1}}\,d\mu_{{u_1}},
\end{split}
\end{equation}
where $C\doteq C(k,D,R)>0$ is a constant and we can take $R=(p+6-2k)^{-1}(p-1-2k)^{-1}R_0(D)>0$, with $R_0(k,D)>0$ a constant.
\end{proposition}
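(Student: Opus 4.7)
My plan is to prove the proposition by induction on $k$, with the base case $k=0$ being precisely Proposition \ref{prop:l2commr2} (up to the trivial identification of the constants $(p+6)^{-1}$ and $(p-1)^{-2}$ with the $k=0$ specializations of $(p+6-2k)^{-1}$ and $(p-1-2k)^{-2}$). For the inductive step, I would apply the divergence theorem to the energy current $J^{V}[\chi \partial_r^k \Phi_{(2)}]$ in the region $\mathcal{A}_{u_1}^{u_2}$, where $V = r^{p-2}\partial_r$ is the multiplier and $\chi$ is the smooth cut-off supported in $\{r \geq R\}$ introduced in Proposition \ref{prop:rpPhiv1}. The deformation-tensor piece $K^{V}$ is computed exactly as in the previous propositions, while the inhomogeneous piece $\mathcal{E}^{V}[\chi \partial_r^k \Phi_{(2)}] = V(\chi \partial_r^k\Phi_{(2)})\,\square_g(\chi \partial_r^k \Phi_{(2)})$ is evaluated using the commutator identity \eqref{eq:boxdrkPsi2} from Lemma \ref{lm:boxdrkPsi2}. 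Combining the two contributions, the principal bulk term has the schematic form
\begin{equation*}
\frac{1}{2}r^{p-3}\bigl[D(p+8)-5D'r\bigr](\partial_r^{k+1}\Phi_{(2)})^2 + \frac{1}{2}(2-p)r^{p-3}\lvert\snabla \partial_r^k \Phi_{(2)}\rvert^2,
\end{equation*}
which has the correct sign provided $R$ is chosen sufficiently large depending on $D$, and the coefficient $2-p$ is handled for the high-frequency part by positivity and for $\psi_{\ell=1},\psi_{\ell=2}$ by combining the Poincar\'e equalities \eqref{eq:poincare1} with Hardy inequalities, as in Proposition \ref{prop:l2commr2}.

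The main work, and the hard part, is controlling the low-order errors produced by $\mathcal{E}^V$ through the right-hand side of \eqref{eq:boxdrkPsi2}, namely
\begin{equation*}
\sum_{j=0}^{k} O(r^{-2-j})\partial_r^{k-j}\Phi_{(2)},\qquad 2kr^{-1}\slashed{\Delta}\partial_r^{k-1}\Phi_{(2)}+\sum_{j=0}^{\max\{k-2,0\}} k(k-1) O(r^{-2-j})\slashed{\Delta}\partial_r^{k-j-2}\Phi_{(2)},
\end{equation*}
together with the source contributions $O(r^{-k-2})\Phi$ and $O(r^{-k-2})\phi$. Each of these is multiplied by $V(\partial_r^k \Phi_{(2)}) = r^{p-2}\partial_r^{k+1}\Phi_{(2)}$, and after a weighted Cauchy--Schwarz a small portion is absorbed into the main bulk term while the remainder takes the form $r^{p-5-2j}(\partial_r^{k-j}\Phi_{(2)})^2$ (or with an angular Laplacian replaced by $\Omega$-derivatives). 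Each such term is then handled by iterating the Hardy inequality \eqref{eq:Hardy1}, each pass trading $r^{q}(\partial_r^m f)^2$ for $r^{q+2}(\partial_r^{m+1} f)^2$; the key step occurs when this iteration raises one of the error terms to level $r^{p-2k}(\partial_r^{k+1}\Phi_{(2)})^2$, at which point the resulting Hardy constant is exactly $(p-1-2k)^{-2}$ and thereby forces the upper endpoint $p < 1+2k$. The vanishing of the boundary terms needed at each Hardy application is guaranteed at $u=u_1$ by the weighted limits assumed on initial data for $\partial_r^s \Phi_{(2)}$, $\Phi$ and $\phi$, which propagate to later $u$ by Propositions \ref{prop:step0radfields} and \ref{prop:hostep0radfields}; the lower endpoint $p > -6+2k$ is imposed by the positivity of the main term after this absorption procedure is complete.

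To close the argument I would handle the angular error terms by working with the hierarchy commuted with the angular momentum operators $\Omega^{\alpha}$ (using that each $\Omega_i$ commutes with $\square_g$ and with $\partial_r$), invoking the Poincar\'e estimates of Lemma \ref{lm:poincare} and the relation \eqref{eq:angmomop1} from Lemma \ref{lm:angmomop} to trade $|\snabla \partial_r^m \Phi_{(2)}|^2$ for $(\partial_r^m \Omega^{\alpha} \Phi_{(2)})^2$ which satisfies the hierarchy at order $k-1$ or lower by the inductive hypothesis. The $\Phi$ and $\phi$ source contributions are bounded using Propositions \ref{prop:rpphiv1}, \ref{prop:rpPhiv1} and \ref{prop:l2commr2}, which contribute the $\sum_{j\leq k+2}\int_{\Sigma_{u_1}} J^T[T^j \psi]\cdot n_{\Sigma_{u_1}}$ term on the right-hand side of \eqref{eq:pPhi2commr}. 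Finally, the cut-off commutator terms $\mathcal{R}_{\chi}[\,\cdot\,]$, which are all supported in $\{R\leq r\leq R+1\}$, are absorbed via the local Morawetz estimate \eqref{ass:morawetzlocal} applied with the appropriate number of $T$-derivatives; the cut-off is then removed exactly as at the end of the proof of Proposition \ref{prop:rpPhiv1}, using Hardy's inequality and standard elliptic estimates converting $\partial_r$-derivatives into $T$-derivatives.
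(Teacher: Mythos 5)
Your overall architecture --- induction on $k$ with base case Proposition \ref{prop:l2commr2}, the multiplier $V=r^{p-2}\partial_r$ applied to $\chi\partial_r^k\Phi_{(2)}$, Lemma \ref{lm:boxdrkPsi2} for the commuted equation, Cauchy--Schwarz plus Hardy plus the inductive hypothesis at lower levels for the lower-order errors, and the local Morawetz estimate \eqref{ass:morawetzlocal} for the cut-off terms --- matches the paper's proof. There is, however, one genuine gap, and it concerns precisely the new difficulty of the inductive step.

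For $k\geq 1$ the admissible range $p\in(-6+2k,1+2k)$ contains values $p>2$, and there the bulk angular term $\frac{1}{2}(2-p)r^{p-1}|\snabla(\chi\partial_r^k\Phi_{(2)})|^2$ produced by $K^V$ has the \emph{wrong} sign; your claim that the coefficient $2-p$ is ``handled for the high-frequency part by positivity'' is false in this regime, and the Poincar\'e/Hardy device of Proposition \ref{prop:l2commr2} only offsets a contribution of fixed size coming from the leading term of the $\Phi_{(2)}\partial_r\Phi_{(2)}$ error, not one whose coefficient grows like $p-2$. Note also that the statement you are proving places $r^{p-1}|\snabla\partial_r^k\Phi_{(2)}|^2$ with a positive coefficient on the \emph{left}-hand side of \eqref{eq:pPhi2commr}, so you must actually generate this positive bulk term, not merely neutralize a negative one; attempting to absorb it via \eqref{eq:angmomop1} and Hardy leads back to the level-$k$ estimate for $\Omega^{\alpha}\psi$ at the same $p$, which is circular. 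The paper's mechanism is different: the commutator term $2kr^{p-1}\slashed{\Delta}(\chi\partial_r^{k-1}\Phi_{(2)})\,\partial_r(\chi\partial_r^{k}\Phi_{(2)})$ --- which you propose to dispatch by Cauchy--Schwarz --- is instead integrated by parts in $r$ and on $\s^2$ (see \eqref{eq:intbypartsranglaplace}); the boundary term on $\mathcal{I}^+$ vanishes by Propositions \ref{prop:step0radfields} and \ref{prop:hostep0radfields}, the cross term $2(p-3)k\,r^{p-2}\snabla(\chi\partial_r^{k-1}\Phi_{(2)})\cdot\snabla(\chi\partial_r^k\Phi_{(2)})$ is absorbed using the inductive hypothesis at level $k-1$, and the remaining piece is $+2k\,r^{p-1}|\snabla(\chi\partial_r^k\Phi_{(2)})|^2$, which combined with the $K^V$ contribution yields the total coefficient $\frac{1}{2}(2+4k-p)>0$ for all $p<2+4k$. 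Without this integration by parts your argument cannot close for $p>2$, i.e.\ for exactly the portion of the range $(-6+2k,1+2k)$ that goes beyond the $k=0$ case; the rest of your outline (the origin of the constant $(p-1-2k)^{-2}$, the endpoints of the $p$-range, the treatment of the $\Phi$ and $\phi$ sources, and the removal of the cut-off) is consistent with the paper.
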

\begin{proof}
Let $V=r^{p-2}\partial_r$. Then by \eqref{eq:boxdrkPsi2}:
\begin{equation*}
\begin{split}
r^2\mathcal{E}^V[\chi \partial_r^{k}\Phi_{(2)}]=&\:\left[6r^{p-1}+O(r^{-2})\right](\partial_r(\chi \partial_r^k\Phi_{(2)}))^2-2\chi r^{p-1}\partial_u\partial_r^k\Phi_{(2)}\partial_r(\chi \partial_r^k\Phi_{(2)})\\
&+\sum_{j=0}^{k}O(r^{p-2-j})\chi\partial_{r}^{k-j}\Phi_{(2)}\partial_r(\chi \partial_r^k\Phi_{(2)})\\
&+2kr^{p-1}\slashed{\Delta}(\chi\partial_r^{k-1}\Phi_{(2)})\partial_r(\chi \partial_r^k\Phi_{(2)})\\
&+\sum_{j=0}^{\max\{k-1,0\}} k(k-1)O(r^{p-2-j})\slashed{\Delta}(\chi\partial_r^{k-j-2}\Phi_{(2)})\partial_r(\chi \partial_r^k\Phi_{(2)})\\
&+O(r^{p-k-2})\chi\Phi\partial_r(\chi \partial_r^k\Phi_{(2)})+O(r^{p-k-1})\chi\phi\partial_r(\chi \partial_r^k\Phi_{(2)}),
\end{split}
\end{equation*}
so that:
\begin{equation*}
\begin{split}
r^2\textnormal{div} J^V[\chi \partial_r^k\Phi_{(2)}]=&\:\frac{1}{2}[(p+8)r^{p-1}+O(r^{p-4})](\partial_r(\chi\partial_r^k \Phi_{(2)}))^2+\frac{1}{2}(2-p)r^{p-1}|\snabla \chi \partial_r^k\Phi_{(2)}|^2\\
&+\sum_{j=0}^{k}O(r^{p-2-j})\chi\partial_{r}^{k-j}\Phi_{(2)}\partial_r(\chi \partial_r^k\Phi_{(2)})\\
&+2kr^{p-1}\slashed{\Delta}(\chi\partial_r^{k-1}\Phi_{(2)})\partial_r(\chi \partial_r^k\Phi_{(2)})\\
&+\sum_{j=0}^{\max\{k-1,0\}} k(k-1)O(r^{p-2-j})\slashed{\Delta}(\chi\partial_r^{k-j-2}\Phi_{(2)})\partial_r(\chi \partial_r^k\Phi_{(2)})\\
&+O(r^{p-k-2})\chi\Phi\partial_r(\chi \partial_r^k\Phi_{(2)})+O(r^{p-k-1})\chi\phi\partial_r(\chi \partial_r^k\Phi_{(2)})\\
&+\sum_{|\alpha_1|,|\alpha_2|\leq 1}\mathcal{R}_{\chi}[\partial^{\alpha_1}\partial_r^{k}\Phi_{(2)}\cdot \partial^{\alpha_2}\partial_r^k\Phi_{(2)}]\\
= &\: \sum_{i=0}^{k+2}J_i[\chi \partial_r^{k}\widetilde{\Phi}]+\sum_{|\alpha_1|,|\alpha_2|\leq 1}\mathcal{R}_{\chi}[\partial^{\alpha_1}\partial_r^{k}\Phi_{(2)}\cdot \partial^{\alpha_2}\partial_r^k\Phi_{(2)}],
\end{split}
\end{equation*}
where
\begin{align*}
J_0[\chi\partial_r^k\Phi_{(2)}] \doteq&\: \frac{1}{2}[(p+8)r^{p-1}+O(r^{p-4})](\partial_r(\chi\partial_r^k \Phi_{(2)}))^2+\frac{1}{2}(2-p)r^{p-1}|\snabla \chi \partial_r^k \Phi_{(2)}|^2,\\
J_1[\chi\partial_r^k\Phi_{(2)}] \doteq&\:2kr^{p-1}\slashed{\Delta}(\chi\partial_r^{k-1}\Phi_{(2)})\partial_r(\chi \partial_r^k\Phi_{(2)}),\\
J_{i+2}[\chi\partial_r^k\Phi_{(2)}] \doteq &\: \chi \left[O(r^{p-2-i})\partial_{r}^{k-i}\Phi_{(2)}+k(k-1)O(r^{p-2-i})\slashed{\Delta}\partial_r^{k-i-2}\Phi_{(2)}\right]\partial_r(\chi \partial_r^k\Phi_{(2)})\\
& \textnormal{for}\quad 0\leq i \leq \max\{0,k-2\},\\
J_{k+1}[\chi\partial_r^k\Phi_{(2)}] \doteq &\:kO(r^{p-2-i})\chi \partial_{r}\Phi_{(2)}\partial_r(\chi \partial_r^k\Phi_{(2)}),\\
J_{k+2}[\chi\partial_r^k\Phi_{(2)}] \doteq &\: O(r^{p-2-k})\chi \Phi_{(2)} \partial_r(\chi \partial_r^k\Phi_{(2)}),\\
J_{k+3}[\chi\partial_r^k\Phi_{(2)}] \doteq &\: O(r^{p-k-2})\chi\Phi\partial_r(\chi \partial_r^k\Phi_{(2)}),\\
J_{k+4}[\chi\partial_r^k\Phi_{(2)}]\doteq &\:O(r^{p-k-2})\chi\phi\partial_r(\chi \partial_r^k\Phi_{(2)}).
\end{align*}
We will prove by induction that \eqref{eq:pPhi2commr} holds for all $0\leq k\leq n$. First, \eqref{eq:pPhi2commr} holds for $k=0$ by Proposition \ref{prop:l2commr2}. Let us suppose \eqref{eq:pPhi2commr} holds for all $0\leq k\leq n$. We want to show that then \eqref{eq:pPhi2commr} also holds for $k=n+1$. We therefore fix $k=n+1$.

First of all, we can estimate by \eqref{ass:morawetzlocal}:
\begin{equation*}
\begin{split}
\int_{\mathcal{A}^{u_2}_{u_1}}\sum_{|\alpha_1|\leq 1,\,|\alpha_2|\leq 1}\mathcal{R}_{\chi}[\partial^{\alpha_1}\partial_r^{n+1}\Phi_{(2)}\cdot \partial^{\alpha_2}\partial_r^{n+1}\Phi_{(2)}]\,d\omega dr du\leq&\: C\sum_{|\alpha|\leq n+4}\int_{\mathcal{A}^{u_2}_{u_1}\cap \{r\leq R+1\}}(\partial^{\alpha}\psi)^2\,d\omega dr du\\
\leq &\: C\sum_{j\leq n+3}\int_{\Sigma_{u_1}}J^T[T^{j}\psi]\cdot n_{\Sigma_{u_1}}\,d\omega dr.
\end{split}
\end{equation*}

The first term in $J_0$ is positive for all $p\geq -8$ if we take $R\geq (p+8)^{-1}R_0$ with $R_0=R_0(D)>0$ suitably large. The second term in $J_0$ is negative if $p>2$, so we will estimate it by using $J_1$. Consider $J_1$ and integrate by parts in $r$ and $\s^2$ to obtain:
\begin{equation}
\label{eq:intbypartsranglaplace}
\begin{split}
\int_{\mathcal{A}^{u_2}_{u_1}}&2(n+1)r^{p-3}\slashed{\Delta}_{\s^2}(\chi\partial_r^{n}\Phi_{(2)})\partial_r(\chi \partial_r^{n+1}\Phi_{(2)})\,d\omega dr du\\
=&\:\int_{\mathcal{I}^+(u_1,u_2)}2(n+1)r^{p-3}\slashed{\Delta}_{\s^2}(\partial_r^{n}\Phi_{(2)}) \partial_r^{n+1}\Phi_{(2)}\,d\omega du\\
&-\int_{\mathcal{A}^{u_2}_{u_1}}2(p-3)(n+1)r^{p-4}\slashed{\Delta}_{\s^2}(\chi\partial_r^{n}\Phi_{(2)})\chi \partial_r^{n+1}\Phi_{(2)}\,d\omega dr du\\
&-\int_{\mathcal{A}^{u_2}_{u_1}}2(n+1)r^{p-3}\slashed{\Delta}_{\s^2}(\partial_r(\chi\partial_r^{n+1}\Phi_{(2)}))\chi \partial_r^{n+1}\Phi_{(2)}\,d\omega dr du\\
=&\:\int_{\mathcal{I}^+(u_1,u_2)}2(n+1)r^{p-3}\slashed{\Delta}_{\s^2}(\partial_r^{n}\Phi_{(2)})\cdot \partial_r^{n+1}\Phi_{(2)}\,d\omega du\\
&+\int_{\mathcal{A}^{u_2}_{u_1}}2(p-3)(n+1)r^{p-2}\slashed{\nabla}(\chi\partial_r^{n}\Phi_{(2)})\cdot \snabla(\chi \partial_r^{n+1}\Phi_{(2)})\,d\omega dr du\\
&+\int_{\mathcal{A}^{u_2}_{u_1}}2(n+1)r^{p-1}|\slashed{\nabla}\partial_r(\chi\partial_r^{n+1}\Phi_{(2)})|^2\,d\omega dr du.
\end{split}
\end{equation}
The integral along $\mathcal{I}^+$ vanishes by Proposition \ref{prop:step0radfields} and Proposition \ref{prop:hostep0radfields}, using that the assumptions on the initial data in this proposition (for $n$ replaced by $n+1$) imply that 
\begin{equation*}
\lim_{r\to \infty}\int_{\s^2}r^{p-4}(\partial_r^n\slashed{\Delta}_{\s^2}\Phi_{(2)})^2+ r^{p-2}(\partial_r^{n+1}\Phi_{(2)})^2\,d\omega\big|_{u'=u}=0
\end{equation*}
at each $u\geq 0$ for $p-2<2(n+1)+1$.

Moreover, the third term on the very right-hand side of \eqref{eq:intbypartsranglaplace} has a good sign, and if we combine it with the second term of $J_0$, we are left with
\begin{equation}
\label{eq:angtermgoodsign}
\frac{1}{2}(2+4(n+1)-p)r^{p-1}|\snabla \chi \partial_r^{n+1} \Phi_{(2)}|^2,
\end{equation}
which has a good sign if $p<2+4(n+1)$. We are left with controlling the second term on the very very right-hand side of \eqref{eq:intbypartsranglaplace}. We apply Cauchy--Schwarz to estimate this term as follows:
\begin{equation*}
\begin{split}
r^{p-2}\slashed{\nabla}(\chi\partial_r^{n}\Phi_{(2)})\cdot \snabla(\chi \partial_r^{n+1}\Phi_{(2)})\leq&\: \epsilon (2+4(n+1)-p)r^{p-1}|\snabla \chi \partial_r^{n+1} \Phi_{(2)}|^2\\
&+C_{\epsilon}(2+4(n+1)-p)^{-1}r^{p-3}|\snabla \chi \partial_r^n \Phi_{(2)}|^2,
\end{split}
\end{equation*}
where we absorb the first term on the right-hand side into \eqref{eq:angtermgoodsign} by choosing $\epsilon$ suitably small, and we can estimate the second term by employing \eqref{eq:pPhi2commr} with $k=n$ if we restrict $2n-6<p-2<2n+1$ and take $R\geq (p+6-2(n+1))^{-1}(p-1-2(n+1))^{-1}R_0$. 

To estimate $J_{k+1}$ and the first term in each $J_{i+2}$ for $0\leq i\leq \max\{n-1,0\}$, we first apply a Cauchy--Schwarz inequality as follows:
\begin{equation*}
|J_{i+2}|\leq \epsilon r^{p-1}(\partial_r(\chi\partial_r^{n+1}\Phi_{(2)}))^2+ C_{\epsilon}r^{p-2i-3}(\partial_{r}^{n+1-i}\Phi_{(2)})^2.
\end{equation*}
We can estimate the second term by applying \eqref{eq:poincare2} and \eqref{eq:pPhi2commr} with $k=n-i$, if $2k-6< p-2-2i< 1+2k$ or equivalently, $2(n+1)-6<p< 1+2(n+1)$ and moreover $R\geq (p+6-2(n+1))^{-1}(p-1-2(n+1))^{-1}R_0$.

We estimate the second term in each $J_{i+2}$ by first integrating by parts:
\begin{equation*}
\begin{split}
\int_{\mathcal{A}^{u_2}_{u_1}}&O(r^{p-2-i})\slashed{\Delta}\chi\partial_r^{n+1-i-2}\Phi_{(2)}\partial_r(\chi \partial_r^{n+1}\Phi_{(2)})\,d\omega dr du\\
=&\:\int_{\mathcal{I}^+(u_1,u_2)}O(r^{p-2-i})\slashed{\Delta}\partial_r^{n+1-i-2}\Phi_{(2)}\partial_r^{n+1}\Phi_{(2)}\,d\omega du\\
&+\int_{\mathcal{A}^{u_2}_{u_1}}O(r^{p-3-i})(\slashed{\Delta}\chi\partial_r^{n+1-i-2}\Phi_{(2)})\chi \partial_r^{n+1}\Phi_{(2)}\,d\omega dr du\\
&+\int_{\mathcal{A}^{u_2}_{u_1}}O(r^{p-2-i})(\slashed{\Delta}\partial_r(\chi\partial_r^{k-i-2}))\chi \partial_r^{n+1}\Phi_{(2)}\,d\omega dr du\\
=&\:\int_{\mathcal{I}^+(u_1,u_2)}O(r^{p-4-i})\slashed{\Delta}_{\s^2}(\partial_r^{n+1-i-2}\Phi_{(2)}) \partial_r^{n+1}\Phi_{(2)}\,d\omega du\\
&+\int_{\mathcal{A}^{u_2}_{u_1}}O(r^{p-3-i})\slashed{\nabla}(\partial_r^{n+1-i-2}\Phi_{(2)})\cdot \snabla(\chi \partial_r^{n+1}\Phi_{(2)})\,d\omega dr du\\
&+\int_{\mathcal{A}^{u_2}_{u_1}}O(r^{p-2-i})\slashed{\nabla}(\partial_r(\chi\partial_r^{n+1-i-2}\Phi_{(2)}))\cdot \snabla(\chi \partial_r^{n+1}\Phi_{(2)})\,d\omega dr du.
\end{split}
\end{equation*}
Again, once can easily see that the integral along $\mathcal{I}^+$ vanishes by Proposition \ref{prop:hostep0radfields} once we split
\begin{equation*}
O(r^{p-4-i})\slashed{\Delta}_{\s^2}(\partial_r^{n+1-i-2}\Phi_{(2)})\partial_r^{n+1}\Phi_{(2)}\leq O(r^{p-2})(\partial_r^{n+1}\Phi_{(2)})^2+ O(r^{p-6-2i})(\slashed{\Delta}_{\s^2}(\partial_r^{n-1-i}\Phi_{(2)}))^2
\end{equation*}
and restrict $p-2<2(n+1)+1$. Now, we estimate the remaining terms by applying Cauchy--Schwarz:
\begin{align*}
O(r^{p-3-i})&\slashed{\nabla}(\chi\partial_r^{n+1-i-2}\Phi_{(2)})\cdot \snabla(\chi \partial_r^{n+1}\Phi_{(2)})\leq \epsilon (2+4(n+1)-p)r^{p-1}|\snabla \chi \partial_r^{n+1} \Phi_{(2)}|^2\\
&+C_{\epsilon}(2+4(n+1)-p)^{-1}r^{p-5-2i}|\snabla \chi \partial_r^{n+1-i-2} \Phi_{(2)}|^2,\\
O(r^{p-2-i})&\slashed{\nabla}(\chi\partial_r^{n}\Phi_{(2)})\cdot \snabla(\chi \partial_r^{n+1}\Phi_{(2)})\leq \epsilon (2+4(n+1)-p)r^{p-1}|\snabla \chi \partial_r^{n+1} \Phi_{(2)}|^2\\
&+C_{\epsilon}(2+4(n+1)-p)^{-1}r^{p-3-2i}|\snabla \partial_r(\chi \partial_r^{n+1-i-2} \Phi_{(2)})|^2,
\end{align*}
where we absorb the terms with a factor $\epsilon$ in front and we use \eqref{eq:pPhi2commr} with $0\leq k\leq n-1$ to estimate the remaining terms. These estimates are certainly valid under the restrictions $2(n+1)-6<p< 1+2(n+1)$ and $R\geq (p+6-2(n+1))^{-1}(p-1-2(n+1))^{-1}R_0$.

We are left with the terms $J_{n+2}$, $J_{n+3}$ and $J_{n+4}$. $J_{n+2}$ can be estimated exactly as above. To estimate $J_{n+3}$ and $J_{n+4}$ we apply again Cauchy--Schwarz:
\begin{align*}
 O(r^{p-n-3})\chi{\Phi}\partial_r(\chi \partial_r^{n+1}{\Phi}_{(2)})\leq&\: \epsilon  r^{p-1}(\partial_r(\chi \partial_r^{n+1}{\Phi}_{(2)}))^2+ C_{\epsilon}r^{p-2n-5}(\chi{\Phi})^2,\\
O(r^{p-n-3})\chi{\Phi}\partial_r(\chi \partial_r^n{\Phi})\leq&\: \epsilon  r^{p-1}(\partial_r(\chi \partial_r^{n+1}{\Phi}_{(2)}))^2+ C_{\epsilon}r^{p-2n-5}(\chi\phi)^2.
\end{align*}
By \eqref{eq:Hardy1}, we have that
\begin{equation*}
\int_{\mathcal{A}^{u_2}_{u_1}}r^{p-2n-5}(\chi{\Phi})^2\,d\omega dr du\leq C\int_{\mathcal{A}^{u_2}_{u_1}}r^{p-2n-3}(\partial_r(\chi{\Phi}))^2\,d\omega dr du,
\end{equation*}
using that $\lim_{r\to \infty}\int_{\s^2}r^{p-2n-4}{\Phi}^2\,d\omega=0$, for $p< 2n+4=2(n+1)+2$.

We can apply \eqref{eq:Hardy1} once more to estimate the right-hand side by Proposition \ref{prop:l2commr2} with $p$ replaced by $p-2(n+1)$. We apply \eqref{eq:Hardy1} three times to estimate the spacetime integral of $r^{p-2n-5}(\chi\phi)^2$ and then  Proposition  \ref{prop:l2commr2}, where we need that $\lim_{r\to \infty}\int_{\s^2}r^{p-2n-4}\phi^2\,d\omega=0$, and $\lim_{r\to \infty}\int_{\s^2}r^{p-2n-4}{\Phi}^2\,d\omega=0$, which holds for $p< 2(n+1)+2$.

Finally, we put the above estimates together and apply the divergence theorem on $\textnormal{div}J^V[\chi \partial_r^{n+1}{\Phi}_{(2)}]$ to conclude that \eqref{eq:pPhi2commr} also holds for $k=n+1$.
\end{proof}

\begin{proposition}
\label{eq:hierTkpsi1}
Let $\psi$ be a solution to \eqref{waveequation} emanating from initial data given as in Theorem \ref{thm:extuniq} on $(\mathcal{R}, g)$.

Assume that $D=1-\frac{2M}{r}+O_{n+2}(r^{-1-\beta})$ with $n\in \N$, $n\geq 1$. Assume also that 
\begin{equation*}
\sum_{ |l|\leq 4+2n}\int_{\Sigma}J^T[\Omega^l\psi]\cdot n_{\Sigma}\,d\mu_{\Sigma}<\infty,
\end{equation*}
and moreover,
\begin{align*}
\lim_{r \to \infty }\sum_{|l|\leq 4+2n}\int_{\s^2}(\Omega^l\phi)^2\,d\omega\big|_{u'=0}<&\:\infty,\\
\lim_{r \to \infty }\sum_{|l|\leq 2+2n}\int_{\s^2}(\Omega^l\Phi)^2\,d\omega\big|_{u'=0}<&\:\infty,\\
\lim_{r \to \infty }\sum_{|l|\leq 2n}\int_{\s^2}r^{-1}\left(\Omega^l\Phi_{(2)}\right)^2\,d\omega\big|_{u'=0}<&\:\infty,
\end{align*}
and
\begin{equation*}
\lim_{r \to \infty }\sum_{|l|\leq 2n-2s}\int_{\s^2}r^{2s+1}\left(\partial_r^s\Omega^l\Phi_{(2)}\right)^2\,d\omega\big|_{u'=0}<\infty,
\end{equation*}
for each $1\leq s\leq n$.

Let $1\leq k \leq n$ and take $p\in(-6+2k,2k+1)$. Then there exists an $R>0$ such that for any $0\leq u_1<u_2$
\begin{equation}
\label{eq:pPhi2commT}
\begin{split}
&\int_{\mathcal{A}_{u_1}^{u_2}}r^{p-1}(\partial_r^{k}T\Phi_{(2)})^2 \,d\omega drdu \\
\leq&\: C(p+6-2k)^{-3}(p-1-2k)^{-1}\int_{\mathcal{N}_{u_1}} r^{p}(\partial_r^{k+1}\Phi_{(2)})^2\,d\omega dr\\
&+ C(p+6-2k)^{-3}(p-1-2k)^{-1}\sum_{\substack{|\alpha|\leq 1\\j\leq k-1}}\int_{\mathcal{N}_{u_1}} r^{p-2-2j}(\partial_r^{k-j}\Omega^{\alpha}\Phi_{(2)})^2\,d\omega dr\\
&+C(p+6-2k)^{-3}\sum_{\substack{|\alpha|\leq 1\\j+|\alpha|\leq k+2}}\int_{\Sigma_{u_1}}J^T[T^{j}\Omega^{\alpha}\psi]\cdot n_{\Sigma_{u_1}}\,d\mu_{{u_1}},
\end{split}
\end{equation}
where $C\doteq C(k,D,R)>0$ is a constant and we can take $R=(p+6-2k)^{-1}(p-1-2k)^{-1}R_0(D)>0$, with $R_0(k,D)>0$ a constant.
\end{proposition}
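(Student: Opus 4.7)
The strategy is to trade the $T$-derivative for a $\partial_r$-derivative via the wave equation and then invoke the already-proved spatial hierarchy of Proposition~\ref{prop:rpPhi2commr}, commuted where necessary with one copy of~$\Omega$.

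First I would combine the Bondi form of the wave operator, $\square_g = -2\partial_u\partial_r + D\partial_r^2 - 2r^{-1}\partial_u + (D'+2Dr^{-1})\partial_r + \slashed{\Delta}$, applied to $\partial_r^{k-1}\Phi_{(2)}$, with Lemma~\ref{lm:boxdrkPsi2} at order $k-1$. The two $-2r^{-1}\partial_u\partial_r^{k-1}\Phi_{(2)}$ terms cancel, and solving for $T\partial_r^k\Phi_{(2)}$ yields the schematic identity
\begin{equation*}
2 T\partial_r^k\Phi_{(2)} = D\,\partial_r^{k+1}\Phi_{(2)} + \slashed{\Delta}\partial_r^{k-1}\Phi_{(2)} + O(r^{-1})\,\partial_r^k\Phi_{(2)} + \sum_{j=1}^{k-1}O(r^{-1-j})\,\partial_r^{k-j}\Phi_{(2)} + \mathcal{R},
\end{equation*}
where $\mathcal{R}$ collects the lower-order angular-Laplacian terms (all with coefficients of size $O(r^{-1})$ or better) together with $O(r^{-k-1})\Phi$ and $O(r^{-k-1})\phi$. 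I would then square, multiply by $r^{p-1}$ and integrate over $\mathcal{A}_{u_1}^{u_2}$, so that $\int r^{p-1}(T\partial_r^k\Phi_{(2)})^2$ is bounded, via Cauchy--Schwarz, by the sum of the squared spacetime norms of each of these summands.

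The $\int_{\mathcal{A}_{u_1}^{u_2}} r^{p-1}(\partial_r^{k+1}\Phi_{(2)})^2$ piece is precisely the bulk of Proposition~\ref{prop:rpPhi2commr} at level $k$ with weight $p$; since the proposition's range $p\in(-6+2k,1+2k)$ coincides with the one in the current statement, this produces the initial-data term $\int_{\mathcal{N}_{u_1}} r^p(\partial_r^{k+1}\Phi_{(2)})^2$, together with the admissible lower-order initial-data and $T^j$-flux contributions, and the constant factor $(p-1-2k)^{-2}$. The angular term is treated by first writing $|\slashed{\Delta} \partial_r^{k-1}\Phi_{(2)}|^2 \lesssim r^{-4}\sum_{|\alpha|\leq 2}(\Omega^\alpha \partial_r^{k-1}\Phi_{(2)})^2$ via \eqref{eq:angmomop1}--\eqref{eq:angmomop2}, so that
\begin{equation*}
\int_{\mathcal{A}_{u_1}^{u_2}} r^{p-1}(\slashed{\Delta}\partial_r^{k-1}\Phi_{(2)})^2\,d\omega dr du \;\lesssim\; \sum_{|\alpha|=1}\int_{\mathcal{A}_{u_1}^{u_2}} r^{p-3}|\snabla\partial_r^{k-1}\Omega^\alpha\Phi_{(2)}|^2\,d\omega dr du.
\end{equation*}
The right-hand side is the bulk of Proposition~\ref{prop:rpPhi2commr} at level $k-1$ with weight $p-2$, applied to $\Omega^\alpha\psi$; its validity range $p-2\in(-6+2(k-1),1+2(k-1))$ is exactly $p\in(-6+2k,1+2k)$, and the commutation $|\alpha|\leq 1$ matches the commutation allowed on the right-hand side of the target estimate.

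The remaining terms $\int r^{p-3-2j}(\partial_r^{k-j}\Phi_{(2)})^2$ for $1\leq j\leq k-1$ are handled by a single Hardy inequality \eqref{eq:Hardy1} (converting $\partial_r^{k-j}$ into $\partial_r^{k-j+1}$) followed by Proposition~\ref{prop:rpPhi2commr} at level $k-j$ with weight $p-2-2j$, always within the admissible $p$-range. The $\Phi$- and $\phi$-tails (weights $r^{p-2k-2}$ and $r^{p-2k-2}$ respectively) are controlled by iterated Hardy inequalities combined with Propositions~\ref{prop:rpPhiv1} and \ref{prop:rpphiv1}, which produces the $T^j\Omega^\alpha\psi$-fluxes on $\Sigma_{u_1}$ with $|\alpha|\leq 1$ and $j+|\alpha|\leq k+2$ appearing on the right-hand side. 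The main technical obstacle is tracking the $R$-dependence of the constants: since Proposition~\ref{prop:rpPhi2commr} forces $R \sim (p+6-2k)^{-1}(p-1-2k)^{-1}R_0$ and the constants $C(D,R)$ deteriorate accordingly, the iterated applications (at level $k$, at level $k-1$ commuted with $\Omega$, and at intermediate levels for the Hardy-reduced terms) combine to yield the claimed prefactor $(p+6-2k)^{-3}(p-1-2k)^{-1}$, with the endpoint $p = 2k+1$ being the obstruction to extending the range beyond the bulk estimate at level $k$, and $p = -6+2k$ being the obstruction coming from the commuted estimate at level $k-1$.
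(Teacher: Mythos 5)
Your proposal is correct and follows essentially the same route as the paper: the paper likewise derives the pointwise identity expressing $\partial_r^kT\Phi_{(2)}$ as $[1+O(r^{-1})]\partial_r^{k+1}\Phi_{(2)}$ plus angular Laplacians of lower $\partial_r$-derivatives plus $O(r^{-j-1})\partial_r^{k-j}\Phi_{(2)}$ tails and $\Phi$, $\phi$ remainders, then squares with weight $r^{p-1}$, converts the $\slashed{\Delta}$ terms into $|\snabla\Omega^{\alpha}\cdot|^2$ bulk terms via \eqref{eq:angmomop2}, and closes using Proposition \ref{prop:rpPhi2commr} at levels $k$ and $k-1$ (the latter applied to $\Omega^{\alpha}\psi$) together with iterated Hardy inequalities for the lower-order tails. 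The only differences are cosmetic (your derivation of the identity via Lemma \ref{lm:boxdrkPsi2} at order $k-1$ versus the paper's direct commutation, and a marginally more pessimistic coefficient on the $\Phi$ remainder, which still closes under the same $p<2k+1$ restriction).
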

\begin{proof}
Let $D=1-\frac{2M}{r}+O_{k+2}(r^{-1-\beta})$, with $k\in \N$. We can commute \eqref{equationfortildePhiinfty} with $\partial_r^k$ and $T$, with $k\in \N$, to arrive at
\begin{equation}
\label{eq:commPhi2partialrT}
\begin{split}
\partial_r^{k}T\Phi_{(2)}=&\:[1+O(r^{-1})]\partial_r^{k+1}\Phi_{(2)}+\sum_{j=0}^{k-1} O(r^{-j})\slashed{\Delta}(\partial_r^{k-1-j}\Phi_{(2)})+\sum_{j=0}^{k} O(r^{-j-1})\partial_r^{k-j}\Phi_{(2)}\\
&+O(r^{-k-2})\Phi+O(r^{-k-1})\phi.
\end{split}
\end{equation}

We square both sides of \eqref{eq:commPhi2partialrT} and integrate the resulting quadratic terms, after multiplying them with a factor $r^{p-1}$. We insert moreover the cut-off $\chi$ to obtain:
\begin{equation*}
\begin{split}
\int_{\mathcal{A}^{u_2}_{u_1}}&r^{p-1}(\partial_r (\chi T \partial_r^{k-1}\Phi_{(2)}))^2,d\omega dr du\\
&\leq\: C\int_{\mathcal{A}^{u_2}_{u_1}}\chi^2r^{p-1}(\partial_r^{k+1}\Phi_{(2)})^2\,d\omega dr du\\
&+\int_{\mathcal{A}^{u_2}_{u_1}}\chi^2\sum_{j=0}^{k-1}O(r^{p-2j-5})\left(\slashed{\Delta}_{\s^2}(\partial_r^{k-1-j}\Phi_{(2)})\right)^2\,d\omega dr du,\\
&+C\sum_{j=0}^{k}\int_{\mathcal{A}^{u_2}_{u_1}}r^{p-3-2j}\chi^2(\partial_r^{k-j} \Phi_{(2)})^2\,d\omega dr du\\
&+C\int_{\mathcal{A}^{u_2}_{u_1}}\chi^2r^{p-5-2k}{\Phi}^2+\chi^2r^{p-3-2k}\phi^2\,d\omega dr du\\
&+C \sum_{0\leq j\leq k+1}\int_{\mathcal{A}^{u_2}_{u_1}}(\chi')^2 (T^j{\Phi}_{(2)})^2\,d\omega dr du.
\end{split}
\end{equation*}
We can apply \eqref{eq:Hardy1} (multiple times) to absorb the $\chi^2(T^{l-1}\phi)^2$, $\chi^2(T^{l-1}\Phi)^2$ and $\chi^2(T^{l-1}\Phi_{(2)})^2$ terms into the remaining terms, as long as $p<2k+2$. 

Furthermore, we estimate
\begin{equation*}
\begin{split}
\int_{\mathcal{A}^{u_2}_{u_1}}&\chi^2\sum_{j=0}^{k-1}O(r^{p-2j-5})\left(\slashed{\Delta}_{\s^2}(\partial_r^{k-1-j}\Phi_{(2)})\right)^2\,d\omega dr du\\
\leq&\:\int_{\mathcal{A}^{u_2}_{u_1}}\chi^2\sum_{|\alpha|\leq 1}\sum_{j=0}^{k-1}O(r^{p-2j-3})|\snabla\left(\partial_r^{k-1-j}\Omega^{\alpha}\Phi_{(2)}\right)|^2\,d\omega dr du
\end{split}
\end{equation*}

We can therefore obtain \eqref{eq:pPhi2commT} by applying \eqref{eq:pPhi2commr} with $-6<p<2k+1$ to $\phi$ and $\sum_{|\alpha|=1}\Omega^{\alpha} \phi$.
\end{proof}

\section{Commutator estimates for $\psi_{0}=\frac{1}{4\pi}\int_{\mathbb{S}^{2}}\psi$}
\label{sec:hierpsi0}

In this section our goal is to prove $r^p$-weighted estimates for the spherically symmetric part $\psi_0$ of $\psi$. \textbf{Unless specifically stated otherwise, we will denote $\psi_0$ by $\psi$ in this section.} 

\subsection{The restricted hierarchy}
\label{sec:hiernpnotzero}

First we assume that the first Newman--Penrose constant of the linear wave $\psi$ that we are dealing with is non-zero, i.e. we assume that initially we have that $I_0 [\psi ] \neq 0$, for $I_0 [\psi ]$ given as in \eqref{firstNP}. In that case we can establish the following hierarchy of estimates.

\begin{proposition}[\textbf{$r^p$-weighted estimates for $\psi_0$ with $I_0 [\psi ] \neq 0$}]\label{en00}
Let $\psi$ be a solution to \eqref{waveequation} emanating from initial data given as in Theorem \ref{thm:extuniq} on $(\mathcal{R}, g)$.

Assume that $\boldsymbol{I_0 [\psi ] \neq 0}$ and take $p\in(0,3)$. Then there exists an $R>0$ such that for any $0\leq u_1<u_2$
\begin{equation}\label{en00est}
\begin{split}
\int_{{\mathcal{N}}_{u_2}} r^p(\partial_r\phi )^2\,d\omega dr+p \int_{\mathcal{A}_{u_1}^{u_2}} r^{p-1}(\partial_r\phi )^2\,d\omega drdu \leq&\: C\int_{{\mathcal{N}}_{u_1}} r^p(\partial_r\phi )^2\,d\omega dr\\
&+C \int_{\Sigma_{u_1}} J^T [\psi ] \cdot n_{u_1}\, d\mu_{{u_1}} ,
\end{split}
\end{equation}
where $C=C(D,R)>0$ is a constant and we can take $R= p^{-1}R_0$ with $R_0=R_0(D)>0$ a constant.
\end{proposition}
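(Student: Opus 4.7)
The plan is to repeat the Dafermos--Rodnianski $r^p$-argument with multiplier $V = r^{p-2}\partial_r$ applied directly to $\phi = r\psi$ (no commutation), exploiting the fact that $\psi = \psi_0$ is spherically symmetric. The key simplification is that $\slashed{\nabla}\phi \equiv 0$, which kills the $(2-p)r^{p-1}|\slashed{\nabla}\phi|^2$ bulk term that obstructs taking $p > 2$ in Proposition \ref{prop:rpphiv1}. I introduce a smooth cutoff $\chi$ supported in $\{r\geq R\}$, apply the divergence theorem to $J^V[\chi\phi]$ over $\mathcal{A}_{u_1}^{u_2}$, and use Lemma \ref{lm:commute0time} (with $\slashed{\Delta}\phi = 0$) to compute $\mathrm{div}\,J^V[\chi\phi]$. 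The result is a good bulk term $\tfrac{p}{2}r^{p-1}(\partial_r(\chi\phi))^2$ (positive for $p > 0$), together with a principal coupling term of the form $D'r^{p-1}\chi\phi\,\partial_r(\chi\phi)\sim 2Mr^{p-3}\chi\phi\,\partial_r(\chi\phi)$, some $O(r^{p-3-\beta})$ Cauchy--Schwarz-able remainders, and cutoff-error terms supported in $\{R\leq r\leq R+1\}$, which I control via the local Morawetz estimate \eqref{ass:morawetzlocal} in terms of the $T$-energy on $\Sigma_{u_1}$. The flux of $J^V[\phi]$ through $\mathcal{I}^+$ is proportional to $r^p|\slashed{\nabla}\phi|^2$ and hence vanishes identically.

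The main step is to integrate the principal coupling term by parts in $r$:
\begin{equation*}
\int_{\mathcal{A}^{u_2}_{u_1}} 2Mr^{p-3}\chi\phi\,\partial_r(\chi\phi)\,d\omega dr du
= M\int_{\mathcal{I}^+\cap\{u_1\leq u\leq u_2\}} r^{p-3}(\chi\phi)^2\,d\omega du + M(3-p)\int_{\mathcal{A}^{u_2}_{u_1}} r^{p-4}(\chi\phi)^2\,d\omega dr du,
\end{equation*}
modulo a $\chi$-boundary contribution at $r = R$ absorbable into the cutoff errors. Proposition \ref{prop:step0radfields} guarantees that $\int_{\s^2}\phi^2\,d\omega$ extends continuously to $\mathcal{I}^+$, so the integrand $r^{p-3}(\chi\phi)^2$ tends to $0$ as $r\to\infty$ precisely when $p < 3$, annihilating the $\mathcal{I}^+$-boundary contribution; the bulk term $M(3-p)r^{p-4}(\chi\phi)^2$ then has a favourable sign for $p < 3$ and is kept on the good side of the inequality. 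The residual $O(r^{p-3-\beta})$ remainders are handled via Cauchy--Schwarz together with successive applications of the Hardy inequality \eqref{eq:Hardy1} (valid for $p<3$ because $r^{p-3}\phi^2\to 0$ at $\mathcal{I}^+$), reducing them to multiples of $r^{p-1}(\partial_r(\chi\phi))^2$ that are absorbed into the main good bulk term on taking $R\geq p^{-1}R_0(D)$. Removing the cutoff $\chi$ by the same argument used at the end of the proof of Proposition \ref{prop:rpPhiv1} yields the stated estimate, picking up only a bounded multiple of $\int_{\Sigma_{u_1}} J^T[\psi]\cdot n_{u_1}\,d\mu_{u_1}$ on the right-hand side.

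The chief obstacle, and the precise source of the threshold $p = 3$, is the $\mathcal{I}^+$-boundary integral produced by this integration by parts: at $p = 3$ the integrand $r^{p-3}(\chi\phi)^2$ fails to vanish in the limit $r\to\infty$ (by Proposition \ref{prop:step0radfields}, $\phi|_{\mathcal{I}^+}$ is merely bounded and not $r$-decaying), and simultaneously the companion bulk term $r^{p-4}\phi^2\sim r^{-1}\phi_\infty^2$ fails to be integrable in $r$ near infinity, so the IBP identity degenerates. The hypothesis $I_0[\psi]\neq 0$ plays no active role in the argument itself, but singles out the regime in which this threshold is genuinely sharp, consistent with the obstruction described in Section \ref{sharpnessofhierarchy}; when $I_0[\psi] = 0$ one has enough additional decay of $\Phi$ at $\mathcal{I}^+$ to push past $p = 3$, which is the content of Theorem \ref{thm:hierpsi02}.
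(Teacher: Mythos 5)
Your proposal is correct and follows essentially the same route as the paper's proof of Proposition \ref{en00}: the multiplier $V=r^{p-2}\partial_r$ applied to the cut-off radiation field $\chi\phi$, the observation that spherical symmetry kills the $(2-p)r^{p-1}|\snabla\phi|^2$ obstruction, the integration by parts of the leading $2Mr^{p-3}\chi\phi\,\partial_r(\chi\phi)$ term producing the $\mathcal{I}^+$-boundary and $(3-p)$-bulk contributions, Cauchy--Schwarz plus Hardy for the $O(r^{p-3-\beta})$ remainders, and the local Morawetz estimate for the cut-off errors. The only cosmetic difference is that you discard the $\mathcal{I}^+$-boundary term by showing it vanishes for $p<3$, whereas the paper simply notes it has a favourable sign (for $M\geq 0$); both are valid, and your remark that $I_0[\psi]\neq 0$ plays no active role in the estimate itself matches the paper.
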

\begin{proof}
Let $\chi$ be the cut-off function introduced in the proof of Proposition \ref{prop:rpPhiv1}. Consider the vector field multiplier $V=r^{p-2}\partial_r$. We apply the divergence theorem in the region $\mathcal{A}_{u_1}^{u_2}$ on $\chi \phi $ to obtain
\begin{equation}
\begin{split}
\label{eq:divthm0}
\int_{\mathcal{N}_{u_2}}& J^V[\chi \phi ]\cdot L\, r^2  dr+\int_{\mathcal{I}^+}J^V[\chi \phi ]\cdot \underline{L}\, r^2 du+\int_{\mathcal{A}_{u_1}^{u_2}} \textnormal{div}J^V[\chi \phi ]\,r^2 drdu\\
&=\int_{\mathcal{N}_{u_1}}J^V[\chi \phi ]\cdot L\,r^2 dr,
\end{split}
\end{equation}
where we used that the boundary term along the hypersurface $\{r=R\}$ vanishes due to the choice of cut-off $\chi$. Recall from Appendix \ref{app:commmultp} that we have
\begin{equation}
\label{eq:KVchiphi0}
K^V[\chi \phi ]=\frac{1}{2}r^{p-3}\left[D(p-4)-D'r\right](\partial_r (\chi \phi ))^2+2r^{p-3}\partial_r(\chi \phi )\partial_u (\chi \phi ) .
\end{equation}
By \eqref{waveoperatorphi} we also have that
\begin{equation}
\label{eq:EVchiphi0}
\begin{split}
\mathcal{E}^V[\chi \phi ]=&\:2Dr^{p-3}(\partial_r(\chi\phi ))^2-2r^{p-3}\partial_u(\chi\phi ) \cdot \partial_r(\chi\phi )+D'r^{p-3}( \chi\phi ) \cdot \partial_r(\chi \phi )\\
&+\sum_{|\alpha_1 | \leq 1 ,| \alpha_2 | \leq 1}\mathcal{R}_{\chi}[\partial^{\alpha_1} \phi \cdot \partial^{\alpha_2}\phi ].
\end{split}
\end{equation}
We add \eqref{eq:KVchiphi0} and \eqref{eq:EVchiphi0} to obtain
\begin{equation*}
\begin{split}
\textnormal{div}\,J^V[\phi ]=&\:\frac{1}{2}r^{p-3}\left[pD-D'r\right](\partial_r(\chi \phi ))^2+D'r^{p-3}( \chi\phi ) \cdot \partial_r(\chi\phi )\\
&+\sum_{|\alpha_1 | \leq 1 , |\alpha_2 | \leq 1}\mathcal{R}_{\chi} [\partial^{\alpha_1} \phi \cdot \partial^{\alpha_2}\phi ].
\end{split}
\end{equation*}
We further write
\begin{equation*}
r^2 \cdot \textnormal{div} J^V[\chi\phi ]=J_0[\chi\phi ]+J_1[\chi\phi ]+\sum_{|\alpha_1 | \leq 1 ,|\alpha_2 | \leq 1}\mathcal{R}_{\chi} [\partial^{\alpha_1} \phi \cdot \partial^{\alpha_2}\phi ],
\end{equation*}
where
\begin{align*}
J_0[\chi\phi ]& \doteq\frac{1}{2}r^{p-1}\left[pD-D'r\right](\partial_r(\chi\phi ))^2 ,\\
J_1[\chi\phi]&\doteq D'r^{p-1}( \chi\phi ) \cdot \partial_r(\chi\phi).
\end{align*}
If $R$ is chosen suitably large we can apply the Morawetz estimate \eqref{ass:morawetzlocal} to estimate
\begin{equation*}
\sum_{|\alpha_1 | \leq 1 ,|\alpha_2 | \leq 1}\int_{\mathcal{A}^{u_2}_{u_1}}\mathcal{R}_{\chi} [\partial^{\alpha_1} \phi \cdot \partial^{\alpha_2}\phi ] \,r^2 drdu\leq C \int_{\Sigma_{u_1}} J^T[\psi ]\cdot n_{u_1} \,d\mu_{{u_1}},
\end{equation*}
where $C>0$ depends in particular on $R>0$ and the choice of cut-off function $\chi$. We immediately obtain that $J_0$ is positive-definite for $0<p < 3$ and $R>0$ suitably large. Indeed, we can expand in the following way for any $\beta > 0$
\begin{align*}
D&=1-\frac{2M}{r}+O(r^{-1-\beta}),\\
D'&=\frac{2M}{r^2}+O(r^{-2-\beta}).
\end{align*}
The leading order term in $J_1$ can be estimated by integrating by parts:
\begin{equation*}
\begin{split}
\int_{\mathcal{A}^{u_2}_{u_1}}r^{p-3}( \chi\phi ) \cdot \partial_r(\chi \phi )\, drdu&=\frac{1}{2}\int_{\mathcal{A}^{u_2}_{u_1}}\partial_r(r^{p-3}(\chi\phi )^2)\,drdu-\frac{1}{2}(p-3) \int_{\mathcal{A}^{u_2}_{u_1}}r^{p-4}(\chi\phi )^2\, drdu\\
&=\frac{1}{2}\int_{\mathcal{I}^+}r^{p-3}(\chi\phi )^2\, du+\frac{1}{2} (3-p)\int_{\mathcal{A}^{u_2}_{u_1}}r^{p-4}(\chi\phi )^2\, drdu.
\end{split}
\end{equation*}
Both terms above have a good sign if $p<3$. The lower-order terms in $J_1$ can be absorbed into the right-hand side above if $R>0$ is suitably large. We put the above estimates into \eqref{eq:divthm0} to estimate
\begin{equation*}
\begin{split}
\int_{\mathcal{N}_{u_2}}& r^p(\partial_r(\chi\phi ))^2\,  dr+\int_{\mathcal{A}_{u_1}^{u_2}} r^{p-1}(\partial_r(\chi\phi ))^2  drdu\\
 \leq&\: \int_{\mathcal{N}_{u_1}} r^p(\partial_r(\chi\phi ))^2\, dr+C\int_{\Sigma_{u_1}}J^T[\psi ]\cdot n_{u_1} \,d\mu_{{u_1}},
\end{split}
\end{equation*}
which is the estimate that we want but for $\chi \phi$ instead of $\phi$. We can remove the cut-off $\chi$ in the above estimate as in the proof of Proposition \ref{prop:rpPhiv1}.
\end{proof}

\subsection{The full hierarchy}
\label{sec:hiernpzero}

We can extend the range of the power $p$ in Proposition \ref{en00}, if we assume additionally that the first Newman--Penrose constant $I_0[\psi]$ vanishes.

\begin{proposition}[\textbf{$r^p$-weighted estimates for $\psi_0$ with $I_0 [\psi ] = 0$}]\label{en01}
Let $\psi$ be a solution to \eqref{waveequation} emanating from initial data given as in Theorem \ref{thm:extuniq} on $(\mathcal{R}, g)$.

Assume that $\boldsymbol{I_0 [\psi ] =0}$ and take $p\in(0,4)$. Then there exists an $R>0$ such that for any $0\leq u_1<u_2$
\begin{equation}\label{en01est}
\begin{split}
\int_{{\mathcal{N}}_{u_2}} &r^p(\partial_r\phi )^2 dr+p\int_{\mathcal{A}_{u_1}^{u_2}} r^{p-1}(\partial_r\phi )^2\,drdu \leq \frac{C}{(4-p)^2}\int_{{\mathcal{N}}_{u_1}} r^p(\partial_r\phi )^2 dr\\
&+ \frac{C}{(4-p)^2}\int_{\Sigma_{u_1}} J^T [\psi ] \cdot n_{{u_1}} d\mu_{{u_1}},
\end{split}
\end{equation}
where $C=C(D,R)>0$ is a constant and we can take $R= p^{-1}R_0$ with $R_0=R_0(D)>0$ a constant.

Moreover, for $p\in [4,5)$ we have that
\begin{equation}\label{en01estv2}
\begin{split}
\int_{{\mathcal{N}}_{u_2}} &r^p(\partial_r\phi )^2 dr+p\int_{\mathcal{A}_{u_1}^{u_2}} r^{p-1}(\partial_r\phi )^2\,drdu \leq C\int_{{\mathcal{N}}_{u_1}} r^p(\partial_r\phi )^2 dr\\
&+ C\int_{\Sigma_{u_1}} J^T [\psi ] \cdot n_{{u_1}} d\mu_{{u_1}} + C  \frac{E_{0;\rm aux}^{\delta}[\psi]}{(1+u_1 )^{1 -{2\delta}}} , 
\end{split}
\end{equation}
for any $\delta > 0$ and
\begin{equation}\label{E0def}
E_{0;\rm aux}^{\delta}[\psi]= \sum_{l\leq 4} \int_{\Sigma_0} J^N [T^l\psi ] \cdot n_0\, d\mu_{\Sigma_0}+ \sum_{l\leq 3}\int_{\mathcal{N}_0} r^{4-l-\delta} (\partial_r  T^l\phi  )^2 \, d\omega dr<\infty,
\end{equation}
with $C=C(D,R,\delta )>0$ is a constant and we can take $R= p^{-1}R_0$ with $R_0=R_0(D)>0$ a constant.
\end{proposition}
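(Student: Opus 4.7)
The proof of \eqref{en01est} for $p \in (0,4)$ parallels that of Proposition \ref{en00}. I would apply the divergence theorem to $J^V[\chi\phi]$ with multiplier $V = r^{p-2}\partial_r$ and the cut-off $\chi$ localizing to $r \geq R$; the bulk term decomposes as $J_0 + J_1 + \mathcal{R}_\chi$, where $J_0 = \tfrac{1}{2}r^{p-1}[pD - D'r](\partial_r(\chi\phi))^2$ is positive-definite for $R$ large, $J_1 = D'r^{p-1}\chi\phi\,\partial_r(\chi\phi) \sim 2M r^{p-3}\chi\phi\,\partial_r(\chi\phi)$, and $\mathcal{R}_\chi$ is bounded via \eqref{ass:morawetzlocal}. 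For $p \in (0,3)$ one can integrate $J_1$ by parts in $r$ as in Proposition \ref{en00}, but for $p \in [3,4)$ the resulting boundary term at $\mathcal{I}^+$ would be $\lim_{r\to\infty} r^{p-3}\phi^2$, which diverges since $\phi \to \phi_\infty(u)$ is generically nonzero (note that $I_0 = 0$ does not force $\phi_\infty=0$). I would therefore estimate $J_1$ uniformly by Cauchy--Schwarz, $|J_1| \leq \epsilon r^{p-1}(\partial_r(\chi\phi))^2 + C_\epsilon r^{p-5}(\chi\phi)^2$, and apply the Hardy inequality \eqref{eq:Hardy1} with $q = p-5$: its boundary condition $\lim_{r\to\infty} r^{p-4}\phi^2 = 0$ holds for $p < 4$ by Proposition \ref{prop:step0radfields}. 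The resulting factor $4/(p-4)^2$ propagates to the constant $C/(4-p)^2$, and absorption into $J_0$ requires $R \gtrsim (4-p)^{-1}$.

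The estimate \eqref{en01estv2} for $p \in [4,5)$ requires a substantively new input, since the above Hardy bound fails at $p = 4$. My plan is to trade the missing Hardy weight for time-decay of auxiliary energies involving $T$-derivatives. Here the hypothesis $I_0[\psi] = 0$ is essential: conservation of $I_0$ along $\mathcal{I}^+$ (Proposition \ref{consNP}) gives $I_0[T^l\psi] = T^l I_0[\psi] = 0$ for all $l \geq 1$, so Part 1 with $p' = 4-\delta$ applies to each $T^l\psi$ for $l=0,\ldots,4$. Combining these hierarchies with the Morawetz estimate \eqref{ass:morawetz} and the standard Dafermos--Rodnianski pigeonhole/dyadic argument (cf.\ the proof of Proposition \ref{DafRoddecay}) yields the time-decay
\begin{equation*}
\int_{\Sigma_u} J^N[T^l\psi]\cdot n_u\,d\mu_{\Sigma_u} + \int_{\mathcal{N}_u} r^{4-l-\delta}(\partial_r T^l \phi)^2\,d\omega\,dr \;\lesssim\; \frac{E_{0;\rm aux}^\delta[\psi]}{(1+u)^{1-2\delta}}
\end{equation*}
for $0 \leq l \leq 3$, which is precisely what the norm $E_{0;\rm aux}^\delta[\psi]$ defined in \eqref{E0def} encodes on the initial data.

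To close the proof, I return to the divergence identity for $V = r^{p-2}\partial_r$ with $p \in [4,5)$ and estimate the problematic bulk integral $\int_{\mathcal{A}_{u_1}^{u_2}} r^{p-5}\phi^2\,dr\,du$ by exploiting the spherically symmetric wave equation $D'r^{-1}\phi = \partial_r(D\partial_r\phi) - 2\partial_r(T\phi)$, which follows from \eqref{eq:sDeltapsiinftyv0}. Since $D' \sim 2M r^{-2}$, this identity trades one factor of $\phi$ for $r^3$ times $\partial_r(T\phi)$ plus second-order $r$-derivatives of $\phi$. The resulting integrals of $r^{p+1}(\partial_r T\phi)^2$ lie inside the Part 1 range for the commuted field $T\psi$, and the second-derivative pieces are absorbed by the already-established $(0,4)$-hierarchy applied to $\phi$. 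The time-decay of the $T\phi$ energy derived above then yields, upon integration in $u$, the remainder $C\,E_{0;\rm aux}^\delta[\psi] \cdot (1+u_1)^{-(1-2\delta)}$ on the right-hand side of \eqref{en01estv2}. The principal obstacle is the careful bookkeeping of weights—verifying that every commuted hierarchy for $T^l\psi$ stays strictly inside $(0,4)$ and that the weights in $E_{0;\rm aux}^\delta[\psi]$ are precisely calibrated to propagate the cascade of time-decay through $l = 0,1,2,3$ up to a $p$ arbitrarily close to $5$.
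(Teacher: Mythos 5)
Your argument for \eqref{en01est} on the range $p\in(0,4)$ is essentially the paper's proof: Cauchy--Schwarz on $J_1$, the Hardy inequality \eqref{eq:Hardy1} with $q=p-5$ (whose boundary condition $\lim_{r\to\infty}r^{p-4}\phi^2=0$ holds for $p<4$ by Proposition \ref{prop:step0radfields}), and the resulting $(4-p)^{-2}$ degeneration. That part is fine.

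The $p\in[4,5)$ step, however, has a genuine gap. You propose to handle the bulk term by substituting the spherically symmetric wave equation $D'r^{-1}\phi=\partial_r(D\partial_r\phi)-2\partial_r(T\phi)$ into $J_1=r^p\,\partial_r(\chi\phi)\cdot D'r^{-1}\phi$ (equivalently, into $\int r^{p-5}\phi^2$ after Cauchy--Schwarz). This fails on weights: since $D'\sim 2Mr^{-2}$, trading $\phi$ for $\tfrac{r}{D'}[\partial_r(D\partial_r\phi)-2\partial_rT\phi]$ costs three powers of $r$, and the resulting spacetime integrals $\int_{\mathcal{A}}r^{p+1}(\partial_rT\phi)^2$ and $\int_{\mathcal{A}}r^{p+1}(\partial_r^2\phi)^2$ carry weights $p+1\in[5,6)$. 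The ``Part 1 range'' applied to $T\psi$ only controls the bulk term $\int_{\mathcal{A}}r^{q-1}(\partial_rT\phi)^2$ for $q<4$, i.e.\ weights strictly below $3$; the commuted hierarchies of Propositions \ref{enr0} and \ref{ent0} do not reach these weights either (and are in any case proved \emph{from} Proposition \ref{en01}, so invoking them here would be circular). Moreover, the $r^p\partial_r\phi\cdot\partial_r(D\partial_r\phi)$ piece, after integration by parts, produces the bulk term $-\tfrac{1}{2}\partial_r(Dr^p)(\partial_r\phi)^2\approx-\tfrac{p}{2}r^{p-1}(\partial_r\phi)^2$, which cancels the positive term in $J_0$ at leading order, destroying the very spacetime term you need on the left-hand side of \eqref{en01estv2}.

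The mechanism the paper actually uses is different and is worth internalizing: one first converts the $p<4$ hierarchy (applied to $\psi$ and its $T$-derivatives, which is where $E^{\delta}_{0;\rm aux}$ enters) into the \emph{pointwise} bound $|\phi|(u,r)\lesssim \sqrt{E^{\delta}_{0;\rm aux}[\psi]}\,(1+u)^{-3/2+\delta/2}$ of \eqref{dec320}, and then estimates $J_1$ by a Cauchy--Schwarz weighted in \emph{both} $r$ and $u$: $|J_1|\leq \epsilon' u^{-1-\eta}r^p(\partial_r(\chi\phi))^2+Cu^{1+\eta}r^{p-6}(\chi\phi)^2$. The second term is then integrable because $r^{p-6}$ is integrable in $r$ precisely for $p<5$ and $u^{1+\eta}\cdot u^{-3+\delta}$ is integrable in $u$, yielding the error $E^{\delta}_{0;\rm aux}[\psi](1+u_1)^{-1+2\delta}$; the first term is absorbed by a sup-in-$u$ bootstrap of the flux. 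Your instinct that $T$-commuted, time-decaying auxiliary energies are the new input is correct, but they must be fed back in through a $u$-weighted Cauchy--Schwarz against a pointwise bound on $\phi$, not through substitution of the wave equation, which overshoots every available $r$-weight.
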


\begin{proof}[Proof of \eqref{en01est}]
We have that
\begin{equation*}
r^2\textnormal{div} J^V[\chi\phi]=J_0[\chi\phi]+J_1[\chi\phi]+\sum_{|\alpha_1| \leq 1, |\alpha_2|\leq 1}\mathcal{R}_{\chi}[\partial^{\alpha_1} \phi\partial^{\alpha_2}\phi],
\end{equation*}
where
\begin{align*}
J_0[\chi\phi]&\doteq\frac{1}{2}r^{p-1}\left[pD-D'r\right](\partial_r(\chi\phi))^2,\\
J_1[\chi\phi]& \doteq D'r^{p-1}\chi\phi\partial_r(\chi\phi).
\end{align*}

First, let us restrict to the case $p\in(0,4)$. Instead of estimating the term $J_1$ by integrating by parts as in the proof of Proposition \ref{prop:rpphiv1}, we apply an $r$-weighted Cauchy--Schwarz inequality, with $\epsilon>0$ suitably small, to obtain
\begin{equation*}
|J_1|\leq \epsilon r^{p-1}p(\partial_r(\chi \phi ) )^2+ C_{\epsilon}p^{-1}r^{p-5}(\chi \phi )^2,
\end{equation*}
where $C_{\epsilon}>0$ is a constant that depends only on $\epsilon>0$. Therefore,
\begin{equation*}
\int_{\mathcal{A}^{u_2}_{u_1}} J_1\,drdu\leq \epsilon \int_{\mathcal{A}^{u_2}_{u_1}}p r^{p-1}(\partial_r(\chi \phi ) )^2\,drdu+C_{\epsilon}p^{-1}\int_{\mathcal{A}^{u_2}_{u_1}}r^{p-5}(\chi \phi )^2\,drdu.
\end{equation*}
The first term on the right-hand side can be absorbed into $J_0$. The second term can be estimated using the Hardy inequality \eqref{eq:Hardy1}
\begin{equation*}
\int_{\mathcal{A}^{u_2}_{u_1}}r^{p-5}(\chi \phi )^2\,drdu\leq \frac{4}{(4-p)^2}\int_{\mathcal{A}^{u_2}_{u_1}}r^{p-3}(\partial_r(\chi \phi ) )^2\,drdu,
\end{equation*}
for $p<4$ and the previous Dafermos--Rodnianski hierarchy. Observe that in order to apply \eqref{eq:Hardy1} we need that:
\begin{equation*}
\lim_{r\to \infty} r^{p-4}\phi^2=0,
\end{equation*}
which follows from Proposition \ref{prop:step0radfields}, for any $0<p<4$ because in particular $I_0[\psi]<\infty$.

We can remove the cut-off working as in Proposition \ref{prop:rpphiv1}.
\end{proof}

Let $\epsilon>0$. Then we can use \eqref{en01est} to obtain the following pointwise decay estimate for $R>0$ suitably large (depending on $\epsilon$)
\begin{equation}\label{dec320}
|\phi |(u,r) \leq C E_{0;\rm aux}^{\epsilon}[\psi](1+u)^{-\frac{3}{2} + \frac{\epsilon}{2}}.
\end{equation}
See Lemma \ref{eq:pointrtkpsi0v0} for a derivation. Using estimate \eqref{dec320} we can now alter the estimate \eqref{en01est} to extend the range of $p$ to $p \in (0,5)$.

\begin{proof}[Proof of \eqref{en01estv2}]
We now estimate the term $J_1$ by applying a Cauchy--Schwarz inequality with weights in $r$ \emph{and} weights in $u$
\begin{equation*}
|J_1|\leq \epsilon' r^{p} u^{-1-\eta}(\partial_r(\chi \phi ) )^2+ Cr^{p-6} u^{1+\eta}(\chi \phi )^2,
\end{equation*}
where we can take $\eta>0$ and $\epsilon'>0$ arbitrarily small. We estimate the first term as follows
\begin{equation*}
\int_{\mathcal{A}^{u_2}_{u_1}} u^{-1-\eta}r^p(\partial_r(\chi \phi ) )^2\,drdu\leq u_1^{-\eta}\sup_{u_1\leq u \leq u_2}\int_{\mathcal{N}_{u}}r^p(\partial_r(\chi \phi ) )^2\,dr.
\end{equation*}
By using \eqref{dec320}, we have that
\begin{equation*}
\int_{\mathcal{A}^{u_2}_{u_1}} u^{1+\eta}r^{p-6}(\chi \phi )^2\,drdu\leq CE_{0;\rm aux}^{\delta}[\psi]\int_{\mathcal{A}^{u_2}_{u_1}} u^{-2+\eta+\delta}r^{p-6}\,drdu\leq C\frac{E_{0; \rm aux}[\psi]}{(1+u_1 )^{1-2\delta}},
\end{equation*}
for $p<5$, for a constant $C$ depending on $R > 0$, and for $\eta=\delta$. 

We use the above estimates for $J_1$ when applying the divergence theorem with respect to the current $J^V$, with $V=r^{p-2}\partial_r$, but we ignore the spacetime terms with a good sign in order to obtain
\begin{equation*}
\int_{\mathcal{N}_{u_2}} r^{p}(\partial_r(\chi \phi ) )^2 dr \leq \epsilon' \sup_{u_1\leq u \leq u_2}\int_{\mathcal{N}_{u}}r^{p}(\partial_r (\chi\phi ) )^2\,dr+C\int_{\mathcal{N}_{u_1}} r^{p}(\partial_r (\chi \phi ) )^2\,dr+
\end{equation*}
$$ + C\int_{\Sigma_{u_1}}J^T [\psi ] \cdot n_{{u_1}} d\mu_{{u_1}} + C \frac{E_{0;\rm aux}^{\delta}[\psi]}{(1+u_1 )^{1-2\delta}}. $$

We can replace the term on the left-hand side by $\sup_{u_1\leq u \leq u_2}\int_{\mathcal{N}_{u}}r^{p}(\partial_r(\chi \phi ) )^2\,dr$, without changing the estimate, and absorb the term with a factor $\epsilon'$ into the left-hand side. Finally, we apply the divergence theorem once more, but now we include the spacetime integral on the left-hand side. We are left with
\begin{equation*}
\begin{split}
p \int_{\mathcal{A}_{u_1}^{u_2}}&r^{p-1}(\partial_{r}(\chi \phi ) )^{2}drdu+\sup_{u_1\leq u \leq u_2}\int_{\mathcal{N}_{u}} r^{p}(\partial_r (\chi \phi ) )^2 dr \leq C\int_{\mathcal{N}_{u_1}} r^{p}(\partial_r (\chi \phi ) )^2\,dr\\
&+ C\int_{\Sigma_{u_1}} J^T [\psi ] \cdot n_{{u_1}} d\mu_{{u_1}} + C \frac{E_{0;\rm aux}[\psi]}{(1+u_1 )^{1-2\delta}} ,
\end{split}
\end{equation*}
for $p<5$. The cut-off is removed in the same way as before.  
\end{proof}

\subsection{Aside: a hierarchy for $r^2 \partial_r (r\psi_0)$ with $I_0[\psi]=0$}
Below we provide some additional estimates for a spherically symmetric wave $\psi$ with vanishing first Newman--Penrose $I_0 [\psi ] = 0$ after commuting $\square_g$ with $r^2 \partial_r$ and using again the multiplier vector field $r^{p-2} \partial_r$. The estimates in this section will not be needed for the remainder of the paper.

\begin{proposition}[\textbf{$r^p$-weighted estimates for $r^2 \partial_r (r\psi_0)$ with $I_0 [\psi_0 ] = 0$}]\label{prop:r2rphi0}
Let $\psi$ be a solution to \eqref{waveequation} emanating from initial data given as in Theorem \ref{thm:extuniq} on $(\mathcal{R}, g)$.

Assume that $\boldsymbol{I_0[\psi]=0}$ and denote 
$$ \Phi = r^2 \cdot \partial_r \phi = r^2 \cdot \partial_r \phi_0 . $$

Then, for $p=2$ we have that
\begin{equation}
\label{ent0est:com1p2}
\begin{split}
\int_{{\mathcal{N}}_{u_2}}& r^p ( \partial_r \Phi )^2 \, dr+\int_{\mathcal{A}_{u_1}^{u_2}} r^{p-1}(\partial_r \Phi )^2\, drdu \leq C \int_{{\mathcal{N}}_{u_1}} r^p(\partial_r \Phi )^2 dr \\
&+ C\sum_{k\leq 1} \int_{\Sigma_{u_1}} J^T [T^k \psi ] \cdot n_{\Sigma_{u_1}} d\mu_{{u_1}}.
\end{split}
\end{equation}
Furthermore, if
\begin{equation}
\label{ass:r3prop}
\lim_{r\to \infty}r^{\frac{1}{2}}\Phi(0,r)<\infty,
\end{equation}
we have for $p\in [2,3)$ that
\begin{equation}
\label{ent0est:com1}
\begin{split}
\int_{{\mathcal{N}}_{u_2}}& r^p ( \partial_r \Phi )^2 \, dr+\int_{\mathcal{A}_{u_1}^{u_2}} r^{p-1}(\partial_r \Phi )^2\, drdu \\
&\leq C \int_{{\mathcal{N}}_{u_1}} r^p(\partial_r \Phi )^2 dr + C\sum_{k\leq 1} \int_{\Sigma_{u_1}} J^T [T^k \psi ] \cdot n_{\Sigma_{u_1}} d\mu_{{u_1}} + C \frac{E_{0,\rm comm}[\psi]}{(1+u_1 )^{1-\delta}} ,
\end{split}
\end{equation}
for any $\delta > 0$, with 
\begin{equation*}
\begin{split}
E_{0;\rm comm}[\psi] =&\: \sum_{k \leq 4} \int_{\Sigma_0} J^T[T^k \psi]\cdot n_0 \, d\mu_{\Sigma_0} + \sum_{l\leq 2} \int_{\mathcal{N}_0} r^{3-l} (\partial_r (T^l \phi ) )^2 \, dr \\
&+ \int_{\mathcal{N}_0} r^{2} (\partial_r \Phi )^2 \, dr,
\end{split}
\end{equation*}
$C=C(D,R,\delta )>0$ a constant and $R= p^{-1}R_0$ with $R_0=R_0(D)>0$ a constant.
\end{proposition}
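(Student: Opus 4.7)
\textbf{Proof plan for Proposition \ref{prop:r2rphi0}.}

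The plan is to apply the $r^{p-2}\partial_r$ multiplier directly to the commuted equation \eqref{eq:boxPhiinfty} for $\Phi = r^2\partial_r \phi$, following the same template as Proposition \ref{prop:rpPhiv1}, but exploiting the spherical symmetry of $\psi$ and the vanishing of $I_0[\psi]$. Introducing the same cut-off $\chi$ as before and setting $V = r^{p-2}\partial_r$, the divergence identity gives boundary flux integrals $\int_{\mathcal{N}_{u_j}} r^p D^2 (\partial_r(\chi\Phi))^2\,dr$ and a vanishing flux at $\mathcal{I}^+$ (since $|\snabla\Phi|^2 \equiv 0$ by spherical symmetry), plus a bulk integral $r^2\,\textnormal{div}\,J^V[\chi\Phi] = J_0 + J_1 + J_2 + \mathcal{R}_{\chi}$, with
\begin{align*}
J_0[\chi\Phi] &= \tfrac{1}{2}r^{p-1}\left[(p+4)D - 3D'r\right](\partial_r(\chi\Phi))^2,\\
J_1[\chi\Phi] &= r^{p-1}[-D''r + 3D' - 2Dr^{-1}]\chi\Phi\,\partial_r(\chi\Phi),\\
J_2[\chi\Phi] &= r^{p+1}[D'' + D'r^{-1}]\chi\phi\,\partial_r(\chi\Phi),
\end{align*}
exactly as in the non-spherically-symmetric case, but now without the $|\snabla\Phi|^2$ contribution. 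Choosing $R$ suitably large in terms of $p$, the integral of $J_0$ has a positive sign, and the cut-off error $\mathcal{R}_{\chi}$ is controlled by the local Morawetz estimate \eqref{ass:morawetzlocal} applied up to one $T$-derivative.

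Next I would handle $J_1$ by integration by parts in $r$. The leading order contribution is $-2r^{p-2}\chi\Phi\,\partial_r(\chi\Phi)$, and integrating by parts yields
\begin{equation*}
-\int_{\mathcal{I}^+(u_1,u_2)} r^{p-2}(\chi\Phi)^2\,du + (p-2)\int_{\mathcal{A}_{u_1}^{u_2}} r^{p-3}(\chi\Phi)^2\,drdu.
\end{equation*}
The boundary integral at $\mathcal{I}^+$ vanishes: since $I_0[\psi]=0$, we have $\Phi|_{\mathcal{I}^+}\equiv 0$, and under the assumption \eqref{ass:r3prop} together with Proposition \ref{consNP0}, $\Phi$ decays fast enough that $r^{p-2}\Phi^2 \to 0$ for $p < 3$. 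For $p=2$ the bulk term vanishes outright, which immediately yields \eqref{ent0est:com1p2} after also estimating $J_2$.

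For $J_2$, I integrate by parts once more, using the key identity $\partial_r\phi = r^{-2}\Phi$:
\begin{equation*}
\int r^{p-2}\phi\,\partial_r(\chi\Phi)\,dr = \big[r^{p-2}\phi\,\chi\Phi\big]_{\partial} - (p-2)\int r^{p-3}\phi\,\chi\Phi\,dr - \int r^{p-4}(\chi\Phi)^2\,dr + \mathcal{R}_{\chi}.
\end{equation*}
The boundary term at $\mathcal{I}^+$ again vanishes because $\Phi|_{\mathcal{I}^+}=0$; the $r^{p-4}\Phi^2$ term enters with the favourable sign (since the leading coefficient is $-2M$); and the cross term $\int r^{p-3}\phi\,\chi\Phi$ is absorbed via Cauchy--Schwarz into $J_0$ plus an $r^{p-3}\phi^2$ term controlled by the Dafermos--Rodnianski hierarchy (Proposition \ref{en01}) applied to $\phi$. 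The subleading $O(r^{p-2-\beta})\chi\phi\partial_r(\chi\Phi)$ pieces are easily absorbed via Cauchy--Schwarz and Hardy.

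For $p\in(2,3)$, the main obstacle is the $J_1$ bulk term $(p-2)\int_{\mathcal{A}^{u_2}_{u_1}} r^{p-3}\Phi^2$, which has the wrong sign and cannot be recovered from $J_0$ in the absence of the Poincar\'e inequality. Here I would rewrite it as $(p-2)\int r^{p+1}(\partial_r\phi)^2\,drdu$ and apply the strategy used in the proof of \eqref{en01estv2}: combine a Cauchy--Schwarz split weighted in $u$,
\begin{equation*}
r^{p-3}\Phi^2 \leq \epsilon'\, u^{-1-\delta}r^{p+1}(\partial_r\phi)^2 + C\, u^{1+\delta}r^{p-7}\phi^2\cdot r^{4},
\end{equation*}
with the pointwise decay bound $|\phi|(u,r)\leq C\sqrt{E^{\delta}_{0;\text{aux}}[\psi]}\,(1+u)^{-3/2+\delta/2}$ from \eqref{dec320}, so that $\int u^{1+\delta}(\cdot)\phi^2$ produces the announced $(1+u_1)^{-1+2\delta}$ factor. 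The first piece is absorbed by the $J_0$ flux after replacing it with its supremum over $[u_1,u_2]$ (a bootstrap identical to that in the proof of \eqref{en01estv2}). Finally, removing the cut-off $\chi$ is handled by standard Hardy inequalities and elliptic estimates, exactly as at the end of the proof of Proposition \ref{prop:rpPhiv1}.
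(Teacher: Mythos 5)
Your treatment of the $p=2$ case coincides with the paper's and is correct: there the $J_1$ bulk term carries the factor $(p-2)=0$, the integration by parts of $J_2$ produces only the favourably-signed term $+2M\int r^{p-4}(\chi\Phi)^2$, and the boundary contributions at $\mathcal{I}^+$ vanish via Proposition \ref{consNP0}. For $p\in(2,3)$, however, there is a genuine gap, and it originates in a sign confusion. The $J_1$ bulk term $(p-2)\int_{\mathcal{A}_{u_1}^{u_2}}r^{p-3}(\chi\Phi)^2\,drdu$ appears on the \emph{left-hand side} of the divergence identity with a non-negative coefficient precisely when $p\geq 2$: it has the \emph{good} sign and can simply be dropped (this is the reverse of the $\psi_1$ situation of Proposition \ref{prop:rpPhiv1}, where the admissible range is $p\leq 2$ and the Poincar\'e inequality is needed to absorb it). The ``Cauchy--Schwarz'' you propose for it, $r^{p-3}\Phi^2\leq \epsilon' u^{-1-\delta}r^{p+1}(\partial_r\phi)^2+\dots$, is in any case not an inequality between two distinct factors, since $r^{p-3}\Phi^2$ and $r^{p+1}(\partial_r\phi)^2$ are literally the same quantity; fortunately the step is also unnecessary.

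The term that actually forces the $(1+u_1)^{-1+\delta}$ loss in \eqref{ent0est:com1} is $J_2$, and your integration-by-parts treatment of it does not close for $p>2$. The cross term it produces, $2M(p-2)\int_{\mathcal{A}_{u_1}^{u_2}}r^{p-3}\phi\,\chi\Phi\,drdu$, cannot be absorbed as you describe: splitting it as $r^a\phi^2+r^{2(p-3)-a}(\chi\Phi)^2$ and absorbing the $\Phi$-piece into $J_0$ via Hardy forces $2(p-3)-a+2\leq p-1$, i.e.\ $a\geq p-3>-1$, but then $\int_R^\infty r^a\phi^2\,dr$ is generically infinite because $\phi$ tends to the (generically non-vanishing) radiation field at $\mathcal{I}^+$ --- and it is certainly not controlled by Proposition \ref{en01}, which bounds weighted fluxes of $\partial_r\phi$, not of $\phi$. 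The paper's resolution is to leave $J_2=-2Mr^{p-2}\chi\phi\,\partial_r(\chi\Phi)+\dots$ un-integrated for $p\in(2,3)$ and apply the $u$-weighted Cauchy--Schwarz \emph{there}: the piece $\epsilon' u^{1+\eta}r^{p-4}\phi^2$ is handled by the intermediate pointwise bound $|\phi|\lesssim\sqrt{E_{0;\mathrm{comm}}[\psi]}\,(1+u)^{-3/2+\delta/2}$ together with the integrability of $r^{p-4}$ for $p<3$, while the piece $Cu^{-1-\eta}r^p(\partial_r(\chi\Phi))^2$ is absorbed by the bootstrap on $\sup_{u}\int_{\mathcal{N}_u}r^p(\partial_r(\chi\Phi))^2\,dr$. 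So the $u$-weighted device you invoke is the right tool, but it must be applied to $J_2$, not to the harmless $J_1$ bulk term; as written, your argument for $p\in(2,3)$ does not close.
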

\begin{proof}

According to the computations in Appendix \ref{app:commmultp} we obtain
\begin{equation*}
\begin{split}
r^{2}\cdot \textnormal{div} J^V [ \chi \Phi] &=J_0+J_1+J_2 + \sum_{|\alpha_1 | \leq 1 , |\alpha_2| \leq 2} \mathcal{R}_{\chi} [\partial^{\alpha_1} \Phi \cdot \partial^{\alpha_2} \Phi ] ,
\end{split}
\end{equation*}
 with
\begin{align*}
J_0[\chi\Phi]&\doteq \frac{1}{2}r^{p-1}\left[pD-3D'r\right](\partial_r(\chi \Phi ) )^2,\\
J_1[\chi\Phi]&\doteq r^{p-1}[-D''r+3D'-2Dr^{-1}](\chi \Phi) \cdot \partial_r (\chi \Phi ) ,\\
J_2[\chi\Phi]&\doteq r^{p+1}[D''+D'r^{-1}]( \chi \phi ) \cdot  \partial_r (\chi \Phi) ,
\end{align*}
for $\chi$ the same cut-off that we used in the previous propositions. First we note that for the term $\mathcal{R}_{\chi}$ by using the Morawetz estimate \eqref{ass:morawetzlocal} we have that
\begin{equation*}
\sum_{ |\alpha_1 | \leq 1, |\alpha_2 | \leq 1}\int_{\mathcal{A}^{u_2}_{u_1}}\mathcal{R}_{\chi} [\partial^{\alpha_1}  \Phi \cdot \partial^{\alpha_2} \Phi ]  \, drdu\leq C \sum_{k \leq 1} \int_{\Sigma_{u_1}} J^T[T^k \psi ]\cdot n_{u_1}\,d\mu_{{u_1}},
\end{equation*}
where $C>0$ depends in particular on $R>0$ and the choice of cut-off function $\chi$.

We note that $J_0$ is positive definite for $p > 0$. For the term $J_1$ we notice that its leading order term is
$$ -2 r^{p-2} (\chi \Phi ) \cdot \partial_r (\chi \Phi) . $$
We have that
\begin{equation}\label{comm0:j1}
 -2 \int_{\mathcal{A}_{u_1}^{u_2}} r^{p-2} (\chi \Phi ) \cdot \partial_r (\chi \Phi) drdu = - \int_{\mathcal{I}^{+}} r^{p-2} (\chi \Phi )^2 du + (p-2) \int_{\mathcal{A}_{u_1}^{u_2}} r^{p-3} (\chi \Phi )^2 drdu . 
 \end{equation}
For $p<3$ the term at null infinity is zero by Proposition \ref{consNP0} (which can be applied due to assumption \eqref{ass:r3prop}). Moreover, for $p \in [2,3)$ the spacetime term in the right hand side of \eqref{comm0:j1} has the right sign. 

For $J_2$ the leading order term in $r^{-1}$ is
$$ -2M r^{p-2} \cdot (\chi \phi ) \cdot \partial_r (\chi \Phi ) . $$
For $p=2$ we integrate by parts and we have that 
$$ -\int_{\mathcal{A}_{u_1}^{u_2}} 2M (\chi \phi ) \cdot \partial_r (\chi \Phi ) \, drdu = - \int_{\mathcal{I}^{+}} 2M (\chi \phi ) \cdot (\chi \Phi ) \, du + \int_{\mathcal{A}_{u_1}^{u_2}} 2M \chi^2 r^2 (\partial_r \phi )^2 \, drdu + $$ $$ + \int_{\mathcal{A}_{u_1}^{u_2}} 2M \chi' r^2 (\chi \phi) \cdot \partial_r \phi  \, drdu . $$
The first term vanishes again by Proposition \ref{consNP0}, the second term has a good sign, and the third one can be estimated by \eqref{ass:morawetzlocal}. We can therefore arrive at the estimate \eqref{ent0est:com1p2}.

We can use \eqref{ent0est:com1p2} to arrive at a result that is analogous to (ii) of Lemma \ref{lm:auxdecaypsi0}, but with $E^{\epsilon}_{0;\rm aux}$ replaced by $E_{0;\rm comm}$ and $r^{4-\epsilon}(\partial_r\phi)^2$ replaced by $r^2(\partial_r\Phi)^2$. A proof of this statement is straightforward and is omitted from this paper.

Then, via arguments similar to those in Lemma \ref{sec:pointdecayradfield}, we can arrive at the following pointwise bound
\begin{equation}\label{dec20prel}
|\phi |(u,r) \leq C \cdot \frac{\sqrt{E_{0; \rm comm}[\psi]}}{(1+u)^{3/2 - \delta/2}} \mbox{ for any $\delta > 0$}.
\end{equation}

Now for $p\in [2,3)$ we apply a $u$-weighted Cauchy--Schwarz inequality and we have that for $\eta > 0$ and $\epsilon' > 0$
$$ -2M r^{p-2} \cdot (\chi \phi ) \cdot \partial_r (\chi \Phi ) \leq \epsilon' \cdot u^{1+\eta} \cdot r^{p-4} (\chi \phi )^2 + C u^{-1-\eta} \cdot r^p \left( \partial_r (\chi \Phi ) \right)^2 . $$
We estimate the second term of the last expression as follows
\begin{equation*}
\int_{\mathcal{A}^{u_2}_{u_1}} u^{-1-\eta}r^p(\partial_r (\chi \Phi ) )^2\,drdu\leq u_1^{-\eta}\sup_{u_1\leq u \leq u_2}\int_{\mathcal{N}_{u}}r^p(\partial_r (\chi \Phi ) )^2\,dr.
\end{equation*}
By using \eqref{dec20prel}, we have that
\begin{equation*}
\int_{\mathcal{A}^{u_2}_{u_1}} u^{1+\eta}r^{p-4}(\chi \phi )^2\,drdu\leq CE_0\int_{\mathcal{A}^{u_2}_{u_1}} u^{-3+\eta+\epsilon}r^{p-4}\,drdu\leq C\frac{E_{0; \rm comm}}{(1+u_1 )^{1-\delta}},
\end{equation*}
for $2 < p<3$, for a constant $C$ depending on $R > 0$, and for $\delta = \epsilon + \eta$.

We use the above estimates for $J_2$ when applying the divergence theorem with respect to the current $J^V$, with $V=r^{p-2}\partial_r$, and we ignore the spacetime terms with a good sign in order to obtain
\begin{equation*}
\int_{\mathcal{N}_{u_2}} r^{p}(\partial_r(\chi \Phi ) )^2 dr \leq \epsilon' \sup_{u_1\leq u \leq u_2}\int_{\mathcal{N}_{u}}r^{p}(\partial_r(\chi \Phi ) )^2\,dr+C\int_{\mathcal{N}_{u_1}} r^{p}(\partial_r (\chi \Phi ) )^2\,dr+
\end{equation*}
$$ + C\sum_{k \leq 1} \int_{\Sigma_{u_1}}J^T [T^k \psi ] \cdot n_{{u_1}} d\mu_{{u_1}} + C \frac{E_{0;\rm comm}[\psi]}{(1+u_1 )^{1-\delta}}. $$

We can replace the term on the left-hand side by $\sup_{u_1\leq u \leq u_2}\int_{\mathcal{N}_{u}}r^{p}(\partial_r (\chi \Phi ) )^2\,dr$, without changing the estimate, and absorb the term with a factor $\epsilon'$ into the left-hand side. Finally, we apply the divergence theorem once more, but now we include the spacetime integral on the left-hand side. We are left with
\begin{equation*}
\begin{split}
(p-2) \int_{\mathcal{A}_{u_1}^{u_2}}&r^{p+1}(\chi \partial_{r} \phi )^{2}drdu+\int_{\mathcal{A}_{u_1}^{u_2}} r^{p-1}(\partial_{r} (\chi \Phi ) )^{2}drdu +\sup_{u_1\leq u \leq u_2}\int_{\mathcal{N}_{u}} r^{p}(\partial_r (\chi \Phi ) )^2 dr \leq \\
&\leq C\int_{\mathcal{N}_{u_1}} r^{p}(\partial_r (\chi \Phi ) )^2\,dr+ C\sum_{k\leq 1} \int_{\Sigma_{u_1}} J^T [T^k \psi ] \cdot n_{{u_1}} d\mu_{{u_1}} + C \frac{E_{0;\rm comm}[\psi]}{(1+u_1 )^{1-\delta}} ,
\end{split}
\end{equation*}
for $2 <  p<3$. The cut-off can be removed similarly as before after noticing that
$$  \int_{\mathcal{N}_{u}} r^{p}(\partial_r (\chi \Phi ) )^2 dr \leq C \int_{\mathcal{N}_{u}} r^{p}(\partial_r \Phi  )^2 dr + C\sum_{k\leq 1} \int_{\Sigma_{u_1}} J^T [T^k \psi ] \cdot n_{{u_1}} d\mu_{{u_1}} , $$
and
$$  \int_{\mathcal{N}_{u}} r^{p}(\partial_r  \Phi  )^2 dr \leq C \int_{\mathcal{N}_{u}} r^{p}(\partial_r (\chi \Phi )  )^2 dr + C\sum_{k\leq 1} \int_{\Sigma_{u_1}} J^T [T^k \psi ] \cdot n_{{u_1}} d\mu_{{u_1}} . $$
\end{proof}
\begin{remark}
It should be noted that the $r$-weighted energies on the right-hand sides of \eqref{ent0est:com1p2} and \eqref{ent0est:com1} can be finite even if $I_0 [\psi ]$ is non-vanishing (of course, we need to in particular use the pointwise condition $I_0[\psi]=0$ to prove Proposition \ref{prop:r2rphi0} in the first place). This is in contrast to Proposition \ref{en01} where the $r$-weighted energies on the right-hand sides of \eqref{en01est} and \eqref{en01est} are not finite if $I_0 [\psi ]$ is non-vanishing. This makes Proposition \ref{prop:r2rphi0} more useful for nonlinear applications, where the Newman--Penrose quantity need to be conserved.
\end{remark}

\subsection{The extended hierarchy for $\partial_r^{k+1} \psi_0$ and $\partial_r^kT\psi$, $k \geq 0$}
\label{sec:rphierTkpsi0}
In this section we will prove some additional estimates for $\partial_r^{k+1} \psi_0$ and $\partial_r^kT\psi_0$, with $k \in \mathbb{N}_0$. Note that we always have $I_0 [T^k \psi ] = 0$ (even when $I_0 [\psi ] \neq 0$) so the estimates provided by Proposition \ref{en01} can directly be applied to $T^k \psi$. We will however prove alternative hierarchies in this section.

\begin{lemma}
\label{lm:commboxdrkpsi0}
Let $\psi$ be a solution to \eqref{waveequation} emanating from initial data given as in Theorem \ref{thm:extuniq} on $(\mathcal{R}, g)$. Then we have that for any $k \in \mathbb{N}$
\begin{equation}\label{eq:partialkphi}
\Box_g (\partial_r^k \phi ) = \left( \frac{2}{r} + O (r^{-2} ) \right) \cdot \partial_r^{k+1} \phi + \sum_{m=0}^k O (r^{-m-3} ) \cdot  \partial_r^{k-m} \phi - \frac{2}{r} \partial_u \partial_r^k \phi . 
\end{equation}
\end{lemma}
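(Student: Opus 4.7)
The plan is a direct computation, with no induction needed. Since we are working in the section where $\psi=\psi_0$ is the spherical mean, the function $\phi=r\psi$ is spherically symmetric, so $\slashed{\Delta}\phi=0$ and the coordinate expression of $\square_g$ derived in Appendix \ref{app:commmultp} (see \eqref{eq:derivwaveeq} and Lemma \ref{lm:commute0time}) collapses, for any smooth spherically symmetric $F$, to
\begin{equation*}
\square_g F \;=\; -2\partial_u\partial_r F + D\partial_r^2 F - 2r^{-1}\partial_u F + (D'+2r^{-1}D)\partial_r F.
\end{equation*}
I would apply this with $F=\partial_r^k\phi$, obtaining an expression that involves the unwanted top-order term $D\partial_r^{k+2}\phi$, and then eliminate that term using the transport equation \eqref{eq:sDeltapsiinftyv0}, which for spherically symmetric $\phi$ reads
\begin{equation*}
2\partial_u\partial_r\phi \;=\; D\partial_r^2\phi + D'\partial_r\phi - D'r^{-1}\phi.
\end{equation*}

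Applying $\partial_r^k$ to this transport equation and using the Leibniz rule gives
\begin{equation*}
D\,\partial_r^{k+2}\phi \;=\; 2\partial_u\partial_r^{k+1}\phi \;-\;\sum_{j=1}^{k}\binom{k}{j}D^{(j)}\partial_r^{k+2-j}\phi \;-\; \sum_{j=0}^{k}\binom{k}{j}D^{(j+1)}\partial_r^{k+1-j}\phi \;+\;\sum_{j=0}^{k}\binom{k}{j}\partial_r^{j}\!\big(D'r^{-1}\big)\partial_r^{k-j}\phi,
\end{equation*}
which I substitute into the coordinate expression of $\square_g(\partial_r^k\phi)$. The two $\partial_u\partial_r^{k+1}\phi$ contributions cancel exactly, leaving only terms of the form $-2r^{-1}\partial_u\partial_r^k\phi$, a coefficient times $\partial_r^{k+1}\phi$, and a finite sum of terms $c_m(r)\,\partial_r^{k-m}\phi$ with $0\leq m\leq k$.

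The last step is a book-keeping check on the coefficients using the assumed asymptotics $D=1-2M/r+O_{3}(r^{-1-\beta})$, which give $D^{(j)}=O(r^{-1-j})$ for $j\geq 1$ and $\partial_r^{m}(D'r^{-1})=O(r^{-m-3})$. For $\partial_r^{k+1}\phi$ the coefficient assembles to $(D'+2r^{-1}D)-k D' - D' = 2r^{-1}D - kD' = 2/r+O(r^{-2})$, matching the claim. For $\partial_r^{k-m}\phi$ with $m\geq 0$, contributions come from the $j=m+2$ term of the first sum (coefficient $O(r^{-1-(m+2)})=O(r^{-m-3})$), the $j=m+1$ term of the second sum (same size), and the $j=m$ term of the third sum (coefficient $O(r^{-m-3})$), so each coefficient is $O(r^{-m-3})$, as claimed.

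I do not anticipate a serious obstacle: everything reduces to the standard Leibniz expansion plus the elementary asymptotic bookkeeping for $D$ and its derivatives. The only point to be mildly careful about is making sure the transport equation \eqref{eq:sDeltapsiinftyv0} really does allow one to trade every occurrence of $\partial_r^{k+2}\phi$ for terms involving only $\partial_u\partial_r^{k+1}\phi$ and lower-order spatial derivatives without generating angular derivatives, which is precisely where the spherical symmetry assumption $\slashed{\Delta}\phi=0$ is used crucially.
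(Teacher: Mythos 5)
Your proof is correct, but it takes a genuinely different route from the paper's. The paper proves \eqref{eq:partialkphi} by induction on $k$: the base case $k=1$ is read off from the earlier computation of $\square_g(\partial_r\phi)$ (which already invoked $\square_g\psi=0$), and the inductive step writes $\square_g(\partial_r^{k+1}\phi)=[\square_g,\partial_r](\partial_r^k\phi)+\partial_r\bigl(\square_g(\partial_r^k\phi)\bigr)$, evaluating the first term from the explicit commutator identity \eqref{eq:commdr} (whose $\slashed{\Delta}$ contribution drops by spherical symmetry) and the second by differentiating the inductive hypothesis. You instead perform a single direct computation: expand $\square_g(\partial_r^k\phi)$ in Bondi coordinates and eliminate the offending $D\partial_r^{k+2}\phi$ and $-2\partial_u\partial_r^{k+1}\phi$ terms by the $k$-fold $\partial_r$-derivative of the transport equation \eqref{eq:sDeltapsiinftyv0}; this is essentially the manipulation the paper carries out for $k=1,2$ in Lemmas \ref{lm:commute1time} and \ref{lm:commute2time}, pushed to general $k$ via Leibniz. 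Your coefficient bookkeeping checks out: the $\partial_r^{k+1}\phi$ coefficient assembles to $2r^{-1}D-kD'=2/r+O(r^{-2})$, and each $\partial_r^{k-m}\phi$ receives exactly the three $O(r^{-m-3})$ contributions you list. The paper's induction avoids tracking binomial coefficients; your version gives a closed-form, one-pass derivation in which the use of the wave equation is explicit at every order rather than hidden in the base case. One caveat shared by both arguments: the bound $D^{(j)}=O(r^{-1-j})$ is needed for $j$ up to $k+2$, which requires more derivatives of the error term in \eqref{ass:quantitativedecD} than the standing $O_3$ assumption; the paper supplies this where the lemma is applied by assuming $D=1-2Mr^{-1}+O_{n+2}(r^{-1-\beta})$, and your write-up should flag the same hypothesis.
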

\begin{proof}
See Appendix \ref{app:commmultp}.
\end{proof}

\begin{proposition}
\label{enr0}
Let $\psi$ be a solution to \eqref{waveequation} emanating from initial data given as in Theorem \ref{thm:extuniq} on $(\mathcal{R}, g)$.

Let $n\in \N_0$ and assume that $D=1-\frac{2M}{r}+O_{n+2}(r^{-1-\beta})$. 
\begin{itemize}
\item[\emph{(i)}] Assume that $\boldsymbol{I_0 [\psi ] = 0}$. 
Let $0\leq k \leq n$ and take $p\in(2k,4+2k)$. Then there exists an $R>0$ such that for any $0\leq u_1<u_2$
\begin{equation}
\label{enr0est}
\begin{split}
\int_{\mathcal{N}_{u_2}}& r^p(\partial_r^{k+1}\phi)^2\, dr+\int_{\mathcal{A}_{u_1}^{u_2}} pr^{p-1}(\partial_r^{k+1}\phi)^2\, drdu \\
\leq&\: C(p-4-2k)^{-1}\sum_{j=0}^k\int_{\mathcal{N}_{u_1}} r^{p-2j}(\partial_r^{k-j+1}\phi)^2\, dr\\
&+C(p-4-2k)^{-1}\sum_{j\leq k}\int_{\Sigma_{u_1}}J^T[T^{j}\psi]\cdot n_{\Sigma_{u_1}}\,d\mu_{\Sigma_{u_1}},
\end{split}
\end{equation}
where $C\doteq C(k,D,R)>0$ is a constant and we can take $R=p^{-1}(p-4-2k)^{-1}R_0(D)>0$, with $R_0(k,D)>0$ a constant.

Furthermore, take $p\in[2k+4,5+2k)$, then there exists an $R>0$ such that for any $0\leq u_1<u_2$ and any $\delta \in (0,1)$
\begin{equation}
\label{enr0estp5}
\begin{split}
\int_{\mathcal{N}_{u_2}}& r^p(\partial_r^{k+1}\phi)^2\, dr+\int_{\mathcal{A}_{u_1}^{u_2}} pr^{p-1}(\partial_r^{k+1}\phi)^2\, drdu \leq C\sum_{j=0}^k\int_{\mathcal{N}_{u_1}} r^{p-2j}(\partial_r^{k-j+1}\phi)^2\, dr\\
&+C\sum_{j\leq k}\int_{\Sigma_{u_1}}J^T[T^{j}\psi]\cdot n_{\Sigma_{u_1}}\,d\mu_{\Sigma_{u_1}}+CE^{\delta}_{0,\rm aux}[\psi](1+u_1)^{-1-\delta},
\end{split}
\end{equation}
where $C\doteq C(k,D,R)>0$ is a constant and we can take $R=p^{-1}R_0(D)>0$, with $R_0(k,D)>0$ a constant.
\item[\emph{(ii)}]
Assume that $\boldsymbol{I_0 [\psi ]\neq  0}$. Then \eqref{enr0est} holds with the restricted range $p\in (2k,2k+3)$.
\end{itemize}
\end{proposition}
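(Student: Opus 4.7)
The plan is to prove Proposition \ref{enr0} by induction on $k\in\N_0$. The base case $k=0$ is exactly Proposition \ref{en00} in case (ii) and the two parts of Proposition \ref{en01} in case (i), including the extension to $p\in[4,5)$ furnished by the auxiliary energy $E_{0;\mathrm{aux}}^{\delta}[\psi]$. For the inductive step I will fix $k\geq 1$, assume the conclusions of the proposition for all $0\leq j\leq k-1$, and apply the divergence theorem to the current $J^V[\chi\partial_r^k\phi]$ with multiplier $V=r^{p-2}\partial_r$ in the region $\mathcal{A}^{u_2}_{u_1}$, where $\chi$ is the standard cut-off supported on $r\geq R$ introduced in the proof of Proposition \ref{prop:rpPhiv1}.

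Using the commuted wave equation \eqref{eq:partialkphi}, the computation of $r^2\,\mathrm{div}\,J^V[\chi\partial_r^k\phi]$ produces a main positive term
\begin{equation*}
J_0=\tfrac{1}{2}r^{p-1}\bigl[pD+O(r^{-1-\beta})\bigr]\bigl(\partial_r(\chi\partial_r^k\phi)\bigr)^2,
\end{equation*}
plus error terms $J_{m+1}=O(r^{p-m-3})\,\chi\partial_r^{k-m}\phi\,\partial_r(\chi\partial_r^k\phi)$ for $0\leq m\leq k$, together with cut-off corrections $\mathcal{R}_\chi$ supported on $\{R\leq r\leq R+1\}$. The key observation is that the leading coefficient of $J_0$ is exactly $p/2$ \emph{independently of $k$}: the $\partial_u\partial_r$ cross term from $K^V$ cancels precisely against the $-\tfrac{2}{r}\partial_u\partial_r^k\phi$ contribution to $\mathcal{E}^V$ coming from \eqref{eq:partialkphi}, while the $+\tfrac{2}{r}\partial_r^{k+1}\phi$ contribution combines with the $\tfrac{1}{2}r^{p-3}D(p-4)$ term in $K^V$ to give $\tfrac{p}{2}r^{p-3}D$. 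Spherical symmetry of $\psi=\psi_0$ eliminates all $|\snabla|^2$ and $\mathcal{I}^+$-flux contributions, so $J_0$ alone furnishes the main positive bulk term for $p>0$ once $R$ is taken sufficiently large, and the $\mathcal{R}_\chi$ errors are absorbed into $\sum_{j\leq k}\int_{\Sigma_{u_1}}J^T[T^j\psi]\cdot n_{\Sigma_{u_1}}\,d\mu_{\Sigma_{u_1}}$ via \eqref{ass:morawetzlocal} after swapping $\partial_r$-derivatives for $T$-derivatives by standard elliptic estimates.

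Each error $J_{m+1}$ will be handled by a weighted Cauchy--Schwarz $|J_{m+1}|\leq \epsilon\,p\,r^{p-1}(\partial_r(\chi\partial_r^k\phi))^2+C_\epsilon\,r^{p-2m-5}(\chi\partial_r^{k-m}\phi)^2$; the first piece is absorbed into $J_0$ for $\epsilon$ small, and the second is converted by a chain of applications of the Hardy inequality \eqref{eq:Hardy1}, either into data-type integrals $\int_{\mathcal{N}_{u_1}}r^{p-2j}(\partial_r^{k-j+1}\phi)^2\,dr$ controlled by the inductive hypothesis at the appropriate lower level, or into a $r^{p-3}(\partial_r^{k+1}\phi)^2$ spacetime term which is absorbed back into $J_0$ using $r^{p-3}\leq R^{-2}r^{p-1}$ for $R$ suitably large. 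The size requirement $R\gtrsim (p(p-4-2k))^{-1}R_0(D)$ produces the claimed $R$-dependence and the $(p-4-2k)^{-1}$ degeneracy on the right-hand side of \eqref{enr0est}. Vanishing of the intermediate Hardy boundary contributions $\lim_{r\to\infty}r^{q+1}(\partial_r^i\phi)^2=0$ is guaranteed by Propositions \ref{prop:step0radfields} and \ref{consNP0}: in case (ii), the decay $\partial_r^i\phi\sim r^{-i-1}$ imposed by $I_0[\psi]\neq 0$ restricts the range to $p<2k+3$ (where the data integrals $\int_{\mathcal{N}_{u_1}}r^p(\partial_r^{k+1}\phi)^2\,dr$ on the right-hand side of \eqref{enr0est} are themselves finite); in case (i), the sharper decay $\partial_r^i\phi\sim r^{-i-2}$ afforded by $I_0[\psi]=0$ extends the range to $p<2k+4$.

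The main obstacle is the limiting regime $p\in[2k+4,2k+5)$ in case (i), where the Hardy chain for the $m=0$ error $J_1$ becomes singular and the purely $r$-weighted Cauchy--Schwarz above no longer closes. I will handle it by adapting the strategy from the proof of \eqref{en01estv2}: replace the purely $r$-weighted Cauchy--Schwarz on $J_1$ by a mixed $u$- and $r$-weighted splitting
\begin{equation*}
|J_1|\leq \epsilon\,u^{-1-\eta}r^{p}\bigl(\partial_r(\chi\partial_r^k\phi)\bigr)^2+C_\epsilon\,u^{1+\eta}r^{p-6}(\chi\partial_r^k\phi)^2,
\end{equation*}
and combine it with the almost-sharp pointwise bound on $\partial_r^k\phi$ following from \eqref{enr0est} already established in the subrange $p\in(2k,2k+4)$ together with a standard Sobolev-type argument. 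This yields the auxiliary correction $CE_{0;\mathrm{aux}}^{\delta}[\psi](1+u_1)^{-1-\delta}$ appearing on the right-hand side of \eqref{enr0estp5}. Finally, the cut-off $\chi$ is removed exactly as in Proposition \ref{prop:rpPhiv1}, by controlling the commutator terms on $\{R\leq r\leq R+1\}$ via Hardy in $r$ and elliptic estimates that exchange $\partial_r$-derivatives for $T$-derivatives, closing the induction.
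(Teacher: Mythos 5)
Your overall architecture --- induction on $k$ with base case Propositions \ref{en00}/\ref{en01}, the multiplier $V=r^{p-2}\partial_r$ applied to $\chi\partial_r^k\phi$ via the commuted equation \eqref{eq:partialkphi}, absorption of the cut-off terms by \eqref{ass:morawetzlocal}, and Cauchy--Schwarz plus Hardy plus the inductive hypothesis for the error terms $J_{m+1}$ --- is the same as the paper's, and your observation that the leading coefficient of $J_0$ equals $p/2$ for every $k$ is exactly the structural point the proof rests on. One caveat in the main range $(2k,4+2k)$: if you take the ``pure Hardy chain'' option that climbs $r^{p-2m-5}(\partial_r^{k-m}\phi)^2$ all the way up to $r^{p-3}(\partial_r^{k+1}\phi)^2$, the intermediate exponents pass through the excluded value $q=-1$ of \eqref{eq:Hardy1} whenever $p$ is an even integer in the range, so for those $p$ you must fall back on your alternative of descending to the lower-level hierarchies, which is what the paper does throughout.

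The genuine gap is in the extended range $p\in[2k+4,5+2k)$. You apply the $u$-weighted Cauchy--Schwarz to $J_1$, but $J_1$ is not the obstruction there: after the ordinary Cauchy--Schwarz it produces $r^{(p-4)-1}\bigl(\partial_r^{(k-1)+1}\phi\bigr)^2$ with $p-4\in[2k,2k+1)\subset\bigl(2(k-1),4+2(k-1)\bigr)$, squarely inside the admissible range of the inductive hypothesis at level $k-1$, so the standard treatment still closes for that term. The term that actually fails is the lowest-order one, $J_{k+1}=O(r^{p-4-k})\chi\phi\,\partial_r(\chi\partial_r^k\phi)$: its Hardy chain lands on the level-zero bulk term at weight $p-2k$, which hits the degenerate endpoint $p'=4$ of \eqref{en01est} (where the constant $C/(4-p')^2$ blows up) precisely as $p$ decreases to $2k+4$. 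The paper therefore applies the $u$-weighted splitting to this $\phi$-term and closes it with the pointwise bound \eqref{dec320} on $\phi$ itself, which is already available from the uncommuted hierarchy. Your substitute --- an ``almost-sharp pointwise bound on $\partial_r^k\phi$'' deduced from \eqref{enr0est} in the subrange $(2k,2k+4)$ ``together with a standard Sobolev-type argument'' --- is not established anywhere at this stage and is not routine: extracting $u$-decay of $\int_{\mathcal{N}_u}r^{p-6}(\partial_r^k\phi)^2\,dr$ at the rate your splitting requires amounts to running the pigeonhole energy-decay argument of Section \ref{sec:energydecay} on the commuted hierarchies, which in the paper's logical ordering comes after, and uses, Proposition \ref{enr0}. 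Redirect the $u$-weighted trick to the $\phi$-term and invoke \eqref{dec320}; the rest of your argument then goes through.
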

\begin{proof}
Let $V=r^{p-2}\partial_r$. We have that
$$ K^V[\chi  \partial_r^k \phi ]=\frac{1}{2}r^{p-3}\left[D(p-4)-D'r\right]\left(\partial_r( \chi  \partial_r^k \phi ) \right)^2+2r^{p-3}\partial_r(\chi  \partial_r^k \phi ) \cdot \partial_u(\chi  \partial_r^k \phi ), $$
and using \eqref{eq:partialkphi}, we moreover have that
\begin{equation*}
 \begin{split}
\mathcal{E}^V[\chi \partial_r^k \phi ]&=r^{p-2}\partial_r(\chi \partial_r^k \phi ) \square_g(\chi \partial_r^k \phi )\\
&=-2r^{p-3}\partial_u(\chi \partial_r^k \phi ) \cdot \partial_r(\chi \partial_r^k \phi ) +r^{p-2}\left(\frac{2}{r} +O(r^{-2} ) \right)\left(\partial_r(\chi \partial_r^k \phi )\right)^2+ \\
&+ \sum_{m=0}^k r^{p-2} O(r^{-m-3} ) (\chi \partial_r^{k-m} \phi ) \cdot \partial_r(\chi  \partial_r^k \phi ) + \sum_{|\alpha_1|\leq 1, |\alpha_2|\leq 1}\mathcal{R}_{\chi}[\partial^{\alpha_1}   \partial_r^k \phi \cdot\partial^{\alpha_2} \partial_r^k \phi ] .
\end{split} 
\end{equation*}
Therefore,
\begin{equation*}
r^2\textnormal{div}J^V[\chi\partial_r^k\phi]=J_0[\chi \partial_r^k\phi]+\sum_{m=1}^{k+1}J_m[\chi \partial_r^k\phi]+ \sum_{|\alpha_1|\leq 1, |\alpha_2|\leq 1}\mathcal{R}_{\chi}[\partial^{\alpha_1}   \partial_r^k \phi \cdot\partial^{\alpha_2} \partial_r^k \phi ] ,
\end{equation*}
where
\begin{align*}
J_0[\chi \partial_r^k\phi]\doteq &\:\frac{1}{2}r^{p-1}\left[p+O(r^{-1})\right]\left(\partial_r( \chi  \partial_r^k \phi ) \right)^2,\\
J_{l+1}[\chi \partial_r^k\phi]\doteq &\:O(r^{p-3-l} ) (\chi \partial_r^{k-l} \phi ) \cdot \partial_r(\chi  \partial_r^k \phi ), \quad \textnormal{for}\: 0\leq l\leq \max\{k-1,0\},\\
J_{k+1}[\chi \partial_r^k\phi]\doteq &\: O(r^{p-4-k})\chi\phi \cdot \partial_r(\chi  \partial_r^k \phi ).
\end{align*}
We will first prove by induction that \eqref{enr0est} holds for all $0\leq k\leq n$. First of all, \eqref{enr0est} holds for $k=0$ by Proposition \ref{en01} . Let us now suppose \eqref{enr0est} holds for all $0\leq k\leq n$. We want to show that then \eqref{enr0est} also holds for $k=n+1$. We therefore fix $k=n+1$.

First of all, we can estimate by \eqref{ass:morawetzlocal}:
\begin{equation*}
\begin{split}
\sum_{|\alpha_1|\leq 1,\,|\alpha_2|\leq 1}\int_{\mathcal{A}^{u_2}_{u_1}}\mathcal{R}_{\chi}[\partial^{\alpha_1}\partial_r^{n+1}{\phi}\cdot \partial^{\alpha_2}\partial_r^{n+1}{\phi}]\, dr du\leq&\: C\sum_{|\alpha|\leq n+2}\int_{\mathcal{A}^{u_2}_{u_1}\cap \{r\leq R+1\}}(\partial^{\alpha}\psi)^2\,d\omega dr du\\
\leq &\: C\sum_{j\leq n+1}\int_{\Sigma_{u_1}}J^T[T^{j}\psi]\cdot n_{\Sigma_{u_1}}\,d\mu_{\Sigma_{u_1}}.
\end{split}
\end{equation*}

The term in $J_0$ is positive for all $p> 0$ if we take $R\geq p^{-1}R_0$, with $R_0=R_0(D,n)>0$ suitably large. We estimate $J_{l+1}
$ with $1\leq l\leq n$ by applying a Cauchy--Schwarz inequality:
\begin{equation*}
|J_{l+1}|\leq \epsilon r^{p-1}p\left(\partial_r( \chi  \partial_r^{n+1} \phi ) \right)^2+C_{\epsilon}p^{-1}r^{p-5-2l} (\chi  \partial_r^{n+1-l} \phi )^2
\end{equation*} 
The first term on the right-hand side can be absorbed into $J_0$ for suitably small $\epsilon$. We estimate the second term as follows:
\begin{equation*}
C_{\epsilon}p^{-1}r^{p-5-2l} (\chi  \partial_r^{n+1-m} \phi )^2\leq C_{\epsilon} p r^{p-3-2l}(\chi \partial_r^{n+1-m}\phi)^2,
\end{equation*}
if we take $R\geq p^{-1}R_0$. We can estimate the right-hand side by applying \eqref{enr0est} with $k=n+1-l$ if $2k<p-2-2m< 4+2k$, or equivalently, $2(n+1)<p<4+2(n+1)$, provided we take $R\geq p^{-1}(p-4-2(n+1))^{-1}R_0$.

We are left with estimating $J_{n+2}$. We apply once again Cauchy--Schwarz to estimate
\begin{equation*}
|J_{n+2}|\leq \epsilon r^{p-1}p\left(\partial_r( \chi  \partial_r^{n+1} \phi ) \right)^2+C_{\epsilon}p^{-1}r^{p-7-2(n+1)} (\chi \phi )^2,
\end{equation*}
and we estimate further
\begin{equation*}
C_{\epsilon}p^{-1}r^{p-7-2(n+1)} (\chi  \phi )^2\leq C_{\epsilon} p r^{p-5-2(n+1)}(\chi \phi)^2,
\end{equation*}
by taking $R\geq p^{-1}R_0$. Now, we apply \eqref{eq:Hardy1} to estimate the spacetime integral of the right-hand side as follows:
\begin{equation*}
\int_{\mathcal{A}^{u_2}_{u_1}} pr^{p-5-2(n+1)}(\chi \phi)^2\, dr du\leq C(p-2-2m)^{-1}\int_{\mathcal{A}^{u_2}_{u_1}} pr^{p-3-2(n+1)}(\partial_r(\chi\phi))^2\, dr du
\end{equation*}
where we use that $p<4+2(n+1)$. The term on the right-hand side can be estimated by applying Proposition \ref{en01}.

Finally, we put the above estimates together and apply the divergence theorem on $\textnormal{div}J^V[\chi \partial_r^{n+1}\phi]$ to conclude that \eqref{enr0est} also holds for $k=n+1$. We easily obtain part (ii) of the proposition by restricting the range of $p$ to $p<3+2k$.

In order to prove \eqref{enr0estp5} we repeat the arguments above, with the only difference being the estimate of $J_{n+2}$. Here, we apply a $u$-weighted Cauchy--Schwarz inquality
\begin{equation*}
|J_{n+2}|\leq \epsilon r^{p}u^{-1-\eta}\left(\partial_r( \chi  \partial_r^{n+1} \phi ) \right)^2+C_{\epsilon}u^{1+\eta}r^{p-9-2(n+1)} (\chi \phi )^2
\end{equation*}
and use the pointwise bound \eqref{dec320}, as in the proof of Proposition \ref{en01}, to extend the range of $p$ to $2(n+1)<p<5+2(n+1)$ for $k=n+1$.
\end{proof}

We will use Proposition \ref{enr0} in order to present an augmented hierarchy of $r^p$--weighted estimates for $T^k \psi$ with $k \geq 1$, where $\psi$ is spherically symmetric.
\begin{proposition}\label{ent0}
Let $\psi$ be a solution to \eqref{waveequation} emanating from initial data given as in Theorem \ref{thm:extuniq} on $(\mathcal{R}, g)$.

Let $n\in \N_0$ and assume that $D=1-\frac{2M}{r}+O_{n+2}(r^{-1-\beta})$. 
\begin{itemize}
\item[\emph{(i)}] Assume that $\boldsymbol{I_0 [\psi ] = 0}$. 
Let $1\leq k \leq n$ and take $p\in(2k,4+2k)$. Then there exists an $R>0$ such that for any $0\leq u_1<u_2$
\begin{equation}
\label{ent0est}
\begin{split}
\int_{\mathcal{A}_{u_1}^{u_2}}pr^{p-1}(\partial_r^{k}T\phi)^2\, drdu \leq&\: C(p-4-2k)^{-1}\sum_{j=0}^k\int_{\mathcal{N}_{u_1}} r^{p-2j}(\partial_r^{k-j+1}\phi)^2\, dr\\
&+C(p-4-2k)^{-1}\sum_{j\leq k}\int_{\Sigma_{u_1}}J^T[T^{j}\psi]\cdot n_{\Sigma_{u_1}}\,d\mu_{\Sigma_{u_1}},
\end{split}
\end{equation}
where $C\doteq C(k,D,R)>0$ is a constant and we can take $R=p^{-1}(p-4-2k)^{-1}R_0(D)>0$, with $R_0(k,D)>0$ a constant.
\item[\emph{(ii)}]
Assume that $\boldsymbol{I_0 [\psi ]\neq  0}$. Then \eqref{ent0est} holds with the restricted range $p\in (2k,2k+3)$.
\end{itemize}
\end{proposition}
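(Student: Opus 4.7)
The plan is to derive the estimate for $\partial_r^k T\phi$ directly from Proposition \ref{enr0} by means of a pointwise commutator identity combined with Hardy inequalities, in close analogy with the proof of Proposition \ref{eq:hierTkpsi1}. Since $\psi$ is assumed spherically symmetric, the angular Laplacian $\slashed{\Delta}\phi$ vanishes in \eqref{eq:sDeltapsiinfty}, so that
\begin{equation*}
2T\partial_r\phi = D\partial_r^2\phi + D'\partial_r\phi - D'r^{-1}\phi.
\end{equation*}
Commuting with $\partial_r^{k-1}$ and using $D=1-2M/r+O_{n+2}(r^{-1-\beta})$, together with the Leibniz rule, I obtain schematically the identity
\begin{equation*}
\partial_r^k T\phi = [1+O(r^{-1})]\partial_r^{k+1}\phi + \sum_{m=0}^{k}O(r^{-2-m})\partial_r^{k-m}\phi.
\end{equation*}

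Next, I square both sides, multiply by $p r^{p-1}$, and integrate over $\mathcal{A}_{u_1}^{u_2}$. The contribution from the leading term is bounded directly by
\begin{equation*}
C\int_{\mathcal{A}_{u_1}^{u_2}}pr^{p-1}(\partial_r^{k+1}\phi)^2\,drdu,
\end{equation*}
which is controlled, for the range $p\in(2k,4+2k)$ (resp.\ $p\in(2k,2k+3)$ when $I_0[\psi]\neq0$), by the right-hand side of \eqref{enr0est} via Proposition \ref{enr0}. For each of the lower-order terms, after a Cauchy--Schwarz inequality I need to bound
\begin{equation*}
\int_{\mathcal{A}_{u_1}^{u_2}}r^{p-5-2m}(\partial_r^{k-m}\phi)^2\,drdu,
\end{equation*}
for $0\leq m\leq k$. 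For $m\geq 1$ I apply the Hardy inequality \eqref{eq:Hardy1} once to trade two factors of $r$ for an additional $\partial_r$, reducing this to an integral of the form $\int r^{p-3-2m}(\partial_r^{k+1-m}\phi)^2$ to which I can apply Proposition \ref{enr0} with derivative order $k-m$ and weight $p-2-2m\in(2(k-m),4+2(k-m))$, using that the finiteness of the boundary terms at $\mathcal{I}^+$ needed to justify Hardy follows from Proposition \ref{prop:step0radfields} and, when $I_0[\psi]=0$, from Proposition \ref{consNP0}. For the $m=0$ term and the $\phi^2$-term ($m=k$) I iterate Hardy until I reach a weight where Proposition \ref{enr0} or Proposition \ref{en00} is applicable at derivative order zero.

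The main technical obstacle is tracking the allowed range of $p$ at each step of the Hardy iteration, ensuring that the boundary terms at infinity vanish (this is where the decay assumption $I_0[\psi]=0$ becomes essential in case (i) to push $p$ above $2k+3$) and that the constants $(p-4-2k)^{-1}$ arising from Proposition \ref{enr0} at the highest derivative level are the only degenerations. The restriction to $p<2k+3$ in case (ii) is dictated precisely by the single application of Proposition \ref{enr0} at derivative order $k$, whose range is truncated when $I_0[\psi]\neq0$; all the Hardy-reduced lower-order terms remain in the admissible range without further loss. Once all terms are bounded, collecting the $\mathcal{N}_{u_1}$-fluxes and the $T$-energies yields exactly \eqref{ent0est}, with the constant $R=p^{-1}(p-4-2k)^{-1}R_0(D)$ chosen large enough so that Proposition \ref{enr0} is applicable at every derivative order $0\leq j\leq k$.
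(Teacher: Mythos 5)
Your proposal follows essentially the same route as the paper: commute \eqref{eq:sDeltapsiinftyv0} with $\partial_r^{k}$ to write $\partial_r^kT\phi$ in terms of $\partial_r^{j}\phi$, square against the weight $pr^{p-1}$, and close using Proposition \ref{enr0} at lower derivative orders together with Hardy inequalities (and Proposition \ref{en01} for the $\phi^2$ term). Two small bookkeeping points: your claimed admissible range ``$p-2-2m\in(2(k-m),4+2(k-m))$'' translates to $p\in(2k+2,2k+6)$, not the full $(2k,4+2k)$; the fix is simply to absorb an extra factor $r^{-2}\leq R^{-2}$ and apply Proposition \ref{enr0} at order $k-m$ with weight $p-2m$, which is exactly what the paper does. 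Also, the Hardy inequality \eqref{eq:Hardy1} needs a vanishing boundary term at $r=R$, so one should insert the usual cut-off $\chi$ and control the resulting $\chi'$-terms via the local Morawetz estimate \eqref{ass:morawetzlocal}, as in the earlier propositions.
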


\begin{proof}
Let $D=1-\frac{2M}{r}+O_{k+2}(r^{-1-\beta})$, with $k\in \N$. We can commute \eqref{eq:sDeltapsiinftyv0} with $\partial_r^k$, with $k\in \N$, $k\geq1$, to arrive at
\begin{equation*}
\partial_r^k T \phi = [1+O(r^{-1})] \partial_r^{k+1}  \phi + \sum_{m=0}^k O (r^{-m-2} ) \partial_r^{k-m} \phi.
\end{equation*}

We therefore obtain
\begin{equation*}
\begin{split}
\int_{\mathcal{A}_{u_1}^{u_2}} pr^{p-1}(\chi \partial_r^k T \phi )^2\, drdu\leq&\: C\int_{\mathcal{A}_{u_1}^{u_2}}p r^{p-1}(\partial_r(\chi \partial_k \phi )^2\, drdu\\
&+C\sum_{m=0}^{k-1}\int_{\mathcal{A}_{u_1}^{u_2}} pr^{p-1-2m-4}\chi^2(\partial_r^{k-m} \phi )^2\, drdu\\
&+C\int_{\mathcal{A}_{u_1}^{u_2}} pr^{p-1-2k-4}\chi^2\phi^2\, drdu\\
&+\int_{\mathcal{A}_{u_1}^{u_2}} \chi'^2(\partial_r^k\phi)^2\,drdu.
\end{split}
\end{equation*}
We apply Proposition \ref{enr0} with $k=n$ to estimate the first term on the right-hand side and Proposition \ref{enr0} with $k$ replaced by $k-m$ and $2(k-m)< p-2m<4+2(k-m)$ to estimate the next $k$ terms. In order to estimate the term with a factor $\phi^2$ we apply \eqref{eq:Hardy1} (using that $p<4+2k$), followed by Proposition \ref{en01} with $p$ replaced by $p-2k$. The final term with a factor $\chi'^2$ can be estimated by applying \eqref{ass:morawetzlocal} as usual.
\end{proof}

\section{Sharpness of the hierarchy for $\psi_0$}
\label{sharpnessofhierarchy}
In this section we will show that the range of $p$ is sharp in Proposition \ref{en00} and \ref{en01}. 

\begin{proposition}\label{prop:sharpn0}
Let $\psi$ be a solution to \eqref{waveequation} emanating from initial data given as in Theorem \ref{thm:extuniq} on $(\mathcal{R}, g)$.

Assume that $\boldsymbol{I_0 [\psi ] \neq 0}$. Then the range of $p$ in estimate \eqref{en00est} is sharp, i.e.\ for any fixed $0\leq u_0 < \infty$ it holds that
\begin{equation}\label{est:sharpn0}
 \int_{\mathcal{A}_{u_0}^{\infty}} r^{2} (\partial_r \phi )^2 \, drdu = \infty .
\end{equation}

\end{proposition}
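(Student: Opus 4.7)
The central observation is that by Proposition \ref{consNP}, the Newman--Penrose quantity $\Phi_0(u,r)\doteq r^2\partial_r(r\psi)$ satisfies $\lim_{r\to\infty}\Phi_0(u,r)=I_0[\psi]\neq 0$ for every $u\geq 0$. Since $r^2(\partial_r\phi)^2=\Phi_0^2/r^2$, the proof reduces to exhibiting a spacetime region $\Omega\subset \mathcal{A}_{u_0}^\infty$ on which $|\Phi_0|\geq |I_0|/2$ and such that $\int_\Omega r^{-2}\,drdu=\infty$. The natural candidate is the wedge
\[
\Omega = \{(u,r):u\geq u_0,\ r\geq C(1+u)\}
\]
for a suitably large constant $C>0$; indeed, $\int_\Omega r^{-2}\,drdu=C^{-1}\int_{u_0}^{\infty}(1+u)^{-1}\,du=\infty$, so all of the work is in the pointwise lower bound on $|\Phi_0|$ inside $\Omega$.

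To establish that lower bound, the plan is to propagate the limiting value $I_0$ inward from $\mathcal{N}_0$ along ingoing null segments. Specializing \eqref{eq:Lbarrndrphi} to $n=2$ and to spherically symmetric $\psi$ (so that $r^{-2}\slashed{\Delta}\phi=0$) yields
\[
\underline{L}\Phi_0 = \bigl(-r^{-1}D+\tfrac{1}{2}D'\bigr)\Phi_0-\tfrac{1}{2}D' r\phi,
\]
whose right-hand side is $O(r^{-1})\cdot(|\Phi_0|+|\phi|)$. For a point $(U,r_*)\in \Omega$, I would integrate this ODE along the ingoing null segment $\mathcal{I}_v$ with $v=U+2r_*(r_*)$, noting that on this segment $r(u)\geq C(1+U)/2$ for all $u\in [0,U]$. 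Combined with the uniform-in-$u$ pointwise bound $|\phi|\leq C(1+u)^{-1/2}$ from Proposition \ref{DafRoddecay}, together with an $L^\infty$ bound on $\Phi_0$ along the segment (obtained via Proposition \ref{prop:step0radfields} applied to $\Omega^k\psi$ with $|k|\leq 2$ and spherical Sobolev), a direct Gr\"onwall estimate yields
\[
|\Phi_0(U,r_*)-\Phi_0(0,r_0)|\leq \frac{C'}{C},
\]
where $r_0\geq C(1+U)$ denotes the $r$-coordinate of the starting point on $\mathcal{N}_0$. Since $r_0\to\infty$ as either $C$ or $U$ grows, Proposition \ref{consNP} ensures $|\Phi_0(0,r_0)-I_0|$ can also be made arbitrarily small. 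Taking $C$ sufficiently large thus gives $|\Phi_0|\geq |I_0|/2$ throughout $\Omega$, and the conclusion follows from
\[
\int_{\mathcal{A}_{u_0}^\infty}r^2(\partial_r\phi)^2\,drdu\geq \int_\Omega \frac{\Phi_0^2}{r^2}\,drdu\geq \frac{|I_0|^2}{4}\int_\Omega \frac{drdu}{r^2}=\infty.
\]

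The main obstacle is the Gr\"onwall step: one must verify that the cumulative error $\int_0^U r(u)^{-1}(|\Phi_0|+|\phi|)(u,r(u))\,du$ stays uniformly small in $U$ along every ingoing null segment entering $\Omega$. Proposition \ref{prop:step0radfields} in isolation yields only $u$-dependent pointwise bounds on $\Phi_0$, but the wedge constraint $r(u)\gtrsim C(1+u)$ coupled with the uniform boundedness and decay of $|\phi|$ from Proposition \ref{DafRoddecay} effectively converts these bounds into an $O(C^{-1})$ error. The contribution involving $|\Phi_0|$ itself will most likely be closed by a preliminary bootstrap argument giving a uniform $L^\infty$ bound on $\Phi_0$ on the relevant ingoing segments; this is essentially the same structure as the transport estimates underlying the proof of Proposition \ref{consNP} itself.
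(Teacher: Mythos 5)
Your argument is correct, but it is a genuinely different route from the one taken in the paper. The paper's proof is an energy-method argument: it first notes that $I_0[\psi]\neq 0$ forces $\int_{\mathcal{N}_{u_0}}r^3(\partial_r\phi)^2\,dr=\infty$, then applies the $p=3$ divergence identity on the truncated region $\mathcal{A}_{u_0}^{\infty}\cap\{v\leq V\}$ and shows that every term in that identity \emph{except} the $r^2(\partial_r\phi)^2$ bulk term remains bounded as $V\to\infty$ (using the already-established hierarchy of Proposition \ref{en00} at $p=2$ and $p=3-\eta$ together with a Hardy inequality), so that bulk term must blow up. You instead prove a \emph{pointwise} lower bound $|r^2\partial_r\phi|\geq|I_0|/2$ on a forward wedge $\{r\geq C(1+u)\}$ by transporting the Newman--Penrose value inward along ingoing null cones, and then integrate directly. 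Your route is more elementary and yields strictly more information (a quantitative lower bound near $\mathcal{I}^+$ of the type exploited in the companion paper for late-time asymptotics), at the cost of requiring pointwise control: you correctly identify that the only nontrivial step is a uniform-in-$u$ $L^\infty$ bound on $\Phi_0$ along the relevant ingoing segments, and this does close by the Gr\"onwall argument you indicate, since the integrated coefficient $\int_0^U r(u)^{-1}\,du\lesssim \log(1+U/(2r_*))\lesssim C^{-1}$ on the wedge and $|\phi|$ is uniformly bounded there by Proposition \ref{DafRoddecay} (for spherically symmetric $\psi$ this needs only $E_{\rm dr}[\psi]<\infty$, an implicit finiteness assumption comparable to the ones the paper's own proof makes when invoking Proposition \ref{en00} and the Hardy/Morawetz estimates). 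The paper's approach, by contrast, is softer and recycles machinery already proved, avoiding any pointwise bootstrap.
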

\begin{proof}
From the estimates in Section \ref{sec:limitsnullinfty} and the assumption that $\boldsymbol{I_0 [\psi ] \neq 0}$, we can estimate
\begin{equation*}
r^4(\partial_r\phi)^2(u_0,r)\geq \frac{1}{2} I_0^2[\psi],
\end{equation*}
for $r\geq \widetilde{R}(u_0)$, with $\widetilde{R}(u_0)>0$ suitably large. 

From the above, it follows that 
\begin{equation}\label{est:auxsharp1}
\int_{\mathcal{N}_{u_0}} r^3 (\partial_r \phi )^2 \, dr = \infty .
\end{equation}
By applying the divergence theorem as in Proposition \ref{en00} with $p=3$ but instead in the region $\mathcal{\bar{A}}_{u_0}^{V} \doteq \mathcal{A}_{u_0}^{\infty} \cap \{ v \leq V \}$ for any fixed $u_0 \geq 0$ and for any $V \geq V_0 (u_0 , R)$ (with $\chi$ a cut-off as in the previous propositions), we obtain:
$$ \int_{\mathcal{\bar{A}}_{u_0}^{V}} r^2 (3D - D' r)  (\partial_r (\chi \phi ) )^2 \, drdu + \int_{\mathcal{\bar{A}}_{u_0}^{V}} r^2 D' (\chi \phi) \cdot (\partial_r (\chi \phi ) ) \, drdu = \int_{\mathcal{N}_{u_0} \cap \{ v \leq V \}} r^3 (\partial_r \phi )^2 \, dr \Rightarrow $$ 
\begin{equation}\label{est:auxsharp2}
I + II + III = \int_{\mathcal{N}_{u_0} \cap \{ r \leq V \}} r^3 (\partial_r \phi )^2 \, dr,
\end{equation}
where

\begin{equation*}
I= \int_{\mathcal{\bar{A}}_{u_0}^{V}} r^2 3D  (\partial_r (\chi \phi ) )^2 \, drdu \geq c(u_0)  \int_{\mathcal{\bar{A}}_{u_0}^{V}} r^2 (\partial_r (\chi \phi ) )^2,
\end{equation*}
for some constant $c(u_0)>0$, and for all $V>\infty$
$$ II = -  \int_{\mathcal{\bar{A}}_{u_0}^{V}} D' r^3  (\partial_r (\chi \phi ) )^2 \, drdu = -  \int_{\mathcal{\bar{A}}_{u_0}^{V}} \left( 2Mr + O (r^{1-\beta} ) \right)  (\partial_r (\chi \phi ) )^2 \, drdu \leq C(u_0), $$
for some constant $C(u_0)>0$, by Proposition \ref{en00} with $p=2$.

Furthermore, we have that for $0<\eta <1$ and for all $V\infty$
\begin{align*}
III = \int_{\mathcal{\bar{A}}_{u_0}^{V}} r^2 D' (\chi \phi) \cdot (\partial_r (\chi \phi ) ) \, drdu \leq&\: C \int_{\mathcal{\bar{A}}_{u_0}^{V}} \frac{1}{r^{2-\eta}}(\chi \phi)^2 \, drdu +  C \int_{\mathcal{\bar{A}}_{u_0}^{V}} r^{2-\eta}  (\partial_r (\chi \phi ) )^2 \, drdu\\
 \leq&\: C \int_{\mathcal{\bar{A}}_{u_0}^{V}} r^{\eta} (\partial_r (\chi \phi ) )^2 \, drdu+C\int_{\mathcal{\bar{A}}_{u_0}^{V}} r^{2-\eta} (\partial_r (\chi \phi ) )^2 \, drdu\\
 \leq&\: C(u_0),
\end{align*}
for some constant $C(u_0)>0$, , by Proposition \ref{en00} with $p=3-\eta$ and an application of \eqref{eq:Hardy1}.

We note that by taking $V \rightarrow \infty$ the right-hand side of \eqref{est:auxsharp2} becomes infinite due to \eqref{est:auxsharp1}. Since $II+III\to \infty$, the term $I$ of the left-hand side of  of \eqref{est:auxsharp2} therefore goes to infinity as $V\to \infty$.

The statement of the proposition then follows after removing the cut-off $\chi$,  by estimating the terms involving $\chi'$ with Cauchy--Schwarz and the Morawetz estimate \eqref{ass:morawetzlocal}.  
\end{proof}

We consider now the case of vanishing first Newman--Penrose constant. For this we will need the following auxiliary lemma.
\begin{lemma}\label{dataT}
If $M\neq 0$, there exists spherically symmetric initial data for a solution $\psi$ to \eqref{waveequation} such that
\begin{equation}
\label{ass:cptsupp}
\psi|_{\mathcal{N}_0}=0,\\
\end{equation}
and moreover, $\psi=T\widetilde{\psi}$, where
\begin{equation*}
r^2\partial_r(r\widetilde{\psi} )(0,r)=I_0 [\widetilde{\psi} ]+O_3(r^{-\alpha})
\end{equation*}
and
\begin{equation*}
\partial_v(r\widetilde{\psi} )|_{\mathcal{N}_0}(v)=2I_0 [\widetilde{\psi} ] v^{-2}+O_3(v^{-2-\alpha}).
\end{equation*}
with $I_0 [\widetilde{\psi} ] \neq 0$.
\end{lemma}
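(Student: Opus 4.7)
The plan is to construct $\widetilde{\psi}$ from $\psi$ by an antiderivative in the direction of the Killing field $T$, freely choosing the constant of integration so that $I_0[\widetilde\psi]\neq 0$. I begin by prescribing smooth spherically symmetric initial data $(\Psi,\Psi')$ for $\psi$ on $\Sigma$ with $\Psi|_{\mathcal{N}_0}\equiv 0$ (compatibly matched at the corner $\{r=R\}$), so that the corresponding solution $\psi$ satisfies \eqref{ass:cptsupp}. I then set
\[
\widetilde{\psi}(u,r)\doteq \int_0^u\psi(u',r)\,du'+f(r),
\]
for a smooth spherically symmetric profile $f=\widetilde{\psi}(0,\cdot)$ to be chosen. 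By construction $T\widetilde{\psi}=\psi$. Since $[T,\square_g]=0$, the quantity $\square_g\widetilde{\psi}$ is $u$-independent, and hence vanishes throughout $\mathcal{R}$ as soon as it vanishes on the slice $\{u=0\}$.

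The condition $\square_g\widetilde\psi|_{u=0}=0$ reduces, using the spherically symmetric wave equation in Bondi coordinates, to the ODE
\[
(r^2 D f')'=2r\,\partial_r\!\big(r\psi|_{u=0}(r)\big).
\]
For $r\geq R$ the right-hand side vanishes and the equation collapses to the static equation $r^2 Df'\equiv C$; picking $C\neq 0$ and the normalization $f(\infty)=0$ yields $f(r)=-C\int_r^\infty dr'/(r'^2 D(r'))$ on $[R,\infty)$. On $r<R$ the inhomogeneous ODE is integrated with smooth matching at $r=R$. Using $D=1-2M/r+O_3(r^{-1-\beta})$, direct expansion gives $\int_r^\infty dr'/(r'^2 D)=r^{-1}+Mr^{-2}+O_3(r^{-2-\beta})$, and the identity $r^2\partial_r(rf)=r^2 f+r^3 f'$ together with $r^3 f'=rC/D$ then yields
\[
r^2\partial_r\!\big(r\widetilde{\psi}(0,r)\big)=CM+O_3(r^{-\alpha}),\qquad \alpha=\min(1,\beta),
\]
so that $I_0[\widetilde{\psi}]=CM\neq 0$ because both $C\neq 0$ and (crucially) $M\neq 0$. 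The $v$-asymptotic for $\partial_v(r\widetilde\psi)|_{\mathcal{N}_0}$ is immediate from $\partial_v=\tfrac{D}{2}\partial_r$ on $\mathcal{N}_0$ together with $v=2r+4M\log r+O(1)$ as $r\to\infty$.

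The key technical point, and the source of the hypothesis $M\neq 0$, is the cancellation in $r^2\partial_r(rf)=r^2f+r^3f'$: the two summands each grow like $\pm Cr$ with opposite signs and cancel exactly, and only the subleading $2M/r$ term in the expansion of $1/D$ survives to leave the finite nonzero limit $CM$. If $M=0$ the same construction still produces a solution, but with $I_0=0$, which fails the conclusion. The remaining step of solving the interior ODE smoothly across $r=R$ is routine since the ODE is nondegenerate there, and the integral $\int_0^u\psi\,du'$ defining $\widetilde{\psi}$ is well-defined on all of $\mathcal{R}$ since the $u$-interval $[\min\{0,u\},\max\{0,u\}]$ lies inside $\mathcal{R}$ at each fixed $r\geq r_{\rm min}$.
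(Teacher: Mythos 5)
Your proposal is correct and follows essentially the same route as the paper: both reduce the condition $T\widetilde{\psi}|_{\mathcal{N}_0}=0$ to the static ODE $\partial_r\big(Dr^2\partial_r\widetilde{\psi}\big)=0$ on $\mathcal{N}$, integrate to get $\widetilde{\psi}(0,r)=-C\int_r^{\infty}D^{-1}(r')r'^{-2}\,dr'$, and read off $I_0[\widetilde{\psi}]=CM$ from the expansion $D^{-1}=1+2Mr^{-1}+O_3(r^{-1-\beta})$, with the same cancellation of the $O(r)$ terms explaining the role of $M\neq 0$. Your extra care with the global definition of $\widetilde{\psi}$ (the $T$-antiderivative plus profile $f$ and the interior matching across $r=R$) goes slightly beyond what the paper records, but the essential content is identical.
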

\begin{proof}
We want to construct a spherically symmetric function $\widetilde{\psi}_0$ on $\mathcal{N}$ such that $T\widetilde{\psi}=0$. Since $\widetilde{\psi}$ is a solution to \eqref{waveequation}, condition (\ref{ass:cptsupp}) implies that
\begin{equation*}
D\partial_r^2\widetilde{\psi}+(D'+2r^{-1}D)\partial_r\widetilde{\psi} =0
\end{equation*}
on $\mathcal{N}$; see \eqref{eq:derivwaveeq}.

We multiply both sides by $r^2$ and rearrange terms to obtain
\begin{equation*}
\partial_r(Dr^2\partial_r\widetilde{\psi} )(u,r)=0.
\end{equation*}
for $r\geq R$. In particular, we require
\begin{equation*}
Dr^2\partial_r\widetilde{\psi} (0,r)=C_0
\end{equation*}
for $r\geq R$ and $C_0\in \R$. Let $\lim_{r\to \infty} \widetilde{\psi} (0,r)=0$, then we can integrate to obtain
\begin{equation*}
\widetilde{\psi} (0,r)=-C_0\int_{r}^{\infty} D^{-1}(r')r'^{-2}\,dr'.
\end{equation*}
By our assumptions on $D$ we have that for any $\beta > 0$
\begin{equation*}
D^{-1}=1+\frac{2M}{r}+O_3(r^{-1-\beta}),
\end{equation*}
and hence
\begin{equation*}
\widetilde{\psi} (0,r)=-C_0r^{-1}-C_0Mr^{-2}+O_3(r^{-2-\beta}).
\end{equation*}
In particular,
\begin{align*}
r\widetilde{\psi} (0,r)=&-C_0-C_0Mr^{-1}+O_3(r^{-1-\beta}),\\
r^2\partial_r(r\widetilde{\psi} )(0,r)=&\:C_0M+O_3(r^{-\beta}).
\end{align*}
We have that $I_0 [\widetilde{\psi} ] =\lim_{r\to \infty}r^2\partial_r(r\widetilde{\psi} )(0,r)=C_0M$, so we can take $I_0 [\psi ] \neq 0$ if and only if $M\neq 0$. By the assumptions on the metric $g$ from Section \ref{sec:geomassm}, we have that $\partial_vr=\frac{1}{2}D$ and $v|_{\mathcal{N}}=2r_*(r)$, where 
\begin{equation*}
r_*(r)=r-2M\log \left(\frac{r}{R}\right)+O_3(r^{-\beta})+O_3(R^{-\beta}).
\end{equation*}
So we can estimate
\begin{equation*}
v^2\partial_v(r\widetilde{\psi} )|_{\mathcal{N}_0}(v)=2I_0 [\widetilde{\psi} ]+O_3(v^{-\beta}). \qedhere
\end{equation*}
\end{proof}

\begin{remark}
In our companion paper \cite{paper2}, the construction of $\widetilde{\psi}_0$ such that $T\widetilde{\psi}_0=\psi_0$ in Lemma \ref{dataT} is carried out for more general initial data for $\psi$ and the constant $C_0\in \R$ is expressed \emph{explicitly} in terms of this initial data.
\end{remark}

\begin{proposition}\label{prop:sharp0}
Let $\psi$ be a solution to \eqref{waveequation} emanating from initial data given as in Theorem \ref{thm:extuniq} on $(\mathcal{R}, g)$.

Assume that $\boldsymbol{I_0 [\psi ] = 0}$. Then the range of $p$ in estimate \eqref{en00est} is \textbf{generically} sharp, i.e. for any fixed $0\leq u_0 < \infty$ it holds that
\begin{equation}\label{est:sharpn02}
 \int_{\mathcal{A}_{u_0}^{\infty}} r^{4} (\partial_r \phi )^2 \, drdu = \infty
 \end{equation}
 for $\psi$ arising from generic, compactly supported initial data on $\Sigma$.
 \end{proposition}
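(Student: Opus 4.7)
The strategy mirrors the proof of Proposition \ref{prop:sharpn0}, but works at the $p=5$ level by exhibiting a \emph{second} Newman--Penrose-type limit whose non-vanishing drives the divergence. The starting point is a transport equation for the quantity $r^3\partial_r\phi$ along $\mathcal{I}^+$. Using \eqref{eq:sDeltapsiinftyv0} I will compute
\begin{equation*}
\underline{L}(r^3 \partial_r \phi ) = \tfrac{1}{2}r^2\left[(rD'-3D)\partial_r \phi - D'\phi \right].
\end{equation*}
Expanding $D=1-2Mr^{-1}+O(r^{-1-\beta})$ and $D'=2Mr^{-2}+O(r^{-2-\beta})$, the hypothesis $I_0[\psi]=0$ forces $r^2\partial_r\phi\to 0$, so the terms involving $\partial_r\phi$ vanish in the limit, while $\tfrac12 r^2 D'\phi\to M \phi|_{\mathcal{I}^+}$. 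Combined with the bounds of Proposition \ref{consNP0} (applicable with $n=3$ since the data is compactly supported), this shows that $f(u)\doteq \lim_{r\to\infty} r^3 \partial_r \phi (u,r)$ exists and satisfies
\begin{equation*}
f'(u) = -M \, \phi|_{\mathcal{I}^+}(u).
\end{equation*}

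The second step is to integrate this transport law. Since the initial data is compactly supported in $r$ on $\mathcal{N}_0$, we have $f(0)=0$. The pointwise estimate of Lemma \ref{eq:pointrtkpsi0v0} combined with Proposition \ref{en01} for $p$ close to $5$ shows $|\phi|_{\mathcal{I}^+}(u)|\lesssim (1+u)^{-3/2+\delta}$, so $\phi|_{\mathcal{I}^+}$ is integrable on $[0,\infty)$ and
\begin{equation*}
f_\infty\doteq \lim_{u\to\infty} f(u) = -M\int_0^\infty \phi|_{\mathcal{I}^+}(u')\,du'.
\end{equation*}
The assumption $M\neq 0$ is guaranteed by the construction in Lemma \ref{dataT} being nontrivial, and the linear functional sending the initial data to $\int_0^\infty \phi|_{\mathcal{I}^+}(u')\,du'$ is not identically zero on the space of smooth compactly supported data; hence $f_\infty\neq 0$ for generic data, which is the genericity condition.

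In the third step I convert the non-vanishing of $f_\infty$ into the divergence \eqref{est:sharpn02}. A quantitative version of the convergence $r^3\partial_r\phi(u,r)\to f(u)$, obtained by integrating $\underline{L}(r^3\partial_r\phi)$ along incoming null rays and using the Hardy-type estimates already established in Proposition \ref{en01}, produces constants $u_1\geq u_0$ and $R_1\geq R$ such that $|r^3\partial_r\phi(u,r)|\geq \tfrac12|f_\infty|$ for all $u\geq u_1$ and $r\geq R_1$. Then
\begin{equation*}
\int_{\mathcal{A}_{u_0}^\infty}r^4(\partial_r \phi )^2\,dr du \geq \int_{u_1}^\infty \int_{R_1}^\infty \frac{f_\infty^2}{4r^2}\,dr\,du = \frac{f_\infty^2}{4R_1}\int_{u_1}^\infty du = \infty.
\end{equation*}

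The main obstacle will be the last step: extracting a quantitative and, crucially, \emph{uniform in $u$}, rate of convergence $r^3\partial_r\phi(u,r)\to f(u)$ as $r\to\infty$. The transport derivation alone only gives pointwise convergence; to rule out that the approach radius $R_1(u)$ grows too quickly in $u$, I will apply the $r^p$-weighted hierarchy of Proposition \ref{en01} near the endpoint $p=5-\epsilon$ to the ``renormalized'' quantity $r^3\partial_r\phi-f(u)$, which by construction has zero NP limit, followed by a one-dimensional Sobolev embedding on $\mathcal{N}_u$.
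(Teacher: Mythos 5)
Your route is genuinely different from the paper's, and the transport computation in your first step is correct: with $I_0[\psi]=0$, equation \eqref{eq:Lbarrndrphi} with $n=3$ indeed gives $\underline{L}(r^3\partial_r\phi)\to -M\,\phi|_{\mathcal{I}^+}$, so $f(u)=\lim_{r\to\infty}r^3\partial_r\phi(u,r)$ exists and satisfies $f'=-M\,\phi|_{\mathcal{I}^+}$. For comparison, the paper avoids this quantity entirely: it takes $\psi=T\widetilde{\psi}$ with $\widetilde{\psi}$ as in Lemma \ref{dataT} (so $I_0[\widetilde{\psi}]\neq 0$ while $T\widetilde{\psi}$ has compactly supported data), rewrites $\partial_r(T\widetilde{\phi})$ via the wave equation as $\frac{1}{2}\partial_r(D\partial_r\widetilde{\phi})$ plus controllable terms, and bounds $\int r^4D^2(\partial_r^2\widetilde{\phi})^2$ from below by $\int r^2(\partial_r\widetilde{\phi})^2=\infty$ using the Hardy inequality and Proposition \ref{prop:sharpn0}; genericity then follows by perturbing arbitrary data by $\epsilon T\widetilde{\psi}$.

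The fatal problem is your third step, and the repair you sketch cannot close it, because the statement you are trying to prove is false. If $f_\infty\neq 0$ (the only case in which your argument produces anything), a bound $|r^3\partial_r\phi(u,r)|\geq\frac{1}{2}|f_\infty|$ for all $u\geq u_1$ and all $r\geq R_1$ with a \emph{fixed} $R_1$ would give $(\partial_r\phi)^2\geq\frac{1}{4}f_\infty^2r^{-6}$ on that region, hence
\begin{equation*}
\int_{\mathcal{A}_{u_1}^{u_2}}r^{p-1}(\partial_r\phi)^2\,dr\,du\;\geq\;\frac{f_\infty^2}{4}\,\frac{R_1^{p-6}}{6-p}\,(u_2-u_1)\;\longrightarrow\;\infty
\end{equation*}
as $u_2\to\infty$ for \emph{every} $p<6$, contradicting the uniform-in-$u_2$ boundedness of this spacetime integral for $p\in(0,5)$ guaranteed by Proposition \ref{en01} for compactly supported data with $I_0=0$. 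The resolution is that $r^3\partial_r\phi\approx f(u)$ only for $r\gtrsim u$: integrating your transport equation along an ingoing ray, the homogeneous coefficient $-\tfrac{3}{2}Dr^{-1}$ contributes a factor of order $(1-u/v)^{3}$, which is close to $1$ only when $v\gg u$. Thus the best admissible choice is $R_1(u)\sim u$, and your final integral becomes the borderline $\int_{u_1}^{\infty}f_\infty^2\,u^{-1}\,du$. This still diverges, but proving the lower bound down to $r\sim u$ requires quantitative leading-order asymptotics of $\partial_r\phi$ throughout the wave zone (essentially the content of the companion paper \cite{paper2}), and it does not follow from applying Proposition \ref{en01} to $r^3\partial_r\phi-f(u)$: that proposition estimates $r^p(\partial_r\phi)^2$ and says nothing about $r$-weighted norms of such a renormalized second-order quantity. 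Separately, your genericity step asserts without proof that $\psi\mapsto\int_0^{\infty}\phi|_{\mathcal{I}^+}\,du$ is not identically zero on compactly supported data; this is fillable (for $\psi=T\widetilde{\psi}$ as in Lemma \ref{dataT} the integral equals $-\widetilde{\phi}|_{\mathcal{I}^+}(0)=C_0\neq 0$ when $M\neq 0$), but as written it is an unproved claim on which the whole argument rests.
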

\begin{proof}
We consider the set of solutions constructed in Lemma \ref{dataT}, i.e. we look at $T\widetilde{\psi}$ for a spherically symmetric linear wave $\widetilde{\psi}$ with $I_0 [\widetilde{\psi} ] \neq 0$ and we denote $\widetilde{\phi}=r\widetilde{\psi}$. We consider the region $\mathcal{\bar{A}}^V_{u_0}$ that was introduced in the proof of Proposition \ref{prop:sharpn0}, and we have for $p=5$ that
\begin{align*}
\int_{\mathcal{\bar{A}}_{u_0}^{V}} r^4 (\partial_r (T \widetilde{\phi} ) )^2 \, drdu =&\: \int_{\mathcal{\bar{A}}_{u_0}^{V}} r^4 \left( (\partial_r (D \partial_r \widetilde{\phi} ) )^2 + \frac{(D' )^2}{r^2} \widetilde{\phi}^2 - \frac{2D'}{r} \partial_r (D \partial_r \widetilde{\phi} ) \cdot \widetilde{\phi} \right) \, drdu\\
 =&\: I + II + III ,
 \end{align*}
and it can be easily checked that $II + III < \infty$. We then have that
\begin{align*}
I =&\: \int_{\mathcal{\bar{A}}_{u_0}^{V}} r^4 (\partial_r (D \partial_r \widetilde{\phi} ) )^2 \, drdu  = \int_{\mathcal{\bar{A}}_{u_0}^{V}} r^4 \left( D^2 (\partial_r^2 \widetilde{\phi})^2 + (D' )^2 (\partial_r \widetilde{\phi} )^2 + 2 D D' (\partial_r^2 \widetilde{\phi} ) \cdot (\partial_r \widetilde{\phi} ) \right) \, drdu \\
=&\: IV + V + VI ,
\end{align*}
and it can be easily checked again that $V + VI < \infty$, while for $IV$, by the Hardy inequality \eqref{eq:Hardy1}, there exists a constant $C>0$ such that
$$IV \geq C \int_{\mathcal{\bar{A}}_{u_0}^{V}} r^2 (\partial_r \widetilde{\phi} )^2 \, drdu = \infty , $$
where in the last step we used the previous result of Proposition \ref{prop:sharpn0}, as $I_0 [\widetilde{\psi}] \neq 0$.

Now consider a \emph{general} solution $\psi$ to \eqref{waveequation} emanating from initial data given as in Theorem \ref{thm:extuniq} on $(\mathcal{R}, g)$. By linearity of \eqref{waveequation}, the function
\begin{equation*}
\psi^{\epsilon}=\psi+\epsilon T\widetilde{\psi}
\end{equation*}
is then also a solution \eqref{waveequation} and for $\epsilon>0$ suitably small, the initial data of $\psi^{\epsilon}$ on $\Sigma$ lies arbitrarily close to the initial data for $\psi$ on $\Sigma$ with respect to any (weighted) initial data norm on $\Sigma$, as the initial data of $\epsilon T\widetilde{\psi}$ is compactly supported. Therefore, solutions to \eqref{waveequation} emanating from \emph{generic} initial data (with respect to any weighted energy norm) satisfy \eqref{est:sharpn02}.
\end{proof}
 
\section{Energy decay estimates}
\label{sec:energydecay}
In this section we obtain energy decay estimates for solutions $\psi$ to \eqref{waveequation} with respect to the timelike vector field $N$ as an application of the hierarchies of $r^p$-weighted estimates developed in Section \ref{sec:hierpsi1} and Section \ref{sec:hierpsi0}. 

Additionally, we show how improved energy decay estimates for solutions of the form $T^k\psi$, with $k\in \N_0$, follow naturally in this formalism from the additional hierarchies of $r^p$-weighted estimates for $\partial_r^{k+1}(r\psi)$ and $\partial_r^{k}T(r\psi)$.

We split $\psi=\psi_0+\psi_1$ and prove energy decay estimates for $\psi_0$ in Section \ref{sec:edecaypsi0} and energy decay estimates of $\psi_1$ in Section \ref{sec:edecaypsi1}. 
\subsection{Energy decay for  $\psi_{0}$}
We first establish polynomial decay for the $N$-energy of spherically symmetric solutions $\psi_0$. We distinguish the cases where $I_0[\psi]\neq 0$ and $I_0[\psi]=0$.
\label{sec:edecaypsi0}
\begin{proposition}[\textbf{Energy decay for $\psi_0$}]\label{prop:endec1}
Let $\psi$ be a spherically symmetric solution to \eqref{waveequation} emanating from initial data given as in Theorem \ref{thm:extuniq} on $(\mathcal{R}, g)$.
\begin{itemize}
\item[\emph{(i)}] Assume that initially we have that $\boldsymbol{I_0 [\psi ] \neq 0}$ and $E_{0 , I_0 \neq 0}^{\epsilon}[\psi]<\infty$,
with
\begin{equation*}
\begin{split}
E_{0 , I_0 \neq 0}^{\epsilon}[\psi] =&\:  \sum_{l=0}^3 \int_{\Sigma_0} J^N [T^l \psi ] \cdot n_0\, d\mu_{\Sigma_0} + \int_{\mathcal{N}_0} r^{3-\epsilon} (\partial_r \phi )^2 \,  dr + \int_{\mathcal{N}_0} r^2 (\partial_r (T \phi ) )^2 \, dr \\
&+ \int_{\mathcal{N}_0} r (\partial_r (T^2 \phi ) )^2 \,  dr.
\end{split}
\end{equation*}
Then, for all $\epsilon>0$, there exists a constant $C \doteq C(D,R,\epsilon)$ such that for all $u\geq 0$
\begin{equation}\label{est:endec1}
\int_{\Sigma_u} J^N [\psi ] \cdot n_{u} d\mu_{\Sigma_{u}} \leq C \frac{E_{0, I_0 \neq 0}^{\epsilon}[\psi]}{(1+u )^{3-\epsilon}} .
\end{equation}

\item[\emph{(ii)}] Assume that initially we have that $\boldsymbol{I_0 [\psi ] = 0}$ and $E_{0, I_0=0}^{\epsilon}[\psi]<\infty$, with
\begin{equation*}
\begin{split}
E_{0, I_0=0}^{\epsilon}[\psi] =&\: \sum_{l=0}^5 \int_{\Sigma_0} J^N [T^l\psi ] \cdot n_0\, d\mu_{\Sigma_0}+ \int_{\mathcal{N}_0} r^{5-\epsilon} (\partial_r \phi )^2 \,  dr  +  \int_{\mathcal{N}_0} r^{4-\epsilon} (\partial_r (T \phi ) )^2 \,  dr\\
&+ \int_{\mathcal{N}_0} r^{3-\epsilon} (\partial_r (T^2 \phi ) )^2 \,  dr + \int_{\mathcal{N}_0} r^2 (\partial_r (T^3 \phi ) )^2 \, dr + \int_{\mathcal{N}_0} r(\partial_r (T^4 \phi ) )^2 \, dr .
\end{split}
\end{equation*}

Then, for all $\epsilon>0$, there exists a constant $C \doteq C(D,R,\epsilon)$ such that for all $u\geq 0$
\begin{equation}\label{est:endec2}
\int_{\Sigma_u} J^N [\psi ] \cdot n_{u} d\mu_{\Sigma_{u}} \leq C \frac{E_{0, I_0=0}^{\epsilon}[\psi]}{(1+u )^{5-\epsilon}}. 
\end{equation}
\end{itemize}
\end{proposition}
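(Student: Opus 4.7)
The plan is to combine the $r^p$-weighted hierarchies of Propositions \ref{en00} and \ref{en01} with the Morawetz estimate \eqref{ass:morawetz}, energy boundedness \eqref{ass:ebound}, and commutation with the Killing field $T$, in an iterated Dafermos--Rodnianski ``ladder.'' The pivotal structural input is that $I_0[T^k\psi]=T^k I_0[\psi]=0$ for every $k\geq 1$, since $I_0[\psi]$ is conserved along $\mathcal{I}^+$ by Proposition \ref{consNP}. Consequently, the extended hierarchy of Proposition \ref{en01}, with range $p\in(0,5-\epsilon)$, is available for every $T$-commuted wave $T^k\psi$ with $k\geq 1$, regardless of whether $I_0[\psi]$ itself vanishes; this is what allows the upper rungs of the ladder to exceed the ceiling $p<3$ imposed on $\psi$ itself by Proposition \ref{en00}.

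For part \emph{(i)}, I plan to prove by ascending induction on $m$ that
\[
\int_{\Sigma_u} J^N [T^{3-m} \psi ] \cdot n_{u}\,d\mu_{\Sigma_{u}}\leq C(1+u)^{-m}\cdot E_{0,I_0\neq 0}^{\epsilon}[\psi]
\]
for $m=0,1,2,3-\epsilon$. The base case $m=0$ is energy boundedness \eqref{ass:ebound} applied to $T^3\psi$, initially controlled by the $\sum_{l=0}^3\int_{\Sigma_0}J^N[T^l\psi]$ summand in $E_{0,I_0\neq 0}^{\epsilon}[\psi]$. At the inductive step $m \mapsto m+1$, I apply the $r^p$-weighted hierarchy to $T^{2-m}\psi$ at weight $p=m+1$ --- invoking Proposition \ref{en01} when $m<2$ (legitimate because $I_0[T^{2-m}\psi]=0$) and Proposition \ref{en00} at the final step for $\psi$ itself at $p=3-\epsilon$. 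This produces a spacetime bound on $\int_{\mathcal{N}_u}r^{m}(\partial_r T^{2-m}\phi)^2\,dr$ by (a) the $r^{m+1}$-weighted initial-data flux, already contained in $E_{0,I_0\neq 0}^{\epsilon}[\psi]$, and (b) the $T$-flux $\int_{\Sigma_{u_1}}J^T[T^{2-m}\psi]\cdot n_{u_1}\,d\mu_{\Sigma_{u_1}}$, which the inductive hypothesis decays at rate $m$. A mean value theorem pigeonhole on dyadic intervals, combined with \eqref{ass:morawetz} to capture the compact-$r$ portion of the $N$-flux (in terms of the fluxes of $T^{2-m}\psi$ and $T^{3-m}\psi$, both already controlled), yields rate $(m+1)$ decay on a dyadic sequence; energy boundedness \eqref{ass:ebound} then propagates this rate to all $u$.

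For part \emph{(ii)}, the identical scheme applies with two more rungs: since $I_0[\psi]=0$, Proposition \ref{en01} applies to $\psi$ itself with the full range $p\in(0,5-\epsilon)$, and the ladder now requires commutators up to $T^5\psi$ and initial-data $r$-weights $r^{5-\epsilon}, r^{4-\epsilon}, r^{3-\epsilon}, r^2, r$ on $\psi,T\psi,\ldots,T^4\psi$, matching exactly the summands of $E_{0,I_0=0}^{\epsilon}[\psi]$. The only new complication is that the top rung (for $p\in[4,5-\epsilon)$) invokes the augmented inequality \eqref{en01estv2}, which carries the extra error term $CE_{0;\mathrm{aux}}^{\delta}[\psi](1+u_1)^{-1+2\delta}$; choosing $2\delta<\epsilon$ makes this error already decay at a rate compatible with that rung, so it is absorbed without degrading the final $(1+u)^{-(5-\epsilon)}$ rate.

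The main technical obstacle is ensuring that both terms on the right of the hierarchy --- the $r^{p}$-weighted flux on $\mathcal{N}_{u_1}$ and the $T$-flux $\int_{\Sigma_{u_1}}J^T[T^{2-m}\psi]\cdot n_{u_1}$ --- remain controlled at every step of the descent: the first by a Dafermos--Rodnianski pigeonhole on the weighted fluxes, the second by the induction hypothesis applied to $T^{3-m}\psi$, whose $N$-flux decays exactly one power faster. This interlocking closure is what forces the precise $r$-weight versus $T$-commutator arithmetic in the definitions of $E_{0,I_0\neq 0}^{\epsilon}$ and $E_{0,I_0=0}^{\epsilon}$. In part \emph{(ii)} one must additionally track the propagation of $E_{0;\mathrm{aux}}^{\delta}$ through the five-rung ladder; the flexibility to shrink $\delta$ below any fixed multiple of $\epsilon$ suffices to guarantee that the target rate survives.
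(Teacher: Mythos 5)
Your overall architecture --- the $r^p$-hierarchies of Propositions \ref{en00} and \ref{en01} fed into a Dafermos--Rodnianski pigeonhole, closed by the Morawetz estimate \eqref{ass:morawetz} and energy boundedness \eqref{ass:ebound}, with a cascade of decay rates $0,1,2,3-\epsilon$ (resp.\ up to $5-\epsilon$) distributed over $T^3\psi,\dots,\psi$ (resp.\ $T^5\psi,\dots,\psi$) --- is the same as the paper's, merely reorganized as an explicit induction on the number of $T$-commutations; the paper instead packages the lower rungs into Proposition \ref{DafRoddecay} and then bolts on the rungs $p=3-\epsilon,2-\epsilon,1-\epsilon$ (resp.\ also $4-\epsilon,5-\epsilon$) for $\psi$ itself. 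Your observation that $I_0[T^k\psi]=0$ for $k\geq 1$ is correct and is indeed what legitimizes the weight $r^{4-\epsilon}$ on $\partial_r T\phi$ in part \emph{(ii)}; note, however, that in part \emph{(i)} every weight you place on a commuted quantity ($p=1$ for $T^2\psi$, $p=2$ for $T\psi$) already lies inside the range of Proposition \ref{en00}, so there the observation is true but carries no load. Your treatment of the $E^{\delta}_{0;\rm aux}$ error term in \eqref{en01estv2} is fine: it only enters at the single rung $p=5-\epsilon$, where its rate $1-2\delta$ beats the rate $0$ of the initial-data flux.

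There is, however, a concrete gap in how you close the inductive step. You assert that the flux term $\int_{\Sigma_{u_1}}J^T[T^{2-m}\psi]\cdot n_{u_1}$ on the right-hand side of the hierarchy ``decays at rate $m$ by the inductive hypothesis.'' It does not: the inductive hypothesis controls $\int_{\Sigma_u}J^N[T^{3-m}\psi]\cdot n_u$, which is quadratic in $\partial\,T(T^{2-m}\psi)$, whereas $J^T[T^{2-m}\psi]$ is quadratic in $\partial\, T^{2-m}\psi$ --- one fewer derivative, and not pointwise dominated by the former. The quantity the inductive hypothesis actually feeds is the right-hand side of the Morawetz estimate \eqref{ass:morawetz} applied to $T^{2-m}\psi$, which involves $J^N[T^{2-m}\psi]+J^N[T^{3-m}\psi]$. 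The decay of the $J^T[T^{2-m}\psi]$ flux terms appearing at the \emph{lower} rungs must instead come from the partial decay of $J^N[T^{2-m}\psi]$ \emph{itself}, established first by running the upper rungs of the same ladder (this is exactly the role of Proposition \ref{DafRoddecay} in the paper: it supplies the rate-$2$ decay of $J^N[\psi]$ that makes the $J^T[\psi]$ terms at the rungs $p=2-\epsilon,1-\epsilon$ harmless). Relatedly, ``apply the hierarchy at weight $p=m+1$'' must be read as ``descend through the full range $p\in(0,m+1]$'': a single rung plus pigeonhole converts decay of the $r^p$-flux into decay of the $r^{p-1}$-flux, and only the bottom rung together with \eqref{ass:morawetz} converts a weighted flux into the $N$-energy. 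Finally, you omit the interpolation Lemma \ref{lm:interpolation}, which the paper needs to trade the $\epsilon$-deficient weight $r^{p-\epsilon}$ obtained at one rung for the clean weight $r^{p-1}$ required as the seed of the next rung down, at the cost of an $\epsilon$ in the rate; without it (or an equivalent device) the $\epsilon$-losses accumulate incorrectly through the ladder.
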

\begin{proof}
(i) $\boldsymbol{I_0 [\psi ] \neq 0}$. By Proposition \ref{DafRoddecay}, we can estimate
$$ \int_{\Sigma_u} J^N [\psi ] \cdot n_u d\mu_{\Sigma_u} \leq C (1+u )^{-2}\bigg[ \sum_{l \leq 2} \int_{\Sigma_u} J^N [T^l \psi ] \cdot n_0\, d\mu_{\Sigma_0} + \int_{\mathcal{N}_0} r^2 (\partial_r \phi )^2 \, dr + \int_{\mathcal{N}_0} r (\partial_r T \phi )^2 \, dr\bigg], $$
for any $u \geq 0$, for a constant $C \doteq C(D,R)$. 

We apply the estimate of Proposition \ref{en00} with $p=3-\epsilon$ and the pigeonhole principle (the mean value theorem on dyadic intervals) to obtain
\begin{equation*}
\begin{split}
\int_{\mathcal{N}_u} r^{2-\epsilon} (\partial_r \phi )^2  \,  dr \leq&\: C {(1+u )^{-2}}\Bigg[\sum_{l \leq 2} \int_{\Sigma_u} J^N [T^l \psi ] \cdot n_0\, d\mu_{\Sigma_0} + \int_{\mathcal{N}_0} r^2 (\partial_r \phi )^2 \, dr \\
&+ \int_{\mathcal{N}_0} r (\partial_r T \phi )^2 \,  dr\Bigg]+C {(1+u )^{-1}}\int_{\mathcal{N}_0} r^{3-\epsilon} (\partial_r \phi )^2  \,  dr.
\end{split}
\end{equation*}
and subsequently, we apply Proposition \ref{en00} with $p=2-\epsilon$, the pigeonhole principle and Lemma \ref{lm:interpolation} with $p=1$ and $q=2$ to obtain
\begin{equation*}
\begin{split}
\int_{\mathcal{N}_u} r (\partial_r \phi )^2  \,  dr \leq&\: C {(1+u )^{-2+\epsilon}}\Bigg[\sum_{l \leq 2} \int_{\Sigma_u} J^N [T^l \psi ] \cdot n_0\, d\mu_{\Sigma_0} + \int_{\mathcal{N}_0} r^2 (\partial_r \phi )^2 \, dr \\
&+ \int_{\mathcal{N}_0} r (\partial_r T \phi )^2 \,  dr\Bigg]+C {(1+u )^{-2+\epsilon}}\int_{\mathcal{N}_0} r^{3-\epsilon} (\partial_r \phi )^2  \,  dr.
\end{split}
\end{equation*}

Now, we combine Proposition \ref{en00} with $p=1-\epsilon$ together with \eqref{ass:morawetz} to estimate
$$ \int_{u_1}^{u_2} \int_{\Sigma_u} J^N [\psi ] \cdot n_{u} d\mu_{\Sigma_u} du \leq C\sum_{l\leq 1}\int_{\Sigma_{u_1}} J^N [T^l\psi ] \cdot n_{u_1} d\mu_{\Sigma_u}  +C {(1+u )^{-2+\epsilon}}\int_{\mathcal{N}_0} r^{3-\epsilon} (\partial_r \phi )^2  \, dr . $$
The energy decay estimate \eqref{est:endec1} now follows after applying once more the pigeonhole principle, using Proposition \ref{DafRoddecay} for $T^l\psi$ with $l\leq 1$ together with energy boundedness \eqref{ass:ebound}.

(ii) $\boldsymbol{I_0 [\psi ] = 0}$. In the case $I_0[\psi]=0$ we can add two additional estimate in the hierarchy of estimates. We consider the cases $p=4-\epsilon$ and $p=5-\epsilon$ from Proposition \ref{en01}. We can therefore improve the decay above by applying as above (multiple times) the pigeonhole principle and then Lemma \ref{lm:interpolation} to estimate the flux integral of $r(\partial_r\phi)^2$ by $r^{1-\epsilon}(\partial_r\phi)^2$ as above, followed by \eqref{ass:morawetz} together with the already established $N$-energy decay estimate (for $\psi$ and $T\psi)$.
\end{proof}
The following lemma concerns decay estimates of certain $r^p$-weighted energy fluxes that will be necessary when deriving pointwise estimates for the radiation field. The lemma contains in particular a sub-optimal decay result that is used to increase the range of $p$ in  \eqref{en00est} from $p\in(0,4)$ to $p\in(0,5)$.
\begin{lemma}
\label{lm:auxdecaypsi0}
Let $\psi=\psi_0$ be a solution to \eqref{waveequation} emanating from initial data given as in Theorem \ref{thm:extuniq} on $(\mathcal{R}, g)$. 
\begin{itemize}
\item[\emph{(i)}]
Assume that $\boldsymbol{I_0[\psi]\neq 0}$, and
\begin{equation*}
E^{\epsilon}_{0;I_0\neq 0}[\psi]<\infty.
\end{equation*}
Then we can estimate
\begin{align}
\label{endec:r2n0v0}
\int_{\mathcal{N}_u} r^2 (\partial_r \phi )^2 \, d\omega dr \leq&\: C(1+ u )^{-1+\epsilon}\left(E_{\rm dr}[\psi]+\int_{\mathcal{N}_0}r^{3-\epsilon}(\partial_r(r\psi))^2\, dr\right),\\
\label{endec:r2n0bv0}
\int_{\mathcal{N}_u} (\partial_r \phi )^2 \, dr \leq&\: C(1+ u )^{-3+\epsilon}E^{\epsilon}_{0;I_0\neq 0}[\psi],
\end{align}
for $C=C(D,R,\epsilon)>0$.
\item[\emph{(ii)}]
Assume that $\boldsymbol{I_0[\psi]=0}$ and $\lim_{r\to \infty}T\psi(0,r)<\infty$, and moreover assume that
\begin{equation*}
\begin{split}
E^{\epsilon}_{0;\textnormal{aux}}[\psi]=&\: \sum_{l\leq 4} \int_{\Sigma_0} J^N [T^l\psi ] \cdot n_0\, d\mu_{\Sigma_0}+ \sum_{l\leq 3}\int_{\mathcal{N}_0} r^{4-l-\epsilon} (\partial_r  T^l\phi  )^2 \,  dr<\infty.
\end{split}
\end{equation*}
Then we can estimate for all $\epsilon>0$
\begin{align}\label{endec:r2n0}
\int_{\mathcal{N}_u} r^2 (\partial_r \phi )^2 \, dr \leq&\: C (1+ u )^{-2+\epsilon}\left(E_{\rm dr}[\psi]+\int_{\mathcal{N}_0}r^{4-\epsilon}(\partial_r(r\psi))^2\, dr\right) ,\\
\label{endec:r2n0b}
\int_{\mathcal{N}_u} (\partial_r \phi )^2 \,  dr \leq&\: C (1+ u )^{-4+\epsilon}E^{\epsilon}_{0;\textnormal{aux}}[\psi],
\end{align}
for $C=C(D,R,\epsilon)>0$.
\item[\emph{(iii)}] Assume that $\boldsymbol{I_0[\psi]=0}$, $\lim_{r\to \infty}T\psi(0,r)<\infty$, $\lim_{r\to \infty}T^2\psi(0,r)<\infty$ and moreover
\begin{equation*}
E^{\epsilon}_{0;I_0= 0}[\psi]<\infty.
\end{equation*}
Then we can estimate
\begin{align}\label{endec:r2n0v2}
\int_{\mathcal{N}_u} r^2 (\partial_r \phi )^2 \,  dr \leq&\: C(1+ u )^{-3+\epsilon}\left(E^{\epsilon}_{0;I_0\neq 0}[\psi]+\int_{\mathcal{N}_0}r^{5-\epsilon}(\partial_r(r\psi))^2\, dr\right),\\
\label{endec:r2n0bv2}
\int_{\mathcal{N}_u} (\partial_r \phi )^2 \,  dr \leq&\: C(1+ u )^{-5+\epsilon}E^{\epsilon}_{0;I_0= 0}[\psi],
\end{align}
for $C=C(D,R,\epsilon)>0$.
\end{itemize}
\end{lemma}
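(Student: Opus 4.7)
The plan is to adapt the iterative scheme used in the proof of Proposition \ref{prop:endec1}, combining in each part the appropriate $r^p$-weighted hierarchy with the mean-value theorem on dyadic time intervals (the pigeonhole argument), Lemma \ref{lm:interpolation}, the Morawetz estimate \eqref{ass:morawetz}, and energy boundedness \eqref{ass:ebound}. The input hierarchy differs in the three cases: for (i) I use Proposition \ref{en00} with $p\in(0,3)$, for (ii) I use Proposition \ref{en01} with $p\in(0,4)$, and for (iii) I additionally invoke the extended estimate \eqref{en01estv2} with $p\in[4,5)$.

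For (i), I would begin by applying Proposition \ref{en00} at $p=3-\epsilon$ to extract boundedness of the $r^{3-\epsilon}$-weighted flux along every $\mathcal{N}_u$ and integrability of the associated spacetime integral of $r^{2-\epsilon}(\partial_r\phi)^2$. The pigeonhole principle then produces a dyadic sequence $\{u_n\}$ along which the $r^{2-\epsilon}$-flux decays like $(1+u_n)^{-1}$; propagating this decay forward using Proposition \ref{en00} at $p=2-\epsilon$ extends it to all $u\geq 0$. Combining the resulting $(1+u)^{-1}$ decay with the uniform bound at $p=3-\epsilon$ and applying Lemma \ref{lm:interpolation} (with $p=2$, $q=1$) produces \eqref{endec:r2n0v0} with an $\epsilon$ loss. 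For \eqref{endec:r2n0bv0} I iterate the same scheme on $T\psi$ and $T^2\psi$: crucially $I_0[T^k\psi]=0$ for $k\geq 1$, so Proposition \ref{en01} (rather than Proposition \ref{en00}) is available for these and the full range $p<4$ can be exploited. Feeding the resulting higher-order energy decay into the Morawetz estimate \eqref{ass:morawetz} then yields $(1+u)^{-3+\epsilon}$ decay of the $N$-energy flux of $\psi$; a final pigeonhole step together with Lemma \ref{lm:interpolation} converts this into the unweighted flux bound stated in \eqref{endec:r2n0bv0}.

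Parts (ii) and (iii) proceed along identical lines but starting one, respectively two, steps higher in the hierarchy. For (ii), I start from $p=4-\epsilon$ in Proposition \ref{en01}, which is legitimate thanks to the finiteness of $E^\epsilon_{0;\textnormal{aux}}[\psi]$; pigeonhole plus propagation at $p=3-\epsilon$ and $p=2-\epsilon$ improves every decay rate in (i) by a single power of $u$, giving \eqref{endec:r2n0} and \eqref{endec:r2n0b} after interpolation. For (iii) I use the estimate \eqref{en01estv2} at $p=5-\epsilon$, which provides a uniform bound on the $r^{5-\epsilon}$ flux modulo an error of size $CE^\delta_{0;\textnormal{aux}}[\psi](1+u)^{-1+2\delta}$. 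Choosing $\delta<\epsilon/2$ keeps this error strictly faster-decaying than the target rate at each rung of the descent, and the remainder of the argument exactly parallels (ii) but shifted by one further power of $u$.

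The main technical obstacle is the careful bookkeeping of the $\epsilon$-losses incurred at each interpolation step, together with verifying that the hypotheses on $E^\epsilon_{0;I_0\neq 0}$, $E^\epsilon_{0;\textnormal{aux}}$, and $E^\epsilon_{0;I_0=0}$ are strong enough to accommodate the full chain of iterations; each descent through the hierarchy requires the next $T$-derivative of $\psi$ to meet the hypotheses of the relevant proposition, and since $I_0[T^k\psi]=0$ automatically for $k\geq 1$, this reduces to finiteness of a $T^k$-energy on $\Sigma_0$ together with an $r$-weighted flux on $\mathcal{N}_0$ — exactly the data encoded in the three initial-data norms. A secondary subtlety in (iii) is avoiding circular reasoning when controlling the error term in \eqref{en01estv2}: this is handled by proving (ii) as a self-contained statement first, and then invoking it to dominate the error term during the iteration for (iii).
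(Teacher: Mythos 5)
Your overall strategy is exactly the one the paper uses: the paper's proof of this lemma is essentially a pointer to the proof of Proposition \ref{prop:endec1}, of which all six estimates are intermediate outputs (descend through the relevant $r^p$-hierarchy with the dyadic pigeonhole argument, use Lemma \ref{lm:interpolation} to trade the $r^{-\epsilon}$ degeneracy for a $u^{\epsilon}$ loss, and close the unweighted flux bounds with the Morawetz estimate \eqref{ass:morawetz} and energy boundedness \eqref{ass:ebound}). Parts (ii) and (iii) of your proposal are sound, including your observation that (ii) must be proved first so that the pointwise bound it yields can control the error term $E^{\delta}_{0;\rm aux}[\psi](1+u_1)^{-1+2\delta}$ in \eqref{en01estv2} without circularity; this is precisely how the paper organises Proposition \ref{en01}.

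The one step that does not close as written is in part (i), where you propose to treat $T\psi$ and $T^2\psi$ via Proposition \ref{en01} ``with the full range $p<4$'' on the grounds that $I_0[T^k\psi]=0$. Vanishing of the Newman--Penrose constant is not the only hypothesis there: to run that hierarchy up to $p=4-\epsilon$ for $T\psi$ you would need $\int_{\mathcal{N}_0}r^{4-\epsilon}(\partial_r T\phi)^2\,dr<\infty$, whereas $E^{\epsilon}_{0;I_0\neq 0}[\psi]$ only contains $\int_{\mathcal{N}_0}r^{2}(\partial_r T\phi)^2\,dr$ and $\int_{\mathcal{N}_0}r(\partial_r T^2\phi)^2\,dr$. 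So with the stated hypotheses this route is unavailable. It is also unnecessary: to obtain \eqref{endec:r2n0bv0} one only needs the rate $(1+u)^{-2}$ for $\int_{\Sigma_u}J^N[T\psi]\cdot n_u$, which is exactly what Proposition \ref{DafRoddecay} applied to $T\psi$ gives from the $r^2(\partial_r T\phi)^2$ and $r(\partial_r T^2\phi)^2$ data terms; substituting this into the $p=1-\epsilon$ estimate combined with \eqref{ass:morawetz} and one final pigeonhole step produces the $(1+u)^{-3+\epsilon}$ decay. With that correction your argument coincides with the paper's.
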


\begin{proof}
The estimates \eqref{endec:r2n0v0}, \eqref{endec:r2n0bv0}, \eqref{endec:r2n0} and \eqref{endec:r2n0bv0} appear in the proof of Proposition \ref{prop:endec1} (after using Lemma \ref{lm:interpolation} to transfer $r$-degeneracy into $u$-degeneracy). Moreover, both the estimates \eqref{endec:r2n0} and \eqref{endec:r2n0b} appear in the proof of Proposition \ref{prop:endec1} as intermediate results obtained by using the hierarchy of \eqref{en00est} only up to $p=4-\epsilon$.
\end{proof}

The energy $E^{\epsilon}_{0,\textnormal{aux}}[\psi]$ will appear in the pointwise decay estimate required to extend the range of $p$ in Proposition \ref{en01} from $p\in(0,4)$ to $p\in(0,5)$; see Lemma \ref{lm:auxpointdecay}.

\subsection{Energy decay for  $T^{k}\psi_{0}$}
\label{sec:edecayTkpsi0}
In this section, we obtain improved energy decay estimates for $T^k\psi_0$, using the $r^p$-weighted estimates from Section \ref{sec:rphierTkpsi0}.

\begin{proposition}[\textbf{Energy decay for $T^k \psi_0$}]\label{prop:endec2}
Let $\psi$ be a spherically symmetric solution to \eqref{waveequation} emanating from initial data given as in Theorem \ref{thm:extuniq} on $(\mathcal{R}, g)$.

Let $n\in \N$ and assume additionally that $D(r)=1-2Mr^{-1}+O_{n+2}(r^{-1-\beta})$ for some $\beta>0$.
\begin{itemize}
\item[\emph{(i)}] Assume that initially we have that $\boldsymbol{I_0 [\psi ] \neq 0}$ and $E_{0, I_0\neq 0; k}^{\epsilon}[\psi]<\infty$, with
\begin{equation*}
\begin{split}
E_{0, I_0\neq 0; k}^{\epsilon}[\psi] \doteq &\: \sum_{l\leq 3+3k}\int_{\Sigma_{0}}J^N[T^l\psi]\cdot n_{0}\,d\mu_{\Sigma_0}\\
&+\sum_{l\leq 2k}\int_{\mathcal{N}_{0}} r^{3-\epsilon}(\partial_rT^l\phi)^2\,dr+r^{2}(\partial_rT^{l+1}\phi)^2+r(\partial_rT^{2+l}\phi)^2\,dr\\
&+\sum_{\substack{m\leq k\\ l\leq 2k-2m+\min\{k,1\}}} \int_{\mathcal{N}_{0}}r^{2+2m-\epsilon}(\partial_r^{1+m}T^{l}\phi)^2\, dr\\
&+\int_{\mathcal{N}_{0}}r^{3+2k-\epsilon}(\partial_r^{1+k}\phi)^2\, dr.
\end{split}
\end{equation*}
Then, for all $\epsilon>0$ and all $k\leq n$, there exists a constant $C \doteq C(D,R,\epsilon,n)$ such that for all $u\geq 0$
\begin{equation}\label{est:tendec1}
\int_{\Sigma_u} J^N [T^k \psi ] \cdot n_u d\mu_{\Sigma_u} \leq C \frac{E_{0, I_0\neq 0; k}^{\epsilon}[\psi]}{(1+u )^{2k+3-\epsilon}} . 
\end{equation}

\item[\emph{(ii)}]Assume that initially we have that $\boldsymbol{I_0 [\psi ] = 0}$ and $E_{0, I_0= 0; k}^{\epsilon}[\psi]<\infty$, with
\begin{equation*}
\begin{split}
E_{0, I_0= 0; k}^{\epsilon}[\psi] \doteq &\: \sum_{l\leq 5+3k}\int_{\Sigma_{0}}J^N[T^l\psi]\cdot n_{0}\,d\mu_{\Sigma_0}\\
&+\sum_{l\leq 2k}\int_{\mathcal{N}_{0}} r^{5-\epsilon}(\partial_rT^l\phi)^2+r^{4-\epsilon}(\partial_rT^{1+l}\phi)^2+r^{3-\epsilon}(\partial_rT^{2+l}\phi)^2\\
&+r^{2}(\partial_rT^{3+l}\phi)^2+r(\partial_rT^{4+l}\phi)^2\, dr\\
&+\sum_{\substack{m\leq k\\ l\leq 2k-2m+\min\{k,1\}}} \int_{\mathcal{N}_{0}}r^{4+2m-\epsilon}(\partial_r^{1+m}T^{l}\phi)^2\, dr\\
&+\int_{\mathcal{N}_{0}}r^{5+2k-\epsilon}(\partial_r^{1+k}\phi)^2\, dr.
\end{split}
\end{equation*}
 Then, for all $\epsilon>0$, there exists a constant $C \doteq C(D,R,\epsilon)$ such that for all $u\geq 0$
\begin{equation}\label{est:tendec2}
\int_{\Sigma_u} J^N [T^k \psi ] \cdot n_u d\mu_{\Sigma_u} \leq C \frac{E_{0, I_0= 0; k}^{\epsilon}[\psi]}{(1+u )^{2k+5-\epsilon}} . 
\end{equation}
\end{itemize}
\end{proposition}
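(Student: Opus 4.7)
The plan is to argue by induction on $k$, the base case $k=0$ being exactly Proposition \ref{prop:endec1}. Throughout we fix $\epsilon>0$ and assume the asymptotics $D=1-2Mr^{-1}+O_{n+2}(r^{-1-\beta})$ so that the commuted equation \eqref{eq:partialkphi} and the hierarchies of Proposition \ref{enr0} and Proposition \ref{ent0} are available for all relevant $k$. I will spell out the scheme for case (i) ($I_0[\psi]\neq 0$); the modification for case (ii) is identical, using Proposition \ref{en01} with extended range $p\in[4,5)$ and hence Proposition \ref{enr0} with $p\in [2k+4,2k+5)$ in place of the corresponding $(0,4)$ and $(2k,2k+4)$ statements, and replacing $3-\epsilon$ by $5-\epsilon$ in the exponents below.

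The first step is a \emph{top level} bound: applying Proposition \ref{enr0} with the largest allowed weight $p=2k+3-\epsilon$ yields, after a standard Gronwall-type argument along dyadic intervals, the uniform-in-$u$ bound
\begin{equation*}
\int_{\mathcal{N}_{u}} r^{2k+3-\epsilon}(\partial_r^{k+1}\phi)^2\,dr \lesssim E_{0,I_0\neq 0;k}^{\epsilon}[\psi]+\sum_{j\leq k}\int_{\Sigma_{u}}J^N[T^j\psi]\cdot n_u\,d\mu_u,
\end{equation*}
where the last sum decays by the inductive hypothesis (giving rates $(1+u)^{-(2j+3)+\epsilon}$) and the definition of $E_{0,I_0\neq 0;k}^{\epsilon}$ absorbs all of the auxiliary $r^{2+2m-\epsilon}(\partial_r^{1+m}T^l\phi)^2$ data terms that appear on the right-hand side of Proposition \ref{enr0} iterated over $m\leq k$. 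Then I descend through the hierarchy: for each $p\in(2k,2k+3-\epsilon)$ Proposition \ref{enr0} supplies the spacetime bound
\begin{equation*}
\int_{\mathcal{A}_{u_1}^{u_2}} r^{p-1}(\partial_r^{k+1}\phi)^2\,drdu\lesssim \sum_{j\leq k}\int_{\mathcal{N}_{u_1}}r^{p-2j}(\partial_r^{k-j+1}\phi)^2\,dr+\sum_{l\leq k}\int_{\Sigma_{u_1}}J^T[T^l\psi]\cdot n_{u_1}\,d\mu_{u_1}.
\end{equation*}
Combining this with the top-level bound via the dyadic mean-value (pigeonhole) argument, and interpolating between adjacent $p$-levels using Lemma \ref{lm:interpolation} to trade $r$-weight for $u$-decay, I obtain in succession
\begin{equation*}
\int_{\mathcal{N}_u} r^{p}(\partial_r^{k+1}\phi)^2\,dr\lesssim (1+u)^{-(2k+3-\epsilon-p)}\bigl(E_{0,I_0\neq 0;k}^{\epsilon}[\psi]+\cdots\bigr),\qquad p\in(2k,2k+3-\epsilon),
\end{equation*}
where the dots stand for lower-order $T^l\psi$ energies that enjoy the inductively-known decay rates.

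With these decaying $r^p$-weighted fluxes in hand, the second main step is to pass from $\partial_r^{k+1}\phi$ to $\partial_r^k T\phi$. Proposition \ref{ent0} gives, for $p\in (2k,2k+3-\epsilon)$,
\begin{equation*}
\int_{\mathcal{A}_{u_1}^{u_2}} r^{p-1}(\partial_r^{k}T\phi)^2\,drdu\lesssim \sum_{j\leq k}\int_{\mathcal{N}_{u_1}}r^{p-2j}(\partial_r^{k-j+1}\phi)^2\,dr+\sum_{l\leq k}\int_{\Sigma_{u_1}}J^T[T^l\psi]\cdot n_{u_1}\,d\mu_{u_1},
\end{equation*}
whose right-hand side decays at the rate $(1+u_1)^{-(2k+3-\epsilon-p)}$ by the previous paragraph and the inductive hypothesis. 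Taking $p$ slightly above $2k$ produces a spacetime bound on $(\partial_r^k T\phi)^2$ that decays at the rate $(1+u_1)^{-3+\epsilon}$ in the strip $\mathcal{A}_{u_1}^{u_2}$. Since in the far region $r\geq R$ one has $r(\partial_r T^k\phi)^2\sim r^{-1}J^T[T^k\psi]\cdot n_u\, r^2$ up to commutator errors that are controlled by the inductive decay of $T^j\psi$, this is exactly the missing ingredient to close the Morawetz estimate for $T^k\psi$ globally.

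The third and final step is to apply the global Morawetz estimate \eqref{ass:morawetz} to $T^k\psi$ in the compact-$r$ region and combine it with the far-away spacetime estimate just obtained, yielding
\begin{equation*}
\int_{u_1}^{u_2}\int_{\Sigma_u}J^N[T^k\psi]\cdot n_u\,d\mu_u\,du\lesssim \frac{E_{0,I_0\neq 0;k}^{\epsilon}[\psi]+\sum_{l\leq k-1}E_{0,I_0\neq 0;l}^{\epsilon}[\psi]}{(1+u_1)^{2k+2-\epsilon}}.
\end{equation*}
A final pigeonhole step together with the energy boundedness assumption \eqref{ass:ebound} applied to $T^k\psi$ upgrades this integrated bound to the claimed pointwise-in-$u$ bound \eqref{est:tendec1}. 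The main obstacle I anticipate is bookkeeping: the right-hand side of Proposition \ref{enr0} involves all intermediate fluxes $r^{p-2j}(\partial_r^{k-j+1}\phi)^2$, and closing the induction requires that (a) every such quantity at step $k$ is controlled at initial time by the norm $E_{0,I_0\neq 0;k}^{\epsilon}$, and (b) the $T^l\psi$ bulk energies appearing in the Morawetz step decay at the rates supplied by the inductive hypothesis; both are exactly the reason the data norm $E_{0,I_0\neq 0;k}^{\epsilon}$ is built out of the particular combinations of powers $r^{3-\epsilon+2m}$ and $r^{2-j}$ appearing in its definition. In the $I_0=0$ case, the additional subtlety is that the extended hierarchy $p\in[2k+4,2k+5)$ in Proposition \ref{enr0} carries an auxiliary decaying error $E_{0,\mathrm{aux}}^{\delta}$ term; this is handled by feeding back the pointwise/energy decay for lower $k$ already established in Proposition \ref{prop:endec1} and Lemma \ref{lm:auxdecaypsi0} before applying the dyadic descent.
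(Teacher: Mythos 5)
Your overall framework (induction on $k$, the hierarchies of Propositions \ref{enr0} and \ref{ent0}, pigeonhole plus Lemma \ref{lm:interpolation}, and the Morawetz/energy-boundedness closing step) matches the paper's, but there is a genuine gap in how the estimates are chained, and it sits exactly where the $2k$ extra powers of decay are supposed to come from. Your descent runs the $\partial_r^{k+1}\phi$ hierarchy over $p\in(2k,2k+3-\epsilon)$, an interval of length $3$ \emph{independently of $k$}; each unit of $p$-range buys one power of $u$ via the pigeonhole argument, so by your own accounting you arrive at a far-region spacetime bound decaying only like $(1+u_1)^{-3+\epsilon}$. The very next display then asserts an integrated energy bound decaying like $(1+u_1)^{-(2k+2)+\epsilon}$; nothing in the preceding steps supplies the missing $2k-1$ powers, and this jump is precisely the content of the proposition. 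The paper's mechanism is different in a way that matters: the hierarchy for $T^{k}\psi=T^{k-1}(T\psi)$ is not the $\partial_r^{k+1}\phi$ hierarchy run in isolation, but the \emph{entire} hierarchy furnished by the inductive hypothesis applied to $T\psi$ (which already delivers $(1+u)^{-(2k+1)+\epsilon}$, resp.\ $(1+u)^{-(2k+3)+\epsilon}$, for $J^N[T^k\psi]$), \emph{augmented by exactly two new top-level estimates}: \eqref{ent0est} at $p=2k+2-\epsilon$ (resp.\ $4+2k-\epsilon$), which controls the spacetime integral of the top flux of the $T\psi$-hierarchy in terms of fluxes of $\partial_r^{k+1-j}\phi$, and \eqref{enr0est} (resp.\ \eqref{enr0estp5}) at the top of its range, which controls the spacetime integrals of those fluxes in turn. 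Each appended estimate converts a bounded top-level flux into a $u^{-1}$-decaying one and hence propagates one extra power down the whole stack; two appended estimates give the two-power gain per increment of $k$. Without this stacking on the inductive hypothesis for $T\psi$, the argument cannot produce a $k$-dependent decay rate.

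A secondary but real issue is the identification in your third step: the quantity your second step controls is $\partial_r^{k}T\phi$ ($k$ radial derivatives of $T\phi$), whereas the far-region energy density of $T^k\psi$ is governed by $\partial_r T^k\phi$ (one radial derivative of $T^k\phi$). For $k\geq 2$ these are different objects, related only through $k-1$ further applications of the wave equation, each of which generates its own hierarchy level and error terms; the claimed equivalence $r(\partial_rT^k\phi)^2\sim r^{-1}J^T[T^k\psi]\cdot n_u\,r^2$ cannot be invoked directly from the bound on $(\partial_r^kT\phi)^2$. Finally, note that the error terms $\sum_{l\leq k}J^T[T^l\psi](u_1)$ on the right-hand sides of Propositions \ref{enr0} and \ref{ent0} include $l=0$, which decays only like $u_1^{-3+\epsilon}$ (resp.\ $u_1^{-5+\epsilon}$); these are harmless only because they appear at the \emph{top} levels of the stacked hierarchy, where mere boundedness (or mild decay) is needed — another reason the estimates must be organized as in the paper rather than with all levels required to decay at the target rate.
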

\begin{proof}
We will only prove (ii). The estimate in (i) can be proven analogously by restricting everywhere the range of $p$.

We will prove \eqref{est:tendec2} by induction. The case $k=0$ follows from \eqref{est:endec2}. Now suppose \eqref{est:tendec2} holds for all $k\leq n$. Then \eqref{est:tendec2} also holds for $\psi$ replaced by $T\psi$. 

By \eqref{ent0est} with $p=4+2(n+1)-\epsilon$ we have an additional estimate for $T^{n+1}\psi$ in our hierarchy of estimates compared to the hierarchy of estimates for $T^n\psi$: indeed, for all $0\leq u_1<u_2$,
\begin{equation*}
\begin{split}
\int_{\mathcal{A}^{u_2}_{u_1}}r^{5+2n-\epsilon}(\partial_r^{1+n}T\phi)^2\, dr du\leq&\: C\sum_{m\leq n+1}\int_{\mathcal{N}_{u_1}}r^{6+2n-\epsilon-2m}(\partial_r^{2+n-m}\phi)^2\, dr\\
&+ \sum_{l\leq n+1}\int_{\Sigma_{u_1}}J^N[T^l\psi]\cdot n_{u_1}\,d\mu_{\Sigma_{u_1}}.
\end{split}
\end{equation*}
As a consequence, analogous to the arguments in Proposition \ref{prop:endec1}, we can improve the decay rate in \eqref{est:tendec2} with $k=n$ applied to $T\psi$ by one power to obtain

\begin{equation*}
\begin{split}
\int_{\Sigma_{u}} J^N [T^{n+1} \psi ] \cdot n_u\, d\mu_{\Sigma_u} \leq&\: C(1+u)^{2n+6-\epsilon}\Bigg[\sum_{l\leq 7+3n}\int_{\Sigma_{0}}J^N[T^l\psi]\cdot n_{\Sigma_0}\,d\mu_{\Sigma_0}\\
&+\sum_{l\leq 2n+1}\int_{\mathcal{N}_{0}} r^{5-\epsilon}(\partial_rT^{1+l}\phi)^2+r^{4-\epsilon}(\partial_rT^{2+l}\phi)^2+r^{3-\epsilon}(\partial_rT^{3+l}\phi)^2\\
&+r^{2}(\partial_rT^{4+l}\phi)^2+r(\partial_rT^{5+l}\phi)^2\,dr\\
&+\sum_{\substack{m\leq k\\ l\leq 2k-2m+2}} \int_{\mathcal{N}_{0}}r^{4+2m-\epsilon}(\partial_r^{1+m}T^{l}\phi)^2\, dr\\
&+\sum_{l\leq 1}\int_{\mathcal{N}_{0}}r^{5+2k-\epsilon}(\partial_r^{1+k}T^l\phi)^2\, dr\Bigg]\\
&+C(1+u)^{2n+6-\epsilon}\sum_{m\leq n+1}\int_{\mathcal{N}_{0}}r^{6+2n-\epsilon-2m}(\partial_r^{2+n-m}\phi)^2\, dr.
\end{split}
\end{equation*}
We can add another estimate to our hierarchy by applying \eqref{enr0estp5} with $p=5+2(n+1)-\epsilon$:
\begin{equation*}
\begin{split}
\int_{\mathcal{A}^{u_2}_{u_1}}r^{6+2n-2m-\epsilon}(\partial_r^{2+n-m}\phi)^2\, dr du\leq&\: C\sum_{m\leq n+1}\int_{\mathcal{N}_{u_1}}r^{7+2n-\epsilon-2m}(\partial_r^{2+n-m}\phi)^2\, dr\\
&+ \sum_{l\leq n+1}\int_{\Sigma_{u_1}}J^N[T^l\psi]\cdot n_{\tau_1}\,d\mu_{\Sigma_{u_1}}+CE_{0;\rm aux}[\psi].
\end{split}
\end{equation*}
We can now improve the energy decay rate by one more power:
\begin{equation*}
\begin{split}
\int_{\Sigma_{u}} J^N [T^{n+1} \psi ] \cdot n_u d\mu_{\Sigma_u} \leq&\: C(1+u)^{2n+7-\epsilon}\Bigg[\sum_{l\leq 8+3n}\int_{\Sigma_{0}}J^N[T^l\psi]\cdot n_{0}\\
&+\sum_{l\leq 2n+2}\int_{\mathcal{N}_{0}} r^{5-\epsilon}(\partial_rT^{1+l}\phi)^2+r^{4-\epsilon}(\partial_rT^{2+l}\phi)^2+r^{3-\epsilon}(\partial_rT^{3+l}\phi)^2\\
&+r^{2}(\partial_rT^{4+l}\phi)^2+r(\partial_rT^{5+l}\phi)^2\, dr\\
&+\sum_{\substack{m\leq k\\ l\leq 2k-2m+3}} \int_{\mathcal{N}_{0}}r^{4+2m-\epsilon}(\partial_r^{1+m}T^{l}\phi)^2\, dr\\
&+\sum_{l\leq 2} \int_{\mathcal{N}_{0}}r^{5+2k-\epsilon}(\partial_r^{1+k}T^l\phi)^2\, dr\Bigg]\\
&+C(1+u)^{2n+7-\epsilon}\sum_{m\leq n+1,\,l\leq 1}\int_{\mathcal{N}_{0}}r^{6+2n-\epsilon-2m}(\partial_r^{2+n-m}T^l\phi)^2\,dr\\
&+C(1+u)^{2n+7-\epsilon}\sum_{m\leq n+1}\int_{\mathcal{N}_{0}}r^{7+2n-\epsilon-2m}(\partial_r^{2+n-m}\phi)^2\, dr.
\end{split}
\end{equation*}
By rearranging the above terms, we arrive at \eqref{est:tendec2} with $k=n+1$ which completes the proof.
\end{proof}
We easily obtain the following decay estimates for $r$-weighted integrals of $T^k\psi$, in analogy with Lemma \ref{lm:auxdecaypsi0}.
\begin{lemma}
\label{lm:auxdecaypsi0Tk}
Let $\psi$ be a spherically symmetric solution to \eqref{waveequation} emanating from initial data given as in Theorem \ref{thm:extuniq} on $(\mathcal{R}, g)$. Let $n\in \N$ and assume additionally that $D(r)=1-2Mr^{-1}+O_{n+2}(r^{-1-\beta})$ for some $\beta>0$ and $\boldsymbol{I_0[\psi]=0}$.

Then we can estimate for all $\epsilon>0$ and $k\leq n$
\begin{align}\label{endec:r2n0ineq0tk}
\int_{\mathcal{N}_u} r^2 (\partial_rT^k \phi )^2 \, dr \leq&\: C \frac{E^{\epsilon}_{0,I_0\neq 0;k}[\psi]}{(1+ u )^{1+2k-\epsilon}} ,\\
\int_{\mathcal{N}_u} (\partial_r T^k\phi )^2 \, dr \leq&\: C \frac{E^{\epsilon}_{0,I_0\neq 0;k}[\psi]}{(1+ u )^{3+2k-\epsilon}} ,
\label{endec:r2n0btk}
\end{align}
for $C=C(D,R,\epsilon,n)>0$ if the energy norms on the right-hand side are finite, and moreover, if we assume $I_0[\psi]=0$, then
\begin{align}\label{endec:r2n0tk}
\int_{\mathcal{N}_u} r^2 (\partial_rT^k \phi )^2 \, dr \leq&\: C \frac{E^{\epsilon}_{0,I_0=0;k}[\psi]}{(1+ u )^{3+2k-\epsilon}} ,\\
\int_{\mathcal{N}_u} (\partial_r T^k\phi )^2 \, dr \leq&\: C \frac{E^{\epsilon}_{0,I_0=0;k}[\psi]}{(1+ u )^{5+2k-\epsilon}},
\label{endec:r2n0btk}
\end{align}
if the energy norms on the right-hand side are finite.
\end{lemma}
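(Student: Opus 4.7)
The plan is to extract both inequalities as intermediate consequences of the inductive proof of Proposition \ref{prop:endec2}, in direct analogy with how Lemma \ref{lm:auxdecaypsi0} is read off from the proof of Proposition \ref{prop:endec1}. In the spherically symmetric setting, the commuted equation from Lemma \ref{lm:commboxdrkpsi0} allows one to convert between $\partial_r^{k+1}\phi$ and $\partial_r^{k} T\phi$ modulo lower-order terms $\partial_r^j\phi$ with $j\le k$, so the hierarchies of Proposition \ref{enr0} for $\partial_r^{k+1}\phi$ and of Proposition \ref{ent0} for $\partial_r^k T\phi$ are essentially interchangeable once enough energies have been assembled.

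First I would apply the hierarchy \eqref{ent0est} to $T^k\psi$ with successive values of $p$, starting from the highest admissible weight: $p=2k+3-\epsilon$ in the $I_0\neq 0$ case and $p=2k+5-\epsilon$ in the $I_0=0$ case (the latter using the extended estimate \eqref{enr0estp5}). The standard dyadic pigeonhole argument then converts each bulk bound of the form
\begin{equation*}
\int_{u_1}^{u_2}\!\int_{\mathcal{N}_u}r^{p-1}(\partial_r T^k\phi)^2\,dr\,du \;\lesssim\; \textnormal{(flux at }u_1\textnormal{)}
\end{equation*}
into a sequence of times along which the flux at weight $p-1$ is $O((1+u)^{-1})$ times the initial flux at weight $p$; iterating this step once per unit decrement of $p$ produces the bound on $\int_{\mathcal{N}_u} r^2(\partial_r T^k\phi)^2\,dr$ with decay rate $(1+u)^{-(1+2k-\epsilon)}$ in the $I_0\neq 0$ case and $(1+u)^{-(3+2k-\epsilon)}$ in the $I_0=0$ case, which are precisely \eqref{endec:r2n0ineq0tk} and \eqref{endec:r2n0tk}. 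Continuing the iteration for two more units of $p$ and then applying Lemma \ref{lm:interpolation} to absorb the remaining $r^{-\epsilon}$-degeneracy as an equivalent $u^{+\epsilon}$-loss yields \eqref{endec:r2n0btk} in both cases.

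The main obstacle will be the bookkeeping of lower-order commutator fluxes. The hierarchy \eqref{enr0est}, and similarly \eqref{ent0est}, carries on its right-hand side a cascade of fluxes $\int_{\mathcal{N}_{u_1}} r^{p-2j}(\partial_r^{k-j+1}\phi)^2\,dr$ for all $j\le k$, together with $N$-energies of $T^l\psi$ for $l\le k$. At each step of the dyadic iteration one must verify that these quantities are initially finite and decay at a rate at least as fast as the target; the norms $E^{\epsilon}_{0,I_0\neq 0;k}[\psi]$ and $E^{\epsilon}_{0,I_0=0;k}[\psi]$ are defined precisely so that the required decay is supplied by Proposition \ref{prop:endec2} applied to $\psi, T\psi, \ldots, T^{k-1}\psi$, already treated in the induction on $k$. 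Once this cascade is organised exactly as in the proof of Proposition \ref{prop:endec2}, the two displayed estimates emerge as the intermediate and terminal rungs of the same ladder, and no new divergence identity is required.
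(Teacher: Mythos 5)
Your proposal is correct and follows essentially the same route as the paper: the paper states this lemma without a separate proof, presenting it (in analogy with Lemma \ref{lm:auxdecaypsi0}) as a collection of intermediate rungs of the dyadic pigeonhole iteration in the proof of Proposition \ref{prop:endec2}, built on the hierarchies of Propositions \ref{enr0} and \ref{ent0} and finished with Lemma \ref{lm:interpolation} to trade the residual $r^{-\epsilon}$-degeneracy for a $u^{\epsilon}$-loss. Your accounting of the decay rates at the $r^2$-weighted and unweighted levels, and of the cascade of lower-order fluxes absorbed into $E^{\epsilon}_{0,I_0\neq 0;k}$ and $E^{\epsilon}_{0,I_0=0;k}$, matches the paper's argument.
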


\subsection{Energy decay for $\psi_{1}$}
In this section we establish polynomial decay for the $N$-energy of $\psi_1$.
\label{sec:edecaypsi1}
\begin{proposition}[\textbf{Energy decay for $\psi_{1}$}]\label{decl2}
Let $\psi$ be a solution to \eqref{waveequation} with $\int_{\s^2}\psi\,d\omega=0$ emanating from initial data given as in Theorem \ref{thm:extuniq} on $(\mathcal{R}, g)$. 

Assume that $E^{\epsilon}_{1}[\psi]<\infty$, with 
\begin{equation*}
\begin{split}
E^{\epsilon}_{1}[\psi]\doteq &\: \sum_{l\leq 5}\int_{\Sigma_{0}}J^N[T^l\psi]\cdot n_{0}\,d\mu_{\Sigma_0}+\sum_{l\leq 3}\int_{\mathcal{N}_0}r^{2}(\partial_rT^l\phi)^2+r^{}(\partial_rT^{l+1}\phi)^2\,d\omega dr\\
&+ \int_{\mathcal{N}_0} r^{2-\epsilon}(\partial_r\Phi)^2 \, d\omega dr+\int_{\mathcal{N}_{0}}r^{2-\epsilon}(\partial_rT{\Phi})^2+r^{1-\epsilon}(\partial_rT^2{\Phi})^2\,d\omega dr+\int_{\mathcal{N}_{0}}r^{1-\epsilon}(\partial_r{\Phi}_{(2)})^2\,d\omega dr
\end{split}
\end{equation*}
and moreover
\begin{align*}
\lim_{r \to \infty }\sum_{|l|\leq 4}\int_{\s^2}(\Omega^l\phi)^2\,d\omega\big|_{u'=0}<&\:\infty,\\
\lim_{r \to \infty }\sum_{|l|\leq 2}\int_{\s^2}(\Omega^l\Phi)^2\,d\omega\big|_{u'=0}<&\:\infty,\\
\lim_{r \to \infty }\int_{\s^2}r^{-1}\left(\Phi_{(2)}\right)^2\,d\omega\big|_{u'=0}<&\:\infty.
\end{align*}

Then, for all $\epsilon> 0$ there exists a constant $C=C(D,R,\epsilon)>0$ such that for all $u\geq 0$
\begin{equation}
\label{eq:edecaylgeq1}
\begin{split}
\int_{\Sigma_{u}}J^N[\psi]\cdot n_{u}\,d\mu_{\Sigma_u}\leq&\: CE^{\epsilon}_{1}[\psi](1+u)^{-5+\epsilon}.
\end{split}
\end{equation}
\end{proposition}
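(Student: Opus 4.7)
The plan is to derive \eqref{eq:edecaylgeq1} via a multi-stage bootstrap that exploits the three $r^p$-weighted hierarchies now at our disposal: the Dafermos--Rodnianski hierarchy (Proposition \ref{prop:rpphiv1}), the $\Phi$-hierarchy (Proposition \ref{prop:rpPhiv1}), and the $\Phi_{(2)}$-hierarchy (Proposition \ref{prop:l2commr2}). The template at each level is the standard Dafermos--Rodnianski mechanism: the spacetime integral on the left-hand side of each hierarchy produces pointwise-in-$u$ flux decay through the pigeonhole principle on dyadic intervals followed by propagation via the hierarchy at the next-lower weight, while the interpolation Lemma \ref{lm:interpolation} is used to absorb the $\epsilon$-loss coming from working with non-integer weights. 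The three hierarchies are coupled by the Hardy inequality \eqref{eq:Hardy1} applied to the algebraic identities $\partial_r \phi = r^{-2} \Phi$ and $\partial_r \Phi = r^{-2} \Phi_{(2)}$, which yield $\int_{\mathcal{N}_u} r^p (\partial_r \phi)^2\,d\omega dr \lesssim \int_{\mathcal{N}_u} r^{p-2}(\partial_r \Phi)^2\,d\omega dr$ and $\int_{\mathcal{N}_u} r^p(\partial_r \Phi)^2\,d\omega dr \lesssim \int_{\mathcal{N}_u} r^{p-2}(\partial_r \Phi_{(2)})^2\,d\omega dr$ for $p \neq 3$, with the required $r \to \infty$ boundary vanishing supplied by Propositions \ref{prop:step0radfields} and \ref{prop:hostep0radfields} under the pointwise-limit hypotheses in the statement.

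Concretely, I would first apply Proposition \ref{DafRoddecay} to $T^l\psi$ for $l \leq 3$, whose initial energies are controlled by $E^\epsilon_1[\psi]$, to obtain the baseline $\int_{\Sigma_u} J^N[T^l \psi]\cdot n_u\,d\mu_{\Sigma_u} \lesssim (1+u)^{-2}$; these control the $J^T[T^k\psi]$ Morawetz contributions on the right-hand sides of \eqref{rphierpsilgeq1} and \eqref{rphierPhi2lgeq2a}. In a second stage, I would iterate the $\Phi$-hierarchy for $\psi, T\psi, T^2\psi$ (whose $r^{2-\epsilon}(\partial_r T^l\Phi)^2$- and $r^{1-\epsilon}(\partial_r T^2\Phi)^2$-fluxes sit in $E^\epsilon_1[\psi]$) starting from $p = 2-\epsilon$: at each descent one pigeonholes on dyadic intervals, propagates with the next-lower hierarchy, and combines the resulting flux decay with the Hardy transition, Lemma \ref{lm:interpolation}, and the DR hierarchy plus \eqref{ass:morawetz} to conclude $\int_{\Sigma_u}J^N[T^l\psi]\cdot n_u \lesssim (1+u)^{-3+\epsilon}$ for $l \leq 2$. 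Feeding this improved Morawetz rate back into the $\Phi$-hierarchy for $\psi, T\psi$ and re-iterating upgrades the bounds to $\int_{\Sigma_u} J^N[T\psi]\cdot n_u \lesssim (1+u)^{-4+\epsilon}$. In the third stage, the $\Phi_{(2)}$-hierarchy for $\psi$ is iterated from $p = 1-\epsilon$ down through $p = -3-\epsilon$; with the Morawetz contributions now decaying at the improved rate, five descents yield $\int_{\mathcal{N}_u} r^{-3-\epsilon}(\partial_r \Phi_{(2)})^2\, d\omega dr \lesssim (1+u)^{-4+\epsilon}$. Two Hardy applications and interpolation then produce $\int_{\mathcal{N}_u} r(\partial_r \phi)^2\,d\omega dr \lesssim (1+u)^{-4+\epsilon}$, and the DR hierarchy at $p=1$ combined with \eqref{ass:morawetz} gives $\int_{u_1}^{u_2} \int_{\Sigma_u} J^N[\psi] \cdot n_u\, d\mu_{\Sigma_u}\, du \lesssim (1+u_1)^{-4+\epsilon}$; a concluding pigeonhole with energy boundedness \eqref{ass:ebound} yields \eqref{eq:edecaylgeq1}.

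The principal obstacle is the interlocking bookkeeping required by the coupling of the three hierarchies with their $T$-commutations. Each application of the $\Phi$- or $\Phi_{(2)}$-hierarchy produces Morawetz-type terms $\sum_{k\leq K}\int_{\Sigma_{u_1}}J^T[T^k\psi]$ that act as a floor on the decay achievable at each descent; since $\Phi$-initial data is available for $T^l\psi$ only up to $l = 2$ and $\Phi_{(2)}$-data only for $\psi$, the boosted decay rates must propagate down the $T^l\psi$-tower in the correct cascading order—first DR for $T^3\psi$, then the $\Phi$-hierarchy for $T^2\psi$, $T\psi$, $\psi$ in turn, and finally the $\Phi_{(2)}$-hierarchy for $\psi$—with each step requiring the Morawetz floor from the previous stage to have already been driven down. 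Alongside this, one must verify the $r\to\infty$ decay assumptions underlying each Hardy inequality and ensure that the accumulated $\epsilon$-losses from the repeated interpolations can be collectively absorbed, which is exactly what motivates the particular combination of spherical-angular, weighted, and fifth-order norms appearing in $E^\epsilon_1[\psi]$.
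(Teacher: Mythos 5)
Your proposal is correct and follows essentially the same strategy as the paper: a five-level chain of $r^p$-weighted estimates built from the Dafermos--Rodnianski, $\Phi$- and $\Phi_{(2)}$-hierarchies, coupled by the Hardy inequality \eqref{eq:Hardy1} applied to $\partial_r\phi=r^{-2}\Phi$ and $\partial_r\Phi=r^{-2}\Phi_{(2)}$, with pigeonholing, Lemma \ref{lm:interpolation}, and a bootstrap in which the $\sum_k J^T[T^k\psi]$ error terms are driven down by first running the shorter chains on $\psi$, $T\psi$, $T^2\psi$.

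The one place you genuinely depart from the paper is the final stage. The paper's chain is $\Phi_{(2)}@(1-\epsilon)\to\Phi@(2-\epsilon)\to\Phi@(1-\epsilon)\to\phi@2\to\phi@1\to$ Morawetz, using the $\Phi_{(2)}$-hierarchy only once at the top (so that its error term $\sum_{k\leq 2}J^T[T^k\psi]$ only needs to be \emph{bounded}), whereas you descend four levels inside the $\Phi_{(2)}$-hierarchy through $p=1-\epsilon,-\epsilon,-1-\epsilon,-2-\epsilon$ and transfer to $\phi$ only at the end via a double Hardy. This still closes, since the range $p\in(-6,1)$ in Theorem \ref{thm:hierdrpsi2} comfortably accommodates the negative weights and the boundary terms in the Hardy inequalities vanish by Proposition \ref{prop:step0radfields}; the price is that the $\Phi_{(2)}$ error term now appears at depth four of the descent, so you need $\int_{\Sigma_u}J^N[T^2\psi]\cdot n_u\lesssim (1+u)^{-3+\epsilon}$ rather than mere boundedness. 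That rate is available from the $r^{1-\epsilon}(\partial_r T^2\Phi)^2$ flux in $E^{\epsilon}_1[\psi]$ (note that only the $p=1-\epsilon$ level of the $\Phi$-hierarchy is available for $T^2\psi$, not $p=2-\epsilon$ as your stage-two phrasing suggests), so your bookkeeping is consistent with the stated energy norm; the paper's arrangement is simply the more economical one in terms of what it demands of the $T$-commuted quantities.
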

\begin{proof}
Note first of all that the assumption
\begin{equation*}
\sum_{|k|\leq 4}\int_{\Sigma}J^T[\Omega^k\psi]\cdot n_{\Sigma}\,d\mu_{\Sigma}<\infty
\end{equation*}
in Proposition \ref{prop:l2commr2} follows from 
\begin{equation*}
\sum_{|k|\leq 4}\int_{\Sigma}J^T[T^k\psi]\cdot n_{\Sigma}\,d\mu_{\Sigma}<\infty,
\end{equation*}
after applying standard elliptic estimates, which in turn follows from $E^{\epsilon}_{1}[\psi]<\infty$. Moreover, the assumption
\begin{equation*}
\lim_{r\to \infty}r^{-1}\int_{\s^2}\Phi_{(2)}^2\,d\omega<\infty
\end{equation*}
also follows from $E^{\epsilon}_{1}[\psi]<\infty$, after applying the fundamental theorem of calculus in the $r$ variable.

By Proposition \ref{DafRoddecay} we have that
\begin{equation*}
\int_{\Sigma_{u}}J^N[\psi]\cdot n_{u}\,d\mu_{\Sigma_u}\leq C(1+u)^{-2}\left[\sum_{|l|\leq 2}\int_{\Sigma_{0}}J^N[T^l\psi]\cdot n_{0}\,d\mu_{\Sigma_0}+\sum_{|l|\leq 1}\int_{\mathcal{N}_0}r^{2-l}(\partial_rT^l\phi)^2\,d\omega dr\right].
\end{equation*}
By applying \eqref{rphierpsilgeq1} with $p=1-\epsilon$ we moreover have that for all $0\leq u_1<u_2$
\begin{equation*}
\begin{split}
\int_{\mathcal{A}^{u_2}_{u_1}}r^{-\epsilon}(\partial_r(\chi{\Phi}))^2\,d\omega dr du\leq&\: C\int_{\mathcal{N}_{u_1}} r^{1-\epsilon}(\partial_r{\Phi})^2\,d\omega dr+C\sum_{k\leq 1}\int_{\Sigma_{u_1}}J^T[T^{k}\psi]\cdot n_{u_1}\,d\mu_{\Sigma_{u_1}},
\end{split}
\end{equation*}
for $R=R(D)>0$ suitably large. 

We can apply the Hardy inequality \eqref{eq:Hardy1} to estimate
\begin{equation*}
\int_{\mathcal{A}^{u_2}_{u_1}}\chi^2r^{2-\epsilon}(\partial_r{\phi})^2\,d\omega dr du\leq C\int_{\mathcal{A}^{u_2}_{u_1}}r^{-\epsilon}(\partial_r(\chi{\Phi}))^2\,d\omega dr du,
\end{equation*}
where use use that $\lim_{r \to \infty }r^{-1-\epsilon}{\Phi}^2=0$.

Together with \eqref{rphierpsilgeq1} for $p=1-\epsilon$ and $p=2-\epsilon$ we therefore obtain a hierarchy consisting of three estimates. In particular, we apply the pigeonhole principle to infer that there exists a dyadic sequence $\{u_j\}$ such that
\begin{align*}
\int_{\mathcal{N}_{u_j}}r^{2-\epsilon}(\partial_r\phi)^2\,d\omega dr \leq &\:C(1+u_j)^{-1}\left[\int_{\mathcal{N}_{0}} r^{1-\epsilon}(\partial_r{\Phi})^2\,d\omega dr+\sum_{|l|\leq 1}\int_{\Sigma_{0}}J^N[T^l\psi]\cdot n_{{0}}\,d\mu_{\Sigma_0}\right],\\
\int_{\mathcal{N}_{u_j}}r^{1-\epsilon}(\partial_r\phi)^2\,d\omega dr\leq&\: C(1+u_j)^{-2}\left[\int_{\mathcal{N}_{0}} r^{1-\epsilon}(\partial_r{\Phi})^2\,d\omega dr+\sum_{l\leq 1}\int_{\Sigma_{0}}J^N[T^l\psi]\cdot n_{0}\,d\mu_{\Sigma_0}\right]\\
&+C(1+u_j)^{-1}\int_{\Sigma_{u_{j-1}}}J^N[\psi]\cdot n_{u_{j-1}}\,d\mu_{\Sigma_{u_{j-1}}}.
\end{align*}
By Lemma \ref{lm:interpolation} we can interchange the $r^{\epsilon}$ degeneracy on the left-hand side of the second inequality for an $\epsilon$-loss in the decay rate on the right-hand side to obtain
\begin{equation*}
\begin{split}
\int_{\mathcal{N}_{u_j}}r(\partial_r\phi)^2\,d\omega dr\leq &\:C(1+u_j)^{-2+\epsilon}\left[\int_{\mathcal{N}_{0}} r^{1-\epsilon}(\partial_r{\Phi})^2\,d\omega dr+ \sum_{|l|\leq 1}\int_{\Sigma_{0}}J^N[T^l\psi]\cdot n_{0}\,d\mu_{\Sigma_0}\right]\\
&+C(1+u_j)^{-3+\epsilon}\left[\sum_{|l|\leq 2}\int_{\Sigma_{0}}J^N[T^l\psi]\cdot n_{0}\,d\mu_{\Sigma_0}+\sum_{|l|\leq 1}\int_{\mathcal{N}_0}r^{2-l}(\partial_rT^l\phi)^2\,d\omega dr\right].
\end{split}
\end{equation*}
\eqref{Ndecay2}

Together with the Morawetz estimates \eqref{ass:morawetz} and \eqref{ass:morawetzlocal}, \eqref{rphierpsilgeq1} with $p=1$ and \eqref{ass:ebound} we therefore obtain for all $u\geq 0$ the estimate:
\begin{equation*}
\begin{split}
\int_{\Sigma_{u}}J^N[\psi]\cdot n_{u}\,d\mu_{u}\leq&\: C(1+u)^{-3+\epsilon}\Bigg[\sum_{|l|\leq 3}\int_{\Sigma_{0}}J^N[T^l\psi]\cdot n_{0}\,d\mu_{\Sigma_0}+\sum_{|l|\leq 1}\int_{\mathcal{N}_0}r^{2-l}(\partial_rT^{l}\phi)^2+r(\partial_rT^{l+1}\phi)^2\,d\omega dr\\
&+\int_{\mathcal{N}_{0}} r^{1-\epsilon}(\partial_r{\Phi})^2\,d\omega dr\Bigg].
\end{split}
\end{equation*}

We can further apply \eqref{rphierpsilgeq1} with $p=2-\epsilon$ to obtain a fourth estimate in our hierarchy and repeat the arguments above to obtain
\begin{equation*}
\begin{split}
\int_{\Sigma_{u}}J^N[\psi]\cdot n_{\tau}\,d\mu_{u}\leq&\: C(1+u)^{-4+\epsilon}\Bigg[\sum_{|l|\leq 4}\int_{\Sigma_{0}}J^N[T^l\psi]\cdot n_{\Sigma_0}\,d\mu_0+\sum_{|l|\leq 2}\int_{\mathcal{N}_0}r^{2}(\partial_rT^l\phi)^2+r^{}(\partial_rT^{l+1}\phi)^2\,d\omega dr\\
&+\int_{\mathcal{N}_{0}} r^{2-\epsilon}(\partial_r{\Phi})^2+r^{1-\epsilon}(\partial_rT{\Phi})^2\,d\omega dr\Bigg].
\end{split}
\end{equation*}

We apply \eqref{eq:Hardy1} once more to estimate
\begin{equation*}
\int_{\mathcal{A}^{\tau_2}_{\tau_1}}\chi^2r^{2-\epsilon}(\partial_r{\Phi})^2\,d\omega dr du\leq C\int_{\mathcal{A}^{\tau_2}_{\tau_1}}r^{-\epsilon}(\partial_r(\chi{\Phi}_{(2)}))^2\,d\omega dr du,
\end{equation*}
where we used that $\lim_{r\to \infty}r^{-1-\epsilon}\Phi_{(2)}^2=0$.

We can now apply \eqref{rphierPhi2lgeq2a} with $p=1-\epsilon$ to obtain an additional estimate in our hierarchy and arrive at \eqref{eq:edecaylgeq1}.
\end{proof}

\begin{remark}
Instead of using the $r^p$-weighted estimate from Proposition \ref{prop:l2commr2} in the proof of Proposition \ref{decl2}, we can instead split $\psi_1=\psi_{\ell=1}+\psi_{\ell\geq 2}$ and use Proposition \ref{prop:rpphil=1} and \ref{prop:l2commr2v0}. In this way we arrive in particular at a version of Proposition \ref{decl2}, where we can take $\epsilon=0$, but where we require the \emph{second} Newman--Penrose constant $I_1[\psi]$ to vanish; see also Remark \ref{rmk:I1}.
\end{remark}

The following lemma concerns decay estimates of certain $r^p$-weighted energy fluxes that will be necessary when deriving pointwise estimates for the radiation field. It contains estimates that appear in the proof of Proposition \ref{decl2}, after additional application of Lemma \ref{lm:interpolation}. We will omit the proof here.
\begin{lemma}
\label{lm:auxdecaypsi1}
Let $\psi$ be a solution to \eqref{waveequation} with $\int_{\s^2}\psi\,d\omega=0$ emanating from initial data given as in Theorem \ref{thm:extuniq} on $(\mathcal{R}, g)$.

Then, for all $\epsilon> 0$ there exists a suitably large $R>0$ and a constant $C=C(D,R,\epsilon)>0$ such that for all $u\geq 0$
\begin{align}\label{endec:r2n0ineq1}
\int_{\mathcal{N}_u} r^2 (\partial_rT^k \phi )^2 \, dr \leq&\: C \frac{E^{\epsilon}_{1}[\psi]}{(1+ u )^{3-\epsilon}} ,\\
\int_{\mathcal{N}_u} (\partial_r T^k\phi )^2 \, dr \leq&\: C \frac{E^{\epsilon}_{1}[\psi]}{(1+ u )^{5-\epsilon}},
\label{endec:r2n1b}
\end{align}
if the energy norms on the right-hand side are assumed to be finite.
\end{lemma}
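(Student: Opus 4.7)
The plan is to observe that both estimates appear as intermediate consequences of the chain of inequalities already assembled in the proof of Proposition \ref{decl2}, once one supplements the analysis there with Lemma \ref{lm:interpolation} to trade a residual $r^{\epsilon}$-weight for an $(1+u)^{\epsilon}$-loss in the decay rate. I would begin by revisiting the hierarchy used in Proposition \ref{decl2}, which couples Proposition \ref{prop:rpPhiv1}, Proposition \ref{prop:l2commr2}, Proposition \ref{DafRoddecay}, the Hardy inequality \eqref{eq:Hardy1} (used to pass between $(\partial_r\Phi_{(2)})^2$-, $(\partial_r\Phi)^2$- and $(\partial_r\phi)^2$-weighted fluxes after removing the cut-off), and the pigeonhole principle along dyadic sequences $\{u_j\}$. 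Each pigeonholing step trades a power of $(1+u)^{-1}$ for the availability of a spacetime $r^p$-integral on its right-hand side, and each Hardy step reduces the $r$-weight by two while shifting from one potential to the next.

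For the first estimate, I would extract from this chain the pair of intermediate bounds
\[
\int_{\mathcal{N}_u} r^{2-\epsilon}(\partial_r\phi)^2\,dr \lesssim \frac{E^{\epsilon}_{1}[\psi]}{(1+u)^{3}}, \qquad \int_{\mathcal{N}_u} r^{3-\epsilon}(\partial_r\phi)^2\,dr \lesssim \frac{E^{\epsilon}_{1}[\psi]}{(1+u)^{2}},
\]
valid for all $u\geq 0$ after extending dyadic estimates by the monotone comparison provided by Proposition \ref{prop:rpPhiv1} on the interval $[u_j,u]$. Here the first estimate is what Proposition \ref{decl2} obtains after exhausting the hierarchy down to the $\Phi_{(2)}$-level (with $p=1-\epsilon$), followed by two Hardy inequalities and one further pigeonhole step; the second is the penultimate step. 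Applying Lemma \ref{lm:interpolation} with $p=2$ and $q=3$ to these two inequalities then yields
\[
\int_{\mathcal{N}_u} r^{2}(\partial_r\phi)^2\,dr \lesssim \frac{E^{\epsilon}_{1}[\psi]}{(1+u)^{3-\epsilon}}.
\]

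For the second estimate I would proceed analogously, reading off from the same pigeonhole chain the pair
\[
\int_{\mathcal{N}_u} r^{-\epsilon}(\partial_r\phi)^2\,dr \lesssim \frac{E^{\epsilon}_{1}[\psi]}{(1+u)^{5}}, \qquad \int_{\mathcal{N}_u} r^{1-\epsilon}(\partial_r\phi)^2\,dr \lesssim \frac{E^{\epsilon}_{1}[\psi]}{(1+u)^{4}},
\]
obtained by running the hierarchy two more steps (pigeonhole after the last $r^p$-estimate in the chain for $\Phi$, combined with Proposition \ref{DafRoddecay} applied to $T\psi$ to absorb the Morawetz-term on the right-hand side). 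Lemma \ref{lm:interpolation} with $p=0$ and $q=5$ then produces the claimed $(1+u)^{-5+\epsilon}$ decay. The main obstacle is purely bookkeeping: one must verify that at each pigeonhole step the right-hand side remains controlled by the specific norm $E^{\epsilon}_{1}[\psi]$ defined in Proposition \ref{decl2}; in particular, the cut-off error terms generated whenever a Hardy inequality is applied must be reabsorbed via the local Morawetz estimate \eqref{ass:morawetzlocal} into the $T^l$-energy components of $E^{\epsilon}_{1}[\psi]$ with $l\leq 5$, and the hypotheses of Proposition \ref{prop:step0radfields} ensuring the vanishing of the relevant boundary terms in the Hardy inequalities must be checked to be implied by the finiteness of $E^{\epsilon}_{1}[\psi]$.
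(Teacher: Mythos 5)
Your proposal is correct and follows exactly the route the paper intends: the paper omits the proof, stating only that the estimates are the intermediate bounds from the proof of Proposition \ref{decl2} combined with an additional application of the interpolation Lemma \ref{lm:interpolation}, which is precisely what you carry out (your choice of interpolation pairs, e.g.\ $r^{2-\epsilon}$ at rate $(1+u)^{-3}$ against $r^{3-\epsilon}$ at rate $(1+u)^{-2}$ via the Hardy-controlled $\Phi$-flux, matches the hierarchy assembled there). The only cosmetic slip is that the dyadic-to-general-$u$ extension for the $\phi$-level fluxes rests on the Dafermos--Rodnianski estimate of Proposition \ref{prop:rpphiv1} rather than on Proposition \ref{prop:rpPhiv1}, which concerns $\Phi$.
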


\subsection{Energy decay for $T^{k}\psi_{1}$}
\label{sec:energydecaytkpsi1}
In this section, we obtain improved energy decay estimates for $T^k\psi_1$, using the $r^p$-weighted estimates from Section \ref{sec:rphierTkpsi1}.

\label{sec:edecayTkpsi1}
\begin{proposition}[\textbf{Energy decay for $T^k\psi_{1}$}]
\label{prop:edecayTkpsi1}
Let $\psi$ be a solution to \eqref{waveequation} with $\int_{\s^2}\psi\,d\omega=0$ emanating from initial data given as in Theorem \ref{thm:extuniq} on $(\mathcal{R}, g)$.

Let $n\in \N$ and assume additionally that $D(r)=1-2Mr^{-1}+O_{n+2}(r^{-1-\beta})$ for some $\beta>0$.

Assume further that 
\begin{align*}
\lim_{r \to \infty }\sum_{|l|\leq 4+2n}\int_{\s^2}(\Omega^l\phi)^2\,d\omega\big|_{u'=0}<&\:\infty,\\
\lim_{r \to \infty }\sum_{|l|\leq 2+2n}\int_{\s^2}(\Omega^l\Phi)^2\,d\omega\big|_{u'=0}<&\:\infty,\\
\lim_{r \to \infty }\sum_{|l|\leq 2n}\int_{\s^2}r^{-1}\left(\Omega^l\Phi_{(2)}\right)^2\,d\omega\big|_{u'=0}<&\:\infty,
\end{align*}
and
\begin{equation*}
\lim_{r \to \infty }\sum_{|l|\leq 2n-2s}\int_{\s^2}r^{2s+1}\left(\partial_r^s\Omega^l\Phi_{(2)}\right)^2\,d\omega\big|_{u'=0}<\infty,
\end{equation*}
for each $1\leq s\leq n$.

Assume moreover that $E_{1;k}^{\epsilon}[\psi]<\infty$, with
\begin{equation*}
\begin{split}
E_{1;k}^{\epsilon}[\psi]\doteq &\:\sum_{\substack{|\alpha|\leq k\\ l+|\alpha|\leq 5+3k}}\int_{\Sigma_{0}}J^N[T^l\Omega^{\alpha}\psi]\cdot n_{0}\;d\mu_{\Sigma_0}\\
&+\sum_{ l\leq 3+2k}\int_{\mathcal{N}_0}r^{2}(\partial_rT^l\phi)^2+r^{}(\partial_rT^{1+l}\phi)^2\,d\omega dr\\
&+\sum_{ l\leq 2k+1}\int_{\mathcal{N}_{0}} r^{2-\epsilon}(\partial_rT^{l}{\Phi})^2+r^{1-\epsilon}(\partial_rT^{l+1}{\Phi})^2\,d\omega dr\\
&+\sum_{\substack{|\alpha|\leq k\\l+|\alpha|\leq 2k}}\int_{\mathcal{N}_{0}} r^{1-\epsilon}(\partial_rT^{l}\Omega^{\alpha}{\Phi}_{(2)})^2\,d\omega dr\\
&+\sum_{\substack{|\alpha|\leq \max\{0,k-1\}\\ m\leq \max\{k-1,0\}\\ l+|\alpha|\leq k-2m+\min\{k,1\}}}\int_{\mathcal{N}_{0}}r^{1+2m-\epsilon}(\partial_r^{1+m}\Omega^{\alpha}T^{l}{\Phi}_{(2)})^2\,d\omega dr\\
&+\sum_{\substack{|\alpha|\leq \max\{0,k-1\}, m\leq k\\ l+|\alpha|\leq 2k-2m+1}}\int_{\mathcal{N}_{0}} r^{2m-\epsilon}(\partial_r^{1+m}\Omega^{\alpha}T^{l}{\Phi}_{(2)})^2\,d\omega dr\\
&+\int_{\mathcal{N}_{0}} r^{1+2k-\epsilon}(\partial_r^{1+k}{\Phi}_{(2)})^2\,d\omega dr.
\end{split}
\end{equation*}

Then, for all $\epsilon> 0$ there exists a constant $C=C(D,R,\epsilon,n)>0$ such that for all $k\leq n$ and $u\geq 0$
\begin{equation}
\label{eq:edecayTklgeq2}
\begin{split}
\int_{\Sigma_{\tau}}J^N[T^k\psi]\cdot n_{u}\:d\mu_{\Sigma_{u}}\leq&\: CE_{1;k}^{\epsilon}[\psi](1+\tau)^{-5-2k+\epsilon}.
\end{split}
\end{equation}
\end{proposition}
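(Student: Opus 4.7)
The plan is to proceed by induction on $k$, with the base case $k=0$ being Proposition \ref{decl2}. Assume the statement holds for all $j \leq k-1$. Applying Proposition \ref{decl2} directly to $T^k \psi$ would yield only $(1+u)^{-5+\epsilon}$ decay, so the task is to extract the $2k$ additional powers of $u$-decay using the extended hierarchies of Section \ref{sec:rphierTkpsi1}, while consistently tracking the initial-data norms so that they match the definition of $E_{1;k}^{\epsilon}[\psi]$.

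The first part of the chain mimics the proof of Proposition \ref{decl2} but applied to $T^k\psi$: commute \eqref{waveequation} with $T^k$, and apply in sequence the Dafermos--Rodnianski hierarchy for $T^k\phi$ (Proposition \ref{prop:rpphiv1}), the $\Phi$-hierarchy for $T^k\Phi$ (Proposition \ref{prop:rpPhiv1}), and the $\Phi_{(2)}$-hierarchy for $T^k\Phi_{(2)}$ (Proposition \ref{prop:l2commr2}). At each stage I use the Hardy inequality \eqref{eq:Hardy1} to relate $r$-weighted fluxes of lower-order quantities to higher-order ones (exploiting that the limits at $\mathcal{I}^+$ of the relevant quantities vanish with appropriate weights thanks to Proposition \ref{prop:step0radfields} and Proposition \ref{prop:hostep0radfields}), and the pigeonhole principle together with Lemma \ref{lm:interpolation} to convert $r$-weighted fluxes on $\mathcal{N}_u$ into $u$-decay rates. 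Combined with the inductive hypothesis applied to $T^{j}\psi$ for $j\leq k-1$ and with \eqref{ass:morawetz}, \eqref{ass:morawetzlocal}, this produces $(1+u)^{-5+\epsilon}$ decay for the $N$-energy of $T^k\psi$.

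To gain the remaining $2k$ powers of $u$-decay, I extend the hierarchy further by invoking Proposition \ref{prop:rpPhi2commr} for $\partial_r^{j+1}\Phi_{(2)}$ and Proposition \ref{eq:hierTkpsi1} for $\partial_r^{j}T\Phi_{(2)}$, for $j = 1, \ldots, k$. Each of these provides $r^p$-weighted estimates with a range of $p$ that widens by $2$ for each unit increase in $j$, thereby allowing the pigeonhole/interpolation procedure to gain two additional powers of $u$-decay per step. Chaining these estimates (with Hardy inequalities to control $\partial_r^j T \Phi_{(2)}$ by $\partial_r^{j+1} \Phi_{(2)}$, and to control $\partial_r^{j+1}\Phi_{(2)}$ in terms of $\partial_r^{j}\Phi_{(2)}$ together with angular derivatives via Lemma \ref{lm:angmomop} applied to the $|\snabla \Phi_{(2)}|^2$ bulk term) produces, after $2k$ further iterations of pigeonhole plus interpolation, the improved rate $(1+u)^{-5-2k+\epsilon}$. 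The $\Omega^\alpha$-commuted fluxes appearing in $E_{1;k}^{\epsilon}[\psi]$ enter exactly at the step where $|\snabla \partial_r^j \Phi_{(2)}|^2$ integrals must be converted to $(\partial_r^j \Omega^\alpha \Phi_{(2)})^2$ via Lemma \ref{lm:angmomop}.

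The main obstacle is bookkeeping: tracking precisely which initial-data norms enter at each level of the hierarchy, ensuring that they are all controlled by $E_{1;k}^{\epsilon}[\psi]$, and verifying that the $\epsilon$-losses accumulated through the multiple applications of Lemma \ref{lm:interpolation} can be absorbed into a single $\epsilon$ at the end (by choosing the interpolation parameters dyadically). The mixed $\partial_r^{1+m} T^l \Omega^\alpha$ terms in $E_{1;k}^{\epsilon}[\psi]$ (with their specific ranges of $m, l, |\alpha|$) are precisely those arising on the right-hand sides of Propositions \ref{prop:rpPhi2commr} and \ref{eq:hierTkpsi1} when one unrolls the induction; closing the argument amounts to verifying that these ranges are exactly what the induction requires, which is where one needs to be most careful.
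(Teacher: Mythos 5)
Your proposal is correct and follows essentially the same route as the paper: induction on $k$ with base case Proposition \ref{decl2}, extension of the hierarchy via Propositions \ref{prop:rpPhi2commr} and \ref{eq:hierTkpsi1} (each $\partial_r$-commutation widening the admissible $p$-range by $2$), and pigeonhole plus interpolation to convert the extra range into $u$-decay, with the $\Omega$-commuted norms entering through the angular bulk terms exactly as you say. The only difference is organizational: the paper gains precisely two new powers per inductive step by applying the inductive hypothesis to $T\psi$ and appending just two new estimates at the top of the hierarchy (applied to $\partial_r^{n+1}T\Phi_{(2)}$ and $\partial_r^{n+2}\Phi_{(2)}$), rather than unrolling all $2k$ extra levels at once — which keeps the bookkeeping you flag as the main obstacle down to a manageable increment at each stage.
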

\begin{proof}
We will prove \eqref{eq:edecayTklgeq2} by induction. The case $k=0$ follows from \eqref{eq:edecaylgeq1}. Now suppose \eqref{eq:edecayTklgeq2} holds for $k\leq n$. Then \eqref{eq:edecayTklgeq2} also holds for $\psi$ replaced by $T\psi$. 

By Proposition \ref{eq:hierTkpsi1} with $p=2(n+1)-\epsilon$ we have an additional estimate for $T^{n+1}\psi$ in our hierarchy of estimates compared to the hierarchy of estimates for $T^n\psi$: indeed, for all $0\leq u_1<u_2$,
\begin{equation}
\label{eq:pPhi2commT}
\begin{split}
&\int_{\mathcal{A}_{u_1}^{u_2}}r^{1+2n-\epsilon}(\partial_r^{n+1}T\Phi_{(2)})^2 \,d\omega drdu \\
\leq&\: C\int_{\mathcal{N}_{u_1}} r^{2+2n-\epsilon}(\partial_r^{n+2}\Phi_{(2)})^2\,d\omega dr\\
&+ C\sum_{\substack{|\alpha|\leq 1, j+|\alpha|\leq n}}\int_{\mathcal{N}_{u_1}} r^{2n-2j-\epsilon}(\partial_r^{n+1-j}\Omega^{\alpha}\Phi_{(2)})^2\,d\omega dr\\
&+C\sum_{\substack{|\alpha|\leq 1, j+|\alpha|\leq n+3}}J^T[T^{j}\Omega^{\alpha}\psi]\cdot n_{\Sigma_{u_1}}\,d\mu_{\Sigma_{u_1}}.
\end{split}
\end{equation}

We also apply \eqref{eq:pPhi2commr} with $p=1+2m-\epsilon$ to estimate
\begin{equation*}
\begin{split}
\sum_{\substack{|\alpha|\leq \max\{0,n-1\}, m\leq n\\ l+|\alpha|\leq 2n-2m+1}}&\int_{\mathcal{A}_{u_1}^{u_2}} r^{2m-\epsilon}(\partial_r^{1+m}\Omega^{\alpha}T^{l+1}{\Phi}_{(2)})^2\,d\omega dr\\
\leq&\: C\sum_{\substack{|\alpha|\leq \max\{0,n-1\}, m\leq n\\ l+|\alpha|\leq 2n-2m+1}}\int_{\mathcal{N}_{u_1}} r^{1+2m-\epsilon}(\partial_r^{1+m}\Omega^{\alpha}T^{l+1}{\Phi}_{(2)})^2\,d\omega dr\\
&+C\sum_{\substack{|\alpha|\leq \max\{0,n-1\}, j+|\alpha|\leq n+2}}J^T[T^{j}\Omega^{\alpha}\psi]\cdot n_{\Sigma_{u_1}}\,d\mu_{\Sigma_{u_1}}.
\end{split}
\end{equation*}

As a consequence, analogous to the arguments in Proposition \ref{decl2}, we can improve the decay rate in \eqref{eq:edecayTklgeq2} with $k=n$ applied to $T\psi$ by one power to obtain
\begin{equation*}
\begin{split}
\int_{\Sigma_{u}}&J^N[T^{n+1}\psi]\cdot n_{\Sigma_u}\,d\mu_{u}\leq C(1+u)^{-6-2n+\epsilon}\Bigg[\sum_{\substack{|\alpha|\leq n\\ l+|\alpha|\leq 7+3n}}\int_{\Sigma_{0}}J^N[T^{l}\Omega^{\alpha}\psi]\cdot n_{0}\,d\mu_{\Sigma_0}\\
&+\sum_{l\leq 3+2n+1}\int_{\mathcal{N}_0}r^{2}(\partial_rT^{l}\phi)^2+r^{}(\partial_rT^{1+l}\phi)^2\,d\omega dr\\
&+\sum_{ l\leq 2n+2}\int_{\mathcal{N}_{0}} r^{2-\epsilon}(\partial_rT^{l}{\Phi})^2+r^{1-\epsilon}(\partial_rT^{1+l}{\Phi})^2\,d\omega dr\\
&+\sum_{\substack{|\alpha|\leq n\\l+|\alpha|\leq 2n+1}}\int_{\mathcal{N}_{0}} r^{1-\epsilon}(\partial_rT^{l}\Omega^{\alpha}{\Phi}_{(2)})^2\,d\omega dr\\
&+\sum_{\substack{|\alpha|\leq \max\{n-1,0\}\\ m\leq \max\{n-1,0\}\\ l+|\alpha|\leq 2n-2m+1}}\int_{\mathcal{N}_{0}}r^{1+2m-\epsilon}(\partial_r^{1+m}\Omega^{\alpha}T^{l}{\Phi}_{(2)})^2\,d\omega dr\\
&+\sum_{\substack{|\alpha|\leq n, m\leq n\\ l+|\alpha|\leq 2n-2m+2}}\int_{\mathcal{N}_{0}} r^{2m-\epsilon}(\partial_r^{1+m}\Omega^{\alpha}T^{l}{\Phi}_{(2)})^2\,d\omega dr\\
&+\sum_{l\leq 1}\int_{\mathcal{N}_{0}} r^{1+2n-\epsilon}(\partial_r^{1+n}T^l{\Phi}_{(2)})^2\,d\omega dr\Bigg]\\
&+C(1+u)^{-6-2n+\epsilon}\sum_{\substack{|\alpha|\leq \max\{n-1,0\}, m\leq n\\ l+|\alpha|\leq 2n-2m+2}}\int_{\mathcal{N}_{0}} r^{1+2m-\epsilon}(\partial_r^{1+m}\Omega^{\alpha}T^{l}{\Phi}_{(2)})^2\,d\omega dr\\
&+C(1+u)^{-6-2n+\epsilon}\int_{\mathcal{N}_{0}} r^{2+2n-\epsilon}(\partial_r^{n+2}\Phi_{(2)})^2\,d\omega dr.
\end{split}
\end{equation*}
We apply Proposition \ref{eq:hierTkpsi1} and \eqref{eq:pPhi2commr} once more to obtain another estimate in our hierarchy. Therefore, we can gain one more power in $(1+u)^{-1}$:
\begin{equation*}
\begin{split}
\int_{\Sigma_{u}}&J^N[T^{n+1}\psi]\cdot n_{u}\,d\mu_{\Sigma_u}\leq C(1+u)^{-7-2n+\epsilon}\Bigg[\sum_{ l\leq 8+3n}\int_{\Sigma_{0}}J^N[T^{l}\psi]\cdot n_{0}\,d\mu_{\Sigma_0}\\
&+\sum_{l\leq 3+2n+2}\int_{\mathcal{N}_0}r^{2}(\partial_rT^{l}\phi)^2+r^{}(\partial_rT^{1+l}\phi)^2\,d\omega dr\\
&+\sum_{l\leq 2n+3}\int_{\mathcal{N}_{0}} r^{2-\epsilon}(\partial_rT^{l}{\Phi})^2+r^{1-\epsilon}(\partial_rT^{1+l}{\Phi})^2\,d\omega dr\\
&+\sum_{\substack{|\alpha|\leq n\\l+|\alpha|\leq 2n+2}}\int_{\mathcal{N}_{0}} r^{1-\epsilon}(\partial_rT^{l}\Omega^{\alpha}{\Phi}_{(2)})^2\,d\omega dr\\
&+\sum_{\substack{|\alpha|\leq n\\ m\leq n\\ l+|\alpha|\leq 2n-2m+2}}\int_{\mathcal{N}_{0}}r^{1+2m-\epsilon}(\partial_r^{1+m}\Omega^{\alpha}T^{l}{\Phi}_{(2)})^2\,d\omega dr\\
&+\sum_{\substack{|\alpha|\leq n, m\leq n+1\\ l+|\alpha|\leq 2n-2m+3}}\int_{\mathcal{N}_{0}} r^{2m-\epsilon}(\partial_r^{1+m}\Omega^{\alpha}T^{l+1}{\Phi}_{(2)})^2\,d\omega dr\\
&+\sum_{l\leq 2}\int_{\mathcal{N}_{0}} r^{2+2n-\epsilon}(\partial_r^{n+2}T^l\Phi_{(2)})^2\,d\omega dr\Bigg]\\
&+C(1+u)^{-7-2n+\epsilon}\sum_{j\leq n+2}\int_{\mathcal{N}_{0}} r^{3+2n-\epsilon-2j}(\partial_r^{n+2-j}\Phi_{(2)})^2\,d\omega dr.
\end{split}
\end{equation*}
The right-hand side above is equal to the right-hand side of \eqref{eq:edecayTklgeq2} with $k=n+1$.
\end{proof}

\begin{lemma}
\label{lm:auxdecaypsi1Tk}
Let $\psi$ be a solution to \eqref{waveequation} with $\int_{\s^2}\psi\,d\omega=0$ emanating from initial data given as in Theorem \ref{thm:extuniq} on $(\mathcal{R}, g)$. Let $n\in \N$ and assume additionally that $D(r)=1-2Mr^{-1}+O_{n+2}(r^{-1-\beta})$ for some $\beta>0$.

Then, for all $\epsilon> 0$ there exists a suitably large $R>0$ and a constant $C=C(D,R,\epsilon,n)>0$ such that for all $k\leq n$ and $u\geq 0$
\begin{align}\label{endec:r2n0ineq1tk}
\int_{\mathcal{N}_u} r^2 (\partial_rT^k \phi )^2 \, dr \leq&\: C \frac{E^{\epsilon}_{1;k}[\psi]}{(1+ u )^{3+2k-\epsilon}} ,\\
\int_{\mathcal{N}_u} (\partial_r T^k\phi )^2 \, dr \leq&\: C \frac{E^{\epsilon}_{1;k}[\psi]}{(1+ u )^{5+2k-\epsilon}}.
\label{endec:r2n1tk}
\end{align}
\end{lemma}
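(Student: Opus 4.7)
The plan is to mirror the argument that produces Lemma \ref{lm:auxdecaypsi1}, now applied to $T^k\psi_1$ in place of $\psi_1$ and using the higher-order hierarchy of Section \ref{sec:rphierTkpsi1}. Both inequalities will be extracted as intermediate byproducts of the proof of Proposition \ref{prop:edecayTkpsi1}, followed by the interpolation Lemma \ref{lm:interpolation} to trade $r$-degeneracy for $u$-loss. In particular, no new analytic input beyond what is already used to prove Proposition \ref{prop:edecayTkpsi1} is required.

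More concretely, in the inductive proof of Proposition \ref{prop:edecayTkpsi1} one produces, along the dyadic sequence $\{u_j\}$ generated by iterating the pigeonhole principle across the hierarchies from Propositions \ref{prop:rpPhiv1}, \ref{prop:l2commr2}, \ref{prop:rpPhi2commr}, and \ref{eq:hierTkpsi1} applied to $T^k\psi_1$, bounds of the form
\begin{align*}
\int_{\mathcal{N}_{u_j}} r^{2-\epsilon}(\partial_r T^k\phi)^2\,d\omega dr &\leq C\,E_{1;k}^{\epsilon}[\psi](1+u_j)^{-3-2k},\\
\int_{\mathcal{N}_{u_j}} r^{1-\epsilon}(\partial_r T^k\phi)^2\,d\omega dr &\leq C\,E_{1;k}^{\epsilon}[\psi](1+u_j)^{-4-2k}.
\end{align*}
The first inequality of the lemma then follows by applying Lemma \ref{lm:interpolation} with $p=1$, $q=3+2k$, which eliminates the $r^{-\epsilon}$ degeneracy at the price of an $(1+u)^{\epsilon}$ loss, and then passing from the dyadic sequence $\{u_j\}$ to arbitrary $u\geq 0$ by monotonicity of the $r^2$-weighted flux furnished by \eqref{rphierpsidafrod} applied to $T^k\psi_1$ (together with an absorbable Morawetz-type remainder bounded by \eqref{ass:morawetz}).

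For the second inequality, one continues the hierarchy one step further by applying Proposition \ref{prop:rpPhiv1} to $T^k\psi_1$ with $p=-\epsilon$, combining with the already-established decay of $\int_{\mathcal{N}_u}r^{2-\epsilon}(\partial_r T^k\phi)^2\,dr$ to gain two additional powers in $(1+u)^{-1}$, and interpolating once more via Lemma \ref{lm:interpolation} between the $p=-\epsilon$ and $p=1-\epsilon$ bounds. This yields the rate $(1+u)^{-5-2k+\epsilon}$ for $\int_{\mathcal{N}_u}(\partial_r T^k\phi)^2\,dr$, and the extension from the dyadic sequence to all $u\geq 0$ is carried out exactly as for the first inequality.

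The only point requiring care is bookkeeping: one must verify that the weighted initial-data norms accumulated on the right-hand side throughout these applications are controlled by $E_{1;k}^{\epsilon}[\psi]$. This is precisely what the definition of $E_{1;k}^{\epsilon}$ in Section \ref{sec:energydecaytkpsi1} was engineered to ensure. Each Morawetz error arising from \eqref{ass:morawetz} and \eqref{ass:morawetzlocal} in the divergence identities is absorbed by the $T^l\Omega^{\alpha}$-commuted $N$-energy terms already included in $E_{1;k}^{\epsilon}$, and each boundary flux on $\mathcal{N}_0$ coincides with one of the $r$-weighted initial fluxes prescribed by $E_{1;k}^{\epsilon}$. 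Thus the main obstacle is organizational rather than analytic: keeping track, at each induction step, of which derivatives and which $r$-weights one is forced to put into the initial energy.
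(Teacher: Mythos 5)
Your proposal is essentially the paper's own (omitted) proof: the two estimates are exactly the intermediate flux-decay statements generated inside the inductive proof of Proposition \ref{prop:edecayTkpsi1} via the pigeonhole principle over the extended hierarchies, upgraded from the dyadic sequence to all $u$ by the flux monotonicity in the $r^p$-identities, with Lemma \ref{lm:interpolation} used at the end to trade the residual $r^{-\epsilon}$ weight for a $(1+u)^{\epsilon}$ loss; this is precisely the mechanism the paper invokes (and leaves implicit) for the $k=0$ analogue, Lemma \ref{lm:auxdecaypsi1}.

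One bookkeeping correction: to produce the $r^{2}$-weighted flux in \eqref{endec:r2n0ineq1tk} via Lemma \ref{lm:interpolation} you must take $p=2$ and $q=3+2k$ there, which requires the pair $\int_{\mathcal{N}_u}r^{2-\epsilon}(\partial_rT^k\phi)^2\,dr\lesssim (1+u)^{-3-2k}$ \emph{and} $\int_{\mathcal{N}_u}r^{3-\epsilon}(\partial_rT^k\phi)^2\,dr\lesssim (1+u)^{-2-2k}$, not the $(r^{2-\epsilon},r^{1-\epsilon})$ pair you quote (that pair with $p=1$ only yields the $r^{1}$-weighted flux). The missing $r^{3-\epsilon}$-weighted bound is available within your framework: by the Hardy inequality \eqref{eq:Hardy1}, $\int_{\mathcal{N}_u}r^{3-\epsilon}\chi^2(\partial_rT^k\phi)^2\,dr\lesssim \int_{\mathcal{N}_u}r^{1-\epsilon}(\partial_r(\chi T^k\Phi))^2\,dr$, and the latter flux decays at the rate $(1+u)^{-2-2k}$ as one of the intermediate outputs of the commuted hierarchy \eqref{rphierpsilgeq1} together with Proposition \ref{eq:hierTkpsi1}. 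With that adjustment the argument closes as you describe, and the second inequality \eqref{endec:r2n1tk} indeed follows either by your extra hierarchy step or, more directly, from the $N$-energy decay of Proposition \ref{prop:edecayTkpsi1} itself.
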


\subsection{Elliptic estimates}
\label{sec:ellipticest}
In this section, we derive an elliptic estimate for $\psi$ that we we will subsequently use to extend the decay estimates for the energy fluxes with respect to $J^N[T^{k+1}\psi]$ from Section \ref{sec:edecayTkpsi0} and Section \ref{sec:edecayTkpsi1} to the energy fluxes with respect to $D^2J^N[YT^k\psi]$. 

\textbf{In contrast with the previous sections, we will require the additional assumption $D'(r_+)>0$ in the $r=r_+$ case in this section.} In other words, we consider only black hole spacetimes $(\mathcal{R},g)$ such that the surface gravity of the future event horizon $\mathcal{H}^+$ is \emph{positive}. This property is in particular satisfied in sub-extremal Reissner--Nordstr\"om black holes (and in fact, this is precisely the property that characterises sub-extremality).

\begin{lemma}[\textbf{A degenerate elliptic estimate for $\psi$}]
Let $\psi$ be a solution to \eqref{waveequation} on $(\mathcal{R}, g)$, such that $D'(r_+)>0$ in the $r_{\rm min}=r_+$ case. Assume moreover that
\begin{align*}
\lim_{\rho\to \infty}r^{1/2}T\psi=&0,\\
\lim_{\rho\to \infty}r^{1/2}\partial_r\psi=&0.
\end{align*}

Then we can estimate with respect to $(\rho,\theta,\varphi)$ coordinates:
\begin{equation}
\label{eq:ellipticpsi}
\begin{split}
\int_{r_{\rm min}}^{\infty}&\int_{\s^2} D^2r^2(\partial_{\rho}^2\psi)^2+Dr^2|\snabla \partial_{\rho}\psi|^2+r^2|\slashed{\nabla}^2\psi|^2\,d\omega d\rho\Big|_{\widetilde{\tau}=\widetilde{\tau}'}\\
\leq&\: C(D)\int_{r_{\rm min}}^{\infty}\int_{\s^2}r^2(\partial_{\rho} T \psi)^2+O(r^{-2\eta})(T^2\psi)^2\,d\omega  d\rho\Big|_{\widetilde{\tau}=\widetilde{\tau}'},
\end{split}
\end{equation}
for all $\widetilde{\tau}'\in [0,\infty)$.
\end{lemma}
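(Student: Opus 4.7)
The idea is to use $\square_g\psi=0$ itself as an algebraic identity that expresses the ``spacelike'' combination of second-order derivatives appearing on the left of \eqref{eq:ellipticpsi} in terms of $T\partial_\rho\psi$, $T^2\psi$ and lower-order quantities. First I would rewrite the wave equation in the hyperboloidal coordinates $(\widetilde{\tau},\rho,\theta,\varphi)$: starting from the form in ingoing Eddington--Finkelstein coordinates $(v,r,\theta,\varphi)$,
\[
\square_g\psi=2T(\partial_r\psi)+\tfrac{2}{r}T\psi+D\partial_r^2\psi+\bigl(D'+\tfrac{2D}{r}\bigr)\partial_r\psi+\slashed{\Delta}\psi,
\]
and substituting $\partial_r=\partial_\rho-hT$, a direct computation yields the pointwise identity
\[
\mathcal{L}\psi\doteq r^{-2}\partial_\rho\bigl(r^2D\partial_\rho\psi\bigr)+\slashed{\Delta}\psi=A\,T\partial_\rho\psi+B\,T^2\psi+C\,T\psi,
\]
with coefficients $A=2(Dh-1)$, $B=h(2-Dh)$ and $C=(Dh)'+2r^{-1}(Dh-1)$. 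By the conditions on $h$ from Section~\ref{sec:foliations} one has $A=2+O(r^{-1-\eta})$ and $B=O(r^{-1-\eta})$ as $r\to\infty$, and $A,B,C$ are uniformly bounded on $[r_{\min},\infty)$; the operator $\mathcal{L}$ degenerates in the $\partial_\rho^2$ direction exactly like the weight $D^2$ in \eqref{eq:ellipticpsi}.

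The main step is to multiply this identity by $r^2\mathcal{L}\psi$ and integrate over $\mathcal{S}_{\widetilde{\tau}}$ with the coordinate measure $d\omega d\rho$. Expanding
\[
r^2(\mathcal{L}\psi)^2=r^{-2}\bigl(\partial_\rho(r^2D\partial_\rho\psi)\bigr)^2+2\,\partial_\rho(r^2D\partial_\rho\psi)\,\slashed{\Delta}\psi+r^2(\slashed{\Delta}\psi)^2,
\]
the first term produces $r^2D^2(\partial_\rho^2\psi)^2$ together with cross terms of the form $D(r^2D)'(\partial_\rho^2\psi)(\partial_\rho\psi)$ and $[(r^2D)']^2r^{-2}(\partial_\rho\psi)^2$, which one handles by an integration by parts in $\rho$ (the boundary contribution at $r=r_+$ vanishes because $D(r_+)=0$, and at infinity it vanishes from the assumed decay of $r^{1/2}\partial_r\psi$). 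The middle cross term is the key: integrating by parts first on $\s^2$ and then in $\rho$ transfers $\partial_\rho$ onto $\slashed{\nabla}\psi$ and produces
\[
2\int_{\mathcal{S}_{\widetilde{\tau}}}\partial_\rho(r^2D\partial_\rho\psi)\,\slashed{\Delta}\psi\,d\omega d\rho=2\int_{\mathcal{S}_{\widetilde{\tau}}}r^2D|\slashed{\nabla}\partial_\rho\psi|^2\,d\omega d\rho+\text{(l.o.t.)}+\text{(boundary)},
\]
where again the horizon boundary term vanishes from the factor $D$, and the infinity boundary term vanishes by the decay assumptions. Finally, the term $r^2(\slashed{\Delta}\psi)^2$ is converted into $r^2|\slashed{\nabla}^2\psi|^2$ via the two-dimensional Bochner identity on the round spheres of radius $r$, at the cost of an $|\slashed{\nabla}\psi|^2$ error of strictly lower order.

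Bounding the right-hand side of the identity by Cauchy--Schwarz and the asymptotics of $A,B,C$ yields precisely the weights on the right of \eqref{eq:ellipticpsi}: from $A^2=4+O(r^{-1-\eta})$ one obtains $r^2(T\partial_\rho\psi)^2$, while $r^2B^2=O(r^{-2\eta})$ reproduces the weight $O(r^{-2\eta})$ in front of $(T^2\psi)^2$. The main technical obstacle is the disposal of the genuinely lower-order contributions that are \emph{not} present on the right of \eqref{eq:ellipticpsi}: the $r^2C^2(T\psi)^2\sim(T\psi)^2$ piece coming from $C\,T\psi$, the $(\partial_\rho\psi)^2$ and $(T\psi)^2$ contributions generated by the integrations by parts in $\rho$, and the $|\slashed{\nabla}\psi|^2$ remainder from Bochner. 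These will be absorbed by a combination of Hardy-type inequalities in $\rho$ on $[r_{\min},\infty)$ (justified by the assumed limits on $r^{1/2}T\psi$ and $r^{1/2}\partial_r\psi$), the Poincar\'e inequality on the spheres for the angular modes with $\ell\geq 1$, and, near the horizon, the assumption $D'(r_+)>0$: through $(r^2D)'\big|_{r=r_+}=r_+^2D'(r_+)>0$ this provides the redshift structure that makes $\mathcal{L}$ coercive in the stated degenerate weighted norms and, in particular, permits the treatment of the spherically symmetric mode, for which the Poincar\'e inequality is unavailable.
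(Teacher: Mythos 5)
Your proposal is correct and follows essentially the same route as the paper's proof: both rewrite $\square_g\psi=0$ as the degenerate elliptic identity $r^{-2}\partial_\rho(Dr^2\partial_\rho\psi)+\slashed{\Delta}\psi=(2+O(r^{-1-\eta}))T\partial_\rho\psi+O(r^{-1-\eta})T^2\psi+(2r^{-1}+\dots)T\psi$, square and integrate, extract $Dr^2|\snabla\partial_\rho\psi|^2$ from the cross term by integrating by parts in $\rho$ and on $\s^2$, and absorb the lower-order $(\partial_\rho\psi)^2$ and $(T\psi)^2$ remainders via Hardy inequalities together with the simple zero of $D$ at $r_+$ (i.e.\ $D'(r_+)>0$). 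The only cosmetic difference is that you invoke the Bochner identity to pass from $(\slashed{\Delta}\psi)^2$ to $|\slashed{\nabla}^2\psi|^2$, where on the round spheres the Ricci term in fact has a favourable sign, so no extra absorption is needed there.
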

\begin{proof}
By \eqref{waveequation}, we have that
\begin{equation}
\label{eq:waveequationv2}
r^{-2}\partial_r(Dr^2\partial_r\psi)+\slashed{\Delta}\psi=-2\partial_r\partial_v\psi-2r^{-1}\partial_v\psi.
\end{equation}

We can rewrite in terms of $\rho$ derivatives:
\begin{equation*}
r^{-2}\partial_{\rho}(Dr^2\partial_{\rho}\psi)=r^{-2}\partial_r(Dr^2\partial_r\psi)+2hD\partial_{\rho}T\psi-Dh^2T^2\psi+r^{-2}\partial_r(Dr^2h)T\psi,
\end{equation*}
where we used that $\partial_{\rho}=Y=\partial_r+h(r)T$.

Hence,
\begin{equation}
\label{eq:waveequationv3}
\begin{split}
r^{-2}\partial_{\rho}(Dr^2\partial_{\rho}\psi)+\slashed{\Delta}\psi=&\:(2hD-2)\partial_{\rho}T\psi+(2h-Dh^2)T^2\psi\\
&+(r^{-2}\partial_r(Dr^2h)-2r^{-1})T\psi\\
=&\:(2+O(r^{-1-\eta}))\partial_{\rho}T\psi+O(r^{-1-\eta})T^2\psi+(2r^{-1}+O(r^{-2-\eta}))T\psi,
\end{split}
\end{equation}
where we used the properties of $h$ from Section \ref{sec:foliations} to arrive at the second equality of \eqref{eq:waveequationv3}.

By \eqref{eq:waveequationv3}, we have that
\begin{equation}
\label{eq:waveeqest}
\begin{split}
\int_{r_{\rm min}}^{\infty}\int_{\s^2} &r^{-4}(\partial_{\rho}(Dr^2\partial_{\rho}\psi))^2r^2+r^{-2}(\slashed{\Delta}_{\s^2}\psi)^2+2r^{-2}\partial_{\rho}(Dr^2\partial_{\rho}\psi)\slashed{\Delta}_{\s^2}\psi\,d\omega d\rho\\
\leq&\: C\int_{r_{\rm min}}^{\infty}\int_{\s^2}r^2(\partial_{\rho} T \psi)^2+O(r^{-2\eta})(T^2\psi)^2+(T\psi)^2\,d\omega d\rho.
\end{split}
\end{equation}

We can apply \eqref{eq:Hardy3} to estimate
\begin{equation*}
\int_{r_{\rm min}}^{\infty}(T\psi)^2\,d\rho\leq 4\int_{r_{\rm min}}^{\infty}(r-r_{\rm min})^2(\partial_{\rho} T\psi)^2\,d\rho,
\end{equation*}
so that
\begin{equation}
\label{eq:waveeqestv2}
\begin{split}
\int_{r_{\rm min}}^{\infty}\int_{\s^2} &r^{-4}(\partial_{\rho}(Dr^2\partial_{\rho}\psi))^2r^2+r^{-2}(\slashed{\Delta}_{\s^2}\psi)^2+2r^{-2}\partial_{\rho}(Dr^2\partial_{\rho}\psi)\slashed{\Delta}_{\s^2}\psi\,d\omega d\rho\\
\leq&\: C\int_{r_{\rm min}}^{\infty}\int_{\s^2}r^2(\partial_{\rho} T \psi)^2+O(r^{-2\eta})(T^2\psi)^2\,d\omega d\rho.
\end{split}
\end{equation}

We first consider the mixed derivative term on the left-hand side of \eqref{eq:waveeqestv2}. We integrate over $\s^2$ and integrate by parts in $\rho$ and on $\s^2$:
\begin{equation}
\begin{split}
\label{eq:ibpangular}
\int_{r_{\rm min}}^{\infty} \int_{\s^2}2r^{-2}\partial_{\rho}(Dr^2\partial_{\rho}\psi)\slashed{\Delta}_{\s^2}\psi\,d\omega d\rho=&\:\int_{r_{\rm min}}^{\infty} \int_{\s^2}4r^{-1}D\partial_{\rho}\psi\slashed{\Delta}_{\s^2}\psi-2D\partial_{\rho}\psi\slashed{\Delta}_{\s^2}\partial_{\rho}\psi\,d\omega d\rho\\
=&\:\int_{r_{\rm min}}^{\infty} \int_{\s^2}4r^{-1}D\partial_{\rho}\psi\slashed{\Delta}_{\s^2}\psi+2D|\snabla_{\s^2}\partial_{\rho}\psi|^2\,d\omega d\rho,
\end{split}
\end{equation}
where we used that all resulting boundary terms vanish by Proposition \ref{prop:step0radfields}: indeed, if $r_{\rm min}=r_+$, we use that  $D(r_+)=0$ and if $r_{\rm min}=0$, we use that $\lim_{r\to 0}\slashed{\Delta}_{\s^2}f=0$ for any function $f$ that is suitably regular at the centre of symmetry $\{r=0\}$. 

We now apply Cauchy--Schwarz and \eqref{eq:poincare2} to estimate the first term inside the integral on the very right-hand side above (more precisely, we apply \eqref{eq:poincare2} to the $\psi_{1}$ part of $\psi$, using orthogonality property of $\psi_{\ell}$):
\begin{equation*}
\begin{split}
\int_{r_{\rm min}}^{\infty}\int_{\s^2}\left|4r^{-1}D\partial_{\rho}\psi\slashed{\Delta}_{\s^2}\psi\right|\,d\omega d\rho\leq&\: \int_{r_{\rm min}}^{\infty}4D(\partial_{\rho}\psi)^2+r^{-2}D(\slashed{\Delta}_{\s^2}\psi)^2\,d\omega d\rho\\
\leq &\: \int_{r_{\rm min}}^{\infty}2D|\snabla_{\s^2}\partial_{\rho}\psi|^2+r^{-2}D(\slashed{\Delta}_{\s^2}\psi)^2\,d\omega d\rho
\end{split}
\end{equation*}

We use the above estimates together with \eqref{eq:waveeqest} to estimate:
\begin{equation}
\label{eq:mainineqelliptic}
\begin{split}
\int_{r_{\rm min}}^{\infty}\int_{\s^2} &r^{-2}(\partial_{\rho}(Dr^2\partial_{\rho}\psi))^2+r^{-2}(1-|D|)(\slashed{\Delta}_{\s^2}\psi)^2\,d\omega d\rho\\
\leq&\: C\int_{r_{\rm min}}^{\infty}(\partial_{\rho} T \psi)^2r^2+h(r)^2r^2(T^2\psi)^2\,d\omega d\rho.
\end{split}
\end{equation}

Since we assumed $|D|\leq 1$, we can now use the above estimate together with \eqref{eq:waveeqestv2} to estimate
\begin{equation}
\label{eq:mainineqelliptic3}
\begin{split}
\int_{r_{\rm min}}^{\infty}&\int_{\s^2} r^{-2}(\partial_{\rho}(Dr^2\partial_{\rho}\psi))^2+r^{2}(\slashed{\Delta}\psi)^2+Dr^2|\snabla\partial_{\rho}\psi|^2\,d\omega d\rho\\
\leq&\: C\int_{r_{\rm min}}^{\infty}\int_{\s^2}(\partial_{\rho} T \psi)^2r^2+r^2h(r)^2(T^2\psi)^2\,d\omega d\rho.
\end{split}
\end{equation}
Furthermore, we can decompose
\begin{equation*}
\begin{split}
r^{-2}(\partial_{\rho}(Dr^2\partial_{\rho}\psi))^2=&\: r^{-2}\left[Dr^2\partial_{\rho}\partial_{\rho}\psi+\partial_r(Dr^2)\partial_{\rho}\psi\right]^2\\
=&\: r^2D^2(\partial_{\rho}^2\psi)^2+r^{-2}(\partial_r(Dr^2))^2(\partial_{\rho}\psi)^2+2D\partial_r(Dr^2)\partial_{\rho}\psi\partial_{\rho}^2\psi.
\end{split}
\end{equation*}
We integrate the mixed term by parts:
\begin{equation*}
\begin{split}
\int_{r_{\rm min}}^{\infty}2D\partial_r(Dr^2)\partial_{\rho}\psi\partial_{\rho}^2\psi\,d\rho=&\:\int_{r_{\rm min}}^{\infty}D\partial_r(Dr^2)\partial_{\rho}((\partial_{\rho}\psi)^2)\,d\rho\\
= &\: -D\partial_r(Dr^2)(\partial_{\rho}\psi)^2|_{\rho=r_{\rm min}}-\int_{r_{\rm min}}^{\infty}\partial_{r}(D\partial_r(Dr^2))(\partial_{\rho}\psi)^2\,d\rho,
\end{split}
\end{equation*}
where we used that $\lim_{\rho\to \infty}D\partial_r(Dr^2)(\partial_{\rho}\psi)^2=0$.

Since moreover $D\partial_r(Dr^2)(\partial_{\rho}\psi)^2|_{\rho=r_{\rm min}}=0$ if $r_{\rm min}=r_+$ or $r_{\rm min}=0$, we are left with:
\begin{equation*}
\begin{split}
\int_{r_{\rm min}}^{\infty} r^2D^2(\partial_{\rho}^2\psi)^2\,d\rho=&\:\int_{r_{\rm min}}^{\infty} r^{-2}(\partial_{\rho}(Dr^2\partial_{\rho}\psi))^2\,d\rho\\
&+\int_{r_{\rm min}}^{\infty} \left[-r^{-2}(\partial_r(Dr^2))^2+\partial_r(D\partial_r(Dr^2))\right](\partial_{\rho}\psi)^2\,d\rho.
\end{split}
\end{equation*}
We can further write,
\begin{equation*}
\begin{split}
\partial_r(D\partial_r(Dr^2))=&\:r^{-2}\partial_r(Dr^2\partial_r(Dr^2))-2r^{-1}D\partial_r(Dr^2)\\
=&\:r^{-2}(\partial_r(Dr^2))^2+D\partial_r^2(Dr^2)-2r^{-1}D\partial_r(Dr^2).
\end{split}
\end{equation*}
Hence,
\begin{equation}
\label{eq:ellipticestv0}
\begin{split}
\int_{r_{\rm min}}^{\infty}\int_{\s^2} D^2r^2(\partial_{\rho}^2 \psi)^2+r^{-2}(\slashed{\Delta}_{\s^2}\psi)^2\,d\omega d\rho\leq&\: C(D)\int_{r_{\rm min}}^{\infty}\int_{\s^2}r^2(\partial_{\rho} T \psi)^2+o(r^{-2\eta})(T^2\psi)^2\,d\omega d\rho\\
&-\int_{r_{\rm min}}^{\infty}\int_{\s^2} F(r,D)(\partial_{\rho}\psi)^2\,d\omega d\rho,
\end{split}
\end{equation}
with
\begin{equation*}
F(D,r)=D(2r^{-1}\partial_r(Dr^2)-\partial_r^2(Dr^2)).
\end{equation*}
As $D=1-2Mr^{-1}+O(r^{-1-\beta})$, we find that there exists a $R>0$ suitably large, such that $F(D,r)\geq 0$ for all $r\geq R$. 

Suppose now that $r_{\rm min}=r_+$. For all $\epsilon>0$, there exists a $\delta>0$, such that for $|r-r_{+}|<\delta$
\begin{equation*}
|F(D,r)|\leq \epsilon.
\end{equation*}
As a consequence, we can apply \eqref{eq:Hardy3} to estimate
\begin{equation*}
\begin{split}
 \int_{r_{+}}^{r_++\delta}|F(D,r)|(\partial_{\rho}\psi)^2\,d\rho\leq &\:4\epsilon \int_{r_{\rm min}}^{\infty}(r-r_+)^2(\partial_{\rho}^2\psi)^2\,d\rho\\
 \leq & C \epsilon \int_{r_{\rm min}}^{\infty}D^2r^2(\partial_{\rho}^2\psi)^2\,d\rho,
 \end{split}
\end{equation*}
where we used in the last inequality that $D$ has a simple zero at $r=r_+$, as $D'(r_+)>0$ by assumption. For $\epsilon=\epsilon_0(D)$ suitably small, we can absorb the very right-hand side of the above equation into the left-hand side of \eqref{eq:ellipticestv0}, if we take $\delta=\delta_0(D)>0$ suitably small.

We are left with the region $\{r_++\delta_0\leq r\leq R\}$. Here, we can simply apply \eqref{eq:Hardy1} as follows:
\begin{equation*}
\begin{split}
\int_{r_++\delta_0}^R |F(D,r)|(\partial_r\psi)^2\,d\rho\leq &\:C(D,\delta_0,R)\int_{r_+}^{\infty}r^{-4}(Dr^2\partial_{\rho}\psi)^2\,d\rho\\
\leq& \:C(D,\delta_0,R)\int_{r_+}^{\infty}r^{-2}(\partial_{\rho}(Dr^2\partial_{\rho}\psi))^2\,d\rho.
\end{split}
\end{equation*}
We estimate the right-hand side by applying \eqref{eq:mainineqelliptic}.

Now, take $r_{\rm min}=0$. Then $D$ is uniformly bounded away from 0, so we can directly apply a Hardy inequality as above:
\begin{equation*}
\begin{split}
\int_{0}^R |F(D,r)|(\partial_{\rho}\psi_0)^2\,d\rho\leq &\:C(D,R)\int_{0}^{\infty}r^{-4}(Dr^2\partial_{\rho}\psi)^2\,d\rho\\
\leq& \:C(D,R)\int_{0}^{\infty}r^{-2}(\partial_{\rho}(Dr^2\partial_{\rho}\psi))^2\,d\rho.
\end{split}
\end{equation*}

The estimate \eqref{eq:ellipticpsi} now follows.
\end{proof}

\section{Pointwise decay estimates}
\label{sec:pointwise}
Pointwise decay estimates for $\psi$ and $r\psi$ follow from the energy decay estimates from Section \ref{sec:energydecay} together with a suitable application of the fundamental theorem of calculus and the Sobolev inequality \eqref{eq:sobolevs2}. In order to obtain an almost-sharp (with $\epsilon$ loss) pointwise decay rate for $\psi$, we additionally require the elliptic estimate from Section \ref{sec:ellipticest}.

\subsection{Pointwise decay for $\psi$ }
In this section we prove polynomial pointwise time-decay for $\psi$.
\begin{proposition}
\label{prop:pointdecpsi}
Let $\psi$ be a solution to \eqref{waveequation} emanating from initial data given as in Theorem \ref{thm:extuniq} on $(\mathcal{R}, g)$, such that $D'(r_+)>0$ in the $r_{\rm min}=r_+$ case.

Assume further that $E^{\epsilon}_{0,I_0\neq0;1}[\psi]<\infty$, $E^{\epsilon}_{0,I_0=0;1}[\psi]<\infty$ and $\sum_{|l|\leq 2}E_{1;1}^{\epsilon}[\Omega^{l}\psi]<\infty$, where these energies are defined in Proposition \ref{prop:endec2} and \ref{prop:edecayTkpsi1}. Assume also that 
\begin{align*}
\lim_{r \to \infty }\sum_{|l|\leq 8}\int_{\s^2}(\Omega^l\phi)^2\,d\omega\big|_{u'=0}<&\:\infty,\\
\lim_{r \to \infty }\sum_{|l|\leq 6}\int_{\s^2}(\Omega^l\Phi)^2\,d\omega\big|_{u'=0}<&\:\infty,\\
\lim_{r \to \infty }\sum_{|l|\leq 4}\int_{\s^2}r^{-1}\left(\Omega^l\Phi_{(2)}\right)^2\,d\omega\big|_{u'=0}<&\:\infty,
\end{align*}
and
\begin{equation*}
\lim_{r \to \infty }\sum_{|l|\leq 2}\int_{\s^2}r^{3}\left(\partial_r^s\Omega^l\Phi_{(2)}\right)^2\,d\omega\big|_{u'=0}<\infty,
\end{equation*}

Then, for all $\epsilon>0$ there exists a constant $C=C(D,R,\epsilon)>0$ such that for all $\widetilde{\tau}\geq 0$
\begin{align}
\label{eq:pointpsi1}
|\psi|(\widetilde{\tau},\rho,\theta,\varphi)\leq C (1+\widetilde{\tau})^{-2+\epsilon}\left[\sqrt{E^{\epsilon}_{0,I_0\neq0;1}[\psi]}+\sum_{ |\alpha|\leq 2}\sqrt{E^{\epsilon}_{1;1}[\Omega^{\alpha}\psi]}\right] \quad \textnormal{if}\quad I_0[\psi]\neq 0,\\
\label{eq:pointpsi2}
|\psi|(\widetilde{\tau},\rho,\theta,\varphi)\leq C (1+\widetilde{\tau})^{-3+\epsilon}\left[\sum_{l\leq 1}\sqrt{E^{\epsilon}_{0,I_0=0;1}[\psi]}+\sum_{|\alpha|\leq 2}\sqrt{E^{\epsilon}_{1;1}[\Omega^{\alpha}\psi]}\right] \quad \textnormal{if}\quad I_0[\psi]= 0,
\end{align}
\end{proposition}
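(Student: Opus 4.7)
The plan is to combine four inputs: the pointwise decay of $r\psi$ on hyperboloidal slices (Theorem \ref{thm:pdecayradfield}), the $N$-energy decay of $\psi$ and $T\psi$ (Propositions \ref{prop:endec2} and \ref{prop:edecayTkpsi1}), the degenerate elliptic estimate \eqref{eq:ellipticpsi}, and the Sobolev embedding on $\s^2$ (Lemma \ref{lm:sobolevs2}). Fix $\widetilde{\tau}\geq 0$, $\omega\in\s^2$, set $\rho_*(\widetilde{\tau})\doteq 1+\widetilde{\tau}$, and write $m=1$ if $I_0[\psi]\neq 0$ and $m=2$ if $I_0[\psi]=0$, so that the target exponent is $-(m+1)+\epsilon$. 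In the far regime $\rho\geq\rho_*$ the bound $|\psi|\leq\rho^{-1}|r\psi|$ together with Theorem \ref{thm:pdecayradfield} immediately gives $|\psi|\lesssim \rho^{-1}(1+\widetilde{\tau})^{-m+\epsilon}\leq (1+\widetilde{\tau})^{-(m+1)+\epsilon}$.

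In the near regime $r_{\min}\leq\rho\leq \rho_*$ apply the fundamental theorem of calculus in $\rho$ starting at $\rho_*$:
\begin{equation*}
\psi(\widetilde{\tau},\rho,\omega)=\psi(\widetilde{\tau},\rho_*,\omega)-\int_\rho^{\rho_*}\partial_{\rho'}\psi(\widetilde{\tau},\rho',\omega)\,d\rho'.
\end{equation*}
The boundary value is controlled by the far-regime bound. For the integral, the key step is the pointwise estimate
\begin{equation*}
|\partial_\rho\psi|(\widetilde{\tau},\rho,\omega)\lesssim\rho^{-1/2}(1+\widetilde{\tau})^{-(m+3/2)+\epsilon/2},
\end{equation*}
because its integral from $\rho$ to $\rho_*=1+\widetilde{\tau}$ is at most $C\rho_*^{1/2}(1+\widetilde{\tau})^{-(m+3/2)+\epsilon/2}=C(1+\widetilde{\tau})^{-(m+1)+\epsilon/2}$, matching the target. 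This choice $\rho_*=1+\widetilde{\tau}$ is precisely what balances the $\rho_*^{-1}$-gain of the far-regime estimate against the $\rho_*^{1/2}$-loss coming from Cauchy--Schwarz in the near regime.

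To prove the pointwise estimate on $\partial_\rho\psi$, apply the fundamental theorem of calculus a second time in $\rho'$, using that $\partial_{\rho'}\psi\to 0$ as $\rho'\to\infty$ (a consequence of Proposition \ref{prop:step0radfields} together with Theorem \ref{thm:pdecayradfield}), followed by Cauchy--Schwarz with weight $r^{-2}/r^2$ and the $\s^2$-Sobolev inequality applied pointwise in $\rho'$. The outcome is
\begin{equation*}
|\partial_\rho\psi|^2(\widetilde{\tau},\rho,\omega)\lesssim\rho^{-1}\sum_{|\alpha|\leq 2}\int_\rho^\infty\int_{\s^2}r^2(\partial_{\rho'}^2\Omega^\alpha\psi)^2\,d\omega\,d\rho'.
\end{equation*}
The degenerate elliptic estimate \eqref{eq:ellipticpsi}, applied to each $\Omega^\alpha\psi$, bounds the right-hand side (modulo controlled lower-order $T^2$-contributions) by the $\mathcal{S}_{\widetilde{\tau}}$-energy of $T\Omega^\alpha\psi$ for $|\alpha|\leq 2$. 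By Propositions \ref{prop:endec2} and \ref{prop:edecayTkpsi1}, applied with one extra $T$-commutation and up to two $\Omega$-commutations -- which is exactly the content of the hypotheses $E^{\epsilon}_{0,I_0\neq 0;1}[\psi_0]<\infty$ (or $E^{\epsilon}_{0,I_0=0;1}[\psi_0]<\infty$) and $\sum_{|\alpha|\leq 2}E^{\epsilon}_{1;1}[\Omega^\alpha\psi_1]<\infty$ -- this energy decays like $(1+\widetilde{\tau})^{-(2m+3)+\epsilon}$, and taking the square root yields the desired pointwise bound. The initial-data limit hypotheses in the statement are what allows Proposition \ref{prop:step0radfields} (and its higher-order variant) to be invoked for $\Omega^\alpha\psi$ up to the required number of derivatives.

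The principal obstacle is the $D^2$-degeneration in \eqref{eq:ellipticpsi} at the event horizon $\{r=r_+\}$ in the sub-extremal case, which is precisely why the hypothesis $D'(r_+)>0$ appears. To handle this, split the near region at $r_++\delta$ for a small fixed $\delta>0$: on the outer slab $\{r_++\delta\leq\rho\leq\rho_*\}$ the weight $D^2$ is uniformly bounded below and the scheme above applies directly; on the thin inner slab $\{r_+\leq\rho\leq r_++\delta\}$, which is absent in the $r_{\min}=0$ case, bypass the elliptic estimate by combining the redshift built into $N$ with the local Morawetz estimate \eqref{ass:morawetzlocal} and $\s^2$-Sobolev to pass directly from the already established $\widetilde{\tau}$-decay of $N$-energies to pointwise decay, noting that the $\rho$-length of the slab is of unit size and so introduces no additional time-loss. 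A minor bookkeeping point is the conversion from the $\Sigma_u$-based energy decay statements of Section \ref{sec:energydecay} to the hyperboloidal $\mathcal{S}_{\widetilde{\tau}}$-based decay used above, which is standard and follows from \eqref{eq:comptimes}, the Killing property of $T$, and the energy boundedness assumption \eqref{ass:ebound}.
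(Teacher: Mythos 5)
Your far-region argument and the outer part of your near-region argument are sound, but there is a genuine gap in the thin slab $\{r_+\leq\rho\leq r_++\delta\}$ next to the event horizon, and it sits exactly where your scheme structurally departs from what is needed. Your argument rests on the pointwise bound $|\partial_\rho\psi|\lesssim\rho^{-1/2}(1+\widetilde{\tau})^{-(m+3/2)+\epsilon/2}$, obtained by Cauchy--Schwarz against the \emph{non-degenerate} flux $\int r^2(\partial_{\rho}^2\Omega^{\alpha}\psi)^2$. The elliptic estimate \eqref{eq:ellipticpsi} only controls the \emph{degenerate} flux $\int D^2r^2(\partial_{\rho}^2\psi)^2$, and near $r=r_+$ the factor $D^2$ vanishes, so the quantity you need is simply not controlled there. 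Your proposed bypass --- passing ``directly from the already established $\widetilde{\tau}$-decay of $N$-energies to pointwise decay'' on the inner slab --- cannot produce the stated rate: from $N$-energy decay of $\psi$ alone, the fundamental theorem of calculus plus Cauchy--Schwarz (with the Hardy inequality \eqref{eq:Hardy3} to handle the zeroth-order factor) yields at best $|\psi|^2\lesssim E^N[\psi]\sim\widetilde{\tau}^{-(2m+1)+\epsilon}$ in your notation, i.e.\ $|\psi|\lesssim\widetilde{\tau}^{-(m+1/2)+\epsilon}$, which is half a power short. Recovering the full power requires trading a $\rho$-derivative for a $T$-derivative --- i.e.\ some form of the elliptic estimate --- precisely in the region where you discard it. A further problem is that the local Morawetz estimate \eqref{ass:morawetzlocal} you invoke on the inner slab is only assumed for $\widetilde{R}\geq R$, i.e.\ in the far region $\mathcal{A}$, so it is not available near the horizon.

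The paper's proof avoids all of this by never asking for a pointwise bound on $\partial_{\rho}\psi$. It applies the fundamental theorem of calculus to $\int_{\s^2}\psi^2$ over the whole slice from $\rho$ to $\infty$, then Cauchy--Schwarz, and then the Hardy inequalities \eqref{eq:Hardy1} and \eqref{eq:Hardy3} to convert \emph{both} resulting $L^2_{\rho}$ factors into the degenerate fluxes $\int D^2r^2(\partial_{\rho}\psi)^2$ and $\int D^2r^2(\partial_{\rho}^2\psi)^2$; the degeneracy is harmless because $(r-r_+)^2\lesssim D^2r^2$ when $D'(r_+)>0$. The first factor is comparable to the $T$-energy on $\mathcal{S}_{\widetilde{\tau}}$ via \eqref{eq:Tcurrent3}, the second is bounded by the degenerate elliptic estimate in terms of the $T$-energy of $T\psi$, and the geometric mean of the two decay rates gives the claimed exponent uniformly down to the horizon --- with no near/far splitting and no use of the radiation-field decay. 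To repair your argument you would either need to adopt this $L^2$-in-$\rho$ structure on the inner slab, or prove a non-degenerate second-order estimate near the horizon, which would require a genuine redshift commutation argument that your proposal does not supply.
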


\begin{proof}
We apply the fundamental theorem of calculus to $\psi^2$ in the $\rho$-direction and integrate over $\s^2$ to obtain
\begin{equation*}
\begin{split}
\int_{\s^2}\psi^2(\widetilde{\tau},\rho,\theta,\varphi)\,d\omega=&\:2\int_{\rho}^{\infty}\int_{\s^2} \psi\partial_{\rho}\psi\,d\omega d\rho\Big|_{\widetilde{\tau}'=\widetilde{\tau}}\\
\leq &\:2 \sqrt{\int_{\rho}^{\infty}\int_{\s^2} \psi^2\,d\omega d\rho}\cdot  \sqrt{\int_{\rho}^{\infty}\int_{\s^2} (\partial_{\rho}\psi)^2\,d\omega d\rho}\Big|_{\widetilde{\tau}'=\widetilde{\tau}}\\
\leq &\:C \sqrt{\int_{r_{\rm min}}^{\infty}\int_{\s^2} D^2r^2(\partial_\rho\psi)^2\,d\omega d\rho}\cdot  \sqrt{\int_{r_{\rm min}}^{\infty}\int_{\s^2} r^2D^2(\partial_\rho^2\psi)^2\,d\omega d\rho}\Big|_{\widetilde{\tau}'=\widetilde{\tau}}.
\end{split}
\end{equation*}
By the above equation together with \eqref{eq:ellipticpsi} and \eqref{eq:Tcurrent3} we can conclude that
\begin{equation*}
\int_{\s^2}\psi^2(\widetilde{\tau},\rho,\theta,\varphi)\,d\omega\leq \sqrt{\int_{\mathcal{S}_{\widetilde{\tau}}}J^T[\psi]\cdot n_{\widetilde{\tau}} \, d\mu_{\mathcal{S}_{\widetilde{\tau}}}}\cdot \sqrt{\int_{\mathcal{S}_{\widetilde{\tau}}}J^T[T\psi]\cdot n_{\widetilde{\tau}}\, d\mu_{\mathcal{S}_{\widetilde{\tau}}} }.
\end{equation*}
The estimates \eqref{eq:pointpsi1} and \eqref{eq:pointpsi2} now follow by applying Lemma \ref{lm:sobolevs2} and commuting $\square_g$ with $\Omega^{\alpha}$ and $T^k$ and using the energy decay from Proposition \ref{prop:endec1} and \ref{prop:endec2} for the $\psi_0$ part of $\psi$ and Proposition \ref{decl2} and \ref{prop:edecayTkpsi1} for the $\psi_1$ part. We use moreover that $\widetilde{\tau}\sim \tau$.
\end{proof}

\subsection{Pointwise decay for $T^{k}\psi$ }
\label{sec:pointdecTkpsi}
We moreover obtain improved decay estimates if we consider $T^k\psi$ with $k\geq 1$ instead of $\psi$.
\begin{proposition}
\label{prop:pointdecpsi}
Let $\psi$ be a solution to \eqref{waveequation} emanating from initial data given as in Theorem \ref{thm:extuniq} on $(\mathcal{R}, g)$, such that $D'(r_+)>0$ in the $r_{\rm min}=r_+$ case. Let $n\in \N_0$ and assume moreover that
$D(r)=1-2Mr^{-1}+O_{n+2}(r^{-1-\beta})$, with $\beta>0$.

Assume further that $E^{\epsilon}_{0,I_0\neq0;n+1}[\psi]<\infty$, $E^{\epsilon}_{0,I_0=0;n+1}[\psi]<\infty$ and $\sum_{|l|\leq 2}E^{\epsilon}_{1;n+1}[\Omega^l\psi]<\infty$, where these energies are defined in Proposition \ref{prop:endec2} and \ref{prop:edecayTkpsi1}. Assume also that 
\begin{align*}
\lim_{r \to \infty }\sum_{|l|\leq 8+2n}\int_{\s^2}(\Omega^l\phi)^2\,d\omega\big|_{u'=0}<&\:\infty,\\
\lim_{r \to \infty }\sum_{|l|\leq 6+2n}\int_{\s^2}(\Omega^l\Phi)^2\,d\omega\big|_{u'=0}<&\:\infty,\\
\lim_{r \to \infty }\sum_{|l|\leq 4+2n}\int_{\s^2}r^{-1}\left(\Omega^l\Phi_{(2)}\right)^2\,d\omega\big|_{u'=0}<&\:\infty,
\end{align*}
and
\begin{equation*}
\lim_{r \to \infty }\sum_{|l|\leq 2+2n-2s}\int_{\s^2}r^{2s+1}\left(\partial_r^s\Omega^l\Phi_{(2)}\right)^2\,d\omega\big|_{u'=0}<\infty,
\end{equation*}
for each $1\leq s\leq n$.

For all $\epsilon>0$ there exists a constant $C=C(D,R,\epsilon,n)>0$ such that for all $0\leq k\leq n$ and $\widetilde{\tau}\geq 0$
\begin{align}
\label{eq:pointrtkpsi1}
|T^k\psi|(\widetilde{\tau},\rho,\theta,\varphi)\leq C\sqrt{E^{\epsilon}_{0,I_0\neq0;k+1}[\psi]+\sum_{|\alpha|\leq 2}E^{\epsilon}_{1;k+1}[\Omega^{\alpha}\psi]}(1+\widetilde{\tau})^{-2-k+\epsilon} \quad \textnormal{if}\quad I_0[\psi]\neq 0,\\
\label{eq:pointrtkpsi2}
|T^k\psi|(\widetilde{\tau},\rho,\theta,\varphi)\leq C\sqrt{E^{\epsilon}_{0,I_0=0;k+1}[\psi]+\sum_{|\alpha|\leq 2}E^{\epsilon}_{1;k+1}[\Omega^{\alpha}\psi]}(1+\widetilde{\tau})^{-3-k+\epsilon} \quad \textnormal{if}\quad I_0[\psi]= 0.
\end{align}
\end{proposition}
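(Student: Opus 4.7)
The plan is to run the same argument as in the proof of the preceding proposition (the $k=0$ case), with $\psi$ replaced by $T^k\psi$ throughout. Because $T$ is Killing, $T^k\psi$ is again a solution to \eqref{waveequation}, and the commutators $[T^k,\Omega^\alpha]$ and $[T^k,\square_g]$ vanish, so every estimate that was applied to $\psi$ (and $\Omega^\alpha\psi$) can be applied verbatim to $T^k\psi$ (and $\Omega^\alpha T^k\psi$) provided the relevant data norms are finite at the correspondingly higher level. First I would apply the fundamental theorem of calculus in the $\rho$-direction to $(T^k\psi)^2$ followed by Cauchy--Schwarz and \eqref{eq:Hardy1} to obtain, exactly as for $\psi$,
\begin{equation*}
\int_{\s^2}(T^k\psi)^2(\widetilde{\tau},\rho,\theta,\varphi)\,d\omega\leq C\sqrt{\int_{r_{\rm min}}^{\infty}\!\!\int_{\s^2}D^2r^2(\partial_\rho T^k\psi)^2\,d\omega d\rho\Big|_{\widetilde{\tau}}}\cdot\sqrt{\int_{r_{\rm min}}^{\infty}\!\!\int_{\s^2}D^2r^2(\partial_\rho^2 T^k\psi)^2\,d\omega d\rho\Big|_{\widetilde{\tau}}}.
\end{equation*}

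Next, using that the $J^T$-flux on $\mathcal{S}_{\widetilde{\tau}}$ controls the first weighted integral on the right, and applying the degenerate elliptic estimate \eqref{eq:ellipticpsi} to $T^k\psi$ (which controls $\int D^2 r^2(\partial_\rho^2 T^k\psi)^2$ by $\int r^2(\partial_\rho T^{k+1}\psi)^2 + O(r^{-2\eta})(T^{k+2}\psi)^2$, with the last term absorbed via \eqref{eq:Hardy3}) gives
\begin{equation*}
\int_{\s^2}(T^k\psi)^2(\widetilde{\tau},\rho,\theta,\varphi)\,d\omega\leq C\sqrt{\int_{\mathcal{S}_{\widetilde{\tau}}}J^T[T^k\psi]\cdot n_{\widetilde{\tau}}\,d\mu_{\mathcal{S}_{\widetilde{\tau}}}}\cdot\sqrt{\int_{\mathcal{S}_{\widetilde{\tau}}}J^T[T^{k+1}\psi]\cdot n_{\widetilde{\tau}}\,d\mu_{\mathcal{S}_{\widetilde{\tau}}}}.
\end{equation*}
I would then promote this $L^2(\s^2)$ bound to an $L^\infty$ bound via the Sobolev inequality of Lemma \ref{lm:sobolevs2}, which forces commutation with $\Omega^\alpha$ for $|\alpha|\leq 2$; since $J^N$ dominates $J^T$, this introduces the sum $\sum_{|\alpha|\leq 2}\int J^N[\Omega^\alpha T^l\psi]\cdot n_{\widetilde{\tau}}$ for $l\in\{k,k+1\}$ on the right-hand side.

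Finally, I would split $\psi=\psi_0+\psi_1$ and invoke Theorem \ref{thm:edecayTkpsi}---that is, Proposition \ref{prop:endec2} for the spherical mean $\psi_0$ and Proposition \ref{prop:edecayTkpsi1} for the higher angular frequencies $\psi_1$---simultaneously at the levels $T^k$ and $T^{k+1}$. Using $\widetilde{\tau}\sim\tau$ from \eqref{eq:comptimes}, in the case $I_0[\psi]\neq 0$ the product of the two square roots decays at the rate
\begin{equation*}
(1+\widetilde{\tau})^{-\tfrac{1}{2}(2k+3-\epsilon)-\tfrac{1}{2}(2(k+1)+3-\epsilon)}=(1+\widetilde{\tau})^{-(k+2)+\epsilon},
\end{equation*}
and analogously $(1+\widetilde{\tau})^{-(k+3)+\epsilon}$ in the case $I_0[\psi]=0$, which yields \eqref{eq:pointrtkpsi1} and \eqref{eq:pointrtkpsi2} after renaming $\epsilon$.

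The main (although purely bookkeeping) obstacle is verifying that $E^{\epsilon}_{0,I_0\neq 0;k+1}[\psi_0]+\sum_{|\alpha|\leq 2}E^{\epsilon}_{1;k+1}[\Omega^{\alpha}\psi_1]$ (respectively with the $I_0=0$ norm) genuinely dominates every initial data quantity required by Proposition \ref{prop:endec2} and Proposition \ref{prop:edecayTkpsi1} at level $k+1$ applied to $\Omega^\alpha\psi$ with $|\alpha|\leq 2$, together with the limits at null infinity appearing in their hypotheses. This is precisely why the index in the initial energies in the statement is $k+1$ rather than $k$, and why the angular limit assumptions are stated up to $\Omega^l$ with $l$ two orders larger than the corresponding ones for the proposition controlling $rT^k\psi$; matching these up with the precise norms in the definitions of $E^{\epsilon}_{0,\cdot;k+1}$ and $E^{\epsilon}_{1;k+1}$ is the one place where care is needed.
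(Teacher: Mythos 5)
Your proposal follows the paper's proof exactly: the paper's argument for this proposition is literally ``proceed as in the proof of the $k=0$ case, using the energy decay estimates for $T^k\psi$ from Propositions \ref{prop:endec2} and \ref{prop:edecayTkpsi1}'', i.e.\ the same fundamental-theorem-of-calculus-in-$\rho$ / Cauchy--Schwarz / Hardy / elliptic-estimate / Sobolev chain applied to $T^k\psi$, with the product of the $J^T[T^k\psi]$- and $J^T[T^{k+1}\psi]$-fluxes on $\mathcal{S}_{\widetilde{\tau}}$ supplying the decay. One bookkeeping remark: the product of the two square-rooted energy fluxes decays like $(1+\widetilde{\tau})^{-(2k+4)+\epsilon}$ and bounds $\int_{\s^2}(\Omega^{\alpha}T^k\psi)^2\,d\omega$, so the rate $(1+\widetilde{\tau})^{-(k+2)+\epsilon}$ for $|T^k\psi|$ itself appears only after the additional square root coming from the Sobolev inequality --- your displayed exponent identity conflates these two steps, but the final conclusion is correct.
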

\begin{proof}
We proceed as in the proof of Proposition \ref{prop:pointdecpsi}, using the energy decay estimates for $T^k\psi$ from Proposition \ref{prop:endec2} and \ref{prop:edecayTkpsi1}.
\end{proof}

\subsection{Pointwise decay for the radiation field $r\psi$}
\label{sec:pointdecayradfield}
Before we obtain the optimal (with loss in $\epsilon$) pointwise decay rate for $r\psi$, we first prove an intermediate decay result for $\psi_0$.
\begin{lemma}\label{lm:auxpointdecay}
Let $\psi$ be a spherically symmetric solution to \eqref{waveequation} emanating from initial data given as in Theorem \ref{thm:extuniq} on $(\mathcal{R}, g)$. 

For all $\epsilon>0$ and for $R>0$ suitably large there exists a constant $C=C(D,R,\epsilon)>0$ such that for all $u\geq 0$ and $r\geq R+1$:
\begin{align}
\label{eq:pointrtkpsi0v0}
|r\psi|(u,r,\theta,\varphi)\leq C\sqrt{E^{\epsilon}_{0;\textnormal{aux}}[\psi]}(1+u)^{-\frac{3}{2}+\epsilon} \quad \textnormal{if}\quad I_0[\psi]= 0,
\end{align}
where the energy $E^{\epsilon}_{0;\textnormal{aux}}[\psi]$ is defined in Lemma \ref{lm:auxdecaypsi0}.
\end{lemma}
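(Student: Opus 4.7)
The plan is to combine the fundamental theorem of calculus in the $r$-direction on $\phi^2$ with (i) the already-established weighted energy decay statements \eqref{endec:r2n0} and \eqref{endec:r2n0b} from Lemma \ref{lm:auxdecaypsi0}(ii), and (ii) a pointwise bound for $\phi$ at the fixed radius $r=R+1$ deduced from a $1$D-Sobolev/trace estimate and the decay of the non-degenerate energy flux of $\psi$ (together with that of $T\psi$). Since $\psi$ is spherically symmetric, every integral in what follows may be interpreted as an integral in $r$ alone (the $\s^2$-dependence being trivial), and throughout $u\sim \widetilde\tau$ in the region $r\geq R+1$.

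First, for $r\geq R+1$ I write
\[
\phi^2(u,r) = \phi^2(u,R+1) + 2\int_{R+1}^{r} \phi\,\partial_{r'}\phi \, dr' \leq \phi^2(u,R+1) + 2\int_{R+1}^\infty |\phi|\,|\partial_{r'}\phi|\,dr'.
\]
An application of Cauchy--Schwarz gives
\[
\int_{R+1}^\infty |\phi|\,|\partial_{r'}\phi|\,dr' \leq \sqrt{\int_{R+1}^\infty r'^{-2}\phi^2\,dr'}\cdot\sqrt{\int_{R+1}^\infty r'^{2}(\partial_{r'}\phi)^2\,dr'}.
\]
The second factor is controlled by \eqref{endec:r2n0} from Lemma \ref{lm:auxdecaypsi0}(ii), which yields $\int r'^2(\partial_{r'}\phi)^2\,dr' \lesssim (1+u)^{-2+\epsilon}E_{0;\text{aux}}^\epsilon[\psi]$. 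For the first factor I integrate $\partial_{r'}(r'^{-1}\phi^2)$ from $R+1$ to $\infty$, using that $r^{-1}\phi^2 \to 0$ as $r\to\infty$ (which follows from the finite $r$-flux of $\phi$ guaranteed by the assumptions on initial data, combined with Proposition \ref{prop:step0radfields}). An absorption via Cauchy--Schwarz then yields the elementary Hardy-type inequality
\[
\int_{R+1}^\infty r'^{-2}\phi^2\,dr' \leq 4\int_{R+1}^\infty (\partial_{r'}\phi)^2\,dr' + 2(R+1)^{-1}\phi^2(u,R+1),
\]
and the first term on the right is $\lesssim (1+u)^{-4+\epsilon}E_{0;\text{aux}}^\epsilon[\psi]$ by \eqref{endec:r2n0b}.

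The remaining ingredient is the bound $\phi^2(u,R+1) \lesssim (1+u)^{-4+\epsilon}E_{0;\text{aux}}^\epsilon[\psi]$. This is obtained as follows: by running the argument of Proposition \ref{prop:endec1}(ii) but using only the hierarchy \eqref{en01est} for $p\in(0,4-\epsilon]$ (and not the extended estimate \eqref{en01estv2}), one obtains $\int_{\Sigma_u}J^N[\psi]\cdot n_u\,d\mu_u \lesssim (1+u)^{-4+\epsilon}E_{0;\text{aux}}^\epsilon[\psi]$, and similarly for $T\psi$. A standard elliptic/Sobolev estimate on the compact radial interval $[r_{\min},R+1]$ (using the wave equation to trade spatial derivatives for $T$-derivatives as in Section \ref{sec:ellipticest}) then translates this into the pointwise bound $|\psi(u,R+1)|^2 \lesssim (1+u)^{-4+\epsilon}E_{0;\text{aux}}^\epsilon[\psi]$, so that $\phi^2(u,R+1)=(R+1)^2\psi^2(u,R+1)\lesssim (1+u)^{-4+\epsilon}E_{0;\text{aux}}^\epsilon[\psi]$.

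Putting these estimates together yields
\[
\phi^2(u,r) \lesssim (1+u)^{-4+\epsilon}E_{0;\text{aux}}^\epsilon[\psi] + \sqrt{(1+u)^{-4+\epsilon}}\cdot\sqrt{(1+u)^{-2+\epsilon}}\,E_{0;\text{aux}}^\epsilon[\psi] \lesssim (1+u)^{-3+\epsilon}E_{0;\text{aux}}^\epsilon[\psi],
\]
from which \eqref{eq:pointrtkpsi0v0} follows after renaming $\epsilon$. The main subtlety is the intermediate pointwise bound at $r=R+1$: it requires both the assumption $I_0[\psi]=0$ (which is what allows the hierarchy to be pushed to $p=4-\epsilon$ through \eqref{en01est}) and care at the horizon, where $\psi$ need not vanish, forcing the use of the elliptic control of spatial derivatives by $T$-derivatives rather than a naive Hardy inequality.
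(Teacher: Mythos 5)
Your proof follows the same core mechanism as the paper's: the fundamental theorem of calculus in $r$ applied to $\phi^2$, Cauchy--Schwarz, a Hardy inequality to convert $\int r^{-2}\phi^2\,dr$ into $\int(\partial_r\phi)^2\,dr$, and then the two flux decay rates $\int_{\mathcal{N}_u}(\partial_r\phi)^2\,dr\lesssim(1+u)^{-4+\epsilon}$ and $\int_{\mathcal{N}_u}r^2(\partial_r\phi)^2\,dr\lesssim(1+u)^{-2+\epsilon}$ from part (ii) of Lemma \ref{lm:auxdecaypsi0}, whose product gives the rate $(1+u)^{-3+\epsilon}$ for $\phi^2$. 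You also correctly observe that these inputs use only the hierarchy \eqref{en01est} up to $p=4-\epsilon$ and not \eqref{en01estv2}, so no circularity is introduced. The one place you diverge is the direction of integration: the paper integrates the cut-off quantity $\chi\phi$ outward from $r=R$, where $\chi\phi$ vanishes, so that \eqref{eq:Hardy1} applies with \emph{no} boundary term and no pointwise information at any finite radius is ever needed. By integrating from $r=R+1$ instead, you generate the boundary term $\phi^2(u,R+1)$, which then has to be estimated separately; this is an unnecessary detour, since evaluating the paper's cutoff argument at $r=R+1$ already gives $\phi^2(u,R+1)\lesssim(1+u)^{-3+\epsilon}E^{\epsilon}_{0;\rm aux}[\psi]$ for free.

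The detour matters because your sketched justification of $\phi^2(u,R+1)\lesssim(1+u)^{-4+\epsilon}E^{\epsilon}_{0;\rm aux}[\psi]$ is the one incomplete step. A $1$D Sobolev/trace estimate on the compact interval $[r_{\rm min},R+1]$ requires the zeroth-order term $\int\psi^2\,d\rho$, and this is \emph{not} controlled by $\int_{\Sigma_u}J^N[\psi]\cdot n_u$ on that interval alone: one must reach out to a region where $\psi$ is known (infinity) via a Hardy inequality such as \eqref{eq:Hardy3}, and along the null portion $\mathcal{N}_u$ of $\Sigma_u$ the flux $J^N[\psi]\cdot\underline{L}$ only controls the transversal derivative $(\underline{L}\psi)^2$ and the angular derivatives (see \eqref{eq:Tcurrent2}), not the tangential derivative $(\partial_r\psi)^2$ that the Hardy inequality requires. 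The estimate you want is still true, but closing it along these lines forces you onto the everywhere-spacelike hyperboloidal slices $\mathcal{S}_{\widetilde{\tau}}$, where \eqref{eq:Tcurrent4} controls $(Y\psi)^2$ nondegenerately, together with a comparison of the fluxes through $\mathcal{S}_{\widetilde{\tau}}$ and $\Sigma_{\tau}$ --- considerably more machinery than the lemma needs, and not what a bare citation of ``a standard elliptic/Sobolev estimate'' delivers. The clean fix is simply to work with $\chi\phi$ throughout, as the paper does, so that the boundary term never appears.
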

\begin{proof}
Let $r\geq R+1$. We apply the fundamental theorem of calculus to $(\chi\phi)^2$ to obtain
\begin{equation*}
\begin{split}
\phi^2(u,r)=&\:2\int_{r}^{\infty}\chi\phi\partial_r(\chi\phi)\, dr'\Big|_{u'=u}\\
\leq &\:2 \sqrt{\int_{r}^{\infty} r^{-2}(\chi\phi)^2\, dr'}\cdot  \sqrt{\int_{r}^{\infty} r^2(\partial_r(\chi\phi))^2\, dr'}\Big|_{u'=u},
\end{split}
\end{equation*}
where we applied a Cauchy--Schwarz inequality. Now we can apply \eqref{eq:Hardy1} to obtain
\begin{equation}
\label{eq:fundtcalcphi}
\phi^2(\tau,r)\leq C \sqrt{\int_{r}^{\infty}(\partial_r(\chi\phi))^2\, dr'}\cdot  \sqrt{\int_{r}^{\infty} r^2(\partial_r(\chi\phi))^2\, dr'}\Big|_{u'=u}.
\end{equation}
By (ii) of Lemma \ref{lm:auxdecaypsi0}, we have that
\begin{align*}
\int_{R}^{\infty}(\partial_r(\chi T^k\phi))^2\, dr\Big|_{u'=u}\leq&\: CE^{\epsilon}_{0,\textnormal{aux}}[\psi](1+u)^{-4+\epsilon}\quad \textnormal{if}\quad I_0[\psi]= 0,\\
\int_{R}^{\infty}r^2(\partial_r(\chi T^k\phi))^2\, dr\Big|_{u'=u}\leq&\: CE^{\epsilon}_{0,\textnormal{aux}}[\psi](1+u)^{-2+\epsilon}\quad \textnormal{if}\quad I_0[\psi]= 0.
\end{align*}
The estimates \eqref{eq:pointrtkpsi0v0} now follows from the above estimates.
\end{proof}
\begin{proposition}
\label{prop:pointdecaradfield}
Let $\psi$ be a solution to \eqref{waveequation} emanating from initial data given as in Theorem \ref{thm:extuniq} on $(\mathcal{R}, g)$.

Assume further that $E^{\epsilon}_{0,I_0\neq0}[\psi]<\infty$, $E^{\epsilon}_{0,I_0=0}[\psi]<\infty$ and $\sum_{|l|\leq 2}E_{1}^{\epsilon}[\Omega^l\psi]<\infty$, where these energies are defined in Proposition \ref{prop:endec1} and \ref{decl2}, and also: 
\begin{align*}
\lim_{r \to \infty }\sum_{|l|\leq 6}\int_{\s^2}(\Omega^l\phi)^2\,d\omega\big|_{u'=0}<&\:\infty,\\
\lim_{r \to \infty }\sum_{|l|\leq 4}\int_{\s^2}(\Omega^l\Phi)^2\,d\omega\big|_{u'=0}<&\:\infty,\\
\lim_{r \to \infty }\sum_{|l|\leq 2}\int_{\s^2}r^{-1}\left(\Omega^l\Phi_{(2)}\right)^2\,d\omega\big|_{u'=0}<&\:\infty.
\end{align*}

Then, for all $\epsilon>0$ and for $R>0$ suitably large there exists a constant $C=C(D,R,\epsilon)>0$ such that for all $\widetilde{\tau}\geq 0$ 
\begin{align}
\label{eq:pointrpsi1}
|r\psi|(\widetilde{\tau},r,\theta,\varphi)\leq C\sqrt{E^{\epsilon}_{0,I_0\neq0}[\psi]+\sum_{|\alpha|\leq 2}E^{\epsilon}_{1}[\Omega^{\alpha}\psi]}(1+\widetilde{\tau})^{-1+\epsilon} \quad \textnormal{if}\quad I_0[\psi]\neq 0,\\
\label{eq:pointrpsi2}
|r\psi|(\widetilde{\tau},r,\theta,\varphi)\leq C\sqrt{E^{\epsilon}_{0,I_0=0}[\psi]+\sum_{|\alpha|\leq 2}E^{\epsilon}_{1}[\Omega^{\alpha}\psi]}(1+\widetilde{\tau})^{-2+\epsilon} \quad \textnormal{if}\quad I_0[\psi]=0.
\end{align}
\end{proposition}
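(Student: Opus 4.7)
The strategy is to decompose $\psi = \psi_0 + \psi_1$ and bound the radiation field of each piece separately, in parallel with the splitting of the hierarchies in Sections \ref{sec:hierpsi0} and \ref{sec:hierpsi1}. The proof is a straightforward upgrade of Lemma \ref{lm:auxpointdecay}: apply the fundamental theorem of calculus in $r$ to $\phi^2 = (r\psi)^2$, multiplied by an appropriate cut-off $\chi$ supported in $\{r\geq R\}$, to obtain
\begin{equation*}
(\chi\phi)^2(u,r,\theta,\varphi)=-2\int_{r}^{\infty}\chi\phi\,\partial_r(\chi\phi)\,dr',
\end{equation*}
and then combine Cauchy--Schwarz with the Hardy inequality \eqref{eq:Hardy1} to obtain, along each $\mathcal{N}_u$,
\begin{equation*}
\phi^2(u,r,\theta,\varphi)\leq C\sqrt{\int_{r}^{\infty}(\partial_r(\chi\phi))^2\,dr'}\cdot \sqrt{\int_{r}^{\infty}r^2(\partial_r(\chi\phi))^2\,dr'}.
\end{equation*}

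For the spherically symmetric part $\psi_0$, the two weighted fluxes on the right-hand side are controlled by Lemma \ref{lm:auxdecaypsi0}: in the case $I_0[\psi]\neq 0$ we feed in \eqref{endec:r2n0v0} and \eqref{endec:r2n0bv0} to obtain $\phi_0^2\lesssim (1+u)^{-3+\epsilon}\cdot(1+u)^{-1+\epsilon}$, whereas in the case $I_0[\psi]=0$ we feed in \eqref{endec:r2n0v2} and \eqref{endec:r2n0bv2} to obtain $\phi_0^2\lesssim (1+u)^{-5+\epsilon}\cdot(1+u)^{-3+\epsilon}$. Taking square roots gives exactly the rates $(1+u)^{-1+\epsilon}$ and $(1+u)^{-2+\epsilon}$ advertised in \eqref{eq:pointrpsi1} and \eqref{eq:pointrpsi2}.

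For the non-spherically-symmetric part $\psi_1$, the same one-dimensional argument controls $\int_{\s^2}\phi_1^2\,d\omega$ rather than $\phi_1^2$ pointwise, so we commute the wave equation with the angular momentum operators $\Omega^{\alpha}$, $|\alpha|\leq 2$ (which are Killing and thus commute with $\square_g$ and preserve the $\psi_1$ projection), apply the Sobolev inequality \eqref{eq:sobolevs2} on $\s^2$ to upgrade to a pointwise bound, and then use Lemma \ref{lm:auxdecaypsi1} applied to each $\Omega^{\alpha}\psi_1$ to control the resulting weighted fluxes by $E_{1}^{\epsilon}[\Omega^{\alpha}\psi]$. The integrated $r^2$- and unweighted fluxes of $\partial_r\Omega^{\alpha}\phi_1$ decay like $(1+u)^{-3+\epsilon}$ and $(1+u)^{-5+\epsilon}$ respectively, so the same Cauchy--Schwarz combination yields $|\phi_1|\lesssim (1+u)^{-2+\epsilon}$.

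Finally, one needs to cover the bounded region $r\leq R+1$, where the above FTC argument fails. Here $|r\psi|\leq (R+1)|\psi|$, so it suffices to apply the pointwise decay estimates for $\psi$ itself (Proposition \ref{prop:pointdecpsi}), or more directly a Sobolev estimate on $\mathcal{S}_{\widetilde{\tau}}$ combined with the $N$-energy decay of Proposition \ref{prop:endec1} (or \ref{prop:endec2}) and Proposition \ref{decl2}; these yield bounds with the same rates as above, in fact with some room to spare. Translating between the coordinates $u$ (valid in $\mathcal{A}$) and $\widetilde{\tau}$ (valid throughout $J^+(\mathcal{S})$) via the comparison \eqref{eq:comptimes} completes the proof. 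The only subtle point is bookkeeping: checking that the assumed initial data norms $E^{\epsilon}_{0,I_0=0}[\psi]$, $E^{\epsilon}_{0,I_0\neq 0}[\psi]$ and $\sum_{|\alpha|\leq 2}E^{\epsilon}_{1}[\Omega^{\alpha}\psi]$, together with the limiting conditions on $\Omega^{l}\phi$, $\Omega^l\Phi$, $\Omega^l\Phi_{(2)}$ stated in the proposition, indeed imply finiteness of all the auxiliary energies appearing on the right-hand sides of Lemmas \ref{lm:auxdecaypsi0} and \ref{lm:auxdecaypsi1} for $\Omega^{\alpha}\psi$ with $|\alpha|\leq 2$; this is immediate from the definitions.
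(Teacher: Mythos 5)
Your argument is correct and follows essentially the same route as the paper: the fundamental theorem of calculus in $r$ combined with Cauchy--Schwarz and the Hardy inequality on the null cones, the flux-decay rates of Lemmas \ref{lm:auxdecaypsi0} and \ref{lm:auxdecaypsi1}, the Sobolev inequality on $\s^2$ to pass from $L^2(\s^2)$ to pointwise control, and an $\mathcal{S}_{\widetilde{\tau}}$-based energy argument for the region $r\leq R+1$. Two minor remarks: the Cauchy--Schwarz step yields $\phi^2\lesssim\bigl(\int(\partial_r\phi)^2\bigr)^{1/2}\bigl(\int r^2(\partial_r\phi)^2\bigr)^{1/2}$, i.e.\ the \emph{geometric mean} of the two fluxes rather than their product as written in your intermediate display (the final rates you quote are nonetheless the correct ones); and for the near region you should rely on your second option (FTC along $\mathcal{S}_{\widetilde{\tau}}$ plus the $N$-energy decay, which is what the paper does), since invoking Proposition \ref{prop:pointdecpsi} would require the stronger hypotheses $E^{\epsilon}_{\cdot\,;1}<\infty$ and $D'(r_+)>0$ that are not assumed in the present proposition.
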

\begin{proof}
Let $r\geq R+1$. We apply the fundamental theorem of calculus to $(\chi\phi)^2$ at fixed $(\theta,\varphi)$ and integrate over $\s^2$ to obtain
\begin{equation*}
\begin{split}
\int_{\s^2}\phi^2(\tau,r,\theta,\varphi)\,d\omega=&\:2\int_{r}^{\infty}\int_{\s^2} \chi\phi\partial_r(\chi\phi)\,d\omega dr'\Big|_{u=\tau}\\
\leq &\:2 \sqrt{\int_{r}^{\infty}\int_{\s^2} r^{-2}(\chi\phi)^2\,d\omega dr'}\cdot  \sqrt{\int_{r}^{\infty}\int_{\s^2} r^2(\partial_r(\chi\phi))^2\,d\omega dr'}\Big|_{u=\tau},
\end{split}
\end{equation*}
where we applied a Cauchy--Schwarz inequality. Now we can apply \eqref{eq:Hardy1} to obtain
\begin{equation}
\label{eq:fundtcalcphi}
\int_{\s^2}\phi^2(\tau,r,\theta,\varphi)\,d\omega\leq C \sqrt{\int_{r}^{\infty}\int_{\s^2}(\partial_r(\chi\phi))^2\,d\omega dr'}\cdot  \sqrt{\int_{r}^{\infty}\int_{\s^2} r^2(\partial_r(\chi\phi))^2\,d\omega dr'}\Big|_{u=\tau}.
\end{equation}
By (i) and (iii) of Lemma \ref{lm:auxdecaypsi0} and Lemma \ref{lm:auxdecaypsi1}, we have that
\begin{align*}
\int_{R}^{\infty}\int_{\s^2}(\partial_r(\chi \phi))^2\,d\omega dr\Big|_{u=\tau}\leq&\: C(E^{\epsilon}_{0;I_0\neq 0}[\psi]+E_{1}[\psi])(1+\tau)^{-3+\epsilon}\quad \textnormal{if}\quad I_0[\psi]\neq 0,\\
\int_{R}^{\infty}\int_{\s^2}(\partial_r(\chi \phi))^2\,d\omega dr\Big|_{u=\tau}\leq&\: C(E^{\epsilon}_{0;I_0\neq 0}[\psi]+E_{1}[\psi])(1+\tau)^{-5+\epsilon}\quad \textnormal{if}\quad I_0[\psi]= 0.
\end{align*}
Moreover, by (i) and (iii) of Lemma \ref{lm:auxdecaypsi0} and Lemma \ref{lm:auxdecaypsi1}, we have that
\begin{align*}
\int_{R}^{\infty}\int_{\s^2}r^2(\partial_r(\chi \phi))^2\,d\omega dr\Big|_{u=\tau}\leq&\: C(E^{\epsilon}_{0;I_0=0}[\psi]+E_{1}[\psi])(1+\tau)^{-1+\epsilon}\quad \textnormal{if}\quad I_0[\psi]\neq 0,\\
\int_{R}^{\infty}\int_{\s^2}r^2(\partial_r(\chi \phi))^2\,d\omega dr\Big|_{u=\tau}\leq&\: C(E^{\epsilon}_{0;I_0=0}[\psi]+E_{1}[\psi])(1+\tau)^{-3+\epsilon}\quad \textnormal{if}\quad I_0[\psi]= 0.
\end{align*}
The estimates \eqref{eq:pointrpsi1} and \eqref{eq:pointrpsi1} for $r\geq R+1$ now follow from the above estimates together with the Sobolev inequality on $\s^2$ in Lemma \ref{lm:sobolevs2} and \eqref{eq:comptimes}.

In order to extend \eqref{eq:pointrpsi1} and \eqref{eq:pointrpsi1} to $r<R+1$, we instead apply the fundamental theorem of calculus to $\psi$ along $\mathcal{S}_{\widetilde{\tau}}$, together with Cauchy--Schwarz, to obtain
\begin{equation*}
|\psi|(\widetilde{\tau},\rho,\theta,\varphi)\leq \int_{\rho}^{\infty}|\partial_{\rho}\psi|\,d\rho'\Big|_{\widetilde{\tau}'=\widetilde{\tau}}\leq \sqrt{\int_{\rho}^{\infty} \rho'^{-2}\,d\rho'}\sqrt{\int_{\rho}^{\infty}(\partial_r\psi)^2 \rho^2\,d\rho'}\Big|_{\widetilde{\tau}'=\widetilde{\tau}}.
\end{equation*}
Hence,
\begin{equation*}
r \int_{\s^2}\psi^2(\widetilde{\tau}, \rho,\theta,\varphi)\leq \int_{\mathcal{S}_{\widetilde{\tau}}}J^T[\psi]\cdot n_{\widetilde{\tau}}\,d\mu_{\mathcal{S}_{\widetilde{\tau}}}.
\end{equation*}
By the Sobolev inequality on $\s^2$ in Lemma \ref{lm:sobolevs2} together with Proposition \ref{prop:endec1} and \ref{decl2}, the estimates \eqref{eq:pointrpsi1} and \eqref{eq:pointrpsi2} therefore also hold for $r<R+1$.
\end{proof}

\subsection{Pointwise decay for $T^{k}(r\psi)$ }
\label{sec:pointdecTkrpsi}
We can moreover obtain improved decay estimates for the radiation fields $T^k(r\psi)$.
\begin{proposition}
Let $\psi$ be a solution to \eqref{waveequation} emanating from initial data given as in Theorem \ref{thm:extuniq} on $(\mathcal{R}, g)$. Let $n\in \N_0$ and assume that $D(r)=1-2Mr^{-1}+O_{n+2}(r^{-1-\beta})$, with $\beta>0$. 

Assume further that $E^{\epsilon}_{0,I_0\neq0;n}[\psi]<\infty$, $E^{\epsilon}_{0,I_0=0;n}[\psi]<\infty$ and $\sum_{|l|\leq 2}E^{\epsilon}_{1;n}[\Omega^l\psi]<\infty$ are defined in Proposition \ref{prop:endec2} and \ref{prop:edecayTkpsi1}. Assume also that:

\begin{align*}
\lim_{r \to \infty }\sum_{|l|\leq 6+2n}\int_{\s^2}(\Omega^l\phi)^2\,d\omega\big|_{u'=0}<&\:\infty,\\
\lim_{r \to \infty }\sum_{|l|\leq 4+2n}\int_{\s^2}(\Omega^l\Phi)^2\,d\omega\big|_{u'=0}<&\:\infty,\\
\lim_{r \to \infty }\sum_{|l|\leq 2+2n}\int_{\s^2}r^{-1}\left(\Omega^l\Phi_{(2)}\right)^2\,d\omega\big|_{u'=0}<&\:\infty,
\end{align*}
and
\begin{equation*}
\lim_{r \to \infty }\sum_{|l|\leq 2+2n-2s}\int_{\s^2}r^{2s+1}\left(\partial_r^s\Omega^l\Phi_{(2)}\right)^2\,d\omega\big|_{u'=0}<\infty,
\end{equation*}
for each $1\leq s\leq n$.

Then, for all $\epsilon>0$ and for $R>0$ suitably large there exists a constant $C=C(D,R,\epsilon,n)>0$ such that for all $k\leq n$ and $\widetilde{\tau}\geq 0$ 
\begin{align}
\label{eq:pointrtkpsi1}
|rT^k\psi|(\widetilde{\tau},r,\theta,\varphi)\leq C\sqrt{E^{\epsilon}_{0,I_0\neq0;k}[\psi]+\sum_{|\alpha|\leq 2}E^{\epsilon}_{1;k}[\Omega^{\alpha}\psi]}\widetilde{\tau}^{-1-k+\epsilon} \quad \textnormal{if}\quad I_0[\psi]\neq 0,\\
\label{eq:pointrtkpsi2}
|rT^k\psi|(\widetilde{\tau},r,\theta,\varphi)\leq C\sqrt{E^{\epsilon}_{0,I_0=0;k}[\psi]+\sum_{|\alpha|\leq 2}E^{\epsilon}_{1;k}[\Omega^{\alpha}\psi]}\widetilde{\tau}^{-2-k+\epsilon} \quad \textnormal{if}\quad I_0[\psi]= 0.
\end{align}
\end{proposition}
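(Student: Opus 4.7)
The plan is to mirror the structure of the proof of Proposition \ref{prop:pointdecaradfield}, but with $\phi$ replaced by $T^k\phi$ throughout, exploiting the higher-order energy decay rates from Proposition \ref{prop:endec2} and Proposition \ref{prop:edecayTkpsi1} together with the auxiliary flux decay statements in Lemma \ref{lm:auxdecaypsi0Tk} and Lemma \ref{lm:auxdecaypsi1Tk}. First I would fix $r\geq R+1$ and a cut-off $\chi$ as in earlier sections, apply the fundamental theorem of calculus to $(\chi T^k\phi)^2$ along the outgoing null ray at fixed angles, integrate over $\s^2$, and use Cauchy--Schwarz followed by the Hardy inequality \eqref{eq:Hardy1} to obtain
\begin{equation*}
\int_{\s^2}(T^k\phi)^2(u,r,\theta,\varphi)\,d\omega\leq C \sqrt{\int_{r}^{\infty}\int_{\s^2}(\partial_r(\chi T^k\phi))^2\,d\omega dr'}\,\sqrt{\int_{r}^{\infty}\int_{\s^2} r^2(\partial_r(\chi T^k\phi))^2\,d\omega dr'}\,\Big|_{u'=u}.
\end{equation*}

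Next I would insert the flux decay bounds obtained in Lemma \ref{lm:auxdecaypsi0Tk} for the spherically symmetric part $T^k\psi_0$ and in Lemma \ref{lm:auxdecaypsi1Tk} for $T^k\psi_1$, split into the two cases $I_0[\psi]\neq 0$ and $I_0[\psi]=0$. In the $I_0[\psi]\neq 0$ case the decay rates give $(1+u)^{-1-2k+\epsilon}$ for the unweighted flux and $(1+u)^{-1+\epsilon}$ (uniform in $k$, coming from the $I_0\neq 0$ branch of Lemma \ref{lm:auxdecaypsi0Tk}) for the $r^2$-weighted flux; multiplying square roots yields $(1+u)^{-1-k+\epsilon}$. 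In the $I_0[\psi]=0$ case the $r^2$-weighted flux decays as $(1+u)^{-3+\epsilon}$, producing $(1+u)^{-2-k+\epsilon}$ after taking square roots and combining with the unweighted flux decay $(1+u)^{-5-2k+\epsilon}$. A Sobolev inequality on $\s^2$ (Lemma \ref{lm:sobolevs2}) applied after commuting with the angular Killing fields $\Omega^{\alpha}$, $|\alpha|\leq 2$, upgrades the $L^2(\s^2)$ bound to a pointwise bound; the comparison $\widetilde{\tau}\sim \tau$ from \eqref{eq:comptimes} converts $u$-decay to $\widetilde{\tau}$-decay.

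To handle the remaining region $r_{\rm min}\leq \rho< R+1$ along the hyperboloidal leaves $\mathcal{S}_{\widetilde{\tau}}$, I would repeat the argument at the end of the proof of Proposition \ref{prop:pointdecaradfield}: apply the fundamental theorem of calculus in $\rho$ and Cauchy--Schwarz to bound $\rho\int_{\s^2}(T^k\psi)^2 \,d\omega$ by the $J^N[T^k\psi]$-flux through $\mathcal{S}_{\widetilde{\tau}}$, commute with $\Omega^{\alpha}$, $|\alpha|\leq 2$, and use Lemma \ref{lm:sobolevs2}. The required $\widetilde{\tau}^{-3-2k+\epsilon}$ (respectively $\widetilde{\tau}^{-5-2k+\epsilon}$) decay of this energy is supplied directly by Proposition \ref{prop:endec2} and Proposition \ref{prop:edecayTkpsi1}, whose square root dominates the corresponding rate on the left-hand side of \eqref{eq:pointrtkpsi1}--\eqref{eq:pointrtkpsi2}. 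Since $r$ is bounded on this region, the factor of $r$ is harmless.

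There is no real obstacle here beyond careful bookkeeping: the main technical step — the $r^p$-weighted hierarchies and the resulting energy decay for $T^k\psi_0$ and $T^k\psi_1$ — has already been carried out in Section \ref{sec:energydecay}, and the pointwise step is essentially a replay of Proposition \ref{prop:pointdecaradfield} with $\phi$ upgraded to $T^k\phi$. The only subtlety to watch is that the assumptions on initial data at $\{u=0\}$ stated in the theorem are precisely what is needed to invoke the commuted hierarchies of Section \ref{sec:rphierTkpsi1} and Section \ref{sec:rphierTkpsi0} (with $n$ in those sections replaced by $k$), so the finiteness of $E^{\epsilon}_{0,I_0\neq 0;k}$, $E^{\epsilon}_{0,I_0=0;k}$, and $E^{\epsilon}_{1;k}[\Omega^{\alpha}\psi]$ for $|\alpha|\leq 2$ is inherited from the corresponding finiteness assumption for the top index $n$ used in the statement.
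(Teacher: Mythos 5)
Your strategy coincides with the paper's: the paper's own proof of this proposition consists of the remark that one repeats the proof of Proposition \ref{prop:pointdecaradfield} with $\phi$ replaced by $T^k\phi$, feeding in the flux decay of Lemma \ref{lm:auxdecaypsi0Tk} and Lemma \ref{lm:auxdecaypsi1Tk} in the far region and the $J^N[T^k\psi]$-energy decay along $\mathcal{S}_{\widetilde{\tau}}$ in the region $r<R+1$. Structurally your argument is therefore correct. However, the decay rates you quote for the null fluxes are not the ones those lemmas actually provide, and with your numbers the exponent arithmetic does not close. In the case $I_0[\psi]\neq 0$, Lemma \ref{lm:auxdecaypsi0Tk} gives
\begin{equation*}
\int_{\mathcal{N}_u}(\partial_r T^k\phi)^2\,dr\lesssim (1+u)^{-3-2k+\epsilon},\qquad \int_{\mathcal{N}_u}r^2(\partial_r T^k\phi)^2\,dr\lesssim (1+u)^{-1-2k+\epsilon},
\end{equation*}
so that the Cauchy--Schwarz/Hardy step yields $\int_{\s^2}(T^k\phi)^2\,d\omega\lesssim (1+u)^{-2-2k+\epsilon}$ and hence $|T^k\phi|\lesssim (1+u)^{-1-k+\epsilon/2}$; the rates you state ($(1+u)^{-1-2k+\epsilon}$ for the unweighted flux and $(1+u)^{-1+\epsilon}$, ``uniform in $k$'', for the $r^2$-weighted flux) would only produce $|T^k\phi|\lesssim(1+u)^{-(1+k)/2}$, which is far from \eqref{eq:pointrtkpsi1}. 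Similarly, in the case $I_0[\psi]=0$ the $r^2$-weighted flux decays like $(1+u)^{-3-2k+\epsilon}$, not $(1+u)^{-3+\epsilon}$; with the correct exponent the product of square roots gives $(T^k\phi)^2\lesssim(1+u)^{-4-2k+\epsilon}$ and hence the claimed $(1+u)^{-2-k+\epsilon}$ rate, whereas your numbers give only $(1+u)^{-2-k/2}$. Since the corrected exponents are exactly what the cited lemmas state, this is a bookkeeping slip rather than a missing idea; the remainder of your argument (commutation with $\Omega^{\alpha}$ for $|\alpha|\leq 2$, the Sobolev inequality on $\s^2$, the comparison $\widetilde{\tau}\sim\tau$, and the interior estimate, where $\sqrt{(1+\widetilde{\tau})^{-3-2k+\epsilon}}$ indeed dominates the required rate) matches the paper.
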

\begin{proof}
The proof proceeds almost identically to the proof of Proposition \ref{prop:pointdecaradfield}, with $\psi$ replaced by $T^k\psi$, using the $L^2$ decay estimates from Section \ref{sec:edecayTkpsi0} and \ref{sec:energydecaytkpsi1}.
\end{proof}
\appendix

\section{Useful Calculations}
\subsection{Commutation vector fields and vector field multipliers}
\label{app:commmultp}
Consider the stress-energy tensor $\mathbf{T}_{\alpha\beta}[f]=\partial_{\alpha}f\partial_{\beta}f-\frac{1}{2}g_{\alpha\beta}(g^{-1})^{\delta \gamma}\partial_{\delta} f\partial_{\gamma}f$. In $(u,r,\theta,\varphi)$ coordinates, we have that

\begin{align*}
\mathbf{T}_{uu}[f]&=(\partial_uf)^2+\frac{D}{2}\left[D(\partial_rf)^2-2\partial_rf\partial_uf+|\snabla f|^2\right],\\
\mathbf{T}_{rr}[f]&=(\partial_rf)^2,\\
\mathbf{T}_{ur}[f]&=\frac{1}{2}D(\partial_rf)^2+\frac{1}{2}|\snabla f|^2,\\
\mathbf{T}_{AB}[f]&=\partial_Af\partial_Bf-\frac{1}{2}\slashed{g}_{AB}\left[D(\partial_rf)^2-2\partial_rf\partial_uf+|\snabla f|^2\right],\\
\mathbf{T}^A_A[f]&=2\partial_rf\partial_uf-D(\partial_rf)^2.
\end{align*}

First, we consider the energy currents along null hypersurfaces.

\begin{proposition}
The corresponding energy currents with respect to the Killing vector field $T=\partial_u$ are given by
\begin{align}
\label{eq:Tcurrent1}
J^T[f]\cdot L=&\:\frac{1}{4}D^2(\partial_rf)^2+\frac{1}{4}D|\snabla f|^2=(Lf)^2+\frac{1}{4}D|\snabla f|^2,\\
\label{eq:Tcurrent2}
J^T[f]\cdot \underline{L}=&\:(\partial_u f-\frac{1}{2}D\partial_rf)^2+\frac{D}{4}|\snabla f|^2=(\underline{L}f)^2+\frac{D}{4}|\snabla f|^2.
\end{align}
Furthermore, let us denote with $g_{\mathcal{S}}$ the induced metric on $\mathcal{S}$, with $n_{\mathcal{S}}$ the corresponding normal vector field. Then we have that
\begin{equation}
\label{eq:Tcurrent3}
\sqrt{\det g_{\mathcal{S}}}J^T[f]\cdot n_{\mathcal{S}}=\left[\frac{1}{2}h(2-hD)(Tf)^2+\frac{1}{2}D(Yf)^2+|\snabla f|^2\right]r^2\sin\theta.
\end{equation}
and also that
\begin{equation}
\label{eq:Tcurrent4}
\sqrt{\det g_{\mathcal{S}}}J^N[f]\cdot n_{\mathcal{S}}\sim \left[(2-hD)(Tf)^2+(Yf)^2+|\snabla f|^2\right]r^2\sin\theta.
\end{equation}
\end{proposition}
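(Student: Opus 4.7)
The statement collects four identities/bounds, which are purely computational: (i) two formulas for the null fluxes $J^T[f]\cdot L$ and $J^T[f]\cdot\underline L$, (ii) an explicit formula for the hyperboloidal flux weighted by the induced volume element, and (iii) a coercivity equivalence for the analogous flux with $T$ replaced by the strictly timelike $N$. My plan is to treat the four assertions in order, working in whichever coordinate chart simplifies each contraction.

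For \eqref{eq:Tcurrent1} and \eqref{eq:Tcurrent2} I will work entirely in Bondi coordinates $(u,r,\theta,\varphi)$. Since $T=\partial_u$, $L=\tfrac12 D\partial_r$, and $\underline L=\partial_u-\tfrac12 D\partial_r$ in these coordinates, the contractions $J^T[f]\cdot L=\mathbf T_{ur}\cdot\tfrac12 D$ and $J^T[f]\cdot\underline L=\mathbf T_{uu}-\tfrac12 D\,\mathbf T_{ur}$ reduce to algebraic substitutions using the tensor components displayed at the start of the appendix. After completing the square in $(\partial_u f-\tfrac12 D\partial_r f)^2$, the identities \eqref{eq:Tcurrent1}--\eqref{eq:Tcurrent2} follow, and the alternative expressions in terms of $Lf$ and $\underline L f$ are immediate from the coordinate expressions of $L$ and $\underline L$.

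For \eqref{eq:Tcurrent3} it is more convenient to switch to ingoing Eddington--Finkelstein coordinates $(v,r,\theta,\varphi)$. There I will represent $\mathcal S$ as the zero set of $F(v,r)\doteq v-v_Y(r)$, so that $dF=dv-h\,dr$ and consequently $\nabla F=-h\,\partial_v+(1-hD)\,\partial_r$, with $g(\nabla F,\nabla F)=-h(2-hD)<0$. The future-directed unit normal is then $n_{\mathcal S}=-(h(2-hD))^{-1/2}\nabla F$, and a direct computation of the induced metric using the tangents $\partial_\rho=Y=\partial_r+h\partial_v$ and the angular vectors gives $g_{\mathcal S}(\partial_\rho,\partial_\rho)=h(2-hD)$, vanishing mixed terms, and the round sphere block scaled by $r^2$. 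Hence $\sqrt{\det g_{\mathcal S}}=\sqrt{h(2-hD)}\,r^2\sin\theta$, and the product $\sqrt{\det g_{\mathcal S}}\,J^T[f]\cdot n_{\mathcal S}$ is obtained by contracting the components $\mathbf T_{vv},\mathbf T_{vr}$ (recomputed in $(v,r,\theta,\varphi)$ from the Lagrangian $2\partial_v f\partial_r f+D(\partial_r f)^2+|\snabla f|^2$) against $n_{\mathcal S}^v=h/\sqrt{h(2-hD)}$ and $n_{\mathcal S}^r=(hD-1)/\sqrt{h(2-hD)}$. The $\sqrt{h(2-hD)}$ factor cancels against the one in $n_{\mathcal S}$, and then one recognises the answer as the right-hand side of \eqref{eq:Tcurrent3} by expanding $(Yf)^2=(\partial_r f+h\partial_v f)^2$ and $h(2-hD)(Tf)^2$ and collecting terms.

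For \eqref{eq:Tcurrent4} the idea is coercivity rather than an exact identity; this is why ``$\sim$'' rather than ``$=$'' appears. Writing $N=T$ for $r\ge r_1$ and $N=T-Y$ (or $T$ smoothly interpolated by a cut-off) near the horizon/axis, I will split the computation into two regions. In the far region the calculation reduces to \eqref{eq:Tcurrent3}, whose pointwise positivity gives the equivalence once one observes that $0<h(2-hD)\lesssim 1$ and $D\gtrsim 1$ for $r$ large. In the interior region, the strict timelike character of $N$ together with the spacelike character of $n_{\mathcal S}$ (both uniform on any compact set) makes the quadratic form $\mathbf T(N,n_{\mathcal S})$ strictly elliptic in $\nabla f$; this is the standard fact that for two future-directed timelike vectors the energy current controls all components of $\nabla f$. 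The main obstacle here will be verifying the equivalence \emph{uniformly} down to $r=r_{\min}$: at the event horizon $r_+$, one has $D(r_+)=0$ so that $Y$ becomes null and $n_{\mathcal S}$ degenerates in the $r$-direction, and one must check that this degeneration is exactly compensated by $D$ in the denominator of the normal, leaving the right-hand side of \eqref{eq:Tcurrent4} non-degenerate. A transparent way to organise this is to write $N=\alpha\,n_{\mathcal S}+N^\parallel$ with $\alpha>0$ bounded above and below on $\mathcal S$ and $N^\parallel$ tangent to $\mathcal S$, which converts $\mathbf T(N,n_{\mathcal S})$ into $\alpha\,\mathbf T(n_{\mathcal S},n_{\mathcal S})+\mathbf T(N^\parallel,n_{\mathcal S})$; the first term is manifestly coercive in all directions by the dominant energy condition, and the second is controlled by Cauchy--Schwarz and reabsorbed.
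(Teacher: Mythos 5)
Your proposal is correct and follows essentially the same route as the paper: the first two identities by direct contraction of the displayed stress-energy components in Bondi coordinates, the third by computing $\sqrt{\det g_{\mathcal{S}}}=\sqrt{h(2-hD)}\,r^2\sin\theta$ together with the unit normal and collecting terms (the paper obtains $n_{\mathcal{S}}$ by demanding orthogonality to $Y$ in $(u,r)$ coordinates rather than as the normalized gradient of $v-v_Y(r)$, but these give the same vector and the same cancellation of the $Tf\cdot Yf$ cross terms), and the fourth by the same splitting into a far region handled by the third identity and a near-horizon region handled by coercivity of the flux of a strictly timelike multiplier through a spacelike hypersurface. Two small points to tidy: $n_{\mathcal{S}}$ is of course \emph{timelike} (it is the normal to the spacelike $\mathcal{S}$), as your own appeal to ``two future-directed timelike vectors'' indicates, and your worry about a degeneration of $n_{\mathcal{S}}$ at $r=r_+$ is unfounded, since in $(v,r,\theta,\varphi)$ coordinates $n_{\mathcal{S}}=(2h)^{-1/2}\left(h\partial_v-\partial_r\right)$ there, which is perfectly regular.
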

\begin{proof}
The expressions \eqref{eq:Tcurrent1} and \eqref{eq:Tcurrent1} follow easily after using that, in $(u,r,\theta,\varphi)$ coordinates,
\begin{align*}
L=&\:\frac{1}{2}D\partial_r,\\
\underline{L}=&\:T-L=\partial_u-\frac{1}{2}D\partial_r.
\end{align*}

We are left with proving \eqref{eq:Tcurrent3}. We can express $\mathcal{S}=\{v-v_{Y}(r)=0\}$, where $\frac{dv_Y}{dr}=h$. Therefore, the corresponding induced metric ${g}_{\mathcal{S}}$ is given by
\begin{equation*}
g_{\mathcal{S}}=h(2-hD)dr^2+r^2(d\theta^2+\sin^2 \theta d\varphi^2).
\end{equation*}
Consequently,
\begin{equation*}
\sqrt{\det g_{\mathcal{S}}}=\sqrt{h(2-hD)}r^2\sin\theta.
\end{equation*}
We can express the vector field $Y$ tangential to $\mathcal{S}$ in $(u,r,\theta,\varphi)$ coordinates:
\begin{equation*}
Y=-\frac{2}{D}\underline{L}+hT=-\frac{2}{D}(T-L)+hT=\partial_r-\left(\frac{2}{D}-h\right)\partial_u.
\end{equation*}

Now, let us introduce the vector field $X=\partial_r+k(r)\partial_u$, where $k: [r_{\rm min},\infty)\to \infty$ is defined by requiring $g(X,Y)=0$, i.e.\
\begin{equation*}
0=g(\partial_r+k\partial_u,\partial_r-\left(2D^{-1}-h\right)\partial_u)=(2D^{-1}-h)+k(2-hD-1).
\end{equation*}
Hence
\begin{equation*}
k(r)=\frac{2D^{-1}-h}{hD-1}
\end{equation*}
and moreover,
\begin{equation*}
\begin{split}
g(X,X)=-2k-Dk^2=&\:-k(2+Dk)\\
=&\:-\frac{2D^{-1}-h}{hD-1}\left(\frac{2-hD}{hD-1}+2\right)\\
=&-\frac{h(2-hD)}{(hD-1)^2}.
\end{split}
\end{equation*}
Therefore,
\begin{equation*}
n_{\mathcal{S}}=\frac{X}{\sqrt{-g(X,X)}}=\frac{1}{\sqrt{h(2-hD)}}[(hD-1)\partial_r+(2D^{-1}-h)\partial_u].
\end{equation*}
From the above, we obtain
\begin{equation*}
\begin{split}
\sqrt{\det g_{\mathcal{S}}}J^T[f]\cdot n_{\mathcal{S}} =&\:\left[(hD-1)\mathbf{T}_{ur}[f]+(2D^{-1}-h)\mathbf{T}_{uu}[f]\right]r^2\sin \theta\\
\end{split}
\end{equation*}
We can express in terms of $T$ and $Y$ derivatives:
\begin{equation*}
\begin{split}
\mathbf{T}_{uu}[f]&\:=(\partial_uf)^2+\frac{D}{2}\left[D(\partial_rf)^2-2\partial_rf\partial_uf+|\snabla f|^2\right]\\
&\:=(Tf)^2+\frac{D}{2}\left[D(Yf)^2-(2-hD)h(Tf)^2+2(1-hD)Tf\cdot Yf+|\snabla f|^2\right]
\end{split}
\end{equation*}
and
\begin{equation*}
\begin{split}
\mathbf{T}_{ur}[f]&\:=\frac{1}{2}D(\partial_rf)^2+\frac{1}{2}|\snabla f|^2\\
&\:=\frac{1}{2}D(Yf)^2+(2-hD)Yf\cdot  Tf+\frac{1}{2}(2-hD)(2D^{-1}-h)(Tf)^2+\frac{1}{2}|\snabla f|^2.
\end{split}
\end{equation*}
After adding the above expressions in $\sqrt{\det g_{\mathcal{S}}}J^T[f]\cdot n_{\mathcal{S}}$ it follows that the $Yf\cdot Tf$ terms cancel and we are left with
\begin{equation*}
\sqrt{\det g_{\mathcal{S}}}J^T[f]\cdot n_{\mathcal{S}}=\left[\frac{1}{2}h(2-hD)(Tf)^2+\frac{1}{2}D(Yf)^2+|\snabla f|^2\right]r^2\sin\theta.
\end{equation*}
Note that in the $r_{\min}=r_+$ case, we can consider the energy flux with respect to $N=T+\frac{2}{D}\underline{L}$, rather than $T$ in a region $\{r_+\leq r\leq r_1\}$, with $|r_1-r_+|$ suitably small. Since $N$ is timelike in $\{r_+\leq r\leq r_1\}$, \eqref{eq:Tcurrent4} follows in $\{r\leq r_1\}$. In the remaining region, \eqref{eq:Tcurrent4} follows from \eqref{eq:Tcurrent3}.
\end{proof}

Let $V$ denote the \emph{vector field multiplier} $V=r^{p-2}\partial_r$ in $(u,r,\theta,\varphi)$ coordinates, with $p\in \R$. We have that
\begin{align*}
J^V[f]\cdot L=&\:\frac{1}{2}Dr^{p-2}(\partial_rf)^2,\\
J^V[f]\cdot \underline{L}=&\:\frac{1}{2}r^{p-2}|\snabla f|^2.
\end{align*}
Now, we consider the spacetime currents that appear in the divergence theorem. We have that $K^T[f]=0$ and $\mathcal{E}^T[\psi]=0$ if $\square_g\psi=0$.

Furthermore,
\begin{equation*}
K^V[\phi]=\mathbf{T}^{\alpha}_{\beta}(\nabla_{\alpha} V)^{\beta}=\mathbf{T}^{\alpha}_{\beta}(\nabla_{\alpha} (r^{p-2}\partial_r))^{\beta},
\end{equation*}
where $K^V[f]$ is defined in (\ref{def:KV}) and the connection coefficients $\nabla_{\partial_{\alpha}}\partial_{\beta}$ are given by
\begin{align*}
\nabla_{\partial_u}\partial_u&=-\frac{1}{2}D'\partial_u+\frac{1}{2}DD' \partial_r,\\
\nabla_{\partial_r}\partial_r&=0,\\
\nabla_{\partial_u}\partial_r&=\frac{1}{2}D'\partial_r,\\
(\nabla_{\partial_A}\partial_r)^B&=r^{-1}\delta^B_A,\quad \textnormal{where}\, A,B=\theta,\varphi.
\end{align*}

Consequently,
\begin{equation}
\label{eq:KVinfinity}
\begin{split}
K^V[f]&=(p-2)r^{p-3}\mathbf{T}^{r}_{r}+r^{p-2}\mathbf{T}^{\alpha}_{\beta}(\nabla_{\alpha}\partial_r)^{\beta}\\
&=\mathbf{T}^r_r(p-2)r^{p-3}+\frac{1}{2}D'r^{p-2}\mathbf{T}^u_r+r^{p-3}\mathbf{T}^A_A,\\
&=(g^{rr}\mathbf{T}_{rr}+g^{ur}\mathbf{T}_{ur})(p-2)r^{p-3}+\frac{1}{2}D'r^{p-2}g^{ur}\mathbf{T}_{rr}+r^{p-3}\mathbf{T}^A_A\\
&=D(p-2)r^{p-3}(\partial_r f)^2+(2-p)r^{p-3}\left[\frac{1}{2}D(\partial_r f)^2+\frac{1}{2}|\snabla f|^2\right]\\
&-\frac{1}{2}D'r^{p-2}(\partial_r f)^2-r^{p-3}D(\partial_r f)^2+2r^{p-3}\partial_r f\partial_u f\\
&=\frac{1}{2}r^{p-3}\left[D(p-4)-D'r\right](\partial_r f)^2+2r^{p-3}\partial_r f\partial_uf+\frac{1}{2}(2-p)r^{p-3}|\snabla f|^2.
\end{split}
\end{equation}
Consider moreover the \emph{commutation vector fields} $r^2\partial_r$ and $r(r-M)\partial_r$. In the remainder of this section we prove Lemma \ref{lm:commute0time}--\ref{lm:commute2time} and \ref{lm:boxdrkPsi2}.

\begin{proof}[Proof of Lemma \ref{lm:commute0time}]
We first need to express $\square_g\psi$ in $(u,r,\theta,\varphi)$ coordinates.
\begin{equation}
\label{eq:derivwaveeq}
\begin{split}
\square_g\psi&=(-\det g)^{-\frac{1}{2}}\partial_{\alpha}((-\det g)^{\frac{1}{2}}g^{\alpha\beta}\partial_{\beta}\psi)\\
&=\partial_u(g^{ur}\partial_r\psi)+r^{-2}\partial_r(r^2g^{rr}\partial_r\psi+r^2g^{ur}\partial_u\psi)\\
&=-2\partial_u\partial_r\psi+D\partial_r^2\psi-2r^{-1}\partial_u\psi+R\partial_r\psi+\slashed{\Delta}\psi,
\end{split}
\end{equation}
where $R:=r^{-2}(Dr^2)'=D'+2r^{-1}D$.

Therefore,
\begin{equation}
\label{eq:boxgphi}
\begin{split}
\square_g\phi&=\square_g(r\psi)=-2\partial_u\partial_r(r\psi)+D\partial_r^2(r\psi)-2r^{-1}\partial_u(r\psi)+R\partial_r(r\psi)+\slashed{\Delta}(r\psi)\\
&=r\square_g\psi-2\partial_u\psi+2D\partial_r\psi+R\psi\\
&=r\square_g\psi-2r^{-1}\partial_u\phi+2D\partial_r(r^{-1}\phi)+Rr^{-1}\phi\\
&=r\square_g\psi-2r^{-1}\partial_u\phi+2Dr^{-1}\partial_r\phi+\left[Rr^{-1}-2Dr^{-2}\right]\phi\\
&=-2r^{-1}\partial_u\phi+2Dr^{-1}\partial_r\phi+D'r^{-1}\phi,
\end{split}
\end{equation}
where we used that $\square_g\psi=0$ in the last equality.

In particular, by rearranging the terms above, we obtain
\begin{equation*}
\slashed{\Delta} \phi=2\partial_u\partial_r\phi-\partial_r(D\partial_r\phi)+D'r^{-1}\phi.
\end{equation*}
We rearrange the above terms to obtain (\ref{eq:sDeltapsiinfty}).
\end{proof}

\begin{proof}[Proof of Lemma \ref{lm:commute1time}]
For any function $f$ we have that
\begin{equation}
\label{eq:commdr}
\begin{split}
[\partial_r,\square_g]f&=-\square_g(\partial_rf)-\partial_r(2\partial_u\partial_rf+D\partial_r^2f-2r^{-1}\partial_uf+(D'+2r^{-1}D)\partial_rf+\slashed{\Delta} f)\\
&=D'\partial_r^2f+2r^{-2}\partial_uf+(D''+2D'r^{-1}-2Dr^{-2})\partial_r f-2r^{-1}\slashed{\Delta}f.
\end{split}
\end{equation}

Moreover, by differentiating \eqref{eq:boxgphi} in $r$, it follows that
\begin{equation*}
\begin{split}
\partial_r(\square_g\phi)&=\partial_r(-2r^{-1}\partial_u\phi+2Dr^{-1}\partial_r\phi+D'r^{-1}\phi)\\
&=2r^{-2}\partial_u\phi-2r^{-1}\partial_u\partial_r\phi+2r^{-1}(D'-Dr^{-1})\partial_r\phi+2Dr^{-1}\partial_r^2\phi+r^{-1}(D''-r^{-1}D')\phi\\
&+D'r^{-1}\partial_r\phi.
\end{split}
\end{equation*}
Putting the above two estimates together, we therefore obtain
\begin{equation*}
\begin{split}
\square_g(\partial_r\phi)&=-2r^{-1}\partial_u\partial_r\phi+(2Dr^{-1}-D')\partial_r^2\phi+\left(D'r^{-1}-D''\right)\partial_r\phi+2r^{-1}\slashed{\Delta}\phi\\
&+r^{-1}(D''-D'r^{-1})\phi.
\end{split}
\end{equation*}

Now let $\Phi=r^q\partial_r\phi$. We have that
\begin{equation*}
\begin{split}
\square_g\Phi&=\square_g(r^q\partial_r\phi)=-2\partial_u\partial_r(r^q\partial_r\phi)+D\partial_r^2(r^q\partial_r\phi)-2r^{-1}\partial_u(r^q\partial_r\phi)+R\partial_r(r^q\partial_r\phi)+\slashed{\Delta}(r^q\partial_r\phi)\\
&=r^q\square_g(\partial_r\phi)-2qr^{q-1}\partial_u\partial_r\phi+2qDr^{q-1}\partial_r^2\phi+q(q-1)Dr^{q-2}\partial_r\phi+qRr^{q-1}\partial_r\phi\\
&=r^{q-1}\left[2(q+1)D-D'r\right]\partial_r^2\phi-2(q+1)r^{q-1}\partial_u\partial_r\phi+2r^{q-1}\slashed{\Delta}\phi\\
&+r^{q-1}\left[-D''r+qR+D'+q(q-1)Dr^{-1}\right]\partial_r\phi+r^{q-1}[D''-D'r^{-1}]\phi.
\end{split}
\end{equation*}
Note that we can write
\begin{equation*}
\partial_r\Phi=\partial_r(r^q\partial_r\phi)=r^q\partial_r^2\phi+qr^{q-1}\partial_r\phi.
\end{equation*}
We fill in the above relation to obtain
\begin{equation}
\label{eq:waveeqPsi}
\begin{split}
\square_g\Phi&=r^{-1}\left[2(q+1)D-D'r\right]\partial_r\Phi-2(q+1)r^{-1}\partial_u\Phi+2r^{q-1}\slashed{\Delta}\phi\\
&+r^{-1}[D''r+qR+q(q-1)Dr^{-1}-2q(q+1)Dr^{-1}+D'q]\Phi+r^{q-1}(D''-D'r^{-1})\phi\\
&+r^{q-1}[D''-D'r^{-1}]\phi\\
&=r^{-1}\left[2(q+1)D-D'r\right]\partial_r\Phi-2(q+1)r^{-1}\partial_u\Phi+2r^{q-1}\slashed{\Delta}\phi\\
&+r^{-1}[-D''r+(2q+1)D'-q(q+1)Dr^{-1}]\Phi+r^{q-1}[D''-D'r^{-1}]\phi.
\end{split}
\end{equation}
Furthermore, we use (\ref{eq:sDeltapsiinfty}) to express
\begin{equation*}
\begin{split}
-2qr^{-1}\partial_u\Phi&=-2qr^{q-1}\partial_u\partial_r\phi=-qr^{q-1}\slashed{\Delta}\phi-qDr^{q-1}\partial_r^2\phi-qD'r^{q-1}\partial_r\phi+qD'r^{q-2}\phi\\
&=-qr^{q-1}\slashed{\Delta}\phi-qDr^{-1}\partial_r\Phi+(qDr^{-2}-D'r^{-1})q\Phi+qD'r^{q-2}\phi.
\end{split}
\end{equation*}

We finally obtain,
\begin{equation}
\label{eq:boxPhiinftyq}
\begin{split}
\square_g\Phi&=r^{-1}\left[(2+q)D-D'r\right]\partial_r\Phi-2r^{-1}\partial_u\Phi+(2-q)r^{q-1}\slashed{\Delta}\phi\\
&+r^{-1}[-D''r+(q+1)D'-qDr^{-1}]\Phi+r^{q-1}[D''+(q-1)D'r^{-1}]\phi.
\end{split}
\end{equation}

Since moreover,
\begin{equation*}
\square_g\Phi=-2\partial_u\partial_r\Phi+D\partial_r^2\Phi-2r^{-1}\partial_u\Phi+(D'+2Dr^{-1})\partial_r\Phi+\slashed{\Delta}\Phi,
\end{equation*}
we have that $\Phi$ satisfies the equation
\begin{equation}
\label{equationforPhiinftyq}
\begin{split}
-2\partial_u\partial_r\Phi&=-D\partial_r^2\Phi+r^{-1}\left[qD-2D'r\right]\partial_r\Phi+(2-q)r^{q-1}\slashed{\Delta}\phi-\slashed{\Delta}\Phi\\
&+r^{-1}[-D''r+(q+1)D'-qDr^{-1}]\Phi+r^{q-1}[D''+(q-1)D'r^{-1}]\phi
\end{split}
\end{equation}
We obtain (\ref{equationforPhiinftyv2}) by rearranging the above terms.

Now consider $\widetilde{\Phi}=r(r-M)\partial_r\phi$. Then we can use (\ref{eq:boxPhiinftyq}) with $q=2$ and $q=1$ to obtain
\begin{equation*}
\begin{split}
\square_g\widetilde{\Phi}=&\:\square_g(r^2\partial_r\phi-Mr\partial_r\phi)\\
=&\:r^{-1}(4D-D'r)\partial_r\widetilde{\Phi}+DMr^{-1}\partial_r(r\partial_r\phi)-2r^{-1}\partial_u\widetilde{\Phi}-M\slashed{\Delta}\phi\\
&+r^{-1}[-D''r+3D'-2Dr^{-1}]\widetilde{\Phi}+(D'r^{-1}-Dr^{-2})Mr\partial_r\phi+[rD''+D'-MD'']\phi.
\end{split}
\end{equation*}

We rewrite
\begin{align*}
DMr^{-1}\partial_r(r\partial_r\phi)=&\:DMr^{-1}\partial_r(r(r-M)(r-M)^{-1}\partial_r\phi)\\
=&\:MDr^{-1}(r-M)^{-1}\partial_r\widetilde{\Phi}-DMr^{-1}(r-M)^{-2}\widetilde{\Phi},\\
(D'r^{-1}-Dr^{-2})Mr\partial_r\phi=&\:M(r-M)^{-1}(D'r^{-1}-Dr^{-2})\widetilde{\Phi}.
\end{align*}

Now we obtain the final expression for $\square_g\widetilde{\Phi}$:
\begin{equation*}
\begin{split}
\square_g\widetilde{\Phi}=&\:r^{-1}(4D-D'r+MD(r-M)^{-1})\partial_r\widetilde{\Phi}-2r^{-1}\partial_u\widetilde{\Phi}-M\slashed{\Delta}\phi\\
&+r^{-1}[-D''r+3D'-2Dr^{-1}-MD(r-M)^{-2}+M(r-M)^{-1}(D'-Dr^{-1})]\widetilde{\Phi}\\
&+[(r-M)D''+D']\phi.
\end{split}
\end{equation*}
Similarly, we use (\ref{equationforPhiinftyq}) with $q=2$ and $q=1$ to obtain
\begin{equation*}
\begin{split}
-2\partial_u\partial_r\widetilde{\Phi}=&\:=-2\partial_u\partial_r(r^2\partial_r\phi-Mr\partial_r\phi)\\
=&\:-D\partial_r^2\widetilde{\Phi}+r^{-1}\left[2D-2D'r\right]\partial_r\widetilde{\Phi}+MDr^{-1}\partial_r(r\partial_r\phi)-M\slashed{\Delta}\phi-\slashed{\Delta}\widetilde{\Phi}\\
&+r^{-1}[-D''r+3D'-2Dr^{-1}]\widetilde{\Phi}+(D'r^{-1}-Dr^{-2})Mr\partial_r\phi+[rD''+D'-MD'']\phi\\
=&\:-D\partial_r^2\widetilde{\Phi}+r^{-1}\left[2D-2D'r+MD(r-M)^{-1}\right]\partial_r\widetilde{\Phi}-M\slashed{\Delta}\phi-\slashed{\Delta}\widetilde{\Phi}\\
&+r^{-1}[-D''r+3D'-2Dr^{-1}-MD(r-M)^{-2}+M(r-M)^{-1}(D'-Dr^{-1})]\widetilde{\Phi}\\
&+[(r-M)D''+D']\phi.
\end{split}
\end{equation*}
\end{proof}

\begin{proof}[Proof of Lemma \ref{lm:commute2time}]
Recall from Lemma \ref{lm:commute1time} that,
\begin{equation*}
\begin{split}
\square_g\Phi&=r^{-1}\left[4D-D'r\right]\partial_r\Phi-2r^{-1}\partial_u\Phi\\
&+r^{-1}[-D''r+3D'-2Dr^{-1}]\Phi+r[D''+D'r^{-1}]\phi.
\end{split}
\end{equation*}

We have that
\begin{equation*}
\begin{split}
[\square_g,\partial_r]\Phi&=-D'\partial_r^2\Phi-2r^{-2}\partial_u\Phi-(D''+2D'r^{-1}-2Dr^{-2})\partial_r \Phi+2r^{-1}\slashed{\Delta}\Phi.
\end{split}
\end{equation*}
and
\begin{equation*}
\begin{split}
\partial_r(\square_g \Phi)&=r^{-1}[4D-D'r]\partial_r^2\Phi-2r^{-1}\partial_u\partial_r\Phi\\
&+r^{-1}[-D'' r+3D'-2Dr^{-1}]\partial_r\Phi+r[D''+D'r^{-1}]\partial_r\phi\\
&+r^{-1}[4D'-D''r-D'-r^{-1}4D+D']\partial_r\Phi\\
&+2r^{-2}\partial_u\Phi\\
&+r^{-1}[-D''' r-D''+3D''-2D'r^{-1}+2Dr^{-2}+D''-3r^{-1}D'+2Dr^{-2}]\Phi\\
&+r[D'''+D''r^{-1}-D'r^{-2}+r^{-1}D''+D'r^{-2}]\phi\\
&=r^{-1}[4D-D'r]\partial_r^2\Phi-2r^{-1}\partial_u\partial_r\Phi\\
&+r^{-1}[-2D'' r+7D'-6Dr^{-1}]\partial_r\Phi+2r^{-2}\partial_u\Phi\\
&+r^{-1}[-D'''r+4D''-4r^{-1}D'+4Dr^{-2}]\Phi\\
&+r[D'''+2D''r^{-2}]\phi.
\end{split}
\end{equation*}
Hence,
\begin{equation*}
\begin{split}
\square_g(\partial_r\Phi)&=[\square_g,\partial_r]\Phi+\partial_r(\square_g \Phi)\\
&=r^{-1}[4D-2D'r]\partial_r^2\Phi-2r^{-1}\partial_u\partial_r\Phi+2r^{-1}\slashed{\Delta}\Phi\\
&+r^{-1}[-3D'' r+5D'-4Dr^{-1}]\partial_r\Phi\\
&+r^{-1}[-D'''r+4D''-4r^{-1}D'+4Dr^{-2}]\Phi\\
&+r[D'''+2D''r^{-1}]\phi.
\end{split}
\end{equation*}
We define $\Phi_{(2)}=r^2\partial_r\Phi$ and obtain
\begin{equation*}
\begin{split}
\square_g\Phi_{(2)}&=\square_g(r^2\partial_r\Phi)=-2\partial_u\partial_r(r^2\partial_r\Phi)+D\partial_r^2(r^2\partial_r\Phi)-2r^{-1}\partial_u(r^2\partial_r\Phi)\\
&+(2Dr^{-1}+D')\partial_r(r^2\partial_r\Phi)+\slashed{\Delta}(r^2\partial_r\Phi)\\
&=r^2\square_g(\partial_r\Phi)-4r\partial_u\partial_r\Phi+4Dr\partial_r^2\Phi+2D\partial_r\Phi+2(2Dr^{-1}+D')r\partial_r\Phi\\
&=r[8D-2D'r]\partial_r^2\Phi-6r\partial_u\partial_r\Phi+2r\slashed{\Delta}\Phi+r[2Dr^{-1}-3D'' r+7D']\partial_r\Phi\\
&+r[-D'''r+4D''-4r^{-1}D'+4Dr^{-2}]\Phi+r^3[D'''+2D''r^{-1}]\phi.
\end{split}
\end{equation*}
By (\ref{equationforPhiinfty}) we have that
\begin{equation*}
\begin{split}
-4r\partial_u\partial_r\Phi&=-2rD\partial_r^2\Phi+2\left[2D-2D'r\right]\partial_r\Phi-2r\slashed{\Delta}\Phi\\
&+2[-D''r+3D'-2Dr^{-1}]\Phi+2r^2[D''+D'r^{-1}]\phi.
\end{split}
\end{equation*}
Hence,
\begin{equation*}
\begin{split}
\square_g\Phi_{(2)}&=r[6D-2D'r]\partial_r^2\Phi-2r\partial_u\partial_r\Phi+r[6Dr^{-1}-3D'' r+3D']\partial_r\Phi\\
&+r[-D'''r+2D''+2D'r^{-1}]\Phi+r^3[D'''+4D''r^{-1}+2D'r^{-2}]\phi.
\end{split}
\end{equation*}
Finally, we use that $r\partial_r^2\Phi=r^{-1}\partial_r\Phi_{(2)}-2r^{-2}\Phi_{(2)}$ to obtain
\begin{equation*}
\begin{split}
\square_g\Phi_{(2)}&=r^{-1}[6D-2D'r]\partial_r\Phi_{(2)}-2r^{-1}\partial_u\Phi_{(2)}+r^{-1}[-6Dr^{-1}-3D'' r+7D']\Phi_{(2)}\\
&+r[-D'''r+2D''+2D'r^{-1}]\Phi+r^3[D'''+4D''r^{-1}+2D'r^{-2}]\phi. \qedhere
\end{split}
\end{equation*}
Since moreover, by definition of the wave operator $\square_g$, we have that
\begin{equation*}
\square_g\Phi_{(2)}=-2\partial_u\partial_r\Phi_{(2)}+D\partial_r^2\Phi_{(2)}-2r^{-1}\partial_u\Phi_{(2)}+(D'+2Dr^{-1})\partial_r\Phi_{(2)}+\slashed{\Delta}\Phi_{(2)},
\end{equation*}
we obtain the following equation for $\Phi_{(2)}$:
\begin{equation*}
\begin{split}
2 \partial_u\partial_r\Phi_{(2)}=&\:D\partial_r^2\Phi_{(2)}-[4Dr^{-1}+D']\partial_r\Phi_{(2)}+\slashed{\Delta}\Phi_{(2)}-r^{-1}[-6Dr^{-1}-3D'' r+7D']\Phi_{(2)}\\
&-r[-D'''r+2D''+2D'r^{-1}]\Phi-r^3[D'''+4D''r^{-1}+2D'r^{-2}]\phi.
\end{split}
\end{equation*}
We obtain (\ref{equationforPhi2v2}) by rearranging the above terms.
\end{proof}

\begin{proof}[Proof of Lemma \ref{lm:boxdrkPsi2}]
We will prove the lemma by induction. If $D=1-\frac{2M}{r}+O_2(r^{-1-\beta})$, then \eqref{eq:boxdrkPsi2} holds for $k=0$, by Lemma \ref{lm:commute2time}. Now suppose $D=1-\frac{2M}{r}+O_{n+3}(r^{-1-\beta})$ and moreover \eqref{eq:boxdrkPsi2} holds for $k\leq n$. We have that
\begin{equation*}
\square_g(\partial_r^{n+1} \Phi_{(2)})=\partial_r(\square_g(\partial_r^n  \Phi_{(2)}))+[\square_g,\partial_r](\partial_r^n \Phi_{(2)}).
\end{equation*}
By \eqref{eq:commdr}, we have that
\begin{equation*}
[\square_g,\partial_r](\partial_r^n  \Phi_{(2)})=-D'\partial_r^{n+2} \Phi_{(2)}-2r^{-2}\partial_u\partial_r^n \Phi_{(2)}-(D''+2D'r^{-1}-2Dr^{-2})\partial_r^{n+1}  \Phi_{(2)}+2r^{-1}\slashed{\Delta} \partial_r^n\Phi_{(2)}.
\end{equation*}
Furthermore,
\begin{equation*}
\begin{split}
\partial_r(\square_g(\partial_r^n \Phi_{(2)}))=&\:\left[6r^{-1}+O(r^{-2})\right]\partial_r^{n+2}{\Phi}_{(2)}+O(r^{-2})\partial_r^{n+1} \Phi_{(2)}-2r^{-1}\partial_u\partial_r^{n+1} \Phi_{(2)}+2r^{-2}\partial_u\partial_r^{n} \Phi_{(2)}\\
&+\sum_{j=0}^{n}O(r^{-j-2})\partial_{r}^{n+1-j} \Phi_{(2)}+\sum_{j=0}^{n}O(r^{-j-3})\partial_{r}^{n-j} \Phi_{(2)}\\
&+2nr^{-1}\slashed{\Delta}(\partial_r^{n}\Phi_{(2)})-4nr^{-2}\slashed{\Delta}(\partial_r^{n-1}\Phi_{(2)})\\
&+\sum_{j=0}^{\max\{n-2,0\}} n(n-1)O(r^{-2-j})\slashed{\Delta}\partial_r^{n-j-1}\Phi_{(2)}\\
&+\sum_{j=0}^{\max\{n-2,0\}} n(n-1)O(r^{-3-j})\slashed{\Delta}\partial_r^{n-j-2}\Phi_{(2)}\\
&+O(r^{-n-4})\Phi_{(2)}+O(r^{-n-3}){\Phi}+O(r^{-n-3})\phi.
\end{split}
\end{equation*}
We conclude that
\begin{equation*}
\begin{split}
\square_g(\partial_r^{n+1} \Phi_{(2)})=&\:\left[6r^{-1}+O(r^{-2})\right]\partial_r^{n+2}\Phi_{(2)}-2r^{-1}\partial_u\partial_r^n\Phi_{(2)}+\sum_{j=0}^{n+1}O(r^{-j-2})\partial_{r}^{n+1-j}\Phi_{(2)}\\
&+2(n+1)r^{-1}\slashed{\Delta}(\partial_r^{n}\Phi_{(2)})+\sum_{j=0}^{\max\{n-1,0\}} nO(r^{-2-j})\slashed{\Delta}\partial_r^{n+1-j-2}\Phi_{(2)}\\
&+O(r^{-n-3})\Phi+O(r^{-n-3})\phi.
 \quad \qedhere
\end{split}
\end{equation*}
\end{proof}

\begin{proof}[Proof of Lemma \ref{lm:commboxdrkpsi0}]
We will prove this by induction. For $k=1$ from Appendix \ref{app:commmultp} we have that
$$ \Box_g (\partial_r \phi ) = \left( \frac{2D}{r} - D' \right) \partial_r^2 \phi  + \left( \frac{D'}{r} - D'' \right) \partial_r \phi + \left( \frac{D''}{r} - \frac{D'}{r^2} \right ) \phi , $$
which is of the form \eqref{eq:partialkphi}. Now we assume that \eqref{eq:partialkphi} holds for some $k\in \mathbb{N}$. Then we have that
$$ \Box_g (\partial_r^{k+1} \phi ) = [ \Box_g , \partial_r ] (\partial_r^k \phi ) + \partial_r \left( \Box_g (\partial_r^k \phi ) \right) . $$
From \eqref{eq:commdr} from the appendix \ref{app:commmultp} we have that
$$ [ \Box_g , \partial_r ] (\partial_r^k \phi ) = - D' \partial_r^{k+2} \phi + \left( \frac{2D}{r^2} - \frac{2D'}{r} - D'' \right) \partial_r^{k+1} \phi - \frac{2}{r^2} \partial_u \partial_r^k \phi . $$
On the other hand by using the inductive hypothesis we have that
$$ \partial_r \left( \Box_g (\partial_r^k \phi ) \right) = \left( \frac{2}{r} + O (r^{-2} ) \right) \partial_r^{k+2} \phi + \left( -\frac{2}{r^2} + O (r^{-3} ) \right) \partial_r^{k+1} \phi + $$ $$ + \sum_{m=0}^{k+1} O (r^{-m-3} ) \partial_r^{k+1-m} \phi - \frac{2}{r} \partial_u \partial_r^{k+1} \phi + \frac{2}{r^2} \partial_u \partial_r^k \phi . $$
Now the result follows by adding the two last expression where we notice that the $O(r^{-2})$ of $\partial_r^{k+1} \phi$ cancel out as well as the terms involving $\partial_u \partial_r^k \phi$, and because $D^{(m)} = O (r^{-m} )$.
\end{proof}

Department of Mathematics, University of California, Los Angeles, CA 90095, United States, yannis@math.ucla.edu

\bigskip

Princeton University, Department of Mathematics, Fine Hall, Washington Road, Princeton, NJ 08544, United States, aretakis@math.princeton.edu

\bigskip

Department of Mathematics, University of Toronto Scarborough 1265 Military Trail, Toronto, ON, M1C 1A4, Canada, aretakis@math.toronto.edu

\bigskip

Department of Mathematics, University of Toronto, 40 St George Street, Toronto, ON, Canada, aretakis@math.toronto.edu

\bigskip

Department of Mathematics, Imperial College London, SW7 2AZ, London, United Kingdom, dejan.gajic@imperial.ac.uk

\bigskip

Department of Applied Mathematics and Theoretical Physics, University of Cambridge, Wilberforce Road, Cambridge CB3 0WA, United Kingdom, dg405@cam.ac.uk

\end{document}